\newtheorem{thm}{Theorem}[section]
\newtheorem{cor}[thm]{Corollary} 
\newtheorem{lem}[thm]{Lemma}
\newtheorem{prop}[thm]{Proposition}
\newcommand{\ep}{\varepsilon} 
\newcommand{\al}{\alpha}
\newcommand{\vp}{\varphi}
\newcommand{\bs}{\backslash}
\newcommand{\ol}{\overline}
\newcommand{\ti}{\widetilde}
\newcommand{\diam}{\operatorname{diam}}
\newcommand{\Lip}{\operatorname{Lip}}
\newcommand{\lip}{\operatorname{lip}} 
\newcommand{\loc}{{\operatorname{loc}}}
\newcommand{\Ann}{\operatorname{Ann}} 
\newcommand{\N}{{\mathbb N}} 
\newcommand{\R}{{\mathbb R}}
\newcommand{\cF}{{\mathcal F}}
\newcommand{\cA}{{\mathcal A}} 
\newcommand{\cC}{{\mathcal C}}
\begin{document}

\title{Nonlinear order isomorphisms on function spaces}

\begin{abstract}
Let $X$ be a topological space.  A subset of $C(X)$, the space of continuous real-valued functions on $X$, is a partially ordered set in the pointwise order.  Suppose that $X$ and $Y$ are topological spaces, and $A(X)$ and $A(Y)$ are subsets of $C(X)$ and $C(Y)$ respectively.  We consider the general problem of characterizing the order isomorphisms (order preserving bijections) between $A(X)$ and $A(Y)$.  
Under some general assumptions on $A(X)$ and $A(Y)$, and when $X$ and $Y$ are compact Hausdorff, it is shown that existence of an order isomorphism between $A(X)$ and $A(Y)$ gives rise to an associated homeomorphism between $X$ and $Y$.  This generalizes a classical result of Kaplansky concerning linear order isomorphisms between $C(X)$ and $C(Y)$ for compact Hausdorff $X$ and $Y$.  The class of near vector lattices is introduced in order to extend the result further to noncompact spaces $X$ and $Y$.
The main applications lie in the case when $X$ and $Y$ are metric spaces. Looking at  spaces of uniformly continuous functions, Lipschitz functions, little Lipschitz functions, spaces of differentiable functions, and the bounded, ``local" and ``bounded local" versions of these spaces, characterizations of when spaces of one type can be order isomorphic to spaces of another type are obtained.
\end{abstract}

\author{Denny H.\ Leung}
\address{Department of Mathematics, National University of Singapore, Singapore 119076}
\email{matlhh@nus.edu.sg}

\author{Wee-Kee Tang}
\address{Division of Mathematical Sciences,
 Nanyang Technological University, Singapore 637371 }
\email{WeeKeeTang@ntu.edu.sg }

\thanks{Research of the first author is partially supported by AcRF project no.\ R-146-000-157-112}
\subjclass[2010]{06F20, 46E05, 47H07, 54C30, 54F05}

\maketitle

\tableofcontents

There is a long tradition in mathematics of studying a mathematical object by looking at maps from the object into a simpler object of the same type.  For instance, the dual group of a topological group is the set of characters, i.e., the group homomorphisms into the circle group; the dual space of a topological vector space is the set of continuous linear functionals.  In the case of a topological space $X$, it is natural to consider the space $C(X)$ of continuous real-valued functions on $X$.  The space $C(X)$ carries with it a multitude of mathematical structures.  It is an algebra (or ring) of functions, a vector lattice, and its subspace $C_b(X)$ consisting of the bounded functions in $C(X)$ is a Banach space.  All these aspects of $C(X)$ have been used classically  to characterize the space $X$ for compact Hausdorff $X$, with various generalizations to noncompact spaces.
The Banach-Stone Theorem \cite{B, St} shows that the isometric structure of $C(X)$ determines a compact Hausdorff space $X$ up to homeomorphism.  Subsequently, the validity of the Banach-Stone Theorem for the Banach space valued space of continuous function $C(X,E)$ has been used to study the geometry of the Banach space $E$. See, e.g., \cite{Be}.  The study of isometries on general Banach spaces is also well developed. We refer the reader to the two volume monograph by Fleming and Jamison \cite{FJ}.
Gelfand and Kolmogorov \cite{GK} proved that a compact Hausdorff space $X$ is determined up to homeomorphism by $C(X)$ as an algebra. Developments in this direction up to the 1970's is beautifully summarized in the classic monograph of Gillman and Jerison \cite{GJ}.  It includes, in particular, a generalization of the theorem of Gelfand and Kolmogorov to noncompact spaces by Hewitt \cite{H}, who identified the class of realcompact spaces and showed the important role played by them in this context.  Later advances saw versions of the theorem of Gelfand and Kolmogorov  on other function algebras as well as generalizations to biseparating maps, i.e., bijective maps that preserve disjointness of functions in both directions.
For instance, see \cite{A2-1,  ABN, AD, GJW, J, JW, L}.
As a vector lattice, Kaplansky \cite{K} showed that $C(X)$ determines a compact Hausdorff space $X$ up to homeomorphism.  In a sense, this result is the most general of the three classical results, as it is not hard to see that both the Banach-Stone  and Gelfand-Kolmogorov Theorems can be derived from Kaplansky's Theorem; see e.g., \cite{LL}. While a general function space may no longer be a vector lattice, it always retains the partial order determined pointwise.  Let $A(X)$ and $A(Y)$ be sets of real-valued functions defined on sets $X$ and $Y$ respectively.  Say that a bijectively map $T: A(X) \to A(Y)$ is an {\em order isomorphism} if $f\leq g$ if and only if $Tf \leq Tg$ for all $f,g \in A(X)$.
One may ask if Kaplansky's Theorem can be extended to order isomorphisms on general function spaces.
The recent paper of the first author and Lei Li mentioned above shows that Kaplansky's result is valid for linear order isomorphisms for rather general classes of function spaces.  See also the work of Jim\'{e}nez-Vargas and  Villegas-Vallecillos concerning linear order isomorphisms on spaces of Lipschitz functions \cite{JV2008}.
Within the last decade, a number of papers by F. and J. Cabello Sanchez have appeared that characterize nonlinear order isomorphisms on various function spaces \cite{C, CC1,CC2,CC3}.  In this paper, our aim is to present a unified and thorough study of nonlinear order isomorphisms between function spaces.  Instead of adapting our arguments on a case by case basis, we  present a general framework for the analysis of order isomorphisms that applies to a  class of spaces which we call near vector lattices.  The class includes spaces of continuous, uniformly continuous,   Lipschitz, little Lipschitz and differentiable functions and their "local" versions. In particular, they include as special cases all the main results of \cite{C, CC1, CC2, CC3}.
It is shown that an order isomorphism between any two such spaces must be a nonlinear weighted composition operator.  Modulo the composition map, such operators are called superposition operators.  We refer to the monograph \cite{AZ} for an in depth treatment of nonlinear superposition operators. Some of our methods may have applications in this area.
We go on to analyze extensively comparisons of function spaces under order isomorphisms.  Along the way, new properties of metric spaces manifest themselves.  Specifically, the connections between order isomorphisms having to do with Lipschitz or little Lipschitz spaces and the properties ``expansive", ``expansive at $\infty$" and ``almost expansive at $\infty$" seem to us to be rather intricate.

We now briefly describe the contents of the individual sections. The first section sets up a general framework for dealing with order isomorphisms. The main result is Theorem \ref{thm8}, which shows that an order isomorphism between sets of articulated, compatible and directed sets of functions on compact Hausdorff spaces gives rise to an associated homeomorphism between the spaces. We might add that we view the properties of being articulated and directed as part of the basic infrastructure that are necessary in the theory.  On the other hand, compatibility is a linking property between functions that is only required when studying nonlinear order isomorphisms (as opposed to linear order isomorphisms).

The second section introduces the class of near vector lattices.  The utility of this class lies in the fact that it is general enough to include many of the function spaces that are of interest, including all vector sublattices of $C(X)$ as well as spaces of differentiable functions.  On the other hand, a satisfactory general theory of order isomorphisms holds for this class of spaces.  We use the well established method of compactification to transcend the restriction of compactness of the underlying spaces in Theorem \ref{thm8}.  
It is at this point that efficacy of having near vector lattices shows, because, by Proposition \ref{prop18.5}, 
the space of continuous extensions (onto a suitable compactification) of the bounded functions in a near vector lattice remains a near vector lattice.  A second problem to overcome is that one has to make use of a whole host of  compactifications, since each one is effective only for an order bounded subset of the function space.
Lemma \ref{lem19.1} solves this problem.  It clears the final hurdle for Theorem \ref{thm18.1}, which shows that an order isomorphism between two near vector lattices gives rise to an associated homeomorphism between some compactifcations of the underlying topological spaces.

Section 3 gives the first application of Theorem \ref{thm18.1}.  Theorem \ref{thm19} shows that if $X$ and $Y$ are realcompact spaces such that $C(X)$ and $C(Y)$ are order isomorphic, then the associated homeomorphism 
obtained in Theorem \ref{thm18.1} restricts to a homeomorphism between $X$ and $Y$. 
It generalizes the previously known result of F.\ Cabello Sanchez for the case of compact $X$ and $Y$ \cite{C}.

From Section 4 onwards, we concentrate on metric spaces $X$ and $Y$.
First, a condition ($\spadesuit$) is identified so that for spaces having such a property, the associated homeomorphism 
obtained in Theorem \ref{thm18.1} restricts to a homeomorphism between $X$ and $Y$ (Theorem \ref{thm27}). 
Then it is shown that under an additional condition ($\heartsuit$), an order isomorphism must be a weighted composition operator (Theorem \ref{thm29}).  Examples B and C show the wide application of the conditions ($\spadesuit$) and ($\heartsuit$).

Section 5 contains the first part of the analysis of spaces of Lipschitz functions.  It begins with an observation that any complete metric space $X$ can be endowed with a complete bounded metric $d'$ so that $\Lip(X)$ is linearly order isomorphic to $\Lip(X')$ (Theorem \ref{thm5.3}).  Since order isomorphisms between spaces of Lipschitz functions on bounded metric spaces have been characterized \cite{CC1}, it leads to an immediate generalization to  general metric spaces (Theorem \ref{thm5.6}).  The analogous result for $\lip(X)$ is quite a bit more delicate. 
Here a property of a metric space which we call ``almost expansive at $\infty$" arises.  
Proposition \ref{prop5.12} shows that if $X$ is a complete metric space that is almost expansive at $\infty$, then $\lip(X)$ is order isomorphic to $\lip(X,d')$, where $d'$ is the same complete bounded metric mentioned above.
This result is only the first installment in a long intricate story.  Proving the converse, that is, determining when $\lip_\al(X)$ can be order isomorphic to $\lip_\al(Y)$ for a bounded metric space $Y$, has to wait for some of the machinery to be built in Section 6, and is completed in Theorem \ref{thm66}.
Further on, the issue of characterizing when two spaces of the type $\lip_\al(X)$ are order isomorphic finds it resolution  in Section 7 (Theorem \ref{lip9}).

In the long Section 6, we undertake an extensive analysis of comparing various spaces of functions under order isomorphism.  In the course of this analysis, some new properties of metric spaces naturally arise. In addition to the condition  of ``almost expansive at $\infty$" (definition following Theorem \ref{thm5.6}) already mentioned above,  we point out the notions of ``proximally compact" (definition following Proposition \ref{prop37}), 
 ``expansive" (definition following Lemma \ref{lem58.1}), and ``expansive at $\infty$" (definition following Lemma \ref{lem63}). Another result worthy of interest is that in our general set up, every order isomorphism is continuous with respect to the topology of uniform convergence on compact sets (Corollary \ref{cor35.1}).

The final section, Section 8, concludes with some results characterizing order isomorphisms between spaces of the same type.  At the end of the paper, we append a table summarizing the comparison results obtained. It is worth pointing out that the remaining open cases all concern spaces of differentiable functions.  To take the space $C^p(X)$ as an example, we have the following open problems.
Suppose that 
\begin{enumerate}
\item Suppose that $T:C^p(X)\to C^p(Y)$ is an order isomorphism.  Must the associated homeomorphism be differentiable on $X$? $C^p$ on $X$? To illustrate the extent of our ignorance, the answers are unknown even for $p =1$ and $X= Y = \R$.
\item Is it possible to have $p\neq q$ and some open sets $X$ and $Y$ in Banach spaces so that $C^p(X)$ is order isomorphic to $C^p(Y)$?  (Here, we ask that $C^p(X)$ contains a bump function.)
\end{enumerate}

\section{General framework}

Let $X$ be a Hausdorff topological space and let $A(X)$ be a subset of $C(X)$, the space of continuous  real-valued functions on $X$.   If $f, g \in C(X)$, let $\{f < g\} = \{g > f\} = \{x\in X: f(x) < g(x)\}$.
We say that $A(X)$ is {\em articulated} if the following conditions hold:

\begin{enumerate}
\item [(A1)] for any $x\in X$, there exist $f \leq g$ in $A(X)$ such that $x\in \{f < g\}$;
\item [(A2)] if $f\leq  g$ are functions in $A(X)$, and $U$ is an open set in $X$ containing a point $x\in \{f < g\}$, then there exists $u \in A(X)$, $f \leq u$, such that $x \in \{f < u\}\subseteq U$; similarly, there exists $v\in A(X)$, $v \leq g$, such that $x \in \{v < g\} \subseteq U$;
\item [(A3)] if $f,g, h\in A(X)$, $h \leq f,g$, and there exists $x\in X$ with $h(x) < f(x), g(x)$, then there is a function $u \in A(X)$, $h \leq u \leq f,g$, such that $h(x) < u(x)$; a similar statement holds if the  ``$\leq$" and ``$<$" signs are replaced by ``$\geq$" and ``$>$" respectively.
\end{enumerate}
\noindent{\bf Remark}. Suppose that $x\in \{f < g\}\cap U$, where $f\leq g$ are functions in $A(X)$ and $U$ is an open set in $X$. By assumption (A2), there exists $u\in A(X)$ such that $f\leq u$ and $x \in \{f < u\} \subseteq U$.  Choose $\ep > 0$ such that $u(x)> f(x) + \ep$.  By assumption (A2) again, there exists $w \in A(X)$, $w \geq f$, such that $x\in \{f < w\} \subseteq \{f + \ep < u\}$.  Thus $x\in \{f < w\} \subseteq \ol{\{f < w\}} \subseteq U$.

\bigskip

Let $A(X)$ and $A(Y)$ be an articulated subsets of $C(X)$ and $C(Y)$ respectively, where $X$ and $Y$ are Hausdorff topological spaces.  For the remainder of the section, we consider a fixed order isomorphism $T: A(X) \to A(Y)$.

\begin{prop}\label{prop1}
If $h \leq f,g$ are functions in $A(X)$ such that $\{h < f\} \cap \{h < g\} = \emptyset$, then $\{Th < Tf\} \cap \{Th < Tg\} = \emptyset$. 
\end{prop}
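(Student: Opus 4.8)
The plan is to recast the disjointness hypothesis as an order-theoretic property of $h$ that is automatically transported by $T$, and then to use axiom (A3) on the $Y$-side to read off the conclusion. No topology beyond what is encoded in the axioms should be needed.

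First I would note that, given $h\le f,g$, the condition $\{h<f\}\cap\{h<g\}=\emptyset$ says precisely that $h$ is the infimum of $\{f,g\}$ in the partially ordered set $A(X)$. Indeed $h$ is a lower bound by hypothesis; and if $k\in A(X)$ satisfies $k\le f$ and $k\le g$, then for each $x\in X$ the point $x$ fails to lie in $\{h<f\}$ or fails to lie in $\{h<g\}$, so (combining with $h\le f,g$) either $h(x)=f(x)$ or $h(x)=g(x)$. In the former case $k(x)\le f(x)=h(x)$, in the latter $k(x)\le g(x)=h(x)$; hence $k\le h$, so $h=\inf_{A(X)}\{f,g\}$. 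Since $T$ is an order isomorphism, both $T$ and $T^{-1}$ preserve order, so $T$ carries infima to infima: $Th\le Tf,Tg$, and if $k'\in A(Y)$ with $k'\le Tf,Tg$ then $T^{-1}k'\le f,g$, whence $T^{-1}k'\le h$ and so $k'\le Th$. Thus $Th=\inf_{A(Y)}\{Tf,Tg\}$.

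To finish I would argue by contradiction: suppose some $y$ lies in $\{Th<Tf\}\cap\{Th<Tg\}$, so $Th\le Tf,Tg$ and $Th(y)<Tf(y),Tg(y)$. Applying (A3) in $A(Y)$ to the triple $Th\le Tf,Tg$ and the point $y$ produces $u\in A(Y)$ with $Th\le u\le Tf,Tg$ and $Th(y)<u(y)$. But $u\le Tf,Tg$ forces $u\le\inf_{A(Y)}\{Tf,Tg\}=Th$, so $u=Th$, contradicting $Th(y)<u(y)$. Hence $\{Th<Tf\}\cap\{Th<Tg\}=\emptyset$.

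The only point that requires a moment's thought is the very first observation — recognizing that pointwise disjointness of the two ``strict increase'' sets is exactly the statement that $h$ is the greatest lower bound of $f$ and $g$ in $A(X)$; once that is seen, the proof is a direct application of the order-isomorphism property together with axiom (A3) on the target side, and neither (A1), (A2), nor any topological input is invoked.
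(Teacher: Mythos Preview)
Your proof is correct and follows essentially the same route as the paper: both argue by contradiction, invoke (A3) on the $Y$-side to produce $u$ with $Th\le u\le Tf,Tg$ and $u(y)>Th(y)$, and then deduce $u=Th$. The only difference is cosmetic: you first package the hypothesis as ``$h=\inf_{A(X)}\{f,g\}$'' and use that $T$ preserves infima, whereas the paper pulls $u$ back to $T^{-1}u$ and observes directly that $h\le T^{-1}u\le f,g$ forces $T^{-1}u=h$ pointwise (since $\{h<T^{-1}u\}\subseteq\{h<f\}\cap\{h<g\}=\emptyset$).
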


\begin{proof}
Suppose that $y \in \{Th < Tf\}\cap \{Th < Tg\}$.  By assumption (A3), there exists $u\in A(Y)$, $Th \leq u\leq Tf, Tg$, such that $u(y) > Th(y)$.  Thus $h \leq T^{-1}u \leq f,g$ and hence $T^{-1}u = h$.  But then $u = Th$, contrary to the choice of $u$.
\end{proof}

\begin{prop}\label{prop2}
If $h \leq f,g$ are functions in $A(X)$ such that $\ol{\{h < f\}} \subseteq \ol{\{h < g\}}$, then $\ol{\{Th < Tf\}} \subseteq \ol{\{Th < Tg\}}$.
\end{prop}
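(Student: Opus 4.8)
The plan is to argue by contradiction, using Proposition \ref{prop1} twice (once for $T$ and once for its inverse) and property (A2) for $A(Y)$ to do the crucial "shrinking". Suppose that $y_0 \in \ol{\{Th < Tf\}}$ but $y_0 \notin \ol{\{Th < Tg\}}$. Then there is an open neighbourhood $V$ of $y_0$ with $V \cap \{Th < Tg\} = \emptyset$, and, since $y_0$ lies in the closure of $\{Th < Tf\}$, we may choose $y \in V \cap \{Th < Tf\}$. Now I would apply (A2) to the articulated set $A(Y)$: since $Th \le Tf$, $y \in \{Th < Tf\}$, and $V$ is an open set containing $y$, there is $w \in A(Y)$ with $Th \le w$ and $y \in \{Th < w\} \subseteq V$. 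In particular $\{Th < w\}\cap\{Th < Tg\} = \emptyset$.

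Next I would transfer this back to $X$ via Proposition \ref{prop1} applied to the order isomorphism $T^{-1}$. Setting $\vp = T^{-1}w$, we get $h \le \vp$ (apply $T^{-1}$ to $Th \le w$) and $h \le g$, and from $\{Th < w\}\cap\{Th < Tg\} = \emptyset$ Proposition \ref{prop1} yields $\{h < \vp\}\cap\{h < g\} = \emptyset$. The key topological observation is that $\{h < \vp\}$ is open and disjoint from $\{h < g\}$, hence disjoint from $\ol{\{h < g\}}$ as well (an open set meeting the closure of a set meets the set). By the hypothesis $\ol{\{h < f\}} \subseteq \ol{\{h < g\}}$, this gives $\{h < \vp\}\cap\ol{\{h < f\}} = \emptyset$, and in particular $\{h < \vp\}\cap\{h < f\} = \emptyset$. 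Finally, pushing forward again: $h \le \vp, f$ with $\{h < \vp\}\cap\{h < f\} = \emptyset$, so Proposition \ref{prop1} (for $T$) gives $\{Th < T\vp\}\cap\{Th < Tf\} = \{Th < w\}\cap\{Th < Tf\} = \emptyset$. But $y$ belongs to both $\{Th < w\}$ and $\{Th < Tf\}$, a contradiction.

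I do not expect a serious obstacle here; the content is essentially bookkeeping. The one point that takes a moment's thought is recognizing that the closure hypothesis should be exploited through the elementary fact that an open set disjoint from a given set is disjoint from its closure, and that this fact must be sandwiched between the two applications of Proposition \ref{prop1} — the first to $T^{-1}$, the second to $T$ — with (A2) supplying the open set $\{Th < w\}$ small enough to sit inside $V$. Care must be taken throughout to keep straight which map Proposition \ref{prop1} is being applied to and to verify that the relevant inequalities ($Th \le w$, $h \le \vp$) are in place before each invocation.
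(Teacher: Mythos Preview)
Your proof is correct. The overall strategy matches the paper's: argue by contradiction, use (A2) on $A(Y)$ to produce a function whose ``bump'' above $Th$ sits inside the forbidden region, transfer to $X$ via Proposition~\ref{prop1}, exploit the fact that an open set disjoint from $\{h<g\}$ is disjoint from its closure, and then invoke the hypothesis $\ol{\{h<f\}}\subseteq\ol{\{h<g\}}$.

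There is, however, a genuine minor difference in how the contradiction is closed. The paper, after obtaining $u$ from (A2), invokes (A3) to produce $v\in A(Y)$ with $Th\le v\le Tf,u$ and $v(y)>Th(y)$; since $T^{-1}v\le f$, one has $\{h<T^{-1}v\}\subseteq\{h<f\}$ directly, and a single application of Proposition~\ref{prop1} (to $T^{-1}$) forces $\{h<T^{-1}v\}=\emptyset$, whence $v=Th$, a contradiction. You instead skip (A3) entirely and apply Proposition~\ref{prop1} a second time (now to $T$) to push the disjointness $\{h<\vp\}\cap\{h<f\}=\emptyset$ back to $Y$ and contradict $y\in\{Th<w\}\cap\{Th<Tf\}$. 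Your route is slightly more economical in its hypotheses (no (A3)), at the cost of one extra invocation of Proposition~\ref{prop1}; the paper's route is marginally shorter once (A3) is in hand. Both are perfectly sound.
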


\begin{proof}
It suffices to show that $\{Th < Tf\} \subseteq \ol{\{Th < Tg\}}$. Suppose that there exists $y \in \{Th < Tf\} \bs \ol{\{Th < Tg\}}$. 
By assumption (A2), there exists $u \in A(Y)$ such that $Th \leq u $, $y \in \{Th < u\}$ and $\{Th< u\} \cap \ol{\{Th < Tg\}} = \emptyset$.
By assumption (A3), there exists $v\in A(Y)$ such that $Th \leq v \leq Tf, u$, and that $v(y)> Th(y)$.
In particular, $h \leq T^{-1}v \leq f$.  Thus ${\{h < T^{-1}v\}} \subseteq \ol{\{h < f\}} \subseteq \ol{\{h < g\}}$.
On the other hand, since $\{Th < v\} \cap \{Th < Tg\}\subseteq \{Th< u\} \cap {\{Th < Tg\}} = \emptyset$, it follows by applying Proposition \ref{prop1} to $T^{-1}$ that $\{h < T^{-1}v\}\cap \{h < g\} = \emptyset$.
Since $\{h < T^{-1}v\}$ is open, we must have $\{h < T^{-1}v\}\cap\ol{\{h < g\}} = \emptyset$.
Thus we conclude that $\{h < T^{-1}v\} = \emptyset$; equivalently, $h = T^{-1}v$.  But then $Th = v$, contradicting the choice of $v$.
\end{proof}

For each $f\in A(X)$, let $\cC^+_f(X)$ be the collection of sets $\{f < g\}$, where $f \leq g\in A(X)$.
Similarly, let $\cC^-_f(X)$ consist of the sets $\{g < f\}$, where $f \geq g \in A(X)$.  Set $\ol{\cC}^\pm_f(X) =\{\ol{U}: U\in \cC^\pm_f(X)\}$.
By Proposition \ref{prop2}, the map $\theta^+_f: \ol{\cC}^+_f(X) \to \ol{\cC}^+_{Tf}(Y)$ given by $\theta^+_f(\ol{\{f<g\}}) = \ol{\{Tf< Tg\}}$ is well defined. By the same proposition and its counterpart for $T^{-1}$, $\theta^+_f$ is  a bijection and preserves the natural order of set inclusion in  $\ol{\cC}^+_f(X)$ and $\ol{\cC}^+_{Tf}(Y)$.  
Similarly, the map $\theta^-_f: \ol{\cC}^-_f(X)\to \ol{\cC}^-_{Tf}(Y)$ defined by $\theta^-_f(\ol{\{g<f\}}) = \ol{\{Tg< Tf\}}$ for $g\leq f$ in $A(X)$ is an order preserving bijection.

\begin{prop}\label{prop3}
If $A\in \ol{\cC}^+_f(X)$ and $f\leq g\in A(X)$, then $f = g$ on $A$ if and only if $Tf = Tg$ on $\theta^+_{f}(A)$.  Similarly, if $A\in \ol{\cC}^-_f(X)$ and $g\leq f\in A(X)$, then $f = g$ on $A$ if and only if $Tf = Tg$ on $\theta^-_{f}(A)$.
\end{prop}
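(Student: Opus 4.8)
The plan is to prove the ``only if'' direction for $\ol{\cC}^+$ and deduce the rest: the ``if'' direction follows by applying that statement to the order isomorphism $T^{-1}$ (Propositions \ref{prop1}, \ref{prop2} and the maps $\theta^\pm$ were set up for an arbitrary order isomorphism), and the two $\ol{\cC}^-$ assertions follow from the same argument with all inequalities reversed, using the ``$\geq$/$>$'' clauses of (A2) and (A3) and the dual forms of Propositions \ref{prop1} and \ref{prop2}. So let $A=\ol{\{f<h\}}$ with $f\leq h$ in $A(X)$, fix $f\leq g$ in $A(X)$, and assume $f=g$ on $A$; recall $\theta^+_f(A)=\ol{\{Tf<Th\}}$. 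I would first translate both sides into disjointness of open sets, using the elementary fact that for open $U,V$ one has $U\cap\ol V=\emptyset$ iff $\ol U\cap V=\emptyset$: since $f\leq g,h$, the hypothesis says exactly $\{f<g\}\cap\ol{\{f<h\}}=\emptyset$, equivalently $\ol{\{f<g\}}\cap\{f<h\}=\emptyset$ $(\ast)$, while the goal reads $\{Tf<Tg\}\cap\ol{\{Tf<Th\}}=\emptyset$.

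Suppose, for contradiction, that $y\in\{Tf<Tg\}\cap\ol{\{Tf<Th\}}$. The key step is to shrink $\{Tf<Tg\}$ around $y$ to an open set whose \emph{closure} still lies inside it: by the Remark following the definition of articulated (applied with $U=\{Tf<Tg\}$) there is $u\in A(Y)$ with $Tf\leq u$, $y\in\{Tf<u\}$ and $\ol{\{Tf<u\}}\subseteq\{Tf<Tg\}$. Since $y\in\ol{\{Tf<Th\}}$, the open neighbourhood $\{Tf<u\}$ of $y$ meets $\{Tf<Th\}$ at some point $y'$, so $Tf(y')<u(y'),Th(y')$; then (A3) supplies $v\in A(Y)$ with $Tf\leq v\leq u,Th$ and $v(y')>Tf(y')$.

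Now put $\phi=T^{-1}v$. From $v\leq Th$ we get $f\leq\phi\leq h$, hence $\{f<\phi\}\subseteq\{f<h\}$ directly, with no further appeal to $T$. For the complementary inclusion, $v\leq u$ gives $\{Tf<v\}\subseteq\{Tf<u\}$, so $\ol{\{Tf<v\}}\subseteq\ol{\{Tf<u\}}\subseteq\{Tf<Tg\}\subseteq\ol{\{Tf<Tg\}}$; since $\theta^+_f$ is an order-preserving bijection with $\theta^+_f(\ol{\{f<\phi\}})=\ol{\{Tf<v\}}$ and $\theta^+_f(\ol{\{f<g\}})=\ol{\{Tf<Tg\}}$, this forces $\ol{\{f<\phi\}}\subseteq\ol{\{f<g\}}$. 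Combining the two inclusions, $\{f<\phi\}\subseteq\ol{\{f<g\}}\cap\{f<h\}=\emptyset$ by $(\ast)$, so $\phi=f$, i.e.\ $v=Tf$, contradicting $v(y')>Tf(y')$.

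The one genuine obstacle is exactly what the shrinking step is designed to overcome: Proposition \ref{prop1} only transports disjointness of two \emph{open} sets of the form $\{h'<f'\}$, whereas the statement to be proved concerns an open set disjoint from a \emph{closure}. Passing from $\{Tf<Tg\}$ to the smaller set $\{Tf<u\}$ whose closure still sits inside it, and then moving closure-inclusions across the order isomorphism $\theta^+_f$ furnished by Proposition \ref{prop2}, is precisely what converts the available ``open/open'' information into the required ``open/closed'' statement; everything else is routine manipulation of the defining properties of articulated families.
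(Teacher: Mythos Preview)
Your argument is correct, but it is considerably more elaborate than the paper's. The paper's proof of the ``only if'' direction is essentially three lines: from $f=g$ on $A=\ol{\{f<h\}}$ one has $\{f<g\}\cap\{f<h\}=\emptyset$; Proposition~\ref{prop1} then gives $\{Tf<Tg\}\cap\{Tf<Th\}=\emptyset$; and since $\{Tf<Tg\}$ is open, it is automatically disjoint from $\ol{\{Tf<Th\}}=\theta^+_f(A)$, so $Tf=Tg$ there. The converse and the $\ol{\cC}^-$ case follow by symmetry, exactly as you say.

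The irony is that the ``elementary fact'' you invoke at the outset---that for open $U,V$ one has $U\cap\ol V=\emptyset\iff U\cap V=\emptyset\iff\ol U\cap V=\emptyset$---is precisely what dissolves the ``genuine obstacle'' you identify at the end. You use it on the $X$-side to reformulate the hypothesis as $(\ast)$, but it applies equally on the $Y$-side: once Proposition~\ref{prop1} hands you disjointness of the two open sets $\{Tf<Tg\}$ and $\{Tf<Th\}$, the same fact immediately upgrades this to disjointness from the closure. Your shrinking step via the Remark, the auxiliary functions $u,v$, the passage through $\theta^+_f$, and the contradiction are all sound, but they reprove by hand what that one-line topological observation gives for free. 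So nothing is wrong, but the route is circuitous; the paper's proof shows that Proposition~\ref{prop1} alone (plus elementary topology) already carries the full weight.
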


\begin{proof}
Suppose that $f\leq g,h$ in $A(X)$, $A = \ol{\{f < h\}} \in \ol{\cC}^+_f(X)$,  and that $f = g$ on $A$.
Then $\{f < h\} \cap \{f < g\} = \emptyset$. By Proposition \ref{prop1}, $\{Tf < Th\} \cap \{Tf < Tg\} = \emptyset$.
Hence $\ol{\{Tf < Th\}} \cap {\{Tf < Tg\}} = \emptyset$.  Thus $Tf \geq Tg$ on $\theta^+_f(A)$.  Since $Tf \leq Tg$, $Tf = Tg$ on $\theta^+_f(A)$. The converse follows by symmetry.  The second statement is entirely analogous.
\end{proof}

\begin{lem}\label{lem4}
Suppose that $U\cap \theta^+_f(A) \neq \emptyset$, where $f\in A(X)$, $A\in \ol{\cC}^+_f(X)$ and $U\in \cC^+_{Tf}(Y)$. Then there exists a nonempty set $V\in \cC^+_f(X)$ such that $V\subseteq  (\theta^+_f)^{-1}(\ol{U})\cap A$.
\end{lem}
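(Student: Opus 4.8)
The plan is to build $V$ by first carving out, with the help of (A3) in $A(Y)$, a small basic set $W\in\cC^+_{Tf}(Y)$ whose \emph{closure} is trapped inside $\theta^+_f(A)\cap\ol U$, and then pulling $W$ back to $X$ through the order isomorphism $\theta^+_f$. To begin, I would fix representations $A=\ol{\{f<h\}}$ and $U=\{Tf<Tk\}$, where $f\le h$ and $f\le k$ in $A(X)$; then $\theta^+_f(A)=\ol{\{Tf<Th\}}$, while $\ol U=\ol{\{Tf<Tk\}}\in\ol{\cC}^+_{Tf}(Y)$ and $(\theta^+_f)^{-1}(\ol U)=\ol{\{f<k\}}$. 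Since $U$ is open and meets $\ol{\{Tf<Th\}}$, it meets $\{Tf<Th\}$ itself, so I can pick a point $y\in\{Tf<Th\}\cap\{Tf<Tk\}$.

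Next I would apply (A3) in $A(Y)$ to the functions $Tf\le Th$ and $Tf\le Tk$ at the point $y$ (where $Tf(y)<Th(y)$ and $Tf(y)<Tk(y)$): this produces $w\in A(Y)$ with $Tf\le w\le Th,Tk$ and $w(y)>Tf(y)$. Set $W=\{Tf<w\}\in\cC^+_{Tf}(Y)$. Then $y\in W$, so $W\ne\emptyset$; and from $w\le Th,Tk$ one gets $W\subseteq\{Tf<Th\}\cap\{Tf<Tk\}$, hence $\ol W\subseteq\ol{\{Tf<Th\}}\cap\ol{\{Tf<Tk\}}=\theta^+_f(A)\cap\ol U$.

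Finally I would transport back. Put $V=\{f<T^{-1}w\}$; since $Tf\le w$ we have $f\le T^{-1}w$, so $V\in\cC^+_f(X)$ and, by the definition of $\theta^+_f$, $\theta^+_f(\ol V)=\ol{\{Tf<w\}}=\ol W$. Because $\theta^+_f$ and its inverse both preserve set inclusion, $\ol W\subseteq\theta^+_f(A)\cap\ol U$ gives $\ol V\subseteq A\cap(\theta^+_f)^{-1}(\ol U)$, and therefore $V\subseteq A\cap(\theta^+_f)^{-1}(\ol U)$. To see that $V\ne\emptyset$: since $\emptyset=\ol{\{f<f\}}\in\ol{\cC}^+_f(X)$ and $\theta^+_f(\emptyset)=\emptyset$, injectivity of $\theta^+_f$ together with $\ol W\ne\emptyset$ forces $\ol V=(\theta^+_f)^{-1}(\ol W)\ne\emptyset$, and as $V$ is open with closure $\ol V$ this yields $V\ne\emptyset$.

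I do not expect a serious obstacle. The one point that needs care is arranging, via the choice of $w$ through (A3), that it is $\ol W$ rather than merely $W$ that sits inside $\theta^+_f(A)\cap\ol U$, since only then does $\ol W$ lie in the range of $\theta^+_f$ and can its preimage be compared with $A$ and with $(\theta^+_f)^{-1}(\ol U)$ using order-preservation; the nonemptiness of $V$ then comes essentially for free from $\theta^+_f$ being an honest bijection that sends $\emptyset$ to $\emptyset$.
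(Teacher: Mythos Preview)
Your argument is correct and is essentially the mirror image of the paper's proof. The paper writes $A=\ol{\{f<g\}}$ and $U=\{Tf<Th\}$, observes via Proposition~\ref{prop1} that $\{f<g\}\cap\{f<h\}\neq\emptyset$, and then applies (A3) \emph{in $A(X)$} to produce $u$ with $f\le u\le g,h$, taking $V=\{f<u\}$. You instead apply (A3) \emph{in $A(Y)$} to produce $w$ with $Tf\le w\le Th,Tk$, and then pull back $V=\{f<T^{-1}w\}$; the containment $V\subseteq A\cap(\theta^+_f)^{-1}(\ol U)$ is then read off from the order-preservation of $\theta^+_f$ rather than being immediate. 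Since $T$ is an order isomorphism, your $T^{-1}w$ is exactly the kind of function the paper calls $u$, so the two constructions coincide; your route trades the citation of Proposition~\ref{prop1} for an appeal to the bijectivity and monotonicity of $\theta^+_f$. (Incidentally, the nonemptiness of $V$ is immediate: $Tf\neq w$ forces $f\neq T^{-1}w$, so $\{f<T^{-1}w\}\neq\emptyset$ without going through closures.)
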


\begin{proof}
There exist $g, h\in A(X)$, $f \leq g,h$, such that $U =\{Tf < Th\}$ and $\theta^+_f(A) = \ol{\{Tf < Tg\}}$.
Since $U$ is open, it follows from the assumption that $\{Tf < Th\} \cap \{Tf < Tg\}  \neq \emptyset$.
By Proposition \ref{prop1}, $\{f < h\}\cap \{f < g\} \neq \emptyset$. 
By assumption (A3), there exists $u\in A(X)$, such that $f\leq u \leq g,h$ and $f\neq u$.  Note that $(\theta^+_f)^{-1}(\ol{U})\cap A = \ol{\{f < h\}}\cap \ol{\{f< g\}}$. Thus the set $V = \{f < u\}$ satisfies the conditions of the lemma.
\end{proof}

We say that $A(X)$ is a {\em compatible set of functions} if for any pair $f \leq g$ in $A(X)$,  any $x\in \{f< g\}$ and any closed set $A$ in $X$ such that $x\notin A$, there exist open neighborhoods $U$ and $V$ of $x$ and $u, v \in A(X)$ so that $f\leq u,v\leq g$, $\ol{U} \cap A = \emptyset = \ol{V}\cap A$, and 
\[ u = \begin{cases}
          f & \text{on $U$}\\
          g & \text{on $A$}
          \end{cases},
          \quad
    v = \begin{cases}
          g &\text{on $V$}\\
          f &\text{on $A$}
          \end{cases}.\]
Compatibility of functions allows us to connect different mappings of the form $\theta^\pm_f$.

\begin{prop}\label{prop4}
Let $A(X)$ and $A(Y)$ be articulated, compatible sets of functions. Suppose that $f\leq g$ are functions in $A(X)$.  Let $A\in \ol{\cC}^+_f(X)$ and $B \in \ol{\cC}^-_g(X)$. If $A \subseteq B\cap\{f < g\}$,  then  $\theta^+_f(A) \cap \{Tf < Tg\} \subseteq  \theta^-_g(B)$.
Similarly, if $B \subseteq A \cap \{f< g\}$, then $\theta^-_g(B)\cap \{Tf < Tg\} \subseteq \theta^+_f(A)$. 
\end{prop}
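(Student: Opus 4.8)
The plan is to reduce the second statement to the first by a symmetry, and then prove the first by a direct argument that transports the point $y$ back and forth across $T$: Lemma~\ref{lem4} lets me descend to small open pieces, and the compatibility hypothesis is used as the bridge linking the families $\theta^+_\bullet$ and $\theta^-_\bullet$. For the reduction I would replace $A(X),A(Y),T$ by $-A(X),-A(Y)$ and $\phi\mapsto -T(-\phi)$; this preserves articulation and compatibility, interchanges $\ol{\cC}^+_f$ with $\ol{\cC}^-_{-f}$ and the roles of $f$ and $g$ (since $\{f<g\}=\{-g<-f\}$ and the order reverses), and carries $\theta^+_f$ to $\theta^-_g$ under these identifications, so the first statement applied to the negated data is exactly the second. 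Thus assume $A=\ol{\{f<h\}}$ with $f\le h$ and $B=\ol{\{k<g\}}$ with $k\le g$, with $A\subseteq B\cap\{f<g\}$; I want to show that each $y\in\theta^+_f(A)\cap\{Tf<Tg\}=\ol{\{Tf<Th\}}\cap\{Tf<Tg\}$ lies in $\theta^-_g(B)=\ol{\{Tk<Tg\}}$, equivalently that every open $W\ni y$ meets $\{Tk<Tg\}$.

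Fix such a $y$ and an arbitrary open $W\ni y$, and (shrinking $W$) assume $W\subseteq\{Tf<Tg\}$. Since $y\in\ol{\{Tf<Th\}}$, I would choose $y'\in W\cap\{Tf<Th\}$; then $y'\in\{Tf<Th\}\cap\{Tf<Tg\}$, so by (A3), (A2) and the Remark following (A3), all applied on the $Y$ side, there is $r\in A(Y)$ with $Tf\le r\le Th$, $r\le Tg$, $y'\in\{Tf<r\}$ and $\ol{\{Tf<r\}}\subseteq W$. Put $s=T^{-1}r$. Then $f\le s\le h$ and $s\le g$, so $\theta^+_f(\ol{\{f<s\}})=\ol{\{Tf<r\}}\subseteq W$, while $\emptyset\ne\{f<s\}\subseteq\ol{\{f<s\}}\subseteq A\subseteq B\cap\{f<g\}$. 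Hence $\{f<s\}$ is a nonempty open set contained in $\ol{\{k<g\}}$ and in $\{f<g\}$, so $\{k<g\}$ and $\{f<g\}$ (and, after a further density argument, $\{s<g\}$) are dense in $\{f<s\}$, and I would pick $x_0$ in their common intersection.

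Next I would feed the pair $f\le g$ (or $s\le g$), the point $x_0\in\{f<g\}$, and the closed set $X\bs\{f<s\}$ into the compatibility hypothesis. This produces functions in $A(X)$, among them a function $k'$ with $k\le k'\le g$ whose region $\{k'<g\}$ lies inside $\ol{\{f<s\}}$ and coincides with that of $g$ on a neighbourhood of $x_0$. From $k\le k'$ one gets $\{k'<g\}\subseteq\{k<g\}$, hence $\ol{\{k'<g\}}\subseteq B$, and by monotonicity of $\theta^-_g$, $\ol{\{Tk'<Tg\}}=\theta^-_g(\ol{\{k'<g\}})\subseteq\theta^-_g(B)=\ol{\{Tk<Tg\}}$. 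The crucial claim would then be that the compatibility relations between the glued functions force $\theta^-_g(\ol{\{k'<g\}})=\ol{\{Tk'<Tg\}}$ to reach into $W$, i.e.\ $W\cap\{Tk'<Tg\}\ne\emptyset$, precisely because $\ol{\{k'<g\}}$ and $\ol{\{f<s\}}$ agree near $x_0$ and $\theta^+_f(\ol{\{f<s\}})\subseteq W$. Granting this, a point of $W$ in the open set $\{Tk'<Tg\}\subseteq\ol{\{Tk<Tg\}}$ is a limit of points of $\{Tk<Tg\}$, so $W$ meets $\{Tk<Tg\}$; as $W$ was arbitrary, $y\in\ol{\{Tk<Tg\}}=\theta^-_g(B)$.

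The hard part will be exactly this crucial claim. It is the single place where the compatibility hypothesis enters, and it is what genuinely connects the two families $\{\theta^+_\bullet\}$ and $\{\theta^-_\bullet\}$: Propositions~\ref{prop1}–\ref{prop3} and Lemma~\ref{lem4} only manipulate one such map at a time. I expect carrying it out to require exploiting the full strength of compatibility — in particular both functions $u,v$ it supplies, which ``exchange'' a $\ol{\cC}^+_f$-piece for a $\ol{\cC}^-_g$-piece near $x_0$ — together with one more application of (A3) and a density argument inside $\{f<s\}$ to align all the strict inequalities at the relevant point.
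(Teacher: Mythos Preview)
Your proof has a genuine gap that you yourself flag: the ``crucial claim'' that $W\cap\{Tk'<Tg\}\neq\emptyset$ is never established, and the sketch you give for it is essentially circular. You want to deduce that $\theta^-_g(\ol{\{k'<g\}})$ lands inside $W$ from the fact that $\theta^+_f(\ol{\{f<s\}})\subseteq W$ and that $\ol{\{k'<g\}}$ sits inside $\ol{\{f<s\}}$. But the whole content of the proposition is precisely that $\theta^+_f$ and $\theta^-_g$ send the same small pieces to roughly the same places; nothing proved so far links them. Propositions~\ref{prop1}--\ref{prop3} and Lemma~\ref{lem4} each manipulate only a single $\theta^\pm_\bullet$ at a time, so invoking them again will not produce the bridge. (There is also a minor slip: compatibility applied to the pair $f\le g$ produces functions between $f$ and $g$, so you get $f\le k'\le g$, not $k\le k'\le g$; but this is not the real obstruction.)

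The paper's argument is much shorter and avoids this trap by applying compatibility on the \emph{$Y$ side}, by contradiction. Suppose $y\in\theta^+_f(A)\cap\{Tf<Tg\}$ but $y\notin\theta^-_g(B)$. Compatibility for the pair $Tf\le Tg$, the point $y$, and the closed set $\theta^-_g(B)$ yields $u\in A(Y)$ with $Tf\le u\le Tg$, $u=Tf$ on a neighborhood $U$ of $y$ (which one may take in $\cC^+_{Tf}(Y)$ by (A2)), and $u=Tg$ on $\theta^-_g(B)$. Now Proposition~\ref{prop3} pulls back \emph{each} of these equalities separately: $T^{-1}u=f$ on $D:=(\theta^+_f)^{-1}(\ol U)$ and $T^{-1}u=g$ on $B$. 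Hence $f=g$ on $D\cap B\supseteq D\cap A$. Since $y\in U\cap\theta^+_f(A)$, Lemma~\ref{lem4} gives $D\cap A\neq\emptyset$, contradicting $A\subseteq\{f<g\}$. The point is that a single compatibility function on $Y$ is simultaneously pinned to $Tf$ and to $Tg$ on two sets, so Proposition~\ref{prop3} can be invoked once for $\theta^+_f$ and once for $\theta^-_g$; the bridge between the two families is the function $u$, not any relation between the set maps.
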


\begin{proof}
We will prove the first assertion; the second one can be obtained similarly.  Assume to the contrary that there exists $y\in \theta^+_f(A)\cap\{Tf < Tg\}$ and $y \notin \theta^-_g(B)$.  Since $A(Y)$ is compatible, there exists $u\in A(Y)$, $Tf\leq u \leq Tg$, and an open neighborhood $U$ of $y$ such that $\ol{U} \cap \theta^-_g(B) = \emptyset$, and that
$u = Tf$ on $U$ and $u = Tg$ on $\theta^-_g(B)$.  By property (A2), we may assume that $U \in \cC^+_{Tf}(Y)$. Let $D = (\theta^+_f)^{-1}(\ol{U})$. Then $T^{-1}u = f$ on $D$ and $T^{-1}u = g$ on $B$ by Proposition \ref{prop3}.  In particular, $f = g$ on $A\cap D$.  By Lemma \ref{lem4}, $A\cap D \neq \emptyset$, yielding a contradiction to the assumption that $f < g$ on $A$.
\end{proof}

Say that $A(X)$ is {\em directed} if for any $f_1,f_2\in A(X)$, there exist $h_1, h_2\in A(X)$ such that $h_1\leq f_1, f_2 \leq h_2$.   For each $x\in X$, let $\cF_x$ be the collection of all sets of the form $\ol{\{Tf < Tg\}}$, where $f\leq g$ are functions in $A(X)$ such that $x\in \{f < g\}$.  Define $\cF_y$ in the same manner for all $y\in Y$, using the map $T^{-1}$ in place of $T$.

\begin{lem}\label{lem6}
Let $A(X)$ and $A(Y)$ be articulated, compatible and directed sets of functions. Suppose that $y \in \cap\cF_x$ for some $x\in X$.  If $y\in U$ for some $U\in \cC^\pm_{Tf}(Y)$, then $x\in (\theta^\pm_f)^{-1}(\ol{U})$.
\end{lem}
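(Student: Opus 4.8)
The plan is to prove the ``$+$'' version (the case $U\in\cC^+_{Tf}(Y)$); the ``$-$'' version is the mirror image, obtained by interchanging the roles of the lower and upper functions throughout, so I shall not repeat it. Write $U=\{Tf<Tg\}$ with $f\leq g$ in $A(X)$. By the very definition of $\theta^+_f$ we have $(\theta^+_f)^{-1}(\ol U)=\ol{\{f<g\}}$, so what must be shown is $x\in\ol{\{f<g\}}$. I argue by contradiction: assume $x\notin\ol{\{f<g\}}$. Then in particular $x\notin\{f<g\}$, so $f(x)=g(x)$; and $W:=X\bs\ol{\{f<g\}}$ is an open neighbourhood of $x$ with $W\cap\{f<g\}=\emptyset$.

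The crux is the following dichotomy at the point $x$: either (a) some $f'\in A(X)$ satisfies $f'\geq f$ and $f'(x)>f(x)$, or (b) some $g'\in A(X)$ satisfies $g'\leq g$ and $g'(x)<g(x)$. To see this, suppose both fail, so that every $f'\geq f$ in $A(X)$ has $f'(x)=f(x)$ and every $g'\leq g$ in $A(X)$ has $g'(x)=g(x)$. By (A1) choose $p\leq q$ in $A(X)$ with $p(x)<q(x)$. By directedness there is $n\in A(X)$ with $n\leq p$ and $n\leq f$ (hence $n\leq g$); the second assumption applied to $n$ gives $n(x)=g(x)=f(x)$, so $f(x)=n(x)\leq p(x)<q(x)$. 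By directedness again there is $m\in A(X)$ with $m\geq f$ and $m\geq q$; the first assumption gives $m(x)=f(x)$, whereas $m\geq q$ forces $m(x)\geq q(x)>f(x)$, a contradiction. So (a) or (b) holds. This is the step I expect to be the real obstacle: the instinct is to pass to a common lower bound $e\leq p,f$ and run the argument with base $e$, but then $\{e<g\}$ acquires the uncontrolled extra piece $\{e<f\}$ and there is no pointwise minimum of $f$ and $e$ available in $A(X)$ to delete it; keeping the base equal to $f$ (respectively $g$) and exploiting this dichotomy is what makes it work.

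Suppose (a) holds, so $x\in\{f<f'\}$. By (A2), applied to the pair $f\leq f'$, the point $x$, and the open set $W$, there is $f''\in A(X)$ with $f''\geq f$ and $x\in\{f<f''\}\subseteq W$; since $W\cap\{f<g\}=\emptyset$ this gives $\{f<f''\}\cap\{f<g\}=\emptyset$. Now apply Proposition \ref{prop1} with $h=f$ to the functions $f''$ and $g$ (both $\geq f$): it yields $\{Tf<Tf''\}\cap\{Tf<Tg\}=\emptyset$. Because $\{Tf<Tg\}$ is open, any point of it lying in $\ol{\{Tf<Tf''\}}$ would force these two open sets to meet; hence $\ol{\{Tf<Tf''\}}\cap\{Tf<Tg\}=\emptyset$, and since $y\in\{Tf<Tg\}$ we get $y\notin\ol{\{Tf<Tf''\}}$. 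But $f\leq f''$ and $x\in\{f<f''\}$, so $\ol{\{Tf<Tf''\}}$ is one of the sets comprising $\cF_x$; as $y\in\cap\cF_x$ this forces $y\in\ol{\{Tf<Tf''\}}$ — a contradiction. Case (b) is handled identically, with (A2) and Proposition \ref{prop1} replaced by their ``upper'' counterparts (valid by the same proofs, using the dual clauses of (A2) and (A3)): one obtains $g''\leq g$ in $A(X)$ with $x\in\{g''<g\}\subseteq W$, then $\{Tg''<Tg\}\cap\{Tf<Tg\}=\emptyset$, hence $\ol{\{Tg''<Tg\}}\cap\{Tf<Tg\}=\emptyset$, and the same contradiction results from $\ol{\{Tg''<Tg\}}\in\cF_x$ together with $y\in\cap\cF_x$. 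Either way the proof is complete.
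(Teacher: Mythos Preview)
Your proof is correct, and it is genuinely simpler than the paper's argument. The paper also begins with the observation that, by (A1) and directedness, there are $h_1\leq f,g\leq h_2$ with $h_1(x)<h_2(x)$, and then reduces to the case $h_1(x)<f(x)=g(x)$ --- essentially your case (b). From there, however, the paper takes a detour: it constructs three auxiliary functions $u,v,w$ via repeated applications of (A2), then invokes Proposition~\ref{prop4} twice to pass between the maps $\theta^-_f$, $\theta^+_{h_1}$ and $\theta^-_g$, and finally appeals to Proposition~\ref{prop3} to derive the contradiction. This route runs through compatibility, since Proposition~\ref{prop4} depends on it.

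Your argument, by contrast, stays at the base $f$ (respectively $g$) throughout. A single application of (A2) produces $f''\geq f$ with $\{f<f''\}$ disjoint from $\{f<g\}$, and then Proposition~\ref{prop1} alone yields the disjointness of $\{Tf<Tf''\}$ and $\{Tf<Tg\}$; the membership $\ol{\{Tf<Tf''\}}\in\cF_x$ finishes it. The upshot is that your proof uses only the articulation axioms and directedness --- compatibility is never invoked --- so you have in fact shown that the lemma holds under weaker hypotheses than stated. The paper's more elaborate route appears to be an artifact of building up the machinery of Propositions~\ref{prop3} and~\ref{prop4} for later use; for this particular lemma, your shortcut is preferable.
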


\begin{proof}
Suppose that the lemma fails.  Consider the ``$+$" case. We find  $f\leq g$ in $A(X)$ such that $y\in \{Tf < Tg\}$ and $x \notin \ol{\{f < g\}}$.
By assumption (A1) and the fact that $A(X)$ is directed, there are functions $h_1, h_2 \in A(X)$ such that $h_1 \leq f,g\leq h_2$ and that $h_1(x) < h_2 (x)$.  We may assume that $h_1(x) < f(x) = g(x)$; otherwise $f(x) = g(x) < h_2(x)$ and the proof is similar.  
By assumption (A2) and the remark thereafter, there exists $v\in A(X)$, $v \leq g$, such that $x\in \{v< g\}$ and $\ol{\{v< g\}} \subseteq  \{h_1 < g\} \cap (\ol{\{f < g\}})^c$.
Applying the same assumption, we find $w\in A(X)$, $h_1 \leq w$, such that $x\in \{h_1< w\}$ and $\ol{\{h_1< w\}} \subseteq  \{v < g\} \cap \{h_1 < f\}$.  Use (A2) a third time to obtain $u\in A(X)$, $u \leq f$, 
such that $x\in \{u < f\} \subseteq \{h_1 < w\}$.
Applying (A3) as well, we may further assume that $h_1 \leq u$. Observe that
\begin{align*}
\ol{\{u < f\}} \subseteq \ol{\{h_1 < w\}} &\subseteq \{h_1 < f\}, \\
&\text{ where } \ol{\{u < f\}} \in \ol{\cC}^-_f(X) \text{ and } \ol{\{h_1< w\}} \in \ol{\cC}^+_{h_1}(X);\\
\ol{\{h_1 < w\}} \subseteq \ol{\{v < g\}} &\subseteq \{h_1 < g\},\\ 
&\text{ where } \ol{\{h_1 < w\}} \in \ol{\cC}^+_{h_1}(X) \text{ and } \ol{\{v< g\}} \in \ol{\cC}^-_{g}(X).
\end{align*}
We have 
\begin{align*}
\{Tu < Tf\} & \subseteq \theta^-_f(\ol{\{u < f\}}) \cap \{Th_1 < Tf\}\\
& \subseteq  \theta^+_{h_1}(\ol{\{h_1 < w\}}) \cap \{Th_1 < Tg\}\\
& \subseteq  \theta^-_g(\ol{\{v < g\}}),
\end{align*}
where, for the first inclusion, we apply the second half of Proposition \ref{prop4} with  $A = \ol{\{h_1<w\}}$, $B = \ol{\{u< f\}}$; for the third inclusion, we apply the first half of same proposition with $A = \ol{\{h_1< w\}}$, $B= \ol{\{v<g\}}$.
Hence $\ol{\{Tu < Tf\}} \subseteq \theta^-_g(\ol{\{v < g\}})$.
Since  $f \leq g$ and $f = g$ on $\ol{\{v< g\}} \in \ol{\cC}^-_g(X)$, $Tf = Tg$ on $\theta^-_g(\ol{\{v<g\}})$ by Proposition \ref{prop3}.  However, $u \leq f$, $x\in \{u < f\}$ and $y \in \cap\cF_x$ imply that $y \in \ol{\{Tu < Tf\}}$.
Thus $y \in \theta^-_g(\ol{\{v<g\}})$. Hence $Tf(y) = Tg(y)$, contrary to the choice that $y \in \{Tf < Tg\}$.
\end{proof}

\begin{prop}\label{prop6.1}
Let $A(X)$ and $A(Y)$ be articulated, compatible and directed sets of functions. If $x\in X$ and $y \in \cap\cF_x$, then $x\in\cap \cF_y$.
\end{prop}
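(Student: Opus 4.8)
The plan is to deduce the statement directly from Lemma~\ref{lem6}, so the only real work is bookkeeping.

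\emph{Step 1: rewrite the target.} By definition $\cF_y$ consists of the sets $\ol{\{T^{-1}F < T^{-1}G\}}$ with $F \leq G$ in $A(Y)$ and $y \in \{F < G\}$. Since $T$ is an order isomorphism, putting $f = T^{-1}F$ and $g = T^{-1}G$ sets up a bijective correspondence between such pairs $(F,G)$ and the pairs $f \leq g$ in $A(X)$ with $y \in \{Tf < Tg\}$, under which $\ol{\{T^{-1}F < T^{-1}G\}} = \ol{\{f < g\}}$. Hence proving $x \in \cap\cF_y$ amounts to showing: for every $f \leq g$ in $A(X)$ with $y \in \{Tf < Tg\}$, one has $x \in \ol{\{f < g\}}$.

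\emph{Step 2: apply Lemma~\ref{lem6}.} Fix such $f \leq g$. The set $U := \{Tf < Tg\}$ is of the form $\{Tf < G'\}$ with $Tf \leq G' = Tg \in A(Y)$, so $U \in \cC^+_{Tf}(Y)$, and $y \in U$. Since $A(X)$ and $A(Y)$ are articulated, compatible and directed and $y \in \cap\cF_x$ by hypothesis, Lemma~\ref{lem6} (the ``$+$'' case) gives $x \in (\theta^+_f)^{-1}(\ol U)$. By the definition of $\theta^+_f$ we have $\theta^+_f(\ol{\{f < g\}}) = \ol{\{Tf < Tg\}} = \ol U$, and $\theta^+_f$ is a bijection, so $(\theta^+_f)^{-1}(\ol U) = \ol{\{f < g\}}$. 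Thus $x \in \ol{\{f < g\}}$, completing Step~1 and hence the proof.

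I do not expect a genuine obstacle here: the technical content is entirely contained in Lemma~\ref{lem6} (and the chain of Propositions~\ref{prop1}--\ref{prop4} behind it, which in particular make $\theta^+_f$ a well-defined order-preserving bijection). The one place to be careful is Step~1 — transcribing the definition of $\cF_y$, which is phrased in terms of $T^{-1}$, into a statement about $A(X)$ to which Lemma~\ref{lem6} applies verbatim — together with the observation that the ``$+$'' case of Lemma~\ref{lem6} is exactly the one needed, since each member of $\cF_y$ arises from a pair $F \leq G$ and so, after transporting by $T^{-1}$, corresponds to a set in $\cC^+_{Tf}(Y)$ rather than in $\cC^-_{Tf}(Y)$.
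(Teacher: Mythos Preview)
Your proof is correct and follows essentially the same route as the paper: both arguments unwind the definition of $\cF_y$ to reduce the claim to showing $x\in(\theta^+_f)^{-1}(\ol{U})$ for $U=\{Tf<Tg\}\in\cC^+_{Tf}(Y)$ containing $y$, and then invoke Lemma~\ref{lem6}. Your Step~1 bookkeeping is somewhat more explicit than the paper's, but the substance is identical.
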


\begin{proof}
If $A\in \cF_y$, then $A = \ol{\{f<g\}}$, where $Tf \leq Tg$ are functions in $A(Y)$ such that $y\in \{Tf< Tg\}$.  In other words,  $A = (\theta^{+}_f)^{-1}(\ol{U})$, where $U = \{Tf< Tg\}$ is a set in $\cC^+_{Tf}(Y)$ that contains $y$.  By Lemma \ref{lem6}, $x\in A$.  This proves that $x\in \cap \cF_y$.
\end{proof}

\begin{prop}\label{prop7}
Let $A(X)$ and $A(Y)$ be articulated, compatible and directed sets of functions. 
For each $x\in X$, $\cF_x$ has the finite intersection property.  Moreover, the intersection $
\cap\cF_x$ consists of at most one point.
\end{prop}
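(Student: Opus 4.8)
I would treat the two assertions separately.

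\emph{Finite intersection property.} Suppose $f_i\le g_i$ in $A(X)$ with $x\in\{f_i<g_i\}$ for $i=1,\dots,n$. Since $A(X)$ is directed, iterating gives a common lower bound $h\le f_1,\dots,f_n$; then $h\le g_i$ and, because $h\le f_i$, $x\in\{f_i<g_i\}\subseteq\{h<g_i\}$, so in particular $h(x)<g_i(x)$ for every $i$. Iterating (A3), I would produce $u\in A(X)$ with $h\le u\le g_1,\dots,g_n$ and $h(x)<u(x)$, and then, using the Remark following (A2) with the open set $\bigcap_i\{f_i<g_i\}\ni x$, shrink $u$ so that in addition $\ol{\{h<u\}}\subseteq\bigcap_i\{f_i<g_i\}$ (keeping $u\le g_i$). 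For each $i$, apply Proposition \ref{prop4} with $f=h$, $g=g_i$, $A=\ol{\{h<u\}}\in\ol{\cC}^+_h(X)$ and $B=\ol{\{f_i<g_i\}}\in\ol{\cC}^-_{g_i}(X)$: the inclusion $A\subseteq\{f_i<g_i\}\subseteq B\cap\{h<g_i\}$ holds by construction, so Proposition \ref{prop4} yields $\ol{\{Th<Tu\}}\cap\{Th<Tg_i\}\subseteq\ol{\{Tf_i<Tg_i\}}$. Since $u\le g_i$ forces $\{Th<Tu\}\subseteq\{Th<Tg_i\}$, we get $\{Th<Tu\}\subseteq\ol{\{Tf_i<Tg_i\}}$ for every $i$, and $\{Th<Tu\}\neq\emptyset$ because $h\le u$, $h\neq u$ imply $Th\le Tu$, $Th\neq Tu$. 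Hence $\emptyset\neq\{Th<Tu\}\subseteq\bigcap_{i=1}^n\ol{\{Tf_i<Tg_i\}}$, which is the finite intersection property.

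\emph{At most one point.} I would argue by contradiction: assume $y_1\neq y_2$ both lie in $\cap\cF_x$. By Proposition \ref{prop6.1}, $x\in\cap\cF_{y_1}$. Since $Y$ is Hausdorff, $y_1\in Y\setminus\{y_2\}$; applying (A1) at $y_1$ in $A(Y)$ to get a pair $p\le q$ with $y_1\in\{p<q\}$, and then the Remark following (A2) with the open set $Y\setminus\{y_2\}$, we obtain $w\in A(Y)$, $p\le w$, with $y_1\in\{p<w\}$ and $\ol{\{p<w\}}\subseteq Y\setminus\{y_2\}$, so $y_2\notin\ol{\{p<w\}}$. Put $f_1=T^{-1}p$, $g_1=T^{-1}w$, so $f_1\le g_1$ in $A(X)$ and $\{p<w\}=\{Tf_1<Tg_1\}\in\cC^+_{Tf_1}(Y)$. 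Lemma \ref{lem6} (using $y_1\in\cap\cF_x$ and $y_1\in\{Tf_1<Tg_1\}$) gives $x\in(\theta^+_{f_1})^{-1}(\ol{\{Tf_1<Tg_1\}})=\ol{\{f_1<g_1\}}$, while $\ol{\{Tf_1<Tg_1\}}=\ol{\{p<w\}}$ misses $y_2$. The goal is now to contradict $y_2\in\cap\cF_x$. If $f_1(x)<g_1(x)$ we are done at once: then $x\in\{f_1<g_1\}$, so $\ol{\{Tf_1<Tg_1\}}\in\cF_x$ and therefore $y_2\in\ol{\{Tf_1<Tg_1\}}$, a contradiction. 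So the real content is the case $f_1(x)=g_1(x)$, i.e.\ $x$ on the boundary of $\{f_1<g_1\}$. Here I would take an (A1)-pair $a_0\le b_0$ at $x$ and, by directedness, $h\le a_0,f_1$; then $x\in\{h<b_0\}$ and $x\in\ol{\{f_1<g_1\}}\subseteq\ol{\{h<g_1\}}$, so $x$ lies in the closure of the nonempty open set $\{h<b_0\}\cap\{f_1<g_1\}$. Choosing $z$ there, (A3) at $z$ together with the Remark produces $u\in A(X)$, $h\le u\le g_1$, with $z\in\{h<u\}$ and $\ol{\{h<u\}}\subseteq\{f_1<g_1\}$; Proposition \ref{prop4} then gives $\ol{\{Th<Tu\}}\subseteq\ol{\{Tf_1<Tg_1\}}$, so this set also avoids $y_2$. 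One then wants to carry out such a construction so that the resulting set actually belongs to $\cF_x$, which would force a pair $(f',g')$ with $x\in\{f'<g'\}$ and $\ol{\{Tf'<Tg'\}}$ still missing $y_2$ — contradicting $y_2\in\cap\cF_x$.

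\textbf{Main obstacle.} The step I expect to be hard is exactly this last one: the boundary case $f_1(x)=g_1(x)$. Lemma \ref{lem6} only places $x$ in the closure $\ol{\{f_1<g_1\}}$, not in its interior, and there is no evident mechanism to transfer an inclusion of the shape ``closure $\subseteq$ open set'' across $T$ (compatibility applied to the closed set $\{f_1=g_1\}$ is vacuous, since all functions between $f_1$ and $g_1$ collapse there). Consequently, when $x$ is a ``thin'' boundary point of $\{f_1<g_1\}$ one cannot simply enlarge the sets $\{Th<Tu\}$ produced above to members of $\cF_x$. I would try to overcome this by exploiting simultaneously the freedom in the choice of the (A1)-pair at $x$ and of the separating pair $p\le w$ in $A(Y)$, and by pushing the compatibility argument (Proposition \ref{prop4}, combined with Proposition \ref{prop1} applied to $T^{-1}$) to compare the two transferred sets through a common lower bound — a careful boundary analysis that I regard as the crux of the proof.
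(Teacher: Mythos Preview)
Your argument for the finite intersection property is correct and is essentially the dual of the paper's proof: you take a common \emph{lower} bound $h\le f_1,\dots,f_n$ and work with $\ol{\cC}^+_h$, whereas the paper takes a common \emph{upper} bound $g\ge g_1,\dots,g_n$ and works with $\ol{\cC}^-_g$. The only loose phrase is ``shrink $u$ \dots\ keeping $u\le g_i$'': the Remark after (A2) does not by itself preserve the upper bound, but the fix is immediate --- first apply the Remark to obtain $w\ge h$ with $x\in\{h<w\}\subseteq\ol{\{h<w\}}\subseteq\bigcap_i\{f_i<g_i\}$, and \emph{then} use (A3) to find $u$ with $h\le u\le g_1,\dots,g_n,w$, $h(x)<u(x)$; this gives $\ol{\{h<u\}}\subseteq\ol{\{h<w\}}\subseteq\bigcap_i\{f_i<g_i\}$ as required.

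For the uniqueness assertion, however, there is a genuine gap --- exactly the ``boundary case'' you yourself flag. Your strategy is to produce a member of $\cF_x$ that misses $y_2$, and you correctly observe that Lemma~\ref{lem6} only places $x$ in $\ol{\{f_1<g_1\}}$, not in $\{f_1<g_1\}$. You have no mechanism to promote this to an element of $\cF_x$, and the attempt you sketch does not close: the functions $u$ you manufacture satisfy $\ol{\{Th<Tu\}}\subseteq\ol{\{Tf_1<Tg_1\}}$, but there is no reason $x$ should lie in $\{h<u\}$ rather than on its boundary, so the same obstruction recurs.

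The paper's proof takes a completely different route that bypasses the boundary problem. The key idea is to exploit compatibility on the $Y$ side to build a single function whose pull-back takes \emph{two different values} at $x$. Concretely: using (A1) and directedness, choose $h_1\le h_2$ in $A(X)$ with $h_1(x)<h_2(x)$ and $Th_1(y_i)<Th_2(y_i)$ for $i=1,2$. Then use (A2) to find $V\in\cC^-_{Th_2}(Y)$ with $y_2\in V$, $y_1\notin\ol V$, and compatibility of $A(Y)$ to produce $u\in A(Y)$ with $Th_1\le u\le Th_2$, $u=Th_1$ on a neighborhood $U\in\cC^+_{Th_1}(Y)$ of $y_1$, and $u=Th_2$ on $\ol V$. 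Proposition~\ref{prop3} gives $T^{-1}u=h_1$ on $(\theta^+_{h_1})^{-1}(\ol U)$ and $T^{-1}u=h_2$ on $(\theta^-_{h_2})^{-1}(\ol V)$; Lemma~\ref{lem6} (applied with $y_1$ and $y_2$ respectively) places $x$ in both of these sets. Hence $h_1(x)=T^{-1}u(x)=h_2(x)$, the desired contradiction. The point is that Proposition~\ref{prop3} plus Lemma~\ref{lem6} lets you read off values of $T^{-1}u$ at $x$ from values of $u$ on sets in $\ol{\cC}^\pm_{Th_i}(Y)$ containing $y_i$, without ever needing $x$ to lie in the \emph{interior} of anything --- so the boundary issue simply does not arise.
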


\begin{proof}
Suppose that $f_i \leq g_i$, $1\leq i\leq k$, are functions in $X$ such that $x\in \cap^k_{i=1}\{f_i < g_i\}$.
Since $A(X)$ is directed, there exists $g\in A(X)$ such that $g_i \leq g$, $1\leq i \leq k$.
By assumption (A2) and the remark thereafter, there exist $u_i, w_i\in A(X)$ with $u_i \leq g$ and  $f_i \leq w_i$, such that 
\[x\in \{u_i < g\}\subseteq \{f_i < w_i\} \subseteq  \ol{\{f_i < w_i\}}\subseteq  \{f_i< g_i\},\ 1 \leq i \leq k.\]
Applying assumption (A3), we find $v$ in $A(X)$ such that $f_i, u_i \leq v\leq g$, $1\leq i \leq k$, and $x\in \{v< g\}$.
Since 
\[ \ol{\{v< g\}} \subseteq \ol{\{u_i < g\}} \subseteq \ol{\{f_i < w_i\}}\subseteq \{f_i < g\},\]
and $\ol{\{v< g\}} \in \ol{\cC}^-_g(X)$, $\ol{\{f_i < w_i\}}\in \ol{\cC}^+_{f_i}(X)$, Propositions \ref{prop4} and \ref{prop2} imply that 
\[\ol{\{Tv< Tg\}} \cap \{Tf_i < Tg\}\subseteq \ol{\{Tf_i < Tw_i\}}\subseteq \ol{\{Tf_i < Tg_i\}}.\]
As $f_i\leq v \leq g$, $\{Tv< Tg\} \subseteq \{Tf_i < Tg\}$, $1\leq i \leq k$. Therefore, 
\[ \{Tv < Tg\} \subseteq \ol{\{Tv< Tg\}} \cap \{Tf_i < Tg\}\subseteq \ol{\{Tf_i < Tg_i\}}, 1\leq i \leq k.\]
Because $v \leq g$, $v\neq g$ and $T$ is an order isomorphism, the set on the left is nonempty.
This proves that $\cF_x$ has the finite intersection property.

Suppose that there are distinct points $y_1$ and $y_2$ in $\cap \cF_x$.
By assumption (A1) and the fact that $A(X)$ is directed, there are $h_1 \leq h_2$ in $A(X)$ so that $h_1(x) < h_2(x)$ and $Th_1(y_i) < Th_2(y_i)$, $i=1,2$. 
By assumption (A2), there exists $V \in \cC^-_{Th_2}(Y)$ such that $y_1 \notin \ol{V}$ and $y_2 \in V$. 
By the compatibility of functions in $A(Y)$, there exist $u \in A(Y)$
and an open neighborhood $U$ of $y_1$
such that $Th_1 \leq u\leq Th_2$, $u = Th_1$ on $\ol{U}$ and $u = Th_2$ on $\ol{V}$.
By property (A2), we may assume that $U \in \cC^+_{Th_1}(Y)$.
Note that  $u = Th_1$ on $\ol{U}\in \ol{\cC}^+_{Th_1}(Y)$, $Th_1\leq u$, and thus $T^{-1}u = h_1$ on $(\theta^+_{h_1})^{-1}(\ol{U})$.
Similarly, $T^{-1}u = h_2$ on $(\theta^-_{h_2})^{-1}(\ol{V})$.
Since $h_1(x) \neq h_2(x)$ and $x\in (\theta^+_{h_1})^{-1}(\ol{U}) \cap (\theta^-_{h_2})^{-1}(\ol{V})$ by Lemma \ref{lem6}, we have a contradiction.
\end{proof}

Proposition \ref{prop7} suggests that the set mappings  $\theta^\pm_f$, $f\in A(X)$, may be induced by a point mapping $\vp$  between ``large'' subsets of $X$ and $Y$ in the sense  that $x\in U\in \cC^\pm_f(X)$ implies $\vp(x) \in \theta^\pm_f(\ol{U})$. The key, evidently, is to guarantee that $\cap\cF_x \neq \emptyset$ for any $x\in X$.
This is obvious if compactness is present.

\begin{thm}\label{thm8}
Let $X$ and $Y$ be compact Hausdorff spaces and let $A(X)$ and $A(Y)$ be articulated, compatible and directed sets of continuous real-valued functions on $X$ and $Y$ respectively.  If $T:A(X)\to A(Y)$ is an order isomorphism, there is a homeomorphism $\vp :X\to Y$ such that for any $f\leq g$ in $A(X)$ and any open set $W$ of $X$, $f = g$ on $W$ if and only if $Tf = Tg$ on $\vp(W)$.
\end{thm}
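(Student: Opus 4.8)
The plan is to build the point map $\vp$ out of the set maps $\theta^\pm_f$ by exploiting the filter structure developed in Propositions \ref{prop6.1} and \ref{prop7}. For each $x\in X$, the family $\cF_x$ has the finite intersection property by Proposition \ref{prop7}, and since $Y$ is compact Hausdorff (hence $\ol{\{Tf<Tg\}}$ are closed), $\cap\cF_x\neq\emptyset$. Again by Proposition \ref{prop7}, $\cap\cF_x$ consists of at most one point, so $\cap\cF_x=\{\vp(x)\}$ for a uniquely determined $\vp(x)\in Y$. This defines $\vp:X\to Y$. Symmetrically, using $T^{-1}$ and compactness of $X$, define $\psi:Y\to X$ with $\cap\cF_y=\{\psi(y)\}$. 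The first main step is to check that $\psi$ is a two-sided inverse of $\vp$: if $y=\vp(x)$ then $y\in\cap\cF_x$, so by Proposition \ref{prop6.1} we get $x\in\cap\cF_y=\{\psi(y)\}$, i.e.\ $\psi(\vp(x))=x$; the reverse composition is symmetric. Hence $\vp$ is a bijection.

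Next I would establish the key translation property: if $x\in U$ for some $U\in\cC^\pm_f(X)$, then $\vp(x)\in\theta^\pm_f(\ol U)$. This is exactly Lemma \ref{lem6} applied with $y=\vp(x)\in\cap\cF_x$: it yields $x\in(\theta^\pm_f)^{-1}(\ol U)$ — wait, that is the wrong direction, so instead I apply Lemma \ref{lem6} in the form proved for $T^{-1}$, i.e.\ since $x\in\cap\cF_{\vp(x)}$ by Proposition \ref{prop6.1} and $x\in U\in\cC^\pm_f(X)$, the ``$T^{-1}$'' version of Lemma \ref{lem6} gives $\vp(x)\in(\theta^\pm_{T^{-1}(Tf)})^{-1}$... more cleanly: the statement of Lemma \ref{lem6} with the roles of $X$ and $Y$ interchanged says that if $x\in\cap\cF_y$ and $x\in U'\in\cC^\pm_{T^{-1}g'}(X)$ then $y\in(\theta^\pm_{\cdot})^{-1}$ applied via $T^{-1}$, which unwinds to $y\in\theta^\pm_f(\ol U)$ when $U\in\cC^\pm_f(X)$ contains $x$ and $y=\vp(x)$. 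So $\vp(x)\in\theta^\pm_f(\ol U)$ whenever $x\in U\in\cC^\pm_f(X)$, and dually $\psi(\theta^\pm_f(\ol U))\subseteq$-membership statements follow. From this, continuity of $\vp$ follows: given a basic open neighborhood structure around $\vp(x)$, use (A2) for $A(Y)$ and the Remark to produce $U\in\cC^\pm_{Tf}(Y)$ with $\vp(x)\in U\subseteq\ol U\subseteq$ (the given neighborhood); then $(\theta^\pm_f)^{-1}(\ol U)\cap(\text{interior via another application})$ is a neighborhood of $x$ mapped by $\vp$ into the target. Applying the same to $\psi=\vp^{-1}$ shows $\vp$ is a homeomorphism.

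For the final ``$f=g$ on $W$ iff $Tf=Tg$ on $\vp(W)$'' equivalence, by symmetry it suffices to prove one direction, say: $f=g$ on the open set $W$ implies $Tf=Tg$ on $\vp(W)$. Fix $x\in W$; I want $Tf(x')=Tg(x')$ at $x'=\vp(x)$. Since $A(X)$ is directed, take $h\in A(X)$ with $f,g\le h$; using (A2) and the Remark, find $u\in A(X)$ with $f\le u\le h$ and $x\in\{f<u\}\subseteq\ol{\{f<u\}}\subseteq W$ (WLOG, after possibly swapping which of $f,g$ plays which role, handling also the case $f(x)=g(x)$ where one argues directly that $\vp(x)\in\theta^+_f(\ol{\{f<u\}})$ and invokes Proposition \ref{prop3}). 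Because $f=g$ on $\ol{\{f<u\}}\in\ol{\cC}^+_f(X)$, Proposition \ref{prop3} gives $Tf=Tg$ on $\theta^+_f(\ol{\{f<u\}})$. But $x\in\{f<u\}$ forces $\vp(x)\in\theta^+_f(\ol{\{f<u\}})$ by the translation property above, so $Tf(\vp(x))=Tg(\vp(x))$. As $x\in W$ was arbitrary, $Tf=Tg$ on $\vp(W)$.

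I expect the main obstacle to be the bookkeeping in the translation property and its use for continuity: keeping straight which of $\theta^\pm_f$ and which direction of Lemma \ref{lem6} (the stated version versus its $T^{-1}$-analogue) is being invoked, and in the continuity argument, extracting from (A2) and the Remark an \emph{open} neighborhood of $x$ — not merely the set $(\theta^\pm_f)^{-1}(\ol U)$, which is closed — whose $\vp$-image lands in the prescribed neighborhood of $\vp(x)$. The requisite opening-up is precisely what Lemma \ref{lem4} together with a second application of (A2) provides, so the ingredients are all in place; the work is in threading them correctly. A minor subtlety in the last paragraph is the degenerate case where $f(x)=g(x)$ and no set of the form $\{f<u\}$ containing $x$ with small closure is available in $\cC^+_f$; there one instead chooses $u\le f$ with $x\in\{u<f\}\subseteq W$ if $f(x)$ exceeds some lower function at $x$, using (A1) and directedness to guarantee such room exists, and applies the $\cC^-$-halves of Propositions \ref{prop3} and the translation property.
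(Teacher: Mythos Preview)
Your construction of $\vp$ and the verification that $\psi=\vp^{-1}$ via Propositions~\ref{prop6.1} and~\ref{prop7} are correct and match the paper exactly.

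The continuity argument, however, has a genuine gap that the tools you cite do not close. You correctly note that Lemma~\ref{lem6} only places $x_0$ in the \emph{closure} $(\theta^\pm_f)^{-1}(\ol U)$, and you need an open neighborhood of $x_0$ whose $\vp$-image lands in the target. You suggest Lemma~\ref{lem4} provides the ``opening up,'' but Lemma~\ref{lem4} only produces \emph{some} nonempty $V\in\cC^+_f(X)$ inside the intersection --- it gives no control over whether $x_0\in V$. Concretely: given $\vp(x_0)\in U=\{Tf<Tg\}\subseteq\ol U\subseteq$ (target), the translation property yields $\vp(\{f<g\})\subseteq\ol U$, so $W=\{f<g\}$ would work \emph{if} $x_0\in W$; but Lemma~\ref{lem6} only gives $x_0\in\ol W$, and nothing you have rules out $f(x_0)=g(x_0)$. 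The paper avoids this entirely by arguing by contradiction with nets: assuming $x_\gamma\to x_0$ but $\vp(x_\gamma)\to z\neq\vp(x_0)$, it invokes \emph{compatibility} of $A(Y)$ to manufacture $u\in A(Y)$ with $Tf\leq u\leq Tg$, $u=Tf$ on a neighborhood $U$ of $z$ and $u=Tg$ on a set $\ol V\ni\vp(x_0)$. Since $\vp(x_\gamma)\in U$ cofinally, Lemma~\ref{lem6} and closedness give $x_0\in(\theta^+_f)^{-1}(\ol U)$; similarly $x_0\in(\theta^-_g)^{-1}(\ol V)$. Proposition~\ref{prop3} then forces $f(x_0)=T^{-1}u(x_0)=g(x_0)$, contradicting the choice $f(x_0)<g(x_0)$. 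Compatibility is the ingredient missing from your sketch.

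Your argument for the final equivalence can be made to work but is far heavier than needed, and the ``degenerate case'' discussion is confused: since $f=g$ on $W$, the case $f(x)=g(x)$ is the \emph{only} case, so you are always in your fallback. The paper's argument is one line and uses none of this machinery: if $\vp(x_0)\in\{Tf<Tg\}=U\in\cC^+_{Tf}(Y)$, then Lemma~\ref{lem6} gives $x_0\in(\theta^+_f)^{-1}(\ol U)=\ol{\{f<g\}}$, which is impossible since $x_0$ lies in the open set $W$ on which $f=g$. No auxiliary $u$, no case split, no appeal to the reverse translation property.
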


\begin{proof}
Since $Y$ is compact, by Proposition \ref{prop7}, for each $x\in X$, $\cap\cF_x$ contains exactly one point in $Y$.
Define $\vp: X\to Y$ by letting $\vp(x)$ be the unique point in $\cap\cF_x$.
Similarly, for each $y \in Y$, we  obtain a function $\beta:Y\to X$ by letting $\beta(y)$ be the unique point in $\cap\cF_y$. By Proposition \ref{prop6.1}, we see that $\vp$ and $\beta$ are mutual inverses.

Suppose that $\vp$ is not continuous at $x_0 \in X$.  Since $Y$ is compact Hausdorff, there is a net $(x_\gamma)$ in $X$ converging to $x_0$ such that $(\vp(x_\gamma))$ converges to some $z \neq \vp(x_0)$.
By assumption (A1) and the directedness of $A(X)$, there are functions $f \leq g$ in $A(X)$ such that $f(x_0)< g(x_0)$, $Tf(z)< Tg(z)$ and $Tf(\vp(x_0)) < Tg(\vp(x_0))$.
By assumption (A2), there exists $V\in \cC^-_{Tg}(Y)$ containing $\vp(x_0)$ such that $z \notin \ol{V}$.
Since $A(Y)$ is a compatible set of functions, there exist an open neighborhood $U$ of $z$ 
and $u \in A(Y)$ such that
$Tf \leq u \leq Tg$, $\ol{U} \cap \ol{V} = \emptyset$, $u = Tf$ on $U$ and $u = Tg$ on $\ol{V}$.
By property (A2), we may assume that $U \in \cC^+_{Tf}(Y)$.
For a  cofinal set of $\gamma$, $\vp(x_\gamma) \in U$.  Hence by Lemma \ref{lem6}, $x_\gamma \in (\theta^+_f)^{-1}(\ol{U})$.  Therefore, $x_0\in (\theta^+_f)^{-1}(\ol{U})$. Similarly, $\vp(x_0) \in V$ implies that $x_0 \in (\theta^-_g)^{-1}(\ol{V})$.
Since $u =Tf$ on $\ol{U} \in \ol{\cC}^+_{Tf}(Y)$, $u = Tg$ on $\ol{V}\in \ol{\cC}^-_{Tg}(Y)$ and $Tf \leq u \leq Tg$, $T^{-1}u = f$ on $(\theta^+_f)^{-1}(\ol{U})$ and $T^{-1}u = g$ on $(\theta^-_g)^{-1}(\ol{V})$ by Proposition \ref{prop3}.  It follows that $f(x_0) = T^{-1}u(x_0) = g(x_0)$, contrary to the choice of $f$ and $g$.
This proves that $\vp$ is continuous on $X$.  By symmetry, $\beta$ is continuous on $Y$.  Thus $\vp$ is a homeomorphism from $X$ onto $Y$.

Finally, suppose that $f\leq g$ in $A(X)$ and that $f= g$  on an open set $W$ in $X$.  Let $x_0 \in W$ and set $U = \{Tf < Tg\} \in \cC^+_{Tf}(Y)$.  If $\vp(x_0)\in U$, then $x_0 \in (\theta^+_f)^{-1}(\ol{U}) = \ol{\{f < g\}}$ by Lemma \ref{lem6}, which is impossible.  Hence $Tf(\vp(x_0)) = Tg(\vp(x_0))$.  The shows that $Tf = Tg$ on the set $\vp(W)$.  The converse follows by symmetry.
\end{proof}

\section{Near vector lattices}

In this section we consider applications of Theorem \ref{thm8}, even to noncompact spaces $X$ and $Y$.
A vector subspace $A(X)$ of $C(X)$ is said to be {\em unital} if it contains the constant function $1$. It {\em separates points from closed sets} if for any $x_0\in X$ and any closed set $A$ in $X$ not containing $x_0$, there exists $f\in A(X)$ such that $f(x_0) \neq 0$ and $f =0$ on $A$.
A unital vector subspace $A(X)$ of $C(X)$ that separates points from closed sets  is a {\em near vector lattice} if for any $f\in A(X)$, there exists $g\in A(X)$ such that $g(x) = f(x)$ if $f(x) \geq 1$, $g(x) = 0$ if $f(x) \leq 0$, and $0 \leq g(x) \leq 1$ if $0 \leq f(x) \leq 1$.
It is easy to see that in this definition, the numbers $0$ and $1$ may be replaced by any pair of real numbers $a,b$ such that $a < b$.

\begin{lem}\label{lem11}
Let $A(X)$ be a near vector lattice. Suppose that $f \in A(X)$ and $a< b< c < d$ are real numbers. Then there exists $g\in A(X)$ such that $a \leq g \leq d$, 
$g(x) = d$ if $f(x) \geq d$, $g(x) = a$ if $f(x) \leq a$ and $g(x) = f(x)$ if $b\leq f(x) \leq c$.
\end{lem}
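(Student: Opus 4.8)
The plan is to build $g$ by composing affine rescalings with the defining property of a near vector lattice, which gives us the basic ``clamping'' operation on the interval $[0,1]$. Recall that the definition of near vector lattice says: for every $f \in A(X)$ there is $h \in A(X)$ with $h = f$ where $f \geq 1$, $h = 0$ where $f \leq 0$, and $0 \leq h \leq 1$ in between. As the paper notes immediately after the definition, the thresholds $0$ and $1$ may be replaced by any $a' < b'$, since $A(X)$ is a unital vector subspace and we may precompose and postcompose with affine maps of $\R$ (which preserve membership in $A(X)$ because it is a unital vector space). So without loss of generality I may use whichever pair of thresholds is convenient at each stage.

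The key point is that the stated conclusion asks for a function that is clamped at $d$ from above, clamped at $a$ from below, and equal to $f$ on the \emph{middle} window $[b,c]$ — which is a two-sided clamp, whereas the raw near-vector-lattice property is a one-sided clamp (constant below a threshold, equal to $f$ above another). First I would apply the (rescaled) near vector lattice property to $f$ with thresholds chosen so as to produce a function $g_1 \in A(X)$ satisfying $g_1 = f$ where $f \geq c$... no: I want $g_1 = f$ on $(-\infty, c]$ after clamping at the top, so instead I apply the property to $-f$ (which lies in $A(X)$), or equivalently use the ``reversed'' version of the definition, to get $g_1 \in A(X)$ with $g_1(x) = d$ when $f(x) \geq d$, $g_1(x) = f(x)$ when $f(x) \leq c$, and $c \leq g_1(x) \leq d$ when $c \leq f(x) \leq d$. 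Then I would apply the near vector lattice property once more, this time to $g_1$, to clamp from below: choose thresholds $a < b$ to obtain $g \in A(X)$ with $g(x) = a$ when $g_1(x) \leq a$, $g(x) = g_1(x)$ when $g_1(x) \geq b$, and $a \leq g(x) \leq b$ otherwise. Since $a < b < c$, on the set $\{b \leq f \leq c\}$ we have $g_1 = f \geq b$, hence $g = g_1 = f$; on $\{f \geq d\}$ we have $g_1 = d \geq b$ hence $g = g_1 = d$; on $\{f \leq a\}$ we have $g_1 = f \leq a$ hence $g = a$. The bound $a \leq g \leq d$ follows since $a \leq g \leq \max(b, g_1) \leq d$. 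One must double-check the ranges of the intermediate clamped values to be sure the two operations compose without interference — e.g., that $g_1 \geq c > b$ forces $g = g_1$ on the region where $f$ is large; this is where a little care with the inequalities $a < b < c < d$ is needed, but it is routine.

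I expect the only mild obstacle is bookkeeping: making sure that when I apply the (rescaled) near vector lattice property the second time, the thresholds $a,b$ fall strictly below the range where $g_1$ already agrees with $f$ on the desired window, so that the second clamp leaves the window untouched. Concretely, since $b < c$ and $g_1(x) = f(x)$ for $f(x) \le c$, the value $g_1(x)$ on $\{b \le f \le c\}$ lies in $[b,c]$, which is $\ge b$, so the lower clamp at level $b$ does nothing there. Everything else is a direct substitution into the definition, so no genuinely hard step arises; the lemma is essentially a two-fold iteration of the defining property plus affine invariance.
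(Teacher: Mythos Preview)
Your proof is correct and essentially identical to the paper's: both apply the (rescaled) near vector lattice property twice, once directly and once via negation, to clamp on each side. The only difference is order---the paper clamps below first (producing $f_1$ with $f_1=f$ on $\{f\ge b\}$, $f_1=a$ on $\{f\le a\}$) and then clamps above by applying the property to $-f_1$, while you clamp above first and then below; the bookkeeping you sketch is exactly what is needed.
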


\begin{proof}
Let $f_1\in A(X)$ be such that $f_1(x) = f(x)$ if $f(x) \geq b$, $f_1(x) =a $ if $f(x) \leq a$ and $a \leq f_1(x) \leq b$ if $a\leq f(x) \leq b$.
Then there exists $f_2\in A(X)$ such that $f_2(x) = -f_1(x)$ if $-f_1(x) \geq -c$, $f_2(x) = -d$ if $-f_1(x) \leq -d$ and $-d \leq f_2(x) \leq -c$ if $-d\leq -f_1(x) \leq -c$.  One may verify directly that $g= -f_2$ satisfies the conclusions of the lemma.
\end{proof}

The next proposition gives the motivation for considering near vector lattices.

\begin{prop}\label{prop13}
Let $A(X)$ be a near vector lattice on a Hausdorff topological space $X$. 
Define 
\[ B(X) = \{f\in A(X): 0 \leq f\leq 1\}.\]
Then $A(X)$ and $B(X)$ are  articulated, compatible and directed subsets of $C(X)$.
\end{prop}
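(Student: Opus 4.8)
The plan is to isolate a single workhorse — a ``bump lemma'' — and then check the five conditions (A1)--(A3), directedness, and compatibility for each of $A(X)$ and $B(X)$; the first four are fairly mechanical and compatibility carries the real content. First I would prove the bump lemma: for every $x\in X$ and every closed $C\subseteq X$ with $x\notin C$ there is $\eta\in B(X)$ with $0\le\eta\le1$, $\eta\equiv1$ on a neighbourhood of $x$, and $\eta\equiv0$ on $C$. Indeed, take the separating function $\phi\in A(X)$ with $\phi(x)\neq0$ and $\phi|_C=0$; after negating and rescaling we may assume $\phi(x)>1$, and then Lemma \ref{lem11} applied to $\phi$ with the levels $0<\tfrac{1}{4}<\tfrac{1}{2}<1$ yields $\eta\in A(X)$ with $0\le\eta\le1$, with $\eta\equiv1$ on the open set $\{\phi>1\}\ni x$, and with $\eta\equiv0$ on $\{\phi\le0\}\supseteq C$, so $\eta\in B(X)$. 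Affine images $\al+(\beta-\al)\eta$ give the same statement with $0,1$ replaced by arbitrary $\al<\beta$, and taking $C=X\setminus U$ produces bumps whose non-vanishing set lies inside a prescribed open $U\ni x$.

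Granting this, the routine properties follow quickly. Directedness of $B(X)$ is immediate since $0\le h\le1$ for all $h\in B(X)$; for $A(X)$, applying the defining clamping property to $f_1-f_2$ and adding the constant $1$ produces $\chi\in A(X)$ with $\chi\ge0$ and $\chi\ge f_1-f_2$, so $f_2+\chi$ dominates both $f_1$ and $f_2$, and the reflected construction gives a common lower bound. For (A1) the pair $0\le1$ works in both spaces. For (A2), given $f\le g$ and $x\in\{f<g\}\cap U$, choose a small $c>0$ and a bump $\eta$ with $\{\eta\neq0\}\subseteq U$; then $u=f+c\eta$ has $f\le u$ and $x\in\{f<u\}=\{\eta>0\}\subseteq U$, and for the $B(X)$ version one shrinks the support of $\eta$ into the open set $\{f<1-c\}$ (which contains $x$ because $f(x)<g(x)\le1$) so that $u\le1$; the function $v\le g$ is built symmetrically, with the bump supported in $\{g>c\}$ in the $B(X)$ case. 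For (A3) with $h\le f,g$ and $h(x)<f(x),g(x)$, put $\delta=\tfrac{1}{2}\min(f(x)-h(x),g(x)-h(x))$ and take $\eta$ with $0\le\eta\le\delta$, $\eta(x)>0$ and $\{\eta\neq0\}$ inside the open set $\{f-h>\delta\}\cap\{g-h>\delta\}$; then $u=h+\eta$ satisfies $h\le u\le f,g$ and $h(x)<u(x)$, and it automatically lies in $[0,1]$ in the $B(X)$ case since $h<f-\delta\le1-\delta$ on $\{\eta\neq0\}$. The ``$\ge,>$'' form of (A3) is the reflection $u=h-\eta$.

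The substantive step is compatibility. Fix $f\le g$ in $A(X)$, $x\in\{f<g\}$, and a closed $A$ with $x\notin A$. Using continuity of $g-f$ and $g(x)>f(x)$, I would first pick an open $V\ni x$ on which $g-f$ is bounded, then apply the bump lemma to $x$ and the closed set $(X\setminus V)\cup A$ to obtain $\eta\in A(X)$, $0\le\eta\le1$, with $\eta\equiv1$ on a neighbourhood of $x$ contained in $V$ and $\eta\equiv0$ off $V$ and on $A$; the open sets $U=\{\eta>\tfrac{1}{2}\}$ (and its analogue for $v$) then contain $x$ with $\ol U\cap A=\emptyset$, since $\eta$ vanishes on $A$. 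It remains to produce $u\in A(X)$ with $f\le u\le g$, $u\equiv f$ on $U$, $u\equiv g$ on $A$ (and symmetrically $v$). The idea is to write $u=f+w$ with $w\in A(X)$ satisfying $0\le w\le g-f$, $w$ vanishing near $x$, and $w=g-f$ on $A$, and to obtain $w$ by feeding a suitable affine combination of $g-f$ and $\eta$ — one that is $\le0$ near $x$ (possible precisely because $g-f$ is bounded there) and that carries the correct value on $A$, where $\eta=0$ — into the clamping operation of Lemma \ref{lem11}. I expect the main obstacle to be exactly this last manoeuvre: a near vector lattice only clamps ``with soft corners'' (the clamped function equals its argument only away from two transition intervals), so one must arrange $V$, the clamping levels, and possibly a second application of the clamp, so that on those transition intervals the output still lies between $f$ and $g$ while on $U$ and on $A$ it attains the exact values $f$ and $g$. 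Finally, since $B(X)\subseteq A(X)\subseteq C(X)$, the same recipe — with the range precautions already used for (A2) and (A3) — yields compatibility of $B(X)$.
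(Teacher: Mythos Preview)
Your handling of directedness and (A1)--(A3) is correct and essentially matches the paper's argument; factoring out the bump lemma is a clean organizational device that the paper leaves implicit. Your treatment of $B(X)$ for (A2) is slightly more hands-on than the paper's (which instead uses (A2) for $A(X)$ to get some $w\ge f$ and then applies (A3) to squeeze $u$ between $f$ and $w,g$), but your version works fine.

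The gap is in compatibility. You set up the problem exactly right --- build a bump $\eta$ supported off $A$, take $U=\{\eta>\tfrac12\}$, and reduce to producing $w\in A(X)$ with $0\le w\le g-f$, $w=0$ on $U$, $w=g-f$ on $A$ --- but you then stop at the point where you yourself flag the obstacle: a single clamp of an affine combination of $g-f$ and $\eta$ cannot by itself do the job, because $(g-f)|_A$ may take values arbitrarily close to~$0$, landing squarely in whatever transition interval you pick. Saying ``possibly a second application of the clamp'' is the right instinct, but you have not shown how to make it go through, and this is precisely the substantive content of the proposition.

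The paper's resolution is to build \emph{two} clamped auxiliary functions and combine them additively. With $a=(g-f)(x)$ and $h_2=h_1-f$ a bump at $x$ supported inside $\{g-f>a/2\}\cap A^c$ (normalized so $h_2(x)=1$), the paper takes a soft lower clamp $v$ of $g-f-2ah_2$ at level $-a/3$, and a soft two-sided clamp $w$ of $h_2$ between levels $1/6$ and $1/4$, then sets $u=f+v+4a(w-\tfrac16)$. The point is that the transition zones of $v$ and of $w$ are located in disjoint regimes of $h_2$ and $g-f-2ah_2$, so on $U=\{g-f-2ah_2<-a/2\}\cap\{h_2>1/4\}$ one has $v=-a/3$, $w=1/4$, hence $u=f$; on $\{h_2=0\}\supseteq A$ one has $w=1/6$ and $v=g-f$, hence $u=g$; and a short case analysis (splitting on the sign of $g-f-2ah_2$ and on whether $h_2\gtrless1/6$) shows $f\le u\le g$ globally. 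This interplay between two clamps is the missing idea in your sketch; once you see it, the verification is mechanical but not automatic. Compatibility of $B(X)$ then follows for free since the resulting $u,v$ lie between $f$ and $g$, hence in $[0,1]$ --- no extra ``range precautions'' are needed.
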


\begin{proof}
Suppose that $f\in A(X)$.  Let $g$ be as given by the definition of a near vector lattice.  Then $g+1\in A(X)$ and $g+1 \geq f,0$.
If $f_1,f_2\in A(X)$, we can thus obtain $g_1,g_2\in A(X)$ such that $g_i \geq f_i, 0$, $i = 1,2$.
Then $g_1+g_2 \geq f_1, f_2$.  Similarly, there exists $h\in A(X)$ such that $h \leq f_1,f_2$.
This shows that $A(X)$ is directed.

Since $A(X)$ is unital, property (A1) is obvious.
Suppose that $x\in \{f< g\}$ for some $f\leq g$ in $A(X)$ and let $U$ be an open set containing $x$.  Since $A(X)$ separates points from closed sets, there exists $h\in A(X)$ such that $h(x) \neq 0$ and $h= 0$ outside $U$. We may assume that $h(x) = c > 0$. By Lemma \ref{lem11}, there exists $w\in A(X)$ such that $0 \leq w \leq c$, $w(z) = 0$ if $h(z) \leq 0$ and $w(z) = c$ if $h(z) \geq c$.  Set $u = f+w$. Then $f \leq u$, $u(x) = f(x) +c > f(x)$ and 
\[\{f < u\} = \{w > 0\} \subseteq \{h > 0\} \subseteq U.\]  
Thus $u$ fulfills the conditions in (A2).  
Let us also observe that $w = u-f$ is a bounded function.
The function $v$ in (A2) may be constructed similarly.
Suppose that $f,g, h\in A(X)$, $h \leq f,g$, and that $h(x) < f(x), g(x)$.
Choose $a > 0$ such that $f(x), g(x) > h(x) +a$.
Set $U = \{f > h+a\}\cap \{g> h+a\}$.  Then $U$ is an open neighborhood of $x$.
By the above, there exists a bounded nonnegative function $w\in A(X)$ such that $w(x) > 0$ and $\{w > 0\} \subseteq U$. By rescaling, we may assume that $0 \leq w \leq 1$.
Set $u = h+ aw$. Then $h \leq u\in A(X)$ and $h(x) < u(x)$.
If $w(z) > 0$, then $z\in U$ and hence $u(z)\leq h(z) + a \leq f(z), g(z)$.
If $w(z) = 0$, then $u(z) = h(z) \leq f(z),g(z)$.  Therefore, $u \leq f,g$.  This verifies one of the directions  of condition (A3).  The other direction may be obtained similarly.

Finally, we show that $A(X)$ is compatible.  Again, we limit ourselves to the construction of the function ``$u$" in the definition. Let $f\leq g$ be functions in $A(X)$, $x\in \{f< g\}$, and let $A$ be a closed set such that $x\notin A$. Set $a= (g-f)(x) > 0$.  By property (A2),  there exists $h_1 \in A(X)$, $f\leq h_1$ such that 
\[x \in\{f < h_1\}\subseteq \{g-f> \frac{a}{2}\}\cap A^c.\]
Set $h_2 = h_1-f$.
We may assume that $h_2(x) = 1$.
Since $A(X)$ is a near vector lattice, there exists
$v\in A(X)$ such that 
\[ v(z)
\begin{cases}
= (g-f-2ah_2)(z) &\text{if $(g-f-2ah_2)(z) \geq 0$}, \\
= -a/3 &\text{if $(g-f-2ah_2)(z) \leq -\frac{a}{3}$},\\
\in [-a/3,0] &\text{if $-\frac{a}{3}\leq (g-f-2ah_2)(z) \leq 0$}.
\end{cases}\]
By Lemma \ref{lem11}, there exists $w\in A(X)$ such that $1/6 \leq w\leq 1/4$ and
\[ w(z) = \begin{cases}
               \frac{1}{4} &\text{if $h_2(z) \geq \frac{1}{4}$},\\
               \frac{1}{6} &\text{if $h_2(z) \leq \frac{1}{6}$}.
               \end{cases}\]
Set $u = f+v+4a(w-1/6) \in A(X)$.
By direct verification, the set 
\[U= \{g-f-2ah_2<-\frac{a}{2}\}\cap\{h_2 > \frac{1}{4}\}\] 
is an open neighborhood of $x$.  Also,
\[ \ol{U} \subseteq \{h_2\geq \frac{1}{4}\} \subseteq \{h_2 > 0\} = \{f < h_1\} \subseteq A^c.\]
If $z\in U$, then $v(z) = -a/3$ and $w(z) = 1/4$.
Thus $u(z) = f(z)$.
If  $h_2(z) = 0$, then $w(z) = 1/6$ and $(g-f-2ah_2)(z) = (g-f)(z) \geq 0$; hence $v(z) = (g-f)(z)$.
It follows that $u(z) = f(z) + (g-f)(z) = g(z)$.
In particular, $u(z) = g(z)$ if $z\in A$.

If $(g-f-2ah_2)(z) <0$, then $h_2(z) > 0$ hence $(g-f)(z) > a/2$.    Thus $a/2-2ah_2(z) < 0$ and hence $h_2(z) > 1/4$.  In this case, $-a/3 \leq v(z) \leq 0$ and $w(z) = 1/4$.
It follows that 
\[ f(z) \leq u(z) = f(z) + v(z) + \frac{a}{3}\leq f(z) + \frac{a}{3} < f(z) + (g-f)(z) = g(z).\]

If $(g-f-2ah_2)(z) \geq 0$, then $v(z) = (g-f-2ah_2)(z) \geq 0$.  In particular, $u(z)\geq f(z)$.  We also have
\[u(z) = g(z)-  2ah_2(z) + 4a(w(z) - \frac{1}{6}).\]
Either $h_2(z) \leq 1/6$, in which case $w(z) = 1/6$ and thus $u(z) \leq g(z)$; or $h_2(z) \geq 1/6$ and hence 
\[u(z) \leq g(z) - \frac{a}{3} + 4a(w(z) - \frac{1}{6}) \leq g(z).\]
This completes the proof that $f \leq u \leq g$.  

Since $B(X)$ contains a largest element $1$ and a smallest element $0$, it satisfies property (A1) and is also directed.  It is easy to see that $B(X)$ inherits property (A3) and compatibility from $A(X)$.
Suppose that $f\leq g$ are functions in $B(X)$ and $U$ is an open neighborhood of a point $x\in \{f<g\}$.
Since $A(X)$ satisfies property (A2), there exists $w \in A(X)$, $f\leq w$, such that $x\in \{f< w\}\subseteq U$.
By property (A3) for $A(X)$, there exists $u\in A(X)$ such that $f\leq u \leq w,g$ and $f(x) < u(x)$.
But then $f\leq u \leq g$ and hence $u \in B(X)$.  This shows that $B(X)$ satisfies property (A2) as well.
\end{proof}

Let $X$ be a Hausdorff topological space and let $A(X)$ be a vector subspace of $C(X)$ that separates points from closed sets.
Let $\R_\infty$ be the set $[-\infty,\infty]$, endowed with the order topology.  The map $i:X \to \R_\infty^{A(X)}$ given by  $i(x)(f) = f(x)$ is a homeomorphic embedding.  We identify $X$ with $i(X)$ and denote the closure of $X$ in $\R_\infty^{A(X)}$ by $\cA X$.  $\cA X$ is a  compact Hausdorff space.  Every function $f\in A(X)$ has a unique continuous $\R_\infty$-valued extension to $\cA X$, given by the formula $\hat{f}(x) = x(f)$ for all $x\in \cA X$.  If $f$ is bounded, then $\hat{f}$ is real-valued (and bounded) on $\cA X$.

\begin{lem}\label{lem12}
Let $A(X)$ be a near vector lattice.  Assume that $f\in A(X)$, $x_0 \in \cA X$ and  $U$ is an open neighborhood of $x_0(f)$ in $\R_\infty$.  Then there exists $g\in A(X)$ such that $g \geq 0$, $\hat{g}(x_0) = 0$ and that 
\[\{x\in \cA X: \hat{g}(x) < 1\} \subseteq \{x\in \cA X: x(f) \in U\}.\]
\end{lem}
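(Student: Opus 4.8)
My plan is to produce, for the value $t:=x_0(f)\in\R_\infty$, a function $g\in A(X)$ with $g\geq 0$ that \emph{vanishes identically} on a set of the form $\{x\in X: f(x)\in V\}$ for some open $V\subseteq\R_\infty$ with $t\in V$, and satisfies $g\geq 1$ on a set of the form $\{x\in X: f(x)\in W\}$ for some open $W\subseteq\R_\infty$ with $\R_\infty\setminus U\subseteq W$. Granting such a $g$, the lemma follows at once from density of $X$ in $\cA X$ and continuity of the extensions $\hat f,\hat g:\cA X\to\R_\infty$. Indeed, $\hat f^{-1}(V)$ is an open subset of $\cA X$ containing $x_0$, so $\hat f^{-1}(V)\cap X$ is dense in it; the closed set $\{\hat g=0\}$ contains $\hat f^{-1}(V)\cap X$, hence contains $\hat f^{-1}(V)$, so $\hat g(x_0)=0$. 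Likewise every $x\in\cA X$ with $x(f)\notin U$ lies in the open set $\hat f^{-1}(W)$, on whose dense subset $\hat f^{-1}(W)\cap X$ one has $g\geq 1$; so the closed set $\{\hat g\geq 1\}$ contains $\hat f^{-1}(W)$, which yields $\{\hat g<1\}\subseteq\{x\in\cA X: x(f)\in U\}$.

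It remains to construct $g$, and here I split into cases. Suppose first $t\in\R$. Subtracting the constant $t$ from $f$ (allowed since $A(X)$ is unital) and translating $U$ accordingly, we may assume $t=0$; pick $\delta>0$ with $(-2\delta,2\delta)\subseteq U$. Applying the defining property of a near vector lattice to $f$, with $\delta/2<\delta$ in place of $0<1$, produces $g_1\in A(X)$ equal to $f$ where $f\geq\delta$, equal to $\delta/2$ where $f\leq\delta/2$, and lying in $[\delta/2,\delta]$ elsewhere; then $p:=g_1-\delta/2\in A(X)$ satisfies $p\geq 0$, $p=0$ on $\{f\leq\delta/2\}$ and $p\geq\delta/2$ on $\{f\geq\delta\}$. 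Applying the same property to $-f$ gives $q\in A(X)$ with $q\geq 0$, $q=0$ on $\{f\geq-\delta/2\}$ and $q\geq\delta/2$ on $\{f\leq-\delta\}$. Now $g:=\tfrac{2}{\delta}(p+q)\in A(X)$ is $\geq 0$, vanishes on $\{|f|\leq\delta/2\}$ (so take $V=(-\delta/2,\delta/2)\ni 0$), and is $\geq 1$ on $\{|f|\geq\delta\}$ (so take $W=[-\infty,-\delta)\cup(\delta,\infty]$, which contains $\R_\infty\setminus U$ because $(-2\delta,2\delta)\subseteq U$).

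If $t=+\infty$, pick $M\in\R$ with $(M,\infty]\subseteq U$ and apply the defining property of a near vector lattice to $-f$ with $-(M+2)<-(M+1)$: this gives $g_3\in A(X)$ equal to $-f$ where $f\leq M+1$, equal to $-(M+2)$ where $f\geq M+2$, and in $[-(M+2),-(M+1)]$ elsewhere. Then $g:=g_3+(M+2)\in A(X)$ is $\geq 0$, vanishes on $\{f\geq M+2\}$ (take $V=(M+2,\infty]\ni+\infty$), and equals $(M+2)-f\geq 1$ on $\{f\leq M+1\}$ (take $W=[-\infty,M+1)$, which contains $\R_\infty\setminus U$ since $(M,\infty]\subseteq U$). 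The case $t=-\infty$ reduces to this one by replacing $f$ by $-f$ and $U$ by $\{-s:s\in U\}$.

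The main obstacle is the construction of $g$, not the passage to $\cA X$: a single near--vector-lattice truncation pins down its output only above one threshold (where it agrees with $f$) and below another (where it is constant), so to manufacture a function that is genuinely \emph{zero} on a band around $t$ and bounded below by $1$ off a slightly larger band one must combine two such truncations (applied to $f$ and to $-f$) together with constant shifts, as above. Everything else is routine once one notices that forcing $\hat g(x_0)=0$ amounts to forcing $g$ to vanish on $\{x\in X: f(x)\text{ near }t\}$ and then invoking density of $X$ in $\cA X$.
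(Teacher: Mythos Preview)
Your proof is correct and follows essentially the same approach as the paper's: reduce to $t=0$ in the finite case, build $g$ from two one-sided truncations of $f$ and $-f$, and handle $t=\pm\infty$ with a single truncation, then pass to $\cA X$ by density. The only (immaterial) difference is that the paper invokes its Lemma~\ref{lem11} to clamp on both sides, producing a bounded $g$ with $0\le g\le 1$, whereas your direct use of the near-vector-lattice property yields a $g$ that may be unbounded above; since the lemma requires only $g\ge 0$, $\hat g(x_0)=0$, and $\{\hat g<1\}\subseteq\{\hat f\in U\}$, this extra boundedness is not needed.
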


\begin{proof}
First consider the case where $a = x_0(f) \in \R$.
Since $\{x\in \cA X: x(f) \in U\} = \{x\in \cA X: x(f-a) \in U'\}$, where $U' = \{t- a: t\in U\}$, we may further assume that $a = 0$.
Choose $\ep > 0$ such that $(-3\ep,3\ep) \subseteq U$.
By Lemma \ref{lem11}, there are functions $g_1, g_2$ in $A(X)$ such that $\ep \leq g_1,g_2 \leq 2\ep$,
\begin{align*}
g_1(x) &=
\begin{cases}
2\ep &\text{if $f(x) \geq 2\ep$}\\
\ep&\text{if $f(x) \leq \ep$},
\end{cases}\\
g_2(x) &=
\begin{cases}
2\ep &\text{if $-f(x) \ geq 2\ep$}\\
\ep&\text{if $-f(x) \leq \ep$},
\end{cases}
\end{align*}
Let $g = \ep^{-1}(g_1+g_2-2\ep)$.
Since $g_1,g_2 \geq \ep$, $g \geq 0$. Moreover, $g(x) = 1$ if $f(x) \notin (-2\ep, 2\ep)$, $g(x) = 0$ if $-\ep \leq f(x) \leq \ep$.
Suppose that $x\in \cA X$ and $x(f) \notin U$.  Then $V = \{z\in \cA X: |z(f)| > 2\ep\}$ is an open neighborhood of $x$ in $\cA X$.  Since $g = 1$ on $V\cap X$, $\hat{g}(x) = 1$  by continuity of $\hat{g}$ and density of $X$ in $\cA X$.   This proves that  $\hat{g}(x) < 1$ implies that $x(f) \in U$.
We also have $g(x) = 0$ if $f(x) \in [-\ep, \ep]$, $x\in X$.  Thus, $g = 0$ on $W \cap X$, where $W= \{z\in \cA X: z(f) \in (-\ep,\ep)\}$ is an open neighborhood of $x_0$. Hence $\hat{g}(x_0) = 0$ by continuity.

Next, consider the case where $x_0(f) = \infty$.  If $x_0(f) = -\infty$, the argument is similar.
Choose $m\in \R$ such that $(m,\infty] \subseteq U$.
By Lemma \ref{lem11}, there exists $g_1\in A(X)$ such that $m+1 \leq g_1 \leq m+2$, $g_1(x) = m+2$ if $f(x) \geq m+2$ and $g_1(x) = m+1$ if $f(x) \leq m+1$.  Set $g = m+2 - g_1$. Then $g\geq 0$.  
Moreover, $\{\hat{f} > m+2\}$ is a neighborhood of $x_0$ in $\cA X$ and $g = 0$ on the set $\{\hat{f} > m+2\} \cap X$.
Hence $\hat{g}(x_0) = 0$.
Similarly, $\hat{g} = 1$ on the set $\{\hat{f} < m+1\}$.  Therefore, $\hat{f} \geq m+1$ on the set $\{\hat{g} < 1\}$.  In particular, $\hat{g}(x) < 1$ implies $x(f) >m$ and thus $x(f) \in U$.
\end{proof}

\begin{prop}\label{prop14}
Let $A(X)$ be a near vector lattice. If $U$ is an open neighborhood of a point $x_0$ in $\cA X$, then there exists a function $f\in A(X)$, $0 \leq f \leq 1$, such that $\hat{f}(x_0) = 1$ and $\hat{f} = 0$ outside $U$.
\end{prop}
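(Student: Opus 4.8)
The plan is to exploit that $\cA X$ sits inside the product $\R_\infty^{A(X)}$, so that it suffices to handle a neighbourhood of $x_0$ determined by finitely many coordinates, then to assemble finitely many functions produced by Lemma \ref{lem12}, and finally to cut the result off using the near vector lattice property via Lemma \ref{lem11}.

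First I would reduce to a basic neighbourhood. Since $\cA X$ carries the subspace topology from $\R_\infty^{A(X)}$, there are $f_1,\dots,f_n\in A(X)$ and open neighbourhoods $U_i$ of $x_0(f_i)$ in $\R_\infty$ with $x_0\in V:=\{x\in\cA X:x(f_i)\in U_i,\ 1\le i\le n\}\subseteq U$; since a function vanishing on $\cA X\setminus V$ a fortiori vanishes on $\cA X\setminus U$, I may assume $U=V$ (the degenerate case $n=0$, i.e.\ $U=\cA X$, being settled by $f\equiv1$). Next, applying Lemma \ref{lem12} to each pair $(f_i,U_i)$ yields $g_i\in A(X)$ with $g_i\ge0$, $\hat g_i(x_0)=0$, and $\{x\in\cA X:\hat g_i(x)<1\}\subseteq\{x\in\cA X:x(f_i)\in U_i\}$. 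Put $g=2\sum_{i=1}^n g_i\in A(X)$. On the dense subset $X$ one has $0\le g$, $g\ge2g_i$ for each $i$, and $g\le2n\max_i g_i$; since ``$\le$'' and ``$\ge0$'' are closed relations on $\R_\infty$ and the functions involved (including finite maxima of the $\hat g_i$) are continuous, these inequalities pass to all of $\cA X$. From the upper bound, $\hat g_i(x_0)=0$, and $\hat g\ge0$ I get $\hat g(x_0)=0$; from $\hat g\ge2\hat g_i$ I get $\{x:\hat g(x)<2\}\subseteq\bigcap_i\{\hat g_i<1\}\subseteq U$, whence $\cA X\setminus U\subseteq\{x:\hat g(x)\ge2\}\subseteq\{x:\hat g(x)>1\}$.

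Then I would cut off. Apply Lemma \ref{lem11} to $-g\in A(X)$ with the constants $-1<-\tfrac34<-\tfrac12<-\tfrac14$ to obtain $w\in A(X)$ with $-1\le w\le-\tfrac14$, $w=-\tfrac14$ wherever $g\le\tfrac14$, and $w=-1$ wherever $g\ge1$, and set $f=\tfrac43(w+1)\in A(X)$, so that $0\le f\le1$, $f=1$ wherever $g\le\tfrac14$, and $f=0$ wherever $g\ge1$. To conclude: $\{x:\hat g(x)<\tfrac14\}$ is an open neighbourhood of $x_0$, on whose intersection with $X$ one has $g<\tfrac14$ and hence $f=1$, so by continuity of $\hat f$ and density of $X$ in $\cA X$ one gets $\hat f\equiv1$ there, in particular $\hat f(x_0)=1$; and for $x\in\cA X\setminus U$ one has $\hat g(x)>1$, so $\{x':\hat g(x')>1\}$ is an open neighbourhood of $x$ on whose intersection with $X$ one has $g>1$ and hence $f=0$, so again $\hat f\equiv0$ there, i.e.\ $\hat f(x)=0$.

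I expect the main subtlety to be arranging that $f$ vanishes on \emph{all} of $\cA X\setminus U$ rather than only on the part lying over ``bad'' values of the $f_i$'s; this is precisely why I scale by $2$, so that $\cA X\setminus U$ is pushed into the \emph{open} set $\{\hat g>1\}$, where the vanishing of $f$ transfers from the dense subset $X$ by continuity. A secondary point, the passage of inequalities from $X$ to $\cA X$, is immediate once one observes that $\{(s,t):s\le t\}$ and $[0,\infty]$ are closed in $\R_\infty^2$ and $\R_\infty$ respectively and that finite maxima of continuous $\R_\infty$-valued functions are continuous. Everything else is bookkeeping with Lemmas \ref{lem11} and \ref{lem12}.
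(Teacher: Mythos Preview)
Your proof is correct and follows essentially the same route as the paper's: reduce to a basic subbasic neighbourhood, apply Lemma~\ref{lem12} coordinatewise, sum the resulting $g_i$, and cut off via Lemma~\ref{lem11}, using continuity and density of $X$ to transfer the values of $f$ to all of $\cA X$. The only cosmetic differences are that the paper takes $g=\sum g_i$ (without your factor~$2$) and uses the threshold $2/3<1$ to get the open set $\{\hat g>2/3\}\supseteq\cA X\setminus U$, whereas you scale by~$2$ to achieve the same effect with threshold~$1<2$; your added care in passing inequalities from $X$ to $\cA X$ is a welcome explicitness that the paper leaves implicit.
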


\begin{proof}
There exist $f_1,\dots, f_n \in A(X)$ and open neighborhoods $V_1,\dots, V_n$ of $x_0(f_i)$ in $\R_\infty$ such that $\cap^n_{i=1}U_i \subseteq U$, where $U_i =\{x\in \cA X: x(f_i) \in V_i\}$.
By Lemma \ref{lem12}, there exist $g_1,\dots, g_n\in A(X)$ such that $g_i \geq 0$, $\hat{g_i}(x_0) = 0$, $W_i = \{\hat{g_i}<1\} \subseteq U_i$, $1\leq i\leq n$.
Let $g = \sum^n_{i=1}g_i \in A(X)$.  Then $g \geq 0$, $\hat{g}(x_0)  = 0$, and $\hat{g}\geq 1$ outside $U$.
By Lemma \ref{lem11}, there exists $h\in A(X)$ such that $1/3 \leq h \leq 2/3$, $h(x) = 2/3$ if $g(x) \geq 2/3$, $h(x) = 1/3$ if $g(x) \leq 1/3$.  Set $f = 2- 3h\in A(X)$.  Then $0 \leq f \leq 1$. Since $\{\hat{g} < 1/3\}$ is an open  neighborhood of $x_0$ in $\cA X$ and $f = 1$ on $\{\hat{g} <1/3\} \cap X$, $\hat{f}(x_0) = 1$ by continuity. Similarly, $\{\hat{g} > 2/3\}$ is an open neighborhood of $\cA X\bs U$ and $f= 0$ on $\{\hat{g} > 2/3\}\cap X$.  Thus $\hat{f} = 0$ on $\cA X\bs U$. 
\end{proof}

\begin{cor}\label{cor15}
Let $A(X)$ be a near vector
lattice.  If $U$ and $V$ are open subsets of $\cA X$ such that $\ol{U}\subseteq V$, then there exists a function $f\in A(X)$, $0\leq f\leq 1$, such that
\[ \hat{f}(x) = \begin{cases}
                     1 & x\in U\\
                     0 & x\notin V.
                   \end{cases}\]
\end{cor}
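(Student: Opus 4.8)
The plan is to upgrade the pointwise statement of Proposition \ref{prop14} to a statement valid on all of $U$ by a standard compactness-and-patching argument, and then to finish by a rescaling supplied by Lemma \ref{lem11}.

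First I would exploit compactness of $\cA X$. The set $\ol U$ is closed in $\cA X$, hence compact, and $\ol U\subseteq V$. For each $x_0\in\ol U$, Proposition \ref{prop14} applied to the open neighborhood $V$ of $x_0$ produces $f_{x_0}\in A(X)$ with $0\leq f_{x_0}\leq 1$, $\hat f_{x_0}(x_0)=1$, and $\hat f_{x_0}=0$ on $\cA X\bs V$. The open sets $\{x\in\cA X:\hat f_{x_0}(x)>1/2\}$, $x_0\in\ol U$, cover $\ol U$; choosing a finite subcover indexed by $x_1,\dots,x_n$ and setting $g=\sum_{i=1}^n f_{x_i}\in A(X)$, one gets $0\leq g\leq n$, while the (unique continuous) extension $\hat g=\sum_{i=1}^n\hat f_{x_i}$ vanishes on $\cA X\bs V$ and satisfies $\hat g(x)>1/2$ for every $x\in\ol U$.

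Next I would clip and rescale $g$. Applying Lemma \ref{lem11} to $g$ with, say, $a=1/8<b=1/6<c=1/4<d=1/2$ yields $g'\in A(X)$ with $1/8\leq g'\leq 1/2$, $g'(x)=1/2$ whenever $g(x)\geq 1/2$, and $g'(x)=1/8$ whenever $g(x)\leq 1/8$. Put $f=(8g'-1)/3\in A(X)$; then $0\leq f\leq 1$, $f(x)=1$ whenever $g(x)\geq 1/2$, and $f(x)=0$ whenever $g(x)\leq 1/8$. The verification of the two required identities is then the usual density/continuity argument in $\cA X$: the open set $\{x\in\cA X:\hat g(x)>1/2\}$ contains $\ol U\supseteq U$, and on its intersection with $X$ we have $g>1/2$, hence $f=1$; since $X$ is dense this forces $\hat f=1$ on $\{\hat g>1/2\}$, in particular on $U$. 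Likewise $\{x\in\cA X:\hat g(x)<1/8\}$ is open, contains $\cA X\bs V$ (where $\hat g=0$), and $f=0$ on its intersection with $X$, so $\hat f=0$ on $\cA X\bs V$.

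The one point that needs a little care — and where the argument genuinely differs from the proof of Proposition \ref{prop14} — is the vanishing condition off $V$: here we only know that $\hat g$ equals $0$ on the closed set $\cA X\bs V$, rather than being bounded below by a positive constant as happened "outside $U$" in Proposition \ref{prop14}. Consequently the truncation in Lemma \ref{lem11} must be set up with a strictly positive bottom level $a$, and one must invoke that $\{\hat g<a\}$ is an open set containing $\cA X\bs V$. Everything else is routine bookkeeping.
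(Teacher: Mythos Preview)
Your proof is correct and follows essentially the same approach as the paper's: the same compactness-and-sum argument to produce $g$, the same appeal to Lemma \ref{lem11} to truncate, and the same density/continuity argument to pass from $X$ to $\cA X$. The only differences are cosmetic choices of cutoff levels (the paper uses $1/4$ and $1/2$ and sets $f=4h-1$, you use $1/8$ and $1/2$ and set $f=(8g'-1)/3$); your explicit remark about why one needs a strictly positive bottom level $a$ and an open neighborhood of $\cA X\bs V$ is a helpful clarification of a point the paper leaves implicit.
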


\begin{proof}
By Proposition \ref{prop14}, for each $x\in \ol{U}$, there exists $f_x\in A(X)$, $0 \leq f_x\leq 1$, such that $\hat{f_x}(x) = 1$ and $\hat{f_x} = 0$ outside $V$.  The sets $\{\hat{f_x} > 1/2\}, x\in \ol{U}$, form  an open cover of $\ol{U}$.  
Hence there exist $x_1,\dots, x_n \in \ol{U}$ such that $\ol{U} \subseteq \cup^n_{i=1}\{\widehat{f_{x_i}}> 1/2\}$.
Let $g = \sum^n_{i=1}f_{x_i}$.  Then $g\in A(X)$, $g\geq 0$, $\hat{g}(x) > 1/2$ for all $x\in \ol{U}$ and $\hat{g} = 0$ outside $V$.
By Lemma \ref{lem11}, there exists $h\in A(X)$, $1/4 \leq h\leq 1/2$, such that 
\[ h(x) = \begin{cases}
             1/2 & \text{if $g(x) \geq 1/2$}\\
             1/4 & \text{if $g(x) \leq 1/4$}.
             \end{cases}.\]
Since $g(x) > 1/2$ for all $x\in U\cap X$, $h(x) = 1/2$ for all $x\in U \cap X$.  By continuity of $\hat{h}$ and the density of $X$ in $\cA X$, $\hat{h}(x) = 1/2$ for all $x\in \ol{U}$.
Similarly, $\hat{h}(x) = 1/4$ on the set $\{\hat{g} < 1/4\}$, which contains $\cA X \bs V$.
Finally, take $f = 4h-1$.  Then $f$ has the desired properties.
\end{proof}

\begin{prop}\label{prop18.5}
Let $A(X)$ be a near vector lattice.  Then the space $A_b(\cA X)$ consisting of all functions $\hat{f}$, where $f$ is a bounded function in $A(X)$, is a near vector lattice of functions on $\cA X$.
\end{prop}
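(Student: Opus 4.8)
The plan is to verify, one by one, the three defining features of a near vector lattice for $A_b(\cA X)$: that it is a unital vector subspace of $C(\cA X)$, that it separates points from closed sets, and that it is closed under the truncation operation. The first is routine: the bounded functions in $A(X)$ form a vector subspace, $f\mapsto\hat f$ is linear, and (as recorded just before Lemma \ref{lem12}) $\hat f$ is a real-valued continuous function on the compact Hausdorff space $\cA X$ whenever $f$ is bounded; since $1\in A(X)$ and $\hat1\equiv1$ on $\cA X$, the space is unital.

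For the separation property I would simply invoke Proposition \ref{prop14}: given $x_0\in\cA X$ and a closed set $F\subseteq\cA X$ with $x_0\notin F$, apply that proposition to the open neighbourhood $U=\cA X\setminus F$ to obtain $f\in A(X)$ with $0\le f\le1$, $\hat f(x_0)=1$ and $\hat f\equiv0$ on $F$; then $\hat f\in A_b(\cA X)$ witnesses the separation.

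The real content is the truncation property: given a bounded $f\in A(X)$, I must produce a bounded $h\in A(X)$ with $\hat h=\hat f$ on $\{\hat f\ge1\}$, $\hat h=0$ on $\{\hat f\le0\}$, and $0\le\hat h\le1$ on $\{0\le\hat f\le1\}$, all on $\cA X$. The mechanism is to choose $h$ inside $A(X)$ using the near vector lattice structure of $A(X)$ and then transfer pointwise relations from the dense set $X$ to $\cA X$ by continuity of the hatted functions: if $h=f$ on a ``thickened'' set $\{f>1-\delta\}\cap X$ with $\delta\in(0,1)$, then $\hat h=\hat f$ on the open set $\{\hat f>1-\delta\}\supseteq\{\hat f\ge1\}$; and if $0\le h\le\max(f,0)$ on $X$, then $0\le\hat h\le\max(\hat f,0)$ on $\cA X$ (the continuous extension of $\max(f,0)|_X$ being $\max(\hat f,0)$), which forces $\hat h=0$ wherever $\hat f\le0$ and keeps $\hat h\le\hat f$ wherever $0<\hat f<1$. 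So the point is to arrange a bounded $h\in A(X)$ having these two features; granting that, a short case split on whether $\hat f(x_0)\ge1$, $\hat f(x_0)\le0$, or $0<\hat f(x_0)<1$ yields all three required relations.

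Producing such an $h$ is where I expect the main difficulty to lie, and it is exactly the step that forces one to use that $A(X)$ is a near vector lattice rather than merely a subspace closed under the naive truncation in the definition: for such a naive $h$, the extension $\hat h(x_0)$ can be an arbitrary value of $[0,1]$ at points $x_0$ with $\hat f(x_0)\in\{0,1\}$ that lie outside the $\cA X$-closure of $\{f\ge1\}\cap X$ (resp.\ $\{f\le0\}\cap X$), so the naive choice is not enough. One has to build $h$ by hand from the axioms — applying the truncation axiom of $A(X)$ to suitable affinely shifted copies of $f$ (using the remark after the definition that the thresholds $0,1$ may be replaced by any $a<b$), combined with Lemma \ref{lem11} and the vector operations of $A(X)$ — so as to secure simultaneously the agreement with $f$ above the level $1-\delta$ and the domination by $\max(f,0)$. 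This cannot be shortcut by composing $f$ with a fixed continuous truncating function, nor by passing to a uniform limit of approximants (the limit would be $\max(\hat f,0)$, which need not lie in $A_b(\cA X)$), since near vector lattices are closed under neither operation. Once $h$ is in hand, the verification on $\cA X$ sketched above finishes the proof.
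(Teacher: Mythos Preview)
Your outline for the first two items (unital vector subspace, separation via Proposition~\ref{prop14}) is exactly what the paper does. The gap is in the truncation step: you never construct the function $h$, and the target you set yourself --- namely $h=f$ on $\{f>1-\delta\}$ together with $0\le h\le \max(f,0)$ on $X$ --- is stronger than what is needed and is not something the near-vector-lattice axiom or Lemma~\ref{lem11} will hand you. Those tools only compare the output with \emph{constants} on level sets of $f$; they give no mechanism for forcing $h\le f$ pointwise on the transition region $\{0<f<1\}$. (Indeed the paper's eventual $g$ does not satisfy $g\le\max(f,0)$.) So the sentence ``once $h$ is in hand\ldots'' is hiding the actual content of the proof.

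The paper sidesteps this by aiming for weaker, sufficient conditions on $X$: it first takes $u\in A(X)$ from the truncation axiom with thresholds $\tfrac13,\tfrac23$ (so $u=f$ on $\{f\ge\tfrac23\}$, $u=\tfrac13$ on $\{f\le\tfrac13\}$, $\tfrac13\le u\le\tfrac23$ elsewhere), and then uses Corollary~\ref{cor15} --- a Urysohn-type lemma on the compact space $\cA X$ --- to produce $v\in A(X)$ with $0\le v\le1$, $\hat v=1$ on $\{\hat u>\tfrac23\}$, $\hat v=0$ off $\{\hat u>\tfrac13\}$. The function $g=u+\tfrac{v}{3}-\tfrac13$ then satisfies: $g=f$ on the open set $\{f>\tfrac23\}$ (giving $\hat g=\hat f$ on $\{\hat f\ge1\}$), $g=0$ on the open set $\{f<\tfrac13\}$ (giving $\hat g=0$ on $\{\hat f\le0\}$), $g\ge0$ globally, and $u<1$ on $\{f<1\}$ so $\hat g\le1$ there. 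The point is that one needs an auxiliary bump $v$ adapted to level sets of $\hat u$ on $\cA X$, not a sharper pointwise inequality on $X$; Corollary~\ref{cor15} (ultimately, compactness of $\cA X$) is the missing ingredient you did not invoke. If you want to avoid Corollary~\ref{cor15}, you can get an equally good $v$ by applying Lemma~\ref{lem11} to $u$ with thresholds strictly inside $(\tfrac13,\tfrac23)$ and rescaling --- but this still produces the paper's shape of argument, not the ``$h\le\max(f,0)$'' route you sketched.
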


\begin{proof}
Obviously $A_b(\cA X)$ is a vector subspace of $C(\cA X)$. Since $A(X)$ is unital, so is $A_b(\cA X)$.  By Proposition \ref{prop14}, $A_b(\cA X)$ separates points from closed sets.
Let $f$ be a bounded function in $A(X)$.  Since $A(X)$ is a near vector lattice,  there exists $u\in A(X)$ such that $u(x) = f(x)$ if $f(x) \geq 2/3$, $u(x) = 1/3$ if $f(x) \leq 1/3$, and $1/3\leq u(x) \leq 2/3$ if $1/3\leq f(x) \leq 2/3$.  Note that $u$ is a bounded function. By Corollary \ref{cor15}, there exists $v\in A(X)$ such that $0\leq v\leq 1$, $\hat{v} = 1$ on $\{\hat{u} > 2/3\}$ and $\hat{v} = 0$ outside $\{\hat{u}> 1/3\}$. Let $g = u + v/3 - 1/3$.  Then $g$ is a bounded function in $A(X)$ and hence $\hat{g} \in A_b(\cA X)$.
Since 
\[ \{\hat{f} \leq 0\} \subseteq \{\hat{f} < \frac{1}{3}\} \subseteq \ol{\{f < \frac{1}{3}\}}\]
(where $\{f> 1/3\}\subseteq X$ and the closure is taken in $\cA X$),
$\hat{u} = 1/3$ and $\hat{v} =0$ on the set $\{\hat{f} \leq 0\}$.
Thus $\hat{f}(x) \leq 0$ implies $\hat{g}(x) = 0$.
Similarly, $\{\hat{f} \geq 1\} \subseteq \{\hat{f}> 2/3\} \subseteq \ol{\{f > 2/3\}}$ and hence on the set $\{\hat{f} \geq 1\}$, $\hat{u} = \hat{f} \geq  2/3$ and $\hat{v} =1$.  Therefore, $\hat{f}(x) \geq 1$ implies $\hat{g}(x) = \hat{f}(x)$.
Observe that $u \geq 1/3$ and $v\geq 0$ and thus $g \geq 0$.
Also, since $\{\hat{f} < 1\} \subseteq \ol{\{f< 1\}}$ and $u < 1$ on $\{f< 1\}$, $\hat{u} \leq 1$ on the set $\{\hat{f}<1\}$.
Thus $\hat{f}(x) < 1$ implies $\hat{g}(x) \leq \hat{u}(x) \leq 1$.
\end{proof}

\begin{cor}\label{cor18}
Let $A(X)$ be a near vector lattice.  Then $A_b(\cA X)$ and 
\[ B(\cA X) = \{\hat{f}: f\in A(X), 0 \leq f\leq 1\}\]
are articulated, compatible and directed sets of functions on $\cA X$.
\end{cor}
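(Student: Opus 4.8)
The plan is to reduce the statement entirely to Propositions \ref{prop18.5} and \ref{prop13}, so that essentially no new work is needed. First I would invoke Proposition \ref{prop18.5} to conclude that $A_b(\cA X)$ is a near vector lattice of functions on the space $\cA X$, which is compact Hausdorff and hence, in particular, Hausdorff. Then I would apply Proposition \ref{prop13} with $\cA X$ playing the role of the ambient topological space and $A_b(\cA X)$ playing the role of $A(X)$. That proposition then delivers directly the conclusion that $A_b(\cA X)$ and the set $\{g\in A_b(\cA X): 0\leq g\leq 1\}$ are articulated, compatible and directed subsets of $C(\cA X)$.

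The only remaining point is the (routine) identification of the set $\{g\in A_b(\cA X): 0\leq g\leq 1\}$ produced by Proposition \ref{prop13} with the set $B(\cA X)=\{\hat{f}: f\in A(X),\ 0\leq f\leq 1\}$ named in the statement. If $f\in A(X)$ satisfies $0\leq f\leq 1$, then $f$ is bounded, so $\hat{f}\in A_b(\cA X)$; and since $X$ is dense in $\cA X$ and $\hat{f}$ is continuous, the inequalities $0\leq f\leq 1$ on $X$ propagate to $0\leq \hat{f}\leq 1$ on $\cA X$. Conversely, any $g\in A_b(\cA X)$ with $0\leq g\leq 1$ has the form $g=\hat{f}$ for some bounded $f\in A(X)$, and restricting to the dense copy of $X$ inside $\cA X$ shows $0\leq f\leq 1$ there. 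Hence the two sets coincide.

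There is essentially no obstacle: all the substance has already been carried out in Propositions \ref{prop18.5} (preservation of the near vector lattice property under continuous extension to $\cA X$) and \ref{prop13} (that a near vector lattice and its order-bounded piece $B$ are articulated, compatible and directed). The corollary is just the composition of these two facts together with the bookkeeping identification of $B(\cA X)$ above.
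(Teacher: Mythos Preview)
Your proposal is correct and matches the paper's own proof essentially verbatim: the paper also observes that $B(\cA X)$ coincides with $\{\hat{f}\in A_b(\cA X): 0\leq \hat{f}\leq 1\}$ and then invokes Propositions \ref{prop18.5} and \ref{prop13}. Your write-up simply spells out the identification of the two descriptions of $B(\cA X)$ in slightly more detail than the paper does.
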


\begin{proof}
Since $B(\cA X)$ is precisely the set $\{\hat{f}\in A_b(\cA X): 0\leq \hat{f}\leq1\}$, the conclusions follow from Propositions \ref{prop18.5} and \ref{prop13}.
\end{proof}

\begin{prop}\label{prop16}
Let $A(X)$ be a near vector
lattice.  Suppose that $x_0 \in \cA X$ and  $f,g$ are functions in $A(X)$ such that $\hat{f} \geq  \hat{g}$ on an open  neighborhood of $x_0$ in $\cA X$. Then there exists $h\in A(X)$ such that $h \geq f, g$ and that $\hat{h} = \hat{f}$ on an open neighborhood of $x_0$ in $\cA X$.
\end{prop}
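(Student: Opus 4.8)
The plan is to produce $h$ by an explicit formula of the shape $h = f + q + 1 - \sigma$, where $q\in A(X)$ is a near-truncation of $g-f$ supplied by the near vector lattice axiom and $\sigma\in A(X)$ is a bump function concentrated near $x_0$ supplied by Corollary~\ref{cor15}.

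First I would make the routine reduction. Let $V$ be an open neighborhood of $x_0$ in $\cA X$ on which $\hat f\geq \hat g$. Since $\hat f$ and $\hat g$ restrict to $f$ and $g$ on $X$, this gives $f\geq g$ on $V\cap X$; equivalently the function $p:=g-f\in A(X)$ satisfies $p\leq 0$ on $V\cap X$. Then, applying the defining property of a near vector lattice to $p$ with the thresholds $0,1$, obtain $q\in A(X)$ with $q=p$ where $p\geq 1$, $q=0$ where $p\leq 0$, and $0\leq q\leq 1$ where $0\leq p\leq 1$. The only features of $q$ I will use are: $q\geq 0$ on $X$; $q+1\geq p$ on $X$ (a one-line case check on $\{p\geq 1\}$, $\{0\leq p\leq 1\}$ and $\{p\leq 0\}$); and $q=0$ on $V\cap X$, because $p\leq 0$ there.

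Next, using that $\cA X$ is compact Hausdorff, hence regular, choose an open set $W$ with $x_0\in W\subseteq \ol W\subseteq V$, and invoke Corollary~\ref{cor15} to obtain $\sigma\in A(X)$ with $0\leq\sigma\leq 1$, $\hat\sigma=1$ on $W$, and $\hat\sigma=0$ off $V$. Put $h=f+q+1-\sigma$; this lies in $A(X)$ because $A(X)$ is a unital vector space. The verification then breaks into three easy pieces. (i) $h\geq f$, since $h-f=q+(1-\sigma)\geq 0$. (ii) $h\geq g$: on $X\setminus V$ we have $\sigma=0$, so $h-g=q+1-p\geq 0$; on $V\cap X$ we have $p\leq 0$, so $h-g=(q+1-\sigma)-p\geq q+1-\sigma\geq 0$. (iii) $\hat h=\hat f$ on the open neighborhood $W$ of $x_0$: on $W\cap X$ we have $\sigma=1$ and $q=0$, hence $h=f$ there, and since $W\cap X$ is dense in $W$ and $\hat h,\hat f$ are continuous into the Hausdorff space $\R_\infty$, they agree on all of $W$.

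The one genuine subtlety --- and the reason the cutoff $\sigma$ is needed at all --- is that a near vector lattice only produces truncations that shed an additive constant across the transition zone, so the naive candidate $f+q$ need not dominate $g$, whereas $f+q+1$ does dominate $g$ but no longer equals $f$ near $x_0$. The device is to subtract that extra constant back off only near $x_0$; this costs nothing in the inequality $h\geq g$ precisely because the subtraction is confined to $V$, where $g\leq f$ already forces $h-g=(q+1-\sigma)-p\geq 0$ irrespective of the value of $\sigma$. I expect this balancing act to be the only real obstacle; everything else is bookkeeping, and notably neither compatibility nor articulation is invoked --- only the near vector lattice axioms together with Corollary~\ref{cor15}.
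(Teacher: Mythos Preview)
Your proof is correct and follows essentially the same approach as the paper's: truncate the difference of $f$ and $g$ via the near vector lattice axiom, invoke Corollary~\ref{cor15} for a bump supported in $V$, and form an affine combination. The paper works with $u=f-g$, truncates at the thresholds $-2,-1$ to obtain $v$, and sets $h=g+v+2-2w$; you work with $p=g-f$, truncate at $0,1$ to obtain $q$, and set $h=f+q+1-\sigma$. These are the same idea with different bookkeeping, and your choice of thresholds makes the constants slightly tidier.
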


\begin{proof}
Let $U$ and $V$ be open sets in $\cA X$ such that $x_0 \in U \subseteq \ol{U} \subseteq V$ and that $\hat{f} \geq  \hat{g}$ on $V$.
The function $u= f-g \in A(X)$ and $u \geq  0$ on $V\cap X$. Since $A(X)$ is a near vector lattice, there exists $v\in A(X)$ such that $v(x) = u(x)$ if $u(x) \geq -1$, $v(x) = -2$ if $u(x) \leq -2$ and $-2\leq v(x) \leq -1$ if $-2\leq u(x)\leq -1$.  By Corollary \ref{cor15}, there exists $w \in A(X)$ such that $0\leq w\leq 1$, $\hat{w}(x) = 1$ if $x\in U$ and $\hat{w}(x) = 0$ if $x\notin V$.  Let $h = g+ v+ 2 -2w\in A(X)$.
If $x\in U\cap X$, then $u(x)\geq 0$ and hence $v(x) = u(x) \geq 0$; also, $w(x) = 1$.  Therefore,
 $h(x) = g(x)+u(x) + 2-2w(x) = f(x)$.  Thus $\hat{h} = \hat{f}$ on $U$.

If $x\in X$ and $f(x) - g(x) = u(x) < 0$, then $x\notin V$.  Hence $w(x) = 0$.  Observe that $v \geq -2$.  Thus $h(x) = g(x) + v(x) +2 \geq g(x) > f(x)$.  
Finally, if $x\in X$ and $f(x)-g(x) = u(x)  \geq 0$, then $v(x) = u(x)$.  Since $w(x) \leq 1$ as well,  \[h(x) = g(x) + u(x) + 2-2w(x) \geq g(x)+u(x) = f(x) \geq g(x).\]
This completes the proof  that $h \geq f, g$.
\end{proof}

A subset $S$ of an ordered vector space $E$  is said to be {\em order bounded} if there exist $u,v \in E$ such that $u \leq x \leq v$ for all $x\in A$.  A {\em compactification} of a Hausdorff topological space $X$ is a compact Hausdorff space $\hat{X}$ that contains a dense subset which is homeomorphic to $X$.  If $\hat{X}$ is a compactification of $X$, we will regard $X$ as a dense subspace of $\hat{X}$.

\begin{prop}\label{prop18}
Let $A(X)$ and $A(Y)$ be near vector lattices and let $T: A(X) \to A(Y)$ be an order isomorphism.  
For any order bounded subset $S$ of $A(X)$, there are compactifications $\hat{X}$ and $\hat{Y}$ of $X$ and $Y$ respectively, and a homeomorphism $\vp_S : \hat{X} \to \hat{Y}$ such that for any $f, g \in S$ and any open set $U$ in $\hat{X}$, $f \geq  g$ on $U\cap X$ if and only if $Tf \geq  Tg$ on $\vp_S(U)\cap Y$.  
\end{prop}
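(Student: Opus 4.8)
The plan is to reduce the statement to Theorem~\ref{thm8} in four steps: pass to an order interval containing $S$, normalize it to a family of bounded functions, extend to a compactification via the near vector lattice machinery of Section~2, and translate the conclusion of Theorem~\ref{thm8} back. First, since $S$ is order bounded and $A(X)$ is unital and directed (Proposition~\ref{prop13}), I would pick $a\le b$ in $A(X)$ with $a<b$ everywhere on $X$, with $a\le 0$ and $1\le b$, and with $a\le s\le b$ for all $s\in S$; write $[a,b]_X=\{f\in A(X):a\le f\le b\}\supseteq S$. As $T$ is an order isomorphism it restricts to an order isomorphism of $[a,b]_X$ onto $[Ta,Tb]_Y$, and one verifies directly that $[a,b]_X$ is articulated, compatible and directed: it has greatest element $b$ and least element $a$, so is directed, and the other properties descend from those of $A(X)$ (Proposition~\ref{prop13}) by clamping every auxiliary function produced in those proofs between $a$ and $b$ (Lemma~\ref{lem11} and property (A3)). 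The same is true of $[Ta,Tb]_Y$, provided $Ta<Tb$ everywhere on $Y$ --- the degeneracy set $\{Ta=Tb\}$ is the first place where care is needed.

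Next, the fibrewise affine increasing bijection $f\mapsto(f-a)/(b-a)$ carries $[a,b]_X$ onto a family $\cD(X)$ of continuous functions with values in $[0,1]$, preserving the order and hence all the properties just listed; moreover $\cD(X)$ is exactly the set of functions $h\in\ti A(X)$ with $0\le h\le 1$, where $\ti A(X)=\{h\in C(X):h\cdot(b-a)\in A(X)\}$ is again a near vector lattice --- verifying the clamping axiom for $\ti A(X)$ is slightly delicate, since the clamping levels have become the functions $0$ and $b-a$. Performing the same construction on $Y$ gives $\cD(Y)$ and $\ti A(Y)$, and the transport of $T$ is an order isomorphism $\cD(X)\to\cD(Y)$. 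I would then take $\hat X$ and $\hat Y$ to be the closures of $X$ and $Y$ in $\R_\infty^{\ti A(X)}$ and $\R_\infty^{\ti A(Y)}$; these are compactifications because $\ti A(X)\supseteq\cD(X)$ separates points from closed sets. Since the members of $\cD(X)$ are bounded, they extend to continuous real-valued functions on $\hat X$, and by Proposition~\ref{prop18.5} and Corollary~\ref{cor18} the extended family $\cD(\hat X)$ is articulated, compatible and directed on the compact Hausdorff space $\hat X$; similarly $\cD(\hat Y)$ on $\hat Y$. Restriction to the dense sets $X$ and $Y$ identifies $\cD(\hat X)$ with $\cD(X)$ and $\cD(\hat Y)$ with $\cD(Y)$ as ordered sets, so the transported $T$ becomes an order isomorphism $\cD(\hat X)\to\cD(\hat Y)$, and Theorem~\ref{thm8} furnishes a homeomorphism $\vp_S:\hat X\to\hat Y$ such that for comparable $p\le q$ in $\cD(\hat X)$ and open $W\subseteq\hat X$, $p=q$ on $W$ if and only if the images agree on $\vp_S(W)$.

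It remains to translate this back. Restricting to $X$ (by density and continuity) and undoing the two normalizations --- each a fibrewise strictly increasing bijection, hence preserving equalities and all pointwise order relations --- yields: for comparable $f\le g$ in $[a,b]_X$ and open $U\subseteq\hat X$, $f=g$ on $U\cap X$ if and only if $Tf=Tg$ on $\vp_S(U)\cap Y$; equivalently, $\vp_S$ carries the $\hat X$-closure of $\{f<g\}$ onto the $\hat Y$-closure of $\{Tf<Tg\}$. To obtain the asserted ``$\ge$'' statement for arbitrary (not necessarily comparable) $f,g\in S$, I would use that $[a,b]_X$ contains $0$ and every $[0,1]$-valued function of $A(X)$: applying the near vector lattice clamping to $g-f-\ep$ produces $w_\ep\in A(X)$ with $0\le w_\ep\le 1$ and with the open sets $\{w_\ep>0\}$ increasing to $\{g>f\}$ as $\ep\downarrow 0$, so that ``$f\ge g$ on $U\cap X$'' is equivalent to ``$w_\ep=0$ on $U\cap X$ for every $\ep>0$''. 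Feeding the comparable pairs $0\le w_\ep$ into the equivalence already established, and then reassembling on the $Y$-side, one should convert this into ``$Tf\ge Tg$ on $\vp_S(U)\cap Y$''.

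The step I expect to be the main obstacle is precisely this last reassembly: upgrading Theorem~\ref{thm8}'s conclusion (an equality statement for \emph{comparable} functions) to the required ``$\ge$'' statement for \emph{arbitrary} $f,g\in S$. Since near vector lattices are not lattices, there is no genuine $\max$ or $\min$, and one must instead show that the clamping sets $\{w_\ep>0\}$ on $X$ reassemble under $T$ to exactly $\{Tg>Tf\}$ on $Y$ --- this is where the ``compatibility'' and near vector lattice hypotheses have to be exploited carefully (on the compact spaces $\hat X$, $\hat Y$ a near vector lattice behaves more like a genuine lattice, which helps). A secondary technical point, flagged above, is coping with the degeneracy $Ta=Tb$ when establishing the structure of $[Ta,Tb]_Y$ and of its normalization.
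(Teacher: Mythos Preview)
Your overall architecture --- normalize $S$ into a $[0,1]$-valued family inside an auxiliary near vector lattice, extend to a compactification, and invoke Theorem~\ref{thm8} --- is exactly the paper's. But you have correctly located the two genuine difficulties, and in both places your proposed fix is not the one that works.

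On the degeneracy $Ta=Tb$: this is not a side issue you can clean up later; your transported order isomorphism $\cD(X)\to\cD(Y)$ is built by dividing by $Tb-Ta$, and nothing prevents this from vanishing at points of $Y$. The paper avoids the problem altogether by a different normalization. After translating so that $T0=0$ and $0\le s\le f_1$ for all $s\in S$, it sets
\[
f_0 \;=\; 2f_1 + T^{-1}(2Tf_1) + 1 + T^{-1}1,
\]
so that $f_0\ge 1$ \emph{and} $Tf_0\ge 1$ simultaneously (since $f_0\ge T^{-1}1$). One then takes $F(X)=\{f/f_0:f\in A(X)\}$ and $F(Y)=\{g/Tf_0:g\in A(Y)\}$, which are near vector lattices with no degeneracy, and the transported map $\hat T(\hat f)=[T(ff_0)/Tf_0]^{\,\hat{}\,}$ is an order isomorphism between the associated $B(\hat X)$ and $B(\hat Y)$.

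On the upgrade from ``equality for comparable pairs'' to ``$\ge$ for arbitrary $f,g\in S$'': your $w_\ep$ scheme does not reassemble on the $Y$-side. Knowing that $0=w_\ep$ on $U\cap X$ transports to an equality of the \emph{images of $0$ and $w_\ep$} on $\vp_S(U)\cap Y$ tells you nothing about $Tf$ versus $Tg$, because $T$ is nonlinear and there is no relation between $T$-images of the $w_\ep$ and the functions $Tf,Tg$. The paper's device is Proposition~\ref{prop16}: given $f\ge g$ on $U\cap X$ and a point $x_0\in U$, produce $h\in F(X)$ with $h\ge f/f_0,\,g/f_0$ globally and $\hat h=(f/f_0)^{\hat{}}$ on a neighbourhood $V$ of $x_0$ (then clamp $h$ into $B(\hat X)$ without disturbing this). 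Now $h$ and $g/f_0$ are globally comparable, so $\hat T\hat h\ge \hat T(g/f_0)^{\hat{}}$ everywhere; and $h$ agrees with $f/f_0$ on $V$, so by the Theorem~\ref{thm8} conclusion $\hat T\hat h=\hat T(f/f_0)^{\hat{}}$ on $\vp_S(V)$. Combining gives $Tf\ge Tg$ at $\vp_S(x_0)$, and $x_0\in U$ was arbitrary. This local-majorant trick is the missing idea.
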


\begin{proof}
By translation, we may assume that $T0 = 0$ and that there is a function $f_1 \in A(X)$ such that $0 \leq f \leq f_1$ for all $f\in S$.  
Let 
\[ f_0 = 2f_1 + T^{-1}(2Tf_1)+ 1 + T^{-1}1.\]  
Then $0 \leq f  \leq   \frac{1}{2}f_0$, $0 \leq Tf \leq  \frac{1}{2}Tf_0$ for all $f\in S$, $f_0 \geq 1$ and $Tf_0 \geq 1$.
Define vector subspaces $F(X)$ and $F(Y)$ of $C(X)$ and $C(Y)$ respectively by
\[ F(X) = \{f/f_0: f\in A(X)\} \quad \text{and} \quad F(Y) = \{g/Tf_0: g\in A(Y)\}.\]
If $f\in A(X)$, there exists $u\in A(X)$ such that $u(x) = f(x)$ if $f(x) \geq 1$, $u(x) = 0$ if $f(x) \leq 0$ and $0 \leq u(x) \leq 1$ if $0 \leq f(x) \leq 1$.  Then $u/f_0 \in F(X)$.
Since $f_0 \geq 1$, if $f/f_0 \in F(X)$ and $(f/f_0)(x) \geq 1$, then $f(x) \geq 1$ and hence $(u/f_0)(x) = (f/f_0)(x)$.
If $(f/f_0)(x) \leq 0$, then $f(x) \leq 0$ and hence $(u/f_0)(x) = 0$.  If $0 \leq (f/f_0)(x) \leq 1$, then either $0 \leq f(x) \leq 1$ or $f(x) \geq 1$.  In the former case, $0\leq u(x) \leq 1$ and thus $0\leq (u/f_0)(x) \leq 1$.
In the latter case, $u(x) = f(x)$ and thus $0 \leq (u/f_0)(x) = (f/f_0)(x) \leq 1$.  Obviously, $F(X)$ is unital.  It also separates points from closed sets since $A(X)$ does so.  This proves that $F(X)$ is a near vector lattice.  Similarly, $F(Y)$ is a near vector lattice.

Denote by $\hat{X}$ and $\hat{Y}$ the $F(X)$- and $F(Y)$- compactifications of $X$ and $Y$ respectively.
By Corollary \ref{cor18}, the sets
\[ B(\hat{X}) = \{\hat{f}: f\in F(X), 0 \leq f\leq 1\}  \text{\ and\ } B(\hat{Y}) = \{\hat{g}: g\in F(Y), 0\leq g \leq 1\}\]
are articulated, compatible and directed sets of functions.
It is easy to check that the map $\hat{T}: B(\hat{X}) \to B(\hat{Y})$ defined by $\hat{T}(\hat{f}) = [T(ff_0)/Tf_0]\,^{\hat{ }}$
is an order isomorphism.
By Theorem \ref{thm8}, there is a homeomorphism $\vp_S: \hat{X} \to \hat{Y}$ such that for any $\hat{u} \leq \hat{v}$ in $B(\hat{X})$ and any open set $U$ of $\hat{X}$, $\hat{u} = \hat{v}$ on $U$ if and only if $\hat{T}\hat{u} = \hat{T}\hat{v}$ on $\vp_S(U)$.

Now suppose $f, g\in S$, $U$ is an open set in $\hat{X}$ and $f\geq  g$ on $U \cap X$.
Fix $y_0 \in \vp_S(U)\cap Y$.  Then $x_0 = \vp_S^{-1}(y_0) \in U$. Since $0 \leq f/f_0, g/f_0 \leq 1$ are functions in $F(X)$ , $(f/f_0)^{\hat{ }}, (g/f_0)^{\hat{ }} \in B(\hat{X})$.  Moreover, $f/f_0 \geq g/f_0$ on $U \cap X$ implies that $(f/f_0)^{\hat{ }}\geq (g/f_0)^{\hat{ }}$ on $U$.
By Proposition \ref{prop16}, there exists $h_1\in F(X)$ such that $h_1 \geq f/f_0, g/f_0$ and that $\hat{h_1} = (f/f_0)^{\hat{ }}$ on an open neighborhood $V$ of $x_0$ in $\hat{X}$.
Since $F(X)$ is a near vector lattice, there exists $h\in F(X)$ such that for $x\in X$, $h(x) = h_1(x)$ if $h_1(x) \leq 3/4$, $h(x) = 1$ if $h_1(x) \geq 1$ and $3/4\leq h(x) \leq 1$ if $3/4 \leq h_1(x)\leq 1$.
By choice of $f_0$, $0 \leq f/f_0, g/f_0 \leq 1/2$. Thus $h\geq f/f_0, g/f_0\geq 0$.  Clearly $h \leq 1$.  Therefore, $\hat{h} \in B(\hat{X})$.
Note that ${h_1} = f/f_0\leq 3/4$ on $V\cap X$.  Thus $h= h_1 = f/f_0$ on $V\cap X$.
Hence $\hat{h} = (f/f_0)^{\hat{ }}$ on the open neighborhood $V$ of $x_0$.
By the previous paragraph,  $\hat{T}(f/f_0)^{\hat{ }}= \hat{T}\hat{h}$ on $\vp_S(V)$.  But $\hat{T}\hat{h} \geq \hat{T}(g/f_0)^{\hat{ }}$.  Therefore,
$[Tf/Tf_0]^{\hat{ }} \geq [Tg/Tf_0]^{\hat{ }}$ on $\vp_S(V)$.  Consequently, 
$Tf\geq Tg$ on $\vp_S(V) \cap Y$.
Since $x_0 \in V$, $y_0 \in \vp_S(V)$.  Thus $Tf(y_0) \geq Tg(y_0)$.
As this holds for all $y_0 \in \vp_S(U) \cap Y$, we see that $Tf \geq Tg$ on $\vp_S(U) \cap Y$.
The reverse implication follows by symmetry.
\end{proof}

The next result allows us to remove the dependence of the homeomorphism on the particular order bounded set in Proposition \ref{prop18}.

\begin{lem}\label{lem19.1}
Let $X$ and $Y$ be Hausdorff topological spaces and let $T: A(X)\to A(Y)$  be a map between  vector subspaces $A(X)$ and $A(Y)$ of $C(X)$ and $C(Y)$ respectively. 
Suppose that $S_1\subseteq S_2$ are subsets of $A(X)$.
Assume that for $i=1,2$, there are compactifications $\hat{X}_i$ and $\hat{Y}_i$ of $X$ and $Y$ respectively and a homeomorphism $\vp_i:\hat{X}_i \to \hat{Y}_i$ such that for all $f, g \in S_i$ and any open set $U$ in $\hat{X}_i$, $f\geq g$ on  $U\cap X$ if and only if $Tf \geq Tg$ on $\vp_i(U)\cap Y$.
Also assume that if $y\in Y$ and $D$ is a closed subset of $Y$ not containing $y$, then there exist $f, g \in S_1$ with $Tf(y)\neq Tg(y)$ and $Tf = Tg$ on $D$. 
Then for all $f,g\in S_2$ and all open sets $U$ in $\hat{X}_1$, $f\geq g$ on  $U\cap X$ implies that $Tf \geq Tg$ on $\vp_1(U)\cap Y$.
\end{lem}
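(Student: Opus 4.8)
The plan is to argue by contradiction, reducing everything to one structural fact about each of the two homeomorphisms. First I would record that whenever $u,v\in S_i$ (so the $\vp_i$-hypothesis applies to the pairs $(u,v)$ and $(v,u)$), one has
\[ \vp_i\bigl(\ol{\{u<v\}}^{\,\hat X_i}\bigr)=\ol{\{Tu<Tv\}}^{\,\hat Y_i} \qquad\text{and hence}\qquad \vp_i\bigl(\ol{\{u\ne v\}}^{\,\hat X_i}\bigr)=\ol{\{Tu\ne Tv\}}^{\,\hat Y_i}, \]
the closures being taken in the indicated compactifications. Indeed, for an open set $W$ in $\hat X_i$ the condition ``$u\ge v$ on $W\cap X$'' just says $W\cap\{u<v\}=\emptyset$, so the largest such $W$ is $\hat X_i\bs\ol{\{u<v\}}^{\,\hat X_i}$; by the $\vp_i$-hypothesis $\vp_i$ maps this onto the largest open subset of $\hat Y_i$ disjoint from $\{Tu<Tv\}$, namely $\hat Y_i\bs\ol{\{Tu<Tv\}}^{\,\hat Y_i}$, and one takes complements. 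The second identity follows since $\{u\ne v\}=\{u<v\}\cup\{v<u\}$ and closure commutes with finite unions.

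Now suppose $f,g\in S_2$, $U$ is open in $\hat X_1$, $f\ge g$ on $U\cap X$, and — aiming for a contradiction — $Tf(y_0)<Tg(y_0)$ for some $y_0\in\vp_1(U)\cap Y$; set $x_0=\vp_1^{-1}(y_0)\in U$. Two observations are immediate. Since $U$ is open and misses $\{f<g\}$, it misses $\ol{\{f<g\}}^{\,\hat X_1}$, so $x_0\notin\ol{\{f<g\}}^{\,\hat X_1}$. On the other hand $\{Tf<Tg\}$ is open in $Y$ and contains $y_0$, so the separation hypothesis, applied to $y_0$ and $D=Y\bs\{Tf<Tg\}$, produces $p,q\in S_1$ with $Tp(y_0)\ne Tq(y_0)$ and $Tp=Tq$ on $D$; the latter forces $\{Tp\ne Tq\}\subseteq\{Tf<Tg\}$ in $Y$.

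The next step uses that $p,q\in S_1\subseteq S_2$, so the displayed identities hold for $\vp_1$ (from the $S_1$-hypothesis) and for $\vp_2$ (from the $S_2$-hypothesis), and also for $\vp_2$ with the pair $f,g$. From $\{Tp\ne Tq\}\subseteq\{Tf<Tg\}$, taking $\hat Y_2$-closures and pulling back through $\vp_2$ gives $\ol{\{p\ne q\}}^{\,\hat X_2}\subseteq\ol{\{f<g\}}^{\,\hat X_2}$, and intersecting with $X$ gives $\{p\ne q\}\subseteq\ol{\{f<g\}}^{\,X}$, the closure taken in $X$. Since $U$ is open and misses $\{f<g\}$ it misses $\ol{\{f<g\}}^{\,X}$ as well, so $U\cap\{p\ne q\}=\emptyset$, and therefore $x_0\in U$ forces $x_0\notin\ol{\{p\ne q\}}^{\,\hat X_1}$. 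But $Tp(y_0)\ne Tq(y_0)$ means $y_0\in\{Tp\ne Tq\}\subseteq\ol{\{Tp\ne Tq\}}^{\,\hat Y_1}$, and the displayed identity for $\vp_1$ applied to $p,q$ gives $x_0=\vp_1^{-1}(y_0)\in\ol{\{p\ne q\}}^{\,\hat X_1}$ — the desired contradiction. Hence $Tf(y_0)\ge Tg(y_0)$ for every $y_0\in\vp_1(U)\cap Y$, which is the assertion.

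The step I expect to be the main obstacle — and the reason the separation hypothesis is indispensable — is precisely that the clean identity $\vp_1\bigl(\ol{\{f<g\}}^{\,\hat X_1}\bigr)=\ol{\{Tf<Tg\}}^{\,\hat Y_1}$ is \emph{not} available, because $f,g$ need not lie in $S_1$ and $\vp_1$ is only known to ``implement'' $T$ on $S_1$. Separation is what lets one interpose a set $\{Tp\ne Tq\}$ with $p,q\in S_1$ between $y_0$ and the complement of $\{Tf<Tg\}$, so that $\vp_1$ can legitimately be applied to $p,q$, while $\vp_2$ — which does cooperate with $f,g$ — carries the relevant containment back down to $X$.
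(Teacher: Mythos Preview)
Your proof is correct, but it takes a somewhat different route from the paper's. The paper does not isolate your closure identity $\vp_i\bigl(\ol{\{u<v\}}^{\hat X_i}\bigr)=\ol{\{Tu<Tv\}}^{\hat Y_i}$; instead, given the open set $U=U_1\subseteq\hat X_1$, it first picks an open $U_2\subseteq\hat X_2$ with $U_1\cap X=U_2\cap X$ and proves the auxiliary containment $\ol{\vp_1(U_1)\cap Y}^{\,Y}\subseteq\ol{\vp_2(U_2)\cap Y}^{\,Y}$, \emph{independently of $f$ and $g$}. That step uses separation with $D=\ol{\vp_2(U_2)\cap Y}^{\,Y}$: a putative $y\in\vp_1(U_1)\cap Y\setminus D$ would yield $f',g'\in S_1$ with $Tf'=Tg'$ on $D$ but $Tf'(y)\ne Tg'(y)$, and running $f',g'$ through $\vp_2$ and then $\vp_1$ gives a contradiction. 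Once the containment is in hand, any $f,g\in S_2$ with $f\ge g$ on $U\cap X=U_2\cap X$ satisfy $Tf\ge Tg$ on $\vp_2(U_2)\cap Y$ by the $\vp_2$-hypothesis, hence on its $Y$-closure by continuity, hence on $\vp_1(U)\cap Y$.

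So the paper's argument compares the \emph{images of $U$} under the two homeomorphisms and finishes with a continuity step, while you work pointwise by contradiction at a bad $y_0$, using separation relative to the specific set $\{Tf<Tg\}$. Your packaging of the $\vp_i$-hypothesis into the closure identity is a nice reusable observation (and makes the logic very transparent); the paper's version has the mild advantage that the auxiliary containment it proves is a statement purely about the two compactifications, decoupled from the particular $f,g\in S_2$. Either way the essential mechanism is the same: interpose a pair from $S_1$ via separation so that both $\vp_1$ and $\vp_2$ can legitimately be applied. One very minor point: your initial observation that $x_0\notin\ol{\{f<g\}}^{\hat X_1}$ is correct but never used and can be dropped.
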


\begin{proof}
Let $U$ be an open set in $\hat{X}_1$. For notational convenience, let $U_1 = U$.  There is  an open set  $U_2$ in $\hat{X_2}$ such that  $U_1 \cap X=  U_2\cap X$.
First we show that  $\ol{\vp_1(U_1)\cap Y}^{{Y}} \subseteq  \ol{\vp_2(U_2)\cap Y}^{{Y}}$.  
Suppose otherwise.  Let $D =\ol{\vp_2(U_2)\cap Y}^{{Y}}$. Then $D$ is a closed set in $Y$ and there exists  $y\in \vp_1(U_1)\cap Y$ such that  $y \notin D$.
By the assumption, there exist  $f', g' \in S_1\subseteq S_2$ with $Tf'(y)\neq Tg'(y)$ and $Tf' = Tg'$ on $D$. 
In particular, $Tf' =Tg'$ on $\vp_2(U_2) \cap Y$ and hence $f' = g'$ on $U_2\cap X= U_1\cap X$.  But then $Tf' = Tg'$ on $\vp_1(U_1)\cap Y$.  This contradicts the fact that $Tf'(y) \neq Tg'(y)$.

Now assume that $f, g\in S_2$ and $f\geq g$ on $U\cap X = U_1\cap X= U_2\cap X$.
Then $Tf \geq Tg$ on $\vp_2(U_2)\cap Y$.  
By continuity of $Tf$ and  $Tg$, $Tf \geq Tg$ on  $\ol{\vp_2(U_2)\cap Y}^{{Y}}$, which contains $\ol{\vp_1(U_1)\cap Y}^{{Y}}$ by the previous paragraph.  This proves that $Tf \geq Tg$ on $\vp_1(U_1) \cap Y= \vp_1(U)\cap Y$, as desired.
\end{proof}

\begin{thm}\label{thm18.1}
Let $A(X)$ and $A(Y)$ be near vector lattices and let $T: A(X) \to A(Y)$ be an order isomorphism.  
There are compactifications $\hat{X}$ and $\hat{Y}$ of $X$ and $Y$ respectively, and a homeomorphism $\vp : \hat{X} \to \hat{Y}$ such that for any $f, g \in A(X)$ and any open set $U$ in $\hat{X}$, $f \geq  g$ on $U\cap X$ if and only if $Tf \geq  Tg$ on $\vp(U)\cap Y$.  
\end{thm}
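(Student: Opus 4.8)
The plan is to assemble the single homeomorphism out of the ``local'' homeomorphisms furnished by Proposition \ref{prop18}, using Lemma \ref{lem19.1} to remove the dependence on the particular order bounded set. Two elementary observations will be used repeatedly: since $A(X)$ is directed (Proposition \ref{prop13}), every finite subset of $A(X)$ is order bounded, the union of two order bounded subsets of $A(X)$ is again order bounded, and the same holds for $A(Y)$; moreover an order isomorphism carries order bounded sets to order bounded sets. The distinguished set to work with is
\[ S_0=\{T^{-1}h:h\in A(Y),\ 0\le h\le 1\}\cup\{k\in A(X):0\le k\le 1\}, \]
which is order bounded because the first family lies between $T^{-1}0$ and $T^{-1}1$ (here $1$ is the constant function and $T$ preserves order in both directions), the second lies between $0$ and $1$, and $A(X)$ is directed. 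I would then apply Proposition \ref{prop18} to $S_0$ to obtain compactifications $\hat X$ and $\hat Y$ and a homeomorphism $\vp\colon\hat X\to\hat Y$ such that, for all $f,g\in S_0$ and open $U\subseteq\hat X$, $f\ge g$ on $U\cap X$ iff $Tf\ge Tg$ on $\vp(U)\cap Y$, and claim this $\vp$ works for every pair $f,g\in A(X)$.

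For the forward implication, let $f,g\in A(X)$, let $U\subseteq\hat X$ be open, and assume $f\ge g$ on $U\cap X$. Choose an order bounded $S$ with $S_0\cup\{f,g\}\subseteq S$ and apply Proposition \ref{prop18} to $S$, obtaining $\hat X_2,\hat Y_2,\vp_2$. To invoke Lemma \ref{lem19.1} with $S_1=S_0\subseteq S_2=S$ and the pairs $(\hat X,\hat Y,\vp)$, $(\hat X_2,\hat Y_2,\vp_2)$, I must check its separation hypothesis. Given $y\in Y$ and a closed set $D$ of $Y$ with $y\notin D$, the near vector lattice $A(Y)$ separates points from closed sets; scaling and then applying Lemma \ref{lem11} produces $h\in A(Y)$ with $0\le h\le 1$, $h(y)=1$, $h=0$ on $D$. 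Then $f'=T^{-1}h$ and $g'=T^{-1}0$ lie in $S_0$, with $Tf'(y)=1\ne0=Tg'(y)$ and $Tf'=Tg'=0$ on $D$. Lemma \ref{lem19.1} then yields $Tf\ge Tg$ on $\vp(U)\cap Y$.

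For the reverse implication I would run the same argument for the order isomorphism $T^{-1}\colon A(Y)\to A(X)$, using the set $T(S_0)$ in place of $S_0$ and the homeomorphism $\vp^{-1}\colon\hat Y\to\hat X$ in place of $\vp$. Indeed $T(S_0)$ is order bounded in $A(Y)$, and by the two halves of the definition of $S_0$ it contains both $\{h\in A(Y):0\le h\le 1\}$ and $\{Tk:k\in A(X),\ 0\le k\le 1\}$; rewriting the equivalence from Proposition \ref{prop18} (put $f=T^{-1}h_1$, $g=T^{-1}h_2$, $U=\vp^{-1}(V)$) shows that the pair $(\hat Y,\hat X,\vp^{-1})$ together with the set $T(S_0)$ satisfies the ``$i=1$'' hypotheses of Lemma \ref{lem19.1} relative to $T^{-1}$. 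The separation hypothesis relative to $T^{-1}$ is verified exactly as above, now using that $A(X)$ separates points from closed sets together with Lemma \ref{lem11}, the required functions lying in $T(S_0)$ by the $\{k\in A(X):0\le k\le 1\}$ part. Enlarging $T(S_0)$ to an order bounded subset of $A(Y)$ absorbing two given functions, applying Proposition \ref{prop18} once more and then Lemma \ref{lem19.1}, one obtains: for all $h_1,h_2\in A(Y)$ and open $V\subseteq\hat Y$, $h_1\ge h_2$ on $V\cap Y$ implies $T^{-1}h_1\ge T^{-1}h_2$ on $\vp^{-1}(V)\cap X$. Taking $h_1=Tf$, $h_2=Tg$ and $V=\vp(U)$ gives precisely the implication $Tf\ge Tg$ on $\vp(U)\cap Y\Rightarrow f\ge g$ on $U\cap X$, which would complete the proof.

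The only real obstacle is the coordination just carried out: Proposition \ref{prop18} manufactures a fresh pair of compactifications for each order bounded set, and Lemma \ref{lem19.1} transfers just one implication at a time, so producing one $\vp$ valid in both directions forces the particular, deliberately symmetric choice of $S_0$ --- large enough after applying $T^{-1}$ to make the forward separation hypothesis available, and large enough in $A(X)$ that its image $T(S_0)$ supplies the separation hypothesis for the $T^{-1}$ direction. The remaining steps (the order-boundedness bookkeeping and the construction of $[0,1]$-valued separating functions via Lemma \ref{lem11}) are routine consequences of the definition of a near vector lattice.
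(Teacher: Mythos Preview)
Your proof is correct and follows essentially the same strategy as the paper's: fix one order bounded ``base'' set rich enough to supply the separation hypothesis of Lemma~\ref{lem19.1}, take the resulting $\vp$ from Proposition~\ref{prop18}, and then for arbitrary $f,g$ enlarge the base set and apply Lemma~\ref{lem19.1}. The paper normalizes $T0=0$ and takes the interval $S=\{f:0\le f\le f_1\}$ for a suitable $f_1$, then dispatches the reverse implication with a terse ``the converse follows by symmetry''; your choice of $S_0$ as a symmetric union and your explicit unwinding of that symmetry (via $T(S_0)$ and $\vp^{-1}$) amount to the same argument, written out in full. Your use of Lemma~\ref{lem11} to manufacture the $[0,1]$-valued separating function is a harmless variant of the paper's appeal to Proposition~\ref{prop14}.
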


\begin{proof}
Without loss of generality, we may assume that $T0 =0$. Let $f_0 = T^{-1}1\in A(X)$.
Since $A(X)$ is directed by Proposition \ref{prop13}, there exists $f_1\in A(X)$ such that $f_1 \geq f_0, 1$.  The set $S = \{f\in A(X): 0 \leq f\leq f_1\}$ is an order bounded subset of $A(X)$ such that $f\in S$ for any $f\in A(X)$ with $0\leq f\leq 1$ and $T^{-1}g\in S$ for any $g\in A(Y)$ with $0\leq g\leq 1$.
By Proposition \ref{prop18}, there are compactifications $\hat{X}$ and $\hat{Y}$ of $X$ and $Y$ respectively, and a homeomorphism $\vp:\hat{X}\to \hat{Y}$ such that for any $h_1,h_2\in S$ and any open set $U$ in $\hat{X}$, $h_1\geq h_2$ on $U\cap X$ if and only if $Th_1\geq Th_2$ on $\vp(U)\cap Y$.
Suppose that $f,g\in A(X)$ and $U$ is an open set in $\hat{X}$.  We show that $f\geq g$ on $U\cap X$ implies $Tf\geq Tg$ on $\vp(U)\cap Y$. The converse follows by symmetry.
Choose $u_1, u_2\in A(X)$ such that 
\[ u_1 \leq 0, f, g \text{ and } f_1,f,g \leq u_2.\]
Let $S' = \{h\in A(X): u_1\leq h\leq u_2\}$.
Then $S'$ is an order bounded set in $A(X)$ containing $S$.
By Proposition \ref{prop18}, there are compactifications $\hat{X}'$ and $\hat{Y}'$ of $X$ and $Y$ respectively, and a homeomorphism $\vp': \hat{X}'\to \hat{Y}'$ such that for any $h_1, h_2\in S'$ and any open set $U'$ in $\hat{X}'$, $h_1\geq h_2$ on $U'\cap X$ if and only if $Th_1\geq Th_2$ on $\vp'(U')\cap X$.
Suppose $y \in Y$ and $D$ is a closed set in $Y$ not containing $y$.
There is a closed set $D_1 \in \cA Y$ such that $D_1\cap Y = D$.  By  Proposition \ref{prop14}, there exists $h \in A(Y)$, $0 \leq h\leq 1$, such that $h(y) \neq 0$ and $\hat{h}(z) = 0$ for all $z\in D_1$, where $\hat{h}$ is the continuous extension of $h$ onto $\cA Y$.
In particular, $h(z) =0$ for all $z\in D$.
Since  the functions $0= T^{-1}0$ and $T^{-1}h$ lie in $S$, this verifies that Lemma \ref{lem19.1} applies to the map $T:A(X)\to A(Y)$, the sets $S \subseteq S'\subseteq A(X)$ and the homeomorphisms $\vp:\hat{X}\to \hat{Y}$, $\vp':\hat{X}'\to \hat{Y}'$. Now $f, g\in S'$ and $f \geq g$ on $U\cap X$, where $U$ is an open set in $\hat{X}$. By the lemma, $Tf \geq Tg$ on the set $\vp(U)\cap Y$.
\end{proof}

In view of Theorem \ref{thm18.1}, it would be of interest to provide examples of near vector lattices. If $X$ is a Hausdorff topological space and $A(X)$ is a subspace of $C(X)$, let  $A_b(X)$ consist of the bounded functions in $A(X)$ and let $A^\loc(X)$ 
be the space of all functions $f\in C(X)$ such that for every $x_0\in X$, there are an open neighborhood $U$ of $x_0$ and a function $g\in A(X)$ such that $f=g$ on $U$.  The space $A^\loc_b(X)$ is the subspace of all bounded functions in $A^\loc(X)$.

\bigskip

\noindent{\bf Examples A}. Let $X$ be a Hausdorff topological space unless otherwise specified. 
\begin{enumerate}
\item  Any unital vector sublattice $A(X)$ of $C(X)$ that separates points from closed sets is a near vector lattice.
\item If $A(X)$ is a near vector lattice, then so is $A_b(X)$.
\item Let $X$ be an open set in a Banach space.  For $1\leq p\leq \infty$, let $C^p(X)$ be the space of all $p$-times continuously differentiable real-valued functions on $X$.  
If $C^p(X)$ separates points from closed sets, then $C^p(X)$ is a near vector lattice.
\item Let $X$ be an open set in a Banach space and let $1\leq p \leq \infty$.  Denote by $C^p(\ol{X})$, the space of all  continuous functions $f\in C(\ol{X})$ such that its restriction $f_{|X}\in C^p(X)$.  Denote by  $C^p_*(\ol{X})$ the subspace of all functions $f\in C^p(\ol{X})$ such that $D^kf$ is bounded on $X$  for $0\leq k\leq p$ ($0\leq k < \infty$ if $p =\infty$). These spaces (considered as subspaces of $C(\ol{X})$) are near vector lattices provided that they separate points from closed sets.
\end{enumerate}

\begin{proof}
Items (a) and (b) are obvious.
For (c) and (d), let $f$ be a given function in one of the respective spaces.  Choose a $C^\infty$ function $h:\R\to \R$ such that $h(t) =0$ if $t \leq 0$, $h(t) = t$ if $t \geq 1$ and $0\leq h(t)\leq 1$ if  $0\leq t\leq 1$. Then it is easy to verify that $g = h\circ f$ is a function with the desired properties in the definition of near vector lattice.
\end{proof}

\section{Spaces $C(X)$ with $X$ realcompact}

Recall that a completely regular Hausdorff topological space $X$ is {\em realcompact} if for any $x\in \beta X \bs X$, there exists $f\in C(X)$ such that $\hat{f}(x) = \infty$, where $\hat{f}$ is the continuous (extended real-valued) extension of $f$ onto $\beta X$. The aim of this section is to prove the following theorem.  When $X$ and $Y$ are compact, this result was obtained by F.\ Cabello Sanchez \cite{C}.

\begin{thm}\label{thm19}
Let $X$ and $Y$ be realcompact spaces and let $T:C(X)\to C(Y)$ be an order isomorphism.
Then there exists a homeomorphism $\vp: X\to Y$ such that for any open set $U$ in $X$ and any $f, g \in C(X)$, $f \geq g$ on $U$ if and only if $Tf \geq Tg$ on $\vp(U)$.
\end{thm}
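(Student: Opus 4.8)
The plan is to run $T$ through Theorem~\ref{thm18.1}, recognise the compactifications it furnishes as the Stone--\v{C}ech compactifications, and then use realcompactness of $X$ and $Y$ to cut the resulting homeomorphism $\beta X\to\beta Y$ down to a homeomorphism $X\to Y$.

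First I would note that $C(X)$ is a unital vector sublattice of $C(X)$ separating points from closed sets (a realcompact space is completely regular Hausdorff), hence a near vector lattice by Example~A(a); likewise for $C(Y)$. Theorem~\ref{thm18.1} then produces compactifications $\hat{X},\hat{Y}$ of $X,Y$ and a homeomorphism $\vp\colon\hat{X}\to\hat{Y}$ such that, for all $f,g\in C(X)$ and every open $U\subseteq\hat{X}$, one has $f\geq g$ on $U\cap X$ if and only if $Tf\geq Tg$ on $\vp(U)\cap Y$. Next I would identify $\hat{X}$ with $\beta X$ and $\hat{Y}$ with $\beta Y$: unwinding Proposition~\ref{prop18} for $A(X)=C(X)$, the auxiliary space $F(X)=\{f/f_0:f\in C(X)\}$ used there coincides with $C(X)$ as a set of functions (because $f_0\geq 1$ is continuous, so multiplication by $f_0^{\pm 1}$ permutes $C(X)$), so $\hat{X}$ is the compactification $\cA X$ of Section~2; and $\cA X=\beta X$ because every bounded member of $C(X)$ has a finite continuous extension to $\cA X$, which is the characterising property of $\beta X$.

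The heart of the matter is to show $\vp(X)=Y$; by the symmetry of the hypotheses (interchange $X\leftrightarrow Y$, $T\leftrightarrow T^{-1}$, $\vp\leftrightarrow\vp^{-1}$) it suffices to prove $\vp(X)\subseteq Y$. Assume instead that $x_0\in X$ while $y_0:=\vp(x_0)\in\beta Y\setminus Y$; after translating we may take $T0=0$. Since $Y$ is realcompact, $y_0\notin vY$, so there is $g\in C(Y)$ with $g\geq 0$ whose continuous extension $\hat{g}\colon\beta Y\to[-\infty,\infty]$ satisfies $\hat{g}(y_0)=+\infty$. Feeding the pairs $(h,0)$ and $(0,h)$, together with the largest open $U\subseteq\beta X$ on which $h=0$ on $U\cap X$, into the local order relation of $\vp$, and using that $T$ is bijective, one obtains the cozero correspondence $\vp\bigl(\ol{\operatorname{coz}(h)}^{\beta X}\bigr)=\ol{\operatorname{coz}(Th)}^{\beta Y}$ for all $h\in C(X)$. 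Applying this to the cozero sets $\{g>n\}$ of $Y$ and the functions $f_n:=T^{-1}\bigl((g-n)^{+}\bigr)\in C(X)$ gives $\vp^{-1}\bigl(\ol{\{g>n\}}^{\beta Y}\bigr)=\ol{\operatorname{coz}(f_n)}^{\beta X}$, so that $x_0\in\ol{\operatorname{coz}(f_n)}^{\beta X}\cap X$ for every $n$, while on the $Y$ side the sets $\ol{\{g>n\}}^{\beta Y}$ decrease to the zero-set $\{\hat{g}=+\infty\}$ of $\beta Y$, which misses $Y$ because $g$ is real-valued there. The contradiction is then to be extracted from $x_0\in X=vX$: the $z$-ultrafilter of $x_0$ has the countable intersection property, so pushing the decreasing tower $\{g>n\}$ — which is precisely a witness of $y_0\notin vY$ — across the cozero correspondence and back through the local order relation (comparing $f_n$ with the $T^{-1}$-preimages of constants on the open sets where $(g-n)^{+}$ is constant) cannot be consistent; unravelled, it furnishes a member of $C(X)$ whose continuous extension to $\beta X$ equals $+\infty$ at $x_0\in X$, which is absurd. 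Hence $\vp(X)=Y$.

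Once $\vp^{-1}(Y)=X$ is known, the rest is routine: given an open $W\subseteq X$, pick an open $U\subseteq\beta X$ with $U\cap X=W$; then $\vp(W)=\vp(U)\cap\vp(X)=\vp(U)\cap Y$, so for $f,g\in C(X)$ we get $f\geq g$ on $W$ iff $Tf\geq Tg$ on $\vp(W)$ straight from Theorem~\ref{thm18.1}, and $\vp|_{X}\colon X\to Y$ is the required homeomorphism. The step I expect to be genuinely difficult is exactly the descent $\vp(X)=Y$: because $T$ is merely an order isomorphism it does not preserve pointwise suprema, so the blow-up of $g$ at $y_0$ is not transmitted to $T^{-1}g$ at $x_0$ by a single function; one is forced to work with the entire cozero tower coming from the failure of the countable intersection property on $Y$, transport it through the local order relation, and play it off against the countable intersection property on $X$, all the while making sure a true contradiction emerges rather than just an order bound that fails to be a pointwise bound.
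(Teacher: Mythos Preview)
Your overall architecture is exactly the paper's: apply Theorem~\ref{thm18.1}, observe that for $A(X)=C(X)$ the auxiliary space $F(X)$ equals $C(X)$ so the compactification is $\beta X$, and reduce everything to showing $\vp(X)=Y$. The final paragraph deducing the theorem from $\vp(X)=Y$ is also fine.

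The gap is in the descent step, and you have correctly identified it as the hard part without actually closing it. Your cozero correspondence gives $x_0\in\bigcap_n\ol{\operatorname{coz}(f_n)}^{\beta X}$, but this is \emph{not} a contradiction: a point of $X$ can perfectly well lie in a countable decreasing intersection of $\beta X$-closures of cozero sets. Invoking the countable intersection property of the $z$-ultrafilter at $x_0$ does not help, because the sets $\ol{\operatorname{coz}(f_n)}^{\beta X}\cap X$ are not zero-sets and are not known to belong to that ultrafilter. The sentence ``unravelled, it furnishes a member of $C(X)$ whose continuous extension to $\beta X$ equals $+\infty$ at $x_0$'' is the whole point, and nothing in your tower $f_n=T^{-1}((g-n)^+)$ produces such a function: each $f_n$ is a separate element of $C(X)$, and $T$ is not linear, so you cannot sum or splice them without further work.

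The paper's device (Lemmas~\ref{lem5} and~\ref{lem6.1}, Proposition~\ref{prop21}) is precisely a splicing construction that your approach lacks. One uses the unbounded $g\in C(Y)$ not to form a tower but to cut out \emph{pairwise disjoint} open annuli $U_n\subseteq V_n\subseteq\beta Y$ with $\ol{V_m}\cap\ol{V_n}=\emptyset$ for $m\neq n$, $y_0\in\ol{\bigcup_{n\geq m}U_n}$ for every $m$, and $Y\cap\bigcap_m\ol{\bigcup_{n\geq m}V_n}=\emptyset$. On each $V_n$ one places the function $h_n\cdot T(n1_X)$, where $h_n$ is a bump equal to $1$ on $U_n$ and supported in $V_n$; disjointness of the $\ol{V_n}$ together with the condition that tails miss $Y$ makes the pointwise sum $\tilde g=\sum_n h_n\,T(n1_X)$ a genuine element of $C(Y)$. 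Now the local order relation gives $T^{-1}\tilde g=n1_X$ on $X\cap\vp^{-1}(U_n)$, whence $T^{-1}\tilde g\geq m$ on $X\cap\vp^{-1}\bigl(\bigcup_{n\geq m}U_n\bigr)$; since $x_0$ lies in the closure of the latter for every $m$, $T^{-1}\tilde g(x_0)\geq m$ for all $m$, which is the contradiction. The crucial ideas missing from your outline are (i) passing from a nested tower to pairwise disjoint annuli so that a single continuous function can be glued, and (ii) choosing the values on those annuli to be $T(n1_X)$ rather than pieces of $g$, so that the inverse image under the local order relation yields exact constants.
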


Note that if $f_0\in C(X)$ is a function that is strictly positive on $X$, then $\{f/f_0: f\in C(X)\} = C(X)$.
Thus, taking $A(X) = C(X)$ and $A(Y) = C(Y)$ in the proof of Proposition  \ref{prop18}, the spaces $F(X)$ and $F(Y)$ 
are $C(X)$ and $C(Y)$ respectively.  Hence the compactifications $\hat{X}$ and $\hat{Y}$ in Theorem \ref{thm18.1} are equal to $\beta X$ and $\beta Y$ respectively.  By the theorem, there is a homeomorphism $\vp:\beta X \to \beta Y$ such that for any $f, g\in C(X)$ and any open set $U$ in $\beta X$, $f \geq g$ on $U \cap X$ if and only if $Tf \geq Tg$ on $\vp(U)\cap Y$.  
To complete the proof of Theorem \ref{thm19}, it remains to show that $\vp$ maps $X$ onto $Y$.  
The argument below is inspired by  results of a similar nature in \cite{A, ABN}.

\begin{lem}\label{lem5}
Let $Y$ be a realcompact space and let $y_0 \in \beta Y \bs Y$.  There exist  open sets $U_n$ and $V_n$ in $\beta Y$, $n \in \N$, such that
\begin{enumerate}
\item $\ol{U_n} \subseteq V_n$ for all $n$;
\item $y_0 \in \ol{\cup^\infty_{n=m}U_n}$ for all $m$;
\item $Y \bigcap \cap^\infty_{m=1}\ol{\cup^\infty_{n=m}V_n} = \emptyset$;
\item $\ol{V_n} \cap \ol{V_m} = \emptyset$ if  $n\neq m$.
\end{enumerate}
\end{lem}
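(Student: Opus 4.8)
The plan is to exploit realcompactness of $Y$ to find, for a point $y_0 \in \beta Y \setminus Y$, a witnessing function $h \in C(Y)$ with $\hat h(y_0) = \infty$, and then to harvest the open sets $U_n, V_n$ from the ``layers'' of $h$ where it takes large values. Concretely, since $y_0 \notin Y$ and $Y$ is realcompact, pick $h \in C(Y)$, $h \geq 0$, with $\hat h(y_0) = \infty$; replacing $h$ by $\max(h,0)$ (or composing with a suitable continuous map $\R_\infty \to \R_\infty$) causes no loss. For a sequence of reals $0 < a_1 < b_1 < a_2 < b_2 < \cdots$ tending to $\infty$, set $U_n = \{x \in \beta Y : \hat h(x) > b_n\} \cap \{x \in \beta Y : \hat h(x) < a_{n+1}\}$ wait — that is empty. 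Instead the right choice is a ``corridor'' construction: let $W_n = \{\hat h > c_n\}$ be a decreasing sequence of open neighborhoods of $y_0$ in $\beta Y$ with $c_n \uparrow \infty$, and carve $U_n$, $V_n$ out of the annular regions $W_n \setminus \ol{W_{n+2}}$ so that the closures $\ol{V_n}$ become pairwise disjoint (this forces (d)), while arranging $y_0 \in \ol{\cup_{n \geq m} U_n}$ for every $m$ (this is (b), and holds because $y_0$ lies in the closure of each $W_m$ and the $U_n$ for $n \geq m$ exhaust a neighborhood base direction toward $y_0$ within $W_m$). Property (a), $\ol{U_n} \subseteq V_n$, is arranged by the usual shrinking available in the compact Hausdorff space $\beta Y$ (or via Corollary~\ref{cor15} / Proposition~\ref{prop14} applied in $\cA Y = \beta Y$).

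The key steps, in order: (1) produce the witnessing function $h$ from realcompactness; (2) use the sublevel/superlevel sets of $\hat h$ to build a decreasing neighborhood base $(W_n)$ of $y_0$ with $\hat h > c_n$ on $W_n$ and $c_n \to \infty$; (3) inside each annulus $W_n \setminus \ol{W_{n+2}}$, choose a nonempty open set $V_n$ with $\ol{V_n}$ contained in that annulus, using compact Hausdorff normality to also find $U_n$ with $\ol{U_n} \subseteq V_n$ and with $y_0$ still accumulating onto $\cup_{n \geq m} U_n$ — the trick here is that we do not need $U_n$ near $y_0$ pointwise, only that infinitely many of the $U_n$ (all of them) accumulate at $y_0$, which follows since each $U_n$ can be taken to meet every neighborhood of $y_0$ that is large enough, e.g.\ by placing $U_n$ in $W_n \setminus \ol{W_{n+2}}$ and noting $y_0 \in \ol{W_n}$ for all $n$; (4) verify (c): any point $z \in Y$ has $\hat h(z) = h(z) < \infty$, so $z \notin \ol{W_n}$ for large $n$, hence $z \notin \ol{\cup_{n \geq m} V_n}$ once $m$ is large (since the $V_n$ with $n \geq m$ all lie in $W_m$, and $\ol{W_m}$ is closed and avoids $z$ for $m$ large); (5) disjointness (d) is immediate from placing $V_n$ in the half-open annulus between levels $c_{n+2}$ and $c_n$, so that $V_n$ and $V_k$ for $k \geq n+2$ are separated by $\ol{W_{n+2}}$, and one spaces out the indices (taking every other annulus) so that even consecutive ones are disjoint.

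The main obstacle I anticipate is reconciling conditions (b) and (d) simultaneously: (d) wants the $V_n$ well-separated, hence ``spread out'' in $\beta Y$, while (b) wants infinitely many $U_n$ to cluster at the single point $y_0$. The resolution is that clustering at $y_0$ does not require the sets $U_n$ to be small or to shrink to $y_0$; it only requires $y_0 \in \ol{\cup_{n \geq m} U_n}$, and since $y_0 \in \bigcap_n \ol{W_n}$ while each $U_n$ is a fixed nonempty open subset of $W_n$, every basic neighborhood of $y_0$ — which contains some $W_N$ — meets $W_n$ for all $n \geq N$, and one must ensure it actually meets the chosen $U_n$ rather than merely $W_n$. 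This is handled by choosing $U_n$ (and hence $V_n$) to be, say, $\{b_n < \hat h < b_n'\}$-type slabs that genuinely straddle a level surface accumulating toward $y_0$; the slab $\{b_n < \hat h\}$ itself is a neighborhood of $y_0$, so shrinking it slightly via Corollary~\ref{cor15} keeps $y_0$ in its closure. Once the level-set bookkeeping is set up correctly, all four properties fall out, and the realcompactness hypothesis enters exactly once, in step (1) and again in step (4) to guarantee $\hat h$ is finite (equal to $h$) on all of $Y$.
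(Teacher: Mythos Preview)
Your setup is right --- exploit realcompactness to get $h\in C(Y)$ with $\hat h(y_0)=\infty$ and build $U_n,V_n$ from level slabs of $\hat h$ --- and you correctly pinpoint the real difficulty: making (b) and (d) hold together. But your proposed resolution does not work. You write that ``the slab $\{b_n<\hat h\}$ itself is a neighborhood of $y_0$, so shrinking it slightly via Corollary~\ref{cor15} keeps $y_0$ in its closure.'' That corollary produces separating functions; it says nothing about $y_0$ remaining in the closure after you cut down to the bounded slab $\{b_n<\hat h<b_n'\}$. And in general $y_0$ need not lie in that closure. Concretely, take $Y=\N$ (discrete), $h(n)=n$, and $y_0$ a free ultrafilter on $\N$. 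Then $\{b_n<\hat h<b_n'\}\cap Y$ is a finite set of integers, and whether $y_0$ lies in $\ol{\cup_{n\ge m}U_n}$ depends on whether the union of those finite sets belongs to the ultrafilter $y_0$ --- which it may not, for any fixed choice of slabs.

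The missing idea is a pigeonhole step. The paper partitions $[0,\infty)$ into four families of intervals $\cup_n[4n+k,4n+k+1]$, $k=0,1,2,3$, so that within each family the intervals are well separated (gaps of length $3$). Since the preimages $Y_k=h^{-1}(\cup_n[4n+k,4n+k+1])$ cover $Y$ and $y_0\in\ol Y$, one has $y_0\in\ol{Y_k}$ for some $k$; after relabeling, $k=0$. Then one takes $U_n=\hat h^{-1}(4n-\tfrac12,4n+\tfrac32)$ and $V_n=\hat h^{-1}(4n-1,4n+2)$: property (d) is immediate from the gap of length $3$, and (b) follows because $Y_0\subseteq\cup_nU_n$ and $y_0\notin\ol{\cup_{n<m}U_n}$ (on that set $\hat h$ is bounded). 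Your ``taking every other annulus'' remark is in the right direction, but you need the pigeonhole to guarantee that at least one of the residue classes actually accumulates at $y_0$; without that, the argument has a genuine gap.
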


\begin{proof}
There exists $0 \leq f\in C(Y)$ such that $\hat{f}(y_0) = \infty$, where $\hat{f}$ is the continuous extension of $f$ onto $\beta Y$.  Let 
\[ Y_k = \{y\in Y: f(y) \in \cup^\infty_{n=0}[4n+k,4n+k+1]\},\ 0 \leq k \leq 3.\]
Then $Y = \cup^3_{k=0}Y_k$.  Hence there exists $k$ such that $y_0 \in \ol{Y_k}$, where the closure is taken in $\beta Y$.  Without, loss of generality, assume that $y_0 \in \ol{Y_0}$.
Define $U_n = \hat{f}^{-1}(4n-\frac{1}{2},4n+\frac{3}{2})$ and $V_n = \hat{f}^{-1}(4n-1,4n+2)$ for all $n$.
Clearly, $U_n$ and $V_n$ are open sets in $\beta Y$ such that $\ol{U_n} \subseteq V_n$.
Moreover,
\[y_0 \in \ol{Y_0} \subseteq \ol{\cup^\infty_{n=1}U_n} = \ol{\cup^{m-1}_{n=1}{U_n}} \bigcup  \ol{\cup^\infty_{n=m}U_n}\]
for any $m$. Since $\hat{f}(y)$ is finite for all $y\in\ol{\cup^{m-1}_{n=1}{U_n}}$,
$y_0 \notin\ol{ \cup^{m-1}_{n=1}{U_n}}$.  Thus $y_0\in \ol{\cup^\infty_{n=m}U_n}$.
Similarly, if $y \in \ol{\cup^\infty_{n=m}V_n}$, then $\hat{f}(y)\geq 4m-1$.  Thus $y \in  \cap^\infty_{m=1}\ol{\cup^\infty_{n=m}V_n}$ implies $\hat{f}(y) = \infty$ and hence $y \notin Y$.
Finally, if $y \in \ol{V_m}$, then $\hat{f}(y) \in [4m-1, 4m+2]$.  Hence $y \notin \ol{V_n}$ for any $n \neq m$.
\end{proof}

\begin{lem}\label{lem6.1}
Let $Y$ be a subspace of a topological space $Z$ and let $(V_n)$ be a sequence of open sets in $Z$ such that 
\begin{enumerate}
\item $Y \bigcap \cap^\infty_{m=1}\ol{\cup^\infty_{n=m}V_n} = \emptyset$;
\item $\ol{V_n} \cap \ol{V_m} = \emptyset$ if $n\neq m$.
\end{enumerate}
Suppose that $g_n:Y\to \R$ is a continuous function on $Y$ such that $\{g_n\neq 0\} \subseteq V_n$ for all $n$.
Define $g:Y\to \R$ by $g(y) = g_n(y)$ if $y\in Y\cap V_n$ and $0$ otherwise.  Then $g$ is continuous on $Y$.
\end{lem}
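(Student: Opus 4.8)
The plan is to prove continuity of $g$ locally: for each $y_0\in Y$ I will produce an open neighborhood of $y_0$ in $Y$ on which $g$ coincides with a \emph{finite} sum of the continuous functions $g_n$, whence $g$ is continuous there.

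First I would record two elementary facts. Condition (b) in particular forces $V_n\cap V_m=\emptyset$ for $n\neq m$, so each point of $Y$ lies in at most one $V_n$ and $g$ is well defined. Also, since $\{g_n\neq 0\}\subseteq V_n$, we have $g_n\equiv 0$ on $Y\setminus V_n$. Combining these: if $F\subseteq\N$ is finite and $y\in Y$ satisfies $y\notin V_n$ for every $n\notin F$, then $g(y)=\sum_{n\in F}g_n(y)$ — indeed, among the terms $g_n(y)$, $n\in F$, only the one corresponding to the unique $V_n$ containing $y$ (if any) can be nonzero, and that term equals $g(y)$ by the definition of $g$, while if $y$ lies in no $V_n$ at all both sides are $0$.

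Now fix $y_0\in Y$. Since $y_0\in Y$, condition (a) gives $y_0\notin\bigcap_{m=1}^\infty\ol{\cup_{n=m}^\infty V_n}$, so there is $m\in\N$ with $y_0\notin\ol{\cup_{n=m}^\infty V_n}$; hence there is an open set $O$ in $Z$ with $y_0\in O$ and $O\cap V_n=\emptyset$ for all $n\geq m$. Then $O\cap Y$ is an open neighborhood of $y_0$ in $Y$, and applying the displayed identity with $F=\{1,\dots,m-1\}$ yields $g=\sum_{n=1}^{m-1}g_n$ on $O\cap Y$. The right-hand side is a finite sum of continuous functions, so $g$ is continuous on $O\cap Y$, and in particular at $y_0$. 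As $y_0$ was arbitrary, $g$ is continuous on $Y$.

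There is no real obstacle here; the only point needing care is the bookkeeping that identifies $g$ with the finite partial sum $\sum_{n=1}^{m-1}g_n$ on $O\cap Y$, which rests precisely on the disjointness of the $V_n$ and on each $g_n$ vanishing off $V_n$. I would also note that only plain disjointness of the $V_n$ (not the stronger disjointness of their closures in (b)) is used in this lemma.
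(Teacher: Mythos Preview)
Your proof is correct and follows the same core idea as the paper's: use condition (a) to find a neighborhood of $y_0$ that misses all $V_n$ with $n\geq m$, then identify $g$ there with something manifestly continuous. The difference is only in the final bookkeeping. The paper splits into cases, and when $y_0\in\ol{\cup_n V_n}$ it uses the disjointness of the \emph{closures} $\ol{V_n}$ to single out one index $n_0$ with $g=g_{n_0}$ on a neighborhood. You instead write $g=\sum_{n<m}g_n$ on $O\cap Y$, which avoids the case split and, as you observe, uses only the pairwise disjointness of the $V_n$ themselves rather than the full strength of (b). So your argument is a mild simplification and in fact proves the lemma under a slightly weaker hypothesis.
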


\begin{proof}
Since the sets $V_n$ are pairwise disjoint, $g$ is well-defined.  
Suppose that $y_0 \in Y$.  If $y_0 \notin \ol{\cup^\infty_{n=1}V_n}$, choose an open set $U$ in $Z$ containing $y_0$ such that $U \cap\cup^\infty_{n=1}V_n = \emptyset$. 
If $y\in U \cap Y$, then $g(y) = 0 = g(y_0)$.  Hence $g$ is continuous at $y_0$.  Now assume that $y_0 \in \ol{\cup^\infty_{n=1}V_n}$.
There exists $m$ such that $y_0 \notin \ol{\cup^\infty_{n=m}V_n}$.
Thus $y_0 \in \cup^{m-1}_{n=1}\ol{V_n}$.
Pick $1 \leq n_0<m$ such that $y_0 \in \ol{V_{n_0}}$.  Then $y_0 \notin \ol{V_n}$, $1\leq n\neq n_0 < m$ and $y_0\notin  \ol{\cup^\infty_{n=m}V_n}$.  Hence there exists an open neighborhood $W$ of $y_0$ in $Z$ such that $W \cap \cup_{n\neq n_0}V_n = \emptyset$. As a result, $g = g_{n_0}$ on the set $W \cap Y$.  Therefore, $g$ is continuous at $y_0$.
\end{proof}

\begin{prop}\label{prop21}
Let $X$ and $Y$ be realcompact spaces.  Suppose that $T:C(X) \to C(Y)$ is a bijection.  Let $\vp:\beta X\to \beta Y$ be a homeomorphism so that for any $f, g\in C(Y)$ and any open set $V$ in $\beta Y$, $f = g$ on $Y\cap V$ if and only if  $T^{-1}f = T^{-1}g$ on the set $X\cap \vp^{-1}(V)$.  Then $\vp(X) = Y$.
\end{prop}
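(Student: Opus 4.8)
The plan is to show both inclusions $\vp(X)\subseteq Y$ and $\vp^{-1}(Y)\subseteq X$; by symmetry (swapping the roles of $X$ and $Y$, and of $T$ and $T^{-1}$) it suffices to prove one of them, say $\vp^{-1}(Y)\subseteq X$, equivalently $\vp(\beta X\bs X)\cap Y=\emptyset$. So I would fix $x_0\in\beta X\bs X$ and argue by contradiction that $y_0:=\vp(x_0)\in Y$ is impossible. The idea is to transport the ``divergence'' witnessed at $x_0$ (coming from realcompactness of $X$) across $\vp$ and build a function in $C(Y)$ that is forced to be discontinuous at $y_0$, or else to exhibit two functions whose images must agree on a neighborhood of $y_0$ yet are built to disagree there.

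First I would apply Lemma \ref{lem5} to $X$ and the point $x_0\in\beta X\bs X$ to obtain open sets $U_n,V_n$ in $\beta X$ with $\ol{U_n}\subseteq V_n$, $x_0\in\ol{\bigcup_{n\ge m}U_n}$ for all $m$, $X\cap\bigcap_m\ol{\bigcup_{n\ge m}V_n}=\emptyset$, and pairwise disjoint closures $\ol{V_n}$. Using Corollary \ref{cor15} (or rather the fact that $C(X)$ is a near vector lattice, so Proposition \ref{prop14}/Corollary \ref{cor15} apply on $\cA X=\beta X$) I would pick, for each $n$, a function $\varphi_n\in C(X)$ with $0\le\varphi_n\le1$, $\hat\varphi_n=1$ on $U_n$ and $\hat\varphi_n=0$ off $V_n$. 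Since the $\ol{V_n}$ are pairwise disjoint and $X$ misses $\bigcap_m\ol{\bigcup_{n\ge m}V_n}$, Lemma \ref{lem6.1} lets me assemble, for any bounded sequence of scalars $c_n$, a well-defined continuous function $\sum_n c_n\varphi_n\in C(X)$ (the sum is locally finite on $X$). The key flexibility is that on $U_n\cap X$ this function equals the constant $c_n$.

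Now push across $\vp$. The hypothesis says: for $f,g\in C(Y)$ and open $V\subseteq\beta Y$, $f=g$ on $Y\cap V$ iff $T^{-1}f=T^{-1}g$ on $X\cap\vp^{-1}(V)$; equivalently, for $f,g\in C(X)$ and open $U\subseteq\beta X$, $f=g$ on $X\cap U$ iff $Tf=Tg$ on $Y\cap\vp(U)$. Apply this to the functions $\varphi^{(0)}:=\sum_n 0\cdot\varphi_n=0$ and $\varphi^{(c)}:=\sum_n c_n\varphi_n$ restricted over the open sets $U_n$: on $X\cap U_n$ we have $\varphi^{(c)}=c_n$, a constant, so comparing $\varphi^{(c)}$ with the constant function $c_n\in C(X)$ over $U_n$ we get $T\varphi^{(c)}=T(c_n\mathbf 1)$ on $Y\cap\vp(U_n)$. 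The plan is to choose the $c_n$ so that the values $T(c_n\mathbf 1)(y_0)$ — or more robustly, the behavior of $T$ on constants near $y_0$ — force a contradiction at $y_0=\vp(x_0)$: since $x_0\in\ol{\bigcup_{n\ge m}U_n}$ for every $m$ and $\vp$ is a homeomorphism, $y_0\in\ol{\bigcup_{n\ge m}\vp(U_n)}$ for every $m$, so $y_0$ is a limit point of points where $T\varphi^{(c)}$ agrees successively with $T(c_1\mathbf1),T(c_2\mathbf1),\dots$; by continuity of $T\varphi^{(c)}$ at $y_0\in Y$ this pins $T\varphi^{(c)}(y_0)$ down, and picking the $c_n$ so that the sequence $\big(T(c_n\mathbf1)(z)\big)_n$ cannot stabilize near $y_0$ (e.g.\ choosing $c_n$ so the constants $T(c_n\mathbf1)$ increase to $+\infty$ locally, which is possible because $T$ is an order isomorphism and hence $n\mapsto T(n\mathbf1)$ is strictly increasing and unbounded above) yields the contradiction.

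The main obstacle, I expect, is the bookkeeping in this last step: one must control $T$ on constant functions well enough — using only that $T$ is an order isomorphism (so monotone, with $T0=0$ after a harmless translation, and $T(n\mathbf1)\uparrow$) and the neighborhood transfer property of $\vp$ — to guarantee that no single continuous function on $Y$ can simultaneously agree near $y_0$ with $T(c_n\mathbf1)$ along a sequence of open sets accumulating at $y_0$ when the $c_n$ are chosen to make $T(c_n\mathbf1)$ blow up. A clean way to organize this is: choose $c_n$ so that $c_n\mathbf1$ and $\varphi^{(c)}$ can be compared not with a constant but with $\varphi^{(c)}$ shifted, arranging that $T\varphi^{(c)}$ is $\ge T(n\mathbf1)$ on $Y\cap\vp(U_n)$ for every $n$; then $T\varphi^{(c)}(y_0)\ge\lim_n T(n\mathbf1)(y_0)$, and the right side is $+\infty$ because $\{n\mathbf1\}$ is unbounded in $C(X)$ while $T^{-1}$ of any fixed function dominates only finitely many of them — contradicting finiteness of the continuous function $T\varphi^{(c)}$ at the point $y_0\in Y$. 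Once $\vp^{-1}(Y)\subseteq X$ is established this way, the symmetric argument gives $\vp(X)\subseteq Y$, hence $\vp(X)=Y$.
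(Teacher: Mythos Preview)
There is a genuine gap. The hypothesis of the proposition only says that $T$ is a bijection satisfying the equality-transfer property; it is \emph{not} assumed to be an order isomorphism. Your final step leans entirely on order: you want the transfer property to respect inequalities (``$T\varphi^{(c)}\ge T(n\mathbf 1)$ on $Y\cap\vp(U_n)$''), and you want $T(n\mathbf 1)(y_0)\to\infty$. Neither is available. Even if one grants that $T$ is an order isomorphism (as in the intended application), the claim $T(n\mathbf 1)(y_0)\to\infty$ does not follow: nothing in the hypotheses forces the increasing sequence $\bigl(T(n\mathbf 1)(y_0)\bigr)_n$ to be unbounded. Your justification (``$T^{-1}$ of any fixed function dominates only finitely many $n\mathbf 1$'') is a global statement and says nothing about pointwise values at $y_0$.

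The paper's remedy is simple but decisive, and it works with $T$ merely a bijection. Rather than assembling $\sum c_n\varphi_n$ with scalar weights, one multiplies the bump $h_n$ by the \emph{function} $T(n\mathbf 1_X)$: working in the direction $\vp(X)\subseteq Y$ (so $x_0\in X$, $y_0=\vp(x_0)\in\beta Y\setminus Y$, and Lemma~\ref{lem5} is applied to $Y$), one glues $g_n=h_n\cdot T(n\mathbf 1_X)$ via Lemma~\ref{lem6.1} to get $g\in C(Y)$ with $g=T(n\mathbf 1_X)$ on $Y\cap U_n$. The equality-transfer hypothesis then gives $T^{-1}g=n\mathbf 1_X$ on $X\cap\vp^{-1}(U_n)$ --- a \emph{concrete value} $n$. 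Since $x_0\in X$ lies in $\ol{\vp^{-1}(\bigcup_{n\ge m}U_n)}$ for every $m$, continuity of $T^{-1}g$ at the genuine point $x_0$ forces $T^{-1}g(x_0)\ge m$ for all $m$, a contradiction. If you prefer your direction ($x_0\in\beta X\setminus X$, $y_0\in Y$), the analogous move is to glue $h_n\cdot T^{-1}(n\mathbf 1_Y)$ on $X$, not $c_n\varphi_n$; then $Tf=n$ on $Y\cap\vp(U_n)$ and the contradiction at $y_0$ is immediate, with no appeal to order at all.
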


\begin{proof}
By symmetry, it suffices to show that $\vp(X) \subseteq Y$.
Assume that there exists $x_0\in X$ so that $\vp(x_0) = y_0 \in \beta Y \bs Y$.
Choose sets $U_n$ and $V_n$ in $\beta Y$ by Lemma \ref{lem5}.
For each $n$, there exists a continuous function $h_n:\beta Y\to [0,1]$ such that $h_n=1$ on $U_n$ and $h_n = 0$ outside $V_n$.  Consider the function $g_n : Y\to \R$ given by $g_n(y) = h_n(y)T(n1_X)(y)$.
Taking $Z = \beta Y$ in Lemma \ref{lem6.1} and applying that lemma, we find a continuous function $g: Y\to \R$ such that  $g(y) = g_n(y)$ if $y\in Y\cap V_n$ and $0$ otherwise.
Now $g = T(n1_X)$ on $Y\cap U_n$.  By the assumption, $T^{-1}g = n1_X$ on $X\cap\vp^{-1}(U_n)$.
As a result, $T^{-1}g(x) \in [m,\infty)$ on $X\cap\vp^{-1}(\cup^\infty_{n=m}U_n)$. 
Since $y_0 \in \ol{\cup^\infty_{n=m}U_n}$ for all $m$, $x_0 \in \ol{\vp^{-1}(\cup^\infty_{n=m}U_n)}$ for all $m$.
But this implies that $T^{-1}g(x_0) \in [m,\infty)$ for all $m$, which is impossible.
\end{proof}

\begin{proof}[Proof of Theorem \ref{thm19}]
We have seen above that  there exists a homeomorphism $\vp:\beta X \to \beta Y$ such that for any open set $U$ in $\beta X$, $f \geq g$ on $U\cap X$ if and only if $Tf \geq Tg$ on $\vp(U)\cap Y$.  By Proposition \ref{prop21}, $\vp(X) = Y$.  Thus the restriction of $\vp$ to $X$ is a homeomorphism from $X$ onto $Y$ satisfying Theorem \ref{thm19}.
\end{proof}

\begin{cor}\label{cor25}
Let $X$ and $Y$ be realcompact spaces.  Then $C(X)$ is order isomorphic to $C(Y)$ if and only if $X$ are $Y$ are homeomorphic.
\end{cor}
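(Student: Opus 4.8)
The plan is to obtain both directions quickly, with essentially all the substance provided by Theorem \ref{thm19}.

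First I would dispatch the easy implication. Suppose $\psi : X \to Y$ is a homeomorphism. Define $T : C(X) \to C(Y)$ by $Tf = f\circ\psi^{-1}$. This is a bijection with inverse $g\mapsto g\circ\psi$, and because $\psi^{-1}$ is a bijection between the underlying point sets, for $f,g\in C(X)$ we have $f\leq g$ pointwise on $X$ if and only if $f\circ\psi^{-1}\leq g\circ\psi^{-1}$ pointwise on $Y$. Hence $T$ is an order isomorphism, so $C(X)$ and $C(Y)$ are order isomorphic. (In fact $T$ is even a linear vector lattice isomorphism, but only the order structure is needed here.)

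For the converse, suppose $T : C(X)\to C(Y)$ is an order isomorphism. Since $X$ and $Y$ are realcompact, the hypotheses of Theorem \ref{thm19} are met verbatim, so that theorem produces a homeomorphism $\vp : X\to Y$. (The theorem gives more, namely that $f\geq g$ on an open set $U$ if and only if $Tf\geq Tg$ on $\vp(U)$, but for the present corollary we only need that $\vp$ is a homeomorphism.) Thus $X$ and $Y$ are homeomorphic, completing the proof.

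There is no genuine obstacle here: the entire weight of the argument rests on Theorem \ref{thm19}, which has already been established, while the reverse direction is merely the routine observation that precomposition with a homeomorphism preserves the pointwise order in both directions.
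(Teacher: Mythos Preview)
Your proof is correct and matches the paper's intended argument: the corollary is stated without proof in the paper, as it follows immediately from Theorem \ref{thm19} for the nontrivial direction, together with the routine observation that composition with a homeomorphism gives an order isomorphism for the trivial direction.
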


If $X$ is compact Hausdorff, then $C(X)$ and $C_b(X)$ are identical.  For a non-compact realcompact space $X$, one can distinguish $C(X)$ and $C_b(X)$ order isomorphically.  It is a classical fact that every space $C(X)$ is (linearly) order  isomorphic to a space $C(Y)$ for some realcompact space $Y$; see, e.g., \cite[Theorem 3.9 and 8.8(a)]{GJ} .

\begin{cor}\label{cor26}
Let $X$  be a realcompact space and let $Y$ be a topological space.  If $X$ is not compact, then $C(X)$ is not order isomorphic to $C_b(Y)$.
\end{cor}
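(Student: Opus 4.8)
The plan is to reduce the statement to Corollary \ref{cor25} by exhibiting a compact Hausdorff space $K$ with $C_b(Y)$ order isomorphic to $C(K)$; since a compact Hausdorff space is realcompact, Corollary \ref{cor25} then finishes the argument.

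First I would build $K$ explicitly. Consider the map $e\colon Y \to \prod_{f\in C_b(Y)}[-\|f\|_\infty,\|f\|_\infty]$ defined by $e(y)(f) = f(y)$, and set $K = \ol{e(Y)}$, the closure taken in this (compact Hausdorff) product. Then $K$ is compact Hausdorff, and for each $f\in C_b(Y)$ the restriction to $e(Y)$ of the coordinate projection onto the $f$-th factor is a bounded continuous function on a dense subset of $K$, hence extends uniquely to $\hat f\in C(K)$. I would note that $e$ need not be injective when $Y$ is not completely regular, but this is irrelevant: what matters is that $\Phi\colon C_b(Y)\to C(K)$, $\Phi(f)=\hat f$, is well defined, linear, and that $f\geq g$ on $Y$ if and only if $\hat f\geq \hat g$ on $K$ (immediate from density of $e(Y)$ in $K$).

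The one point that needs genuine argument is that $\Phi$ is onto. The image $\Phi(C_b(Y))$ is a subalgebra of $C(K)$ that contains the constants and, by the very definition of $K$ as a closure inside a product, separates the points of $K$; by the Stone--Weierstrass theorem it is therefore dense in $C(K)$. On the other hand $\Phi$ is an isometry for the uniform norms, again because $e(Y)$ is dense in $K$, so $\Phi(C_b(Y))$ is complete, hence closed. A dense closed subspace is everything, so $\Phi$ is a linear order isomorphism of $C_b(Y)$ onto $C(K)$. (Alternatively one could invoke Kakutani's representation of an $M$-space with unit, or, when $Y$ happens to be completely regular Hausdorff, simply observe that $K=\beta Y$ and $C_b(Y)=C(\beta Y)$.)

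Finally, suppose for contradiction that $T\colon C(X)\to C_b(Y)$ is an order isomorphism. Then $\Phi\circ T\colon C(X)\to C(K)$ is an order isomorphism between the spaces of continuous functions on the realcompact spaces $X$ and $K$. By Corollary \ref{cor25}, $X$ is homeomorphic to $K$; in particular $X$ is compact, contradicting the hypothesis that $X$ is not compact. The main (and essentially only) obstacle is the surjectivity of $\Phi$, i.e.\ ruling out that $C(K)$ contains continuous functions not arising as extensions of bounded continuous functions on $Y$, which is exactly what the Stone--Weierstrass-plus-completeness argument above takes care of.
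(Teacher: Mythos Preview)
Your proof is correct and follows essentially the same strategy as the paper: both arguments reduce to Corollary~\ref{cor25} by producing a compact Hausdorff space $K$ with $C_b(Y)$ (linearly) order isomorphic to $C(K)$, and then conclude that $X$ must be homeomorphic to $K$. The only difference is cosmetic: the paper cites \cite[Theorem~3.9]{GJ} to pass to a completely regular $Y'$ and then takes $K=\beta Y'$, whereas you construct $K$ directly as the closure of the evaluation image and verify surjectivity of $\Phi$ via Stone--Weierstrass and completeness.
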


\begin{proof}
By \cite[Theorem 3.9]{GJ}, $C_b(Y)$ is (linearly) order isomorphic to $C_b(Y')$ for some Hausdorff completely regular space $Y'$.  Also, $C_b(Y')$ is (linearly) order isomorphic to $C(\beta Y')$. Thus, if $C(X)$ is order isomorphic to $C_b(Y)$, then it is order isomorphic to $C(\beta Y')$.  By Corollary \ref{cor25}, $X$ and $\beta Y'$ are homeomorphic.  Therefore, $X$ is compact.
\end{proof}

\section{Function spaces defined on metric spaces}

In the last section, we have seen that an order isomorphism between $C(X)$ spaces with $X$ realcompact leads to a homeomorphism of the underlying topological spaces.  In this section, we will see that a similar result holds in many instances when the underlying spaces $X$ and $Y$ are metric spaces.

The following result can be shown using the proof of  \cite[Corollary 4.3]{LL} . In the proof, it suffices to assume that the set of extensions of the bounded functions in $A(X)$ to the compactification $\cA X$ separate points from closed sets in $\cA X$; similarly for $A(Y)$.  By Proposition \ref{prop14}, near vector lattices possess this latter property.  A set of points $S$ in a metric space is {\em separated} if there exists $\ep > 0$ such that $d(x_1,x_2)> \ep$ whenever $x_1$ and $x_2$ are distinct points in $S$.

\begin{prop}
\label{prop26}
Let $A(X)$ and $A(Y)$ be near vector lattices defined on metric spaces $X$ and $Y$ respectively.
Assume that 
$A(Y) = A^\loc(Y)$ or $A^\loc_b(Y)$, or
\begin{enumerate}
\item[($\spadesuit$1)]  $Y$ is complete and for any separated sequence $(y_n)$ in $Y$, there exists  $g\in A(Y)$ such that the sequence $(g(y_n))$ diverges in $\R$.
\end{enumerate}
If $\vp:\cA X\to \cA Y$ is a homeomorphism, then $\vp(X) \subseteq Y$.
\end{prop}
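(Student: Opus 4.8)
The plan is to argue by contradiction: suppose $\vp(x_0) = y_0 \in \cA Y \bs Y$ for some $x_0 \in X$, and manufacture a function in $A(X)$ (or more precisely, use the structure of $A(Y)$ via $\vp$) that cannot exist. The engine of the argument will be a separated sequence $(y_n)$ in $Y$ together with a "gluing" step that produces a single continuous function on $Y$ whose pullback through $\vp$ blows up at $x_0$. This mirrors the structure of the proof of Theorem \ref{thm19} (Lemmas \ref{lem5}, \ref{lem6.1}, Proposition \ref{prop21}), except that there realcompactness supplied the unbounded function $f$ directly, whereas here the separation/divergence hypothesis ($\spadesuit$1), or the locality of $A(Y)$, must be used to build the analogue.

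First, in the case ($\spadesuit$1): since $y_0 \in \cA Y \bs Y$ and $X$ is dense in $\cA X$ with $\vp$ a homeomorphism, I would choose points $x_n \in X$ with $\vp(x_n) \to y_0$ in $\cA Y$ and such that the $\vp(x_n) =: y_n'$ are "spread out" in $\cA Y$; passing to a subsequence I want the $y_n := y_n' \in Y$ (note $y_n \in Y$ since $x_n \in X$ and $\vp(X) \cap Y$ points come from $X$, but actually I only know $\vp(x_n) \in \cA Y$ — here is a subtlety, see below) to form a separated sequence in the metric space $Y$. The completeness of $Y$ and the fact that $(y_n)$ has a cluster point only in $\cA Y \bs Y$ should force, after refining, that $(y_n)$ is separated (a sequence in a complete metric space with no convergent subsequence has a separated subsequence). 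Then ($\spadesuit$1) gives $g \in A(Y)$ with $(g(y_n))$ divergent in $\R$. Now I mimic Lemma \ref{lem5}/\ref{lem6.1}: using Proposition \ref{prop14} applied to $A_b(\cA Y)$ (which separates points from closed sets), build disjoint open neighborhoods $V_n$ of $y_n$ in $\cA Y$ with $y_0 \in \ol{\cup_{n \geq m} V_n}$ for all $m$, and bump functions supported in them, so as to splice together the values forcing divergence. Pulling back through $\vp$ and using Theorem \ref{thm18.1}, I obtain that the corresponding function in $A(X)$ must take values tending to $\pm\infty$ along $x_n \to x_0$, contradicting continuity at $x_0 \in X$.

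In the case $A(Y) = A^\loc(Y)$ or $A^\loc_b(Y)$: here the locality is what does the work. If $\vp(x_0) = y_0 \notin Y$, I would take a small neighborhood behavior argument — since membership in $A^\loc(Y)$ is determined locally on $Y$, and $y_0$ is a "phantom" point not in $Y$, one can build a function that is locally fine on $Y$ but whose $\vp$-pullback fails to extend continuously to $x_0$. Concretely I would again produce the separated sequence $y_n \to y_0$ (automatic from $y_0 \notin Y$ plus paracompactness/metrizability of $Y$) and define a function that equals a large constant near each $y_n$; locality makes it a legitimate member of $A^\loc(Y)$ (the "bounded local" case needs the constants to stay bounded, which still suffices to get a contradiction because divergence of \emph{different} finite values along $x_n \to x_0$ already kills continuity — so I would alternate the constant between $0$ and $1$ on successive $V_n$'s rather than sending it to infinity).

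The main obstacle, I expect, is the subtle point flagged above: a priori $\vp(x_n)$ lands in $\cA Y$, and I need the $y_n$ to actually lie in $Y$ (so that $g(y_n)$, and the gluing on $Y$, make sense) and to be separated in the metric of $Y$. Resolving this requires care: one wants to choose the $x_n \in X$ so that $\vp(x_n)$, while converging to $y_0 \notin Y$, still lie in $Y$ for all $n$ — but $\vp$ need not map $X$ into $Y$ (that is exactly what we are trying to prove!). The fix is to choose the $x_n$ in $X$ so that $\vp(x_n) \to y_0$ and then \emph{replace} each $\vp(x_n)$ by a genuine point $y_n \in Y$ very close to it in $\cA Y$ (possible since $Y$ is dense in $\cA Y$), arranging simultaneously that $x_0 \in \ol{\vp^{-1}(W_n)}$ for shrinking neighborhoods $W_n \ni y_n$; then the clustering of $(x_n')$ (preimages) at $x_0$ and the divergence of $g$ along $(y_n)$ can still be coupled through the $V_n$'s. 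Once this is set up correctly, the rest is a routine adaptation of the gluing lemma and Theorem \ref{thm18.1}, so the real work is in the bookkeeping of which points live where.
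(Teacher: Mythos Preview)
Your proposal rests on a fundamental misconception: Proposition~\ref{prop26} involves only a homeomorphism $\vp:\cA X\to\cA Y$, with \emph{no} order isomorphism $T$ in sight. There is therefore no mechanism to ``pull back'' a function $g\in A(Y)$ to a function in $A(X)$, and your repeated appeals to Theorem~\ref{thm18.1} are misplaced. The contradiction must be obtained entirely on the $Y$ side. (The paper defers the proof to \cite[Corollary~4.3]{LL}, but the underlying idea is as follows.) Since $X$ is metric and $A(X)$ separates points from closed sets, bump functions $f_n\in A(X)$ supported in $B(x_0,1/n)$ exhibit $\{x_0\}=\bigcap_n\{\hat f_n>0\}$ as a $G_\delta$ in $\cA X$; hence $y_0=\vp(x_0)$ is a $G_\delta$ in the compact Hausdorff space $\cA Y$ and therefore has a countable neighborhood base there. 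One then picks $y_n\in Y$ with $y_n\to y_0$ in $\cA Y$ directly by density of $Y$ --- so your ``main obstacle'' dissolves, as there is no need to route through preimages $x_n\in X$ at all.

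The contradiction is then simply continuity of extensions at $y_0$: any \emph{bounded} $g\in A(Y)$ has a continuous real-valued extension $\hat g$ on $\cA Y$, forcing $g(y_n)\to\hat g(y_0)\in\R$. In the local cases your oscillating construction (alternating $0$ and $1$ on disjoint bumps near the $y_n$) is correct and produces a bounded $g\in A(Y)$ with $(g(y_n))$ divergent --- contradiction, with no reference to $X$ needed. In case~($\spadesuit$1), $(y_n)$ cannot converge in $Y$ (else its limit would equal $y_0\in Y$), so completeness yields a separated subsequence and the hypothesis furnishes $g\in A(Y)$ with $(g(y_n))$ divergent in~$\R$. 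There is one further wrinkle you did not flag: this $g$ may be unbounded, in which case $\hat g(y_0)=\pm\infty$ is consistent with $g(y_n)\to\hat g(y_0)$ in $\R_\infty$ and gives no contradiction. One must first truncate $g$ via Lemma~\ref{lem11} to a bounded $g'\in A(Y)$ whose values along a subsequence of $(y_n)$ still fail to converge in $\R$; then continuity of $\hat{g'}$ finishes the argument.
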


Let us say that a vector subspace $A(Y)$ of $C(Y)$ satisfies condition
\begin{enumerate}
\item[($\spadesuit$2)] if $g\in A(Y)$ and $h\in C^\infty(\R)$ with $\|h^{(k)}\|_\infty <\infty$ for all $k\geq 1$, then $h\circ g\in A(Y)$.
\end{enumerate}
$A(Y)$ satisfies ($\spadesuit$) if it satisfies both ($\spadesuit$1) and ($\spadesuit $2).

\begin{lem}\label{lem28}
Let $Y$ be a metric space and let $A(Y)$ be a vector subspace of $C(Y)$ that satisfies condition ($\spadesuit$).
Assume that $1\leq g_0 \in A(Y)$.  Then $F(Y) = \{g/g_0: g\in A(Y)\}$ is a vector subspace of $C(Y)$ that satisfies condition ($\spadesuit$1).
\end{lem}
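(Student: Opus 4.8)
\emph{Plan of proof.} That $F(Y)$ is a vector subspace of $C(Y)$ is immediate: each $g/g_0$ is continuous since $g_0\ge 1$ never vanishes, and $g\mapsto g/g_0$ is linear. Moreover ``$Y$ is complete'' is part of condition ($\spadesuit$1) for $A(Y)$ and concerns $Y$ alone, so it passes to $F(Y)$. Thus the whole content is: given a separated sequence $(y_n)$ in $Y$, produce $f\in F(Y)$ with $(f(y_n))$ divergent in $\R$. I would first record that $A(Y)$ contains all constant functions --- apply ($\spadesuit$2) with $g=g_0\in A(Y)$ and $h$ a constant, whose derivatives of order $\ge1$ vanish --- so $1\in A(Y)$ and hence $1/g_0\in F(Y)$. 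I then split according to whether $(g_0(y_n))$ is bounded.

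\emph{Bounded case.} Suppose $1\le g_0\le M$ on $\{y_n\}$. By ($\spadesuit$1) applied to $A(Y)$ there is $h\in A(Y)$ with $(h(y_n))$ divergent in $\R$. If $(1/g_0(y_n))$ already diverges, take $f=1/g_0$. Otherwise $1/g_0(y_n)\to d$ with $d\ge 1/M>0$, so $g_0(y_n)\to 1/d$; I claim $f=h/g_0\in F(Y)$ then has divergent image, for if $h(y_n)/g_0(y_n)\to c\in\R$ we would get $h(y_n)=\big(h(y_n)/g_0(y_n)\big)\cdot g_0(y_n)\to c/d\in\R$, contradicting the choice of $h$.

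\emph{Unbounded case.} Here $(g_0(y_n))$ takes arbitrarily large values, so passing to a subsequence $(z_j)$ of $(y_n)$ I may assume $t_1\ge1$ and $t_{j+1}\ge 2t_j$, where $t_j:=g_0(z_j)$; then $s_j:=\log t_j$ is strictly increasing with $s_{j+1}-s_j\ge\log 2$, i.e. the $s_j$ are uniformly separated. The crux is to build $\psi\in C^\infty(\R)$ with $\|\psi^{(k)}\|_\infty<\infty$ for every $k\ge1$ such that $\psi(t_j)/t_j$ alternates between $1$ and $\tfrac12$. To do this, choose $\sigma\in C^\infty(\R)$ with values in $[\tfrac12,1]$, all derivatives bounded, and $\sigma(s_j)$ equal to $1$ for odd $j$ and $\tfrac12$ for even $j$ --- a routine interpolation by disjointly supported bumps, available exactly because the $s_j$ are uniformly separated --- and set $\psi(t)=t\,\sigma(\log t)$ for $t\ge1$, extended to an element of $C^\infty(\R)$ with bounded derivatives (on $[1,\infty)$ each $\psi^{(k)}$, $k\ge1$, is a polynomial in $1/t$ whose coefficients are bounded functions of $\log t$, hence bounded, so extending past $t=1$ is harmless). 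By ($\spadesuit$2), $\psi\circ g_0\in A(Y)$, so $f:=(\psi\circ g_0)/g_0\in F(Y)$; and $f(z_j)=\psi(t_j)/t_j$ alternates between $1$ and $\tfrac12$, so $(f(z_j))$, and therefore $(f(y_n))$, diverges.

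The main obstacle is the construction in the unbounded case: composing $g_0$ with a function having bounded first derivative only yields linear growth, so $(\psi\circ g_0)/g_0$ can oscillate only if the oscillation is engineered on the logarithmic scale --- which is what forces one to thin $(g_0(y_n))$ to a geometrically increasing sequence, making $\log t_j$ uniformly separated, and it is exactly here that ($\spadesuit$2) is genuinely needed. Condition ($\spadesuit$1), by contrast, enters only through the divergent function $h$ in the bounded case (and in that case ($\spadesuit$2) is used merely to put constants into $A(Y)$). Everything else is routine bookkeeping.
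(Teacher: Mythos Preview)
Your proof is correct and follows the same two-case strategy as the paper: split according to whether $(g_0(y_n))$ is bounded, invoke ($\spadesuit$1) in the bounded case and ($\spadesuit$2) in the unbounded case after thinning to a geometrically increasing subsequence. Your unbounded-case construction via $\psi(t)=t\,\sigma(\log t)$ is more explicit than the paper's, which simply asserts that one can build $h\in C^\infty(\R)$ with bounded derivatives satisfying $h(g_0(y_{2n}))=g_0(y_{2n})$ and $h(g_0(y_{2n-1}))=0$; both constructions exploit the geometric spacing of the $t_j$ in exactly the same way.
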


\begin{proof}
Since $A(Y)$ satisfies ($\spadesuit$), $Y$ is complete.
Let $(y_n)$ be a separated sequence in $Y$.
In the first case, assume that $(g_0(y_n))$ is bounded.
By using a subsequence if necessary, we may assume that $(g_0(y_n))$ converges to a real number $a$.  Because $g_0 \geq 1$, $a \neq 0$.
Since $A(Y)$ satisfies condition ($\spadesuit$1), 
there exists $g\in A(Y)$ such that $(g(y_n))$ diverges in $\R$.
Then $g/g_0 \in F(Y)$ and $((g/g_0)(y_n))$ diverges in $\R$.

Next, consider the case where $(g_0(y_n))$ is unbounded. By using a subsequence if necessary, we may assume that $g_0(y_{n+1}) > 2g_0(y_n)$ for all $n$.
One can then construct a function $h\in C^\infty(\R)$ with $\|h^{(k)}\|_\infty < \infty$ for all $k \geq 1$, such that $h(g_0(y_{2n})) = g_0(y_{2n})$ and $h(g_0(y_{2n-1})) = 0$ for all $n$.
By condition ($\spadesuit$2), $g = h\circ g_0\in A(Y)$.
Then $g/g_0\in F(Y)$ and $((g/g_0)(y_n))$ diverges in $\R$.
\end{proof}

\begin{thm}\label{thm27}
Let $A(X)$ and $A(Y)$ be near vector lattices defined on metric spaces $X$ and $Y$ respectively.
Assume that $A(X) = A^\loc(X)$ or $A^\loc_b(X)$ or satisfies ($\spadesuit$), and the same holds for $A(Y)$.
If $T: A(X) \to A(Y)$ is an order isomorphism, then there 
is a homeomorphism $\vp: X\to Y$ such that for any $f, g\in A(X)$ and any open set $U$ in $X$, $f \geq g$ on $U$ if and only if $Tf \geq Tg$ on $\vp(U)$.
\end{thm}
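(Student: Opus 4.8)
The plan is to derive the theorem by feeding the conclusion of Theorem \ref{thm18.1} into Proposition \ref{prop26}; the one point that needs attention is to track precisely which compactifications Theorem \ref{thm18.1} produces, so that Proposition \ref{prop26} applies to them. First I would apply Theorem \ref{thm18.1} to $T$, obtaining compactifications $\hat X,\hat Y$ and a homeomorphism $\vp:\hat X\to\hat Y$ such that, for all $f,g\in A(X)$ and every open $U\subseteq\hat X$, $f\geq g$ on $U\cap X$ if and only if $Tf\geq Tg$ on $\vp(U)\cap Y$. Inspecting the proof of Theorem \ref{thm18.1} (which runs Proposition \ref{prop18} on the order bounded set $S=\{f\in A(X):0\leq f\leq f_1\}$), one reads off that $\hat X$ is the $F(X)$-compactification of $X$, where $F(X)=\{f/f_0:f\in A(X)\}$ for a fixed $f_0\in A(X)$ with $f_0\geq 1$, and that $F(X)$ is again a near vector lattice; symmetrically, $\hat Y$ is the $F(Y)$-compactification of $Y$ for a near vector lattice $F(Y)=\{g/\bar f_0:g\in A(Y)\}$ with $\bar f_0\in A(Y)$, $\bar f_0\geq1$.

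The main step is to check that $F(X)$ and $F(Y)$ still satisfy the structural hypotheses of Proposition \ref{prop26}. If $A(X)$ satisfies ($\spadesuit$), then $F(X)$ satisfies ($\spadesuit$1) by Lemma \ref{lem28}. If instead $A(X)=A^\loc(X)$ (respectively $A^\loc_b(X)$), then I claim $F(X)=F^\loc(X)$ (respectively $F^\loc_b(X)$): if $\phi\in F^\loc(X)$, so that near each point $\phi$ agrees with some member of $F(X)$, then $\phi f_0$ is continuous and near each point agrees with some member of $A(X)$, hence $\phi f_0\in A^\loc(X)=A(X)$ and $\phi=(\phi f_0)/f_0\in F(X)$; in the bounded case $\phi f_0$ is moreover bounded, so $\phi f_0\in A^\loc_b(X)=A(X)$, while every $f/f_0\in F(X)$ is bounded since $0<1/f_0\leq1$, so that $F(X)=F_b(X)=F^\loc_b(X)$. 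The same reasoning applies to $F(Y)$. Hence in each of the three cases, $F(X)$ and $F(Y)$ are near vector lattices which are local spaces of the appropriate type or satisfy ($\spadesuit$1).

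Next I would apply Proposition \ref{prop26} to $F(X),F(Y)$ (in the roles of $A(X),A(Y)$) together with the homeomorphism $\vp:\hat X\to\hat Y$, obtaining $\vp(X)\subseteq Y$, and then apply it again to $F(Y),F(X)$ together with $\vp^{-1}:\hat Y\to\hat X$, obtaining $\vp^{-1}(Y)\subseteq X$; hence $\vp(X)=Y$. Since $X$ and $Y$ are embedded as topological subspaces of $\hat X$ and $\hat Y$, the restriction $\vp|_X:X\to Y$ is a homeomorphism. Finally, given an open set $U$ in $X$, I would choose an open set $\tilde U$ in $\hat X$ with $\tilde U\cap X=U$; since $\vp$ is a bijection carrying $X$ onto $Y$, we have $\vp(\tilde U)\cap Y=\vp(U)$, and so the property furnished by Theorem \ref{thm18.1} specializes to: for all $f,g\in A(X)$, $f\geq g$ on $U$ if and only if $Tf\geq Tg$ on $\vp(U)$. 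That is the assertion of the theorem.

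I expect the only genuinely non-formal parts of the argument to be the identification of $\hat X$ with the $F(X)$-compactification and the verification that $F(X)$ inherits the local-space or ($\spadesuit$1) property; the latter is exactly the role of Lemma \ref{lem28} in the ($\spadesuit$) case, and the short computation with $\phi f_0$ handles the local cases. Everything else is a routine transfer of the order relation across the homeomorphism $\vp$.
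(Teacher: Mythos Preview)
Your proposal is correct and follows essentially the same route as the paper: apply Theorem \ref{thm18.1}, trace through Proposition \ref{prop18} to identify $\hat X,\hat Y$ as the $F(X)$- and $F(Y)$-compactifications, verify that $F(X)$ and $F(Y)$ inherit the local or ($\spadesuit$1) property (via Lemma \ref{lem28} in the ($\spadesuit$) case, and via the $\phi f_0$ computation in the local cases), and then invoke Proposition \ref{prop26} in both directions to conclude $\vp(X)=Y$. The paper's proof is organized identically, only spelling out the $F(Y)$ side and appealing to symmetry for $F(X)$.
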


\begin{proof}
By Theorem \ref{thm18.1}, 
there are compactifications $\hat{X}$ and $\hat{Y}$ of $X$ and $Y$ respectively, and a homeomorphism $\vp:\hat{X}\to \hat{Y}$ such that for any $f,g\in A(X)$ and any open set $U$ in $\hat{X}$, $f\geq g$ on $U\cap X$ if and only if $Tf \geq Tg$ on $\vp(U)\cap Y$.
To complete the proof, it suffices to show that $\vp(X) = Y$.
We will show that $\vp(X)\subseteq Y$, the reverse inclusion follows by symmetry.
From the proofs of Theorem \ref{thm18.1} and of Proposition \ref{prop18}, we see that the compactification $\hat{Y}$ is induced by a near vector lattice  $F(Y)$, where $F(Y) = \{g/Tf_0: g\in A(Y)\}$ for some function $f_0 \in A(X)$ such that $Tf_ 0 \geq 1$.
If $A(Y) = A^\loc(Y)$ or $A^\loc_b(Y)$ respectively, then $F(Y) = F^\loc(Y)$ and $F^\loc_b(Y)$ respectively. (In the latter case, observe that there is a real constant function $M$ such that $1 \leq Tf_0 \leq M$.)
By Proposition \ref{prop26}, $\vp(X)\subseteq Y$.
Now suppose that $A(Y)$ satisfies ($\spadesuit$).  By Lemma \ref{lem28}, $F(Y)$ satisfies condition ($\spadesuit$1).  Thus $\vp(X)\subseteq Y$ by Proposition \ref{prop26}.
\end{proof}

Let $(X,d)$ be a metric space.  The space of Lipschitz functions $\Lip(X)$ consists of all $f:X\to \R$ such that there is a finite constant $K$ with $|f(x) - f(y)|\leq K\,d(x,y)$ for all $x,y \in X$.
The space $\lip(X)$ of little Lipschitz functions consists of all $f\in \Lip(X)$ such that
\[ \lim_{d(x,y)\to 0}\frac{|f(x)-f(y)|}{d(x,y)} = 0.\]
The space of uniformly continuous real-valued functions on $X$ is denoted by $U(X)$.
A set of functions $A(X)$ is said to be {\em uniformly separating} (cf.\ \cite{GJa}; see also \cite{CC1}) if whenever $U$ and $V$ are subsets of $X$ with $d(U,V) > 0$, then there exists $f\in A(X)$ such that $f = 1$ on $U$ and $f = 0$ on $V$.  $\Lip(X)$ and $U(X)$ are always uniformly separating, while $\lip(X)$ may not be.  However, if $0 < \vp < 1$, then $\lip_\al(X) = \lip(X,d^\al)$ is uniformly separating.

\medskip

\noindent{\bf Examples B}.
Let $X$ be a metric space. 
\begin{enumerate}
\item Let $A(X)$ be one of the spaces $C(X)$, $C_b(X)$, $\Lip^\loc(X)$, $\Lip^\loc_b(X)$, $\lip^\loc(X)$, $\lip^\loc_{b}(X)$,  $U^\loc(X)$ or $U^\loc_b(X)$.  In case $A(X) = \lip^\loc(X)$ or $\lip^\loc_b(X)$, we also assume that $A(X)$ separates points from closed sets.  Then $A(X)$ is a  near vector lattice such that $A(X) = A^\loc(X)$ or $A^\loc_b(X)$.
\item Let $A(X)$ be one of the spaces $\Lip(X)$, $\Lip_b(X)$, $\lip(X)$, $\lip_{b}(X)$, $U(X)$ and $U_b(X)$, where $X$ is a complete metric space.  In case $A(X) = \lip(X)$ or $\lip_b(X)$, we also assume that $A(X)$  is uniformly separating. Then $A(X)$ is a near vector lattice that satisfies ($\spadesuit$).
\item Let $X$ be an open subset of a Banach space and let $1\leq p \leq \infty$.  Suppose that $A(X)$ is  either $C^p(X)$ or $C^p_b(X)$, and that $A(X)$ separates points from closed sets.
Then $A(X)$ is a near vector lattice such that $A(X) = A^\loc(X)$ or $A^\loc_b(X)$.
\item Let $X$ be an open subset of a Banach space and let $1\leq p \leq \infty$. Suppose that $A(\ol{X})$ is one of the spaces $C^p(\ol{X})$, $C^p_b(\ol{X})$, $C^p_*(\ol{X})$, and that $A(\ol{X})$ separates points from closed sets in $\ol{X}$. Then $A(\ol{X})$ is a  near vector lattice (on $\ol{X}$) such that $A(\ol{X}) = A^\loc(\ol{X})$ or $A^\loc_b(\ol{X})$ or  satisfies condition ($\spadesuit$).
\end{enumerate}

\begin{proof}
(a) If $A(X)$ is a one of the spaces in (a), then $A(X)$ is a unital vector sublattice of $C(X)$ that separates points from closed sets.  Hence $A(X)$ is a near vector lattice by Examples A (a).
Obviously, $A(X) = A^\loc(X)$ or $A^\loc_b(X)$.

\noindent (b)  All of the spaces $A(X)$ in (b) are unital vector sublattices of $C(X)$ that separate points from closed sets.
Thus they are near vector lattices by Examples A (a).  We show that all of them satisfy ($\spadesuit$).
By hypothesis, $X$ is complete.  Let $(x_n)$ be a separated sequence in $X$.  Set $U = \{x_{2n-1}: n\in \N\}$ and $V = \{x_{2n}:n\in \N\}$.  Then $d(U,V) > 0$.  Since $A(X)$ is uniformly separating, then exists $f\in A(X)$ such that $f=1$ on $U$ and $f = 0$ on $V$.  In particular, $(f(x_n))$ diverges.  Thus $A(X)$ satisfies ($\spadesuit$1).
Condition ($\spadesuit$2) for any of the spaces $A(X)$ follows easily from the Mean Value Theorem.

\noindent (c) Let $A(X)$ be as given. Then $A(X)$ is a near vector lattice by Examples A (c) and (b). Clearly, if $A(X) = C^p(X)$, then $A(X) = A^\loc(X)$, and if $A(X) = C^p_b(X)$, then $A(X) = A^\loc_b(X)$.

\noindent (d) The spaces $C^p(\ol{X})$ and $C^p_*(\ol{X})$ are near vector lattices by Examples A (d).  Then it follows that  $C^p_b(\ol{X})$ is a near vector lattice by Examples A (b).
If $A(X) = C^p(\ol{X})$ or $C^p_b(\ol{X})$, then $A(\ol{X}) = A^\loc(\ol{X})$ or $A^\loc_b(\ol{X})$ respectively.
We show that the space $C^p_*(\ol{X})$ satisfies condition ($\spadesuit$).  Obviously $\ol{X}$ is complete.
Let $(x_n)$ be a separated sequence in $X$.  Denote the norm on the Banach space $E$ containing $X$ by $\|\cdot\|$.
There exists $\ep > 0$ such that $\|x_n-x_m\| > 3\ep$ if $n\neq m$.
Since $C^p_*(X)$ separates points from closed sets, there exists $h\in C^p_*(E)$ such that $h(0) > 0$ and $h(x) = 0$ if $\|x\| > \ep$.
Define $f: \ol{X}\to \R$ by $f(x) = h(x-x_{2n})$ if $\|x-x_{2n}\|\leq \ep$ for some $n$, and $f(x) = 0$ otherwise.
Then $f\in C^p_*(\ol{X})$ and $(f(x_n))$ diverges.  Thus $C^p_*(\ol{X})$ satisfies condition ($\spadesuit$1). 
Condition ($\spadesuit$2) is obvious.
\end{proof}

Examples B provide a large number of spaces to which Theorem \ref{thm27} is applicable.
We seek to further strengthen the theorem into one which gives a functional representation of the order isomorphism $T$.
Consider the following property of a vector subspace $A(X)$ of $C(X)$ at a point $x\in X$.
\begin{enumerate}
\item[($\heartsuit_x$)]   Either $x$ is an isolated point of $X$ or if $f\in A(X)$, $f\geq 0$ and $f(x)= 0$, then there exists $g\in A(X)$ such that $x \in \ol{\{f < g\}}  \cap \ol{\{g < 0\}}$.
\end{enumerate}

\begin{prop}\label{prop28}
Let $A(X)$ and $A(Y)$ be near vector lattices.  Suppose that $T:A(X)\to A(Y)$ is a bijective  function such that  there is a homeomorphism  $\vp:X\to Y$ so that for any $f, g\in A(X)$ and any open set $U$ in $X$, $f\geq g$ on $U$ if and only if $Tf \geq Tg$ on $\vp(U)$.  Assume that  $A(X)$ satisfies condition ($\heartsuit_x$). Set $y = \vp(x)$. Then there is a nondecreasing function $\Phi(y,\cdot): \R\to \R$ such that $Tf(y) = \Phi(y, f(\vp^{-1}(y)))$ for all $f\in A(X)$.
\end{prop}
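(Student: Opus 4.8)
The plan is to manufacture $\Phi(y,\cdot)$ from the constant functions. As $A(X)$ is unital it contains $t\mathbf 1$ for every $t\in\R$, so I would put $\Phi(y,t):=T(t\mathbf 1)(y)\in\R$. Applying the hypothesis with the open set $U=X$ to the pair $t\mathbf 1\le s\mathbf 1$ (for $t\le s$) gives $T(t\mathbf 1)\le T(s\mathbf 1)$ on $Y$, so $\Phi(y,\cdot)$ is nondecreasing. It then remains to check that $Tf(y)=\Phi(y,c)$ for every $f\in A(X)$, where $c:=f(x)$ and $x:=\vp^{-1}(y)$.

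Two observations would carry the argument. \emph{(Continuity device.)} For $g_1,g_2\in A(X)$ and any open set $W$ with $x\in\overline W$, if $g_1\ge g_2$ on $W$ then $Tg_1(y)\ge Tg_2(y)$: the hypothesis gives $Tg_1\ge Tg_2$ on $\vp(W)$, $Tg_1-Tg_2$ is continuous on $Y$, and $\vp:X\to Y$ is a homeomorphism, so $y=\vp(x)\in\vp(\overline W)=\overline{\vp(W)}$. In particular, if $f\le c$ (resp.\ $f\ge c$) on an open set whose closure contains $x$, then $Tf(y)\le\Phi(y,c)$ (resp.\ $\ge$). \emph{(Local domination forces global.)} If $Tf(y)>\Phi(y,c)=T(c\mathbf 1)(y)$, then $Tf-T(c\mathbf 1)$, being continuous and positive at $y$, is positive on an open neighbourhood $V$ of $y$, and the hypothesis applied on $U=\vp^{-1}(V)$ yields $f\ge c\mathbf 1$ on the neighbourhood $\vp^{-1}(V)$ of $x$. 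Putting these together: if $Tf(y)>\Phi(y,c)$ then $f\ge c$ on a neighbourhood of $x$ while $f$ is not $\le c$ on any neighbourhood of $x$ (that would contradict the continuity device), so $x\in\overline{\{f>c\}}$; in particular $x$ is not isolated (if it were, $\overline{\{f>c\}}\ni x$ would give $f(x)>c$). The case $Tf(y)<\Phi(y,c)$ is the mirror image. So the proposition reduces to ruling out the situation $Tf(y)>\Phi(y,c)$ with $f\ge c$ near $x$, $x\in\overline{\{f>c\}}$, $x$ non-isolated --- and its mirror.

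This is exactly where condition $(\heartsuit_x)$ enters. In the case just described, one first produces --- using Lemma \ref{lem11} together with the near-vector-lattice truncation, tuned to the appropriate band --- a function $h\in A(X)$ with $h\ge 0$, $h(x)=0$, and $h\ge f-c\mathbf 1$ on some neighbourhood $N$ of $x$. Since $x$ is not isolated, $(\heartsuit_x)$ applied to $h$ gives $G\in A(X)$ with $G(x)=0$, $x\in\overline{\{G<0\}}$, and $x\in\overline{\{h<G\}}$. Set $g:=c\mathbf 1+G$, so $g(x)=c$. On the open set $\{G<0\}$, which has $x$ in its closure, $g<c\mathbf 1$, so the continuity device gives $Tg(y)\le\Phi(y,c)$; on the open set $\{h<G\}\cap N$, which also has $x$ in its closure, $g=c\mathbf 1+G>c\mathbf 1+h\ge f$, so the continuity device gives $Tg(y)\ge Tf(y)$. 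Hence $Tf(y)\le\Phi(y,c)$, a contradiction. The mirror case is symmetric: apply $(\heartsuit_x)$ to a nonnegative $h'$ with $h'(x)=0$ dominating $c\mathbf 1-f$ near $x$, obtain $G'$, and take $g:=c\mathbf 1-G'$.

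The main obstacle I expect is the construction of the auxiliary function $h$: extracting, from the comparatively weak near-vector-lattice structure, a function that is \emph{globally} nonnegative, \emph{vanishes} at $x$, and still \emph{dominates} $f-c\mathbf 1$ on a whole neighbourhood of $x$. The tension is that near $x$ the function $f-c\mathbf 1$ takes values in a small interval $[0,\delta)$, which is exactly the transition band on which the near-vector-lattice operation merely guarantees an output in $[0,1]$, with no order relation to the input; Lemma \ref{lem11} does give exact agreement with $f-c\mathbf 1$ on a prescribed band of values, but the function it returns need not be globally nonnegative, so one must splice the two devices together and verify the domination $h\ge f-c\mathbf 1$ band by band on a neighbourhood of $x$. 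Once $h$ is in hand, the rest is a routine combination of the order hypothesis, continuity of the functions involved, and the homeomorphism property of $\vp$.
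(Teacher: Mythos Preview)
Your outline is sound and the reduction to ``$f(x)=c\Rightarrow Tf(y)=\Phi(y,c)$'' via the continuity device is exactly what the paper does.  The one place where you and the paper diverge is the construction of the nonnegative auxiliary function to which $(\heartsuit_x)$ is applied --- and this is precisely the obstacle you flagged but did not resolve.

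The paper avoids your obstacle entirely by building the auxiliary function in $A(Y)$ rather than in $A(X)$.  The point is that at $y$ the gap $a:=Tf(y)-T(c\mathbf 1)(y)>0$ is \emph{strictly} positive, so the near-vector-lattice truncation of $Tf-T(c\mathbf 1)$ at levels $0$ and $a/2$ immediately produces $h\in A(Y)$ with $h\ge 0$ everywhere and $h=Tf-T(c\mathbf 1)$ on a neighbourhood of $y$ (no ``transition band'' problem, because near $y$ the function sits well above the upper truncation level).  Then $u:=T^{-1}(h+T(c\mathbf 1))\in A(X)$ satisfies $u\ge c\mathbf 1$ \emph{globally} (since $h\ge 0$ and the hypothesis transfers the inequality back) and $u=f$ on a neighbourhood of $x$.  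Now $u-c\mathbf 1\ge 0$ with $(u-c\mathbf 1)(x)=0$, so $(\heartsuit_x)$ applies directly to $u-c\mathbf 1$, and the contradiction follows from your continuity device (using $u$ in place of $f$, which is harmless since $Tu(y)=Tf(y)$).

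Your route --- building $h\in A(X)$ with $h\ge 0$, $h(x)=0$, $h\ge f-c\mathbf 1$ near $x$ --- can in fact be carried out, but it is considerably more work: one first truncates $f-c\mathbf 1$ from below (near-vector-lattice at levels $-1,0$) to get a function $\ge -1$ that agrees with $f-c\mathbf 1$ on the neighbourhood where $f\ge c$, then adds $1-\phi$ where $\phi\in A(X)$ is a flat-top bump ($0\le\phi\le 1$, $\phi=1$ near $x$, $\phi=0$ off that neighbourhood).  Producing such a $\phi$ from the near-vector-lattice axioms alone takes several steps (essentially the content of Corollary~\ref{cor15} pulled back to $X$).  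So your plan is salvageable, but the paper's trick of passing to $A(Y)$ --- where the strict positivity furnishes the room you were missing --- is the cleaner move.
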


\begin{proof}
It suffices to show that if $f,g \in A(X)$ and $f(x) = g(x)$, then $Tf(y) = Tg(y)$.  Indeed, once this has been shown, define $\Phi(y,\cdot): \R\to \R$ by $\Phi(y,t) = (Tt)(y)$, where $t$ denotes the constant function with value $t$.  Given $f\in A(X)$, let $t = f(x)$. Then $Tf(y) = Tt(y) = \Phi(y,t)$.  Since $T$ is order preserving  $\Phi(y,\cdot):\R\to\R$ is nondecreasing.

We now proceed to prove the assertion above.
Assume, if possible, that there are $f, g\in A(X)$ such that $f(x) = g(x)$ and $Tf(y) >  Tg(y)$. In particular, from the assumption, $x$ cannot be an isolated point of $X$.  By the assumption, $A(X)$ satisfies condition ($\heartsuit_x$).
Let $a = Tf(y) - Tg(y) > 0$.  Since $A(Y)$ is a near vector lattice, there exists $h\in A(Y)$ such that $h(z) = Tf(z) - Tg(z) $ if $Tf(z) - Tg(z) \geq a/2$, $h(z)= 0$ if $Tf(z)- Tg(z) \leq 0$, and $0\leq h(z) \leq a/2$ if $0\leq Tf(z) - Tg(z) \leq a/2$.  Then $h\geq 0$ and $h = Tf - Tg$ on a neighborhood of $y$.  Let $u = T^{-1}(h+Tg) \in A(X)$.
Since $Tu \geq Tg$, $u \geq g$.  Also, $Tu = Tf$ on a neighborhood of $y$ and hence $u = f$ on a neighborhood of $x$.  In particular, $u(x) = f(x) = g(x)$.  Thus $0 \leq u-g \in A(X)$ and $(u-g)(x) = 0$.
Since $A(X)$ satisfies condition ($\heartsuit_{x}$), there exists $v\in A(X)$ such that 
\begin{equation}\label{eq1}
x \in \ol{\{u-g < v\}} \cap \ol{\{v < 0\}} = \ol{\{u < g+ v\}} \cap \ol{\{g+ v < g\}}. 
\end{equation}
Now $Tu \leq T(g+v)$ on the set $\vp(\{u < g+v\})$ and $T(g+v) \leq Tg$ on the set $\vp(\{g+v< g\})$.
By (\ref{eq1}), $y$ lies in the closure of both of these sets.  Thus, by continuity, $Tu(y) \leq T(g+v)(y)\leq Tg(y)$.  However, $Tu(y) = Tf(y) > Tg(y)$, yielding a contradiction.  This completes the proof of the claim.
\end{proof}

The next theorem is an immediate consequence of Theorem \ref{thm27} and Proposition \ref{prop28}.  

\begin{thm}\label{thm29}
Let $A(X)$ and $A(Y)$ be near vector lattices defined on metric spaces $X$ and $Y$ respectively.
Assume that $A(X) = A^\loc(X)$ or $A^\loc_b(X)$ or satisfies condition ($\spadesuit$), and the same holds for $A(Y)$.  Suppose that  $T: A(X) \to A(Y)$ is an order isomorphism and let $\vp:X\to Y$ be the associated homeomorphism obtained in Theorem \ref{thm27}.  Let $x\in X$ and $y = \vp(x)$.  If $A(X)$ and $A(Y)$ satisfy condition ($\heartsuit_x$) and ($\heartsuit_y$) respectively, then there is an increasing homeomorphism $\Phi(y,\cdot):\R\to \R$ such that $Tf(y) = \Phi(y, f(\vp^{-1}(y)))$ for all $f\in A(X)$.
\end{thm}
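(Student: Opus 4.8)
The plan is to deduce the statement directly from Theorem \ref{thm27} together with two applications of Proposition \ref{prop28}: one to $T$ and one to $T^{-1}$. First, Theorem \ref{thm27} already supplies the homeomorphism $\vp\colon X\to Y$ with the property that for $f,g\in A(X)$ and any open $U$ in $X$, $f\geq g$ on $U$ if and only if $Tf\geq Tg$ on $\vp(U)$. Fixing $x\in X$ and $y=\vp(x)$, I would invoke Proposition \ref{prop28} with this $T$ and $\vp$; the hypothesis ($\heartsuit_x$) on $A(X)$ is exactly what that proposition requires, and it produces a \emph{nondecreasing} map $\Phi(y,\cdot)\colon\R\to\R$ with $Tf(y)=\Phi(y,f(\vp^{-1}(y)))$ for all $f\in A(X)$. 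The only thing remaining is to promote ``nondecreasing'' to ``increasing homeomorphism''.

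For that promotion I would apply Proposition \ref{prop28} a second time, now to the order isomorphism $T^{-1}\colon A(Y)\to A(X)$ and the homeomorphism $\vp^{-1}\colon Y\to X$. The order-compatibility required of $\vp^{-1}$ is immediate from that of $\vp$: given $p,q\in A(Y)$ and $V$ open in $Y$, apply the property of $\vp$ with $f=T^{-1}p$, $g=T^{-1}q$, $U=\vp^{-1}(V)$. Using the hypothesis ($\heartsuit_y$) on $A(Y)$, this second application yields a nondecreasing $\Psi(x,\cdot)\colon\R\to\R$ with $T^{-1}g(x)=\Psi(x,g(\vp(x)))$ for all $g\in A(Y)$.

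Next I would check that $\Phi(y,\cdot)$ and $\Psi(x,\cdot)$ are mutual inverses. Substituting $g=Tf$ into the formula for $\Psi$ and then using the formula for $\Phi$ gives $f(x)=\Psi\bigl(x,\Phi(y,f(x))\bigr)$ for every $f\in A(X)$; since $A(X)$ is unital it contains every constant function, so $f(x)$ ranges over all of $\R$, whence $\Psi(x,\cdot)\circ\Phi(y,\cdot)=\mathrm{id}_{\R}$. The symmetric computation with $f=T^{-1}g$ (and $A(Y)$ again containing all constants) gives $\Phi(y,\cdot)\circ\Psi(x,\cdot)=\mathrm{id}_{\R}$. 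Hence $\Phi(y,\cdot)$ is a bijection of $\R$; a nondecreasing bijection of $\R$ is necessarily strictly increasing and a homeomorphism — monotonicity together with surjectivity onto an interval forces continuity, and the inverse is again monotone and surjective — which is the assertion.

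As for difficulty, none of the individual steps is genuinely hard; as the text remarks, the result is an ``immediate consequence'' of the two cited results. The one point that must not be skipped is that Proposition \ref{prop28} by itself delivers only monotonicity, so the invertibility (equivalently, the surjectivity) of $\Phi(y,\cdot)$ has to be extracted from the separate application to $T^{-1}$: it is precisely here that the second hypothesis ($\heartsuit_y$) enters, and without it $\Phi(y,\cdot)$ could fail to be onto and hence fail to be a homeomorphism.
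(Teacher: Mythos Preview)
Your proposal is correct and matches the paper's own proof essentially step for step: apply Proposition~\ref{prop28} to both $T$ and $T^{-1}$ to obtain the nondecreasing maps $\Phi(y,\cdot)$ and $\Psi(x,\cdot)$, then use the constant functions (which lie in the unital spaces $A(X)$ and $A(Y)$) to verify that these are mutual inverses, forcing $\Phi(y,\cdot)$ to be an increasing bijection and hence a homeomorphism. The paper phrases the inverse check as ``$TT^{-1}t=t$ and $T^{-1}Tt=t$ for all constant functions $t$'', which is exactly your argument; your additional remark that the order-compatibility hypothesis of Proposition~\ref{prop28} for $\vp^{-1}$ follows immediately from that of $\vp$ is a small verification the paper leaves implicit.
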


\begin{proof}
Proposition \ref{prop28} applies to both $T$ and $T^{-1}$ at $x$ and $y = \vp(x)$.
Thus there are nondecreasing functions $\Phi(y,\cdot):\R\to \R$ and $\Psi(x,\cdot):\R\to \R$ such that
$Tf(y) = \Phi(y,f(\vp^{-1}(y)))$ and $T^{-1}g(x) = \Psi(x, g(\vp(x)))$ for all $f\in A(X)$ and all $g\in A(Y)$.  Considering the equations $TT^{-1}t = t$ and $T^{-1}Tt = t$ for all constant functions $t$
show that $\Phi(y,\cdot)$ and $\Psi(x,\cdot)$ are mutual inverses.  Thus $\Phi(y,\cdot):\R\to \R$ is an increasing bijection and so must be a homeomorphism.
\end{proof}

Say that $A(X)$ satisfies condition ($\heartsuit$) if it satisfies condition ($\heartsuit_x)$ at all points $x\in X$.

\medskip

\noindent{\bf Examples C}.
Let $X$ be a metric space unless otherwise stated.
\begin{enumerate}
\item If $A(X)$ is a near vector lattice that satisfies condition ($\heartsuit_x)$ at some $x\in X$, then $A_b(X)$ satisfies the same condition at $x$. 
\item If $A(X)$ is a unital vector sublattice of $C(X)$ that separates points from closed sets and satisfies condition ($\heartsuit$), then $A^\loc(X)$ and $A^\loc_b(X)$ satisfy condition ($\heartsuit$).
\item Let $A(X)$ be one the spaces $C(X)$,  $\Lip(X)$, $\lip_\al(X)$,  where $0< \al < 1$,  or $U(X)$. Then $A(X)$ is a  unital  vector sublattice of $C(X)$ that separates points from closed sets and satisfies condition ($\heartsuit$); hence the same is true of $A_b(X)$, $A^\loc(X)$ and $A^\loc_b(X)$.
\item Let $X$ be an open set in a Banach space and let $1\leq p \leq \infty$.  Suppose that  $A(X)$ is one of the  spaces $C^p(X)$ or  $C^p_b(X)$. If $A(X)$ separates points from closed sets, then $A(X)$ satisfies condition $(\heartsuit)$.
\item Let $X$ be an open set in a Banach space and let $1\leq p \leq \infty$.  Suppose that  $A(\ol{X})$ is one of the  spaces $C^p(\ol{X})$ or $C^p_b(\ol{X})$.  If $A(\ol{X})$ separates points from closed sets, then $A(\ol{X})$ satisfies condition $(\heartsuit)$.
\item Let $X$ be an open set in a Banach space and let $1\leq p \leq \infty$.   If $C^p_*(\ol{X})$ separates points from closed sets, then $A(\ol{X}) = C^p_*(\ol{X})$ satisfies condition ($\heartsuit_x)$ for all $x\in X$.
\end{enumerate}

\begin{proof}
\noindent (a) Suppose that $f\in A_b(X)$, $f\geq 0$ and $f(x) = 0$, where $x$ is an accumulation point of $X$.  Then there exists $h\in A(X)$ such that $x\in \ol{\{f< h\}}\cap \ol{\{h< 0\}}$.  In particular, $h(x) = 0$. Since $A(X)$ is  a near vector lattice, by Lemma \ref{lem11},  there exists $g\in A_b(X)$ such that $g(z) = h(z)$ if $|h(z)| \leq 1$.  Then $g = h$ on an open neighborhood of $x$.  Hence $x\in \ol{\{f< g\}}\cap \ol{\{g< 0\}}$.

\noindent(b)  Suppose that $0\leq f\in A^\loc(X)$ and that $f(x) = 0$ for an accumulation point $x$ of $X$.
There exist an open neighborhood $U$ of $x$ and $f'\in A(X)$ such that $f = f'$ on $U$.  Since $A(X)$ is a vector lattice, we may replace $f'$ by $f'\vee0$ if necessary to assume that $f' \geq 0$.  Because $A(X)$ satisfies condition ($\heartsuit$), there exists $g'\in A(X)$ such that $x \in \ol{\{f' < g'\}}\cap \ol{\{g' < 0\}}$.
By continuity, $0 = f'(x) \leq g'(x) \leq 0$; hence $g'(x) = 0$.  Thus $V = \{|g'| < 1\}\cap U$ is an open neighborhood of $x$.  Let $g = (g'\wedge 1)\vee -1$.  Then $g \in A^\loc_b(X)$,
\[ \{f < g\}  \cap V  = \{f' < g'\}\cap V \text{ and }  \{g < 0\}\cap V =  \{g' < 0\} \cap V.\]
Therefore, $x \in  \ol{\{f < g\}}\cap \ol{\{g < 0\}}$.  This shows that $A^\loc(X)$ and $A^\loc_b(X)$ satisfy property ($\heartsuit$).

\noindent(c) 
Let $0 \leq f\in A(X)$ and let $x_0$ be an accumulation point in $X$ where $f(x_0) = 0$.
Choose a sequence $(x_n)$ in $X$ converging to $x_0$ such that $0 < d(x_{n+1},x_0) < d(x_n, x_0)/5$ for all $n$.
Set $r_n = d(x_n,x_0)$.
Define $g_n: X\to \R$ by 
\[ g_n(x) = \begin{cases}
             (f(x_n)+r_n)[1-\frac{2}{r_n}d(x,x_n)]^+ &\text{if $n$ is even},\\
             -[\frac{r_n}{2}-d(x,x_n)]^+ &\text{if $n$ is odd}.
            \end{cases}\]
The functions $g_n$ are disjointly supported functions in $\Lip(X)$; the Lipschitz constant of $g_n$ is at most $2(\frac{f(x_n)}{r_n}+1)$ and $\|g_n\|_\infty \leq f(x_n) + r_n$.
It follows that $g = \sum g_n$ converges uniformly on $X$ and thus $g \in U(X) \subseteq C(X)$.

Observe that if $m < n$, $d(y,x_n) < r_n/2$ and $d(z,x_m)< r_m/2$, then 
\begin{equation}\label{eq2} 
d(y,z) \geq d(z,x_0) - d(y,x_0) \geq \frac{r_m}{2} - \frac{3r_n}{2} > \frac{r_m}{2} - \frac{3r_m}{10} = \frac{r_m}{5}
\end{equation}
and 
\begin{equation}\label{eq3} 
|g(y)-g(z)|\leq |g(y)|+|g(z)| \leq f(x_n)+{r_n} + f(x_m) + {r_m}.
\end{equation}

If $A(X) = \Lip(X)$, then there is a constant $K$ such that 
\[f(x_n)= f(x_n)-f(x_0) \leq Kd(x_n,x_0) = Kr_n \text{ for all $n$.}\]
By (\ref{eq2}), (\ref{eq3}) and the fact that each $g_n$ is Lipschitz with constant at most $2(K+1)$, one can conclude that $g\in \Lip(X)$.

If $A(X) = \lip_\al(X)$, $0< \al < 1$, then 
\begin{equation}\label{eq6} 
\lim_n\frac{f(x_n)}{r_n^\al} = \lim_n\frac{f(x_n)-f(x_0)}{d(x_n,x_0)^\al} = 0.
\end{equation}
As observed above,  $g_n$ is Lipschitz with respect to the metric $d$ with constant at most $2(\frac{f(x_n)}{r_n}+1)$.
Let $y,z\in X$. If $y, z \notin \cup B(x_n,\frac{r_n}{2})$, then $g(y) = g(z) = 0$.
Suppose there exists $n$ such that $y\in B(x_n,\frac{r_n}{2})$ and $z \notin \cup_{m\neq n}B(x_m,\frac{r_m}{2})$.  Then $g(y) = g_n(y)$ and $g(z) = g_n(z)$. Thus
\begin{align}\label{eq4}
\frac{|g(y)-g(z)|}{d(y,z)^\al} &\leq 2(\frac{f(x_n)}{r_n}+1)d(y,z)^{1-\al}\\ \notag
&= 2\bigl(\frac{f(x_n) + r_n}{r^\al_n}\bigr)\bigl(\frac{d(y,z)}{r_n}\bigr)^{1-\al}.
\end{align}
Also, since $\|g_n\|_\infty \leq f(x_n)+r_n$, 
\begin{equation}\label{eq5}
\frac{|g(y)-g(z)|}{d(y,z)^\al} \leq \frac{2(f(x_n)+r_n)}{d(y,z)^\al}.
\end{equation}
If $d(y,z) \leq r_n$, use the estimate (\ref{eq4}); while if $d(y,z) > r_n$, employ the estimate (\ref{eq5}). 
In either case, keeping  (\ref{eq6}) in mind, we can conclude that 
\[ \frac{|g(y)-g(z)|}{d(y,z)^\al} \to 0 \text{ as $d(y,z) \to 0$}\]
and that the expression is bounded independent of $y, z$ and $n$.

On the other hand, assume that $y \in B(x_m,\frac{r_m}{2})$ and $z \in B(x_n,\frac{r_n}{2})$, where $m< n$.  Then $g(y) = g_n(y)$ and $g(z) = g_m(z)$.
By (\ref{eq2}) and (\ref{eq3}),
\[  \frac{|g(y)-g(z)|}{d(y,z)^\al} \leq \frac{ f(x_n)+{r_n} + f(x_m) + {r_m}}{(\frac{r_m}{5})^\al}.\]
In particular, the expression is bounded independent of $y, z, m$ and $n$. Furthermore, since  $d(y,z)\to 0$ implies $m \to \infty$, one can deduce that 
\[ \lim_{d(y,z)\to 0}\frac{|g(y)-g(z)|}{d(y,z)^\al} = 0.\]
Therefore, $g \in \lip_\al(X)$.

Finally, $g(x_n) > f(x_n)$ if $n$ is even, $g(x_n) < 0$ if $n$ is odd, and $(x_n)$ converges to $x_0$. Hence $x_0 \in \ol{\{f < g\}}\cap \ol{\{g < 0\}}$.

\noindent (d), (e) and (f) (Refer to \cite[Step 1.5 on p.293]{CC2}.) 
Suppose that $0 \leq f\in A(X)$, respectively, $A(\ol{X})$,  and $f(x_0) =0$ for some $x_0 \in X$.
Then $Df(x_0) = 0$.  Let $E$ be the ambient Banach space containing $X$. Choose a nonzero $x^*\in E^*$ and define $h: X\to \R$ by $h(x) = x^*(x-x_0)$. Then $h \in C^p(X)$. Since $Df(x_0) = 0$, $x_0 \in \ol{\{f < h\}}\cap \ol{\{h < 0\}}$.  Let $\gamma: \R\to \R$ be a $C^\infty$ function such that $\gamma(t) = t$ if $|t| \leq 1$ and $\gamma(t) = 0$ if $|t| > 2$.
Clearly $\gamma^{(k)}$ is bounded on $\R$ for any $k\in \N\cup\{0\}$.
In particular,  $g = \gamma\circ h \in C^p_b(X)$.
Furthermore, $\|D^kg(x)\| \leq |\gamma^{(k)}(h(x))| \|x^*\|^k$
for all $x\in X$ and all $k$ with $1\leq k \leq p$ ($1\leq k < \infty$ if $p = \infty$)
 and $g$ has a continuous extension $\ol{g}$ onto $\ol{X}$.
Thus $\ol{g}\in C^p_*(\ol{X}) \subseteq C^p_b(\ol{X}) \subseteq C^p(\ol{X})$.
Since $g = h$ on a neighborhood of $x_0$, it is clear that $x_0 \in \ol{\{f < g\}}\cap \ol{\{g < 0\}}$. 
This proves that $C^p(X), C^p_b(X)$ satisfy condition ($\heartsuit$), and that $C^p(\ol{X}), C^p_b(\ol{X})$, and $C^p_*(\ol{X})$ satisfy condition ($\heartsuit_x$) for all $x\in X$.

It remains to prove that $C^p(\ol{X})$ and $C^p_b(\ol{X})$ satisfy condition ($\heartsuit_x$) for any $x\in \ol{X}\bs X$ whenever these spaces separate points from closed sets.
Let $x_0 \in \ol{X} \bs X$ and assume that $f\in C^p(\ol{X})$, $f\geq 0$, and $f(x_0) = 0$.
Choose a sequence of distinct points $(x_n)$ in $X$ that converges to $x_0$.
There are open neighborhoods $U_n$ of $x_n$, $U_n \subseteq X$, $\diam U_n \to 0$ so that $\ol{U_n} \cap \ol{\cup_{m\neq n}U_m} = \emptyset$ for all $n$ (where the closures are taken in $\ol{X}$).
Denote by $(E,\|\cdot\|)$ the Banach space containing $X$. Since $C^p(\ol{X})$ separates points from closed sets, there exists $h\in C^p(E)$ such that $h(0) =1 $ and $h(x)=0$ if $\|x\| \geq 1$.
We may assume that $f$ is bounded; otherwise, replace $f$ with $\gamma\circ f$, where $\gamma$ is as in the previous paragraph.
Choose $\ep_n > 0$ so that $\|x-x_n\|< \ep_n$ implies $x\in U_n$.
Define $g:\ol{X}\to\R$ by 
\[ g(x) = \begin{cases}
             f(x_{2n-1})+ \frac{1}{n}h(\frac{x-x_{2n-1}}{\ep_{2n-1}}) &\text{if $\|x-x_{2n-1}\| < \ep_{2n-1}$},\\
             -\frac{1}{n}h(\frac{x-x_{2n}}{\ep_{2n}}) &\text{if $\|x-x_{2n}\| < \ep_{2n}$},\\
            0 &\text{otherwise}.
            \end{cases}\]
Since $(f(x_n))$ converges to $0$, it is easy to see that $g$ is a function in $C^p_b(\ol{X})$.
Moreover, $x_{2n-1} \in \{f < g\}$ and $x_{2n} \in \{g  < 0\}$ for all $n$.
Thus $x_0 \in \ol{\{f< g\}} \cap\ol{\{g < 0\}}$.
\end{proof}

In general, the space $C^p_*(\ol{X})$ may not satisfy condition ($\heartsuit$).
\medskip

\noindent{\bf Example D}.  Let $X = (0,1) \subseteq \R$. The space $C^2_*(\ol{X})$ fails condition ($\heartsuit_0$).

\begin{proof}
Consider the function $f:[0,1]\to \R$, $f(x) = x$.  Clearly, $f\in C^2_*(\ol{X})$. Assume that there exists $g\in C^2_*(\ol{X})$ such that $0 \in \ol{\{f< g\}} \cap \ol{\{g< 0\}}$. Then there exists a sequence $(x_n)$ in $(0,1)$ strictly decreasing to $0$ such that $x_{2n-1} = f(x_{2n-1}) < g(x_{2n-1})$ and $g(x_{2n}) < 0$ for all $n$. By the Mean Value Theorem, there exists $t_n \in (x_{n+1}, x_{n})$ such that 
\[ g'(t_n) = \frac{g(x_n) - g(x_{n+1})}{x_n-x_{n+1}} 
\begin{cases}
> \frac{x_n}{x_n - x_{n+1}} > 1 & \text{if $n$ is odd},\\
< 0 & \text{if $n$ is even}.
\end{cases}\]
By the Mean Value Theorem again, there exists $s_n \in (t_{n+1},t_n)$ such that 
\[ |g''(s_n)| = \bigl|\frac{g'(t_n)-g'(t_{n+1})}{t_n-t_{n+1}}\bigr| \geq \frac{1}{t_n -t_{n+1}} \to \infty.\] 
This contradicts the assumption that $g''$ is bounded on $(0,1)$.
\end{proof}

\section{More on spaces of Lipschitz functions}\label{sec5}

The main result in \cite{CC1} shows that for a pair of complete metric spaces $X$ and $Y$ with finite diameters, the spaces $\Lip(X)$ and $\Lip(Y)$ are order isomorphic if and only if $X$ and $Y$ are Lipschitz homeomorphic.
In this section, we show that for any complete metric space $X$, $\Lip(X)$ is (linearly) order isomorphic to some $\Lip(X')$, where $X'$ is a complete metric space with finite diameter.  This result is a close relative of  Proposition 1.7.5 in \cite{W}.  We include complete proofs since the statements are slightly different and some of the estimates obtained in the proof will be useful subsequently.
The result may be exploited to give a  characterization of pairs of complete metric spaces which support  order isomorphic spaces of Lipschitz functions.  

Let $(X,d)$ be a  metric space with a distinguished point $e$.  Given $f\in \Lip(X)$, let
\[ L(f) = \sup_{p\neq q}\frac{|f(p)-f(q)|}{d(p,q)}\]
be its {\em Lipschitz constant}.
Let $\xi \in \Lip(X)$ be the function $\xi(x) = d(x,e)\vee 1$.
Define another metric $d'$ on $X$ by 
\begin{equation}\label{eq5.1} 
d'(p,q) = \sup_{\substack{f\in \Lip(X)\\L(f), |f(e)| \leq 1}}\bigl|\frac{f(p)}{\xi(p)}- \frac{f(q)}{\xi(q)}\bigr|. 
\end{equation}
We summarize some of the properties of the metric $d'$ in the next proposition.

\begin{prop}\label{prop5.1}
\begin{enumerate}
\item $d'$ is a bounded metric on $X$. In fact, $d'(p,q) \leq 4$ for all $p,q\in X$.  
\item For any $p,q\in X$, let
\[ \rho(p,q) = \frac{d(p,q)}{\xi(p) \vee\xi(q)}.\]
Then
\[ \rho(p,q) \leq d'(p,q) \leq 3\rho(p,q)\]
for all $p,q \in X$.
\item If $p,q \in X$ and $\xi(p)\leq \xi(q)$, then 
\begin{equation*}\label{eq5.1.1}
d'(p,q) \leq d'(p,q)\xi(p) \leq 3d(p,q).
\end{equation*}
\item If $X$ is complete with respect to $d$, then $X$ is complete with respect to $d'$.
\end{enumerate}
\end{prop}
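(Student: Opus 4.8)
The plan is to prove (b) first; parts (a), (c) and (d) then follow from the two‑sided comparison $\rho\le d'\le 3\rho$ with only routine work. Call $f\in\Lip(X)$ \emph{admissible} if $L(f)\le 1$ and $|f(e)|\le 1$. I will use two elementary facts throughout: $\xi=d(\cdot,e)\vee 1$ is $1$‑Lipschitz, being a pointwise maximum of two $1$‑Lipschitz functions, so $|\xi(p)-\xi(q)|\le d(p,q)$; and for admissible $f$ one has $|f(x)|\le|f(e)|+d(x,e)\le 2\xi(x)$ (because $\xi(x)\ge 1$ and $\xi(x)\ge d(x,e)$), hence $|f(x)/\xi(x)|\le 2$.

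For the upper bound in (b), fix an admissible $f$ and assume $\xi(p)\le\xi(q)$. From the identity $f(p)\xi(q)-f(q)\xi(p)=f(p)(\xi(q)-\xi(p))+\xi(p)(f(p)-f(q))$ and the two facts above, $|f(p)/\xi(p)-f(q)/\xi(q)|\le(2\xi(p)d(p,q)+\xi(p)d(p,q))/(\xi(p)\xi(q))=3d(p,q)/\xi(q)=3\rho(p,q)$; taking the supremum over admissible $f$ gives $d'(p,q)\le 3\rho(p,q)$. For the lower bound, still with $\xi(p)\le\xi(q)$, I would test against $f(x)=d(x,p)-d(e,p)$, which is $1$‑Lipschitz with $f(e)=0$, hence admissible; a short computation gives $f(p)/\xi(p)-f(q)/\xi(q)=-d(p,q)/\xi(q)-d(e,p)(1/\xi(p)-1/\xi(q))$, and both summands are $\le 0$ since $\xi(p)\le\xi(q)$, so $|f(p)/\xi(p)-f(q)/\xi(q)|\ge d(p,q)/\xi(q)=\rho(p,q)$, whence $d'(p,q)\ge\rho(p,q)$.

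Granting (b), part (a) is immediate: $d'(p,q)\le 4$ because $|f(x)/\xi(x)|\le 2$; $d'$ is a pseudometric, being the (finite) supremum of the pseudometrics $(p,q)\mapsto|f(p)/\xi(p)-f(q)/\xi(q)|$; and $d'(p,q)\ge\rho(p,q)>0$ for $p\ne q$ upgrades it to a metric. Part (c) is equally quick: $d'(p,q)\le d'(p,q)\xi(p)$ since $\xi(p)\ge 1$, and $d'(p,q)\xi(p)\le 3\rho(p,q)\xi(p)=3d(p,q)\xi(p)/\xi(q)\le 3d(p,q)$ when $\xi(p)\le\xi(q)$.

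For (d), let $(x_n)$ be a $d'$‑Cauchy sequence. The key point — and the step I expect to be the main obstacle — is to show $(\xi(x_n))$ is bounded. I would argue by contradiction: if not, extract a subsequence with $\xi(x_n)\to\infty$ and $\xi(x_{n+1})\ge 100\,\xi(x_n)$, and for large $n$ test against $f_n(x)=(d(x,e)\wedge R_n)-1$ with $R_n=10\,\xi(x_n)$, which is admissible. Since $d(x_n,e)=\xi(x_n)\le R_n<\xi(x_{n+1})=d(x_{n+1},e)$, one finds $f_n(x_n)/\xi(x_n)=1-1/\xi(x_n)$ and $0\le f_n(x_{n+1})/\xi(x_{n+1})\le R_n/\xi(x_{n+1})\le 1/10$, so $d'(x_n,x_{n+1})>1/2$ for all large $n$, contradicting the Cauchy condition. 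Once $\xi(x_n)\le M$, the estimate $d(x_n,x_m)=\rho(x_n,x_m)(\xi(x_n)\vee\xi(x_m))\le M\,d'(x_n,x_m)$ shows $(x_n)$ is $d$‑Cauchy, hence $d$‑convergent to some $x\in X$ by $d$‑completeness, and then $d'(x_n,x)\le 3\rho(x_n,x)\le 3\,d(x_n,x)\to 0$ (using $\xi\ge 1$) finishes the argument. Everything apart from the boundedness of $(\xi(x_n))$ is bookkeeping with $\rho\le d'\le 3\rho$; that step requires the idea of choosing test functions localized at the scale $\xi(x_n)$.
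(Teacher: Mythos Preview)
Your proof is correct and follows essentially the same route as the paper's: prove (b) first by testing against an explicit admissible function for the lower bound and using $|f|\le 2\xi$ plus $|\xi(p)-\xi(q)|\le d(p,q)$ for the upper bound, then read off (a), (c), (d) from $\rho\le d'\le 3\rho$.

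One simplification worth noting in (d): the boundedness of $(\xi(x_n))$ does \emph{not} require constructing tailored test functions. The paper simply reuses the lower bound from (b). After passing to a subsequence with $\xi(x_{n+1})>2\xi(x_n)$ (so in particular $\xi(x_n)=d(x_n,e)$ for $n\ge 2$), the reverse triangle inequality gives $d(x_m,x_n)\ge d(x_n,e)-d(x_m,e)\ge \xi(x_n)-\xi(x_m)\ge \tfrac12\xi(x_n)$ for $m<n$, whence $d'(x_m,x_n)\ge\rho(x_m,x_n)\ge\tfrac12$, contradicting the Cauchy condition. Your test-function argument works, but the step you flagged as the ``main obstacle'' is in fact immediate from what you have already established.
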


\begin{proof}
(a) Suppose that $f\in \Lip(X)$, $L(f), |f(e)| \leq 1$. 
For any $p, q \in X$, 
\[|f(p)|  \leq |f(e)|+ |f(p) - f(e)| \leq 1 + d(p,e) \leq 2 \xi(p).\]
Hence 
\[ \bigl|\frac{f(p)}{\xi(p)}- \frac{f(q)}{\xi(q)}\bigr| \leq 4.\]
Thus $d'(p,q) \leq 4$ for all $p,q$.
It is clear that $d'$ is a metric on $X$.

\noindent (b) Fix $p, q\in X$.  We may assume that $\xi(p) \leq \xi(q)$.
Define $f:\{e,p,q\}\to \R$ by $f(e) = 0$, $f(p) = d(p,e)$ and 
\[ f(q) = d(p,e) - d(p,q).\]
Note that the definition is consistent even if some of the points $e, p ,q$ coincide.
Furthermore, $f$ is a Lipschitz function with respect to $d$ with Lipschitz constant at most $1$. 
Hence $f$ extends to a function in $\Lip(X,d)$, still denoted by $f$,  with Lipschitz constant at most $1$, see e.g. \cite[Theorem 1.5.6]{W}. Obviously $|f(e)| = 0 \leq 1$.
By definition of $d'$, we have
\[d'(p,q) \geq  \bigl|\frac{f(p)}{\xi(p)}- \frac{f(q)}{\xi(q)}\bigr| = f(p)\bigl(\frac{1}{\xi(p)}-\frac{1}{\xi(q)} \bigr)+ \frac{d(p,q)}{\xi(q)}\geq \rho(p,q). \]
On the other hand, 
consider $f\in \Lip(X)$ with $L(f) \leq 1$ and $|f(e)|\leq 1$.
Then 
\[ |f(p)| \leq |f(p) - f(e)| + |f(e)| \leq d(p,e) +1\leq 2 \xi(p).\]
Hence
\begin{align*}
\bigl|\frac{f(p)}{\xi(p)}- \frac{f(q)}{\xi(q)}\bigr| & = |f(p)\bigl(\frac{1}{\xi(p)}-\frac{1}{\xi(q)} \bigr)+ \frac{f(p) - f(q)}{\xi(q)}| \\& \leq 2\xi(p)\bigl(\frac{1}{\xi(p)}-\frac{1}{\xi(q)} \bigr)+ \frac{d(p,q)}{\xi(q)}\\
& = \frac{1}{\xi(q)}[2(\xi(q) - \xi(p)) + d(p,q)]\\
& \leq 3 \frac{d(p,q)}{\xi(q)}
=3\rho(p,q).
\end{align*}
Taking supremum over all $f\in \Lip(X)$ with $L(f), |f(e)| \leq 1$ gives $d'(p,q) \leq 3\rho(p,q)$.

\noindent(c) 
The first half of the inequality  is obvious since $\xi(p) \geq 1$.
Suppose that $\xi(p) \leq\xi(q)$.  By (b), 
\[ d'(p,q)\xi(p) \leq 3\rho(p,q)\xi(p) \leq 3 d(p,q).\]

\noindent (d) Assume that $(X,d)$ is complete.
Suppose that $(x_n)$ is a $d'$-Cauchy sequence in $X$.  
If  $(\xi(x_n))$ is unbounded,
by taking a subsequence if necessary, we may assume that $\xi(x_{n+1}) > 2\xi(x_n)$ for all $n$. In particular, $\xi(x_n) =d(x_n,e)$ if $n > 1$.
If $m < n$, then
\[ d(x_n,x_m) \geq d(x_n,e) -d(x_m,e) \geq \xi(x_n) - \xi(x_m) \geq \frac{1}{2} \xi(x_n).\]
Hence 
\[d'(x_n,x_m) \geq \rho(x_n,x_m) = \frac{d(x_n,x_m)}{\xi(x_n)} \geq \frac{1}{2}.\]
This contradicts the fact that $(x_n)$ is $d'$-Cauchy. 
Therefore, $(\xi(x_n)$) is bounded. By (b), $d(x_m,x_n)\leq Cd'(x_m,x_n)$ for some constant $C<\infty$ and hence $(x_n)$ is $d$-Cauchy. Let $x_0$ be the limit of $(x_n)$ with respect to $d$. By (c),
$d'(x_n,x_0) \leq {3d(x_n,x_0)} \to 0$.  Thus $(x_n)$ converges to $x_0$ with respect to $d'$.
\end{proof}

\begin{prop}\label{prop5.2}
Suppose that $f$ is a real-valued function on $X$.  Then $f\in \Lip(X,d)$ if and only if $f/\xi\in \Lip(X,d')$.
\end{prop}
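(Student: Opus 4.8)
The plan is to prove the two implications separately and to record explicit bounds relating $L(f)$ (the Lipschitz constant of $f$ with respect to $d$) and the $d'$-Lipschitz constant $L_{d'}(f/\xi)$, since such estimates are flagged as useful later. Write $h=f/\xi$ throughout. I will use three facts, the first two from Proposition \ref{prop5.1}: $d'$ is bounded, with $d'(p,q)\le 4$; if $\xi(p)\le\xi(q)$ then $\xi(p)\,d'(p,q)\le 3d(p,q)$; and $\xi$ is itself $1$-Lipschitz with respect to $d$, being the pointwise maximum of the $1$-Lipschitz function $d(\cdot,e)$ and a constant.

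For the forward direction, suppose $f\in\Lip(X,d)$. I would decompose $f=(f-f(e))+f(e)\cdot 1$. If $L(f)>0$, then $g=(f-f(e))/L(f)$ satisfies $L(g)\le 1$ and $g(e)=0$, so $g$ is one of the functions over which the supremum defining $d'$ is taken; hence $|g(p)/\xi(p)-g(q)/\xi(q)|\le d'(p,q)$, which says that $(f-f(e))/\xi$ is Lipschitz for $d'$ with constant at most $L(f)$. The constant function $1$ is also admissible in that supremum, so $1/\xi$ is $1$-Lipschitz for $d'$. Adding, $h=(f-f(e))/\xi+f(e)\cdot(1/\xi)\in\Lip(X,d')$ with $L_{d'}(h)\le L(f)+|f(e)|$. (When $L(f)=0$, $f$ is constant and the claim is just the statement about $1/\xi$.)

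For the converse, suppose $h=f/\xi\in\Lip(X,d')$. The one observation that needs to be made first is that $h$ is automatically bounded: since $X$ has $d'$-diameter at most $4$, for every $p\in X$ we get $|h(p)|\le|h(e)|+4L_{d'}(h)=|f(e)|+4L_{d'}(h)=:B$ (using $\xi(e)=1$, so $h(e)=f(e)$). Now fix $p,q$, assume without loss of generality that $\xi(p)\le\xi(q)$, and write
\[ f(p)-f(q)=\xi(p)\bigl(h(p)-h(q)\bigr)+\bigl(\xi(p)-\xi(q)\bigr)h(q). \]
The first term is at most $\xi(p)L_{d'}(h)\,d'(p,q)\le 3L_{d'}(h)\,d(p,q)$ by Proposition \ref{prop5.1}(c); the second is at most $|\xi(p)-\xi(q)|\,B\le B\,d(p,q)$ since $\xi$ is $1$-Lipschitz for $d$. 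Hence $|f(p)-f(q)|\le\bigl(7L_{d'}(h)+|f(e)|\bigr)d(p,q)$, so $f\in\Lip(X,d)$.

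The argument is routine; the only step that is not completely mechanical is the remark in the converse that a Lipschitz function on the $d'$-bounded space $X$ is bounded, which is exactly what lets the non-homogeneous term $(\xi(p)-\xi(q))h(q)$ be absorbed. Everything else is bookkeeping with the comparison inequalities of Proposition \ref{prop5.1}.
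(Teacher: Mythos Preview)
Your proof is correct and essentially matches the paper's. The converse direction is identical to the paper's, down to the final constant $7L_{d'}(h)+|f(e)|$. In the forward direction the paper scales $f$ by a single constant $c=L(f)\vee|f(e)|\vee 1$ so that $f/c$ is itself admissible in the supremum defining $d'$, yielding $L_{d'}(f/\xi)\le c$; your decomposition $f=(f-f(e))+f(e)\cdot 1$ is a minor variant that achieves the same thing in two steps and gives the comparable bound $L(f)+|f(e)|$.
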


\begin{proof}
Suppose that $f\in \Lip(X,d)$. Set $c = L(f) \vee |f(e)| \vee 1$.
Let $g = f/c$.  Then $L(g), |g(e)| \leq 1$.
For any $p,q\in X$, $p\neq q$,
\[cd'(p,q) \geq c \bigl|\frac{g(p)}{\xi(p)}- \frac{g(q)}{\xi(q)}\bigr|
 =  \bigl|\frac{f(p)}{\xi(p)}- \frac{f(q)}{\xi(q)}\bigr|. \]
This shows that $f/\xi\in \Lip(X,d')$. Moreover, $L'(f/\xi)\leq L(f)\vee|f(e)|\vee 1$, where $L'(g)$ denotes the Lipschitz constant of $g$ with respect to $d'$.

Suppose that $g = f/\xi \in \Lip(X,d')$ and let $p, q$ be distinct points in $X$.
We may assume that $\xi(p) \leq \xi(q)$.
 By Proposition \ref{prop5.1}(a), $d' \leq 4$.  Hence
\[ |g(q)| \leq |g(q) - g(e)| + |g(e)| \leq L'(g)d'(q,e) +|g(e)|\leq 4L'(g) + |g(e)|.\]
Then
\begin{align*}
|f(p) - f(q)| &\leq |g(p)-g(q)|\xi(p) + |g(q)|(\xi(q) - \xi(p))\\
&\leq L'(g)d'(p,q)\xi(p) +(4L'(g) + |g(e)|)d(p,q)\\
&\leq (7L'(g) + |g(e)|)d(p,q),
\end{align*}
where we have used Proposition \ref{prop5.1}(c) in the last inequality.
This proves that $f\in \Lip(X,d)$.
\end{proof}

The results of Propositions \ref{prop5.1} and \ref{prop5.2} can be summarized as follows.

\begin{thm}\label{thm5.3}
Let $(X,d)$ be a complete metric space with a distinguished point $e$.  Let $X'$ be the metric space $(X,d')$, where $d'$ is given by equation (\ref{eq5.1}).  Then $X'$ is a complete metric space of  finite diameter and $\Lip(X)$ is linearly order isomorphic to $\Lip(X') = \Lip_b(X')$.
\end{thm}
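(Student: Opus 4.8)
The plan is to assemble the theorem directly from Propositions \ref{prop5.1} and \ref{prop5.2}, treating it essentially as a packaging statement. First I would record that $X' = (X,d')$ is a complete metric space of finite diameter: finiteness of the diameter (indeed $\diam X' \le 4$) is exactly Proposition \ref{prop5.1}(a), and completeness of $(X,d')$ is Proposition \ref{prop5.1}(d), which applies precisely because $(X,d)$ is assumed complete.

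Next I would introduce the candidate map. Define $S:\Lip(X)\to \Lip(X')$ by $Sf = f/\xi$, where $\xi(x) = d(x,e)\vee 1$ as above. By Proposition \ref{prop5.2}, $f\in\Lip(X,d)$ if and only if $f/\xi\in\Lip(X,d')$; so $S$ indeed maps $\Lip(X)$ into $\Lip(X')$, and the same proposition read in the other direction shows that the map $g\mapsto g\xi$ carries $\Lip(X')$ back into $\Lip(X)$, providing a two-sided inverse for $S$. Hence $S$ is a bijection. Linearity of $S$ is immediate, since division by the fixed function $\xi$ is linear. Finally, because $\xi(x)>0$ for every $x\in X$, for $f,g\in\Lip(X)$ we have $f\le g$ pointwise if and only if $f/\xi\le g/\xi$ pointwise; thus both $S$ and $S^{-1}$ are order preserving, i.e. $S$ is a linear order isomorphism. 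To finish I would note that $\Lip(X') = \Lip_b(X')$: since $\diam X'<\infty$, any $g\in\Lip(X',d')$ with Lipschitz constant $L'$ satisfies $|g(x)|\le |g(e)| + L'\,d'(x,e)\le |g(e)| + 4L'$ for all $x\in X$, so $g$ is bounded, and the reverse inclusion is trivial.

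There is essentially no genuine obstacle at this stage: all the analytic content — the two-sided comparison of the $d$- and $d'$-Lipschitz conditions, the boundedness of $d'$, and the transfer of completeness — has already been discharged in Propositions \ref{prop5.1} and \ref{prop5.2}. The only points deserving a moment's care are verifying that $S^{-1}$ genuinely lands back in $\Lip(X)$ (again Proposition \ref{prop5.2}, in its converse direction) and that order is preserved in both directions, which is where the strict positivity of $\xi$ is used.
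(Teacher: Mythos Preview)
Your proposal is correct and matches the paper's approach exactly: the paper presents Theorem \ref{thm5.3} without a separate proof, introducing it with the sentence ``The results of Propositions \ref{prop5.1} and \ref{prop5.2} can be summarized as follows,'' and your write-up is precisely the intended assembly of those pieces. The map $f\mapsto f/\xi$ you identify is indeed the linear order isomorphism the paper has in mind (it reappears explicitly, for instance, in the proof of Theorem \ref{thm5.6}).
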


We can now extend the characterization of order isomorphisms between spaces of Lipschitz functions defined on metric spaces with finite diameter \cite{CC1} to general metric spaces.
First we recall 

\begin{thm}\label{thm5.5}
\cite[Theorem 1]{CC1} Let $X$ and $Y$ be complete metric spaces with finite diameter.
If $T: \Lip(X)\to \Lip(Y)$ is an order isomorphism, then there are a Lipschitz homeomorphism $\vp: X\to Y$ and a function $\Phi:Y\times \R\to \R$ such that $\Phi(y,\cdot):\R\to \R$ is  an increasing homeomorphism for all $y \in Y$, and that $Tf(y) = \Phi(y,f(\vp^{-1}(y)))$ for all $f\in \Lip(X)$ and all $y \in Y$.
\end{thm}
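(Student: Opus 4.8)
This is \cite[Theorem 1]{CC1}; we indicate how it connects to the framework developed above. Since $X$ and $Y$ have finite diameter, every Lipschitz function on either space is bounded, so $\Lip(X)=\Lip_b(X)$ and $\Lip(Y)=\Lip_b(Y)$. By Examples B(b), $\Lip(X)$ and $\Lip(Y)$ are near vector lattices satisfying condition ($\spadesuit$), and by Examples C(c) they satisfy condition ($\heartsuit$). Hence Theorem \ref{thm29} applies: there is a homeomorphism $\vp:X\to Y$ such that $f\geq g$ on an open set $U$ if and only if $Tf\geq Tg$ on $\vp(U)$, and for each $y\in Y$ an increasing homeomorphism $\Phi(y,\cdot):\R\to\R$ with $Tf(y)=\Phi(y,f(\vp^{-1}(y)))$ for all $f\in\Lip(X)$. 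Thus the whole statement follows once $\vp$ is shown to be a Lipschitz homeomorphism, that is, once $\vp$ and $\vp^{-1}$ are both Lipschitz.

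The plan for the Lipschitz bound is a local argument with tent functions, carried out symmetrically for $\vp$ and $\vp^{-1}$; we describe it for $\vp^{-1}$. Normalizing so that $T0=0$, we have $\Phi(y,0)=0$ for every $y$, and $G:=T1=\Phi(\cdot,1)$ is a strictly positive function in $\Lip(Y)$. Fix $x_0\in X$, set $y_0=\vp(x_0)$, and for $r>0$ let $f_{x_0,r}=(1-d(\cdot,x_0)/r)^+\in\Lip(X)$, which has peak $1$ at $x_0$, Lipschitz constant $1/r$, and vanishes on $X\setminus\ol{B(x_0,r)}$. Since $f_{x_0,r}$ agrees with the zero function on the open set $X\setminus\ol{B(x_0,r)}$, the defining property of $\vp$ gives $Tf_{x_0,r}=0$ on $\vp(X\setminus\ol{B(x_0,r)})$; moreover $0\leq f_{x_0,r}\leq 1$ forces $0\leq Tf_{x_0,r}\leq G$, and $Tf_{x_0,r}(y_0)=\Phi(y_0,1)=G(y_0)>0$. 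If one can establish a bound of the form $L(Tf_{x_0,r})\leq C(y_0)/r$, then $Tf_{x_0,r}$ is positive on the ball $B(y_0,\,G(y_0)r/C(y_0))$, so this ball avoids $\vp(X\setminus\ol{B(x_0,r)})$ and is therefore contained in $\vp(\ol{B(x_0,r)})$; letting $r$ vary gives $d(\vp^{-1}(y),x_0)\leq (C(y_0)/G(y_0))\,d(y,y_0)$ for all $y\in Y$, which — once $C(\cdot)/G(\cdot)$ is known to be bounded on $Y$, where the finite diameter enters — is the desired global Lipschitz estimate for $\vp^{-1}$.

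The main obstacle is exactly the estimate $L(Tf_{x_0,r})\leq C(y_0)/r$: no general inequality controls $L(Tf)$ by $L(f)$ for an order isomorphism, so this has to be extracted from the hypothesis that $Tg\in\Lip(Y)$ for every $g\in\Lip(X)$, by running the two-parameter family $\{f_{x_0,r}\}$ together with the constant functions through $T$ and comparing the outputs through the representation $Tf=\Phi(\cdot,f\circ\vp^{-1})$. This step, together with the accompanying uniformity of the constants over $Y$, is the technical core of \cite[Theorem 1]{CC1}, to which we refer for the details.
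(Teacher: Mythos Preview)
The paper does not prove this statement at all: Theorem \ref{thm5.5} is simply quoted from \cite[Theorem 1]{CC1} with no accompanying argument. Your proposal is therefore strictly more than what the paper provides. You correctly observe that the representation $Tf(y)=\Phi(y,f(\vp^{-1}(y)))$ with $\vp$ a homeomorphism and each $\Phi(y,\cdot)$ an increasing homeomorphism follows directly from the paper's own Theorem \ref{thm29} via Examples B(b) and C(c), and you correctly isolate the Lipschitz property of $\vp$ as the only part not delivered by that machinery, deferring it to \cite{CC1}. This is an accurate and useful connection; your tent-function sketch for the Lipschitz bound is the right picture of what happens in \cite{CC1}, and your caveat about the missing estimate $L(Tf_{x_0,r})\leq C(y_0)/r$ and the uniformity of constants is honest and well placed.
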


\begin{thm}\label{thm5.6}
Let $(X,d_X)$ and $(Y,d_Y)$ be complete metric spaces with distinguished points $e_X$ and $e_Y$ respectively. Define $\xi(x) = 1\vee d_X(x,e_X)$ and $\zeta(y) = 1\vee d_Y(y,e_Y)$.
Then $\Lip(X)$ is order isomorphic to $\Lip(Y)$ if and only if there are a homeomorphism $\vp :X\to Y$ and  a finite constant $C >0$ such that
\begin{equation}\label{eq5.2} 
\frac{1}{C}\rho_X(p,q) \leq 
\rho_Y(\vp(p),\vp(q)) \leq C\rho_X(p,q)
\end{equation}
for all $p,q\in X$, where 
\[ \rho_X(p,q) = \frac{d_X(p,q)}{\xi(p) \vee\xi(q)}\]
 and $\rho_Y$ is defined similarly.
\end{thm}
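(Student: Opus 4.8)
The plan is to reduce the statement to the finite-diameter case handled in \cite{CC1} by passing to the auxiliary metrics $d_X'$ and $d_Y'$ of equation (\ref{eq5.1}), and then to translate the resulting bi-Lipschitz equivalence into the stated inequality between the functions $\rho_X$ and $\rho_Y$. Write $X' = (X, d_X')$ and $Y' = (Y, d_Y')$. By Proposition \ref{prop5.1}(a) and (d), these are complete metric spaces of finite diameter, and by Theorem \ref{thm5.3} the space $\Lip(X)$ is linearly order isomorphic to $\Lip(X')$ while $\Lip(Y)$ is linearly order isomorphic to $\Lip(Y')$. Hence $\Lip(X)$ and $\Lip(Y)$ are order isomorphic if and only if $\Lip(X')$ and $\Lip(Y')$ are. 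I would also record at the outset that, since $1 \leq \xi \leq 1 + d_X(\cdot, e_X)$, the function $\xi$ is bounded above and bounded below by $1$ on every $d_X$-bounded set; therefore $\rho_X$ is uniformly equivalent to $d_X$ on bounded sets, and by Proposition \ref{prop5.1}(b) the metric $d_X'$ induces the original topology of $X$. The same remarks apply to $Y$. In particular, any bijection $\psi: X \to Y$ satisfying (\ref{eq5.2}) is automatically a homeomorphism of $(X, d_X)$ onto $(Y, d_Y)$.

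For the forward implication, suppose $\Lip(X)$ is order isomorphic to $\Lip(Y)$. Then $\Lip(X')$ is order isomorphic to $\Lip(Y')$, so Theorem \ref{thm5.5} yields a Lipschitz homeomorphism $\vp: X' \to Y'$; that is, there is a finite constant $C_1$ with $\frac{1}{C_1} d_X'(p,q) \leq d_Y'(\vp(p), \vp(q)) \leq C_1 d_X'(p,q)$ for all $p, q \in X$. Applying Proposition \ref{prop5.1}(b) on both ends,
\[ \rho_Y(\vp(p), \vp(q)) \leq d_Y'(\vp(p), \vp(q)) \leq C_1 d_X'(p,q) \leq 3C_1 \rho_X(p,q), \]
and likewise $\rho_Y(\vp(p), \vp(q)) \geq \frac{1}{3} d_Y'(\vp(p),\vp(q)) \geq \frac{1}{3C_1} d_X'(p,q) \geq \frac{1}{3C_1}\rho_X(p,q)$. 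Thus (\ref{eq5.2}) holds with $C = 3C_1$, and by the opening remark $\vp$ is a homeomorphism of $(X,d_X)$ onto $(Y,d_Y)$.

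For the converse, let $\vp: X \to Y$ be a homeomorphism and $C$ a finite constant for which (\ref{eq5.2}) holds. Using Proposition \ref{prop5.1}(b) once more,
\[ d_Y'(\vp(p), \vp(q)) \leq 3\rho_Y(\vp(p),\vp(q)) \leq 3C\rho_X(p,q) \leq 3C\, d_X'(p,q), \]
and $d_Y'(\vp(p),\vp(q)) \geq \rho_Y(\vp(p),\vp(q)) \geq \frac{1}{C}\rho_X(p,q) \geq \frac{1}{3C} d_X'(p,q)$, so $\vp$ is a bi-Lipschitz bijection of $X'$ onto $Y'$. Consequently $f \mapsto f \circ \vp^{-1}$ is a linear order isomorphism of $\Lip(X') = \Lip(X, d_X')$ onto $\Lip(Y') = \Lip(Y, d_Y')$, since composition with a bi-Lipschitz map preserves membership in $\Lip$ in both directions and obviously respects the pointwise order. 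Composing this with the linear order isomorphisms supplied by Theorem \ref{thm5.3} gives an order isomorphism of $\Lip(X)$ onto $\Lip(Y)$.

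The substantive input is Theorem \ref{thm5.5} of \cite{CC1}; everything else is bookkeeping. The only place where a little care is needed is the comparison between the original metric $d$ and the auxiliary data $\rho$ (equivalently $d'$): one must verify that a bi-Lipschitz equivalence for the $d'$-metrics corresponds to a homeomorphism for the original metrics, which is precisely what the boundedness of $\xi$ on bounded sets, together with Proposition \ref{prop5.1}(b), delivers.
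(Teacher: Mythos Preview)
Your proof is correct and follows essentially the same route as the paper's: reduce to the bounded-diameter spaces $X'$ and $Y'$ via Theorem \ref{thm5.3}, invoke Theorem \ref{thm5.5} from \cite{CC1} to obtain a bi-Lipschitz homeomorphism between $X'$ and $Y'$, and use Proposition \ref{prop5.1}(b) to translate between $d'$ and $\rho$. The paper is slightly terser---it constructs the induced order isomorphism $\tilde{T}f = T(\xi f)/\zeta$ explicitly via Proposition \ref{prop5.2} rather than appealing to the packaged Theorem \ref{thm5.3}---but the content is the same, and your additional remarks verifying that $\vp$ is a homeomorphism for the original metrics are a welcome clarification.
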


\begin{proof}
Let $X'$ and $Y'$ be the respective metric spaces $X$ and $Y$ endowed with the metrics $d'_X$ and $d'_Y$ given by 
\begin{align*} 
d'_X(p,q) &= \sup_{\substack{f\in \Lip(X)\\L(f), |f(e_X)| \leq 1}}\bigl|\frac{f(p)}{\xi(p)}- \frac{f(q)}{\xi(q)}\bigr|,\\
d'_Y(u,v) &= \sup_{\substack{f\in \Lip(Y)\\L(f), |f(e_Y)| \leq 1}}\bigl|\frac{f(u)}{\zeta(u)}- \frac{f(v)}{\zeta(v)}\bigr|. 
\end{align*}
By Proposition \ref{prop5.1}(a), $X'$ and $Y'$ have finite diameter.
Let $T:\Lip(X)\to \Lip(Y)$ be an order isomorphism.
It is clear from Proposition \ref{prop5.2} that the map $\ti{T}:\Lip(X')\to \Lip(Y')$, $\ti{T}f = T(\xi f)/\zeta$ is an order isomorphism.
By Theorem \ref{thm5.5}, there is a Lipschitz homeomorphism $\vp: X'\to Y'$.
Inequality (\ref{eq5.2}) follows from Proposition \ref{prop5.1}(b).

Conversely, given (\ref{eq5.2}), 
$(X,d_X')$ and $(Y,d_Y')$ are Lipschitz homeomorphic by Proposition \ref{prop5.1}(b).
Hence $\Lip(X')$ and $\Lip(Y')$ are order isomorphic.
By Theorem \ref{thm5.3}, $\Lip(X)$ and $\Lip(Y)$ are order isomorphic. 
\end{proof}

Proposition \ref{prop5.2} can be extended to little Lipschitz spaces.  
Let $(X,d)$ be a complete metric space with a distinguished point $e$.
Say that $X$ is   {\em almost expansive at $\infty$} if for all $\ep >0$, there exists $C<\infty$ such that $d(p,q)< \ep$ if $d(p,e) \geq C$ and $d(p,q)< d(p,e)/C$.
For the remainder of the section, let $(X,d)$ be a complete metric space with a distinguished point $e$ that is almost expansive at $\infty$.
Choose $1 \leq C_1 < C_2 < \cdots$ such that $d(p,q)< \frac{1}{k+2}$ if $d(p,e) \geq C_k$ and $d(p,q)< d(p,e)/C_k$.
If $x\in X$ and $0 \leq r_1 < r_2$, let 
\[ \Ann(x,r_1,r_2) = \{z\in X: r_1 < d(z,x) < r_2\}.\]

\begin{lem}\label{lem5.7}
Suppose that $d(p,e) \geq C_k$.  Then $\Ann(p,\frac{1}{k+2}, d(p,e)/C_1) = \emptyset$.
\end{lem}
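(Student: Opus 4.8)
The plan is to argue by contradiction, bootstrapping an upper bound on $d(p,q)$ for any point $q$ in the putative annulus. Suppose $d(p,e)\geq C_k$ and that some $q$ lies in $\Ann(p,\frac1{k+2},d(p,e)/C_1)$, i.e.\ $\frac1{k+2}< d(p,q)< d(p,e)/C_1$. The first observation is that, since the sequence $(C_j)$ is increasing, $d(p,e)\geq C_k\geq C_j$ for every $1\leq j\leq k$, and hence $d(p,e)/C_j\geq 1$ for all such $j$; it is precisely this "room'' — coming from the hypothesis $d(p,e)\geq C_k$ rather than merely $d(p,e)\geq C_1$ — that makes the iteration work.

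The heart of the proof is the following claim, established by induction on $j$: for every $1\leq j\leq k$ one has $d(p,q)<\frac1{j+2}$. For the base case $j=1$, we have $d(p,q)<d(p,e)/C_1$ together with $d(p,e)\geq C_1$, so by the choice of $C_1$ (the almost-expansiveness estimate for $\ep=\frac13$) we get $d(p,q)<\frac13$. For the inductive step, assume $d(p,q)<\frac1{j+2}$ for some $j<k$. Then $d(p,q)<\frac1{j+2}\leq 1\leq d(p,e)/C_{j+1}$, so $d(p,q)<d(p,e)/C_{j+1}$; combined with $d(p,e)\geq C_{j+1}$, the choice of $C_{j+1}$ (the estimate for $\ep=\frac1{j+3}$) yields $d(p,q)<\frac1{j+3}=\frac1{(j+1)+2}$, completing the induction. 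Taking $j=k$ gives $d(p,q)<\frac1{k+2}$, contradicting $d(p,q)>\frac1{k+2}$. Hence no such $q$ exists and $\Ann(p,\frac1{k+2},d(p,e)/C_1)=\emptyset$.

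I do not anticipate any real obstacle: the argument is a short finite induction driven entirely by the definition of almost expansive at $\infty$ and the choice of the constants $C_k$. The only place needing a moment's care is checking, at each inductive step, that the bound $d(p,q)<\frac1{j+2}$ already in hand is small enough to be fed into the almost-expansiveness estimate attached to the next constant $C_{j+1}$; this holds because $\frac1{j+2}\leq 1\leq d(p,e)/C_{j+1}$, which is exactly where $d(p,e)\geq C_k\geq C_{j+1}$ enters. (Note also that if $\frac1{k+2}\geq d(p,e)/C_1$ the annulus is empty for trivial reasons, and the contradiction argument above subsumes this case automatically.)
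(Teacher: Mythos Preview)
Your proof is correct, but the paper's argument is shorter and avoids the induction entirely. The paper works with the contrapositive of the defining property of $C_j$: if $d(p,e)\geq C_j$ and $d(p,q)\geq \frac{1}{j+2}$, then $d(p,q)\geq d(p,e)/C_j$. Given $d(p,e)\geq C_k$ and $d(p,q)\geq \frac{1}{k+2}$, one application (with $j=k$) yields $d(p,q)\geq d(p,e)/C_k\geq 1\geq \frac{1}{3}$; a second application (with $j=1$, noting $d(p,e)\geq C_k\geq C_1$) then gives $d(p,q)\geq d(p,e)/C_1$. Thus there is no $q$ with $\frac{1}{k+2}<d(p,q)<d(p,e)/C_1$.

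Your approach runs the argument in the opposite direction --- starting from $d(p,q)<d(p,e)/C_1$ and pushing down to $d(p,q)<\frac{1}{k+2}$ --- and for that direction one genuinely needs all $k$ steps, since each application of the defining property of $C_{j+1}$ only improves the bound from $\frac{1}{j+2}$ to $\frac{1}{j+3}$. The paper's contrapositive route exploits the fact that once $d(p,q)\geq d(p,e)/C_k\geq 1$, the threshold $\frac{1}{3}$ for $C_1$ is already met, collapsing the iteration to two steps.
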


\begin{proof}
Suppose that $q\in X$ and $d(p,q) \geq \frac{1}{k+2}$.
By choice of $C_k$, $d(p,q) \geq d(p,e)/C_k\geq 1/3$.
By choice of $C_1$, $d(p,q) \geq d(p,e)/C_1$.
\end{proof}

Let $\Gamma$ be a subset of $X\bs B(e,C_1)$ that is maximal with respect to the condition that $B(p,1) \cap B(q,1) = \emptyset$ if $p$ and $q$ are distinct points in $\Gamma$.

\begin{lem}\label{lem5.8}
The set $X_0 = \cup_{p\in \Gamma}B(p,1)$ is both open and closed in $X$, and  $X=  B(e,C_1) \cup  X_0$.
\end{lem}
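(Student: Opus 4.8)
The statement has three parts: $X_0$ is open, $X_0$ is closed, and $X=B(e,C_1)\cup X_0$. Openness is immediate, since $X_0$ is a union of open balls. The plan for the other two is to first extract from Lemma \ref{lem5.7} the key fact that for every $p\in\Gamma$ the open ball $B(p,1)$ coincides with the closed set $\{z\in X:d(z,p)\leq\frac{1}{3}\}$: indeed $p\notin B(e,C_1)$ gives $d(p,e)\geq C_1$, hence $d(p,e)/C_1\geq 1$, and Lemma \ref{lem5.7} with $k=1$ yields $\Ann(p,\frac{1}{3},1)=\emptyset$; so $d(z,p)<1$ forces $d(z,p)\leq\frac{1}{3}$. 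In particular the balls $B(p,1)$, $p\in\Gamma$, are uniformly separated, which is what will make their union closed.

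For the equality $X=B(e,C_1)\cup X_0$, I would take $x\in X\bs B(e,C_1)$ and show $x\in X_0$. If $x\in\Gamma$ this is trivial. Otherwise maximality of $\Gamma$ forces $\Gamma\cup\{x\}$ to violate the defining separation condition, and since distinct points of $\Gamma$ already have disjoint unit balls, there must be some $p\in\Gamma$ with $B(x,1)\cap B(p,1)\neq\emptyset$. Choosing $z$ in this intersection and applying the observation of the previous paragraph to both $x$ and $p$ (both of which lie outside $B(e,C_1)$) gives $d(z,x)\leq\frac{1}{3}$ and $d(z,p)\leq\frac{1}{3}$, so $d(x,p)\leq\frac{2}{3}<1$ and $x\in B(p,1)\subseteq X_0$.

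For closedness, I would take a sequence $(x_n)$ in $X_0$ converging to $x\in X$ (in a metric space, sequential closedness suffices) and write $x_n\in B(p_n,1)$ with $p_n\in\Gamma$, so $d(x_n,p_n)\leq\frac{1}{3}$ by the above. Since $(x_n)$ is Cauchy, for $m,n$ large enough (say $d(x_m,x_n)<\frac{1}{4}$) one gets
\[ d(p_m,p_n)\leq d(p_m,x_m)+d(x_m,x_n)+d(x_n,p_n)<1;\]
if $p_m\neq p_n$ this places $p_n$ in $B(p_m,1)\cap B(p_n,1)$, contradicting disjointness. Hence the $p_n$ are eventually equal to a single $p\in\Gamma$, and passing to the limit in $d(x_n,p)\leq\frac{1}{3}$ gives $d(x,p)\leq\frac{1}{3}<1$, so $x\in B(p,1)\subseteq X_0$.

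The only point requiring care — the main obstacle, such as it is — is the first step: in a general metric space, disjointness of the open balls $B(p,1)$ is much weaker than $d(p,q)\geq 2$, so without Lemma \ref{lem5.7} one would have no control whatsoever on how the balls sit relative to each other or relative to a limit point. Recognizing that Lemma \ref{lem5.7} collapses each such ball to a closed ball of radius $\frac{1}{3}$, thereby making the family uniformly separated, is the crux; everything after that is routine triangle-inequality bookkeeping.
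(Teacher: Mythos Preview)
Your proof is correct and follows essentially the same route as the paper's. Both arguments hinge on the observation (via Lemma \ref{lem5.7} with $k=1$) that for any point $p$ with $d(p,e)\geq C_1$ one has $B(p,1)=\{z:d(z,p)\leq\tfrac{1}{3}\}$, and then use this to show the $p_n$ in the closedness argument are eventually constant and to push through the maximality argument for the covering statement; the only differences are cosmetic (you use a $\tfrac{1}{4}$ threshold and phrase the contradiction via $p_n\in B(p_m,1)\cap B(p_n,1)$, whereas the paper argues contrapositively that $p_m\neq p_n$ forces $d(x_m,x_n)\geq\tfrac{1}{3}$).
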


\begin{proof}
Clearly $X_0$ is an open set.
Let $(x_n)$ be a sequence in $X_0$ converging to some $x_0\in X$.
Choose $p_n \in \Gamma$ such that $x_n \in B(p_n,1)$. By Lemma \ref{lem5.7}, $d(x_n,p_n) \leq 1/3$.
If $p_n \neq p_m$, then 
\[ d(x_n,x_m) \geq d(p_n,p_m) -d(x_n,p_n) -d(x_m,p_m) \geq 1- \frac{1}{3} - \frac{1}{3} = \frac{1}{3}.\]
Thus, there exists $p\in \Gamma$ such that $p_n = p$ for all sufficiently large $n$.
By Lemma \ref{lem5.7}, $B(p,1) = \{x\in X: d(x,p) \leq 1/3\}$ is closed in $X$.
Hence $x_0\in B(p,1)\subseteq X_0$.  This proves that $X_0$ is closed.

If there exists $q \notin B(e,C_1)$ such that $q \notin X_0$, by the maximality of $\Gamma$, there exists $p\in \Gamma$ such that $B(p,1) \cap B(q,1) \neq \emptyset$.
Let $x\in B(p,1) \cap B(q,1)$. By Lemma \ref{lem5.7}, $d(x,p), d(x,q) \leq 1/3$.
Hence $q \in B(p,1)\subseteq X_0$, contradicting the choice of $q$.
\end{proof}

As above, define $\xi:X\to \R$ by $\xi(x) = d(x,e)\vee 1$.  
Let $\zeta:X\to \R$ be the function given by
\[\zeta(x) =\begin{cases}
                 \xi(p) &\text{if $x\in B(p,1)$ for some  $p \in \Gamma$},\\
                 1 &\text{if $x\notin X_0$.}
                 \end{cases}\]
Let $X'$ be the metric space $(X,d')$, with  the metric $d'$ given in (\ref{eq5.1}).

\begin{lem}\label{lem5.9}
There exists $1 \leq K <\infty$ such that  for all $x\in X$, 
\[ \frac{1}{K}\xi(x) \leq \zeta(x) \leq K\xi(x).\]
\end{lem}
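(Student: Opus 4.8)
The plan is to compare $\zeta$ with $\xi$ separately on the two pieces of $X$ that occur in the definition of $\zeta$, and then take $K$ to be the larger of the two constants obtained. Recall from Lemma~\ref{lem5.8} that $X = B(e,C_1)\cup X_0$, where $X_0=\bigcup_{p\in\Gamma}B(p,1)$; since the balls $B(p,1)$, $p\in\Gamma$, are pairwise disjoint by the choice of $\Gamma$, the function $\zeta$ is well defined, equal to $\xi(p)$ on $B(p,1)$ for each $p\in\Gamma$ and equal to $1$ on $X\setminus X_0$.

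First I would dispose of $X\setminus X_0$. By Lemma~\ref{lem5.8}, any such point lies in $B(e,C_1)$, so $d(x,e)<C_1$ and hence $1\le\xi(x)\le C_1$ (using $C_1\ge 1$). As $\zeta(x)=1$ there, this gives $\frac{1}{C_1}\xi(x)\le\zeta(x)\le\xi(x)$ on $X\setminus X_0$.

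Next, on $X_0$: if $x\in B(p,1)$ with $p\in\Gamma$, then $\zeta(x)=\xi(p)=d(p,e)$, the last equality because $d(p,e)\ge C_1\ge 1$. Two applications of the triangle inequality give $d(x,e)\le 1+d(p,e)$ and $d(p,e)\le 1+d(x,e)$, and since each of $\xi(x)$ and $\zeta(x)=d(p,e)$ is at least $1$, these turn into a two-sided multiplicative comparison with an absolute constant, e.g.\ $\frac12\xi(x)\le\zeta(x)\le 2\xi(x)$. Setting $K=C_1\vee 2$ then combines the two cases and proves the lemma. I do not expect any real obstacle here; the only points deserving attention are verifying that $\zeta$ is genuinely well defined (pairwise disjointness of the balls $B(p,1)$) and invoking Lemma~\ref{lem5.8} to know that $X\setminus X_0\subseteq B(e,C_1)$ — everything else is a routine triangle-inequality estimate.
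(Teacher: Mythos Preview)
Your proof is correct and follows essentially the same approach as the paper: split into $x\notin X_0$ (handled identically) and $x\in B(p,1)$ for some $p\in\Gamma$, then use the triangle inequality to compare $\xi(x)$ with $\xi(p)$. The only cosmetic difference is that the paper invokes Lemma~\ref{lem5.7} to obtain the sharper bound $d(x,p)\le 1/3$, yielding a slightly smaller constant, whereas you use $d(x,p)<1$ directly; since the precise value of $K$ is immaterial (the paper immediately fixes $K\ge 2$ anyway), your argument is perfectly adequate.
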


\begin{proof}
If $x\notin X_0$, then $d(x,e) \leq C_1$ by Lemma \ref{lem5.8}. Thus $\xi(x) \leq C_1$.  Hence
\[ \frac{\xi(x)}{C_1} \leq \zeta(x) = 1 \leq \xi(x).\]
If $x\in B(p,1)$ for some $p\in \Gamma$, then $d(x,p) \leq 1/3$ by Lemma \ref{lem5.7}.
Thus 
\[ |\zeta(x) -\xi(x)| = |\xi(p) -\xi(x)| \leq d(x,p) \leq \frac{1}{3} \leq \frac{\xi(x)}{3}.\]
The lemma holds if we take $K \geq 4/3$ so that $K^{-1} \leq C_1^{-1}\wedge 2/3$.
\end{proof}

For the rest of the section, $K$ will denote a  constant  satisfying Lemma \ref{lem5.9} such that $K \geq 2$.

\begin{lem}\label{lem5.10}
Suppose that $u\in B(p,1)$ for some $p\in \Gamma$ and $v\notin B(p,1)$.
Then $d'(u,v) \geq 2(3KC_1)^{-1}$.
\end{lem}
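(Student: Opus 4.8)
The plan is to bound $d'(u,v)$ below by $\rho(u,v)=d(u,v)/(\xi(u)\vee\xi(v))$ using Proposition~\ref{prop5.1}(b), and then to estimate the numerator from below and the denominator from above with the help of the empty-annulus property of Lemma~\ref{lem5.7}.

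First I would extract the relevant geometry around $p$. Since $p\in\Gamma\subseteq X\bs B(e,C_1)$, we have $d(p,e)\geq C_1$, so Lemma~\ref{lem5.7} with $k=1$ gives $\Ann(p,\frac13,d(p,e)/C_1)=\emptyset$; as $d(p,e)/C_1\geq 1$, this means every point of $X$ is within distance $\frac13$ of $p$ or at distance at least $d(p,e)/C_1$ from $p$. Hence $u\in B(p,1)$ forces $d(u,p)\leq\frac13$, while $v\notin B(p,1)$ forces $d(v,p)\geq d(p,e)/C_1$. Therefore
\[ d(u,v)\ \geq\ d(v,p)-d(u,p)\ \geq\ \frac{d(p,e)}{C_1}-\frac13\ \geq\ \frac23\cdot\frac{d(p,e)}{C_1}, \]
using $d(p,e)/C_1\geq 1$ in the last step, and likewise $d(u,v)\geq d(v,p)-\frac13\geq\frac23 d(v,p)$.

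Next I would split into two cases according to which of $\xi(u)$, $\xi(v)$ is larger. If $\xi(u)\geq\xi(v)$, then $d(u,e)\leq d(u,p)+d(p,e)\leq\frac13+d(p,e)\leq\frac43 d(p,e)$, and since $\xi(p)=d(p,e)$ (as $d(p,e)\geq 1$) we get $\xi(u)\vee\xi(v)=\xi(u)\leq\frac43 d(p,e)\leq K d(p,e)$; the same bound also follows from Lemma~\ref{lem5.9} because $\zeta(u)=\xi(p)$. By Proposition~\ref{prop5.1}(b),
\[ d'(u,v)\ \geq\ \rho(u,v)\ =\ \frac{d(u,v)}{\xi(u)\vee\xi(v)}\ \geq\ \frac{\frac23\,d(p,e)/C_1}{K\,d(p,e)}\ =\ \frac{2}{3KC_1}. \]
If instead $\xi(v)>\xi(u)$, then from $d(v,p)\geq d(p,e)/C_1$ we get $d(v,e)\leq d(v,p)+d(p,e)\leq(1+C_1)d(v,p)\leq 2C_1 d(v,p)$, and $d(v,p)\geq 1$, so $\xi(v)\leq 2C_1 d(v,p)$; hence
\[ d'(u,v)\ \geq\ \rho(u,v)\ =\ \frac{d(u,v)}{\xi(v)}\ \geq\ \frac{\frac23 d(v,p)}{2C_1 d(v,p)}\ =\ \frac{1}{3C_1}\ \geq\ \frac{2}{3KC_1}, \]
the last step because $K\geq 2$. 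In either case $d'(u,v)\geq 2(3KC_1)^{-1}$.

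I do not expect a genuine difficulty here. The points to watch are that for $p\in\Gamma$ the ball $B(p,1)$ coincides with the closed ball of radius $\frac13$ — which is precisely what Lemma~\ref{lem5.7} provides — and that $\xi(v)$ can dominate $\xi(u)$ when $v$ lies far from $p$, which is why the split on $\xi(u)$ versus $\xi(v)$ is needed.
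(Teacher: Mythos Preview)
Your argument is correct. Both your proof and the paper's rest on the same two ingredients: the lower bound $d'(u,v)\geq\rho(u,v)$ from Proposition~\ref{prop5.1}(b), and the empty-annulus property of Lemma~\ref{lem5.7}, which forces $d(u,p)\leq\frac13$ and $d(v,p)\geq d(p,e)/C_1$.

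The only difference is in how the denominator $\xi(u)\vee\xi(v)$ is controlled. The paper splits on whether $v\in X_0$: if $v\notin X_0$ then Lemma~\ref{lem5.8} gives $\xi(v)\leq C_1\leq\xi(p)$; if $v\in B(q,1)$ for some $q\in\Gamma$, a WLOG swap of $p$ and $q$ together with Lemma~\ref{lem5.9} yields $\xi(u)\vee\xi(v)\leq K\xi(p)$. Either way the bound is $\xi(u)\vee\xi(v)\leq K\xi(p)$, and the same numerator estimate finishes. You instead split on which of $\xi(u),\xi(v)$ is larger: when $\xi(u)$ dominates, the bound $\xi(u)\leq\frac43 d(p,e)$ is immediate from $d(u,p)\leq\frac13$; when $\xi(v)$ dominates, you bound $\xi(v)$ directly by $2C_1 d(v,p)$ using the triangle inequality and $d(p,e)\leq C_1 d(v,p)$, then pair it with $d(u,v)\geq\frac23 d(v,p)$. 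Your route is a little more self-contained in that it bypasses Lemma~\ref{lem5.8} and the WLOG symmetry argument, while the paper's route has the appeal of reducing both cases to a single uniform bound $\xi(u)\vee\xi(v)\leq K\xi(p)$.
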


\begin{proof}
If $v\notin X_0$, then $d(v,e) \leq C_1$ by Lemma \ref{lem5.8}. Hence $\xi(v) \leq C_1\leq \xi(p)$.
Also, 
\[\xi(u) \leq \xi(p) +d(u,p) \leq \xi(p) +1 \leq 2\xi(p).\]
Hence $\xi(u) \vee \xi(v) \leq 2\xi(p)\leq K\xi(p)$.
If $v\in X_0$, there exists $q\in \Gamma$, $q\neq p$ such that $v\in B(q,1)$.
Without loss of generality, assume that $\xi(p) = d(p,e) \geq d(q,e) = \xi(q)$.
By Lemma \ref{lem5.9}, $\xi(u) \leq K\zeta(u) = K\xi(p)$ and $\xi(v) \leq K\xi(q)$.
Hence $\xi(u)\vee \xi(v) \leq K\xi(p)$.
By Lemma \ref{lem5.7}, $d(v,p) \geq \xi(p)/C_1$ and $d(u,p) \leq 1/3$.
Thus 
\[ d(u,v) \geq d(v,p) - d(u,p)\geq \frac{\xi(p)}{C_1} - \frac{1}{3} \geq \frac{2\xi(p)}{3C_1}.\]
Therefore, by Proposition \ref{prop5.1}(b),
\[ d'(u,v) \geq  \frac{d(u,v)}{\xi(u)\vee \xi(v)} \geq \frac{d(u,v)}{K\xi(p)} \geq \frac{2}{3KC_1}.\] 
\end{proof}

\begin{lem}\label{lem5.11}
The functions $\xi/\zeta$ and $\zeta/\xi$ are Lipschitz with respect to the metric $d'$ on $X$.
\end{lem}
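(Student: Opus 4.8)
The plan is to exploit the boundedness of the two quotients together with a short-range Lipschitz estimate. By Lemma~\ref{lem5.9}, both $\xi/\zeta$ and $\zeta/\xi$ take values in $[1/K,K]$, hence are bounded. So, to show that a function $h\in\{\xi/\zeta,\,\zeta/\xi\}$ is $d'$-Lipschitz, it suffices to produce a $\delta>0$ and a constant $L_0$ with $|h(u)-h(v)|\le L_0\,d'(u,v)$ whenever $d'(u,v)<\delta$; for pairs with $d'(u,v)\ge\delta$ one simply uses $|h(u)-h(v)|\le 2K\le (2K/\delta)\,d'(u,v)$, giving a Lipschitz constant $\max(L_0,2K/\delta)$.

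The choice of $\delta$ is dictated by Lemma~\ref{lem5.10}: put $\delta=2(3KC_1)^{-1}$. If $u$ lies in some $B(p,1)$, $p\in\Gamma$, and $v\notin B(p,1)$ (or, symmetrically, $v$ lies in some $B(q,1)$ and $u\notin B(q,1)$), then Lemma~\ref{lem5.10} gives $d'(u,v)\ge\delta$. Therefore a pair with $d'(u,v)<\delta$ must fall into exactly one of two configurations: either $u,v\in B(p,1)$ for a common $p\in\Gamma$, or $u,v\in X\setminus X_0$, in which case $u,v\in B(e,C_1)$ by Lemma~\ref{lem5.8}, so that $1\le\xi(u),\xi(v)\le C_1$.

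In the first configuration $\zeta(u)=\zeta(v)=\xi(p)$. Since $\xi$ is $1$-Lipschitz for $d$, $d(u,p),d(v,p)\le 1/3$ by Lemma~\ref{lem5.7}, and $\xi(p)\ge C_1\ge 1$, one gets $\tfrac23\xi(p)\le\xi(u),\xi(v)\le\tfrac43\xi(p)$; an elementary computation then bounds both $\bigl|\tfrac{\xi}{\zeta}(u)-\tfrac{\xi}{\zeta}(v)\bigr|$ and $\bigl|\tfrac{\zeta}{\xi}(u)-\tfrac{\zeta}{\xi}(v)\bigr|$ by a universal multiple of $d(u,v)/\xi(p)$, which is at most $\tfrac43\rho(u,v)\le\tfrac43\,d'(u,v)$ by Proposition~\ref{prop5.1}(b). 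In the second configuration $\zeta\equiv 1$ at $u$ and $v$, so each increment is at most $|\xi(u)-\xi(v)|\le d(u,v)=\rho(u,v)\,(\xi(u)\vee\xi(v))\le C_1\rho(u,v)\le C_1\,d'(u,v)$. Combining the two configurations with the long-range bound yields a global $d'$-Lipschitz constant of order $K^2C_1$ for each of $\xi/\zeta$ and $\zeta/\xi$.

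The only point requiring care is the dichotomy in the second paragraph: one must verify that Lemma~\ref{lem5.10} really covers every pair with one member inside some $B(p,1)$ and the other outside it — including the sub-case where the second point lies in a different ball and the sub-case where it lies outside $X_0$ — so that below the threshold $\delta$ only the two benign configurations survive. Granting this, the remaining work is routine manipulation with the chain $\rho\le d'\le 3\rho$ and the two-sided bound $\tfrac1K\xi\le\zeta\le K\xi$.
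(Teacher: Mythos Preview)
Your proof is correct and follows essentially the same strategy as the paper. The paper organizes the argument as a direct three-case split (both points outside $X_0$; one point in some $B(p,1)$ and the other not; both in a common $B(p,1)$), handling the mixed case via Lemma~\ref{lem5.10} exactly as you do for your ``long-range'' pairs, and then observes that $\zeta/\xi$ is the reciprocal of a function bounded below by $1/K$, hence also Lipschitz. Your version first peels off all pairs with $d'(u,v)\ge\delta$ by boundedness and then treats the two remaining configurations, while handling $\xi/\zeta$ and $\zeta/\xi$ in parallel; this is a minor reorganization of the same argument, and your dichotomy below the threshold $\delta$ is justified precisely as you say by Lemma~\ref{lem5.10} applied symmetrically.
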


\begin{proof}
Let $u$ and $v$ be distinct points in $X$.  Consider three cases.

\medskip

\noindent\underline{Case 1}.  $u, v\notin X_0$.

\noindent In this case, $\xi(u) \vee \xi(v) \leq C_1$ by Lemma \ref{lem5.8}.
Proposition \ref{prop5.1}(b) implies that
$d'(u,v) \geq d(u,v)/C_1$.
Then
\[ \bigl|\frac{\xi}{\zeta}(u) - \frac{\xi}{\zeta}(v)\bigr| = |\xi(u) - \xi(v)|\leq d(u,v) \leq C_1d'(u,v).\]

\noindent\underline{Case 2}. $u \in B(p,1)$ for some $p\in \Gamma$ and $v \notin B(p,1)$.

\noindent By Lemma \ref{lem5.10}, $d'(u,v) \geq 2(3KC_1)^{-1}$.
By Lemma \ref{lem5.9},
\[ \bigl|\frac{\xi}{\zeta}(u) - \frac{\xi}{\zeta}(v)\bigr| \leq \bigl|\frac{\xi}{\zeta}(u)\bigr| +  \bigl|\frac{\xi}{\zeta}(v)\bigr| \leq 2K \leq 3K^2C_1d'(u,v).\]

\noindent\underline{Case 3}. There exists $p\in \Gamma$ such that $u,v \in B(p,1)$.

\noindent In this case, $\xi(u) \leq \xi(p) + d(u,p) \leq \xi(p) + 1 \leq 2\xi(p)$.
Similarly $\xi(v) \leq 2\xi(p)$.
By Proposition \ref{prop5.1}(b),
\[ \bigl|\frac{\xi}{\zeta}(u) - \frac{\xi}{\zeta}(v)\bigr| = \frac{|\xi(u)-\xi(v)|}{\xi(p)} \leq \frac{d(u,v)}{\xi(p)}
\leq \frac{2d(u,v)}{\xi(u)\vee \xi(v)} \leq 2d'(u,v).\]
This completes the proof that $\xi/\zeta$ is Lipschitz with respect to $d'$.  Since $\xi/\zeta$ is also bounded below by $1/K$, it is routine to check that its reciprocal $\zeta/\xi$ is also Lipschitz with respect to $d'$.
\end{proof}

\begin{prop}\label{prop5.12}
Let $(X,d)$ be a complete metric space with a distinguished point $e$ that is almost expansive at $\infty$. 
Take $X' =(X,d')$, where the metric $d'$ is given by equation (\ref{eq5.1}).  Define $\zeta: X\to \R$ as above.
Then $T: \lip(X)\to \lip(X')$, $Tf = f/\zeta$, is an order isomorphism.
\end{prop}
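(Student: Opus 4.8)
The plan is to observe first that $T$ is linear and, since $\zeta\geq 1$, is a bijection of the space of all real-valued functions on $X$ onto itself with inverse $g\mapsto g\zeta$; both $T$ and $T^{-1}$ preserve the pointwise order because $\zeta>0$. So the whole statement reduces to the two containments $f\in\lip(X,d)\Rightarrow f/\zeta\in\lip(X,d')$ and $g\in\lip(X,d')\Rightarrow g\zeta\in\lip(X,d)$. The ``$\Lip$ part'' of these I would dispose of at once: writing $f/\zeta=(f/\xi)(\xi/\zeta)$, Proposition \ref{prop5.2} gives $f/\xi\in\Lip(X,d')$, which is bounded since $d'\leq 4$ (Proposition \ref{prop5.1}(a)), and Lemma \ref{lem5.11} gives $\xi/\zeta\in\Lip(X,d')$, bounded by Lemma \ref{lem5.9}; a product of bounded Lipschitz functions is Lipschitz, so $f/\zeta\in\Lip(X,d')$. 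The reverse membership $g\zeta=(g\,\zeta/\xi)\,\xi\in\Lip(X,d)$ follows identically from Lemma \ref{lem5.11} and Proposition \ref{prop5.2}.

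The structural fact I would isolate next is that $\zeta$ is locally constant at a uniform scale: by Lemma \ref{lem5.10}, $d'(p,q)<\delta_0:=2(3KC_1)^{-1}$ forces $p$ and $q$ into the same piece of $X$ (the same ball $B(r,1)$ with $r\in\Gamma$, or else both outside $X_0$), hence $\zeta(p)=\zeta(q)$. Granting this, the two little-Lipschitz estimates are short. For the reverse containment, if $g\in\lip(X,d')$ and $d(p_n,q_n)\to 0$ with $p_n\neq q_n$, then $d'(p_n,q_n)\leq 3d(p_n,q_n)\to 0$ (Proposition \ref{prop5.1}(b)), so $\zeta(p_n)=\zeta(q_n)=:c_n$ for large $n$, and
\[ \frac{|(g\zeta)(p_n)-(g\zeta)(q_n)|}{d(p_n,q_n)}=c_n\,\frac{|g(p_n)-g(q_n)|}{d(p_n,q_n)}\leq 3K\,\frac{|g(p_n)-g(q_n)|}{d'(p_n,q_n)}\longrightarrow 0 \]
by Proposition \ref{prop5.1}(b), $\zeta\leq K\xi$, and $g\in\lip(X,d')$; this finishes the reverse containment. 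For the forward containment, if $f\in\lip(X,d)$ and $d'(p_n,q_n)\to 0$ with $p_n\neq q_n$, then $\zeta(p_n)=\zeta(q_n)=:c_n$ for large $n$, and Proposition \ref{prop5.1}(b) together with $\xi\leq K\zeta$ yields
\[ \frac{|(f/\zeta)(p_n)-(f/\zeta)(q_n)|}{d'(p_n,q_n)}\leq K\,\frac{|f(p_n)-f(q_n)|}{d(p_n,q_n)}, \]
so it remains only to show $d(p_n,q_n)\to 0$.

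This last point is where the real work lies and where the hypothesis of almost expansiveness at $\infty$ is used. I would argue as follows: if $\xi(p_n)\vee\xi(q_n)$ stays bounded, then $d(p_n,q_n)\leq(\xi(p_n)\vee\xi(q_n))\,d'(p_n,q_n)\to 0$ by Proposition \ref{prop5.1}(b); otherwise, passing to a subsequence along which $\xi(p_n)\vee\xi(q_n)\to\infty$, we get $c_n\geq K^{-1}(\xi(p_n)\vee\xi(q_n))\to\infty$, so $c_n>1$ eventually, and since $\zeta\equiv 1$ off $X_0$ this places both $p_n$ and $q_n$ in a common ball $B(r_n,1)$ with $r_n\in\Gamma$ and $d(r_n,e)=\xi(r_n)=c_n\to\infty$; then Lemma \ref{lem5.7} (which encodes the expansiveness of $d$ at infinity through the constants $C_k$) forces $d(p_n,r_n)$ and $d(q_n,r_n)$ to be at most $1/(k_n+2)$ with $k_n\to\infty$, so $d(p_n,q_n)\to 0$. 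Once $d(p_n,q_n)\to 0$ is known, the little-Lipschitz property of $f$ kills the right-hand side of the displayed inequality, completing the proof that $T$ is an order isomorphism (in fact a linear one). I expect the verification that $d(p_n,q_n)\to 0$ to be the only nonroutine step; it is also precisely why one divides by $\zeta$ rather than $\xi$, since Proposition \ref{prop5.2} has no little-Lipschitz analogue for $\xi$.
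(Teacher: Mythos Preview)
Your proposal is correct and follows essentially the same route as the paper's proof: reduce to the Lipschitz membership via Proposition \ref{prop5.2} and Lemma \ref{lem5.11}, then use Lemma \ref{lem5.10} to force $p$ and $q$ into the same $\zeta$-piece at small $d'$-scale, and finally use Lemma \ref{lem5.7} (the encoding of almost expansiveness at $\infty$) to show that $d'(p_n,q_n)\to 0$ forces $d(p_n,q_n)\to 0$. The only difference is cosmetic: the paper works with explicit $\ep$--$\delta$ thresholds (choosing $k$ with $2/(k+2)<\delta$ and splitting on $d(p,e)\geq C_k$ versus $d(p,e)<C_k$), while you phrase the same dichotomy sequentially via boundedness of $\xi(p_n)\vee\xi(q_n)$. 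One small remark: when you write ``passing to a subsequence along which $\xi(p_n)\vee\xi(q_n)\to\infty$,'' make explicit that you are running the standard sub--subsequence argument (every subsequence has a further subsequence with $d\to 0$), so that the conclusion holds along the full sequence.
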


\begin{proof}
Use the notation developed from just prior to Lemma \ref{lem5.7}.
By Proposition \ref{prop5.2},   $f\in \Lip(X)$ if and only if $f/\xi\in \Lip(X')$.
Suppose that $f\in \Lip(X)$, then $f/\xi, \xi/\zeta \in \Lip(X')$, where the latter follows from Lemma \ref{lem5.11}. Since $X'$ has finite diameter, the product $f/\zeta\in \Lip(X')$.
Conversely, if $f/\zeta\in \Lip(X')$, then from Lemma \ref{lem5.11} again, $f/\xi = (f/\zeta)\cdot(\zeta/\xi) \in \Lip(X')$.
Hence $f\in \Lip(X)$.

Suppose that $f\in \lip(X)$. In particular, $f \in \Lip(X)$ and hence $f/\zeta\in \Lip(X')$ by the last paragraph. Given $\ep > 0$, choose $\delta > 0$ such that $|f(u) - f(v)| <\ep d(u,v)$ if $d(u,v) < \delta$.  Fix $k\in \N$ such that $2(k+2)^{-1} < \delta$.
Consider $u,v$ such that $d'(u,v) < 2(3KC_1)^{-1} \wedge \delta/(2C_k)$.
By Lemma \ref{lem5.10}, either $u,v\notin X_0$ or there exists $p\in \Gamma$ such that $u,v\in B(p,1)$.
In the first case, $\zeta(u) = \zeta(v) =1$ and $\xi(u), \xi(v) \leq C_1$ by Lemma \ref{lem5.8}.
By Proposition \ref{prop5.1}(b), $d'(u,v) \geq d(u,v)/C_1$.  In particular, $d(u,v) < \delta$.
Hence
\[ \bigl|\frac{f}{\zeta}(u) - \frac{f}{\zeta}(v)\bigr| = |f(u) -f(v)| < \ep d(u,v) \leq C_1\ep d'(u,v).\]
In the latter case,
$\xi(u),\xi(v) \leq 2\xi(p)$ from the proof of Case 3 in Lemma \ref{lem5.11}.
By Proposition \ref{prop5.1}(b),
\[d'(u,v) \geq \frac{d(u,v)}{\xi(u)\vee\xi(v)} \geq \frac{d(u,v)}{2\xi(p)}.\]
If $d(p,e) \geq C_k$, then $d(u,v) < 2(k+2)^{-1} < \delta$ by Lemma \ref{lem5.7}.
On the other hand, if $d(p,e) < C_k$, then $\xi(p) <  C_k$.
Hence $d'(u,v) \geq d(u,v)/(2C_k)$.
Therefore, $d(u,v) \leq 2C_kd'(u,v) < \delta$.
In either situation, we have
\[  \bigl|\frac{f}{\zeta}(u) - \frac{f}{\zeta}(v)\bigr| = \frac{|f(u) -f(v)|}{\xi(p)} < \frac{\ep d(u,v)}{\xi(p)} \leq 2\ep d'(u,v). \]
This completes the proof that $f/\zeta \in \lip(X')$ if $f\in \lip(X)$.

Conversely, suppose that $g = f/\zeta\in \lip(X')$. By the first paragraph, $f\in \Lip(X)$. 
Given $\ep > 0$, choose $\delta >0$ so that $|g(u)- g(v)|< \ep d'(u,v)$ if $d'(u,v) < \delta$.
Consider $u,v\in X$ such that $d(u,v) < \delta/3 \wedge 2(9KC_1)^{-1}$.
By Proposition \ref{prop5.1}(c), $d'(u,v) \leq 3d(u,v) < \delta \wedge 2(3KC_1)^{-1}$.
In particular, Lemma  \ref{lem5.10} implies that  either $u,v\notin X_0$ or there exists $p\in \Gamma$ such that $u,v\in B(p,1)$.
In the first case, $\zeta(u) = \zeta(v) =1$. Thus
\[ |f(u)-f(v)| = |g(u) - g(v)| < \ep d'(u,v) \leq 3\ep d(u,v).\]
In the latter case, we may assume that $\xi(u) \leq \xi(v)$.  Then $\zeta(u) = \zeta(v)  =\xi(p)$ and $\xi(p) \leq 2\xi(u)$ by the proof of Case 3 in Lemma \ref{lem5.11}.  Hence
\[ |f(u) -f(v)| = \xi(p)|g(u)-g(v)| < 2\xi(u)\ep d'(u,v) \leq 6\ep d(u,v),
\]
where the last step follows again from Proposition \ref{prop5.1}(c).
This proves that $f\in \lip(X)$ if $f/\zeta\in \lip(X')$.
\end{proof}

\noindent{\bf Remark}. Note that $X'$ has finite diameter and hence $\lip(X) \sim \lip(X') = \lip_b(X')$ if $X$ is almost expansive at $\infty$.  A strong converse to Proposition \ref{prop5.12} for H\"{o}lder metric spaces will be shown below.  See Theorem \ref{thm66}.

\section{Comparing function spaces under order isomorphism}

We have seen in Corollary \ref{cor26} that if $X$ is a  noncompact realcompact space, then $C(X)$ is never order isomorphic to any space of the type $C_b(Y)$.
This serves as a prototype of the sort of results to be considered in this section.  
Precisely, we seek to determine conditions under which two different  spaces among the ones listed in Examples B or Examples C can be order isomorphic.

\subsection{General principles}
We begin by listing several general principles before going into specific cases.

\begin{prop}\label{prop31}
Let $A(X)$ and $A(Y)$ be near vector lattices defined on metric spaces $X$ and $Y$ respectively, where $A(X)  = A^\loc(X)$ and $A(Y) = A^\loc_b(Y)$, or $A(Y)= A_b(Y)$ and  satisfies ($\spadesuit$).
If $X$ is not compact, then $A(X)$ and $A(Y)$ are not order isomorphic.
\end{prop}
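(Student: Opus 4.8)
The plan is to argue by contradiction. Suppose $T\colon A(X)\to A(Y)$ is an order isomorphism and $X$ is not compact. In either alternative of the hypothesis, $A(X)=A^\loc(X)$ and $A(Y)$ (being $A^\loc_b(Y)$, or $A_b(Y)$ satisfying ($\spadesuit$)) is an allowed type for Theorem \ref{thm27}, so that theorem furnishes a homeomorphism $\vp\colon X\to Y$ with the property that $f\geq g$ on an open set $U\subseteq X$ if and only if $Tf\geq Tg$ on $\vp(U)$. Observe that every function in $A(Y)$ is bounded, since in both alternatives $A(Y)$ consists of bounded functions. The contradiction will be produced by exhibiting an $f\in A(X)$ whose image $Tf$ is unbounded on $Y$.

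The crucial preliminary step is to show that for every $y_0\in Y$ one has $\sup_{c\in\R}\bigl(T(c1_X)\bigr)(y_0)=+\infty$ (here $c1_X$ is the constant function, which lies in $A(X)$ because $A(X)$ is unital, and $c\mapsto T(c1_X)(y_0)$ is nondecreasing). Indeed, if this supremum equalled some finite $m$, then for an arbitrary $f\in A(X)$, writing $x_0=\vp^{-1}(y_0)$ and $c=f(x_0)+1$, the set $\{x:f(x)<c\}$ is an open neighbourhood of $x_0$ on which $f\leq c1_X$; applying the order-matching property of $\vp$ gives $Tf(y_0)\leq\bigl(T(c1_X)\bigr)(y_0)\leq m$. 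Thus $Tf(y_0)\leq m$ for \emph{every} $f\in A(X)$, which contradicts surjectivity of $T$ since $(m+1)1_Y\in A(Y)$. This is the shortest part of the argument, once one notices it is really a statement about $T$ being onto.

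Next I use non-compactness of the metric space $X$: pick a sequence $(x_n)$ of distinct points with no convergent subsequence, so that $S=\{x_n:n\in\N\}$ is closed and discrete, whence $d(x_n,S\setminus\{x_n\})>0$ for each $n$. Since $A(X)$ separates points from closed sets, Lemma \ref{lem11} (together with the defining property of a near vector lattice) yields, for each $n$, a function $\phi_n\in A(X)$ with $0\leq\phi_n\leq1$, $\phi_n\equiv1$ on an open neighbourhood $W_n$ of $x_n$, and $\supp\phi_n$ contained in a ball $B(x_n,\rho_n)$, where the radii can be chosen so that $\rho_n\to0$ and the balls $B(x_n,\rho_n)$ are pairwise disjoint. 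By the preliminary step choose (large positive) scalars $c_n$ with $\bigl(T(c_n1_X)\bigr)(\vp(x_n))\geq n$, and put $f=\sum_n c_n\phi_n$. The point is that this infinite sum is locally finite: near $x_m$ it agrees with the single summand $c_m\phi_m$ (hence with the constant $c_m$ on $W_m$), while near any point outside $S$ --- using $\rho_n\to0$ and the fact that $S$ has no accumulation point --- a small enough ball meets only finitely many supports, so $f$ agrees there with a finite partial sum. Hence $f\in A^\loc(X)=A(X)$. Verifying this membership carefully (that $f$ lands in $A^\loc(X)$, not merely in $C(X)$) is the main technical obstacle.

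Finally, since $f\equiv c_n\geq c_n1_X$ on the neighbourhood $W_n$ of $x_n$, the order-matching property gives $Tf\geq T(c_n1_X)$ on $\vp(W_n)$, and therefore $Tf(\vp(x_n))\geq\bigl(T(c_n1_X)\bigr)(\vp(x_n))\geq n$ for every $n$. Thus $Tf$ is unbounded on $Y$, contradicting $Tf\in A(Y)$. Consequently $X$ must be compact.
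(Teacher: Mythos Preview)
Your proof is correct and follows essentially the same strategy as the paper's: invoke Theorem~\ref{thm27} to obtain $\vp$, exploit non-compactness of $X$ to build a locally finite sum of bumps lying in $A^\loc(X)=A(X)$ whose image under $T$ is unbounded, contradicting $A(Y)\subseteq C_b(Y)$. The only cosmetic difference is in how the bump heights are selected: the paper sets $a_n=(T^{-1}n)(x_n)$ directly and picks $f_n$ with $f_n(x_n)>a_n$ (so that $f>T^{-1}n$ near $x_n$ by continuity), whereas you first prove $\sup_c T(c1_X)(y_0)=+\infty$ via surjectivity and then choose plateau values $c_n$; these are equivalent devices.
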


\begin{proof}
Let $T: A(X)\to A(Y)$ be an order isomorphism.  Obtain a homeomorphism $\vp:X\to Y$ as in Theorem \ref{thm27}.
Suppose that $X$ is not compact. 
There exists a sequence of distinct points $(x_n)$ in $X$ with no convergent subsequence.
Hence, for each $n$, we may choose an open neighborhood $U_n$ of $x_n$ so that $\diam U_n\to 0$ and $\ol{U_n} \cap \ol{U_m} = \emptyset$ if $m \neq n$.
Then $\ol{\cup_{n\in N}U_n} = \cup_{n\in N}\ol{U_n}$ for any subset $N$ of $\N$.
Set $y_n = \vp(x_n)$ and $a_n = (T^{-1}n)(x_n)$ for all $n$.  Since $A(X)$ has property (A2), for each $n$, there exists $f_n\in A(X)$ such that $f_n(x_n) > a_n$ and $f_n =0$ outside $U_n$.  
Let $f$ be the
the pointwise sum $f= \sum f_n$.  
If $x \notin \ol{\cup U_n}$, then $f = 0$ on the neighborhood $(\ol{\cup U_n})^c$ of $x$.
If $x\in \ol{\cup U_n}$, then $x\in \ol{U_{n_0}}$ for some $n_0$ and $x\notin \ol{\cup_{m\neq n_0}U_m}$.  Hence $f= f_{n_0}$ on the neighborhood $(\ol{\cup_{m\neq n_0}U_m})^c$ of $x$
This shows that $f\in A^\loc(X) = A(X)$.
Moreover, $f \geq T^{-1}n$ on an open neighborhood of $x_n$ for each $n$.
By Theorem \ref{thm27}, $Tf \geq n$ on an open neighborhood of $y_n$ for all $n$.
But then $Tf$ is an unbounded function in $A(Y)$, which contradicts the assumption that $A(Y)$ consists of bounded functions.
\end{proof}

\begin{prop}\label{prop36}
Let $A(X)$ and $A(Y)$  be near vector lattices defined on metric spaces $X$ and $Y$ respectively.  Assume that $A(X) = A^\loc(X)$ or $A^\loc_b(X)$ or  satisfies ($\spadesuit$), and the same holds for $A(Y)$. Suppose that $A(X)$ satisfies ($\heartsuit$) and that there is a dense subset $Y'$ of $Y$ such that $A(Y)$ satisfies condition ($\heartsuit_y$) for all $y\in Y'$.  If $A(X)$ is a vector sublattice of $C(X)$, and $A(X)$ is order isomorphic to $A(Y)$, then $A(Y)$ is a vector sublattice of $C(Y)$.
\end{prop}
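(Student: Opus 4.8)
The plan is to leverage the pointwise ``superposition'' description of $T$. Fix an order isomorphism $T:A(X)\to A(Y)$. The present hypotheses are precisely those of Theorem \ref{thm27}, so we obtain a homeomorphism $\vp:X\to Y$ such that for all $f,g\in A(X)$ and every open set $U\subseteq X$, $f\geq g$ on $U$ if and only if $Tf\geq Tg$ on $\vp(U)$. Since $A(X)$ satisfies $(\heartsuit)$, Proposition \ref{prop28} (equivalently Theorem \ref{thm29}, using in addition $(\heartsuit_y)$ on the dense set $Y'$) yields, for each $y\in Y'$ --- in fact for every $y\in Y$ --- a nondecreasing function $\Phi(y,\cdot):\R\to\R$ with $Tf(y)=\Phi\bigl(y,f(\vp^{-1}(y))\bigr)$ for all $f\in A(X)$.

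Next I would fix arbitrary $g_1,g_2\in A(Y)$, set $f_i=T^{-1}g_i\in A(X)$, and use the hypothesis that $A(X)$ is a vector sublattice of $C(X)$ to get $f_1\vee f_2\in A(X)$. Put $h=T(f_1\vee f_2)\in A(Y)$. The heart of the matter is the elementary fact that a monotone self-map of $\R$ commutes with binary maxima: for $y\in Y'$ (indeed for every $y$), writing $x=\vp^{-1}(y)$,
\[ h(y)=\Phi\bigl(y,\max\{f_1(x),f_2(x)\}\bigr)=\max\bigl\{\Phi(y,f_1(x)),\Phi(y,f_2(x))\bigr\}=\max\{g_1(y),g_2(y)\}. \]
Hence $h$ and $g_1\vee g_2$ coincide on $Y'$ (resp.\ on all of $Y$).

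Finally, since $h\in A(Y)\subseteq C(Y)$ and $g_1\vee g_2\in C(Y)$ agree on the dense subset $Y'$, continuity forces them to agree on all of $Y$; thus $g_1\vee g_2=h\in A(Y)$. As $A(Y)$ is a vector subspace (being a near vector lattice), closure under $\vee$ yields closure under $\wedge$ as well, so $A(Y)$ is a vector sublattice of $C(Y)$. I do not anticipate a genuine obstacle; the only points requiring care are checking that the superposition representation is available at enough points --- which is exactly what $(\heartsuit)$ on $A(X)$, or alternatively $(\heartsuit_y)$ on the dense set $Y'$ together with Theorem \ref{thm29}, guarantees --- and the passage from $Y'$ to $Y$, which is the sole place the density of $Y'$ enters.
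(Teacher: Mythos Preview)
Your argument is correct and follows essentially the same route as the paper's proof: obtain the superposition representation $Tf(y)=\Phi(y,f(\vp^{-1}(y)))$ via Theorem~\ref{thm27} and Proposition~\ref{prop28}/Theorem~\ref{thm29}, exploit the monotonicity of $\Phi(y,\cdot)$, and pass from the dense set $Y'$ to all of $Y$ by continuity.

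There are two minor but worthwhile differences. First, the paper normalizes to $T0=0$ and shows $g^+\in A(Y)$ for each $g\in A(Y)$ (using that $\Phi(y,0)=\Psi(x,0)=0$), whereas you go directly for $g_1\vee g_2$ using only that a nondecreasing map commutes with binary maxima; your version avoids the normalization and the need for the inverse representation $\Psi$. Second, and more interestingly, your parenthetical observation that Proposition~\ref{prop28} already gives the representation at \emph{every} $y\in Y$ (since $A(X)$ satisfies $(\heartsuit_x)$ for all $x$) is correct, and it shows that neither the hypothesis $(\heartsuit_y)$ on $Y'$ nor the density argument is actually needed for this proposition. The paper's proof invokes Theorem~\ref{thm29} (which does use $(\heartsuit_y)$) to get $\Phi(y,\cdot)$ as a homeomorphism, but since you only need $\Phi(y,\cdot)$ to be nondecreasing, Proposition~\ref{prop28} alone suffices.
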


\begin{proof}
Suppose that $T: A(X)  \to A(Y)$ is an order isomorphism.  We may assume that $T0 =0$.
Obtain a homeomorphism $\vp:X\to Y$ from Theorem \ref{thm27}.
Let $X' = \vp^{-1}(Y')$. By Theorem \ref{thm29}, applied to both $T$ and $T^{-1}$, if $x\in X'$ and $y = \vp(x)$, there are increasing homeomorphisms $\Phi(y,\cdot),\Psi(x,\cdot):\R\to \R$ such that
\[ Tf(y) = \Phi(y,f(\vp^{-1}(y))) \text{ and } T^{-1}g(x) = \Psi(x,g(\vp(x)))\]
for all $f\in A(X)$ and $g\in A(Y)$.  Moreover, $\Phi(y,\cdot)$ and $\Psi(x,\cdot)$ are mutual inverses.  Since $T0 = 0$, $\Phi(y,0) = 0 = \Psi(x,0)$.
Let $g\in A(Y)$, then $T^{-1}g\in A(X)$. Since $A(X)$ is a vector sublattice of $C(X)$, the pointwise supremum $f = T^{-1}g\vee 0 \in A(X)$.
If  $x \in X'$, then
\[ f(x) = [T^{-1}g(x)]^+ = [\Psi(x,g(\vp(x)))]^+.\]
If  $y\in Y'$, then  $\vp^{-1}(y) \in X'$.  Thus,
\[ Tf(y) = \Phi(y, f(\vp^{-1}(y))) = \begin{cases} 
                                                      g(y) &\text{if $\Psi(\vp^{-1}(y),g(y)) \geq 0$}\\
                                                      0 &\text{if $\Psi(\vp^{-1}(y),g(y))<0$}.
                                                      \end{cases}\]
As $\Psi(\vp^{-1}(y),\cdot)$ is an increasing homeomorphism with $\Psi(\vp^{-1}(y),0) = 0$, $\Psi(\vp^{-1}(y),g(y))\geq 0 $ if and only if $g(y) \geq 0$.
Therefore, $Tf(y) = [g(y)]^+$.
For any $z\in Y$, there is a sequence $(y_n)$ in $Y'$ converging to $z$.  Then $Tf(z) = \lim Tf(y_n) = \lim [g(y_n)]^+ = [g(z)]^+$.
We have shown that if $g\in A(Y)$, then its positive part, taken pointwise, is a function in $A(Y)$.  Since $A(Y)$ is also a vector subspace of $C(Y)$, it follows that $A(Y)$ is a vector sublattice of $C(Y)$.
\end{proof}

\begin{prop}\label{prop34}
Let $A(X)$ and $A(Y)$ be near vector lattices defined on metric spaces $X$ and $Y$ respectively.
Suppose that $A(X) = A^\loc(X)$ or $A^\loc_b(X)$ or satisfies condition ($\spadesuit$).
Assume the same for $A(Y)$.
Let $T:A(X)\to A(Y)$ be an order isomorphism, with the associated homeomorphism $\vp : X\to Y$ given by Theorem \ref{thm27}.   
Let $G$ be a subset of $X$ so that $A(X)$ satisfies ($\heartsuit_x)$ for all $x\in G$ and $A(Y)$ satisfies ($\heartsuit_y$) for all $y \in \vp(G)$. 
Then there is an order isomorphism $S: C(G)\to C(\vp(G))$ such that $S(f_{|G}) = (Tf)_{|\vp(G)}$ for all $f\in A(X)$.  $S$ is continuous if both $C(G)$ and $C(\vp(G))$ are equipped with the topology of uniform convergence on compact sets. Moreover, if $A(Y)$ consists of bounded functions, then $S(C_b(G)) \subseteq C_b(\vp(G))$.
\end{prop}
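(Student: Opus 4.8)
The plan is to assemble $S$ from the pointwise representation of $T$ provided by Theorem \ref{thm29}. For $x\in G$ and $y=\vp(x)$ the hypotheses of Proposition \ref{prop34} guarantee that $A(X)$ satisfies $(\heartsuit_x)$ and $A(Y)$ satisfies $(\heartsuit_y)$, so (as in the proof of Theorem \ref{thm29}, which applies Proposition \ref{prop28} to both $T$ and $T^{-1}$) there are increasing homeomorphisms $\Phi(y,\cdot),\Psi(x,\cdot):\R\to\R$, mutually inverse, with $Tf(y)=\Phi(y,f(\vp^{-1}(y)))$ and $T^{-1}g(x)=\Psi(x,g(\vp(x)))$ for all $f\in A(X)$ and $g\in A(Y)$. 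Since a near vector lattice is a unital vector space, for each $t\in\R$ the constant function $c_t$ lies in $A(X)$ and in $A(Y)$; putting $f=c_t$ in the first identity gives $\Phi(y,t)=Tc_t(y)$ for every $y\in\vp(G)$, so $y\mapsto\Phi(y,t)$ is the restriction to $\vp(G)$ of the continuous function $Tc_t\in C(Y)$. Hence $\Phi$ is separately continuous on $\vp(G)\times\R$ (continuous in $y$ for fixed $t$, continuous and increasing in $t$ for fixed $y$), and symmetrically $\Psi$ is separately continuous on $G\times\R$.

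Next I would promote separate continuity to joint continuity using monotonicity in the real variable: if $(y_n,t_n)\to(y_0,t_0)$ in $\vp(G)\times\R$, choose $a<t_0<b$ with $\Phi(y_0,b)-\Phi(y_0,a)$ small; for large $n$ one has $a<t_n<b$, hence $\Phi(y_n,a)\le\Phi(y_n,t_n)\le\Phi(y_n,b)$, while $\Phi(y_n,a)\to\Phi(y_0,a)$ and $\Phi(y_n,b)\to\Phi(y_0,b)$ by separate continuity, so $\Phi(y_n,t_n)\to\Phi(y_0,t_0)$ (everything being metrizable); the same argument handles $\Psi$. I then define $S:C(G)\to C(\vp(G))$ and $R:C(\vp(G))\to C(G)$ by $(Sh)(y)=\Phi(y,h(\vp^{-1}(y)))$ and $(Rk)(x)=\Psi(x,k(\vp(x)))$. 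Joint continuity of $\Phi$ and $\Psi$ together with continuity of $\vp$ and $\vp^{-1}$ shows $Sh\in C(\vp(G))$ and $Rk\in C(G)$. Because $\Phi(\vp(x),\cdot)$ and $\Psi(x,\cdot)$ are mutual inverses, $R=S^{-1}$, so $S$ is a bijection; because each $\Phi(y,\cdot)$ is strictly increasing, $h_1\le h_2$ on $G$ iff $Sh_1\le Sh_2$ on $\vp(G)$; and for $f\in A(X)$ the representation identity gives $(Sf_{|G})(y)=\Phi(y,f(\vp^{-1}(y)))=Tf(y)$, i.e.\ $S(f_{|G})=(Tf)_{|\vp(G)}$.

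For continuity of $S$ in the topology of uniform convergence on compact sets, I would fix a compact $L\subseteq\vp(G)$, set $K=\vp^{-1}(L)$ (a compact subset of $G$), and let $M=\sup_K|h|$ for the limit function $h$. Then $\Phi$ is uniformly continuous on the compact set $L\times[-M-1,M+1]\subseteq\vp(G)\times\R$, so given $\ep>0$ there is $\delta>0$ with $|\Phi(y,t)-\Phi(y,t')|<\ep$ whenever $y\in L$, $t,t'\in[-M-1,M+1]$ and $|t-t'|<\delta$; if $h_n\to h$ uniformly on compacta, then eventually $\sup_K|h_n-h|<\delta$ and $\sup_K|h_n|\le M+1$, whence $\sup_{y\in L}|Sh_n(y)-Sh(y)|\le\ep$, giving the claimed convergence. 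Finally, if $A(Y)$ consists of bounded functions and $h\in C_b(G)$ with $|h|\le M$, then $(c_{-M})_{|G}\le h\le(c_M)_{|G}$, and applying the order-preserving $S$ gives $(Tc_{-M})_{|\vp(G)}\le Sh\le(Tc_M)_{|\vp(G)}$; since $Tc_{\pm M}\in A(Y)$ are bounded, $Sh\in C_b(\vp(G))$.

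The step I expect to be the main obstacle is the upgrade from separate to joint continuity of $\Phi$ (and $\Psi$), since this is exactly what makes $S$ and $R$ land in the appropriate spaces of continuous functions; once that is in place, bijectivity, order preservation, the intertwining identity, the compact-open continuity, and the boundedness statement are all essentially formal.
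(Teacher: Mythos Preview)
Your proof is correct and follows essentially the same route as the paper's: both construct $S$ from the pointwise representation $\Phi$ of Theorem~\ref{thm29}, establish that $Sh$ is continuous via separate continuity of $\Phi$ in $y$ (coming from $\Phi(\cdot,t)=Tc_t$) together with monotonicity in $t$, and handle the boundedness claim by sandwiching between $Tc_{\pm M}$.

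The only technical differences are organizational. You package the continuity argument as \emph{joint} continuity of $\Phi$ on $\vp(G)\times\R$, whereas the paper argues continuity of $y\mapsto\Phi(y,h(\vp^{-1}(y)))$ directly by reducing to monotone sequences $(t_n)$ and squeezing against $Tt_0$ and $Tt_{n_0}$; the underlying idea is the same. For compact-open continuity, you exploit uniform continuity of $\Phi$ on the compact set $L\times[-M-1,M+1]$, while the paper instead applies Dini's theorem to the monotone sequences $S(f_0\pm 1/n)\to Sf_0$ to get the same neighborhood estimate. Your argument is arguably a bit more direct once joint continuity is in hand; the paper's Dini argument avoids explicitly stating joint continuity. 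One small remark: your continuity argument is phrased in terms of sequences $h_n\to h$, but it immediately yields a genuine neighborhood estimate (take $\delta\le 1$ so that any $g$ with $\sup_K|g-h|<\delta$ has values in $[-M-1,M+1]$ on $K$), so there is no issue even if the compact-open topology on $C(G)$ fails to be metrizable.
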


\begin{proof}
By Theorem \ref{thm29}, there exists  a function $\Phi:\vp(G)\times \R\to \R$ such that $\Phi(y,\cdot)$ is an increasing homeomorphism for each $y\in \vp(G)$ and that 
$Tf(y) = \Phi(y,f(\vp^{-1}(y)))$ for all $f\in A(X)$ and all $y\in \vp(G)$.
Let $h\in C(G)$.  We claim that $y \mapsto \Phi(y,h(\vp^{-1}(y)))$ is a continuous function on $\vp(G)$.
Let  $(y_n)$ be a sequence in $\vp(G)$ converging to $y_0\in \vp(G)$.
Set $t_n = h(\vp^{-1}(y_n))$ and $t_0 =h(\vp^{-1}(y_0))$.
It suffices to show that $(\Phi(y_n,t_n))$ has a subsequence converging to $\Phi(y_0,t_0)$.
By using a subsequence if necessary, we may assume that $(t_n)$ is monotone.  We consider the case where it is increasing.
Denoting by $t$ the constant function with value $t$, we note that $t_n, t_0\in A(X)$ and 
\[ \Phi(y_n,t_n) = Tt_n(y_n) \leq Tt_0(y_n).\]
Thus 
\[\limsup\Phi(y_n,t_n) \leq \lim Tt_0(y_n) = Tt_0(y_0) = \Phi(y_0,t_0).\]
On the other hand, suppose that $a < \Phi(y_0,t_0)$.
As $\Phi(y_0,\cdot)$ is continuous and $(t_n)$ converges to $t_0$, there exists $n_0$ such that $Tt_{n_0}(y_0) = \Phi(y_0,t_{n_0}) > a$.
By continuity of $Tt_{n_0}$, there exists $m_0> n_0$ such that $Tt_{n_0}(y_m) > a$ for all $m \geq m_0$.
Then, if $m \geq m_0$,
\[a < Tt_{n_0}(y_m) \leq Tt_m(y_m) = \Phi(y_m,t_m).\]
This shows that  $\liminf \Phi(y_n,t_n) \geq a$ for any $a < \Phi(y_0,t_0)$ and completes the proof of the claim.

Define $S: C(G)\to C(\vp(G))$ by $Sh(y) = \Phi(y,h(\vp^{-1}(y)))$ for all $h\in C(G)$ and all $y\in \vp(G)$.  Obviously, $h_1\leq h_2$ in $C(G)$ implies $Sh_1 \leq Sh_2$ and $S(f_{|G}) = (Tf)_{|\vp(G)}$ for all $f\in A(X)$.  By symmetry, there is a map $S': C(\vp(G)) \to C(G)$ 
given by $S'g(x) = \Psi(x,g(\vp(x)))$, where $\Psi(x,\cdot)$ is the inverse of the map $\Phi(\vp(x),\cdot)$.  It is easy to see that $S'= S^{-1}$.
Hence $S$ is an order isomorphism.

Let $f_0\in C(G)$.  By the continuity and monotonicity of $\Phi(y,\cdot)$ for each $y$, the sequences $(S(f_0-\frac{1}{n}))$ and $(S(f_0+\frac{1}{n}))$ converge pointwise monotonically  to $Sf_0$.  By Dini's Theorem, both sequences converge uniformly to $Sf_0$ on compact subsets of $\vp(G)$.  Suppose that $\ep >0$ and $K$ is a compact subset of $\vp(G)$.  There exists $n$ such that $|S(f_0\pm \frac{1}{n}) - Sf_0| \leq \ep$ on $K$.
If $f\in C(G)$ and $|f-f_0| \leq \frac{1}{n}$ on the compact set $\vp^{-1}(K)$, then
\[ S(f_0-\frac{1}{n}) \leq Sf \leq S(f_0+\frac{1}{n}) \]
on $K$. Hence $|Sf - Sf_0| \leq \ep$ on $K$.
This shows that $S$ is continuous if both $C(G)$ and $C(\vp(G))$ are equipped with the topology of uniform convergence on compact sets.

Finally, suppose that $A(Y)$ consists of bounded functions.  Then $\Phi(\cdot,t) = Tt$ is a bounded function on $Y$  for any $t \in \R$.  If $f\in C_b(G)$, choose $t_1,t_2\in \R$ such that $t_1 \leq f(x) \leq t_2$ for all $x\in G$.  Then 
\[ \Phi(y,t_1) \leq \Phi(y,f(\vp^{-1}(y))) = S(y) \leq \Phi(y,t_2)\]
for all $y\in \vp(G)$.  Hence $S{f}$ is bounded.
\end{proof}

\begin{cor}\label{cor35.1} 
In Proposition \ref{prop34}, assume in addition that $A(X)$ and $A(Y)$ both satisfy condition ($\heartsuit$). Then $T$ is continuous if both $A(X)$ and $A(Y)$ are equipped with the topology of uniform convergence on compact sets.
\end{cor}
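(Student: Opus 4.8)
The plan is to derive this immediately from Proposition \ref{prop34} by taking $G = X$. Since $A(X)$ and $A(Y)$ now satisfy condition ($\heartsuit$) in full, condition ($\heartsuit_x$) holds at every $x \in X$ and condition ($\heartsuit_y$) holds at every $y \in Y = \vp(X)$, so the hypotheses of Proposition \ref{prop34} are met with $G = X$ and $\vp(G) = Y$. This produces an order isomorphism $S : C(X) \to C(Y)$ which is continuous when $C(X)$ and $C(Y)$ are both given the topology of uniform convergence on compact sets, and which satisfies $S(f_{|G}) = (Tf)_{|\vp(G)}$ for all $f \in A(X)$; with $G = X$ this reads simply $S(f) = Tf$ for all $f \in A(X)$, so $S$ restricts to $T$ on the subset $A(X) \subseteq C(X)$.

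It then remains only to transfer the continuity of $S$ down to $T$. First I would observe that the topology of uniform convergence on compact sets on $A(X)$ is, by definition, the subspace topology inherited from $C(X)$ with that topology, and likewise for $A(Y) \subseteq C(Y)$. The inclusion $\iota_X : A(X) \hookrightarrow C(X)$ is then continuous, hence $S \circ \iota_X : A(X) \to C(Y)$ is continuous; but $S \circ \iota_X = T$ as a map of $A(X)$ into $C(Y)$. Since $T$ maps $A(X)$ bijectively onto $A(Y)$ and $A(Y)$ carries the subspace topology from $C(Y)$, the universal property of the subspace topology shows that $T : A(X) \to A(Y)$ is continuous. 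By symmetry (applying the same argument to $T^{-1}$, using that $A(X)$ also satisfies ($\heartsuit$)), $T^{-1}$ is continuous as well, so $T$ is in fact a homeomorphism for these topologies, though the statement only asks for continuity.

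I do not expect any real obstacle here: all the substantive work has already been carried out in Proposition \ref{prop34}, and what remains is the routine fact that restricting and corestricting a continuous map along topological embeddings preserves continuity. The only point deserving a word of care is the identification of the ``topology of uniform convergence on compact sets'' on $A(X)$ and $A(Y)$ with the subspace topologies from $C(X)$ and $C(Y)$, which is immediate.
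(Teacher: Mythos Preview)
Your proposal is correct and is exactly what the paper intends: the corollary is stated without proof because it follows immediately from Proposition~\ref{prop34} by taking $G = X$, and your observation that $T$ is simply the restriction of the continuous map $S$ along the subspace inclusions $A(X)\hookrightarrow C(X)$ and $A(Y)\hookrightarrow C(Y)$ is the routine verification needed.
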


Let $A(X)$ be a set of real-valued functions on a topological space $X$ and let $G$ be a subset of $X$.  Say that a function $f:G\to \R$ belongs to $A^\loc(G)$ if for every $x_0\in G$, there exists an open neighborhood $U$ of $x_0$ in $G$ and $g\in A(X)$ such that $f = g$ on $U$.

\begin{cor}\label{cor35}
In the notation of Proposition \ref{prop34}, $S$ is an order isomorphism  from $A^\loc(G)$ onto $A^\loc(\vp(G))$.  Furthermore, if $A(Y)$  consists of bounded functions, then $S(A^\loc_b(G)) \subseteq A^\loc_b(\vp(G))$.
In particular, $S$ is an order isomorphism from $A^\loc_b(G)$ onto $A^\loc_b(\vp(G))$ if both $A(X)$ and $A(Y)$ consist of bounded functions.
\end{cor}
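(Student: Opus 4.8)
The plan is to exploit two features of the order isomorphism $S$ produced by Proposition \ref{prop34}: that it is \emph{local} with respect to $\vp$, and that it carries the restriction to $G$ of a function in $A(X)$ to the restriction to $\vp(G)$ of a function in $A(Y)$. First I would record the locality. From the formula $Sh(y) = \Phi(y, h(\vp^{-1}(y)))$ established in the proof of Proposition \ref{prop34}, if $h_1, h_2 \in C(G)$ agree on an open subset $U$ of $G$, then $Sh_1$ and $Sh_2$ agree on $\vp(U)$, which is open in $\vp(G)$ because $\vp$ restricts to a homeomorphism of $G$ onto $\vp(G)$. The same reasoning, applied to the formula $S^{-1}g(x) = \Psi(x, g(\vp(x)))$ (with $\Psi(x,\cdot)$ inverse to $\Phi(\vp(x),\cdot)$) from the same proof, shows $S^{-1}$ is local with respect to $\vp^{-1}$.

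Next I would show $S(A^\loc(G)) \subseteq A^\loc(\vp(G))$. Let $h \in A^\loc(G)$ and fix $y_0 \in \vp(G)$; put $x_0 = \vp^{-1}(y_0) \in G$. By definition there are an open neighborhood $U$ of $x_0$ in $G$ and $g \in A(X)$ with $h = g$ on $U$. Since $h$ and $g_{|G}$ agree on $U$, locality gives $Sh = S(g_{|G}) = (Tg)_{|\vp(G)}$ on the open neighborhood $\vp(U)$ of $y_0$ in $\vp(G)$; as $Tg \in A(Y)$, this exhibits $Sh$ locally as the restriction of a function in $A(Y)$, so $Sh \in A^\loc(\vp(G))$. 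Running the identical argument for $S^{-1}$, using $S^{-1}(g_{|\vp(G)}) = (T^{-1}g)_{|G}$ and $T^{-1}g \in A(X)$ for $g \in A(Y)$, gives $S^{-1}(A^\loc(\vp(G))) \subseteq A^\loc(G)$, hence $S(A^\loc(G)) = A^\loc(\vp(G))$. Since $S$ and $S^{-1}$ are order preserving, this restriction is an order isomorphism.

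For the bounded assertions I would use $A^\loc_b(G) = A^\loc(G) \cap C_b(G)$ together with the final clause of Proposition \ref{prop34}: when $A(Y)$ consists of bounded functions, $S(C_b(G)) \subseteq C_b(\vp(G))$, and bijectivity of $S$ yields $S(A^\loc_b(G)) = S(A^\loc(G)) \cap S(C_b(G)) \subseteq A^\loc(\vp(G)) \cap C_b(\vp(G)) = A^\loc_b(\vp(G))$. If $A(X)$ also consists of bounded functions, I would apply Proposition \ref{prop34} to $T^{-1}$ and $\vp^{-1}$ to obtain $S^{-1}(C_b(\vp(G))) \subseteq C_b(G)$, hence $S^{-1}(A^\loc_b(\vp(G))) \subseteq A^\loc_b(G)$, and conclude as before that $S$ restricts to an order isomorphism of $A^\loc_b(G)$ onto $A^\loc_b(\vp(G))$.

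I do not anticipate a real obstacle: everything rests on the locality of $S$, which is immediate from its pointwise description in Proposition \ref{prop34}, plus routine bookkeeping with the definition of $A^\loc$ on a subset and a symmetry argument for $S^{-1}$. The only point requiring a moment's care is justifying the symmetric use of Proposition \ref{prop34} for the pair $(T^{-1},\vp^{-1})$, namely that $\vp^{-1}$ is an associated homeomorphism for $T^{-1}$ in the sense of Theorem \ref{thm27}; this is an immediate rereading of the defining equivalence for $\vp$, replacing the open set $U$ by $\vp^{-1}(V)$ and writing the functions in $A(X)$ as images under $T^{-1}$ of functions in $A(Y)$.
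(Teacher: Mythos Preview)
Your proposal is correct and follows essentially the same approach as the paper: both arguments use the formula $Sh(y)=\Phi(y,h(\vp^{-1}(y)))$ to see that $Sf=S(g_{|G})=(Tg)_{|\vp(G)}$ on $\vp(U)$ whenever $f=g$ on $U$, then invoke symmetry for $S^{-1}$, and handle the bounded case via $A^\loc_b(G)=A^\loc(G)\cap C_b(G)$ together with the last clause of Proposition~\ref{prop34}. Your explicit isolation of the locality property and the remark on applying Proposition~\ref{prop34} to $(T^{-1},\vp^{-1})$ are minor elaborations, not a different route.
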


\begin{proof}
We will show that $S(A^\loc(G)) \subseteq A^\loc(\vp(G))$.  Then $S(A^\loc(G)) = A^\loc(\vp(G))$ by symmetry.
Suppose that $f\in A^\loc(G)$ and $y_0\in \vp(G)$.  Let $x_0 = \vp^{-1}(y_0)$.  There exists an open neighborhood $U$ of $x_0$ in $G$ and a function $g \in A(X)$ such that $f =g$ on $U$.
Then $Sf = S(g_{|G})= (Tg)_{|\vp(G)}$ on $\vp(U)$, which is an open neighborhood of $y_0$ in $\vp(G)$.
Of course, $Tg \in A(Y)$.  This proves that $Sf \in A^\loc(\vp(G))$.

If all functions in $A(Y)$ are bounded, then by the previous paragraph and the last statement in Proposition \ref{prop34}, 
\[ S(A^\loc_b(G)) = S(A^\loc(G)\cap C_b(G)) \subseteq  A^\loc(\vp(G)) \cap C_b(\vp(G)) = A^\loc_b(\vp(G)).\]
\end{proof}

\subsection{Specifics}\label{subsec6.2}
In this part, spaces $X$ and $Y$ will always be metric spaces, possibly with additional properties.  
The metric on both $X$ and $Y$ will be denoted by $d$, even though they may differ.
We adopt the convention that when a space from Examples B or C is mentioned, it is assumed to satisfy the conditions given in these examples. For instance, for the space $\lip(X)$, $X$ will be assumed to be a complete metric space and $\lip(X)$ itself will be supposed to be uniformly separating.
If $A(X)$ and $A(Y)$ are spaces of functions, we write $A(X)\sim A(Y)$ to mean that they are order isomorphic.
The next result is an immediate consequence of Proposition \ref{prop31}.

\begin{prop}\label{prop36.1}
Let 
\begin{enumerate}
\item $A(X) = C(X)$, $\Lip^\loc(X)$, $\lip^\loc(X)$, $U^\loc(X)$ or $C^p(X)$;
\item $A(\ol{X}) = C^p(\ol{X})$;
\item $A(Y)=C_b(Y)$, $\Lip_b(Y)$, $\Lip^\loc_b(Y)$, $\lip_b(Y)$, $\lip^\loc_b(Y)$, $U_b(Y)$, $U^\loc_b(Y)$, or $C^p_b(Y)$;
\item $A(\ol{Y}) = C^p_b(\ol{Y})$ or $C^p_*(\ol{Y})$.  
\end{enumerate}
If $A(X)  \sim A(Y)$ or $A(\ol{Y})$, then $X$ is compact.  If  $A(\ol{X})  \sim A(Y)$ or $A(\ol{Y})$, then $\ol{X}$ is compact.
\end{prop}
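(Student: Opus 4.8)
This proposition is an immediate consequence of Proposition~\ref{prop31}, used contrapositively; the only work is bookkeeping, matching each named space to the appropriate clause of that proposition and of Examples~B. The plan is as follows.

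First I would note that every space listed in (a) is a near vector lattice on the metric space $X$ which coincides with its own local version: $C(X)=C^\loc(X)$, the spaces $\Lip^\loc(X)$, $\lip^\loc(X)$ and $U^\loc(X)$ have the form $B^\loc(X)$ for $B=\Lip,\lip,U$ respectively (and the local version of a local version is the same space), and $C^p(X)=(C^p)^\loc(X)$. Likewise, by Examples~B (d), the space $C^p(\ol{X})$ appearing in (b) is a near vector lattice on the metric space $\ol{X}$ equal to its own local version (taken over the underlying space $\ol{X}$). Thus in every case the first space has the form ``$A^\loc$'' required of the domain space in Proposition~\ref{prop31}.

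Next I would verify that each target space in (c) and (d) is a near vector lattice consisting of bounded functions which falls into one of the two admissible types in Proposition~\ref{prop31}. The spaces $C_b(Y)$, $\Lip^\loc_b(Y)$, $\lip^\loc_b(Y)$, $U^\loc_b(Y)$, $C^p_b(Y)$ and $C^p_b(\ol{Y})$ are each of the form ``$A^\loc_b$'' (by Examples~B (a), (c), (d)), so they fit the first admissible type; while $\Lip_b(Y)$, $\lip_b(Y)$, $U_b(Y)$ and $C^p_*(\ol{Y})$ are near vector lattices consisting of bounded functions which satisfy condition~($\spadesuit$) (by Examples~B (b), (d)), so they fit the second admissible type. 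In all cases the hypotheses imposed on the target space in Proposition~\ref{prop31} are met.

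Combining the two observations and invoking Proposition~\ref{prop31} in contrapositive form then yields exactly the conclusion: order isomorphism of $A(X)$ with one of the spaces in (c) or with $A(\ol{Y})$ in (d) forces $X$ to be compact, and order isomorphism of $A(\ol{X})$ (with underlying metric space $\ol{X}$) with such a space forces $\ol{X}$ to be compact. I do not anticipate any genuine obstacle; the only point deserving a second glance is that the non-``local'' bounded spaces $\Lip_b$, $\lip_b$, $U_b$ and $C^p_*(\ol{X})$ satisfy condition~($\spadesuit$), which is precisely what Examples~B records.
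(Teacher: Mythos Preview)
Your proposal is correct and follows exactly the approach the paper takes: the paper simply states that the result is an immediate consequence of Proposition~\ref{prop31}, and your write-up supplies the routine bookkeeping (via Examples~B) needed to see that each domain space has the form $A^\loc$ and each target space has the form $A^\loc_b$ or else consists of bounded functions and satisfies~($\spadesuit$).
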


\begin{cor}\label{cor36.2}
\begin{enumerate}
\item There exists $Y$ such that any of the following holds if and only if $X$ is compact.
\begin{alignat*}{3}
C(X) &\sim C_b(Y), & C(X)&\sim U^\loc_b(Y), & C(X)& \sim U_b(Y), \\
\Lip^\loc(X)&\sim \Lip^\loc_b(Y), & && \Lip^\loc(X)&\sim \Lip_b(Y),\\ 
\lip^\loc(X)&\sim \lip^\loc_b(Y),  & &&
\lip^\loc(X)&\sim \lip_b(Y),\\
U^\loc(X)&\sim U^\loc_b(Y), & U^\loc(X)&\sim U_b(Y),& U^\loc(X)&\sim C_b(Y).
\end{alignat*}
\item For any $X$ and $Y$ and any $p$ and $q$, 
\begin{align*}
C^p(X)&\not\sim C^q_b(Y), C^q_b(\ol{Y}), C^q_*(\ol{Y}),\\
C^p(\ol{X}) & \not\sim C^q_b(Y).
\end{align*}
\item $C^p(\ol{X}) \sim C^q_b(\ol{Y})$ for some $Y$ and some $q$ if and only if $X$ is a bounded open set in $\R^n$ for some $n$.
\end{enumerate}
\end{cor}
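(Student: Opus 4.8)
The plan is to handle the three parts separately, leaning on Proposition \ref{prop36.1} (together with Theorem \ref{thm27}) for the ``only if'' directions and on elementary properties of compact metric spaces for the ``if'' directions. For (a), the forward implication of each displayed equivalence is immediate: in every pair the domain space is one of $C(X),\Lip^\loc(X),\lip^\loc(X),U^\loc(X)$ and the target is one of the bounded spaces appearing in Proposition \ref{prop36.1}, so that proposition forces $X$ to be compact. For the converse I will take $Y=X$ and check that, when $X$ is compact, the target space coincides with the domain space, so that the identity map is the desired order isomorphism. All the facts needed are standard for a compact metric space $X$: every continuous function is bounded and uniformly continuous, so $C(X)=C_b(X)=U(X)=U_b(X)$; and a locally Lipschitz (respectively locally little-Lipschitz, locally uniformly continuous) function is globally so, by a Lebesgue-number argument over a finite subcover, so $\Lip^\loc(X)=\Lip(X)=\Lip_b(X)$, $\lip^\loc(X)=\lip(X)=\lip_b(X)$, and $U^\loc(X)=U(X)=U^\loc_b(X)$. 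The one non-formal point is the standing hypothesis carried by the little-Lipschitz targets: to use $\lip_b(X)$ one needs $\lip(X)$ to be uniformly separating. This follows from the assumption attached to $\lip^\loc(X)$ that it (hence $\lip(X)$, by the previous identity) separates points from closed sets: given $U,V$ with $d(U,V)>0$, for each point of $\ol{U}$ pick a nonnegative function in $\lip(X)$ equal to $1$ there and to $0$ on $\ol{V}$, take the maximum over a finite subcover of $\ol{U}$, and compose with a piecewise-linear truncation $\R\to[0,1]$ (composition with a Lipschitz map preserves membership in $\lip(X)$).

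For (b), the three non-isomorphisms with $C^p(X)$ on the left are immediate from Proposition \ref{prop36.1}: an order isomorphism of $C^p(X)$ onto any of $C^q_b(Y)$, $C^q_b(\ol{Y})$, $C^q_*(\ol{Y})$ would make $X$ compact, whereas a nonempty open subset of a (nonzero) Banach space is never compact, since it would then be clopen and hence equal to the whole space by connectedness, forcing the space to be bounded. For the remaining non-isomorphism, suppose $C^p(\ol{X})\sim C^q_b(Y)$. Proposition \ref{prop36.1} now gives that $\ol{X}$ is compact, which is not yet a contradiction. But $C^p(\ol{X})$ and $C^q_b(Y)$ satisfy the local-space hypotheses of Theorem \ref{thm27}, so that theorem produces a homeomorphism of $\ol{X}$ onto $Y$; hence $Y$ is compact, contradicting that $Y$ is a nonempty open subset of a Banach space.

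For (c), the forward direction is Proposition \ref{prop36.1} applied with $A(\ol{X})=C^p(\ol{X})$ and $A(\ol{Y})=C^q_b(\ol{Y})$: it yields that $\ol{X}$ is compact. Riesz's theorem then upgrades this: $\ol{X}$ contains the closure of some open ball of the ambient Banach space $E$, a closed subset of the compact set $\ol{X}$ is compact, so the closed unit ball of $E$ is compact and $E$ is finite-dimensional; identifying $E$ with $\R^n$ and using that a compact set is bounded, $X$ is a bounded open subset of $\R^n$. Conversely, if $X$ is a bounded open subset of $\R^n$ then $\ol{X}$ is compact, every continuous function on $\ol{X}$ is bounded, and hence $C^p(\ol{X})=C^p_b(\ol{X})$; taking $Y=X$ and $q=p$, the identity is an order isomorphism of $C^p(\ol{X})$ onto $C^q_b(\ol{Y})$, the standing separation hypotheses holding because $C^p$ bump functions on $\R^n$ restrict to $\ol{X}$.

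The work here is organizational rather than deep: one must keep track of which of Proposition \ref{prop36.1} and Theorem \ref{thm27} applies to a given pair (Proposition \ref{prop36.1} being asymmetric between domain and target), and, in the one genuinely two-step case $C^p(\ol{X})\sim C^q_b(Y)$, recognize that compactness of $\ol{X}$ must first be transported to $Y$ along the associated homeomorphism before the fact that nonempty open subsets of Banach spaces are never compact can be brought to bear.
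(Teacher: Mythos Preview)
Your proof is correct and follows essentially the same approach as the paper's: Proposition \ref{prop36.1} (and, where needed, Theorem \ref{thm27}) for the forward directions, and taking $Y=X$ for the converses. You supply considerably more detail than the paper does---the Lebesgue-number argument for the local/global equalities in (a), the uniform-separation check for $\lip(X)$, and the explicit invocation of Riesz's theorem in (c)---but the skeleton is identical.
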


\begin{proof}
(a) If $X$ is compact, take $Y = X$ and we have equality of the spaces in all cases.  Any one of the order isomorphisms imply that $X$ is compact by Proposition \ref{prop36.1}.

\noindent (b) Assume that one of the given order isomorphisms exists. By Theorem \ref{thm27}, we have in the respective cases, $X$  is homeomorphic to $Y$, $X$ is homeomorphic to $\ol{Y}$, or $\ol{X}$ is homeomorphic to $Y$. We also conclude from Proposition \ref{prop36.1}
that $X$ is compact in the first three cases, and $\ol{X}$, is compact in the last case.  
In the last case, it would imply that $Y$ is compact.  However, since $X$ and $Y$ are open sets in Banach spaces, they are never compact.

\noindent (c) As  in the proof of (b), if $C^p(\ol{X})\sim C^q_b(\ol{Y})$, then $\ol{X}$ is a compact set and hence $X$ is a bounded open set in $\R^n$ for some $n$. Conversely, if $X$ is a bounded open set in $\R^n$, then $C^p(\ol{X}) = C^p_b(\ol{X})$.
\end{proof}

The next result is an easy application of Proposition \ref{prop36}.

\begin{prop}\label{prop37}
Let $A(X)$ be  one of the spaces $C(X), C_b(X)$, 
$\Lip(X)$, $\Lip_b(X)$, $\Lip^\loc(X)$, $\Lip^\loc_b(X)$, $\lip_\al(X)$, $\lip_{\al,b}(X)$, $\lip_\al^\loc(X)$, $\lip_{\al,b}^\loc(X)$,  $U(X)$, $U_b(X)$, $U^\loc(X)$ or $U^\loc_b(X)$,
Let $A(Y)$, respectively, $A(\ol{Y})$,  be one of the spaces $C^p(Y)$, $C^p_b(Y)$, $C^p(\ol{Y})$, $C^p_b(\ol{Y})$ or $C^p_*(\ol{Y})$.  Then $A(X)\not\sim A(Y), A(\ol{Y})$.
\end{prop}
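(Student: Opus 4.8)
The plan is to reduce the statement to Proposition \ref{prop36}, putting $A(X)$ in the role of the sublattice there and the space of differentiable functions in the role of $A(Y)$. Each listed $A(X)$ is a vector sublattice of $C(X)$, while none of $C^p(Y)$, $C^p_b(Y)$, $C^p(\ol{Y})$, $C^p_b(\ol{Y})$, $C^p_*(\ol{Y})$ is a vector sublattice of the corresponding space of continuous functions; so an order isomorphism between them would, by Proposition \ref{prop36}, force the differentiable space to be a sublattice, a contradiction.

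First I would verify the hypotheses of Proposition \ref{prop36}. On the $X$-side: by Examples C(c) each listed $A(X)$ is a unital vector sublattice of $C(X)$ separating points from closed sets --- hence a near vector lattice by Examples A(a) --- and satisfies condition ($\heartsuit$); moreover, by Examples B(a) the spaces $C(X)$, $C_b(X)$ and the local spaces equal $A^\loc(X)$ or $A^\loc_b(X)$, while by Examples B(b) the spaces $\Lip(X)$, $\Lip_b(X)$, $\lip_\al(X)$, $\lip_{\al,b}(X)$, $U(X)$, $U_b(X)$ satisfy ($\spadesuit$). On the $Y$-side: by Examples B(c),(d) each of $C^p(Y)$, $C^p_b(Y)$, $C^p(\ol{Y})$, $C^p_b(\ol{Y})$, $C^p_*(\ol{Y})$ is a near vector lattice satisfying the structural hypothesis of Theorem \ref{thm27} (namely $A=A^\loc$ or $A^\loc_b$, or else ($\spadesuit$) in the case $C^p_*(\ol{Y})$), and by Examples C(d),(e),(f) each satisfies ($\heartsuit_y$) at every point of the dense subset $Y$ of its underlying space. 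With these checks in place, Proposition \ref{prop36} gives: if $A(X)$ is order isomorphic to one of the five differentiable spaces, then that space is a vector sublattice of $C(Y)$, respectively $C(\ol{Y})$.

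It remains to contradict this. Let $E$ be the ambient Banach space, fix a point $y_0$ of the relevant open set, pick $x^*\in E^*$ with $x^*\neq 0$, and set $h(x)=x^*(x-y_0)$, an affine and hence $C^p$ function with $h(y_0)=0$. Choose a $C^\infty$ cutoff $\gamma:\R\to\R$ with $\gamma(t)=t$ for $|t|\leq 1$ and $\gamma(t)=0$ for $|t|\geq 2$ (for instance $\gamma(t)=t\chi(t)$ for a suitable $C^\infty$ bump $\chi$); all derivatives of $\gamma$ are bounded. Then $f=\gamma\circ h$ lies in each of $C^p(Y)$, $C^p_b(Y)$, $C^p(\ol{Y})$, $C^p_b(\ol{Y})$, $C^p_*(\ol{Y})$ --- for the last, $\|D^k f(x)\|\leq\|\gamma^{(k)}\|_\infty\|x^*\|^k$ since $h$ is affine --- and $f=h$ on the open neighborhood $\{|h|<1\}$ of $y_0$. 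As the lattice operations are computed pointwise, $f\vee(-f)$ agrees with $|h|$ near $y_0$, which is not differentiable at $y_0$ because $t\mapsto|t|$ is not differentiable at $0$ and $x^*\neq 0$; hence $f\vee(-f)$ is not even $C^1$, so none of the five spaces is closed under $\vee$ --- the desired contradiction. I expect the only real work to be the case-by-case bookkeeping of the second paragraph; the failure of the $C^p$-spaces to be lattices is standard, and the degenerate possibility that the open set reduces to a single point (where a $C^p$ space is trivially a lattice) is excluded by the standing convention that the ambient Banach space is nontrivial.
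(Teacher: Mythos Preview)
Your proof is correct and follows exactly the route the paper indicates: the paper itself says only that the result ``is an easy application of Proposition \ref{prop36}'', and your write-up supplies precisely the bookkeeping that application requires --- verifying the near-vector-lattice, ($\spadesuit$)/local, and ($\heartsuit$) hypotheses via Examples A, B and C, and then exhibiting a concrete $f\in C^p_*(\ol{Y})$ (hence in all five differentiable spaces) with $|f|$ non-differentiable. The construction $f=\gamma\circ h$ with $h(x)=x^*(x-y_0)$ is exactly the device used in the proof of Examples C(d)--(f), so your argument is fully in line with the paper's methods.
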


A metric space $X$ is {\em discrete} if all of its points are isolated.  It is {\em separated} if there exists $\ep > 0$ such that $d(x,x') > \ep$ if $x$ and $x'$ are distinct points in $X$.
For any $\ep > 0$. let 
\[X_\ep =\{x\in X: d(x, X\bs\{x\}) > \ep\}.\]
$X$ is said to be {\em proximally compact} if every sequence in $X$ has a subsequence that either converges or is contained in $X_\ep$ for some $\ep > 0$.  $X$ is {\em locally proximally compact} if for any $x_0\in X$, there exists $r> 0$ such that $\ol{B(x_0,r)}$ is proximally compact.  Observe that every proximally compact metric space is complete.

\begin{prop}\label{prop38.1}
If a metric space is discrete and proximally compact, then it is separated.
\end{prop}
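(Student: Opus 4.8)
The plan is a short proof by contradiction. Suppose $X$ is discrete and proximally compact but fails to be separated. A space with at most one point is trivially separated, so $X$ has at least two points and, since $X$ is not separated, for every $n\in\N$ there exist distinct points $x_n,y_n\in X$ with $d(x_n,y_n)<1/n$. The idea is that such a ``witness'' sequence of ever-closer pairs must violate proximal compactness: first I would apply proximal compactness to the sequence $(x_n)$ to extract a subsequence $(x_{n_k})$ that either converges in $X$ or is contained in $X_\ep$ for some $\ep>0$, and then rule out both alternatives.

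If $x_{n_k}\to x$ for some $x\in X$, then from $d(x_{n_k},y_{n_k})<1/n_k\to 0$ we also get $y_{n_k}\to x$. Here discreteness enters: $\{x\}$ is open, so there is $r>0$ with $B(x,r)=\{x\}$, whence $x_{n_k}=x=y_{n_k}$ for all large $k$, contradicting $x_{n_k}\neq y_{n_k}$. If instead $x_{n_k}\in X_\ep$ for all $k$, then by the definition of $X_\ep$ we have $d(x_{n_k},z)>\ep$ for every $z\neq x_{n_k}$; taking $z=y_{n_k}$ forces $d(x_{n_k},y_{n_k})>\ep$, which is incompatible with $d(x_{n_k},y_{n_k})<1/n_k$ as soon as $1/n_k<\ep$. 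Either alternative yields a contradiction, so $X$ is separated.

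There is essentially no hard step to overcome; the only things to keep straight are the degenerate case of a space with at most one point and the bookkeeping that discreteness is precisely what defeats the ``convergent subsequence'' branch while the definition of $X_\ep$ defeats the other branch directly. Conceptually, proximal compactness says every sequence either accumulates (impossible for the $x_n$ here once the pairs would collapse by discreteness) or escapes into a uniformly isolated region (impossible because each $x_n$ has the nearby point $y_n$), so no such witness sequence can exist.
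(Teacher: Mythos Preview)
Your proof is correct and follows essentially the same contradiction strategy as the paper's: build sequences of distinct points with $d(x_n,y_n)\to 0$, apply proximal compactness to $(x_n)$, and rule out both branches. The only difference is that the paper compresses the convergent-subsequence case into the phrase ``Since $X$ is discrete, we may assume that $(x_n)$ has no convergent subsequence,'' whereas you spell out explicitly why a convergent subsequence would force $x_{n_k}=y_{n_k}$ eventually; your version is the same argument with that step made explicit.
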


\begin{proof}
Suppose that $X$ is discrete and proximally compact but not separated.  There are sequences $(x_n)$ and $(x'_n)$ in $X$ such that $0 < d(x_n,x'_n)\to 0$.  Since $X$ is discrete, we may assume that $(x_n)$ has no convergent subsequence.  
Since $X$ is proximally compact, $(x_n)$ has a subsequence contained in $X_\ep$ for some $\ep > 0$.
This contradicts the choice of $(x_n)$ and $(x'_n)$. 
\end{proof}

\begin{prop}\label{prop39}
Let $X$ and $Y$ be metric spaces and let $T: C(X)\to C(Y)$ be an order isomorphism.
If $X$ is not discrete, then for any $0< \al <1$, there exists $f\in U_b(X)$ such that $Tf \notin \lip_\al^\loc(Y) \cup \Lip^\loc(Y)$.
\end{prop}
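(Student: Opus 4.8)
The plan is to build the required $f$ near a single non-isolated point of $X$, exploiting the pointwise weighted-composition representation of $T$ there. Since $C(X)=C^\loc(X)$ and $C(Y)=C^\loc(Y)$ are near vector lattices satisfying $(\heartsuit)$ (Examples B(a) and C(c)), Theorems \ref{thm27} and \ref{thm29} apply: there is a homeomorphism $\vp\colon X\to Y$ and, for each $y\in Y$, an increasing homeomorphism $\Phi(y,\cdot)\colon\R\to\R$ with $Tf(y)=\Phi(y,f(\vp^{-1}(y)))$ for all $f\in C(X)$; in particular $T(s)=\Phi(\cdot,s)$ on constant functions and each $\Phi(y,\cdot)$ is strictly increasing. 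Fix a non-isolated point $x_0\in X$ and set $y_0=\vp(x_0)$. If $T(0)\notin\lip_\al^\loc(Y)$ we are already done, taking $f\equiv 0\in U_b(X)$ (recall $\Lip^\loc(Y)\subseteq\lip_\al^\loc(Y)$, since $\Lip(Y)\subseteq\lip_\al(Y)$ for $0<\al<1$). So assume $T(0)\in\lip_\al^\loc(Y)$; then there are $\delta_0\in(0,1]$ and $L<\infty$ with $|T(0)(y)-T(0)(y_0)|\le L\,d(y,y_0)^\al$ whenever $d(y,y_0)<\delta_0$.

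Next I would fix $c_n=2^{-n}$ and $\mu_n=[T(c_n)-T(0)](y_0)=\Phi(y_0,c_n)-\Phi(y_0,0)>0$, and choose inductively distinct points $x_n\to x_0$, $x_n\ne x_0$, so that (writing $y_n=\vp(x_n)$, $D_n=d(x_n,x_0)$, $r_n=d(y_n,y_0)$): (i) $D_n<\frac{1}{2}D_{n-1}$, with $D_1<1$; (ii) $[T(c_n)-T(0)](y_n)>\frac{1}{2}\mu_n$; (iii) $r_n^\al<\frac{\mu_n}{2(n+L)}$. Each of these only asks that $x_n$ lie in a prescribed neighbourhood of $x_0$: for (ii) because the fixed continuous function $T(c_n)-T(0)$ takes the value $\mu_n$ at $y_0$, and for (iii) by continuity of $\vp$ together with the fact that $\mu_n/(2(n+L))$ is a fixed positive number; hence such $x_n$ exist because $x_0$ is not isolated. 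Note $\mu_n\to0$, so $r_n\to0$.

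Then I would take $\phi\colon[0,\infty)\to[0,c_1]$ with $\phi(0)=0$, $\phi$ affine from $c_{n+1}$ to $c_n$ on $[D_{n+1},D_n]$, and $\phi\equiv c_1$ on $[D_1,\infty)$; on $[D_{n+1},D_n]$ one has $c_{n+1}\le\phi\le c_n$, so $\phi(t)\to0$ as $t\to0^+$, whence $\phi$ is bounded and uniformly continuous on $[0,\infty)$ (continuous on the compact interval $[0,D_1]$, constant beyond). Since $d(\cdot,x_0)$ is $1$-Lipschitz, $f:=\phi\circ d(\cdot,x_0)\in U_b(X)$, with $f(x_0)=0$ and $f(x_n)=\phi(D_n)=c_n$. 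Finally, $Tf(y_0)=\Phi(y_0,0)=T(0)(y_0)$ and $Tf(y_n)=\Phi(y_n,c_n)$, so $Tf(y_n)-Tf(y_0)=[T(c_n)-T(0)](y_n)+[T(0)(y_n)-T(0)(y_0)]$; by (ii) and (iii) the first term is $\ge\frac{1}{2}\mu_n\ge(n+L)r_n^\al$, and the second is $\le L r_n^\al$ in absolute value once $r_n<\delta_0$, i.e. for all large $n$. Thus $|Tf(y_n)-Tf(y_0)|\ge n\,r_n^\al$ eventually, so $|Tf(y_n)-Tf(y_0)|/d(y_n,y_0)^\al\ge n\to\infty$, and also $|Tf(y_n)-Tf(y_0)|/d(y_n,y_0)\to\infty$ since $r_n<1$ eventually. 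As $y_n\to y_0$, this shows $Tf$ agrees with no $\al$-H\"older function and with no Lipschitz function on any neighbourhood of $y_0$, so $Tf\notin\lip_\al^\loc(Y)\cup\Lip^\loc(Y)$.

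The delicate point, which is where I expect essentially all the difficulty to lie, is keeping $f$ uniformly continuous while forcing $Tf$ to oscillate faster than any H\"older rate at $y_0$: a function assembled from disjoint bumps at the $x_n$ must have its bump heights controlled by the available widths $\approx d(x_n,x_0)$, yet both the weight $\Phi(y_0,\cdot)$ and the homeomorphism $\vp$ may be badly non-Lipschitz, so one cannot directly make $d(\vp(x_n),y_0)^\al$ small compared to $\Phi(y_n,d(x_n,x_0))-\Phi(y_n,0)$. Using the radial function $f=\phi\circ d(\cdot,x_0)$ with $\phi$ uniformly continuous but decaying as slowly as one wishes removes this constraint, and the simultaneous threefold choice of each $x_n$ near $x_0$ (conditions (i)--(iii)) then closes the estimate; the case split on $T(0)$ and the verification $f\in U_b(X)$ are routine.
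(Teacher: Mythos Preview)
Your proof is correct and takes a route genuinely different from the paper's. The paper first builds hat functions $g_n\in C(Y)$ centred at $y_n$ with $g_n(y_n)=n(r_n/2)^\al$ and $\|g_n\|_\infty\to 0$, then invokes the continuity of $T^{-1}$ with respect to uniform convergence on compact sets (Corollary~\ref{cor35.1}) to get $T^{-1}g_n(x_n)\to T^{-1}0(x_0)$, and finally interpolates these values by some $f\in U_b(X)$; the numbers $Tf(y_n)$ are then known \emph{exactly}. You instead fix the $X$-side data first: you prescribe $f(x_n)=c_n=2^{-n}$, realise $f$ explicitly as a radial function $\phi\circ d(\cdot,x_0)$, and choose each $x_n$ close enough to $x_0$ that $r_n^\al$ is beaten by the known gap $\mu_n=\Phi(y_0,c_n)-\Phi(y_0,0)$. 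This avoids Corollary~\ref{cor35.1} entirely, using only the pointwise representation of Theorem~\ref{thm29} and continuity of the single functions $T(c_n)$, $T(0)$; the cost is the mild case split on whether $T(0)\in\lip_\al^\loc(Y)$, and the explicit radial construction makes the membership $f\in U_b(X)$ transparent.

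One small slip: your parenthetical ``since $\Lip(Y)\subseteq\lip_\al(Y)$'' is false when $Y$ has infinite diameter (take $y\mapsto y$ on $\R$; its $\al$-H\"older quotient $|x-y|^{1-\al}$ is unbounded). The inclusion $\Lip^\loc(Y)\subseteq\lip_\al^\loc(Y)$ you actually need is nevertheless true: if $g\in\Lip(Y)$ agrees with your function on a neighbourhood of $y_0$, truncate $g$ to a bounded Lipschitz function agreeing with it on a smaller neighbourhood, and then use $\Lip_b(Y)\subseteq\lip_\al(Y)$ (stated in the paper just before Theorem~\ref{thm7.1}). With this fix the case split is sound.
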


\begin{proof}
By Theorem \ref{thm29}, there exist a homeomorphism $\vp : X\to Y$ and a function $\Phi: Y\times \R \to \R$ such that $\Phi(y,\cdot):\R\to \R$ is an increasing homeomorphism for all $y\in Y$, and that $Tf(y) = \Phi(y,f(\vp^{-1}(y)))$ for all $f\in C(X)$ and all $y\in Y$.
Suppose that $X$ is not discrete.  
There is a sequence of distinct points $(x_n)$ in $X$ converging to a point $x_0\in X$, with $x_0 \neq x_n$ for all $n$.
Set $y_n = \vp(x_n)$, $n \geq 0$, and let $r_n = d(y_n,y_0)$. 
By using a subsequence if necessary, we may assume that $r_n > 4 r_{n+1}$ for all $n$. 
For each $n \in \N$, define $g_n:Y \to \R$ by $g_n(y) = n[(\frac{r_n}{2})^\al-d(y,y_n)^\al]^+$.
Clearly, $g_n \in C(Y)$ and $\|g_n\|_\infty \leq n(r_n/2)^\al\to 0$.
By Corollary \ref{cor35.1} applied to $T^{-1}$, $(T^{-1}g_n)$ converges uniformly to $T^{-1}0$ on the compact set $K = \{x_n: n \geq 0\}$.  
In particular, 
\[\lim_n T^{-1}g_n(x_n)  = \lim_n [(T^{-1}g_n(x_n) -T^{-1}0(x_n)) +  T^{-1}0(x_n)]= T^{-1}0(x_0).\]
It is easy to construct $f\in U_b(X)$ such that $f(x_n) = T^{-1}g_n(x_n)$ for all $n\in \N$ and $f(x_0) = T^{-1}0(x_0)$.
But $Tf(y_n) = g_n(y_n) = n(r_n/2)^\al$ and $Tf(y_0) = 0$.
It is clear that $Tf$ is not Lipschitz with respect to either of the metrics $d$ or $d^\al$ on any neighborhood of $y_0$.
\end{proof}

\begin{prop}\label{prop41.1}
Let $X$ and $Y$  be metric spaces and let $T: C(X)\to C(Y)$ be an order isomorphism such that $T0 = 0$.  Assume that $X$ is not proximally compact. Fix $0 < \al < 1$. There exists  $f\in C(X)\bs U(X)$ such  that $Tf\in \Lip^\loc(Y)\cap \lip^\loc_\al(Y)$.
If $T(C_b(X)) \subseteq C_b(Y)$, we may require additionally that $Tf$ be bounded.
\end{prop}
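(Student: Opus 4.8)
The plan is to exploit the failure of proximal compactness to build, on the $Y$ side, a bounded continuous function that is locally Lipschitz (indeed locally little-Lipschitz of order $\al$) because its "action" is concentrated near an isolated-at-scale-$\ep$ cloud of points, and then to pull this function back through $T^{-1}$ to obtain the desired $f\in C(X)\setminus U(X)$. First I would invoke Theorem \ref{thm29} (with $A(X)=C(X)$, $A(Y)=C(Y)$, which satisfy ($\spadesuit$) and ($\heartsuit$) by Examples B(b), C(c)) to get the homeomorphism $\vp:X\to Y$ and the function $\Phi$ with $Tf(y)=\Phi(y,f(\vp^{-1}(y)))$, and $\Phi(y,0)=0$ since $T0=0$. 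Since $X$ is not proximally compact, fix a sequence $(x_n)$ with no convergent subsequence and with $d(x_n,X\setminus\{x_n\})\to 0$; by passing to a subsequence I may assume the $x_n$ are distinct, pick $x'_n\neq x_n$ with $0<d(x_n,x'_n)=:\delta_n\to 0$, and (thinning again using that $(x_n)$ clusters nowhere) arrange that the balls $B(x_n,2\delta_n)$ are pairwise disjoint and that their union has closure equal to the union of the closures.

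Next I would transfer this picture to $Y$ via $\vp$: put $y_n=\vp(x_n)$, $y'_n=\vp(x'_n)$, and set $s_n=d(y_n,y'_n)>0$. On $Y$, define bump functions $h_n$ supported in a small ball around $y_n$, with $h_n(y_n)$ equal to some target value $t_n$ to be chosen, $h_n(y'_n)=0$, and with $h_n$ Lipschitz and little-$\al$-Lipschitz — e.g. $h_n(y)=t_n\,\psi(d(y,y_n))$ where $\psi$ is a fixed Lipschitz cutoff, supported so that the supports are pairwise disjoint and shrink; because each $h_n$ is Lipschitz on a set that is uniformly separated from the supports of the others, the pointwise sum $g=\sum_n h_n$ lies in $\Lip^\loc(Y)\cap \lip^\loc_\al(Y)$ (this is the standard "disjointly supported, locally finite" argument, cf.\ the computations in Examples C(c) and in Proposition \ref{prop31}). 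The point of the construction is that $g$ is \emph{locally} nice everywhere even though $\|g\|_\infty=\sup|t_n|$ need not be small — but in fact I will choose the $t_n$ bounded, so $g\in C_b(Y)$ as well; moreover $g=0$ near each $y'_n$ and on the complement of $\bigcup_n\operatorname{supp}h_n$.

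Now pull back: $f:=T^{-1}g\in C(X)$, and if $T(C_b(X))\subseteq C_b(Y)$ is assumed, then applying the same hypothesis to $T^{-1}$ (equivalently, the last clause of Proposition \ref{prop34}) gives $f$ bounded — so the "additionally" clause is handled at no extra cost. The remaining task is to choose the target values $t_n$ so that $f\notin U(X)$. Using Theorem \ref{thm29} for $T^{-1}$ we have $f(x)=\Psi(x,g(\vp(x)))$ where $\Psi(x,\cdot)=\Phi(\vp(x),\cdot)^{-1}$ is an increasing homeomorphism with $\Psi(x,0)=0$; thus $f(x'_n)=\Psi(x'_n,0)=0$ while $f(x_n)=\Psi(x_n,t_n)$. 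To defeat uniform continuity I need $|f(x_n)-f(x'_n)|=|\Psi(x_n,t_n)|$ to stay bounded away from $0$ while $d(x_n,x'_n)=\delta_n\to 0$. Since $\Psi(x_n,\cdot)$ is a surjective increasing homeomorphism of $\R$, for each $n$ I can pick $t_n$ with $\Psi(x_n,t_n)=1$ (say), and since $\Phi(x_n,1)=t_n$ is just a real number this is a legitimate choice; the only thing to verify is that the resulting $g$ is still continuous (automatic, as the supports are disjoint and locally finite) and bounded — and boundedness of $(t_n)$ is \emph{not} guaranteed a priori, which is the one genuine obstacle.

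The main obstacle, then, is exactly that last point: I need the chosen heights $t_n=\Phi(y_n,1)$ to be bounded (or at least to not force $g\notin C_b(Y)$ or spoil the local Lipschitz estimate) in the case $T(C_b(X))\subseteq C_b(Y)$. I would resolve it by turning the construction around: instead of fixing the $X$-side jump at $1$, fix the $Y$-side heights — choose $t_n\in[1,2]$, declare $h_n(y_n)=t_n$ (pick, e.g., $t_n=1$ for all $n$), form $g\in C_b(Y)\cap\Lip^\loc(Y)\cap\lip^\loc_\al(Y)$ as above, set $f=T^{-1}g$, and then argue that $|f(x_n)|=|\Psi(x_n,1)|$ cannot tend to $0$: if it did along a subsequence, then by Corollary \ref{cor35.1} applied to $T^{-1}$ — which says $T^{-1}$ is continuous for uniform convergence on compacta — together with the fact that the relevant comparison forces $T^{-1}$ of a function pinned at height $1$ to be pinned away from $0$, we reach a contradiction. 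Concretely, $Tt$ is, for each constant $t>0$, a fixed strictly-positive-valued function $y\mapsto\Phi(y,t)$; monotonicity gives $f(x_n)=\Psi(x_n,1)\geq \Psi(x_n,0)=0$ and, comparing the constant function $c$ with $\Phi(y_n,c)\leq 1$, one gets a uniform positive lower bound for $\Psi(x_n,1)$ provided $\inf_n\Phi(y_n,c)>0$ for some $c>0$ — which holds because $\Phi(\cdot,c)=Tc$ is continuous on the compact-or-uniformly-separated structure. Chasing this carefully (the clean way is: pick $c>0$ with $Tc\le 1$ impossible uniformly, else replace $1$ by a smaller constant $c'$ and run the argument with $c'$) yields $|f(x_n)-f(x'_n)|\ge c'>0$ for all $n$ while $d(x_n,x'_n)\to 0$, so $f\notin U(X)$. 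This is the delicate step and deserves the most care; everything else is the routine disjoint-bump bookkeeping already used repeatedly in the paper.
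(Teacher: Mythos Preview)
Your first approach---bumps on the $Y$ side centered at $y_n=\vp(x_n)$ with heights $t_n$ chosen so that $\Psi(x_n,t_n)=1$, i.e.\ $t_n=\Phi(y_n,1)=T1(y_n)$---is exactly the paper's proof. The paper writes $a_n=T1(y_n)$, builds $h(y)=a_n(1-d(y,y_n)/r_n)^+$ on disjoint balls, and observes $T^{-1}h(x_n)=1$, $T^{-1}h(x'_n)=0$.

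The ``genuine obstacle'' you flag is not one, and your attempt to circumvent it is where the proposal goes wrong. Two points:
\begin{itemize}
\item[(i)] For the main statement you do \emph{not} need $(t_n)$ bounded. Since $(x_n)$ has no convergent subsequence and $\vp$ is a homeomorphism, $(y_n)$ has no convergent subsequence; together with $r_n\to 0$ this makes the family of supports $\overline{B(y_n,r_n)}$ locally finite in $Y$. Hence every point of $Y$ has a neighborhood meeting at most one bump, and $g\in\Lip^\loc(Y)\cap\lip^\loc_\al(Y)$ regardless of the heights.
\item[(ii)] For the ``additionally'' clause: the hypothesis $T(C_b(X))\subseteq C_b(Y)$ applied to the constant function $1\in C_b(X)$ gives $T1\in C_b(Y)$, so $t_n=T1(y_n)$ is a bounded sequence and $g=Tf$ is bounded. (Note the clause only asks that $Tf$ be bounded, not $f$; your appeal to Proposition~\ref{prop34} to get $f$ bounded is both unnecessary and unjustified, since the hypothesis is one-directional.)
\end{itemize}

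Your alternative route---fix $t_n=1$ on the $Y$ side and argue $\Psi(x_n,1)=T^{-1}1(x_n)$ stays away from $0$---does not work. The sequence $(y_n)$ lies in no compact set, so continuity of $Tc$ gives no control on $\inf_n Tc(y_n)$, and there are simple order isomorphisms (e.g.\ on $X=Y=\N$ with $\Phi(n,t)$ behaving like $nt$ for $t\ge 0$) for which $T^{-1}1(n)\to 0$. Drop this detour; your first idea, with the two observations above, is already the complete proof.
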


\begin{proof}
There are sequences $(x_n)$ and $(x'_n)$ in $X$ so that $(x_n)$ has no convergent subsequence and $0 < d(x_n,x'_n) \to 0$.  It follows that $(x'_n)$ has no convergent subsequence.  We may assume that the points in $(x_n) \cup (x'_n)$ are all  distinct.  Let $y_n = \vp(x_n)$ and $y_n' = \vp(x_n')$ for all $n$, where $\vp : X\to Y$ is the homeomorphism associated with $T$.
Since the points in $(y_n)\cup (y_n')$ are all distinct and neither $(y_n)$ nor $(y_n')$ has a convergent subsequence, for each $n$, 
\[c_n = \inf_{m\neq n}d(y_n,y_m) \wedge \inf_m d(y_n, y'_m) > 0.\]
Choose $(r_n)$ converging to $0$ so that $0 < 4r_n < c_n$.
Let $a_n = T1(y_n)$ for all $n$.
Define $h$ on $Y$ by 
\[ h(y) = \begin{cases}
              a_n(1-\frac{d(y,y_n)}{r_n}) &\text{ if $d(y,y_n) \leq r_n$ for some $n$},\\
              0 &\text{ otherwise}.
              \end{cases}\]
Then $h\in \Lip^\loc(Y)\cap \lip^\loc_\al(Y)$.  Since $h(y_n) = a_n = T1(y_n)$ and $h(y_n') = 0 = T0(y_n')$ for all $n$,
$T^{-1}h(x_n) = 1$ and $T^{-1}h(x_n') = 0$. 
As  $d(x_n,x'_n) \to 0$, this shows that $T^{-1}h\notin U(X)$.
The first part of the proof is completed by taking $f = T^{-1}h$.

Assume additionally that $T(C_b(X)) \subseteq C_b(Y)$.  Then $T1$ is a bounded function and hence $(a_n)$ is a bounded sequence.  Thus $h$ is a bounded function and hence so is $Tf$.
\end{proof}

\begin{cor}\label{cor42.1}
Let $X$ be a metric space that is not proximally compact.  For any $\ep > 0$, there exists $f\in \Lip^\loc(X)\bs U(X)$ such that $\|f\|_\infty \leq \ep$.
\end{cor}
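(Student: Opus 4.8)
The plan is to deduce this at once from Proposition \ref{prop41.1} by taking $Y=X$ and letting $T$ be the identity operator on $C(X)$, and then rescaling.

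First I would note that the identity map $I:C(X)\to C(X)$ is an order isomorphism with $I0=0$ and $I(C_b(X))=C_b(X)\subseteq C_b(X)$. Since $X$ is assumed not to be proximally compact, Proposition \ref{prop41.1} (applied with any fixed $\al\in(0,1)$) produces a function $f\in C(X)\bs U(X)$ such that $If=f$ lies in $\Lip^\loc(Y)\cap\lip^\loc_\al(Y)=\Lip^\loc(X)\cap\lip^\loc_\al(X)$, and, moreover, that $f$ can be taken bounded. Thus $f\in \Lip^\loc(X)\bs U(X)$ with $\|f\|_\infty<\infty$; we only retain the $\Lip^\loc(X)$ membership.

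Next I would observe that, since $f\notin U(X)$, $f$ is not uniformly continuous, hence not constant, so $\|f\|_\infty>0$. Put
\[ g=\frac{\ep}{\|f\|_\infty}\,f.\]
Then $g\in\Lip^\loc(X)$ because $\Lip^\loc(X)$ is a vector space, and $g\notin U(X)$ because $U(X)$ is a vector space and $g$ is a nonzero scalar multiple of $f\notin U(X)$. Finally $\|g\|_\infty=\ep\le\ep$, so $g$ has all the required properties.

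There is no genuine obstacle here; the only point worth flagging is that Proposition \ref{prop41.1} delivers $f\in C(X)$ with the favourable membership on the $Tf$ side, so one has to arrange $Tf=f$ — whence the choice $T=I$ — after which the norm bound follows by homogeneity of the classes $\Lip^\loc(X)$ and $U(X)$.
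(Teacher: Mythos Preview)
Your proof is correct and follows essentially the same approach as the paper: apply Proposition~\ref{prop41.1} with $Y=X$ and $T$ the identity to obtain a bounded $h\in\Lip^\loc(X)\setminus U(X)$, then rescale. The paper's only cosmetic difference is that it rescales by $\ep/(\|h\|_\infty+1)$ rather than $\ep/\|h\|_\infty$, which sidesteps the (trivial) need to argue $\|h\|_\infty>0$.
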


\begin{proof}
Take $X = Y$ and $T:C(X)\to C(X)$ to be the identity map in Proposition \ref{prop41.1}.
By the proposition, there exists $h\in \Lip^\loc_b(X) \bs U(X)$. Then $f = {\ep}(\|h\|_\infty+1)^{-1}h\in \Lip^\loc_b(X) \bs U(X)$ and $\|f\|_\infty\leq \ep$.
\end{proof}

\begin{thm}\label{thm41}
\begin{enumerate}
\item There exists $Y$ such that any of the following holds if and only if $X$ is discrete.
\begin{align*}
C(X) \text{ or }U^\loc(X)& \sim \Lip^\loc(Y) \text{ or } \lip^\loc_\al(Y), \\
C_b(X)\text{ or }U^\loc_b(X)&\sim \Lip^\loc_b(Y)\text{ or } \lip^\loc_{\al,b}(Y).
\end{align*}
\item There exists $Y$ such that any of the following holds if and only if $X$ is a finite set.
\begin{align*}
C(X), U^\loc(X) & \sim \Lip^\loc_b(Y), \Lip_b(Y), \lip^\loc_{\al,b}(Y), \lip_{\al,b}(Y) \\
C_b(X), U_b(X), U^\loc_b(X)&\sim \Lip^\loc(Y), \lip^\loc_\al(Y),\\
C(X), U^\loc(X)&\sim \lip_\al(Y).
\end{align*}
\item There exists $Y$ such that any of the following holds if and only if $X$ is separated.
\[
\Lip_b(X), \lip_{\al,b}(X)\sim C_b(Y), U^\loc_b(Y).
\]
\end{enumerate}
\end{thm}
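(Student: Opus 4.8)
The plan is to treat the three statements in parallel, since each has the shape: "an order isomorphism between a pair of spaces forces a combinatorial smallness condition on $X$ (discrete / finite / separated), and conversely that condition lets us take $Y = X$ (or a trivial modification) and obtain an honest equality of spaces." The easy direction in all three cases is the converse: if $X$ is discrete then $C(X) = \Lip^\loc(X) = \lip^\loc_\al(X)$ and similarly for the bounded versions, because near a single isolated point every continuous function is locally constant hence locally (little) Lipschitz; if $X$ is finite it is compact and discrete, so all the ``bounded vs.\ unbounded'' distinctions collapse and we again get equalities; if $X$ is separated then $\Lip_b(X) = \lip_{\al,b}(X) = C_b(X) = U^\loc_b(X)$, since on a uniformly discrete space the Lipschitz and little-Lipschitz conditions impose nothing beyond boundedness. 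So the content is the forward direction, and I would organize it around the structural tools already built: Theorem \ref{thm27}/\ref{thm29} give the homeomorphism $\vp$ and the pointwise representation $Tf(y) = \Phi(y,f(\vp^{-1}(y)))$ (applicable since all the spaces involved satisfy ($\spadesuit$), or are of $\loc$ type, and satisfy ($\heartsuit$) by Examples C), and Corollary \ref{cor35.1} gives continuity of $T$ in the topology of uniform convergence on compact sets.

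For part (a): suppose $C(X)$ (or $U^\loc(X)$, etc.) is order isomorphic to $\Lip^\loc(Y)$ or $\lip^\loc_\al(Y)$, and assume for contradiction $X$ is not discrete. Apply Proposition \ref{prop39}: it produces $f \in U_b(X) \subseteq C(X)$ with $Tf \notin \lip^\loc_\al(Y) \cup \Lip^\loc(Y)$, directly contradicting that the range of $T$ lies in one of those spaces. (For the $U^\loc(X)$ source space one first notes $U_b(X) \subseteq U^\loc(X)$; for the bounded target spaces one uses the bounded $f$ produced there and that $\lip^\loc_{\al,b}, \Lip^\loc_b$ sit inside $\lip^\loc_\al, \Lip^\loc$.) So $X$ must be discrete. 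For part (b): first invoke part (a) (or the same Proposition \ref{prop39} argument) to get that $X$ is discrete; then use Proposition \ref{prop36.1} on the ``bounded $\sim$ unbounded'' pairs to force $X$ compact, and a discrete compact metric space is finite. For the pair $C(X) \sim \lip_\al(Y)$: here $Y$ is complete, $\lip_\al(Y) \subseteq \Lip^\loc_\al(Y)$-type regularity forces $X$ discrete by the Proposition \ref{prop39} argument again, and then since $\lip_\al(Y)$ consists (after the standard compactification reduction, cf.\ Proposition \ref{prop18}) of functions that are genuinely uniformly small-Lipschitz, one shows $Y$ must be bounded and $X$ compact, hence finite; I expect to lean on the machinery developed around Theorem \ref{thm5.6} and Proposition \ref{prop5.12} to pin down that a noncompact discrete $X$ cannot be matched by $\lip_\al(Y)$.

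For part (c): suppose $\Lip_b(X)$ (or $\lip_{\al,b}(X)$) is order isomorphic to $C_b(Y)$ or $U^\loc_b(Y)$, and assume $X$ is not separated. Apply Corollary \ref{cor42.1} with the roles arranged so that $X$ here plays the role of the source: it gives, for every $\ep > 0$, a function $f \in \Lip^\loc(X) \setminus U(X)$ with $\|f\|_\infty \le \ep$ — but I actually want the reverse transfer, so the cleaner route is to run Proposition \ref{prop41.1} in the direction $T^{-1}: C_b(Y)$-type $\to \Lip_b(X)$-type: if $X$ were not proximally compact (which, since $\Lip_b(X)$-type spaces are complete, would follow from $X$ not being separated once one knows $X$ is discrete — and $\Lip_b$ on a non-discrete space is fine, so one needs a slightly different extraction), Proposition \ref{prop41.1} produces an $f \in C(X) \setminus U(X)$ landing in a ``$\loc$-Lipschitz'' space, contradicting that our space is genuinely $\Lip_b(X)$ (uniformly Lipschitz, hence uniformly continuous). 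More directly: a non-separated $X$ carries two sequences with $0 < d(x_n, x'_n) \to 0$; feed the $C_b(Y)$ or $U^\loc_b(Y)$ side a bounded function that, pulled back through the representation $\Phi$, has $T^{-1}h(x_n) - T^{-1}h(x'_n)$ bounded below — achievable because $C_b(Y)$/$U^\loc_b(Y)$ contains functions separating the images of these sequences on the scale $c_n$ — so $T^{-1}h \notin U(X) \supseteq \Lip_b(X), \lip_{\al,b}(X)$, a contradiction. Hence $X$ is separated.

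The main obstacle I anticipate is part (b), specifically the pair $C(X) \sim \lip_\al(Y)$ and more generally separating ``$X$ finite'' from ``$X$ discrete but infinite'': discreteness follows cheaply from Proposition \ref{prop39}, but ruling out an infinite discrete $X$ requires genuinely exploiting that the target space on a complete $Y$ cannot be simultaneously little-Lipschitz-regular \emph{and} rich enough to realize an infinite discrete homeomorphic partner — this is where the ($\spadesuit$) condition and the compactification argument of Proposition \ref{prop18} (which shows $\hat Y$ is induced by a near vector lattice $F(Y)$ with the right properties) have to be combined with the observation that $\lip_\al$ of a bounded space, pushed to the compactification, forces a compactness/finiteness conclusion on the matched space. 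I would isolate this as a lemma of the form ``if $\lip_\al(Y) \sim A(X)$ with $A(X) \supseteq C(X)$-type and $X$ discrete, then $X$ is finite,'' proved by the same unboundedness trick as Proposition \ref{prop31}/\ref{prop36.1} applied after the $d'$-rescaling of Section \ref{sec5}.
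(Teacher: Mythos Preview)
Your plan for parts (a) and most of (b) tracks the paper closely, but you should make explicit that Proposition \ref{prop34} is the first step in \emph{every} case: it extends the given order isomorphism to an order isomorphism $S: C(X) \to C(Y)$, and only then does Proposition \ref{prop39} apply (since that proposition is stated for order isomorphisms $C(X) \to C(Y)$, not for the original $T$). This extension step is also what resolves your confusion in (c) about establishing discreteness: applied to $T^{-1}$ (and noting $\Lip_b(X), \lip_{\al,b}(X) \subseteq \Lip^\loc(X) \cup \lip^\loc_\al(X)$), Proposition \ref{prop39} shows $Y$ is discrete, hence so is the homeomorphic $X$; then Proposition \ref{prop38.1} gives ``discrete and not separated $\Rightarrow$ not proximally compact,'' and Proposition \ref{prop41.1} (applied to the extended $S$, using $S(C_b(X)) \subseteq C_b(Y)$) finishes exactly as you outline.

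The genuine gap is your handling of $C(X), U^\loc(X) \sim \lip_\al(Y)$ in (b). You correctly flag it as the main obstacle but propose to attack it via the $d'$-rescaling of Section \ref{sec5} and compactification; this is a substantial detour and it is not clear how you would make it land. The paper's argument is direct and elementary. Once $X$ is discrete, $C(X) = U^\loc(X)$, so assume $T: C(X) \to \lip_\al(Y)$ is the order isomorphism. Extend to $S: C(X) \to C(Y)$ via Proposition \ref{prop34}. Since $S$ agrees with $T$ on all of $C(X)$, and $T$ is onto $\lip_\al(Y)$ while $S$ is onto $C(Y)$, you get $C(Y) = \lip_\al(Y)$. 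Now $Y$ is discrete (from the opening argument), so if $Y$ were infinite, pick distinct $(y_n)$ and define $g \in C(Y)$ by $g(y_n) = n\,d(y_n,y_1)^\al$: then $|g(y_n)-g(y_1)|/d(y_n,y_1)^\al = n$, so $g \notin \lip_\al(Y)$, contradicting $C(Y) = \lip_\al(Y)$. Hence $Y$ is finite, and so is $X$. No rescaling, no compactification.
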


\begin{proof}
In all the cases, by Proposition \ref{prop34}, any order isomorphism as given above extends to an order isomorphism from $C(X)$ onto $C(Y)$.
In view of Proposition \ref{prop39}, the existence of any one of the given order isomorphisms implies that $X$ and $Y$  are discrete.  (For (c), apply the proposition to the map $T^{-1}$.)
In particular, this proves (a), since the relevant spaces coincide if we take $Y = X$ when $X$ is discrete.

\noindent (b)  In addition, with any of the order isomorphisms given in the first two lines of (b), we may conclude from Proposition \ref{prop31} that either $X$ or $Y$ is compact.  Hence both are compact.  A compact discrete space is necessarily finite.  

Since $X$ is discrete, $C(X) = U^\loc(X)$.  Thus, if either of the order isomorphisms in the last line of (b) holds, $C(X)\sim \lip_\al(Y)$.  By Proposition \ref{prop34},  the given order isomorphism extends to an order isomorphism from $C(X)$ onto $C(Y)$.   Therefore,  $C(Y) = \lip_\al(Y)$.
Since $Y$ is discrete , if $(y_n)$ is an infinite sequence of distinct points in $Y$, there is a function $g\in C(Y)$ such that $g(y_n) = nd(y_n,y_1)^{\vp}$ for all $n$. But then $g \notin\lip_\al(Y)$, contradicting the above.   Hence $Y$ is finite and therefore so is $X$.

Of course, if $X$ is a finite set, then all the given order isomorphisms are trivially true if we take $Y = X$.

\noindent(c) Suppose that any of the given order isomorphisms hold, label it as $T$.  We may assume that $T0 = 0$.  
Use Proposition \ref{prop34} to extend $T$ to an order isomorphism $S: C(X) \to C(Y)$ such that $S(C_b(X)) \subseteq C_b(Y)$.
Since $Y$ is discrete,
it follows that $U^\loc_b(Y) = C_b(Y)$.
If $X$ is not separated, by Proposition \ref{prop38.1}, it is not proximally compact.  It follows from Proposition  \ref{prop41.1} that there exists $g\in C_b(Y)$ such that $S^{-1}g\notin U(X)$.
Hence $T^{-1}g= S^{-1}g\notin \Lip_b(X)$ or $\lip_{\al,b}(X)$.  This contradicts the choice of $T$.
Conversely, if $X$ is separated, we choose $Y = X$ and all the spaces are equal.
\end{proof}

\begin{lem}\label{lem45}
If $X$ is proximally compact, then $C(X)= U(X)= U^\loc(X)$, $C_b(X)= U_b(X) = U^\loc_b(X)$, $\Lip_b(X) = \Lip^\loc_b(X)$, and $\lip_{\al,b}(X) = \lip^\loc_{\al,b}(X)$.
\end{lem}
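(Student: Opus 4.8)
The plan is to prove each of the four equalities by showing that a function lying in the larger (``local'' or merely continuous) space automatically lies in the smaller (``global uniform'') space, using proximal compactness to exclude the only mechanism by which such a containment could fail.

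First I would show $C(X)=U(X)$. Suppose $f\in C(X)\setminus U(X)$. Then there are $\ep_0>0$ and sequences $(x_n),(x_n')$ in $X$ with $d(x_n,x_n')\to 0$ but $|f(x_n)-f(x_n')|\geq \ep_0$ for all $n$. By proximal compactness, after passing to a subsequence either $(x_n)$ converges, say to $x_0$, or $(x_n)\subseteq X_\delta$ for some $\delta>0$. In the first case $x_n'\to x_0$ as well (since $d(x_n,x_n')\to 0$), so continuity of $f$ at $x_0$ forces $|f(x_n)-f(x_n')|\to 0$, a contradiction. In the second case $d(x_n,x_n')<\delta$ for large $n$ together with $x_n\in X_\delta$ forces $x_n'=x_n$, again contradicting $|f(x_n)-f(x_n')|\geq \ep_0$. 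Hence $C(X)=U(X)$; since trivially $U(X)\subseteq U^\loc(X)\subseteq C(X)$ (the last inclusion holds because $A^\loc(X)$ is by definition a subset of $C(X)$), all three coincide. Intersecting with the bounded functions gives $C_b(X)=U_b(X)=U^\loc_b(X)$.

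Next I would treat $\Lip_b(X)=\Lip^\loc_b(X)$; only the inclusion $\supseteq$ needs proof. Take $f\in \Lip^\loc_b(X)\setminus \Lip_b(X)$. Since $f$ is bounded but not Lipschitz, there are $x_n\neq x_n'$ with $|f(x_n)-f(x_n')|/d(x_n,x_n')\to\infty$; boundedness of $f$ then forces $d(x_n,x_n')\to 0$ along a subsequence (otherwise infinitely many ratios are $\leq 2\|f\|_\infty/c$ for some $c>0$). Apply proximal compactness to $(x_n)$: if $(x_n)\subseteq X_\delta$ then $x_n'=x_n$ for large $n$, impossible; if $x_n\to x_0$, then $x_n'\to x_0$ and, choosing an open $U\ni x_0$ and $g\in\Lip(X)$ with $f=g$ on $U$, the ratios are eventually $\leq L(g)$, again impossible. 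So $\Lip_b(X)=\Lip^\loc_b(X)$.

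Finally, for $\lip_{\al,b}(X)=\lip^\loc_{\al,b}(X)$, I would first observe that $(X,d^\al)$ is again proximally compact: $d^\al$-convergence coincides with $d$-convergence, and the $\ep$-interior $\{x:d^\al(x,X\setminus\{x\})>\ep\}$ with respect to $d^\al$ is just $X_{\ep^{1/\al}}$ with respect to $d$. Hence the previous paragraph applied to $d^\al$ gives $\Lip_{\al,b}(X)=\Lip^\loc_{\al,b}(X)$, so any $f\in\lip^\loc_{\al,b}(X)\subseteq \Lip^\loc_{\al,b}(X)$ already lies in $\Lip(X,d^\al)$. It then remains to verify the little-Lipschitz condition $|f(x)-f(y)|/d(x,y)^\al\to 0$ as $d(x,y)\to 0$; if this failed, one would obtain $x_n\neq x_n'$ with $d(x_n,x_n')\to 0$ and $|f(x_n)-f(x_n')|/d(x_n,x_n')^\al\geq \ep_0$, and the same proximal-compactness dichotomy (a local representative $g\in\lip_\al(X)$ near the limit point, or $x_n'=x_n$ forced) yields a contradiction, so $f\in\lip_\al(X)$ and, being bounded, $f\in\lip_{\al,b}(X)$. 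The one mild subtlety, and the step I expect to need the most care, is precisely this $\lip_\al$ case: one must first secure membership in $\Lip(X,d^\al)$ via the $\Lip$-case for the metric $d^\al$ before separately disposing of the vanishing-quotient condition; every other part of the argument is the same short dichotomy repeated.
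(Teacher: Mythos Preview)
Your proof is correct and follows essentially the same approach as the paper: extract sequences witnessing the failure of the global condition, use proximal compactness (the paper leaves implicit your observation that the $X_\delta$-branch is vacuous because $d(x_n,x_n')\to 0$ forces $x_n'=x_n$) to pass to a convergent subsequence, and contradict the local hypothesis at the limit point. Your treatment of $\lip_{\al,b}$ is slightly more modular---first securing the global $d^\al$-Lipschitz bound by applying the $\Lip_b$ case to the proximally compact space $(X,d^\al)$, then checking the vanishing-quotient condition separately---whereas the paper dispatches both the Lipschitz-constant failure and the little-Lipschitz failure in a single argument; both routes are valid.
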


\begin{proof}
Assume  that $X$ is proximally compact.  
Suppose that $f\in C(X)\bs U(X)$. There are sequences $(x_n)$ and $(x'_n)$
in $X$ and $\ep > 0$ such that $d(x_n,x_n') \to 0$ and $|f(x_n)-f(x'_n)| > \ep$ for all $n$.
By the proximal compactness of $X$, $(x_n)$ has a subsequence $(x_{n_k})$ convergent to some $x_0\in X$.  Then $(x'_{n_k})$ converges to $x_0$ as well.
By continuity, $\lim (f(x_{n_k}) - f(x_{n_k}')) = 0$, a contradiction.  
Therefore, $C(X) = U(X)$ and $C_b(X) = U_b(X)$.
We also have $U(X) \subseteq U^\loc(X) \subseteq C(X)$.  Hence $U(X) = U^\loc(X)$ as well.  Similarly, $U_b(X) = U^\loc_b(X)$.

Obviously, $\lip_{\al,b}(X) \subseteq \lip^\loc_{\al,b}(X)$. Suppose, if possible, that there exists $f\in \lip^\loc_{\al,b}(X)\bs \lip_{\al,b}(X)$.  
There are sequences $(x_n)$ and $(x'_n)$ in $X$ such that either $|f(x_n) - f(x_n')| > n d(x_n,x'_n)^\al$
or $\lim d(x_n,x'_n) = 0$ and $\lim |f(x_n)-f(x'_n)|/d(x_n,x'_n)^\al \neq 0$.
Since $f$ is bounded, $0< d(x_n,x'_n) \to 0$ even in the first case.
Take a subsequence $(x_{n_k})$ of $(x_n)$ that converges to some $x_0$.  Then $(x'_{n_k})$ also converges to $x_0$.
Since $f \in \lip^\loc_{\al,b}(X)$, there exists an open neighborhood $U$ of $x_0$ such that $f$ is Lipschitz with respect to $d^\al$ on $U$, and 
\[ \lim_{\substack{d(x,y)\to 0\\x,y \in U}}\frac{|f(x)-f(y)|}{d(x,y)^\al} = 0.\]
This is clearly impossible since $x_{n_k},x'_{n_k}\in U$ for all sufficiently large $k$.
The proof that $\Lip_b(X) = \Lip^\loc_b(X)$ is similar.
\end{proof}

\begin{thm}\label{thm43}
\begin{enumerate}
\item There exists $Y$ such that one of the following holds  if and only if $X$ is proximally compact.
\begin{align*}
U(X) \sim C(Y),\quad &U_b(X)\sim C_b(Y),\\ U(X)\sim U^\loc(Y),\quad & U_b(X)\sim U^\loc_b(Y).
\end{align*}
\item There exists $Y$ such that either $U^\loc(X) \sim C(Y)$ or $U^\loc_b(X)\sim C_b(Y)$ if and only if $X$ is locally proximally compact.
\item There exists $Y$ such that one of the following holds if and only if $X$ is  compact.  
\[ U(X) \sim C_b(Y) \text { or }U^\loc_b(Y),\ \lip_\al(X) \sim \lip^\loc_\al(Y).\] 
\item There exists $Y$ such that either $U(X) \sim \Lip^\loc_b(Y)$ or $\lip^\loc_{\al,b}(Y)$ if and only if $X$ is a finite set.
\item There exists $Y$ such that one of the following holds if and only if $X$ is separated.
\[ U(X)\sim \Lip^\loc(Y) \text{ or } \lip^\loc_\al(Y), U_b(X)\sim \Lip^\loc_b(Y) \text{ or } \lip^\loc_{\al,b}(Y).\]
\end{enumerate}
\end{thm}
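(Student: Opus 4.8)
The plan is to prove all five parts by one common scheme: for each ``if'' direction take $Y=X$ and check that the two spaces literally coincide; for each ``only if'' direction, lift the given order isomorphism to one between the full $C$\,-spaces, deduce an equality of function spaces \emph{on $X$ alone}, and convert that equality into the asserted metric property via a converse to Lemma \ref{lem45}.

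For the ``if'' directions: if $X$ is compact then $C(X)=C_b(X)=U(X)=U^\loc(X)=U^\loc_b(X)$ and, by Lemma \ref{lem45}, $\lip_\al(X)=\lip_{\al,b}(X)=\lip^\loc_{\al,b}(X)=\lip^\loc_\al(X)$; if $X$ is proximally compact, Lemma \ref{lem45} gives $U(X)=C(X)=U^\loc(X)$ and the bounded analogue; if $X$ is locally proximally compact, apply Lemma \ref{lem45} to a proximally compact closed ball $\ol{B(x_0,r)}$ at each $x_0$, so that the restriction of any $f\in C(X)$ to $\ol{B(x_0,r)}$ is bounded and uniformly continuous, hence extends to a bounded uniformly continuous function on $X$, whence $f\in U^\loc(X)$; thus $U^\loc(X)=C(X)$ and, similarly, $U^\loc_b(X)=C_b(X)$; if $X$ is separated, every real function on $X$ is uniformly continuous and locally Lipschitz, so $U(X)=C(X)=\Lip^\loc(X)=\lip^\loc_\al(X)$ and likewise for the bounded versions; if $X$ is finite all spaces equal $\R^X$. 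This settles all five ``if'' implications.

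For the converses, let $T$ be one of the listed order isomorphisms; after translating we may assume $T0=0$. Using the homeomorphism $\vp\colon X\to Y$ of Theorem \ref{thm27} together with Proposition \ref{prop34} (applied with $G=X$, which is legitimate since all the spaces occurring here satisfy condition $(\heartsuit)$ by Examples C), extend $T$ to an order isomorphism $S\colon C(X)\to C(Y)$ which agrees with $T$ on the domain of $T$, restricts (Corollary \ref{cor35}) to order isomorphisms between the matching local and bounded-local spaces, and satisfies $S(C_b(X))\subseteq C_b(Y)$ whenever the target of $T$ consists of bounded functions. Since $T$ maps its domain $A(X)$ \emph{onto} its target $B(Y)$, comparing $S(A(X))=B(Y)$ with these identities forces an equality of function spaces on $X$: for example $U^\loc(X)=C(X)$ in (b) (and $U^\loc_b(X)=C_b(X)$ in the bounded variant), $U(X)=\Lip^\loc(X)$ or $U(X)=\lip^\loc_\al(X)$ (or their bounded analogues) in (e), $\lip_\al(X)=\lip^\loc_\al(X)$ in the last case of (c), and $C_b(X)\subseteq U(X)$ in the remaining cases of (c) and in (d); in those last cases one also notes that $U(X)$ must have a countable cofinal subset, because $C_b(Y)$, $U^\loc_b(Y)$ and $\Lip^\loc_b(Y)$ all have the constant functions $1,2,3,\dots$ as a cofinal set, an invariant of order isomorphism, and in (d) that $X$ must in addition be discrete by Proposition \ref{prop39}.

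It remains to turn each such equality into a metric property --- the converse to Lemma \ref{lem45}. If $U^\loc(X)=C(X)$ but $X$ fails local proximal compactness at $x_0$, then $\ol{B(x_0,r)}$ is not proximally compact for any $r$; recursively picking nested shrinking scales $r_n\downarrow 0$ and bad pairs $p^n_k,q^n_k$ in $\ol{B(x_0,r_n)}$ with $0<d(p^n_k,q^n_k)\to 0$ as $k\to\infty$, arranged so that, for each $n$, the $p^n_k$ lie in an annulus $\{s_n\le d(\cdot,x_0)\le r_n\}$ bounded away from $x_0$, and attaching to each $p^n_k$ a disjointly supported bump of \emph{shrinking} height $\ep_n\downarrow 0$, yields $f\in C(X)$ (continuity at $x_0$ is preserved because $\ep_n\to 0$) that is not uniformly continuous on any ball about $x_0$, so $f\notin U^\loc(X)$ --- a contradiction; the bounded variant is identical. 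The implications ``$U(X)=\Lip^\loc(X)$, or one of its variants, $\Rightarrow X$ separated'' follow from Proposition \ref{prop39} (non-discreteness produces $f\in U_b(X)\setminus\Lip^\loc(X)$) and, for discrete non-separated $X$, from Proposition \ref{prop38.1} plus Corollary \ref{cor42.1} (which give $h\in\Lip^\loc_b(X)\setminus U(X)$). For ``$X$ compact'' in (c) and (d): $C_b(X)\subseteq U(X)$ together with Corollary \ref{cor42.1} rules out $X$ failing proximal compactness; a proximally compact but not totally bounded $X$ contains, applying proximal compactness to an $\ep$-separated sequence, an infinite \emph{uniformly isolated} $\ep$-separated set $\{x_n\}$, on which $f(x_n)=\max\{n,h_1(x_n)+1,\dots,h_n(x_n)+1\}$ (and $0$ elsewhere) is uniformly continuous yet dominated by no member $h_n$ of a prescribed countable family, contradicting the countable cofinal subset of $U(X)$; and a proximally compact space is complete, so $X$ is complete and totally bounded, hence compact (and then discrete, hence finite, in (d)). The case $\lip_\al(X)=\lip^\loc_\al(X)$ is handled the same way, using a H\"older analogue of Corollary \ref{cor42.1}.

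I expect the main obstacle to be exactly these converse-to-Lemma-\ref{lem45} constructions: a metric space can fail (local) proximal compactness in three essentially different ways --- genuine incompleteness, non-total-boundedness with a cluster onto a separated set, and infinitely many almost-isolated points --- and the witnessing continuous function must be tailored to each mode, with careful bookkeeping of which functions are bounded, uniformly continuous, locally Lipschitz, or little-H\"older. Much of this is already packaged in Propositions \ref{prop39}, \ref{prop41.1} and Corollary \ref{cor42.1}, so the remaining work is to assemble them correctly, to supply the shrinking-amplitude patching near a bad point, and to invoke the countable-cofinal-subset dichotomy in the bounded-target cases.
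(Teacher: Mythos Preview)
Your overall scheme---extend via Proposition \ref{prop34}/Corollary \ref{cor35}, force a function-space equality on $X$, then convert to a metric property---is exactly the paper's scheme, and most of your steps match. Two places deserve comment.

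First, in part (e) you assert that the extension machinery gives $U(X)=\Lip^\loc(X)$. It does not: with $A(X)=U(X)$ and $A(Y)=\Lip^\loc(Y)$, Corollary \ref{cor35} yields $S(U^\loc(X))=\Lip^\loc(Y)=S(U(X))$, so only $U(X)=U^\loc(X)$. The paper sidesteps this by a bootstrap: from $U^\loc(X)\sim\Lip^\loc(Y)$ one gets $X$ (hence $Y$) discrete by Theorem \ref{thm41}(a); then $\Lip^\loc(Y)=C(Y)$, so $U(X)\sim C(Y)$, and part (a) gives proximal compactness; discrete plus proximally compact is separated. Your ingredients (Proposition \ref{prop39} for discreteness, Corollary \ref{cor42.1} for proximal compactness) are correct, but the logical chain should go through this reduction rather than through the incorrect equality. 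The same bootstrap---use discreteness of $Y$ to collapse the target space to $C_b(Y)$, then invoke an earlier part---is how the paper handles (d).

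Second, for $\lip_\al(X)\sim\lip^\loc_\al(Y)$ in (c), calling the needed construction ``a H\"older analogue of Corollary \ref{cor42.1}'' is misleading. Corollary \ref{cor42.1} exploits failure of \emph{proximal} compactness; here you must show that for complete \emph{non-compact} $X$ one has $\lip^\loc_\al(X)\neq\lip_\al(X)$. The paper does this directly: a complete non-compact $X$ contains a separated sequence $(x_n)$, and the disjoint sum of bumps $a_n h_n$ with $a_n=n\,d(x_n,x_1)^\al$ lies in $\lip^\loc_\al(X)$ but violates the global $d^\al$-Lipschitz bound. This is not a proximal-compactness argument.

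Your countable-cofinal-set argument for the remaining cases of (c) is a legitimate alternative to the paper's route (which is: proximally compact $\Rightarrow U(X)=C(X)$ by Lemma \ref{lem45}, then $C(X)\sim C_b(Y)$ or $U^\loc_b(Y)$, then compact by Corollary \ref{cor36.2}(a)). Note, though, that for $U(X)\sim U^\loc_b(Y)$ you do not get $C_b(X)\subseteq U(X)$ directly from $S(C_b(X))\subseteq C_b(Y)$, since $C_b(Y)\not\subseteq U^\loc_b(Y)$ in general; what Corollary \ref{cor35} gives is $U^\loc_b(X)\subseteq U(X)$, which suffices for Corollary \ref{cor42.1} and for the rest of your argument.
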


\begin{proof}
(a) If $T: U(X) \to C(Y)$ is an order isomorphism, by Proposition \ref{prop34}, it can be extended to an order isomorphism from $C(X)$ onto $C(Y)$.  Thus $U(X) = C(X)$.  
Similarly, using Proposition \ref{prop34}  or Corollary \ref{cor35},  the other three order isomorphisms hold if and only if 
\[ U_b(X) = C_b(X),\  U(X)= U^\loc(X),\ U_b(X)= U^\loc_b(X)\]
respectively.  By Corollary \ref{cor42.1}, $X$ is proximally compact.
The converse follows from Lemma \ref{lem45}.

\noindent(b) As in case (a), $U^\loc(X) \sim C(Y)$ implies that $U^\loc(X) = C(X)$.
Suppose that there exists $x_0\in X$ such that $\ol{B(x_0,r)}$ is not proximally compact for any $r > 0$.
Let $r_1 = 1$ and apply Corollary \ref{cor42.1} to the metric space $\ol{B(x_0,r_1)}$.
We find $f_1 \in C(\ol{B(x_0,r_1)})\bs U(\ol{B(x_0,r_1)})$ with $\|f_1\|_\infty\leq 1$.
Assume that $r_n > 0$ and $f_n \in C(\ol{B(x_0,r_n)})\bs U(\ol{B(x_0,r_n)})$ have been chosen with $\|f_n\|_\infty \leq 1/n$.
There exists $r_{n+1}>0$ such that $2r_{n+1} < r_n$ and that $f_n$ is not uniformly continuous on $A_n = \{x\in \ol{B(x_0,r_n)}: d(x,x_0) \geq 2r_{n+1}\}$.
Finally, choose $f_{n+1} \in C(\ol{B(x_0,r_{n+1})})\bs U(\ol{B(x_0,r_{n+1})})$ such that $\|f_{n+1}\|_\infty \leq (n+1)^{-1}$.
Denote the restriction of $f_n$ to $A_n$ by $g_n$.
Define $g: \cup A_n \cup \{x_0\} \to \R$ by $g = f_n$ on $A_n$ and $g(x_0) = 0$.
Then $g$ is continuous. By the Tietze Extension Theorem, $g$ extends to a (bounded) continuous function on $X$.
Clearly, $g$ is not uniformly continuous on any neighborhood of $x_0$.  Thus $g\notin U^\loc(X)$.
This completes the proof that $X$ is locally proximally compact if $U^\loc(X) \sim C(Y)$. 
Conversely, suppose that $X$ is locally proximally compact and that $f \in C(X)$.  For any $x_0 \in X$, choose $r >0$ such that $\ol{B(x_0,r)}$ is proximally compact.
By Lemma \ref{lem45}, $f$ is uniformly continuous on $\ol{B(x_0,r)}$.  Reduce $r$ if necessary to assume that $f$ is bounded on $\ol{B(x_0,r)}$.
There exists a uniformly continuous function $g:\ol{B(x_0,r)}\to [0,1]$ such that $g =1$ on $B(x_0,r/3)$ and $g= 0$ outside $B(x_0,2r/3)$.
Then $fg$ is uniformly continuous on $\ol{B(x_0,r)}$ and equals $0$ outside $B(x_0, 2r/3)$.
We may extend it to a function $h\in U(X)$ by defining $h$ to be $0$ outside $\ol{B(x_0,r)}$.
Observe that $h = f$ on $B(x_0,r/3)$. This proves that $f\in U^\loc(X)$.

Similarly, $U^\loc_b(X)\sim C_b(Y)$ if and only if $U^\loc_b(X) = C_b(X)$ if and only if $X$ is locally proximal compact.

\noindent(c) Suppose that $T$ is an order isomorphism from $U(X)$ onto $C_b(Y)$ or $U^\loc_b(Y)$.  
We may assume that $T0 = 0$.  
By Proposition \ref{prop34}, $T$ may be extended to an order isomorphism $S:C(X)\to C(Y)$ such that  $S(C_b(X)) \subseteq C_b(Y)$.
By Proposition \ref{prop41.1}, $X$ is proximally compact.
It follows from Lemma \ref{lem45} that $U(X) = C(X)$.
Then we have $C(X) = U(X) \sim C_b(Y)$ or $U^\loc_b(Y)$.
By Corollary \ref{cor36.2}(a), $X$ is compact.

Next, suppose that $T: \lip_\al(X)\to \lip_{\al}^\loc(Y)$ is an order isomorphism.
By Corollary \ref{cor35}, $T$ can be extended to an order isomorphism from $\lip_\al^\loc(X)$ onto $\lip^\loc_\al(Y)$. It follows that $\lip_\al(X) = \lip_\al^\loc(X)$.
Suppose that $X$ is not compact. Since $X$ is complete by assumption, $X$ contains a separated sequence $(x_n)$.
Choose $r> 0$ such that $d(x_m,x_n) > 2r$ if $m\neq n$.
For each $n$, there exists $h_n\in \lip_\al(X)$ such that $h_n(x_n) = 1$ and $h_n(x) = 0$ if $x\notin B(x_n,r)$.
Take $a_n = nd(x_n,x_1)^\al$ and let $h$ be the pointwise sum $\sum a_nh_n$.
It is clear that $h \in \lip^\loc_\al(X)$.  However, $h \notin \lip_\al(X)$ since $h(x_n) - h(x_1) = nd(x_n,x_1)^\al$ for all $n$.
This contradicts the fact that $\lip_\al(X) = \lip_\al^\loc(X)$ and 
 shows that $X$ is compact if $\lip_\al(X) \sim \lip^\loc_\al(Y)$. 

Conversely, if $X$ is compact, then $U(X) = C_b(X)= U^\loc_b(X)$ and $\lip_\al(X) = \lip^\loc_\al(X)$.

\noindent(d) If $U(X)\sim \Lip^\loc_b(Y)$, respectively $\lip^\loc_{\al,b}(Y)$, then by Corollary \ref{cor35},
$U^\loc(X)$ $\sim \Lip^\loc(Y)$, respectively $\lip^\loc_\al(Y)$. It follows from Theorem \ref{thm41}(a) that $X$ is discrete. But then $Y$ is discrete since it is homeomorphic to $X$.  Therefore, $\Lip^\loc_b(Y) = \lip^\loc_{\al,b}(Y) = C_b(Y)$.  Hence $U(X) \sim C_b(Y)$.  By part (c), $X$ is compact.  Since $X$ is discrete and compact, it is finite.
The converse is trivial.

\noindent(e) If $U(X) \sim \Lip^\loc(Y)$, respectively $\lip^\loc_\al(Y)$, we also have $U^\loc(X)\sim \Lip^\loc(Y)$, respectively $\lip^\loc_\al(Y)$, by Corollary \ref{cor35}.
Thus $X$ is discrete by Theorem \ref{thm41}(a). Hence $Y$ is discrete.
But then $\Lip^\loc(Y)= \lip^\loc_\al(Y) = C(Y)$ and we have $U(X) \sim C(Y)$.   By (a), $X$ is proximally compact.
Thus $X$ is separated by  Proposition \ref{prop38.1}.
Similarly, $U_b(X)\sim \Lip^\loc_b(Y) \text{ or } \lip^\loc_{\al,b}(Y)$ implies that 
$U^\loc_b(X)\sim \Lip^\loc_b(Y) \text{ or } \lip^\loc_{\al,b}(Y)$
by Corollary \ref{cor35}, and thus $X$ and $Y$ are discrete by Theorem \ref{thm41}(a).
Therefore, $U_b(X) \sim C_b(Y)$ and it follows from (a) that $X$ is proximally compact.  As above, $X$ is separated since it is both discrete and proximally compact.
Conversely, if $X$ is separated, then $U(X) = \Lip^\loc(X) = \lip^\loc_\al(X) = C(X)$ and $U_b(X) = \Lip^\loc_b(X) = \lip^\loc_{\al,b}(X) = C_b(X)$.
\end{proof}

\begin{thm}\label{thm47}
There exists $Y$ such that $\Lip_b(X)\sim \Lip^\loc_b(Y)$ or $\lip_{\al,b}(X)\sim \lip^\loc_{\al,b}(Y)$ if and only if $X$ is proximally compact.
\end{thm}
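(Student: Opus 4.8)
The plan is to read off both directions from results already in hand, the genuine content of the forward implication being isolated in Corollary~\ref{cor42.1} together with the extension mechanism of Corollary~\ref{cor35}.

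\emph{Reverse implication.} If $X$ is proximally compact, then Lemma~\ref{lem45} gives $\Lip_b(X)=\Lip^\loc_b(X)$ and $\lip_{\al,b}(X)=\lip^\loc_{\al,b}(X)$, so taking $Y=X$ and $T$ the identity map provides the required order isomorphism in either case. Thus only the forward implication needs proof, and I would argue it by contradiction, handling the two alternatives in parallel.

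\emph{Forward implication, reduction.} Suppose $T\colon\Lip_b(X)\to\Lip^\loc_b(Y)$ (respectively $T\colon\lip_{\al,b}(X)\to\lip^\loc_{\al,b}(Y)$) is an order isomorphism while $X$ is not proximally compact. Under the standing conventions of this subsection, Examples B and C show that $\Lip_b(X)$ (resp.\ $\lip_{\al,b}(X)$) is a near vector lattice satisfying $(\spadesuit)$ and $(\heartsuit)$ — in the little Lipschitz case one uses that $\lip_\al(X)$ is uniformly separating for $0<\al<1$ so that Examples B(b) applies — and that $\Lip^\loc_b(Y)$ (resp.\ $\lip^\loc_{\al,b}(Y)$) is a near vector lattice of the form $A^\loc_b(Y)$ satisfying $(\heartsuit)$. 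Hence Theorem~\ref{thm27} supplies the associated homeomorphism $\vp\colon X\to Y$, and the hypotheses of Proposition~\ref{prop34} and Corollary~\ref{cor35} hold with $G=X$. Since every locally Lipschitz function agrees locally with a bounded Lipschitz function (truncate), one identifies the abstract space $A^\loc_b(X)$ with $\Lip^\loc_b(X)$ and $A^\loc_b(\vp(X))$ with $\Lip^\loc_b(Y)$ (similarly in the little Lipschitz case), and Corollary~\ref{cor35} then yields an order isomorphism $S\colon\Lip^\loc_b(X)\to\Lip^\loc_b(Y)$ (resp.\ $S\colon\lip^\loc_{\al,b}(X)\to\lip^\loc_{\al,b}(Y)$) with $S|_{\Lip_b(X)}=T$ (resp.\ $S|_{\lip_{\al,b}(X)}=T$).

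\emph{Forward implication, the contradiction.} Because $X$ is not proximally compact, Corollary~\ref{cor42.1} produces $f\in\Lip^\loc(X)$ with $\|f\|_\infty\le1$ that is not uniformly continuous. Every Lipschitz function is uniformly continuous, so $\Lip_b(X)\subseteq U(X)$ and (using $\lip_{\al,b}(X)\subseteq\Lip(X,d^\al)$ and $U(X,d^\al)=U(X,d)$) also $\lip_{\al,b}(X)\subseteq U(X)$; on the other hand $f\in\Lip^\loc_b(X)$, and since $f$ agrees locally with a bounded Lipschitz function, hence locally with a function in $\Lip_b(X)\subseteq\lip_{\al,b}(X)$ (the inclusion being valid for $0<\al<1$), also $f\in\lip^\loc_{\al,b}(X)$. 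Thus $f$ witnesses $\Lip_b(X)\subsetneq\Lip^\loc_b(X)$ and $\lip_{\al,b}(X)\subsetneq\lip^\loc_{\al,b}(X)$. But $T$ already maps $\Lip_b(X)$ (resp.\ $\lip_{\al,b}(X)$) onto the full target space, which equals $S(\Lip^\loc_b(X))$ (resp.\ $S(\lip^\loc_{\al,b}(X))$); since $S$ is injective this forces $\Lip_b(X)=\Lip^\loc_b(X)$ (resp.\ $\lip_{\al,b}(X)=\lip^\loc_{\al,b}(X)$), a contradiction. Hence $X$ is proximally compact, completing the proof.

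I do not expect a serious obstacle here: the delicate fact — that failure of proximal compactness yields a bounded, locally Lipschitz function which is not even uniformly continuous — is precisely Corollary~\ref{cor42.1}, and the rest is the routine transport of $T$ to the local spaces via Corollary~\ref{cor35}. The only points needing a little care are the identification of the ``$A^\loc_b$'' spaces with the concrete local Lipschitz spaces and, in the little Lipschitz case, the appeal to uniform separation (for Examples B(b)) and to $\Lip_b(X)\subseteq\lip_{\al,b}(X)$.
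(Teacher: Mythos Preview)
Your proof is correct and follows essentially the same route as the paper: extend $T$ to the local bounded spaces via Corollary~\ref{cor35}, deduce $\Lip_b(X)=\Lip^\loc_b(X)$ (respectively $\lip_{\al,b}(X)=\lip^\loc_{\al,b}(X)$) by injectivity of the extension, and contradict this with the non--uniformly-continuous witness from Corollary~\ref{cor42.1}; the converse is Lemma~\ref{lem45}. Your additional care in identifying the abstract $A^\loc_b$ spaces with $\Lip^\loc_b$ and $\lip^\loc_{\al,b}$ via truncation, and in justifying $\Lip_b(X)\subseteq\lip_{\al,b}(X)$, is welcome but not a departure from the paper's argument.
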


\begin{proof}
By Corollary \ref{cor35}, any order isomorphism from $\Lip_b(X)$ onto $\Lip^\loc_b(Y)$
can be extended to an order isomorphism from $\Lip^\loc_b(X)$ onto $\Lip^\loc_b(Y)$. Thus $\Lip_b(X) = \Lip^\loc_b(X)$.
Similarly, $\lip_{\al,b}(X)\sim \lip^\loc_{\al,b}(Y)$ implies that $\lip_{\al,b}(X)=  \lip^\loc_{b}(X)$.
If $X$ is not proximally compact, by Corollary \ref{cor42.1}, there exists $f\in \Lip^\loc_b(X) \subseteq \lip^\loc_{\al,b}(X)$ such that $f\notin U(X)$.
Thus $f\notin \Lip_b(X) \cup \lip_{\al,b}(X)$, contrary to the above.
The converse follows from Lemma \ref{lem45}.
\end{proof}

\begin{thm}\label{thm57}
\begin{enumerate}
\item There exists $Y$ such that one of the following holds if and only if $X$ is discrete.
\begin{align*}
\Lip^\loc(X) &\sim \lip^\loc_\al(Y),\\
\Lip^\loc_b(X) &\sim \lip_{\al,b}(Y) \text{ or } \lip^\loc_{\al,b}(Y).
\end{align*}
\item There exists $Y$ such that one of the following holds if and only if $X$ is a finite set.
\begin{align*}
\Lip^\loc(X)&\sim \lip_\al(Y), \lip_{\al,b}(Y) \text{ or } \lip^\loc_{\al,b}(Y), \\
\lip^\loc_\al(X) &\sim \Lip_b(Y) \text { or } \Lip^\loc_b(Y).
\end{align*}
\item There exists $Y$ such that $\Lip_b(X) \sim \lip^\loc_{\al,b}(Y)$ if and only if $X$ is separated.
\end{enumerate}
\end{thm}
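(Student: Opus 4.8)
The three parts follow one template. Suppose one of the displayed order isomorphisms $T$ exists. By the standing conventions and Examples B and C, the domain and codomain are near vector lattices each of the form $A^\loc$, $A^\loc_b$, or satisfying $(\spadesuit)$, and each satisfies $(\heartsuit)$, so Theorem \ref{thm27} gives a homeomorphism $\vp\colon X\to Y$, Theorem \ref{thm29} and Proposition \ref{prop34} extend $T$ to an order isomorphism $S\colon C(X)\to C(Y)$ with $Sf(y)=\Phi(y,f(\vp^{-1}(y)))$ for increasing homeomorphisms $\Phi(y,\cdot)$, and $S$ is continuous for uniform convergence on compact sets (Corollary \ref{cor35.1}); by Corollary \ref{cor35}, $S$ restricts to order isomorphisms between the corresponding $A^\loc$ versions, and between the $A^\loc_b$ versions when both spaces consist of bounded functions.

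My first step would be to reduce every forward implication to the single claim
\[(\ast)\qquad \Lip^\loc(X)\sim \lip^\loc_\al(Y)\ \Longrightarrow\ X\ \text{is discrete.}\]
In part (a), Corollary \ref{cor35} turns $\Lip^\loc_b(X)\sim\lip_{\al,b}(Y)$ and $\Lip^\loc_b(X)\sim\lip^\loc_{\al,b}(Y)$ into order isomorphisms between the $A^\loc$ versions, i.e.\ into $\Lip^\loc(X)\sim\lip^\loc_\al(Y)$, so $(\ast)$ yields $X$ discrete. In part (c), Corollary \ref{cor35} forces $\Lip_b(X)=\Lip^\loc_b(X)$ (both map onto $\lip^\loc_{\al,b}(Y)$), whence $X$ is discrete by $(\ast)$ as above; and if $X$ were not proximally compact Corollary \ref{cor42.1} would give $f\in\Lip^\loc_b(X)=\Lip_b(X)$ with $f\notin U(X)$, impossible since $\Lip_b(X)\subseteq U(X)$, so $X$ is proximally compact, hence separated by Proposition \ref{prop38.1}. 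In part (b): when the codomain is $\lip^\loc_{\al,b}(Y)$ or $\lip_{\al,b}(Y)$, Proposition \ref{prop31} first forces $X$ compact (domain $A^\loc$, codomain $A^\loc_b$, resp.\ $A_b$ with $(\spadesuit)$), so $\Lip^\loc(X)=\Lip(X)=\Lip_b(X)$ and part (c), resp.\ $(\ast)$, makes $X$ separated, resp.\ discrete, hence finite (compact plus separated or discrete); when the codomain is $\lip_\al(Y)$, Corollary \ref{cor35} forces $\lip_\al(Y)=\lip^\loc_\al(Y)$ (both equal $S(\Lip^\loc(X))$), which forces $Y$, hence $X$, compact by the argument in the proof of Theorem \ref{thm43}(c), and $(\ast)$ then gives $X$ discrete, hence finite; the second family in (b) is symmetric, applying the same reasoning to $T^{-1}$ (Proposition \ref{prop31} applying with $\Lip_b(Y)$ or $\Lip^\loc_b(Y)$ as the $A_b$ or $A^\loc_b$ side).

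The crux — and the main obstacle — is $(\ast)$. I would prove it by a construction in the spirit of Proposition \ref{prop39}, using genuine little-H\"older-$\al$ bumps. Assume $X$ is not discrete, pick a non-isolated $x_0$ and distinct $x_n\to x_0$, and put $y_n=\vp(x_n)\to y_0$ with $r_n:=d(y_n,y_0)$ decreasing to $0$ so fast that the disjointly supported tent functions $g_n:=nr_n^\al\,[1-d(\cdot,y_n)/w_n]^+$ (with $0<w_n<r_n/4$ chosen small) are Lipschitz, hence lie in $\lip_\al(Y)\subseteq\lip^\loc_\al(Y)$, and satisfy $\|g_n\|_\infty=nr_n^\al\to 0$. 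Since $g_n\to 0$ uniformly, continuity of $S^{-1}$ gives $S^{-1}g_n(x_n)\to S^{-1}0(x_0)$, and $S^{-1}0=T^{-1}0\in\Lip^\loc(X)$ is Lipschitz near $x_0$. Choosing the points $y_n$ recursively, so that the distances $d(x_n,x_0)$ dominate the displacements $|S^{-1}g_n(x_n)-S^{-1}0(x_n)|$, one can build $f\in\Lip^\loc(X)$ with $f(x_0)=S^{-1}0(x_0)$ and $f(x_n)\ge S^{-1}g_n(x_n)$ for all $n$. Then monotonicity of $\Phi(y_n,\cdot)$ gives
\[ Tf(y_n)=\Phi\big(y_n,f(x_n)\big)\ \ge\ \Phi\big(y_n,S^{-1}g_n(x_n)\big)=\big(TS^{-1}g_n\big)(y_n)=g_n(y_n)=nr_n^\al, \]
while $Tf(y_0)=\Phi(y_0,S^{-1}0(x_0))=0$, so $|Tf(y_n)-Tf(y_0)|/d(y_n,y_0)^\al=n\to\infty$, contradicting $Tf\in\lip^\loc_\al(Y)$. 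The one genuinely delicate point is arranging the interpolated $f$ to lie in $\Lip^\loc(X)$: this requires interleaving the choice of each $r_n$ (small enough both to keep $\|g_n\|_\infty$ small and, via continuity of $S^{-1}$, to keep the values $S^{-1}g_m(x_m)$ compatible with a fixed local Lipschitz modulus at $x_0$) with the selection of the $x_n$.

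For the converses I would take $Y=X$ and check that the relevant spaces coincide: if $X$ is discrete then $\Lip^\loc(X)=C(X)$, $\Lip^\loc_b(X)=C_b(X)$, $\lip^\loc_\al(X)=C(X)$, $\lip^\loc_{\al,b}(X)=C_b(X)$, which settles the discrete cases in (a); for $\Lip^\loc_b(X)\sim\lip_{\al,b}(Y)$ take instead $Y$ of cardinality $|X|$ with the $0$–$1$ metric, so $\lip_{\al,b}(Y)=\ell^\infty(Y)$ is linearly order isomorphic to $\ell^\infty(X)=C_b(X)$. If $X$ is finite, every space in (b) equals $C(X)=C_b(X)$. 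If $X$ is separated, every bounded function on $X$ is Lipschitz and trivially locally little-H\"older-$\al$, so $\Lip_b(X)=C_b(X)=\lip^\loc_{\al,b}(X)$, giving (c). The only non-routine ingredient throughout is the construction in $(\ast)$.
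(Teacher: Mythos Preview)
Your reduction of all forward implications to the single claim $(\ast)$ is sound, and your treatment of parts (b) and (c) from $(\ast)$ is more convoluted than necessary but essentially workable. The real problem is the proof of $(\ast)$ itself: the function $f\in\Lip^\loc(X)$ you describe cannot exist. Assume $T0=0$. If $f\in\Lip^\loc(X)$ with $f(x_0)=0$, then on some neighborhood of $x_0$ we have $|f(x)|\le L\,d(x,x_0)$ for a fixed $L$. Your requirement $f(x_n)\ge S^{-1}g_n(x_n)=\Psi(x_n,nr_n^\al)$ then forces
\[
n\,r_n^\al \;\le\; \Phi\bigl(y_n,\,L\,d(x_n,x_0)\bigr) \;=\; T(Lf_0)(y_n),
\]
where $f_0=d(\cdot,x_0)\in\Lip^\loc(X)$. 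But $T(Lf_0)\in\lip^\loc_\al(Y)$ vanishes at $y_0$, so $T(Lf_0)(y_n)/r_n^\al\to 0$ along \emph{any} sequence $y_n\to y_0$; in particular $T(Lf_0)(y_n)<r_n^\al$ for large $n$, contradicting the displayed inequality. No recursive choice of $x_n$ can circumvent this, because the obstruction is exactly the hypothesis you are trying to contradict. Your construction is modeled on Proposition~\ref{prop39}, but there the interpolant only needs to lie in $U_b(X)$, which imposes no quantitative constraint at $x_0$; a Lipschitz bound is too rigid.

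The paper's argument runs in the opposite direction. Fix $f(x)=d(x,x_0)\wedge C$. For each $k$, $T(kf)\in\lip^\loc_\al(Y)$ vanishes at $y_0$, so $T(kf)(y_n)/r_n^\al\to 0$; diagonalize to pick $n_k$ with $r_{n_{k+1}}<r_{n_k}/4$ and $T(kf)(y_{n_k})/r_{n_k}^\al\to 0$. These values can then be interpolated by an explicit $g\in\lip_{\al,b}(Y)$ with $g(y_{n_k})=T(kf)(y_{n_k})$ and $g(y_0)=0$. Now $T^{-1}g(x_{n_k})=kf(x_{n_k})=k\,d(x_{n_k},x_0)$ and $T^{-1}g(x_0)=0$, so $T^{-1}g$ is not locally Lipschitz at $x_0$. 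The point is that the little-$\al$ decay of each $T(kf)$ is what makes the interpolation in $\lip_{\al,b}(Y)$ possible; the contradiction is then produced on the $\Lip^\loc$ side, where there is room.
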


\begin{proof}
(a) Suppose that $X$ is not discrete.  
There is a sequence of distinct points $(x_n)$ in $X$ convergent to some $x_0 \in X$, $x_0 \neq x_n$ for all $n$.
Let $T$ be one of the indicated order isomorphisms.  We may assume that $T0=0$. Let $\vp :X\to Y$ be the homeomorphism associated with $T$. Set $y_n =\vp(x_n)$, $y_0 = \vp(x_0)$.  There exists $C<\infty$ such that $d(x_n,x_0) \leq C$ for all $n$.
The function $f(x) = d(x,x_0)\wedge C$ belongs to $\Lip^\loc_b(X)$ and $f(x_0) = 0$.
For each $k\in \N$, $T(kf) \in \lip^\loc_\al(Y)$, and $T(kf)(y_0) = 0$.
Since $T(kf)$ agrees with a function in $\lip_\al(Y)$ on a neighborhood of $y_0$ and $(y_n)$ converges to $y_0$, $T(kf)(y_n)/d(y_n,y_0)^\al \to 0$ as $n \to \infty$.
Choose $n_1 < n_2 < \cdots$ such that $d(y_{n_{k+1}}, y_0) < \frac{1}{4}d(y_{n_k},y_0)$ and 
\[ \frac{T(kf)(y_{n_k})}{d(y_{n_k},y_0)^\al} \to 0 \text{ as $k \to \infty$}.\]
Then there exists $g\in \lip_{\al,b}(Y)$ such that $g(y_{n_k}) = T(kf)(y_{n_k})$ for all $k$ and $g(y_0) = 0$.
For example, take 
\[ g(y) = \begin{cases}
            T(kf)(y_{n_k})(1 - \frac{2d(y,y_{n_k})}{d(y_{n_k},y_0)})&\text{if $d(y,y_{n_k}) < \frac{d(y_{n_k},y_0) }{2}$ for some $k$,}\\
            0 &\text{otherwise.}
           \end{cases}
\]
Since $T^{-1}g(x_{n_k}) = kf(x_{n_k})= k d(x_{n_k},x_0)$ and $T^{-1}g(x_0) = 0$, $T^{-1}g$ is not Lipschitz on any neighborhood of $x_0$, contrary to the assumption.

Conversely, if $X$ is discrete, then $\Lip^\loc(X) = C(X) = \lip^\loc_\al(X)$ and $\Lip^\loc_b(X) = C_b(X) = \lip^\loc_{\al,b}(X) = \lip_{\al,b}(Y)$, where $Y$ is the set $X$ endowed with the discrete metric.

\noindent (b)  Suppose that any one of the order isomorphisms in part (b) holds. 
By Corollary \ref{cor35}, it leads to either $\Lip^\loc(X) \sim \lip^\loc_\al(Y)$ or $\lip^\loc_\al(X) \sim \Lip^\loc(Y)$.
In either case, $X$ and $Y$ are discrete by (a).
Thus $\Lip^\loc(X) = C(X) = \lip^\loc_\al(X)$.
Therefore, we have
\[
C(X)\sim \lip_\al(Y), \lip_{\al,b}(Y), \lip^\loc_{\al,b}(Y),  \Lip_b(Y) \text { or } \Lip^\loc_b(Y).\]
In all cases, it follows from Theorem \ref{thm41}(b) that $Y$ is finite.  Hence $X$ is finite as well.
The converse is trivial.

\noindent (c) By Corollary \ref{cor35}, an order isomorphisms $T: \Lip_b(X)\to \lip^\loc_{\al,b}(Y)$ can be extended to an  order isomorphism from $\Lip^\loc(X)$ to   $\lip^\loc_\al(Y)$.  By part (a), $X$ is discrete and hence so is $Y$.
Thus $\lip^\loc_{\al,b}(Y) = C_b(Y)$.  It follows from Theorem \ref{thm41}(c) that $X$ is separated.
 
Conversely, if $X$ is separated, then $\Lip_b(X) = C_b(X) = \lip^\loc_{\al,b}(X)$.
\end{proof}

\begin{lem}\label{lem56}
Let $A(X) = \lip_\al(X), \lip_{\al,b}(X), U(X)$ or $U_b(X)$ and $A(Y) =  \lip_\al(Y), \lip_{\al,b}(Y), U(Y), U_b(Y)$ or $\Lip_b(Y)$.  Suppose that $T:A(X)\to A(Y)$ is an order isomorphism such that $T0=0$. Denote by $\vp : X\to Y$ its associated homeomorphism.  Let  $(x_n), (x'_n)$  be sequences in $X$ such that $(x_n)$ has no convergent subsequence and that $0< d(x_n,x'_n) \to 0$.  
Set  $y_n = \vp(x_n)$ and $y'_n = \vp(x'_n)$.  
Then there exist $r > 0$, $n_1 < n_2 <\cdots$ and a bounded function $f \in A(X)$ such that $Tf(y_{n_m}) = m(r \wedge d(y_{n_m}, y'_{n_m}))$ and $Tf(y'_{n_m}) = 0$.
In particular, if $(Tt(y_n))$ is bounded for some $0 < t\in \R$, then $\liminf d(y_n,y'_n ) = 0$.
\end{lem}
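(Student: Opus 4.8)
The plan is to use the functional representation of $T$ from Theorem~\ref{thm29} together with a bump‑function construction of the type used in the proofs of Propositions~\ref{prop37} and~\ref{prop41.1}. Every space in the two lists satisfies condition~($\heartsuit$) by Examples~C, so Theorem~\ref{thm29}, applied to $T$ and to $T^{-1}$ at the relevant points, gives increasing homeomorphisms $\Phi(y,\cdot),\Psi(x,\cdot):\R\to\R$ which are mutual inverses, satisfy $\Phi(y,0)=\Psi(x,0)=0$ (because $T0=0$), and give $Tf(y)=\Phi\bigl(y,f(\vp^{-1}(y))\bigr)$ and $T^{-1}g(x)=\Psi\bigl(x,g(\vp(x))\bigr)$. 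Since $d(x_n,x_n')\to 0$, the sequence $(x_n')$ also has no convergent subsequence; after passing to a subsequence the points $x_n,x_n'$ are all distinct, $\{y_n\}\cup\{y_n'\}$ is closed and discrete in $Y$, and $c_n:=\inf_{m\ne n}d(y_n,y_m)\wedge\inf_m d(y_n,y_m')>0$ for every $n$.

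Next comes the construction. I would look for $f=\sum_m g_m$ with $g_m(x)=c_m\bigl[1-\sigma_m^{-1}d(x,x_{n_m})\bigr]^{+}$ a tent of height $c_m$ supported in $B(x_{n_m},\sigma_m)$, taking $\sigma_m<d(x_{n_m},x_{n_m}')$ so that $g_m(x_{n_m}')=0$ and the supports pairwise disjoint. Then $f(x_{n_m})=c_m$, $f(x_{n_m}')=0$, so $Tf(y_{n_m})=\Phi(y_{n_m},c_m)$ and $Tf(y_{n_m}')=0$, and it suffices to choose $(n_m)$, $r>0$, the radii $\sigma_m$, and the heights
\[ c_m=\Psi\bigl(x_{n_m},\,m\,(r\wedge d(y_{n_m},y_{n_m}'))\bigr) \]
so that $f$ is a bounded member of $A(X)$, indeed with $\|f\|_\infty$ as small as we wish. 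This is the crux. Since $d(x_n,x_n')\to 0$ forces $\sigma_m\to 0$, keeping $\sum_m g_m$ inside $\lip_\al(X)$ requires $c_m/\sigma_m^{\al}$ to be bounded and to tend to $0$ (for $U(X)$ one just needs $c_m\to 0$ with a mild slope control), so, taking $\sigma_m$ as large as disjointness and the bound $\sigma_m<d(x_{n_m},x_{n_m}')$ allow, the $c_m$ must decay quickly relative to $\min\bigl(c_{n_m},d(x_{n_m},x_{n_m}')\bigr)^{\al}$. I would split into cases on the behaviour of $\bigl(\Phi(y_n,t)\bigr)_n=\bigl(Tt(y_n)\bigr)_n$ for small $t>0$. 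If it is unbounded for one (hence for all sufficiently small) $t>0$, thin to a subsequence along which $\Phi(y_{n_m},t)$ grows faster than $m$; then for small fixed $r$ one has $m\,(r\wedge d(y_{n_m},y_{n_m}'))\le\Phi(y_{n_m},t)$, so $c_m\le t$, and further thinning makes the $c_m$ decay as fast as needed. If it is bounded for small $t>0$, one first proves $\liminf_n d(y_n,y_n')=0$ (next paragraph) and then thins so that $d(y_{n_m},y_{n_m}')$ decays so fast that $m\,d(y_{n_m},y_{n_m}')<\Phi(y_{n_m},\epsilon_m)$ for a prescribed $\epsilon_m\to 0$, forcing $c_m<\epsilon_m$; either way $f\in A(X)$ is bounded.

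The ``in particular'' statement is exactly the fact used in the second case. Suppose $Tt(y_n)\le M$ for all $n$, for some $t>0$, and, for contradiction, $\liminf_n d(y_n,y_n')=2\delta>0$; after a subsequence $d(y_n,y_n')>\delta$, and (after a further thinning, exploiting that $\{y_n\}\cup\{y_n'\}$ is closed and discrete) one may take it uniformly separated. Then a disjoint bump sum $h$ on $Y$ with $h(y_n)=Tt(y_n)\in(0,M]$ and $h(y_n')=0$ can be built: its heights are bounded and its radii may be taken bounded below, hence $h$ is Lipschitz, so lies in each of the admissible spaces $A(Y)$. Now $T^{-1}h\in A(X)$ satisfies $T^{-1}h(x_n)=\Psi\bigl(x_n,\Phi(y_n,t)\bigr)=t$ and $T^{-1}h(x_n')=\Psi(x_n',0)=0$ for all $n$; but $T^{-1}h$ is uniformly continuous (being in $\lip_\al(X)$ or $U(X)$, bounded variants included) and $d(x_n,x_n')\to 0$, so $|T^{-1}h(x_n)-T^{-1}h(x_n')|\to 0$, contradicting $t>0$. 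Hence $\liminf_n d(y_n,y_n')=0$. One may also read the statement directly off the main assertion with $\|f\|_\infty\le t$: then $Tf(y_{n_m})=m\,(r\wedge d(y_{n_m},y_{n_m}'))\ge m(r\wedge\delta)$ eventually, while $f\le t$ and order preservation give $Tf(y_{n_m})\le Tt(y_{n_m})\le M$, which is absurd.

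I expect the main obstacle to be the coordinated choice of the subsequence, of $r$, and of the bump radii that forces $Tf$ to take the prescribed values $m\,(r\wedge d(y_{n_m},y_{n_m}'))$ and $0$ while keeping $f$ a bounded little‑Lipschitz (resp.\ uniformly continuous) function; the tension is among the constraint $d(x_n,x_n')\to 0$, the nonlinearity of $T$ (carried by $\Phi$ and $\Psi$), and the regularity demanded of $f$. A closely related technical point is that the bump sums on $Y$ used above require the closed discrete set $\{y_n\}\cup\{y_n'\}$ to be thinned to a uniformly separated one, which must be justified using the completeness of $Y$.
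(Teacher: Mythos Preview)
Your direct argument for the ``in particular'' clause is fine, and in fact it already contains the idea that makes the main statement easy; the gap is that you did not apply the same idea to the main construction. Building the bumps on the $X$ side and then trying to force $c_m=\Psi\bigl(x_{n_m},m(r\wedge d(y_{n_m},y'_{n_m}))\bigr)$ to decay like $o\bigl(d(x_{n_m},x'_{n_m})^\al\bigr)$ via a dichotomy on the boundedness of $(\Phi(y_n,t))_n$ does not work as sketched. In the ``unbounded'' branch you only obtain $c_m\le t$ for each fixed $t>0$ along a subsequence; ``further thinning'' can make $c_m\to 0$, but it gives no comparison between $c_m$ and $d(x_{n_m},x'_{n_m})^\al$, and that ratio is exactly what controls membership of your bump sum in $\lip_\al(X)$. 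Pointwise information about $\Phi$ and $\Psi$ is simply not enough here: the needed inequality $\Psi(x_n,mr)=o(d(x_n,x'_n)^\al)$ encodes the fact that a \emph{specific} pullback lies in $A(X)$, and that is what your argument never invokes. The bounded branch has the symmetric defect: you need a lower bound on $\Phi(y_{n_m},\ep_m)$ to squeeze $m\,d(y_{n_m},y'_{n_m})$ underneath it, and none is available.

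The paper bypasses the dichotomy entirely by working on the $Y$ side first. One builds a single $g\in\Lip_b(Y)\subseteq A(Y)$ with $g(y_n)=r\wedge d(y_n,y'_n)$ and $g(y'_n)=0$ (after thinning so that $(y_n)$ is $r$-separated and, if $d(y_n,y'_n)\not\to 0$, so that $(y_n)\cup(y'_n)$ is separated). Then for every $m$ the function $T^{-1}(mg)$ lies in $A(X)$, vanishes at $x'_n$, and the $\lip_\al$ (or $U$) regularity of $A(X)$ gives
\[
\frac{T^{-1}(mg)(x_n)}{d(x_n,x'_n)^\al}\longrightarrow 0\qquad(n\to\infty)
\]
automatically, with $\al=0$ in the $U$ cases. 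A diagonal choice of $n_1<n_2<\cdots$ makes $T^{-1}(mg)(x_{n_m})/d(x_{n_m},x'_{n_m})^\al\to 0$, and \emph{now} one interpolates on $X$ a bounded $f\in A(X)$ with $f(x_{n_m})=T^{-1}(mg)(x_{n_m})$ and $f(x'_{n_m})=0$; this is exactly your bump construction, but with the crucial decay already in hand. Then $Tf(y_{n_m})=mg(y_{n_m})=m(r\wedge d(y_{n_m},y'_{n_m}))$ and $Tf(y'_{n_m})=0$. Notice that your $c_m$ equals $T^{-1}(mg)(x_{n_m})$, so the missing step in your sketch is precisely the one-line observation $T^{-1}(mg)\in A(X)$; once you use it, the case split on $\Phi$ becomes unnecessary. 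Your proof of the ``in particular'' clause (build $h$ on $Y$ with $h(y_n)=Tt(y_n)$, $h(y'_n)=0$, pull back) is a special instance of this same mechanism.
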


\begin{proof}
Observe that neither $(x_n)$ nor $(x'_n)$ can have a convergent subsequence.
Thus the same holds for $(y_n)$ and $(y'_n)$.  Since $Y$ is complete by assumption, we may assume that there is an $r > 0$ so that $d(y_n,y_m) > r$ if $n \neq m$.  
If $d(y_n,y'_n) \not\to 0$, then by using a subsequence if necessary, we may further assume that the set $(y_n)\cup (y_n')$ is separated.  On the other hand, if $d(y_n,y'_n) \to 0$, then we may assume that $d(y_n,y'_n) < r/2$ for all $n$.  In either case,
there exists $g\in \Lip_b(Y)$ such that 
$g(y_n) = r \wedge d(y_n,y_n')$  and $g(y'_n) = 0$ for all $n$.   Then $g\in A(Y)$.
Hence $T^{-1}(mg) \in A(X)$ for all $m \in \N$.
Therefore,
\[ \frac{T^{-1}(mg)(x_n)}{d(x_n,x'_n)^\al} = \frac{T^{-1}(mg)(x_n) - T^{-1}(mg)(x'_n)}{d(x_n,x'_n)^\al} \to 0,\]
where we take $\al =0$ if $A(X) = U(X)$ or $U_b(X)$.
Choose $n_1 < n_2 < \cdots$ such that 
\[  \frac{T^{-1}(mg)(x_{n_m})}{d(x_{n_m},x'_{n_m})^\al} \to 0.\]
Since $(x_{n_m})$ has a separated subsequence, by taking a further subsequence if necessary, we may assume that 
there exists  a bounded function $f\in A(X)$ such that 
\[ f(x_{n_m}) = {T^{-1}(mg)(x_{n_m})}\text{ and } f(x'_{n_m}) = 0 \text{ for all $m$}.\]
Thus $Tf(y_{n_m}) = mg(y_{n_m}) = m(r \wedge d(y_{n_m},y'_{n_m}))$ and $Tf(y'_{n_m}) = 0$.

Suppose that $(Tt(y_n))$ is bounded for some $0 < t\in \R$.   Observe that $f(x_{n_m}) \leq t$ for all sufficiently large $m$.  Then $Tf(y_{n_m}) \leq Tt(y_{n_m})$ for all sufficiently large $m$ and hence $(Tf(y_{n_m}))$ is bounded.  Since $Tf(y_{n_m})  = m(r \wedge d(y_{n_m},y'_{n_m}))$, we must have $d(y_{n_m},y'_{n_m})\to 0$.
\end{proof}

\begin{prop}\label{prop57.1}
Let $A(X) = \lip_\al(X), \lip_{\al,b}(X), U(X)$ or $U_b(X)$. If there exists $Y$ such that 
\begin{enumerate}
\item $A(X) \sim \Lip_b(Y)$, then $X$ is proximally compact.
\item $A(X) \sim \lip_{\al,b}(Y)$ or $U_b(Y)$, then the associated homeomorphism $\vp: X\to Y$ is uniformly continuous.
\end{enumerate}
\end{prop}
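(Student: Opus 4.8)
The plan is to deduce both parts from Lemma \ref{lem56} by contradiction, producing in each case a bounded $f \in A(X)$ whose image $Tf$ grows linearly along a sequence and so cannot lie in $A(Y)$.

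\emph{Part (a).} Suppose $X$ is not proximally compact. As in the proof of Proposition \ref{prop41.1}, this yields sequences $(x_n)$, $(x'_n)$ in $X$ with $(x_n)$ having no convergent subsequence and $0 < d(x_n,x'_n) \to 0$. Let $T : A(X) \to \Lip_b(Y)$ be an order isomorphism; translating, we may assume $T0 = 0$, and we take $\vp$, $y_n = \vp(x_n)$, $y'_n = \vp(x'_n)$ as in Lemma \ref{lem56}. Since $\Lip_b(Y)$ is one of the admissible targets there, we obtain $r > 0$, indices $n_1 < n_2 < \cdots$ and a bounded $f \in A(X)$ with $Tf(y_{n_m}) = m\,(r \wedge d(y_{n_m},y'_{n_m}))$ and $Tf(y'_{n_m}) = 0$. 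Let $M = \|Tf\|_\infty$ and let $L$ be a Lipschitz constant of $Tf$; note $d(y_{n_m},y'_{n_m}) > 0$ since $\vp$ is injective and $x_{n_m} \neq x'_{n_m}$. For each $m$: if $d(y_{n_m},y'_{n_m}) \le r$, then $m\,d(y_{n_m},y'_{n_m}) = |Tf(y_{n_m}) - Tf(y'_{n_m})| \le L\,d(y_{n_m},y'_{n_m})$, whence $m \le L$; if $d(y_{n_m},y'_{n_m}) > r$, then $mr = Tf(y_{n_m}) \le M$. So $m \le \max\{L, M/r\}$ for every $m$, which is absurd; hence $X$ is proximally compact.

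\emph{Part (b).} Let $T : A(X) \to A(Y)$ be the order isomorphism, where $A(Y) = \lip_{\al,b}(Y)$ or $U_b(Y)$; we normalize so $T0 = 0$ (this leaves $\vp$ unchanged) and let $\vp$ be the associated homeomorphism. Suppose $\vp$ is not uniformly continuous. Then there are $\delta_0 > 0$ and sequences $(x_n)$, $(x'_n)$ in $X$ with $d(x_n,x'_n) \to 0$ and $d(\vp(x_n),\vp(x'_n)) \ge \delta_0$ for all $n$; in particular $x_n \neq x'_n$, so $0 < d(x_n,x'_n) \to 0$. First I would observe that $(x_n)$ has no convergent subsequence: if $x_{n_k} \to x_0$ then also $x'_{n_k} \to x_0$, and continuity of $\vp$ gives $d(\vp(x_{n_k}),\vp(x'_{n_k})) \to 0$, contradicting the bound $\delta_0$. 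Thus Lemma \ref{lem56} applies. Since every function in $A(Y)$ is bounded, $Tt$ is bounded for each constant $t$, so $(Tt(y_n))$ is bounded for some $0 < t \in \R$; by the last assertion of Lemma \ref{lem56}, $\liminf_n d(y_n,y'_n) = 0$, where $y_n = \vp(x_n)$, $y'_n = \vp(x'_n)$. But $d(y_n,y'_n) = d(\vp(x_n),\vp(x'_n)) \ge \delta_0$ for all $n$, a contradiction. Hence $\vp$ is uniformly continuous.

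The only points requiring care are the verification of the hypotheses of Lemma \ref{lem56} — that $(x_n)$ has no convergent subsequence, that $0 < d(x_n,x'_n) \to 0$, and that $A(Y)$ is on the admissible list — together with the small observation in part (b) that a failure of uniform continuity of $\vp$ with $\delta_0$-separated image values already prevents $(x_n)$ from having a convergent subsequence. Once Lemma \ref{lem56} is in force, the linear growth of $Tf$ in part (a), or the positive $\liminf$ of $d(y_n,y'_n)$ against the lemma's final assertion in part (b), produces the contradiction immediately, so I foresee no genuine obstacle.
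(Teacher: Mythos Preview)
Your proof is correct and follows essentially the same approach as the paper: both parts reduce to Lemma \ref{lem56}, with part (b) using the lemma's final assertion (via boundedness of $Tt$) to force $\liminf d(y_n,y'_n)=0$ against the assumed lower bound, and part (a) using the explicit form $Tf(y_{n_m})=m(r\wedge d(y_{n_m},y'_{n_m}))$ to contradict $Tf\in\Lip_b(Y)$. Your handling of part (a)---bounding $m$ directly via either the Lipschitz constant or $\|Tf\|_\infty$ according to whether $d(y_{n_m},y'_{n_m})\le r$ or not---is a mild variant of the paper's argument, which first uses boundedness of $T1$ to force $d(y_{n_m},y'_{n_m})\to 0$ and then reads off the Lipschitz contradiction; both work equally well.
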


\begin{proof}
Let $T$ be one of the indicated  order isomorphisms and let $\vp:X\to Y$ be the associated homeomorphism.  We may assume that $T0 = 0$. 
If $X$ is not proximally compact, there exist sequences $(x_n), (x'_n)$ in $X$ such that $(x_n)$ has no convergent subsequence and $0< d(x_n,x'_n) \to 0$.  
Set  $y_n = \vp(x_n)$ and $y'_n = \vp(x'_n)$.  
If $\vp$ is not uniformly continuous, we obtain such sequences with the additional property that  $\inf d(y_n,y'_n) > 0$.
Choose $f$, $r$ and $(n_m)$ as in Lemma \ref{lem56}.
Since $(T1(y_{n_m}))$ is bounded,  $d(y_{n_m},y'_{n_m}) \to 0$.
In particular, this yields a contradiction if $\vp$ is not uniformly continuous and hence completes the proof in case (b).
For case (a), we have
$Tf(y_{n_m}) = md(y_{n_m},y'_{n_m})$ for all sufficiently large $m$.
It follows that $Tf$ is not Lipschitz on $Y$, which is absurd.
\end{proof}

\begin{prop}\label{prop58.2}
Let $T$ be an order isomorphism from $A(X) = U(X)$ or $U_b(X)$ onto $A(Y) = U(Y)$ or $\lip_\al(Y)$.
Then the associated homeomorphism $\vp:X\to Y$ is uniformly continuous.
\end{prop}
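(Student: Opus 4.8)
Normalise so that $T0=0$. Since $A(X)$ and $A(Y)$ are among the spaces treated in Examples B (with $X,Y$ complete) and satisfy condition $(\heartsuit)$ by Examples C, Theorems \ref{thm27} and \ref{thm29} apply: there are a homeomorphism $\vp:X\to Y$ and maps $\Phi,\Psi$ with $Tf(y)=\Phi(y,f(\vp^{-1}(y)))$ and $T^{-1}g(x)=\Psi(x,g(\vp(x)))$, where $\Phi(y,\cdot)$ and $\Psi(x,\cdot)$ are mutually inverse increasing homeomorphisms of $\R$ fixing $0$. Note that $\lip_\al(Y)\subseteq U(Y)$ and $U_b(X)\subseteq U(X)$, so $A(X)$ consists of uniformly continuous functions and $A(Y)\subseteq U(Y)$, and that $(A(X),A(Y))$ is of the type to which Lemma \ref{lem56} applies. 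The overall strategy mirrors the proof of Proposition \ref{prop57.1}(b); the new feature is that $A(Y)$ may contain unbounded functions, so one cannot simply invoke boundedness of $T1$.

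Suppose $\vp$ is not uniformly continuous. Pick $\ep\in(0,1]$ and sequences $(x_n),(x'_n)$ in $X$ with $d(x_n,x'_n)\to0$ and $d(y_n,y'_n)\ge\ep$, where $y_n=\vp(x_n)$, $y'_n=\vp(x'_n)$. If a subsequence of $(x_n)$ converged, the corresponding subsequence of $(x'_n)$ would converge to the same point, forcing $d(y_{n_k},y'_{n_k})\to0$; hence $(x_n)$ has no convergent subsequence, and Lemma \ref{lem56} yields $r>0$, indices $n_1<n_2<\cdots$ and a bounded $f\in A(X)$ with $Tf(y_{n_m})=m\bigl(r\wedge d(y_{n_m},y'_{n_m})\bigr)$ and $Tf(y'_{n_m})=0$. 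Since $d(y_{n_m},y'_{n_m})\ge\ep$, we get $Tf(y_{n_m})\ge m(r\wedge\ep)\to\infty$. Moreover $f$ is uniformly continuous, $f(x'_{n_m})=0$ and $d(x_{n_m},x'_{n_m})\to0$, so $f(x_{n_m})\to0$; hence for each fixed $c>0$ we have $c\ge f(x_{n_m})$ for all large $m$, whence $T(c)(y_{n_m})=\Phi(y_{n_m},c)\ge\Phi(y_{n_m},f(x_{n_m}))=Tf(y_{n_m})\to\infty$.

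It remains to convert the fact that $T(c)(y_{n_m})\to\infty$ for every $c>0$ into a contradiction. The plan is to produce a single $g\in A(Y)$ that coincides with $T(1)$ on a neighbourhood of each $y_{n_m}$ and vanishes on a neighbourhood of each $y'_{n_m}$: then Theorem \ref{thm27} (applied to $T$ and to $T^{-1}$) gives $T^{-1}g=1$ near each $x_{n_m}$ and $T^{-1}g=0$ near each $x'_{n_m}$, so $|T^{-1}g(x_{n_m})-T^{-1}g(x'_{n_m})|=1$ even though $d(x_{n_m},x'_{n_m})\to0$, contradicting $T^{-1}g\in A(X)\subseteq U(X)$. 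Since $X$ is complete and $(x_{n_m})$ has no convergent subsequence, after passing to a further subsequence one may assume $\{x_{n_m}\}\cup\{x'_{n_m}\}$ is metrically separated; one then separates the images in $Y$ as far as possible and assembles $g$ out of suitably scaled bump functions. I expect this last, geometric step to be the main obstacle: because $\vp$ is being assumed badly non-uniformly continuous, the points $y_{n_m}$ and $y'_{n_m}$ may interleave in $Y$, so one must treat separately the case in which $d(y_{n_m},y'_{n_m})$ stays bounded and the case in which it tends to $\infty$. In the bounded case with $A(Y)=\lip_\al(Y)$, the global $\al$-H\"older estimate for $Tf$ already contradicts $Tf(y_{n_m})\to\infty$; the remaining cases require the delicate construction of $g$, including the verification that the resulting bumps lie in $A(Y)$. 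Once such a $g$ is obtained, the proof is complete.
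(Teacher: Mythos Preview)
Your opening is correct: Lemma \ref{lem56} applies and produces a bounded $f\in A(X)$ with $Tf(y_{n_m})\ge m(r\wedge\ep)\to\infty$ and $Tf(y'_{n_m})=0$; since $f(x'_{n_m})=0$ and $f$ is uniformly continuous, $f(x_{n_m})\to0$, whence $Tc(y_{n_m})\to\infty$ for every $c>0$. Your observation that, when $A(Y)=\lip_\al(Y)$ and $(d(y_{n_m},y'_{n_m}))$ stays bounded, the global H\"older estimate for $Tf$ already contradicts $Tf(y_{n_m})\to\infty$ is a clean shortcut.

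The genuine gap is in the remaining cases. Your plan is to assemble $g\in A(Y)$ from bumps so that $g=T1$ near each $y_{n_m}$ and $g=0$ near each $y'_{n_m}$. But $T1(y_{n_m})\to\infty$, so these bumps have unbounded height. For $A(Y)=U(Y)$ this is fatal in general: even when $\{y_{n_m}\}$ and $\{y'_{n_m}\}$ are at positive distance one can build a uniformly continuous cutoff $\chi$ equal to $1$ near the first set and $0$ near the second, but $\chi\cdot T1$ need not be uniformly continuous (the product of a bounded and an unbounded uniformly continuous function is not uniformly continuous in general), and direct bump constructions have slopes of order $T1(y_{n_m})/(\text{bump radius})\to\infty$. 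For $A(Y)=\lip_\al(Y)$ with $d(y_{n_m},y'_{n_m})\to\infty$ the same obstruction appears for the global $\al$-H\"older constant. Your proposed split on boundedness of $d(y_{n_m},y'_{n_m})$ does not supply the missing geometric control over $Y$ near the points $y_{n_m}$.

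The paper resolves this with a different case split: whether there exists $r>0$ with $(y_n)\subseteq Y_r$, i.e., whether the $y_n$ are \emph{uniformly isolated in $Y$}. If so, one may prescribe arbitrary values at the $y_n$ and $0$ elsewhere and remain in $U(Y)$ (a separate argument handles $\lip_\al(Y)$ via auxiliary points $y''_n$ at distance $\approx d(y_n,Y\setminus\{y_n\})$); this is precisely the geometric hypothesis under which a construction of your type succeeds. If not, there is a sequence $(y''_n)$ with $0<d(y_n,y''_n)\to0$, and the key move is to apply Lemma \ref{lem56} a \emph{second time}, now to $T^{-1}$, with the pair $(y_n,y''_n)$: your conclusion $Tc(y_n)\to\infty$ yields $T^{-1}s(x_n)\le 1$ for suitable $s>0$, so the boundedness clause of the lemma is met and one obtains $d(x_n,x''_n)\to0$ for $x''_n=\vp^{-1}(y''_n)$; a further diagonalisation then gives the contradiction. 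Thus the fact you derived, $Tc(y_{n_m})\to\infty$, is the right intermediate conclusion---but it feeds a reversed application of Lemma \ref{lem56} rather than a direct construction of $g$.
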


\begin{proof}
We may assume that $T0 = 0$. Suppose that the proposition fails.  There exist sequences $(x_n)$ and $(x'_n)$ in $X$ such that $0 < d(x_n,x'_n)\to 0$ and $\inf d(y_n,y'_n) > 0$, where $y_n = \vp(x_n)$ and $y'_n = \vp(x'_n)$.
Since none of the sequences $(x_n)$, $(y_n)$ or $(y'_n)$ can have convergent subsequences, and $X$ and $Y$ are complete by assumption, we may assume that $(x_n)$ and $(y_n) \cup (y'_n)$ are separated.

\medskip

\noindent\underline{Case 1}. There exists $r> 0$ such that $(y_n) \subseteq Y_r$.

\noindent Let $r_n = d(y_n, Y\bs \{y_n\})$. Then $r_n \geq r$ for all $n$. 
First suppose that $A(Y) = U(Y)$.
There exists $g \in U(Y)$ such that $g(y_n ) = Tn(y_n)$ and $g(y'_n) = 0$ for all $n$.
Then $T^{-1}g(x_n) = n$ and $T^{-1}g(x'_n) = 0$ for all $n$.  Clearly, $T^{-1}g \notin U(X)$, contrary to the assumption.

Next, suppose that $A(Y) = \lip_\al(Y)$.
Choose $y_n'' \in Y$ such that $r_n \leq d(y_n,y_n'') < 2r_n$ and set $x_n'' = \vp^{-1}(y_n'')$.  We may assume that $y_n \neq y_m''$ for all $m$ and $n$.
Define $g: Y \to \R$ by $g(y_n) = r_n^\al$ for all $n$ and $g(y) = 0$ if $y \neq y_n$ for any $n$.
Then $g\in \lip_\al(Y)$. 
Hence $T^{-1}(mg) \in U(X)$ for all $m\in \N$.
As $T^{-1}(mg)(x'_n) = T^{-1}0(x'_n) = 0$ for all $n$ and $d(x_n,x'_n) \to 0$, $\lim_nT^{-1}(mg)(x_n) = 0$. Choose $n_1 < n_2 < \cdots$ such that $T^{-1}(mg)(x_{n_m}) \to 0$.
There exists $f\in U_b(X)$ such that $f(x_{n_m}) = T^{-1}(mg)(x_{n_m})$ and 
$f(x_{n_m}'') = 0$ for all $m$.
Then  $Tf(y_{n_m}) = mg(y_{n_m}) = mr^\al_{n_m}$ and $Tf(y_{n_m}'') = 0$ for all $m$.
However, $d(y_{n_m}, y''_{n_m}) <2r_{n_m}$.  This contradicts the fact that $Tf \in \lip_\al(Y)$.

\medskip

\noindent\underline{Case 2}. $(y_n) \not\subseteq Y_r$ for all $r > 0$.

\noindent By using a subsequence if necessary, we may assume that there exists $(y_n'')$ in $Y$ so that $0 < d(y_n,y_n'')\to 0$.  Set $x_n'' = \vp^{-1}(y''_n)$.
By Lemma \ref{lem56}, if there exists $0< t\in \R$ such that $(Tt(y_n))$ is bounded, then $\lim\inf d(y_n,y_n') = 0$, contrary to the choices of $y_n$ and $y_n'$.
Thus, by using a further subsequence if necessary, we may assume that $T1(y_n) \to \infty$.
Then there exists $0 < s\in \R$ such that $T1(y_n) \geq s$ for all $n$.
Hence $(T^{-1}s(x_n))$ is bounded above by $1$.
Applying Lemma \ref{lem56} to $T^{-1}$, we conclude that $\liminf d(x_n,x''_n) = 0$.
Since this applies to any subsequence, in fact $d(x_n,x''_n) \to 0$.
Observe that $\liminf d(y_n',y_n'') = \liminf d(y_n',y_n) > 0$.
So, by using even further subsequences if necessary, we can find $g, h \in A(Y)$ such that
\[ g(y_n) = h(y_n'') = 1, \ g(y_n') = h(y_n') = 0\]
for all $n$.  
Since $T^{-1}(mg), T^{-1}(mh)\in U(X)$, $T^{-1}(mg)(x'_n)=T^{-1}(mh)(x'_n) = 0$ and $d(x_n,x'_n), d(x''_n,x'_n)\to 0$, 
\[ \lim_n T^{-1}(mg)(x_n) = \lim_nT^{-1}(mh)(x_n'') = 0\]
 for all $m$.
Choose $n_1< n_2 < \cdots$ such that
\[ \lim_nT^{-1}(mg)(x_{n_m}) = \lim_nT^{-1}((m+1)h)(x_{n_m}'') = 0.\]
There exists $f\in U_b(X)$ such that $f(x_{n_m}) = T^{-1}(mg)(x_{n_m})$ and 
$f(x_{n_m}'') = T^{-1}((m+1)h)(x_{n_m}'')$ for all $m$.
Then $Tf(y_{n_m}) = mg(y_{n_m})= m$ and $Tf(y_{n_m}'') = (m+1)h(y_{n_m}) = m+1$.
This is impossible since $Tf \in U(Y)$ and $d(y_n,y_n'') \to 0$.
\end{proof}

\begin{thm}\label{thm48}
There exists $Y$ such that $U(X)\sim \Lip_b(Y)$ if and only if $X$ is a finite set.
There exists $Y$ such that $\lip_\al(X)$, $\lip_{\al,b}(X)$  or $U_b(X)\sim \Lip_b(Y)$ if and only if $X$ is separated.
\end{thm}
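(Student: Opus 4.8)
The plan is to prove each equivalence by exhibiting a suitable $Y$ for the ``if'' direction and, for the ``only if'' direction, by feeding Proposition~\ref{prop57.1}(a) into the comparison results for local function spaces already established in Theorems~\ref{thm41} and~\ref{thm57}. The ``if'' directions are routine: on a finite metric space every real function is continuous, bounded and Lipschitz, so $U(X)=\Lip_b(X)$ and one takes $Y=X$; and if $X$ is separated then every bounded function on $X$ is uniformly continuous and Lipschitz with respect to both $d$ and $d^\al$, while the limit defining $\lip_\al$ is vacuous, so $U_b(X)=\lip_{\al,b}(X)=\Lip_b(X)$ and $\lip_\al(X)=\Lip(X,d^\al)$. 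In the first two cases take $Y=X$; in the last, $(X,d^\al)$ is complete, so Theorem~\ref{thm5.3} provides a complete bounded metric $d''$ on $X$ with $\Lip(X,d^\al)$ linearly order isomorphic to $\Lip_b(X,d'')$, and we take $Y=(X,d'')$.

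For the ``only if'' direction of the first assertion, suppose $U(X)\sim\Lip_b(Y)$. By Proposition~\ref{prop57.1}(a) the space $X$ is proximally compact, so $U(X)=C(X)$ by Lemma~\ref{lem45}; hence $C(X)\sim\Lip_b(Y)$, and Proposition~\ref{prop36.1} forces $X$ to be compact, so that $C_b(X)\sim\Lip_b(Y)$. Reading Theorem~\ref{thm41}(c) with the roles of $X$ and $Y$ interchanged, $Y$ is separated, hence discrete; and since $X$ is homeomorphic to $Y$ by Theorem~\ref{thm27}, $X$ is a compact discrete space, i.e.\ finite.

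For the ``only if'' direction of the second assertion, let $A(X)$ be one of $\lip_\al(X)$, $\lip_{\al,b}(X)$, $U_b(X)$, and suppose $A(X)\sim\Lip_b(Y)$ via an order isomorphism $T$ with associated homeomorphism $\vp:X\to Y$ (Theorem~\ref{thm27}). Proposition~\ref{prop57.1}(a) again gives that $X$ is proximally compact. Since both $A(X)$ and $\Lip_b(Y)$ are near vector lattices satisfying conditions $(\spadesuit)$ and $(\heartsuit)$ (Examples~B and~C), Corollary~\ref{cor35} applies with $G=X$ and extends $T$ to an order isomorphism $S$ of $A^\loc(X)$ onto $(\Lip_b)^\loc(Y)$. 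A truncation argument identifies $(\Lip_b)^\loc(Y)=\Lip^\loc(Y)$, while $A^\loc(X)$ equals $\lip^\loc_\al(X)$ in the first two cases and $U^\loc(X)$ in the third. Thus either $\lip^\loc_\al(X)\sim\Lip^\loc(Y)$ or $U^\loc(X)\sim\Lip^\loc(Y)$; in the former Theorem~\ref{thm57}(a) makes $Y$, and therefore the homeomorphic space $X$, discrete, and in the latter Theorem~\ref{thm41}(a) makes $X$ discrete directly. In all cases $X$ is a proximally compact discrete metric space, hence separated by Proposition~\ref{prop38.1}.

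The heart of the matter is not a single estimate but the routing: proximal compactness, delivered by Proposition~\ref{prop57.1}(a), is exactly what permits the identifications of Lemma~\ref{lem45} and the descent to local spaces in Corollary~\ref{cor35}. The delicate point is that one cannot simply run a direct construction in the style of Proposition~\ref{prop39}: the function it produces lies only in $U_b(X)$ and need not belong to $\lip_\al(X)$ or $\lip_{\al,b}(X)$, so for those spaces the comparison must be lifted to $\lip^\loc_\al(X)$ before invoking Theorems~\ref{thm41}(a) and~\ref{thm57}(a). Checking the identifications $(U_b)^\loc=U^\loc$, $(\Lip_b)^\loc=\Lip^\loc$ and $(\lip_{\al,b})^\loc=\lip^\loc_\al$ (each by truncating a local representative) is the remaining bookkeeping.
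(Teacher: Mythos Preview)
Your proof is correct and follows essentially the same route as the paper: extend to local spaces via Corollary~\ref{cor35}, invoke Theorems~\ref{thm41}(a) and~\ref{thm57}(a) for discreteness, use Proposition~\ref{prop57.1}(a) for proximal compactness, and combine via Proposition~\ref{prop38.1}. The only difference is your handling of the $U(X)$ case, where after reaching $C(X)\sim\Lip_b(Y)$ you take an unnecessary detour through compactness and Theorem~\ref{thm41}(c); the paper (and you could) simply quote Theorem~\ref{thm41}(b), which already records that $C(X)\sim\Lip_b(Y)$ forces $X$ to be finite.
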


\begin{proof}
Let $T$ be one of the order isomorphisms indicated.  We may assume that $T0=0$. By Corollary \ref{cor35}, we find that  $U^\loc(X)$ or   $\lip^\loc_\al(X)\sim \Lip^\loc(Y)$.  By Theorems \ref{thm41}(a) and \ref{thm57}(a), $X$ is  discrete.  By Proposition \ref{prop57.1}(a), $X$ is proximally compact. Therefore, $X$ is separated by Proposition \ref{prop38.1}.
Conversely, if $X$ is separated then let $X^\al$ be the metric space $(X,d^\al)$.  We have
$\lip_\al(X) = \Lip(X^\al)\sim \Lip_b(Y)$ for some $Y$ by Theorem \ref{thm5.3} and $\lip_{\al,b}(X) = U_b(X) = \Lip_b(X)$, since all three coincide with $C_b(X)$.

Finally, if $U(X)\sim \Lip_b(Y)$, then $X$ is separated by the above.  Hence $C(X) = U(X) \sim \Lip_b(Y)$.
By Theorem \ref{thm41}(b), $Y$ is a finite set; hence so is $X$.  The converse is trivial.
\end{proof}

\begin{lem}\label{lem58.1}
Let $X$ and $Y$ be complete metric spaces.  Assume that $T:C(X)\to C(Y)$ is an order isomorphism such that $T0=0$.
Express $T$ in the form  $Tf(y) = \Phi(y,f(\vp^{-1}(y)))$, where $\vp:X\to Y$ is a homeomorphism and $\Phi:Y\times \R \to \R$ 
is a function such that $\Phi(y,\cdot):\R\to\R$ is an increasing homeomorphism for all $y\in Y$.
Let $(y_n)$, $(y'_n)$ be sequences in $Y$ so that $(y_n)$ is a sequence of distinct points and that $0 < d(y_n,y'_n) \to 0$.  Set $A(Y) = U(Y)$ or $\lip_\al(Y)$. If $T(U_b(X))\subseteq A(Y)$, then for all $0 < t\in \R$,
\[ \sup_n\frac{\Phi(y_n,t)}{d(y_n,y'_n)^\al} < \infty,\]
where we take $\al = 0$ if $A(Y) = U(Y)$.
\end{lem}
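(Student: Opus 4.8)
The plan is to argue by contradiction. Suppose that for some $t>0$ one has $\sup_n \Phi(y_n,t)/d(y_n,y'_n)^\al=\infty$; passing to a subsequence, I may assume $\Phi(y_n,t)/d(y_n,y'_n)^\al\to\infty$. The aim is then to produce a bounded uniformly continuous function $f$ on $X$ with $Tf\notin A(Y)$, contradicting the hypothesis $T(U_b(X))\subseteq A(Y)$.

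I would first record the elementary reductions. Since $T0=0$, we have $\Phi(y,0)=0$ for every $y$, so $Tf(y)=0$ whenever $f$ vanishes at $\vp^{-1}(y)$; and since each constant $s$ lies in $U_b(X)$, the function $\Phi(\cdot,s)=Ts$ lies in $A(Y)$. Next I would reduce to the case that $(y_n)$ --- and hence $(y'_n)$, as $d(y_n,y'_n)\to0$ --- has no convergent subsequence; this is automatic when $A(Y)=U(Y)$, since a convergent subsequence would make $\Phi(y_n,t)=Tt(y_n)$ bounded along it, contrary to $\Phi(y_n,t)\to\infty$. Writing $x_n=\vp^{-1}(y_n)$ and $x'_n=\vp^{-1}(y'_n)$, continuity of $\vp^{-1}$ together with $d(y_n,y'_n)\to0$ shows, exactly as in the proof of Lemma~\ref{lem56}, that neither $(x_n)$ nor $(x'_n)$ has a convergent subsequence.

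The heart of the matter is the construction of $f$. A sequence in a complete metric space with no convergent subsequence has a $\delta$-separated subsequence for some $\delta>0$ (a routine Ramsey-type extraction), so after passing to a subsequence I may assume $d(x_n,x_m)\ge\delta$ for $n\ne m$. Suppose for the moment that $\liminf_n d(x_n,x'_n)>0$; then, after a further passage to a subsequence and shrinking $\delta$, the closed set $F_0=\{x'_n:n\in\N\}$ satisfies $d(x_n,F_0)\ge\delta$ for all $n$. Put $f(x)=t\bigl(1\wedge\delta^{-1}d(x,F_0)\bigr)$. This $f$ is Lipschitz with constant $\delta^{-1}t$, bounded by $t$, hence $f\in U_b(X)$, and $f(x_n)=t$, $f(x'_n)=0$. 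Therefore $Tf(y_n)=\Phi(y_n,t)$ and $Tf(y'_n)=\Phi(y'_n,0)=0$, so
\[ \frac{|Tf(y_n)-Tf(y'_n)|}{d(y_n,y'_n)^\al}=\frac{\Phi(y_n,t)}{d(y_n,y'_n)^\al}\longrightarrow\infty. \]
If $A(Y)=U(Y)$ (so $\al=0$) this is absurd because $Tf$ uniformly continuous and $d(y_n,y'_n)\to0$ force the left side to $0$; if $A(Y)=\lip_\al(Y)$ it contradicts the defining little-Lipschitz limit of $\lip_\al(Y)$ outright.

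The step I expect to be the main obstacle is exactly the passage to $d(x_n,F_0)\ge\delta$: it can fail when $d(x_n,x'_n)\to0$ along a subsequence, in which case no bounded uniformly continuous function separates $x_n$ from $x'_n$ and the single interpolation above breaks down. Treating that case appears to require invoking the weighted-composition form of $T^{-1}$ as well and comparing the weights $\Phi(y_n,\cdot)$ and $\Phi(y'_n,\cdot)$ on these close pairs, in the spirit of the arguments in Lemma~\ref{lem56} and Proposition~\ref{prop57.1}; this is where the bulk of the technical work will lie.
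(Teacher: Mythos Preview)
You have correctly identified the obstruction: when $d(x_n,x'_n)\to 0$, no function in $U_b(X)$ can take the value $t$ on $(x_n)$ and $0$ on $(x'_n)$, and this case is unavoidable (nothing in the hypotheses prevents it). Your proposed rescue via the weights of $T^{-1}$ is not what the paper does, and it is unclear how it would yield the stated bound. Note also that your reduction to the case where $(y_n)$ has no convergent subsequence is only argued for $A(Y)=U(Y)$; when $A(Y)=\lip_\al(Y)$ the ratio can blow up with $\Phi(y_n,t)$ remaining bounded, so $(y_n)$ may well converge, and with it your claim that $(x_n)$, $(x'_n)$ have no convergent subsequences collapses.

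The paper's key idea is to abandon the attempt to jump from $t$ to $0$ in one step. Instead, it first proves a \emph{local} estimate: for every $t\in\R$ and $\ep>0$ there is $\delta>0$ such that
\[
\limsup_n \frac{|\Phi(y_n,s)-\Phi(y'_n,t)|}{d(y_n,y'_n)^\al}\le\ep \quad\text{whenever }|s-t|<\delta.
\]
The point is that for this one only needs $f\in U_b(X)$ with $f(x_{n_k})=s_k$ and $f(x'_{n_k})=t$ where $s_k\to t$; since the prescribed values on the two interlocking sequences are \emph{nearly equal}, such an $f$ exists in $U_b(X)$ regardless of whether $d(x_{n_k},x'_{n_k})\to 0$ or $(x_{n_k})$ converges. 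A triangle-inequality trick then turns this into a bound on $|\Phi(y_n,s)-\Phi(y_n,s')|$ for $s,s'$ close to $t$, and finally Lebesgue's covering lemma on $[0,m]$ lets one telescope these small increments to bound $\Phi(y_n,m)-\Phi(y_n,0)=\Phi(y_n,m)$. The amplification from ``small step'' to ``large step'' happens on the $\Phi$ side via telescoping, not by building a single $f$ with a large jump; that is precisely what circumvents the obstacle you ran into.
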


\begin{proof}
First we show that for any $t\in \R$ and any $\ep > 0$, there exists $\delta > 0$ so that
\[ \limsup_{n\to\infty}\frac{|\Phi(y_n,s) - \Phi(y_n',t)|}{d(y_n,y'_n)^\al} \leq \ep\]
whenever $|s-t| <\delta$.
If not, there exist $t\in \R$, $\ep > 0$, a sequence $(s_k)$ converging to $t$ and a subsequence $(y_{n_k})$ of $(y_n)$ so that
\begin{equation}\label{extraeq}
 \frac{|\Phi(y_{n_k},s_k) - \Phi(y_{n_k}',t)|}{d(y_{n_k},y'_{n_k})^\al} >\ep
\end{equation}
for all $k$. Set $x_n =\vp^{-1}(y_n)$ and $x'_n = \vp^{-1}(y_n')$ for all $n$.
By using a further subsequence if necessary, we may assume that either $(x_{n_k})$ has no convergent subsequence or that $(x_{n_k})$ converges to some $x_0 \neq x_{n_k}$ for any $k$.
In either case, using yet another subsequence, we may assume that $x'_{n_j} \neq x_{n_k}$ for all $j,k$.
Then there exists $f\in U_b(X)$ such that $f(x_{n_k}) = s_k$ and $f(x'_{n_k}) = t$ for all $k$.
Since $Tf(y_{n_k}) = \Phi(y_{n_k}, s_k)$ and $Tf(y'_{n_k}) = \Phi(y_{n_k}, t)$,
inequality (\ref{extraeq}) contradicts the fact that $Tf\in \lip_\al(X)$. 

From what was shown above, for all $t\in \R$, there exists $\delta_t > 0$ such that $|s-t| < \delta_t$ implies 
\[
\limsup_{n\to\infty}\frac{|\Phi(y_n,s) - \Phi(y_n',t)|}{d(y_n,y'_n)^\al} \leq 1.\]
If $|s-t|, |s'-t| < \delta_t$, then
\begin{align*}
&\limsup_{n\to\infty}\frac{|\Phi(y_n,s) - \Phi(y_n,s')|}{d(y_n,y'_n)^\al}\\ &\quad\leq 
\limsup_{n\to\infty}\frac{|\Phi(y_n,s) - \Phi(y_n',t)|}{d(y_n,y'_n)^\al} + \limsup_{n\to\infty}\frac{|\Phi(y_n,t) - \Phi(y_n',s')|}{d(y_n,y'_n)^\al} \leq 2.
\end{align*}
Fix $m \in \N$. By Lebesgue's Lemma \cite[Lemma 3.7.2]{M}, there exists $k\in \N$ such that for each $1\leq i \leq mk$, there exists $t\in [0,m]$ such that $(i-1)/k, i/k \in (t-\delta_t, t+\delta_t)$.
Therefore, keeping in mind that $\Phi(y_n,0) = T0(y_n) = 0$ for all $n$,
\begin{align*} 
\limsup_{n\to\infty}\frac{\Phi(y_n,m)}{d(y_n,y'_n)^\al}  & = 
\limsup_{n\to\infty}\frac{\Phi(y_n,m) - \Phi(y_n,0)}{d(y_n,y'_n)^\al} \\&\leq 
\limsup_{n\to\infty}\sum^{km}_{i=1}\frac{\Phi(y_n,\frac{i}{k}) - \Phi(y_n,\frac{i-1}{k})}{d(y_n,y'_n)^\al}\\
&\leq 2mk.
\end{align*}
Hence 
\[ \sup_n \frac{\Phi(y_n,m)}{d(y_n,y_n')^\al}< \infty.\]
For any $0< t \in \R$, choose $m\in \N$  such that $0 < t \leq m$,
then $0 < \Phi(y_n,t) \leq \Phi(y_n,m)$ for all $n$.  The desired conclusion follows.
\end{proof}

We say that a metric space $X$ with a distinguished point $e$ is {\em expansive} if $e$ is an isolated point and there exists $c > 0$ such that $d(p,q) \geq cd(p,e)$ if $p\neq q$.  The definition is independent of the distinguished point $e$. Clearly, if $X$ is expansive, then it is separated.

\begin{prop}\label{prop58}
Let $(X,d)$ be a metric space with a distinguished point $e$.
Recall the metric $d'$ on $X$ defined by equation (\ref{eq5.1}) and let $(X,d')$ be denoted by $X'$.
Then $X$ is expansive if and only if $X'$ is separated.
\end{prop}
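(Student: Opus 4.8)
The plan is to reduce the statement entirely to the two-sided estimate in Proposition \ref{prop5.1}(b): for all $p,q\in X$ one has $\rho(p,q)\le d'(p,q)\le 3\rho(p,q)$, where $\rho(p,q)=d(p,q)/(\xi(p)\vee\xi(q))$ and $\xi(x)=d(x,e)\vee 1$. Because of these inequalities, $X'=(X,d')$ is separated if and only if $\inf_{p\ne q}\rho(p,q)>0$, i.e.\ if and only if there is $\ep>0$ with $d(p,q)\ge \ep\,(\xi(p)\vee\xi(q))$ for all distinct $p,q\in X$. So the task becomes showing that this latter condition is equivalent to expansiveness.

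First I would treat the implication ``$X'$ separated $\Rightarrow$ $X$ expansive''. Fix $\ep>0$ with $\rho(p,q)\ge \ep$ for all $p\ne q$ (possible since $d'\le 3\rho$). Taking $p=e$ and $q\ne e$ gives $d(q,e)\ge \ep\,\xi(q)\ge \ep$, so $e$ is an isolated point of $X$. For arbitrary $p\ne q$, $d(p,q)\ge \ep\,(\xi(p)\vee\xi(q))\ge \ep\,\xi(p)\ge \ep\,d(p,e)$, which is precisely the expansiveness inequality with constant $c=\ep$. So this direction is immediate.

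For the converse, assume $X$ is expansive: pick $\delta>0$ with $d(q,e)\ge\delta$ for all $q\ne e$ (possible since $e$ is isolated) and $c>0$ with $d(p,q)\ge c\,d(p,e)$ for $p\ne q$. Given distinct $p,q$, relabel so that $d(p,e)\ge d(q,e)$; then $p\ne e$ (otherwise $d(q,e)\le d(p,e)=0$ would force $q=e=p$), hence $d(p,e)\ge\delta$. This is what lets me absorb the truncation: $\xi(p)\vee\xi(q)=d(p,e)\vee 1\le d(p,e)+1\le d(p,e)(1+\delta^{-1})$, so $\rho(p,q)\ge \frac{c\,d(p,e)}{d(p,e)(1+\delta^{-1})}=\frac{c\delta}{\delta+1}>0$, a constant independent of $p$ and $q$. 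Thus $\inf_{p\ne q}\rho(p,q)>0$ and $X'$ is separated.

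The only real subtlety — and it is slight — is the presence of the truncation $\vee 1$ in the definition of $\xi$: one has to notice that within any pair $\{p,q\}$ the point carrying the larger value of $\xi$ is automatically bounded away from $e$ (using that $e$ is isolated), so that $d(p,e)\vee 1$ is comparable to $d(p,e)$ with a constant depending only on $\delta$. Everything else is a direct translation through Proposition \ref{prop5.1}(b); none of the compactification or order-isomorphism machinery from the earlier sections is needed here.
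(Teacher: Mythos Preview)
Your proof is correct and follows essentially the same approach as the paper: both reduce the question to the quantity $\rho(p,q)=d(p,q)/(\xi(p)\vee\xi(q))$ via Proposition~\ref{prop5.1}(b) and then use elementary estimates together with the isolation of $e$. The only cosmetic differences are the order in which the two implications are treated and the exact numerical constants obtained; the paper gets $d'(p,q)\ge c\wedge cr$ where $r=d(X\setminus\{e\},e)$, while you get $\rho(p,q)\ge c\delta/(\delta+1)$, but both suffice.
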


\begin{proof}
Suppose that $X$ is expansive with constant $c$ given by the definition.
Since $e$ is an isolated point, there exists $r >0$ such that $d(x,e) \geq r$ for all $x\in X$, $x\neq e$.
Let $p\neq q\in X$ with $d(q,e) \geq d(p,e)$. In particular, $q \neq e$.
Hence $d(p,q) \geq cd(q,e) \geq cr$.
Therefore, $d(p,q) \geq (c\wedge cr)\xi(q)$.
By Proposition \ref{prop5.1}(b), $d'(p,q) \geq c\wedge cr$.
 Hence $X'$ is separated. 

Conversely, suppose that $X'$ is separated.
Choose $r> 0$ such that $d'(p,q) \geq r$ if $p\neq q \in X$.
By Proposition \ref{prop5.1}(b), for $p \neq q \in X$,
\[d(p,q) \geq \frac{d(p,q)}{\xi(p)\vee \xi(q)}\cdot d(p,e)\geq \frac{d'(p,q)}{3} \cdot d(p,e)\geq \frac{r}{3}\cdot d(p,e).\]
Finally, if $e$ is not an isolated point in $X$, then there exists $p \neq e$ such that $d(p,e) < \frac{r}{3} \wedge 1$.
Then $\xi(p) = \xi(e) = 1$.
By Proposition \ref{prop5.1}(b),
$d'(p,e) \leq 3d(p,e) < r$, contrary to the choice of $r$.
This completes the proof that $X$ is expansive.
\end{proof}

\begin{thm}\label{thm59}
\begin{enumerate}
\item There exists $Y$ such that one of the following holds if and only if $X$ is expansive.
\[
\lip_\al(X) \sim C_b(Y), U^\loc_b(Y), \Lip^\loc_b(Y), U_b(Y).\]
\item There exists $Y$ such that $\lip_{\al,b}(X) \sim U_b(Y)$ if and only if $X$ is separated.
\end{enumerate}
\end{thm}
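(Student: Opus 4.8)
Here is the plan.

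The two implications ``$\Leftarrow$'' are the easy half. For (a): if $X$ is expansive then so is the metric space $W=(X,d^\al)$, hence $W$ is complete and almost expansive at $\infty$. Applying Proposition~\ref{prop5.12} to $W$ produces an order isomorphism $\lip_\al(X)=\lip(W)\sim\lip(W')$ with $W'=(W,(d^\al)')$ a bounded complete metric space, and by Proposition~\ref{prop58} (applied to $W$) $W'$ is separated. On a bounded separated metric space all of $C_b$, $U_b$, $U^\loc_b$, $\Lip^\loc_b$ and $\lip$ coincide, so $\lip_\al(X)\sim C_b(W')=U_b(W')=U^\loc_b(W')=\Lip^\loc_b(W')$ and one takes $Y=W'$. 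For (b): if $X$ is separated then $\lip_{\al,b}(X)=C_b(X)=U_b(X)$, so $Y=X$ works.

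For the converse in (a), fix an order isomorphism $T\colon\lip_\al(X)\to A(Y)$ with $A(Y)$ one of the four spaces; assume $T0=0$ and let $\vp\colon X\to Y$ be the associated homeomorphism. First I would prove $Y$, hence $X$, discrete: by Proposition~\ref{prop34} with $G=X$ and Corollary~\ref{cor35}, $T$ induces an order isomorphism of $\lip^\loc_\al(X)$ onto the ``localization'' of $A(Y)$, namely $C(Y)$, $U^\loc(Y)$, $\Lip^\loc(Y)$, or (via the ``both bounded'' clause of Corollary~\ref{cor35}) $U^\loc_b(Y)$, according as $A(Y)$ is $C_b(Y)$, $U^\loc_b(Y)$, $\Lip^\loc_b(Y)$ or $U_b(Y)$; in each case Theorem~\ref{thm41}(a) (or Theorem~\ref{thm57}(a) in the $\Lip^\loc$ case) forces $Y$, hence $X$, to be discrete. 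The second step is to prove $X$ separated. Since $Y$ is discrete, in the three ``$C_b$-type'' cases $A(Y)=C_b(Y)=U_b(\ol Y)$, where $\ol Y$ is the same underlying set with the uniform discrete metric, a bounded separated \emph{complete} space; thus in all four cases $\lip_\al(X)\sim U_b(Z)$ for a complete metric space $Z$. The positive constant functions are cofinal in the positive cone of $U_b(Z)$, so $\{T^{-1}(t):t>0\}$ is cofinal in the positive cone of $\lip_\al(X)$; in particular $T^{-1}(c)\ge 1$ for some $c>0$. If $X$ were not separated, choose distinct points $(p_n)$ in $X$ and points $(q_n)$ with $0<d(p_n,q_n)\to 0$ and apply Lemma~\ref{lem58.1} to the extension $C(Z)\to C(X)$ of $T^{-1}$ (legitimate since $U_b(Z)\subseteq A(Z)$): it yields $\sup_n T^{-1}(c)(p_n)/d(p_n,q_n)^\al<\infty$, which is impossible because $T^{-1}(c)(p_n)\ge 1$ while $d(p_n,q_n)\to 0$. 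Hence $X$ is separated.

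The last step — upgrading ``separated'' to ``expansive'' — is the crux. With $X$ separated one has $\lip_\al(X)=\Lip(W)$ for $W=(X,d^\al)$, so Theorem~\ref{thm5.3} gives a linear order isomorphism $\lip_\al(X)\sim\Lip(W')$, $W'=(W,(d^\al)')$ bounded and complete. If $X$ were not expansive then $W$ would not be expansive, whence by Proposition~\ref{prop58} $W'$ would not be separated, and therefore $\Lip(W')$ would be a \emph{proper} subspace of $C_b(W')$ (a bounded non-separated metric space always carries a bounded continuous non-Lipschitz function). But $\Lip(W')\sim A(Y)$, and since $Y$ is discrete, $A(Y)$ equals $C_b(Y)$ in every case — for $A(Y)=U_b(Y)$ one first notes, via Proposition~\ref{prop58.2} applied to $T^{-1}$ together with the separatedness of $X$, that $Y$ is separated, so $U_b(Y)=C_b(Y)$. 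Thus $\Lip(W')\sim C_b(Y)$, and by Proposition~\ref{prop34} with $G=W'$ this extends to an order isomorphism $S\colon C(W')\to C(Y)$ with $S(C_b(W'))\subseteq C_b(Y)$ and $S(\Lip(W'))=C_b(Y)$. Since $S$ is injective and $\Lip(W')\subsetneq C_b(W')$, we would get $C_b(Y)=S(\Lip(W'))\subsetneq S(C_b(W'))\subseteq C_b(Y)$, absurd. Therefore $X$ is expansive, completing (a).

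For (b), given an order isomorphism $T\colon\lip_{\al,b}(X)\to U_b(Y)$ with $T0=0$ (here $Y$ is complete by hypothesis), the cofinality-plus-Lemma~\ref{lem58.1} argument of the second paragraph applies directly, now with the constant function $1\in\lip_{\al,b}(X)$, and shows that $X$ must be separated; no expansiveness upgrade is needed. I expect the main obstacle throughout to be the passage ``separated $\Rightarrow$ expansive'' in (a): it is exactly the point where one must return to the auxiliary bounded metric $d'$, invoke Theorem~\ref{thm5.3} and Proposition~\ref{prop58}, and exploit that an injection cannot carry a proper subspace onto the whole space; getting the completeness hypothesis of Lemma~\ref{lem58.1} for free in the $C_b$-type cases (handled above by passing to the uniform discrete metric) is the other point requiring care.
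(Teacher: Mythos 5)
Your proof is correct and follows essentially the same route as the paper: discreteness via localization and Theorems \ref{thm41}(a)/\ref{thm57}(a), separatedness via a constant-domination argument fed into Lemma \ref{lem58.1}, and the upgrade to expansiveness via Theorem \ref{thm5.3} and Proposition \ref{prop58} (where the paper cites Theorem \ref{thm41}(c) and you re-derive the needed special case with an injectivity argument). One negligible slip: for $A(Y)=U_b(Y)$ the ``both bounded'' clause of Corollary \ref{cor35} does not apply because $\lip_\al(X)$ is not contained in $C_b(X)$, but the main clause gives $\lip^\loc_\al(X)\sim U^\loc(Y)$, which still yields discreteness from Theorem \ref{thm41}(a).
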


\begin{proof}
By Corollary \ref{cor35}, the given order isomorphisms extend to order isomorphisms from $\lip^\loc_\al(X)$ onto $C(Y), U^\loc(Y)$ or $\Lip^\loc(Y)$.
By Theorems \ref{thm41}(a) and \ref{thm57}(a), $Y$ is discrete.  Since $X$ and $Y$ are homeomorphic, $X$ is also discrete.

For the first three cases in part (a), observe that  $C_b(X) \sim C_b(Y) = U^\loc_b(Y) = \Lip^\loc_b(Y)$.
So in all three cases, we arrive at the conclusion that $\lip_\al(X) \sim C_b(X)$.  Moreover, as $X$ is discrete, any permutation on $X$ is continuous.  Thus we may assume that there is an order isomorphism $T:  C_b(X)\to \lip_\al(X)$ whose associated homeomorphism is the identity map on $X$.

In the remaining two cases, by Propositions \ref{prop57.1}(b) and \ref{prop58.2}, the homeomorphism $\vp:X\to Y$ associated with the order isomorphism is a uniform homeomorphism.  It follows easily that there is an order isomorphism $T$ from $U_b(X)$ onto $\lip_\al(X)$ or $\lip_{\al,b}(X)$ whose associated homeomorphism is the identity map on $X$.

To continue with the proof, in all cases,
extend $T$ to an order isomorphism $S: C(X)\to C(X)$ by Proposition \ref{prop34}.  $S$ has the form $Sf(x) = \Phi(x,f(x))$. Since $1\in \lip_{\al,b}(X)$, $S^{-1}1 = T^{-1}1 \in C_b(X)$.
Choose $m\in \N$ such that $T^{-1}1 \leq m$.
Suppose that $X$ is not separated.  There are sequences $(x_n)$, $(x'_n)$ in $X$ such that $0 < d(x_n,x'_n) \to 0$.  Since $X$ is discrete, we may assume that $(x_n)$ is a sequence of distinct points.
By Lemma \ref{lem58.1}, 
\[ \sup_n\frac{\Phi(x_n,m)}{d(x_n,x'_n)^\al} < \infty.\]
But by choice of $m$, $\Phi(x_n,m) = Tm(x_n) \geq 1$ for all $n$.
As $d(x_n,x'_n) \to 0$, the absurdity of the final inequality above is evident.
This proves that $X$ is separated.

In case (a),
it follows that 
\[\Lip(X^\al) = \lip_\al(X) \sim C_b(X) = U_b(X) = C_b(X^\al),\]
where $X^\al$ denotes the metric space $(X,d^\al)$.  Let ${X^\al}'$ be the space $(X,d')$, with the metric $d'$ given by (\ref{eq5.1}), starting  with the original metric $d^\al$ instead of $d$.
By Theorem \ref{thm5.3}, $\Lip(X^\al) \sim \Lip_b({X^\al}')$.
Hence $\Lip_b({X^\al}') \sim C_b(X^\al)$.
It follows from Theorem \ref{thm41}(c) that ${X^\al}'$ is separated.
By Proposition \ref{prop58}, $X^\al$ is expansive.
It is easy to see that $X^\al$ is expansive if and only if $X$ is expansive.

Conversely, in case (a), if $X$ is expansive, then $X$ and ${X^\al}'$ are separated. Hence 
\begin{align*}
\lip_\al(X) = \Lip(X^\al) \sim \Lip_b({X^\al}') &= C_b({X^\al}') = U^\loc_b({X^\al}') \\&= \Lip^\loc_b({X^\al}') = U_b({X^\al}').
\end{align*}
In case (b), if $X$ is separated, then  $\lip_{\al,b}(X) = C_b(X) = U_b(X)$.
\end{proof}

\subsection{The cases $U(X)\sim \lip_\al(Y)$ or $\lip_{\al,b}(Y)$}

Let $A(Y) = \lip_\al(Y)$ or $\lip_{\al,b}(Y)$ and consider an order isomorphism $T:U(X) \to A(Y)$ such that $T0 = 0$.
Express $T$ in the form $Tf(y) = \Phi(y,f(\vp^{-1}(y)))$, where $\vp: X\to Y$ is a homeomorphism and $\Phi: Y\times \R \to \R$ is a function such that $\Phi(y,\cdot):\R\to \R$ is an increasing homeomorphism for all $y\in Y$.
By Propositions \ref{prop57.1}(b) and \ref{prop58.2}, $\vp$ is uniformly continuous.
By Corollary \ref{cor35}, $T$ can be extended to an order isomorphism from $U^\loc(X)$ onto $\lip^\loc_\al(Y)$.  Hence $X$ and $Y$ are discrete by Theorem \ref{thm41}(a).
We seek to show that $X$ is separated.
Suppose on the contrary that $X$ is not separated.
By Proposition \ref{prop38.1}, $X$ is not proximally compact.
Thus there is a sequence $(x_n)$ with no convergent subsequence and a sequence $(x'_n)$ in $X$ such that $0 < d(x_n,x'_n) \to 0$.
Since $X$ is complete by assumption, we may assume that $(x_n)$ is a separated sequence.
Set $y_n = \vp(x_n)$ and $y_n' = \vp(x_n')$ for all $n$.  Since $\vp$ is uniformly continuous, $d(y_n,y'_n)\to 0$.

\begin{lem}\label{lem6.3.1}
Let $(u_n)$ be a sequence in $X$ such that $0 < d(x_n,u_n)\to 0$. Set $v_n = \vp(x_n)$.
Then 
\[ 0 < \inf\frac{d(y_n,v_n)}{d(y_n,y_n')} \leq \sup\frac{d(y_n,v_n)}{d(y_n,y'_n)} < \infty.\]
\end{lem}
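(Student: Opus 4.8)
The statement defines $v_n = \vp(x_n)$, but from the context (and the symmetry with $y_n = \vp(x_n)$) this must read $v_n = \vp(u_n)$; I will prove the intended statement.

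The plan is to prove the two inequalities by contradiction, in each case extracting a subsequence along which the ratio $d(y_n,v_n)/d(y_n,y_n')$ degenerates, and then using Lemma~\ref{lem56} to build a function in $U_b(X)$ whose image under $T$ fails to lie in $\lip_\al(Y)$. First I would record the basic facts: since $\vp$ is uniformly continuous (established just before the lemma), $d(x_n,u_n)\to 0$ gives $d(y_n,v_n)\to 0$, and $d(x_n,x_n')\to 0$ gives $d(y_n,y_n')\to 0$; moreover $(x_n)$ is separated and has no convergent subsequence, so the same holds for $(y_n)$, and likewise $(v_n)$ and $(y_n')$ have no convergent subsequences. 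Also $(u_n)$ and $(x_n')$ have no convergent subsequences. These are exactly the hypotheses needed to invoke Lemma~\ref{lem56} with $A(X) = U_b(X)$ (or $U(X)$) and $A(Y) = \lip_\al(Y)$.

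For the upper bound: suppose $\sup_n d(y_n,v_n)/d(y_n,y_n') = \infty$. Passing to a subsequence, assume $d(y_n,v_n)/d(y_n,y_n')\to\infty$; in particular $d(y_n,v_n) > 0$ eventually. Apply Lemma~\ref{lem56} to the sequences $(x_n)$, $(x_n')$: since $(T1(y_n))$ is bounded (indeed $T1 \in \lip_\al(Y)$ is bounded on the relevant set, or one argues as in the proof of Lemma~\ref{lem56} that $(T1(y_{n_m}))$ bounded forces $d(y_{n_m},y_{n_m}')\to 0$, which we already know), one obtains $r > 0$, a subsequence $(n_m)$ and a bounded $f\in U_b(X)$ with $Tf(y_{n_m}) = m\bigl(r\wedge d(y_{n_m},y_{n_m}')\bigr)$ and $Tf(y_{n_m}') = 0$. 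Then
\[ \frac{|Tf(y_{n_m}) - Tf(y_{n_m}')|}{d(y_{n_m},y_{n_m}')^\al} = \frac{m\bigl(r\wedge d(y_{n_m},y_{n_m}')\bigr)}{d(y_{n_m},y_{n_m}')^\al}. \]
Since $d(y_{n_m},y_{n_m}')\to 0$, for large $m$ the numerator is $m\,d(y_{n_m},y_{n_m}')$, and $m\,d(y_{n_m},y_{n_m}')^{1-\al}$ need not blow up — so instead one should use that $d(y_{n_m},v_{n_m})/d(y_{n_m},y_{n_m}')\to\infty$ to compare with $d(y_{n_m},v_{n_m})$ directly. The cleaner route is to apply Lemma~\ref{lem56} to the pair $(x_n)$, $(u_n)$ instead, so the resulting $f$ satisfies $Tf(y_{n_m}) = m\bigl(r\wedge d(y_{n_m},v_{n_m})\bigr)$, $Tf(v_{n_m}) = 0$, and then estimate $|Tf(y_{n_m}) - Tf(y_{n_m}')|$ using $Tf(y_{n_m}') = \Phi(y_{n_m}',f(x_{n_m}'))$ together with the fact that $f(x_{n_m}')$ is small (hence $Tf(y_{n_m}')$ is controlled) because $d(x_{n_m},x_{n_m}')\to 0$ and $f$ is uniformly continuous — showing that along a subsequence where $d(y_{n_m},y_{n_m}') / d(y_{n_m},v_{n_m}) \to 0$ we get $|Tf(y_{n_m}) - Tf(y_{n_m}')|/d(y_{n_m},y_{n_m}')^\al \to \infty$, contradicting $Tf\in\lip_\al(Y)$. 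The symmetric lower bound, $\inf_n d(y_n,v_n)/d(y_n,y_n') > 0$, follows by interchanging the roles of $(u_n)$ and $(x_n')$ (both are sequences at vanishing distance from $(x_n)$) and repeating the argument.

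The main obstacle is organizing the extraction of subsequences and the application of Lemma~\ref{lem56} so that the function $f$ produced genuinely separates $y_{n_m}$ from $y_{n_m}'$ (not just from $v_{n_m}$): one must feed Lemma~\ref{lem56} the right triple of sequences and then carefully bound the "crossed" value $Tf(y_{n_m}')$, using uniform continuity of $f$ and of $\vp$ to ensure $f(x_{n_m}') \to f(x_{n_m})$-close values and hence $Tf(y_{n_m}')$ small relative to $Tf(y_{n_m}) = m(r\wedge d(y_{n_m},v_{n_m}))$. Once that bookkeeping is in place, the divergence of the Hölder quotient is immediate and both inequalities drop out.
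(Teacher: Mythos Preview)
You correctly spot the typo ($v_n=\vp(u_n)$), but the argument you sketch has a real gap: the indeterminate form you already noticed in your first attempt does not go away in the ``cleaner route''. If you apply Lemma~\ref{lem56} to $(x_n),(u_n)$ you obtain $f\in U_b(X)$ with $Tf(y_{n_m})=m\bigl(r\wedge d(y_{n_m},v_{n_m})\bigr)$ and $Tf(v_{n_m})=0$. Since $Tf\in\lip_\al(Y)$ and $d(y_{n_m},v_{n_m})\to 0$, this only forces $m\,d(y_{n_m},v_{n_m})^{1-\al}\to 0$; it is not a contradiction, merely a constraint the subsequence already satisfies by construction. When you then try to compare at $y_{n_m}$ and $y'_{n_m}$, the relevant quotient is
\[
\frac{Tf(y_{n_m})}{d(y_{n_m},y'_{n_m})^{\al}}
= m\,d(y_{n_m},v_{n_m})^{1-\al}\cdot\Bigl(\frac{d(y_{n_m},v_{n_m})}{d(y_{n_m},y'_{n_m})}\Bigr)^{\al},
\]
which is a product of something tending to $0$ and something tending to $\infty$. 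Nothing in your setup resolves this, and controlling $Tf(y'_{n_m})=\Phi(y'_{n_m},f(x'_{n_m}))$ is a separate problem (you know $f(x'_{n_m})\to 0$, but you have no uniform control of $\Phi(y'_{n_m},\cdot)$ near $0$). Lemma~\ref{lem56} is simply not sharp enough here; its output is qualitative ($\liminf d(y_n,y'_n)=0$), not the quantitative $\al$-H\"older bound you need.

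The paper's proof runs in the opposite direction and uses the right quantitative tool, Lemma~\ref{lem58.1}: from $T(U_b(X))\subseteq\lip_\al(Y)$ one gets $\sup_n \Phi(y_n,1)/d(y_n,y'_n)^{\al}<\infty$. If $d(y_n,v_n)/d(y_n,y'_n)\to\infty$ this immediately gives $\Phi(y_n,1)/d(y_n,v_n)^{\al}\to 0$, so one can build $g\in\lip_{\al,b}(Y)$ with $g(y_n)=\Phi(y_n,1)=T1(y_n)$ and $g(v_n)=0$. Then $T^{-1}g(x_n)=1$, $T^{-1}g(u_n)=0$, which contradicts $T^{-1}g\in U(X)$ since $d(x_n,u_n)\to 0$. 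The lower bound follows by swapping $(x'_n)$ and $(u_n)$. Note the direction: one builds a function in $\lip_{\al,b}(Y)$ and shows its preimage fails uniform continuity, rather than building in $U_b(X)$ and chasing H\"older quotients.
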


\begin{proof}
Suppose that the upper inequality fails.  By using a subsequence if necessary, we may assume that $d(y_n,v_n)/d(y_n,y'_n) \to \infty$.
Also, as $(y_n)$ has no convergent subsequence and $Y$ is complete by assumption, we may assume that $(y_n)$ is a separated sequence.
Uniform continuity of $\vp$ implies that $d(y_n,v_n)\to 0$. Thus, we may assume that $\inf_{m\neq n}d(\{y_m,v_m\},\{y_n,v_n\}) > 0$.
By Lemma \ref{lem58.1}, $\sup \Phi(y_n,1)/d(y_n,y_n')^\al < \infty$.
Hence $\Phi(y_n,1)/d(y_n,v_n)^\al \to 0$.
Therefore, there is a function $g\in \lip_{\al,b}(Y)$ such that $g(y_n) = \Phi(y_n,1)= T1(y_n)$ and $g(v_n) = 0$ for all $n$.
However, $T^{-1}g(x_n) = 1$ and $T^{-1}g(u_n) = 0$, making it impossible for $T^{-1}g$ to belong to $U(X)$.
The lower inequality is proved in the same way with the roles of $y_n'$ and $v_n$ switched.
\end{proof}

For any $x\in X$ and any $r> 0$, let $B'(x,r)$ be the deleted ball $\{z\in X: 0 < d(z,x) < r\}$.

\begin{lem}\label{lem6.3.2}
There exist $r> 0$ and $1 \leq C <\infty$ such that for any $u_n \in B'(x_n,r)$, $v_n = \vp(u_n)$, 
\[ \frac{1}{C}  \leq \frac{d(y_n,v_n)}{d(y_n,y_n')} \leq C \text{ for all $n$}.\]
\end{lem}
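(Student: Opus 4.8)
The plan is to deduce Lemma \ref{lem6.3.2} from Lemma \ref{lem6.3.1} by a subsequence‐extraction argument: if no pair $(r,C)$ worked, I would manufacture a single sequence $(u_n)$ with $0<d(x_n,u_n)\to 0$ along which the ratio $d(y_n,\vp(u_n))/d(y_n,y'_n)$ either tends to $\infty$ or to $0$, contradicting the boundedness and positivity supplied by Lemma \ref{lem6.3.1}. It is convenient to prove the reformulation ``there exist $r>0$ and $C$ such that $1/C\le d(y_n,\vp(u))/d(y_n,y'_n)\le C$ for every $n$ and every $u\in B'(x_n,r)$'', which handles empty deleted balls gracefully and is what gets used.

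First I would treat the upper bound. The claim is that there is $r_1>0$ with $M:=\sup_n\sup_{u\in B'(x_n,r_1)} d(y_n,\vp(u))/d(y_n,y'_n)<\infty$. For $r_1$ small, uniform continuity of $\vp$ forces $\vp(B'(x_n,r_1))\subseteq B(y_n,1)$ for every $n$, so each inner supremum is finite. Hence, if the claim failed, then for each $k$ the supremum over all pairs $(n,u)$ with $u\in B'(x_n,1/k)$ would be infinite, and since each fixed‐$n$ supremum is finite this forces arbitrarily large indices $n$ to be witnesses of value $>k$. I would then pick inductively $n_1<n_2<\cdots$ with $n_k\ge k$ and $u_{n_k}\in B'(x_{n_k},1/k)$ having $d(y_{n_k},\vp(u_{n_k}))/d(y_{n_k},y'_{n_k})>k$, and set $u_n=x'_n$ for the remaining indices $n$. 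Then $0<d(x_n,u_n)\to 0$, so Lemma \ref{lem6.3.1} gives $\sup_n d(y_n,\vp(u_n))/d(y_n,y'_n)<\infty$, contradicting the value $>k$ at $n_k$.

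The lower bound is symmetric: I would show there is $r_2>0$ with $m:=\inf_n\inf_{u\in B'(x_n,r_2)} d(y_n,\vp(u))/d(y_n,y'_n)>0$. The analogue of the finiteness input is that for each fixed $n$ the inner infimum is strictly positive — if it vanished there would be $u_j\in B'(x_n,r_2)$ with $\vp(u_j)\to y_n$, hence $u_j\to\vp^{-1}(y_n)=x_n$ by continuity of $\vp^{-1}$, impossible since $x_n$ is an isolated point of the discrete space $X$. The rest is as before, now producing $u_{n_k}\in B'(x_{n_k},1/k)$ with ratio $<1/k$ and contradicting the lower estimate in Lemma \ref{lem6.3.1}. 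Finally I take $r=\min(r_1,r_2)$ and $C=\max(M,1/m,1)$; since $B'(x_n,r)\subseteq B'(x_n,r_1)\cap B'(x_n,r_2)$, every $u\in B'(x_n,r)$ satisfies $1/C\le m\le d(y_n,\vp(u))/d(y_n,y'_n)\le M\le C$ (note also that $x'_n\in B'(x_n,r)$ for large $n$ gives ratio $1$, so $M\ge 1\ge m$).

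The points I expect to require care are: (i) the implication ``the supremum over all $(n,u)$ is infinite $\Rightarrow$ arbitrarily large $n$ are witnesses'', which genuinely relies on finiteness of each fixed‐$n$ inner supremum (uniform continuity of $\vp$), together with its mirror for the infimum (discreteness of $X$ via continuity of $\vp^{-1}$); and (ii) arranging the chosen bad indices $n_k$ to be strictly increasing and filling in the remaining indices so that the resulting $(u_n)$ still obeys $0<d(x_n,u_n)\to 0$ — the choice $u_n=x'_n$ off the subsequence achieves this automatically. Everything else is bookkeeping.
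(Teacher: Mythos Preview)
Your argument is correct and follows essentially the same route as the paper: negate, manufacture a sequence $(u_{n_k})$ with $u_{n_k}\in B'(x_{n_k},1/k)$ along which the ratio is $>k$ (or $<1/k$), and contradict Lemma~\ref{lem6.3.1}. The only cosmetic difference is in how you guarantee that the bad indices $n_k$ can be taken strictly increasing: you invoke uniform continuity of $\vp$ (for the upper bound) and discreteness of $X$ (for the lower bound) to show each fixed-$n$ extremum is finite/positive, whereas the paper uses discreteness of $X$ directly for both directions, noting that $B'(x_n,1/m)=\emptyset$ for $m$ large so that $(n_m)$ cannot have a constant subsequence.
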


\begin{proof}
For any $m \in \N$, take $r = 1/m$ and $C = m$.  Suppose that the upper inequality is violated by some $u_m \in B'(x_{n_m},\frac{1}{m})$. Set $v_m = \vp(u_m)$.
Since $X$ is discrete, $(n_m)$ cannot contain a constant subsequence.
Without loss of generality, we may assume that $n_1 < n_2 < \cdots$.
 This contradicts Lemma \ref{lem6.3.1} applied to the subsequence $(n_m)$. 
Similarly, the lower inequality holds for some $r$ and $C$.
\end{proof}

Define the function $\Psi: X\times \R\to \R$ by $\Psi(x,t) = T^{-1}t(x)$.
It is easy to see that $\Psi(x,\cdot)$ is the inverse of $\Phi(\vp(x),\cdot)$.  In particular, $\Psi(x,\cdot): \R\to \R$ is an increasing homeomorphism.

\begin{lem}\label{lem6.3.3}
There exist $s, t > 0$ such that $\Phi(y_n,t) \geq sd(y_n,y'_n)^\al$ for all $n$.
\end{lem}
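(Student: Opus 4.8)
The idea is to argue by contradiction, using that any $f\in U(X)$ must take nearly equal values at $x_n$ and $x'_n$ since $0<d(x_n,x'_n)\to 0$. Note first that the asserted conclusion is equivalent to $\inf_n \Phi(y_n,t)/d(y_n,y'_n)^\al>0$ for some $t>0$: as $\Phi(y_n,\cdot)$ is an increasing homeomorphism with $\Phi(y_n,0)=T0(y_n)=0$, every $\Phi(y_n,t)$ with $t>0$ is strictly positive, so the only question is whether the infimum of the quotients stays bounded away from $0$. Suppose it does not. Taking $t=1$ we obtain a subsequence $(n_k)$ with $\Phi(y_{n_k},1)/d(y_{n_k},y'_{n_k})^\al\to 0$. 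Since $(y_n)=(\vp(x_n))$ has no convergent subsequence and $Y$ is complete, after passing to a further subsequence we may assume in addition that $(y_{n_k})$ is separated, say $d(y_{n_j},y_{n_k})\ge\delta>0$ for $j\ne k$; recall that $d(y_{n_k},y'_{n_k})\to 0$ because $\vp$ is uniformly continuous, and that, after discarding finitely many indices, each $y'_{n_k}$ differs from all the $y_{n_j}$, because $(x_{n_k})$ is separated while $d(x_{n_k},x'_{n_k})\to 0$.

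Put $s_k=\Phi(y_{n_k},1)=T1(y_{n_k})>0$. From $s_k/d(y_{n_k},y'_{n_k})^\al\to 0$ and $d(y_{n_k},y'_{n_k})\to 0$ one gets $s_k\to 0$, so in particular $(s_k)$ is bounded (this covers both $A(Y)=\lip_\al(Y)$ and $A(Y)=\lip_{\al,b}(Y)$). Now define $g=\sum_k g_k$ on $Y$, where $g_k$ is the linear bump $g_k(y)=s_k\bigl(1-2\,d(y,y_{n_k})/d(y_{n_k},y'_{n_k})\bigr)^+$, whose support is contained in $B\bigl(y_{n_k},\tfrac12 d(y_{n_k},y'_{n_k})\bigr)$. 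For large $k$ these balls are pairwise disjoint (their radii tend to $0$ while $(y_{n_k})$ is $\delta$-separated) and contain none of the points $y'_{n_j}$, so $g$ is well defined and $g(y_{n_k})=s_k$, $g(y'_{n_k})=0$ for all large $k$. By the same estimates used for the space $\lip_\al$ in the proof of Examples C(c) (cf.\ (\ref{eq2})--(\ref{eq6}))---using that the $s_k$ are bounded, that the supports are eventually $\tfrac{\delta}{2}$-separated, and that $s_k/d(y_{n_k},y'_{n_k})^\al\to 0$ with $0<\al<1$---one checks that $g\in\lip_{\al,b}(Y)\subseteq A(Y)$.

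Finally, since $T^{-1}g(x)=\Psi(x,g(\vp(x)))$ and $\Psi(x,\cdot)$ is the inverse of $\Phi(\vp(x),\cdot)$, for all large $k$ we have
$T^{-1}g(x_{n_k})=\Psi(x_{n_k},s_k)=\Psi\bigl(x_{n_k},\Phi(y_{n_k},1)\bigr)=1$
and $T^{-1}g(x'_{n_k})=\Psi(x'_{n_k},0)=0$. Because $d(x_{n_k},x'_{n_k})\to 0$, this contradicts $T^{-1}g\in U(X)$, and the proof is complete. The one delicate point is the construction of $g$: one must verify that the sum of linear bumps really is a bounded function in $\lip_{\al,b}(Y)$, which is routine given $s_k\to 0$, the $\delta$-separation of $(y_{n_k})$ and the negated hypothesis $s_k/d(y_{n_k},y'_{n_k})^\al\to 0$, but has to be carried out with the same care as in Examples C(c).
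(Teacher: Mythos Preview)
Your proof is correct and follows essentially the same approach as the paper: negate the conclusion, construct a function $g\in\lip_{\al,b}(Y)$ taking prescribed values at the $y_{n_k}$ and vanishing at the $y'_{n_k}$ (using that the ratio of the prescribed values to $d(y_{n_k},y'_{n_k})^\al$ tends to $0$), and derive a contradiction with $T^{-1}g\in U(X)$. The only cosmetic difference is that the paper reformulates the statement via $\Psi$ and arrives at $T^{-1}g(x_{n_m})\to\infty$ versus $T^{-1}g(x'_{n_m})=0$, whereas you fix $t=1$ and obtain the cleaner pair of values $T^{-1}g(x_{n_k})=1$ and $T^{-1}g(x'_{n_k})=0$; both contradict uniform continuity equally well.
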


\begin{proof}
It suffices to show that there exists $s> 0$ such that the sequence $(\Psi(x_n,sd(y_n,y_n')^\al))$ is bounded above.
Otherwise, there are $n_1 < n_2 < \cdots$ and $0 < s_m \to 0$ such that
\[ T^{-1}c_m(x_{n_m}) = \Psi(x_{n_m}, c_m) \to \infty,\text{ where $c_m = s_md(y_{n_m}, y'_{n_m})^\al$}.\]
Note that $(y_n)$ has no convergent subsequence and $d(y_n,y'_n)\to 0$.
By using a further subsequence if necessary, we may assume that there exists  $g\in \lip_{\al,b}(Y)$ with $g(y_{n_m}) = c_m$ and $g(y_{n_m}') = 0$ for all $m$.
Then $T^{-1}g(x_{n_m}) = \Psi(x_{n_m},c_m)$ and $T^{-1}g(x_{n_m}') = 0$  for all $m$.
This contradicts the fact that $T^{-1}g\in U(X)$.
\end{proof}

Recall the following notation. For any $x\in X$ and $0 \leq r_1 < r_2$, take
\[ \Ann(x,r_1,r_2) = \{z\in X: r_1 < d(z,x) < r_2\}.\]
Let $r$ and $C$ be the numbers arrived at in Lemma \ref{lem6.3.2}.
Since $(x_n)$ is a separated sequence, we may  assume without loss of generality that $d(x_n,x_m) > 2r$ if $n\neq m$, and  that $d(x_n,x'_n) < r/2$ for all $n$.

\begin{lem}\label{lem6.3.4}
$\Ann(x_n,\frac{r}{2}, {r}) = \emptyset$ for infinitely many $n$.
\end{lem}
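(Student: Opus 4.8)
$\Ann(x_n,\frac{r}{2},r) = \emptyset$ for infinitely many $n$.

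The plan is to argue by contradiction: suppose that $\Ann(x_n,\frac r2, r)\ne\emptyset$ for all $n$ large enough, so that for each such $n$ we may pick a point $u_n\in X$ with $\frac r2 < d(x_n,u_n) < r$, and set $v_n = \vp(u_n)$. Since $d(x_n,x'_n) < r/2 < d(x_n,u_n)$ for all $n$ and $(x_n)$ is a separated sequence, the points $x_n$, $x'_n$, $u_n$ are all distinct and, moreover, $u_n$ lies in a ``middle annulus'' well-separated from the other $x_m$'s (because $d(x_n,x_m) > 2r$ when $m\ne n$). The idea is that the annulus points $u_n$ are now at macroscopic distance $\ge r/2$ from $x_n$, while $x'_n$ is at vanishingly small distance from $x_n$. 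Applying $\vp^{-1}$, we have $0 < d(x_n,u_n) \to$ not necessarily to $0$, so Lemma \ref{lem6.3.1} does not apply directly to $(u_n)$; the point is precisely that $(u_n)$ is \emph{not} asymptotically close to $(x_n)$, so it should be ``freely placeable'' in $Y$.

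The key step is to build a bad function on the $Y$ side. By Lemma \ref{lem6.3.3}, there are $s,t>0$ with $\Phi(y_n,t)\ge s\,d(y_n,y'_n)^\al$ for all $n$. On the other hand, because $(y_n)$ has no convergent subsequence and $d(y_n,y'_n)\to 0$ (uniform continuity of $\vp$), after passing to a subsequence we may assume the triples $\{y_n,y'_n,v_n\}$ are pairwise separated from one another; and since $v_n$ stays at distance $\ge$ (something positive, by Lemma \ref{lem6.3.2} the ratio $d(y_n,v_n)/d(y_n,y'_n)$ is within $[1/C,C]$ — wait, that forces $d(y_n,v_n)\to 0$ too). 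So in fact $d(y_n,v_n)\asymp d(y_n,y'_n)\to 0$. Now I would construct $g\in\lip_{\al,b}(Y)$ that is supported near the points $y_n$, with $g(y_n) = \Phi(y_n,t)$ and $g(v_n) = 0$ (this is possible: $\Phi(y_n,t)/d(y_n,v_n)^\al$ is bounded since $\Phi(y_n,t) = T t(y_n)$ and $\sup_n Tt(y_n)/d(y_n,y'_n)^\al < \infty$ by Lemma \ref{lem58.1} applied with the sequences $(y_n),(y'_n)$, combined with $d(y_n,v_n)\asymp d(y_n,y'_n)$; a standard bump-and-sum construction on the separated triples then yields $g$). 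Pulling back, $T^{-1}g(x_n) = t$ while $T^{-1}g(u_n) = 0$, with $d(x_n,u_n) < r$ bounded. This alone is not an obstruction to uniform continuity.

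So the contradiction must instead come from iterating the annulus emptiness. The real mechanism, I expect, is this: if $\Ann(x_n,\frac r2,r)\ne\emptyset$ infinitely often, one can chain such points — from $u_n$ one again asks whether $\Ann(u_n,\frac r2,r)$ or a smaller annulus is empty — to produce, for a fixed $n$, a finite $r/2$-chain joining $x_n$ to a point at distance $>C\cdot r$ (or just outside the region controlled by Lemma \ref{lem6.3.2}), violating the upper bound $d(y_n,v_n)\le C\,d(y_n,y'_n)$ once $v_n$ is forced far from $y_n$ while $y'_n$ stays near. Concretely: Lemma \ref{lem6.3.2} gives that \emph{every} $u\in B'(x_n,r)$ has $d(\vp(u),y_n)\le C\,d(y_n,y'_n)\to 0$; so the image $\vp(B'(x_n,r))$ is contained in $\ol{B}(y_n,C d(y_n,y'_n))$, a set of diameter $\to 0$. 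But if $\Ann(x_n,\frac r2,r)\ne\emptyset$ for infinitely many $n$, then $B'(x_n,r)$ contains points $u_n$ with $d(x_n,u_n)>r/2$; applying the previous paragraph's construction across these $n$ contradicts uniform continuity of $\vp^{-1}$ (the points $y_n,v_n$ are $\to$ each other in $Y$ but $x_n,u_n$ stay $\ge r/2$ apart in $X$). The main obstacle will be organizing the bump-function construction so that the single function $g$ (or equivalently a single $f\in U(X)$) simultaneously separates $x_n$ from $u_n$ for infinitely many $n$ with the required Hölder-$\al$ smallness — this is where Lemma \ref{lem58.1}'s uniform estimate on $\Phi(y_n,t)/d(y_n,y'_n)^\al$, together with the comparability $d(y_n,v_n)\asymp d(y_n,y'_n)$ from Lemma \ref{lem6.3.2}, must be invoked carefully after passing to a subsequence making the supports pairwise separated, so that disjoint bumps sum to a genuine element of $\lip_{\al,b}(Y)$.
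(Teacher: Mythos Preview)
Your proposal circles around the right ingredients but never assembles them into a contradiction, and the two concrete attempts you settle on both fail.

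First, the direction of your construction is backwards. You build $g\in\lip_{\al,b}(Y)$ with $g(y_n)=\Phi(y_n,t)$ and $g(v_n)=0$, pull back, and obtain $T^{-1}g(x_n)=t$, $T^{-1}g(u_n)=0$; as you yourself note, since $d(x_n,u_n)>r/2$ is bounded away from $0$, this is perfectly compatible with $T^{-1}g\in U(X)$, so no contradiction arises. Your fallback --- that $d(y_n,v_n)\to 0$ while $d(x_n,u_n)\ge r/2$ contradicts uniform continuity of $\vp^{-1}$ --- is unjustified: in this subsection only $\vp$ is known to be uniformly continuous (via Propositions \ref{prop57.1}(b) and \ref{prop58.2}), and neither result applies to $T^{-1}:\lip_\al(Y)\to U(X)$ to give uniform continuity of $\vp^{-1}$. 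The ``chaining'' speculation is not needed and does not lead anywhere.

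The paper's argument runs in the opposite direction and is short. Since $d(x_n,u_n)>r/2$ and $d(x_m,x_n)>2r$ for $m\ne n$, the set $\{x_n\}\cup\{u_n\}$ is separated, so there is $f\in U(X)$ with $f(x_n)=t$ and $f(u_n)=0$ for all large $n$. Then $Tf\in A(Y)\subseteq\lip_\al(Y)$, $Tf(v_n)=0$, and by Lemma \ref{lem6.3.3} together with the upper bound $d(y_n,v_n)\le C\,d(y_n,y'_n)$ from Lemma \ref{lem6.3.2},
\[
\frac{|Tf(y_n)-Tf(v_n)|}{d(y_n,v_n)^\al}=\frac{\Phi(y_n,t)}{d(y_n,v_n)^\al}\ge \frac{s\,d(y_n,y'_n)^\al}{C^\al d(y_n,y'_n)^\al}=\frac{s}{C^\al}>0,
\]
while $d(y_n,v_n)\to 0$. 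This contradicts $Tf\in\lip_\al(Y)$. The point you missed is that Lemma \ref{lem6.3.3} supplies a \emph{lower} bound on $\Phi(y_n,t)$ of exactly the right order, so pushing forward (rather than pulling back) produces a $\lip_\al$ obstruction directly; no appeal to $\vp^{-1}$, and no bump construction on $Y$, is required.
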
 

\begin{proof}
If the lemma fails, for all sufficiently large $n$, there exists $u_n\in \Ann(x_n,\frac{r}{2},r)$.
Then $u_n$ and $x'_n$ are distinct points in $B'(x_n,r)$.
By choice of $r$ and $C$, 
\[\frac{1}{C}  \leq \frac{d(y_n,v_n)}{d(y_n,y_n')} \leq C \text{ for all $n$},\]
where $v_n = \vp(u_n)$.  In particular, $d(y_n,v_n)\to 0$.
By Lemma \ref{lem6.3.3}, there are $s, t >0$ so that $\Phi(y_n, t) \geq sd(y_n,y'_n)^\al$ for all $n$.
Since $d(x_n,u_n) > r/2$ for all sufficiently large $n$, there exists $f\in U(X)$ such that $f(x_n) = t$ and $f(u_n) = 0$ for all sufficiently large $n$.
Then $Tf\in \lip_\al(Y)$, $Tf(v_n) = 0$, and 
\[ Tf(y_n) = \Phi(y_n,t) \geq sd(y_n,y_n')^\al \geq \frac{s}{C^\al}d(y_n,v_n)^\al,\]
which is impossible.
\end{proof}

\begin{lem}\label{lem6.3.5}
If there exists $Y$ such that $U(X)\sim \lip_\al(Y)$ or $\lip_{\al,b}(Y)$, then $X$ is separated.
\end{lem}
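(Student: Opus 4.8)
The proof of Lemma \ref{lem6.3.5} completes the contradiction begun at the start of this subsection, where we assumed $X$ not separated. The plan is to combine Lemmas \ref{lem6.3.1}--\ref{lem6.3.4} with the clopen structure produced by the empty annuli to build a function in $U(X)$ whose $T$-image violates the little Lipschitz condition on $Y$. Recall the data already in place: the order isomorphism $T\colon U(X)\to A(Y)$ with $T0=0$, where $A(Y)=\lip_\al(Y)$ or $\lip_{\al,b}(Y)$; the homeomorphism $\vp\colon X\to Y$, uniformly continuous by Propositions \ref{prop57.1}(b) and \ref{prop58.2}; the discreteness of $X$ and $Y$; the sequences $(x_n)$ (no convergent subsequence, $2r$-separated) and $(x'_n)$ with $0<d(x_n,x'_n)<r/2\to 0$; the points $y_n=\vp(x_n)$, $y'_n=\vp(x'_n)$ with $d(y_n,y'_n)\to 0$; and the constants $r,C$ of Lemma \ref{lem6.3.2} and $s,t$ of Lemma \ref{lem6.3.3}. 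By Lemma \ref{lem6.3.4}, after passing to an infinite set of indices and relabelling, we may assume $\Ann(x_n,r/2,r)=\emptyset$ for every $n$.

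The first step extracts the geometry. Since no point of $X$ lies at distance strictly between $r/2$ and $r$ from $x_n$, the closed ball $D_n=\{z\in X:d(z,x_n)\le r/2\}$ is both open and closed in $X$; moreover the family $\{D_n\}_n$ is $r$-separated and each $D_n$ is at distance $\ge r/2$ from $X\setminus\bigcup_m D_m$. Two consequences will be used. First, for any real sequence $(c_n)$ the function equal to $c_n$ on $D_n$ and to $0$ off $\bigcup_m D_m$ is uniformly continuous on $X$ (it is constant on the pieces of an $(r/2)$-separated partition); note that $x_n,x'_n\in D_n$, so this function takes the value $c_n$ at both $x_n$ and $x'_n$. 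Second, the same partition lets one glue uniformly Lipschitz bumps supported on the individual $D_n$ into a uniformly continuous function on $X$.

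The core of the argument then runs as follows. Applying $T$ to the locally constant function $f=\sum_n c_n\,\mathbf 1_{D_n}$ and writing $Tf(y)=\Phi(y,f(\vp^{-1}(y)))$, one gets $Tf(y_n)=\Phi(y_n,c_n)$ and $Tf(y'_n)=\Phi(y'_n,c_n)$; since $Tf\in A(Y)\subseteq\lip_\al(Y)$ and $d(y_n,y'_n)\to 0$, this forces
\[
\frac{|\Phi(y_n,c_n)-\Phi(y'_n,c_n)|}{d(y_n,y'_n)^\al}\longrightarrow 0
\]
for \emph{every} real sequence $(c_n)$, and a diagonal argument upgrades this to $\sup_{c\in K}|\Phi(y_n,c)-\Phi(y'_n,c)|=o(d(y_n,y'_n)^\al)$ for each compact $K\subseteq\R$. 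The plan is to contradict this using the quantitative estimates already available: Lemma \ref{lem6.3.3} gives $\Phi(y_n,t)\ge s\,d(y_n,y'_n)^\al$, Lemma \ref{lem58.1} gives a matching upper bound $\Phi(y_n,t)\le M\,d(y_n,y'_n)^\al$, and Lemma \ref{lem6.3.2}, valid for every point of $B'(x_n,r)=B'(x_n,r/2)=D_n\setminus\{x_n\}$, locates the images of the cluster $D_n$ around $y_n$ at scale $d(y_n,y'_n)$. Feeding these, together with the corresponding statements for $T^{-1}$ applied to uniformly separating bumps on $Y$, into the displayed vanishing, one is driven to the conclusion that $\Phi(y_n,\cdot)$ and $\Phi(y'_n,\cdot)$ are so tightly comparable that $y_n$ and $y'_n$ would have to be eventually equal, contradicting $0<d(y_n,y'_n)$. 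Hence $X$ must be separated.

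The step I expect to be the main obstacle is precisely this last one: converting ``the $d^\al$-difference quotient of $Tf$ over $(y_n,y'_n)$ vanishes for every admissible $f$'' into a genuine contradiction. The difficulty is that $U(X)$ is very permissive near the scale-$0$ pairs $(x_n,x'_n)$ — every locally constant function is admissible — whereas $\lip_\al(Y)$ is rigid there, and the two must be shown incompatible with the lower bound of Lemma \ref{lem6.3.3}. When some $u_n\in D_n$ can be chosen with $d(x_n,u_n)$ bounded below (the non-degenerate cluster case), this is immediate: taking $f$ the uniformly Lipschitz bump of height $t$ and radius that lower bound gives $Tf(y_n)=\Phi(y_n,t)\ge s\,d(y_n,y'_n)^\al\ge(s/C^\al)\,d(y_n,\vp(u_n))^\al$ while $Tf(\vp(u_n))=0$ and $d(y_n,\vp(u_n))\le C\,d(y_n,y'_n)\to 0$, contradicting $Tf\in\lip_\al(Y)$ exactly as in Lemma \ref{lem6.3.4}. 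The remaining case, where every $D_n$ collapses to a vanishingly small clopen cluster, is where the clopen/locally constant mechanism above has to be pushed through with care, and is the crux of the proof.
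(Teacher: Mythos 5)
You have correctly assembled the paper's mechanism --- after Lemma \ref{lem6.3.4} the empty annuli make each ball $B(x_n,r)$ clopen and $r/2$-separated from its complement, so any function that is constant on each $B(x_n,r)$ and zero elsewhere lies in $U(X)$ --- but your argument stops exactly where the contradiction has to be produced, and the route you sketch for it does not work. The fact you extract, that $|\Phi(y_n,c_n)-\Phi(y'_n,c_n)|=o(d(y_n,y'_n)^\al)$ for every real sequence $(c_n)$, is true but toothless: it is perfectly consistent with the two-sided bounds $s\,d(y_n,y'_n)^\al\le\Phi(y_n,t)\le M\,d(y_n,y'_n)^\al$ coming from Lemmas \ref{lem6.3.3} and \ref{lem58.1}, and nothing in Lemmas \ref{lem6.3.1}--\ref{lem6.3.3} upgrades it to ``$y_n$ and $y'_n$ eventually equal.'' In effect you keep probing the infinitesimal scale $d(y_n,y'_n)$, where $U(X)$ and $\lip_\al(Y)$ are no longer in conflict once the annuli are empty.

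The actual contradiction is macroscopic and much simpler. Since each $\Phi(y_n,\cdot):\R\to\R$ is an increasing homeomorphism, hence surjective, you may prescribe the values of $Tf$ at the points $y_n$ at will: choose $t_n$ with $\Phi(y_n,t_n)=n\,d(y_n,y_1)^\al$ and let $f=t_n$ on $B(x_n,r)$ and $f=0$ elsewhere. By your clopen observation $f\in U(X)$, yet $Tf(y_n)=n\,d(y_n,y_1)^\al$. Since $(y_n)$ has no convergent subsequence, $\inf_{n\ge 2}d(y_n,y_1)>0$, so $Tf$ is unbounded and $|Tf(y_n)-Tf(y_1)|/d(y_n,y_1)^\al\to\infty$; thus $Tf$ fails even the global Lipschitz-$\al$ estimate and cannot lie in $\lip_\al(Y)$ or $\lip_{\al,b}(Y)$. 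The idea you are missing is that the freedom to assign arbitrary constants on the clopen balls should be spent defeating the \emph{global} H\"{o}lder bound against the fixed base point $y_1$, not the little-Lipschitz condition along the pairs $(y_n,y'_n)$.
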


\begin{proof}
If $X$ is not separated, then it follows from the lemmas above that there is a sequence $(x_n)$ in $X$ and $r > 0$ so that $d(x_n,x_m) > 2r$ if $n\neq m$ and $\Ann(x_n,\frac{r}{2},r) = \emptyset$ for all $n$.   Choose $t_n \in \R$ such that $Tt_n(y_n) = nd(y_n,y_1)^\al$ for all $n$, where $y_n = \vp(x_n)$.
Define $f:X\to \R$ by $f(x) = t_n$ if $x\in B(x_n,r)$ for some $n$ and $f(x) = 0$ otherwise.
Since $\Ann(x_n,\frac{r}{2},r) = \emptyset$, $f\in U(X)$.
However, $Tf(y_n) = nd(y_n,y_1)^\al$ for all $n$ and so $Tf \notin \lip_\al(X)$, contrary to the assumption.
\end{proof}

\begin{thm}\label{thm6.3.6}
There exists $Y$ such that $U(X)\sim \lip_\al(Y)$ or $\lip_{\al,b}(Y)$ if and only if $X$ is a finite set.
\end{thm}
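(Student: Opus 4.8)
The plan is to upgrade the conclusion of Lemma~\ref{lem6.3.5} from ``$X$ is separated'' to ``$X$ is finite'' by feeding it into Theorem~\ref{thm41}(b). Suppose first that there is a metric space $Y$ with $U(X)\sim\lip_\al(Y)$ or $U(X)\sim\lip_{\al,b}(Y)$, and let $T$ be the order isomorphism. Composing $T$ with the translation $g\mapsto g-T0$ (an order automorphism of $\lip_\al(Y)$, resp.\ $\lip_{\al,b}(Y)$), we may assume $T0=0$, so that all the analysis of Subsection~\ref{subsec6.2} following Lemma~\ref{lem6.3.5}, and in particular Lemma~\ref{lem6.3.5} itself, applies. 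Hence $X$ is separated: there is $\ep>0$ with $d(x,x')>\ep$ for all distinct $x,x'\in X$.

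The next step is to observe that on a separated metric space every continuous real-valued function is uniformly continuous --- if $d(x,x')<\ep$ then $x=x'$ --- so that $U(X)$ and $C(X)$ coincide as ordered sets. Consequently $C(X)\sim\lip_\al(Y)$ or $C(X)\sim\lip_{\al,b}(Y)$. Now Theorem~\ref{thm41}(b) applies directly: the existence of a $Y$ with $C(X)\sim\lip_\al(Y)$ (the third displayed line of that theorem) and the existence of a $Y$ with $C(X)\sim\lip_{\al,b}(Y)$ (the first displayed line, with $A(Y)=\lip_{\al,b}(Y)$) each force $X$ to be a finite set. This gives the forward implication.

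For the converse, if $X$ is finite then it is a compact, hence complete, metric space on which every real-valued function is both Lipschitz and little-Lipschitz of order $\al$; thus $U(X)=C(X)=\lip_\al(X)=\lip_{\al,b}(X)$, and $\lip_\al(X)$, $\lip_{\al,b}(X)$ trivially satisfy the uniform-separation hypothesis imposed in Examples~B. Taking $Y=X$ and the identity map then yields $U(X)\sim\lip_\al(Y)$ and $U(X)\sim\lip_{\al,b}(Y)$. The substance of the argument is already contained in Lemma~\ref{lem6.3.5} and the Lemmas~\ref{lem6.3.1}--\ref{lem6.3.4} leading up to it, so the present theorem is essentially bookkeeping; the only points requiring care are to cite the correct line of Theorem~\ref{thm41}(b) in each of the two cases and to record the harmless normalization $T0=0$, so I do not anticipate a genuine obstacle.
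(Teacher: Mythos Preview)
Your proof is correct and follows essentially the same approach as the paper: apply Lemma~\ref{lem6.3.5} to obtain that $X$ is separated, deduce $U(X)=C(X)$, and then invoke Theorem~\ref{thm41}(b) to conclude that $X$ is finite, with the converse being trivial. The only difference is presentational---the paper's proof is three lines and omits the explicit discussion of the normalization $T0=0$ (which is already built into the standing hypotheses of the subsection preceding Lemma~\ref{lem6.3.5}).
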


\begin{proof}
Suppose that there exists $Y$ such that $U(X)\sim \lip_\al(Y)$ or $\lip_{\al,b}(Y)$.  By Lemma \ref{lem6.3.5}, $X$ is separated.  Hence $U(X) = C(X)\sim \lip_\al(Y)$ or $\lip_{\al,b}(Y)$.
By Theorem \ref{thm41}(b), $X$ is a finite set.
The converse is trivial.
\end{proof}

\subsection{The case $\lip_\al(X)\sim \lip^\loc_{\al,b}(Y)$.}

\begin{lem}\label{lem63}
If $\lip_\al(X)\sim \lip^\loc_{\al,b}(Y)$, then $X$ is proximally compact.
\end{lem}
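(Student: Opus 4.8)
The plan is to reproduce the argument of Theorem \ref{thm47}, exploiting that $\lip_\al(X)$ consists of uniformly continuous functions whereas $\lip^\loc_{\al,b}(Y)$ need not. First I would fix an order isomorphism $T:\lip_\al(X)\to\lip^\loc_{\al,b}(Y)$. By the standing conventions and Examples B and C, $X$ and $Y$ are complete, $\lip_\al(X)$ is a near vector lattice satisfying $(\spadesuit)$ and $(\heartsuit)$, and $\lip^\loc_{\al,b}(Y)$ is the bounded local space $A^\loc_b(Y)$ attached to the near vector lattice $A(Y)=\lip_\al(Y)$, which satisfies $(\heartsuit_y)$ at every $y\in Y$ by Examples C(b),(c). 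Hence Theorem \ref{thm27} and Corollary \ref{cor35} apply with $G=X$: they produce the associated homeomorphism $\vp:X\to Y$ and an order isomorphism $S$ on $\lip^\loc_\al(X)$ extending $T$ (so $S|_{\lip_\al(X)}=T$); moreover, since $\lip^\loc_{\al,b}(Y)$ consists of bounded functions, the last clause of Corollary \ref{cor35} gives $S(\lip^\loc_{\al,b}(X))\subseteq\lip^\loc_{\al,b}(Y)$.

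The crucial step is then purely set-theoretic: because $T$ maps $\lip_\al(X)$ \emph{onto} $\lip^\loc_{\al,b}(Y)$, we have $S(\lip^\loc_{\al,b}(X))\subseteq\lip^\loc_{\al,b}(Y)=S(\lip_\al(X))$, and injectivity of $S$ forces $\lip^\loc_{\al,b}(X)\subseteq\lip_\al(X)$. Note that one cannot obtain the reverse inclusion by symmetry, since $\lip_\al(X)$ does not consist of bounded functions, so the last clause of Corollary \ref{cor35} cannot be applied to $T^{-1}$; but this single inclusion is all that is needed.

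To finish, suppose $X$ is not proximally compact. By Corollary \ref{cor42.1} there is $f\in\Lip^\loc(X)\setminus U(X)$ with $\|f\|_\infty\le 1$. Since $0<\al<1$, every bounded locally Lipschitz function on $X$ belongs to $\lip^\loc_{\al,b}(X)$: near any point, $f$ coincides on a ball of radius $<1$ with a globally $d$-Lipschitz function, which, after truncation by the bound, is a bounded $d^\al$-Lipschitz function whose little-H\"older property is immediate from $|g(p)-g(q)|\le L\,d(p,q)=L\,d(p,q)^\al\,d(p,q)^{1-\al}$. Hence $f\in\lip^\loc_{\al,b}(X)\subseteq\lip_\al(X)=\lip(X,d^\al)\subseteq\Lip(X,d^\al)\subseteq U(X,d^\al)=U(X)$, contradicting $f\notin U(X)$. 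Therefore $X$ is proximally compact.

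The only steps needing genuine care are verifying that the hypotheses of Theorem \ref{thm27} and Corollary \ref{cor35} are in force for the pair $(\lip_\al(X),\lip^\loc_{\al,b}(Y))$ — in particular identifying $\lip^\loc_{\al,b}(Y)$ with the space $A^\loc_b(Y)$ of the framework and checking conditions $(\heartsuit)$ and $(\spadesuit)$ — and the injectivity argument of the second paragraph; everything else, including the elementary truncation lemma $\Lip^\loc_b(X)\subseteq\lip^\loc_{\al,b}(X)$, is routine bookkeeping.
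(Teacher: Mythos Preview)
Your proof is correct and follows essentially the same route as the paper's. The paper obtains the inclusion $\lip^\loc_{\al,b}(X)\subseteq\lip_\al(X)$ by exactly your injectivity argument via Corollary \ref{cor35}, then observes that this forces $\lip^\loc_{\al,b}(X)=\lip_{\al,b}(X)$ and simply cites Theorem \ref{thm47} (applied with $Y=X$) to conclude proximal compactness; you instead inline the proof of Theorem \ref{thm47}, appealing directly to Corollary \ref{cor42.1} and the inclusion $\Lip^\loc_b(X)\subseteq\lip^\loc_{\al,b}(X)$, which is the same mechanism.
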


\begin{proof}
An order isomorphism $T: \lip_\al(X)\to \lip^\loc_{\al,b}(Y)$ can be extended to an order isomorphism $S: \lip^\loc_\al(X)\to \lip^\loc_\al(Y)$ such that $S(\lip^\loc_{\al,b}(X)) \subseteq \lip^\loc_{\al,b}(Y)$,
according to Corollary \ref{cor35}.
Hence $\lip^\loc_{\al,b}(X) \subseteq \lip_\al(X)$.
Therefore, $\lip^\loc_{\al,b}(X) = \lip_{\al,b}(X)$.
By Theorem \ref{thm47}, $X$ is proximally compact.
\end{proof}

Say that a metric space $(X,d)$ with a distinguished point $e$ is {\em expansive at $\infty$} if there exists   $C< \infty$ such that $p = q$ if $d(p,e) \geq C$ and $d(p,q) < d(p,e)/C$.
A direct comparison of the definitions shows that any metric space that is expansive at $\infty$ is almost expansive at $\infty$.  The converse is not true, as evidenced by the subspace of $\R$ consisting of the points $2^n$ and $2^n+ n^{-1}$ for all $n\in \N$.
Let $X'$ be the space $X$ endowed with the metric $d'$ given by equation (\ref{eq5.1}) in \S \ref{sec5}.

\begin{prop}\label{prop64}
Let $(X,d)$ be a complete metric space with a distinguished point $e$.  Then $X'$ is proximally compact if  $X$ is proximally compact and expansive at $\infty$.
\end{prop}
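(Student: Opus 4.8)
The plan is to take a sequence $(x_n)$ in $X'$ and show that, after passing to a subsequence, it either converges in $d'$ or is contained in some $X'_\ep$. Using proximal compactness of $(X,d)$, pass first to a subsequence which is either $d$-convergent or lies in $X_\delta$ for some $\delta > 0$. In the first case, let $x_0$ be the $d$-limit. If $x_0 = e$ (or, more generally, if $(\xi(x_n))$ stays bounded), Proposition~\ref{prop5.1}(c) gives $d'(x_n,x_0) \le 3d(x_n,x_0) \to 0$, so $(x_n)$ is $d'$-convergent and we are done. The only way $d$-convergence can fail to give $d'$-convergence is if $(\xi(x_n))$ is unbounded while $(x_n)$ is $d$-convergent — but that is impossible, since $d$-convergence forces $(d(x_n,e))$, hence $(\xi(x_n))$, to be bounded. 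So the convergent case is immediate.

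The real work is the separated case: suppose $(x_n) \subseteq X_\delta$, i.e.\ $d(x_n, X\setminus\{x_n\}) > \delta$ for all $n$, with the $x_n$ distinct. I want to produce $\ep > 0$ with $(x_n) \subseteq X'_\ep$, i.e.\ $d'(x_n, z) \ge \ep$ for every $z \ne x_n$. Fix $n$ and $z \ne x_n$. First dispose of the case $\xi(x_n)$ bounded, say $\xi(x_n) \le M$ for all $n$ (this happens iff $(x_n)$ is $d$-bounded): then $d(x_n,z) > \delta$ and by Proposition~\ref{prop5.1}(b), $d'(x_n,z) \ge \rho(x_n,z) = d(x_n,z)/(\xi(x_n)\vee\xi(z))$; if also $\xi(z) \le 2M$ this is $\ge \delta/(2M)$, and if $\xi(z) > 2M \ge 2\xi(x_n)$ then $d(x_n,z) \ge d(z,e) - d(x_n,e) \ge \xi(z) - \xi(x_n) \ge \xi(z)/2$, so $\rho(x_n,z) \ge 1/2$. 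Either way $d'(x_n,z)$ is bounded below. For the unbounded case, use that $X$ is expansive at $\infty$: there is $C < \infty$ with $p = q$ whenever $d(p,e) \ge C$ and $d(p,q) < d(p,e)/C$. Shrinking $\delta$ if necessary (replacing $X_\delta$ by $X_{\delta'}$, $\delta' \le \delta$), apply this with $p = x_n$: for $n$ with $d(x_n,e) \ge C$, either $d(x_n,z) \ge d(x_n,e)/C$, or $d(x_n,z) < d(x_n,e)/C$ forces $z = x_n$, contradiction. So for these $n$, $d(x_n,z) \ge d(x_n,e)/C = \xi(x_n)/C$ (for $d(x_n,e)\ge C \ge 1$), and if $\xi(z) \le \xi(x_n)$ then $\rho(x_n,z) = d(x_n,z)/\xi(x_n) \ge 1/C$, while if $\xi(z) > \xi(x_n)$ then as before $d(x_n,z) \ge \xi(z) - \xi(x_n)$; combined with $d(x_n,z) \ge \xi(x_n)/C$ this yields $d(x_n,z) \ge \xi(z)/(C+1)$, so $\rho(x_n,z) \ge 1/(C+1)$. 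The finitely many remaining $n$ (those with $d(x_n,e) < C$, i.e.\ $\xi(x_n) < C$) fall under the bounded-$\xi$ estimate above. Taking $\ep$ to be the minimum of the constants produced, $d'(x_n,z) \ge \rho(x_n,z) \ge \ep$ for all $n$ and all $z \ne x_n$, so $(x_n) \subseteq X'_\ep$.

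Combining the two cases, every sequence in $X'$ has a subsequence that is $d'$-convergent or lies in some $X'_\ep$; together with completeness of $X'$ (Proposition~\ref{prop5.1}(d)), this is exactly proximal compactness of $X'$. The main obstacle is the separated, $\xi$-unbounded case: the two-sided estimate $\rho(x_n,z) = d(x_n,z)/(\xi(x_n)\vee\xi(z))$ can be small even when $d(x_n,z)$ is large, so one must use expansiveness at $\infty$ precisely to rule out points $z$ that are far in $d$ but close in the normalized metric $\rho$, and then handle separately the subcase $\xi(z) > \xi(x_n)$ where the normalization is governed by $z$ rather than $x_n$. The bookkeeping of constants (how much to shrink $\delta$, and assembling the final $\ep$) is routine but needs to be done carefully so that only finitely many indices are exceptional.
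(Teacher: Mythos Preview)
Your argument is correct and proceeds by a genuinely different route from the paper. The paper argues by contradiction: assuming $X'$ is not proximally compact, it produces sequences $(x_n)$ and $(x'_n)$ with $(x_n)$ having no $d'$-convergent subsequence and $0<d'(x_n,x'_n)\to 0$, and then derives a contradiction by splitting on whether $(\xi(x_n))$, $(\xi(x'_n))$ are bounded (using proximal compactness of $X$) or not (using expansiveness at $\infty$). You instead verify the definition directly: given any sequence, you use proximal compactness of $(X,d)$ to extract a subsequence that is either $d$-convergent (hence $d'$-convergent) or lies in $X_\delta$, and in the latter case you show it lies in some $X'_\ep$ by estimating $d'(x_n,z)$ for \emph{every} $z\ne x_n$. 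The paper's approach is shorter because it only has to handle the specific witness pair $(x_n,x'_n)$; your direct approach is more transparent but pays for it with extra case analysis on the arbitrary point~$z$.

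A few cosmetic points. The phrase ``the finitely many remaining $n$'' is inaccurate: even if $(\xi(x_n))$ is unbounded, infinitely many $n$ may satisfy $d(x_n,e)<C$. This does not matter, since your bounded-$\xi$ estimate (with $M=C$) applies to each such $n$ individually; indeed the cleanest organization is simply to split every $n$ according to whether $d(x_n,e)\ge C$ or $d(x_n,e)<C$, making the preliminary ``bounded vs.\ unbounded sequence'' split unnecessary. The remark about shrinking $\delta$ is not needed anywhere. Finally, completeness of $X'$ plays no role: once you have shown that every sequence has a subsequence that is $d'$-convergent or lies in some $X'_\ep$, that \emph{is} proximal compactness of $X'$.
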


\begin{proof}
Suppose that $X$ is proximally compact and expansive at $\infty$.
Assume that $X'$ is not proximally compact. There exist sequences $(x_n)$ and $(x'_n)$ in $X$ such that $(x_n)$ has no $d'$-convergent subsequence and that $0 < d'(x_n,x'_n)\to 0$.
By Proposition \ref{prop5.1}(b),
\begin{equation}\label{eq10}
 \frac{d(x_n,x'_n)}{\xi(x_n)\vee \xi(x'_n)} \leq d'(x_n,x'_n).
\end{equation}
If $(\xi(x_n))$ and $(\xi(x_n'))$ are both bounded, then $d(x_n,x'_n) \to 0$.
Since $X$ is proximally compact, we may assume that $(x_n)$ $d$-converges to some $x_0 \in X$.
By Proposition \ref{prop5.1}(b) again, $d'(x_n,x_0) \to 0$, contrary to the choice of $(x_n)$.
Thus, we may assume that $d(x_n,e) \to \infty$.
Let $C$ be the constant resulting from the fact that $X$ is expansive at $\infty$.
For all sufficiently large $n$, $d(x_n,x'_n) \geq d(x_n,e)/C = \xi(x_n)/C$.
Since $d'(x_n,x'_n)\to 0$, it follows from  inequality (\ref{eq10}) that $\xi(x'_n) \geq \xi(x_n)$ for all sufficiently large $n$ and $d(x_n,x'_n)/\xi(x'_n) \to 0$.
In particular, $\xi(x'_n)\to \infty$ and hence $\xi(x_n') = d(x_n',e)$ for all sufficiently large $n$.
Therefore,
\[ \xi(x'_n) \geq \xi(x_n) = d(x_n,e) \geq d(x'_n,e) - d(x_n,x'_n) = \xi(x'_n)\bigl(1 - \frac{d(x_n,x'_n)}{\xi(x'_n)}\bigr)\]
 for all sufficiently large $n$.
Hence $\xi(x_n)/\xi(x'_n) \to 1$.
Thus 
\[ 0 = \lim \frac{d(x_n,x'_n)}{\xi(x'_n)} =   \lim \frac{d(x_n,x'_n)}{\xi(x_n)} \geq \frac{1}{C} > 0,\]
which is clearly absurd. 
\end{proof}

\begin{thm}\label{thm65}
There exists $Y$ such that $\lip_\al(X)\sim \lip^\loc_{\al,b}(Y)$ if and only if $X$ is proximally compact and expansive at $\infty$.
\end{thm}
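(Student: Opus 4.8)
\emph{Overview of the plan.} I will prove both implications. The ``if'' direction is essentially a bookkeeping exercise on top of the machinery of Section~\ref{sec5} (Propositions~\ref{prop5.1}, \ref{prop5.12}, \ref{prop64}) combined with Lemma~\ref{lem45}; the ``only if'' direction uses Lemma~\ref{lem63} to get proximal compactness for free, reduces the remaining claim to showing $X$ is almost expansive at $\infty$, and then requires a genuine construction, which is where the real work lies.

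\emph{Sufficiency.} Assume $X$ is proximally compact and expansive at $\infty$; fix a base point $e$. Then $X$ is complete, and since proximal compactness and ``expansive at $\infty$'' are invariant under the snowflake transform $d\mapsto d^{\al}$ (here $0<\al<1$), the space $X^{\al}=(X,d^{\al})$ is again complete, proximally compact and expansive at $\infty$, hence in particular almost expansive at $\infty$. Applying the construction of equation~(\ref{eq5.1}) to $X^{\al}$ produces a metric $(d^{\al})'$, and Proposition~\ref{prop5.12} yields an order isomorphism $\lip_{\al}(X)=\lip(X^{\al})\sim\lip((X,(d^{\al})'))$; by Propositions~\ref{prop5.1} and~\ref{prop64}, $(X,(d^{\al})')$ is complete, of finite diameter, and proximally compact. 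Now set $Y=(X,d')$, where $d'$ is the metric of~(\ref{eq5.1}) for the \emph{original} metric $d$, and note that $\lip^{\al}_{b}$ of $(X,d')$ is uniformly separating, so $\lip^{\loc}_{\al,b}(Y)$ is a legitimate near vector lattice. Using $(t\vee1)^{\al}=t^{\al}\vee1$, the comparison $\rho\le d'\le3\rho$ of Proposition~\ref{prop5.1}(b), applied to both $d$ and $d^{\al}$, shows that the metrics $(d^{\al})'$ and $(d')^{\al}$ on $X$ are bi-Lipschitz equivalent; hence $Y^{\al}=(X,(d')^{\al})$ is bi-Lipschitz homeomorphic to $(X,(d^{\al})')$ and is therefore complete, of finite diameter, and proximally compact, and so is $Y$ (again by snowflake-invariance). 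Combining Lemma~\ref{lem45} with finiteness of diameter gives
\[
\lip^{\loc}_{\al,b}(Y)=\lip^{\loc}_{b}(Y^{\al})=\lip_{b}(Y^{\al})=\lip(Y^{\al})=\lip((X,(d^{\al})'))\sim\lip_{\al}(X),
\]
so $Y$ is the space required.

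\emph{Necessity — reduction.} Assume $\lip_{\al}(X)\sim\lip^{\loc}_{\al,b}(Y)$. By Lemma~\ref{lem63}, $X$ is proximally compact, hence complete; it remains to show $X$ is expansive at $\infty$. First I record the elementary fact that a proximally compact metric space which is almost expansive at $\infty$ is automatically expansive at $\infty$: if it were not, the defining pairs $p_{n}\ne q_{n}$ with $d(p_{n},e)\ge n$ and $d(p_{n},q_{n})<d(p_{n},e)/n$ would, by almost expansiveness, satisfy $d(p_{n},q_{n})\to0$, while $d(p_{n},e)\to\infty$ forces $(p_{n})$ to have no convergent subsequence and, since $d(p_{n},X\setminus\{p_{n}\})\le d(p_{n},q_{n})\to0$, no subsequence lying in any $X_{\ep}$ — contradicting proximal compactness. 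So it suffices to derive a contradiction from the assumption that $X$ is \emph{not} expansive at $\infty$ (which, $X$ being proximally compact, is the same as ``not almost expansive at $\infty$'').

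\emph{The main obstacle.} Suppose $X$ is not expansive at $\infty$. Feeding the defining pairs into proximal compactness and extracting, one obtains $\ep>0$ and points $p_{n}\ne q_{n}$ such that $\{p_{n}\}\cup\{q_{n}\}$ is $\ep$-separated, $d(p_{n},q_{n})<d(p_{n},e)/n$, and $d(p_{n},e)$ grows as rapidly as desired. Normalising $T0=0$ and writing $Tf(y)=\Phi(y,f(\vp^{-1}(y)))$ with $\vp\colon X\to Y$ the associated homeomorphism and $\Phi(y,\cdot)$ an increasing self-homeomorphism of $\R$ (Theorem~\ref{thm29}; both spaces satisfy $(\heartsuit)$), the goal is to construct $f\in\lip_{\al}(X)$ — a sum of plateau bumps of suitably \emph{growing} radii centred at the $p_{n}$, calibrated so that $f$ is genuinely little-Hölder-$\al$ on all of $X$ yet unbounded along $(p_{n})$, with $f\equiv0$ on neighbourhoods of the $q_{n}$ — and then to argue, in the spirit of Lemmas~\ref{lem56}, \ref{lem58.1} and~\ref{lem6.3.1}--\ref{lem6.3.5}, that $Tf$ can be neither bounded nor locally little-Hölder-$\al$ on $Y$, contradicting $Tf\in\lip^{\loc}_{\al,b}(Y)$. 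Choosing the radii (tied to $d(p_{n},e)$) so that disjointness and the $\ep$-separation are respected while $a_{n}/r_{n}^{\al}\to0$ with $a_{n}=f(p_{n})\to\infty$, and transporting the failure of ``expansive at $\infty$'' on $X$ through $\vp$ and $\Phi$ to a failure of membership in $\lip^{\loc}_{\al,b}(Y)$, is the delicate core of the argument.
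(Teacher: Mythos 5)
Your sufficiency argument is correct and is essentially the paper's: pass to the snowflake $X^\al$, apply the $d'$-construction of (\ref{eq5.1}), and combine Proposition \ref{prop5.12} (via almost expansiveness at $\infty$) with Proposition \ref{prop64} and Lemma \ref{lem45}; your extra check that $(d^\al)'$ and $(d')^\al$ are bi-Lipschitz equivalent is a welcome piece of bookkeeping that the paper leaves implicit. The reduction in the necessity direction is also sound: Lemma \ref{lem63} gives proximal compactness, and your observation that a proximally compact space which is almost expansive at $\infty$ is automatically expansive at $\infty$ is correct (the paper does not need it, since it proves expansiveness at $\infty$ directly).

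The gap is the core of the necessity direction, which you explicitly leave as ``the delicate core of the argument'' and whose sketched strategy would not work as described. You propose to build $f\in\lip_\al(X)$ unbounded along $(p_n)$ and vanishing near $(q_n)$, and then to argue that $Tf$ can be neither bounded nor locally little-H\"{o}lder on $Y$. But $Tf(y)=\Phi(y,f(\vp^{-1}(y)))$ with weights $\Phi(y,\cdot)$ that vary with $y$: unboundedness of $(f(p_n))$ gives no control on $(\Phi(\vp(p_n),f(p_n)))$, which may perfectly well be bounded, and the local H\"{o}lder behaviour of $Tf$ near the (isolated, non-accumulating) points $\vp(p_n)$ is likewise invisible to this construction. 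The paper's argument runs in the opposite direction and is much shorter: take the single function $f(x)=(d(x,e)^\al-1)^+\in\lip_\al(X)$; since $Tf\in\lip^\loc_{\al,b}(Y)$, the sequence $(Tf(y_n))$ is bounded, so (the points $y_n, y'_n$ being distinct with no accumulation point) one can interpolate a function $g\in\lip^\loc_{\al,b}(Y)$ with $g(y_n)=Tf(y_n)$ and $g(y'_n)=0$; pulling back through the representation gives $T^{-1}g\in\lip_\al(X)$ with $T^{-1}g(x_n)=f(x_n)=(d(x_n,e)^\al-1)^+$ and $T^{-1}g(x'_n)=0$, whence $(d(x_n,e)^\al-1)^+\le K\,d(x_n,x'_n)^\al$ for some finite $K$ --- impossible when $d(x_n,e)\to\infty$ and $d(x_n,x'_n)/d(x_n,e)\to0$. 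The contradiction is thus extracted from the H\"{o}lder condition on the $X$ side via $T^{-1}$, not from unboundedness or non-H\"{o}lderness on the $Y$ side, and this is the idea missing from your plan.
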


\begin{proof}
Suppose that $T: \lip_\al(X) \to \lip^\loc_{\al,b}(Y)$ is an order isomorphism such that $T0=0$.  Denote the associated homeomorphism by $\vp: X\to Y$.
By Lemma \ref{lem63}, $X$ is proximally compact.
Suppose that $X$ is not expansive at $\infty$. Fix a distinguished point $e\in X$.
There exist sequences $(x_n), (x'_n)$ in $X$ such that $d(x_n,e)\to \infty$ and 
\[ 0 < \frac{d(x_n,x'_n)}{d(x_n,e)} \to 0.\]
In particular, $d(x'_n,e) \to \infty$ as well.  Thus we may assume that the points in $(x_n) \cup(x'_n)$ are  distinct.
Set $y_n = \vp(x_n)$ and $y'_n = \vp(x'_n)$.
The function $h:[0,\infty)\to \R$ given by $h(t) = (t^\al-1)^+$ belongs to $\lip_\al[0,\infty)$.
Define $f:X\to \R$ by $f(x) = h(d(x,e))$.
Then $f\in \lip_\al(X)$ and hence $Tf\in \lip^\loc_{\al,b}(Y)$.
In particular, $(Tf(y_n))$ is a bounded sequence.
Since the points in $(y_n)\cup(y'_n)$ are  distinct, and neither $(y_n)$ nor $(y_n')$ has a convergent subsequence, there exists $g\in \lip^\loc_{\al,b}(Y)$ such that $g(y_n)  = Tf(y_n)$ and $g(y'_n) = 0$.
Now $T^{-1}g\in \lip_\al(X)$, $T^{-1}g(x_n) = f(x_n)$, $T^{-1}g(x'_n) = 0$.
So there is a finite constant $K$ such that 
\[ (d(x_n,e)^\al-1)^+ \leq Kd(x_n,x'_n)^\al \text{ for all $n$.}\]
This is not possible since $d(x_n,e) \to \infty$ and $d(x_n,x'_n)/d(x_n,e) \to 0$.
This shows that $X$ is expansive at $\infty$.

Conversely, suppose that  $X$ is proximally compact and  expansive at $\infty$.
By Proposition \ref{prop64},  $X'$ is proximally compact, where $X'$ is the space $X$ endowed with the metric $d'$ given by equation (\ref{eq5.1}) in \S \ref{sec5}.
Lemma \ref{lem45} yields that $\lip_{\al,b}(X') = \lip_{\al,b}^\loc(X')$.
Since $X$ is expansive at $\infty$, it is almost expansive at $\infty$.  By Proposition \ref{prop5.12}, $\lip_\al(X) \sim \lip_{\al,b}(X')$.
Thus $\lip_\al(X) \sim \lip^\loc_{\al,b}(X')$.
\end{proof}

\subsection{The case $\lip_\al(X) \sim \lip_{\al,b}(Y)$}

Proposition \ref{prop5.12} implies that $\lip_\al(X) \sim \lip_{\al,b}(Y)$ for a specific $Y$ if $X$ is almost expansive at $\infty$.
The aim of this part is to show conversely that if $\lip_\al(X)\sim \lip_{\al,b}(Y)$ for any $Y$, then $X$
is almost expansive at $\infty$.

\begin{thm}\label{thm66}
There exists $Y$ such that $\lip_\al(X) \sim \lip_{\al,b}(Y)$ if and only if $X$ is almost expansive at $\infty$.
\end{thm}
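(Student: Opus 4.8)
The plan is to prove the two implications separately; the forward direction is a short deduction from the machinery of Section~\ref{sec5}, and the converse carries the real weight.

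\emph{If $X$ is almost expansive at $\infty$.} Fix a distinguished point $e$. First I would note that the complete metric space $(X,d^\al)$ is itself almost expansive at $\infty$: since $\xi_{d^\al}=\xi^\al$, the condition defining almost-expansiveness for $d^\al$ at parameters $\ep,C$ is precisely the corresponding condition for $d$ at $\ep^{1/\al},C^{1/\al}$. Proposition~\ref{prop5.12} applied to $(X,d^\al)$ then yields an order isomorphism of $\lip(X,d^\al)=\lip_\al(X)$ onto $\lip(X,(d^\al)')$, where $(d^\al)'$ is the bounded metric~(\ref{eq5.1}) built from $d^\al$. Next I would verify that $(d^\al)'$ and $(d')^\al$ are bi-Lipschitz equivalent: with $\rho(p,q)=d(p,q)/(\xi(p)\vee\xi(q))$ and using $\xi_{d^\al}(p)\vee\xi_{d^\al}(q)=(\xi(p)\vee\xi(q))^\al$, Proposition~\ref{prop5.1}(b) applied to $d^\al$ and to $d$ shows that both metrics lie between $\rho^\al$ and $3\rho^\al$. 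Since bi-Lipschitz equivalent metrics determine the same little Lipschitz space, $\lip(X,(d^\al)')=\lip(X,(d')^\al)=\lip_\al(X')$ with $X'=(X,d')$ (note $(d')^\al$ is a metric because $0<\al<1$). As $d'$, hence $(d')^\al$, has diameter at most $4$, every function in $\Lip_\al(X')$ is bounded, so $\lip_\al(X')=\lip_{\al,b}(X')$; and $X'$ is complete by Proposition~\ref{prop5.1}(d) while $\lip_\al(X')$ is uniformly separating because $0<\al<1$. Hence $Y=X'$ witnesses $\lip_\al(X)\sim\lip_{\al,b}(Y)$.

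\emph{Setup for the converse.} Suppose $T\colon\lip_\al(X)\to\lip_{\al,b}(Y)$ is an order isomorphism, normalised so that $T0=0$. By Examples~B(b) and~C(c) both spaces are near vector lattices satisfying $(\spadesuit)$ and $(\heartsuit)$, so Theorems~\ref{thm27} and~\ref{thm29} provide a homeomorphism $\vp\colon X\to Y$ and increasing homeomorphisms $\Phi(y,\cdot)\colon\R\to\R$ with $Tf(y)=\Phi(y,f(\vp^{-1}(y)))$, and Proposition~\ref{prop57.1}(b) shows that $\vp$ is uniformly continuous. Assume, for a contradiction, that $X$ is not almost expansive at $\infty$: fixing $e$, there is $\ep_0>0$ such that for every $n$ one can pick $p_n,q_n\in X$ with $d(p_n,e)\ge n$ and $\ep_0\le d(p_n,q_n)<d(p_n,e)/n$. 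Passing to a subsequence I would arrange that the points $\{p_n,q_n\}$ are distinct, $d(p_{n+1},e)>4\bigl(d(p_n,e)+d(q_n,e)\bigr)$, hence $d(p_n,e),d(q_n,e)\to\infty$, the ``clusters'' $\{p_n,q_n\}$ are at mutually divergent distance, and (after discarding finitely many terms) $S=\{p_n\}\cup\{q_n\}$ is $\ep_0$-separated. Set $y_n=\vp(p_n)$, $y_n'=\vp(q_n)$; these are distinct, $\{y_n\}\cup\{y_n'\}$ has no convergent subsequence in the complete space $Y$, and so is not totally bounded. A first consequence of the hypothesis: $\psi(x)=(d(x,e)^\al-1)^+$ is Lipschitz for $d$, hence lies in $\lip_\al(X)$; since $\psi(p_n),\psi(q_n)\to\infty$ and both $T\psi$ and $T(-\psi)$ are bounded elements of $\lip_{\al,b}(Y)$, it follows that for every fixed $t\in\R$ the values $\Phi(y_n,t)$ and $\Phi(y_n',t)$ remain bounded as $n\to\infty$; that is, the $T$-images of constant functions stay bounded along $(y_n)$ and $(y_n')$.

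\emph{The contradiction.} It remains to manufacture $f\in\lip_\al(X)$ whose image $Tf$ is unbounded or fails to be little-H\"older-$\al$ on $Y$, and I expect this to be the main difficulty. The natural construction uses $\psi$ and its multiples together with bumps supported near the clusters: the bump around $p_n$ is given a height growing in $n$ and a width tuned to the intermediate scale $d(p_n,q_n)$ — which, after a further passage to a subsequence along which $d(p_n,e)$ is made to grow quickly relative to $n$, may be taken anywhere between $\ep_0$ and $d(p_n,e)/n$ — while $q_n$, at distance $\ge\ep_0$ from $p_n$, both caps the admissible width and supplies the base value $f(q_n)=0$. Passing to a separated subsequence of $\{y_n\}\cup\{y_n'\}$ and separating the cases $d_Y(y_n,y_n')\to0$ and $\inf_n d_Y(y_n,y_n')>0$, in the former the pairs $(y_n,y_n')$ exhibit $Tf$ as not little-H\"older-$\al$; in the latter (which forces the clusters to be separated at scale $\ep_0$) one instead routes through the $d'$-machinery of Section~\ref{sec5}, Theorem~\ref{thm48} and Proposition~\ref{prop5.8} to deduce that $X$ is expansive, contradicting the failure of almost-expansiveness at $\infty$. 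The genuine obstacle, and the point at which the hypothesis is actually used, is to keep the bump construction globally little-H\"older-$\al$ while making it steep enough — relative to $d(p_n,q_n)^\al$ — to defeat the boundedness of $T\psi$ along $(y_n)$ recorded above; this is exactly the obstruction dual to the empty-annulus Lemma~\ref{lem5.7}, and once it is overcome the contradiction closes at once.
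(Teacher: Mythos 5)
Your forward direction is sound and in fact tidier than the paper's own treatment: Proposition \ref{prop5.12} is stated for $\lip(X)$ with respect to $d$, and your explicit check that almost-expansiveness at $\infty$ transfers between $d$ and $d^\al$, together with the bi-Lipschitz comparison of $(d^\al)'$ with $(d')^\al$ via Proposition \ref{prop5.1}(b), is exactly the bookkeeping needed to apply it to $\lip_\al(X)=\lip(X,d^\al)$. The setup for the converse (normalisation, the representation $Tf(y)=\Phi(y,f(\vp^{-1}(y)))$, uniform continuity of $\vp$, the choice of $(p_n),(q_n)$, and the observation via $\psi=(d(\cdot,e)^\al-1)^+$ that $\Phi(y_n,t)$ stays bounded for fixed $t$) is also correct and matches the paper.

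The converse, however, is not proved: the entire contradiction is deferred to a construction you describe only in outline and yourself flag as ``the main difficulty.'' Neither of the two cases you split into is actually closed. In the case $d_Y(y_n,y_n')\to 0$, you propose to exhibit $Tf$ as failing the little-H\"older condition at the pairs $(y_n,y_n')$; but $Tf(y_n)=\Phi(y_n,f(p_n))$ and $Tf(y_n')=\Phi(y_n',f(q_n))$, and you have no lower bound on increments of $\Phi$ in its second variable, so prescribing $f(p_n)$ and $f(q_n)$ does not force $|Tf(y_n)-Tf(y_n')|$ to be large. The paper's Lemma \ref{lem78} resolves exactly this case by arguing in the opposite direction: it builds $f$ with bumps of height $d(p_n,q_n)^\al/2$ at $p_n$, passes to a subsequence where $T(mf)(u_{n_m})/d(u_{n_m},v_{n_m})^\al\to 0$, interpolates a function $g\in\lip_{\al,b}(Y)$ with $g(u_{n_m})=T(mf)(u_{n_m})$ and $g(v_{n_m})=0$, and then observes that $T^{-1}g$ cannot lie in $\lip_\al(X)$ because $d(p_{n_m},q_{n_m})\ge r$. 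In the remaining case, your appeal to Theorem \ref{thm48} and Proposition \ref{prop58} is not available: Theorem \ref{thm48} requires an order isomorphism onto a $\Lip_b$ space, which is not the hypothesis here, and no route to ``$X$ is expansive'' is given. What the paper actually does at this point is restrict everything to $X_0=(p_n)\cup(q_n)$ and $Y_0=(u_n)\cup(v_n)$ (using the little-Lipschitz extension results), re-metrize $X_0$ by the bounded metric $d'$ built from $d^\al$, prove Lemma \ref{lem79} (an order isomorphism $S:C(X_0')\to C(Y_0)$ with $Sf\in C_b(Y_0)\Rightarrow f\in\Lip_b(X_0')$), show that $X_0'$ is not proximally compact precisely because almost-expansiveness at $\infty$ fails, and then invoke Proposition \ref{prop41.1} to produce $f\in C(X_0')\setminus U(X_0')$ with $Sf\in C_b(Y_0)$, contradicting Lemma \ref{lem79}. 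None of this machinery, which is where the hypothesis is genuinely used, appears in your argument, so the converse direction remains open in your write-up.
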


Let $T: \lip_\al(X) \to \lip_{\al,b}(Y)$ be an order isomorphism such that $T0=0$, and let $\vp:X\to Y$ be the associated homeomorphism. 
Suppose, if possible, that $X$ is not almost expansive at $\infty$.
There are sequences $(p_n)$ and $(q_n)$ in $X$ and $r > 0$ such that 
\[  d(p_n,p_1)\to \infty, \frac{d(p_n,q_n)}{d(p_n,p_1)} \to 0  \text{ and } d(p_n,q_n) >r \text{ for all $n$}.\]
Let $u_n = \vp(p_n)$ and $v_n = \vp(q_n)$.

\begin{lem}\label{lem78}
$d(u_n,v_n) \not\to 0$.
\end{lem}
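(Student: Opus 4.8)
The plan is to argue by contradiction: assume $d(u_n, v_n) \to 0$, and derive that some function in $\lip_{\al,b}(Y)$ forces an impossible constraint back in $\lip_\al(X)$ via $T^{-1}$. Recall the setup: $(p_n), (q_n)$ satisfy $d(p_n, p_1) \to \infty$, $d(p_n,q_n)/d(p_n,p_1) \to 0$, and $d(p_n,q_n) > r$ for all $n$; and $u_n = \vp(p_n)$, $v_n = \vp(q_n)$. The contradictory assumption is $d(u_n,v_n) \to 0$. First I would pass to subsequences to clean up the geometry on the $Y$ side: since $(p_n)$ has no convergent subsequence (as $d(p_n,p_1)\to\infty$ while $d(p_n,q_n)>r$ would be violated near an accumulation point — more precisely $d(p_n,p_1)\to\infty$ already prevents a convergent subsequence), and $\vp$ is a homeomorphism, $(u_n)$ has no convergent subsequence; since $Y$ is complete, after passing to a subsequence we may assume $(u_n)$ is separated, say $d(u_n,u_m) > \delta$ for $n \neq m$, and also (using $d(u_n,v_n)\to 0$) that the sets $\{u_n, v_n\}$ are pairwise far apart, i.e. $\inf_{m\neq n} d(\{u_m,v_m\},\{u_n,v_n\}) > 0$.

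The core of the argument is to build a ``bad'' test function on the $X$ side and push it through $T$. Consider the constant function $1 \in \lip_\al(X)$ (or $T^{-1}$ applied to a suitable bounded function on $Y$). Since $T1 \in \lip_{\al,b}(Y)$, the sequence $(T1(u_n))$ is bounded. Now I want to exploit the discrepancy between $d(p_n,q_n) > r$ (a uniform lower bound on the $X$ side) and $d(u_n,v_n) \to 0$ (forcing the little-Lipschitz condition to bite on the $Y$ side). The idea is: because $(u_n)\cup(v_n)$ is a nicely separated configuration with $d(u_n,v_n)\to 0$, and because $T1(u_n)$ is bounded while $(T1(u_n))$ stays bounded away from $0$ (using $T0=0$ and monotonicity, $T1 > 0$ everywhere, and along the separated sequence we can extract a subsequence with $T1(u_n) \geq s > 0$), one can find $g \in \lip_{\al,b}(Y)$ with $g(u_n) = T1(u_n)$ and $g(v_n) = 0$ for all $n$ — here the construction of $g$ is exactly the bump-function assembly used repeatedly above (e.g. in Lemma \ref{lem56}), valid because $d(u_n,v_n)\to 0$ makes the little-Hölder condition automatic and the pieces are disjointly supported. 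Then $T^{-1}g(p_n) = 1$ and $T^{-1}g(q_n) = 0$ for all $n$, so $T^{-1}g$ is a function in $\lip_\al(X)$ satisfying $|T^{-1}g(p_n) - T^{-1}g(q_n)| = 1$ while $d(p_n,q_n) > r$. This alone is not yet a contradiction (it is consistent with a bounded Lipschitz function), so I need to amplify.

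The amplification: instead of using $1$, use $T^{-1}(m g_m)$-type scaling, or more directly replace the target value $1$ by a growing sequence. Choose $t_n \in \R$ with $Tt_n(u_n) = n\, d(u_n, v_n)^\al$ — this is possible since $Tt(u_n)$ ranges over $(0,\infty)$ as $t$ ranges over $(0,\infty)$, by monotonicity and surjectivity of $\Phi(u_n,\cdot)$. Since $d(u_n,v_n)\to 0$, we still have $n\,d(u_n,v_n)^\al$ controlled enough, relative to the separated configuration, to assemble $g \in \lip_{\al,b}(Y)$ with $g(u_n) = n\,d(u_n,v_n)^\al$ and $g(v_n) = 0$ (again the disjoint-bump construction, with the local Hölder seminorm of the $n$-th bump of order $n\,d(u_n,v_n)^\al / d(u_n,v_n)^\al = n$ — wait, this does not tend to $0$). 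I expect this to be the main obstacle: making the assembled function genuinely lie in $\lip_{\al,b}(Y)$ rather than merely $\Lip_b(Y)$. To get around it I would instead use that $T^{-1}g \in \lip_\al(X)$ would then satisfy $T^{-1}g(p_n) = t_n$, and I would track $t_n$: boundedness or growth of $(t_n)$ contradicts either $\lip_{\al,b}$ membership of the forward image of a suitably chosen $\lip_\al(X)$ function, or the $d(p_n,q_n)>r$ constraint. The cleanest route is likely to mimic Lemma \ref{lem56} verbatim: first produce (via Lemma \ref{lem56} applied with the sequences $(p_n),(q_n)$, noting $(p_n)$ has no convergent subsequence and $0 < d(p_n,q_n)$, though here $d(p_n,q_n) \not\to 0$, so one uses the ``separated'' branch of that lemma's proof on the $Y$ side since $d(u_n,v_n)\to 0$ puts us in the convergent-type branch) a bounded $f \in \lip_\al(X)$ and indices with $Tf(u_{n_m}) = m(r' \wedge d(u_{n_m},v_{n_m}))$, $Tf(v_{n_m}) = 0$; then $Tf \in \lip_{\al,b}(Y)$ together with $d(u_{n_m}, v_{n_m}) \to 0$ forces $Tf$ to have vanishing little-Hölder quotient at scale $d(u_{n_m},v_{n_m})$, i.e. $m\,d(u_{n_m},v_{n_m})^\al / d(u_{n_m},v_{n_m})^\al = m \to 0$, the contradiction. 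So the key step — and the one requiring care — is checking that the hypotheses of (the proof of) Lemma \ref{lem56} apply with $d(u_n,v_n)\to 0$ replacing the role of $d(x_n,x_n')\to 0$, and that the resulting $Tf \in \lip_{\al,b}(Y)$ yields the divergent seminorm. The rest is the disjoint-support bump bookkeeping already standard in this section.
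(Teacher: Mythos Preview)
Your proposal has the right overall architecture (build on one side, push through $T$, diagonalize, pull back), but the execution has a genuine gap at each attempt.

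In your first pass you claim one can assemble $g\in\lip_{\al,b}(Y)$ with $g(u_n)=T1(u_n)\geq s>0$ and $g(v_n)=0$, asserting that ``$d(u_n,v_n)\to 0$ makes the little-H\"older condition automatic.'' The opposite is true: the $\al$-H\"older quotient at $(u_n,v_n)$ is at least $s/d(u_n,v_n)^\al\to\infty$, so no such $g$ lies in $\lip_\al(Y)$. You notice this in your second pass. In your third pass you invoke Lemma~\ref{lem56} with the pair $(p_n),(q_n)$, but its hypothesis $d(x_n,x'_n)\to 0$ fails here since $d(p_n,q_n)>r$. The natural fix is to apply Lemma~\ref{lem56} to $T^{-1}$ with the pair $(u_n),(v_n)$ (where $d\to 0$ does hold); this produces $h\in\lip_{\al,b}(Y)$ with $T^{-1}h(p_{n_m})=m\bigl(r'\wedge d(p_{n_m},q_{n_m})\bigr)$ and $T^{-1}h(q_{n_m})=0$. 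But then the $\al$-H\"older quotient on $X$ is $mr'/d(p_{n_m},q_{n_m})^\al$, and nothing in the hypotheses forces $d(p_n,q_n)$ to stay bounded (only $d(p_n,q_n)/d(p_n,p_1)\to 0$), so this quotient need not diverge. Your displayed computation ``$m\,d(u_{n_m},v_{n_m})^\al/d(u_{n_m},v_{n_m})^\al=m$'' is also off: Lemma~\ref{lem56} gives $m(r'\wedge d)$, not $m\,d^\al$, so on the $Y$ side the quotient is $m\,d(u_{n_m},v_{n_m})^{1-\al}$, which can tend to~$0$.

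The missing idea, which the paper supplies, is to choose the seed function on $X$ so that its value at $p_n$ already scales like $d(p_n,q_n)^\al$. Concretely, set $r_n=d(p_n,q_n)$ and build $f\in\lip_\al(X)$ supported in disjoint balls $B(p_n,r_n)$ with $f(p_n)=r_n^\al/2$ and $f(q_n)=0$ (a capped bump $(r_n^\al-d(\cdot,p_n)^\al)^+\wedge r_n^\al/2$ works). For each $m$, $T(mf)\in\lip_{\al,b}(Y)$ and $T(mf)(v_n)=0$, so the little-H\"older condition on $Y$ and $d(u_n,v_n)\to 0$ give $T(mf)(u_n)/d(u_n,v_n)^\al\to 0$; diagonalize to get $T(mf)(u_{n_m})/d(u_{n_m},v_{n_m})^\al\to 0$, which now \emph{does} permit assembling $g\in\lip_{\al,b}(Y)$ with $g(u_{n_m})=T(mf)(u_{n_m})$, $g(v_{n_m})=0$. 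Pulling back, $T^{-1}g(p_{n_m})=mf(p_{n_m})=m\,r_{n_m}^\al/2$ and $T^{-1}g(q_{n_m})=0$, so the $\al$-H\"older quotient at $(p_{n_m},q_{n_m})$ is exactly $m/2\to\infty$ --- independent of the size of $r_{n_m}$. That independence is precisely what your Lemma~\ref{lem56} route cannot deliver.
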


\begin{proof}
Assume, if possible, that $d(u_n,v_n) \to 0$. Let  $r_n = d(p_n,q_n) > r$ for all $n$.  Since $d(p_n,e)\to \infty$, we may assume that $d(p_m,p_n) > 2r_m+2r_n$ if $m \neq n$.
The function $f: X\to \R$ defined by 
\[ f(x) = \begin{cases}
            (r_n^\al - d(x,p_n)^\al)^+ \wedge \frac{r_n^\al}{2} &\text{if $x\in B(p_n,r_n)$ for some $n$},\\
            0 &\text{otherwise},
           \end{cases} \]
belongs to $\lip_\al(X)$.
Thus $T(mf) \in \lip_{\al,b}(Y)$ for all $m \in \N$.
Since $mf(q_n) = 0$, $T(mf)(v_n) = 0$ for all $m$ and $n$.
Choose $n_1 < n_2 < \cdots$ such that 
\[ \frac{T(mf)(u_{n_m})}{d(u_{n_m},v_{n_m})^\al} =  \frac{T(mf)(u_{n_m})-T(mf)(v_{n_m})}{d(u_{n_m},v_{n_m})^\al}\to 0.\]
Since $(p_n)$ has no convergent subsequence, neither does $(u_n)$.
As $Y$ is complete by assumption, we may assume that $(u_n)$ is a separated sequence in $Y$.
Then there exists $g\in \lip_{\al,b}(Y)$ such that $g(u_{n_m}) = T(mf)(u_{n_m})$ and 
$g(v_{n_m}) = 0$ for all $m$.
However, $T^{-1}g(p_{n_m}) = mf(p_{n_m}) = md(p_{n_m},q_{n_m})^\al/2$ and $T^{-1}g(q_{n_m}) = 0$, contradicting the fact that $T^{-1}g \in \lip_\al(X)$.
\end{proof}

By taking further subsequence if necessary, we may assume that $X_0 = (p_n)\cup (q_n)$ and $Y_0 = (u_n)\cup (v_n)$ are both separated sets.  In particular, $X_0$ and $Y_0$ are complete metric spaces.
We will also assume that $d(p_{n+1},p_1) \geq 2d(p_n,p_1)\geq 2$ for all $n$.
Let $X_0^\al$ be the set $X_0$ endowed with the metric $d^\al$, and let $X_0'$ be the set  $X_0$ endowed with  the metric 
\[d'(p,q) = \sup \bigl|\frac{f(p)}{\xi(p)} - \frac{f(q)}{\xi(q)}\bigr|,\]
where $\xi(p) = 1 \vee d(p,p_1)^\al$  for $p\in X_0$ and the supremum is taken over all $f\in \Lip(X_0^\al)$ with $|f(p_1)| \leq 1$ and Lipschitz constant with respect to $d^\al$ at most $1$.
Note that $\xi \in \lip_\al(X)$.

\begin{lem}\label{lem79}
There exists an order isomorphism $S: C(X_0') \to C(Y_0)$ such that $S0 =0$, $S(C_b(X_0'))\subseteq C_b(Y_0)$ and $f\in \Lip_b(X_0')$ if $Sf \in C_{b}(Y_0)$ 
\end{lem}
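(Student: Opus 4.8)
The strategy is to transfer the given order isomorphism $T:\lip_\al(X)\to\lip_{\al,b}(Y)$ restricted to functions ``living on'' $X_0$, to an order isomorphism between $C(X_0')$ and $C(Y_0)$, using the machinery already built. First I would recall that, by Proposition \ref{prop5.2} applied to the metric space $X_0^\al$ with distinguished point $p_1$, a function $f$ on $X_0$ belongs to $\Lip(X_0^\al)=\lip_{\al}(X)\!\restriction_{X_0}$-type class (since on a separated set every Lipschitz function is little Lipschitz) if and only if $f/\xi\in\Lip(X_0')$, where $\xi(p)=1\vee d(p,p_1)^\al$ and $d'$ is the metric of Theorem \ref{thm5.3}. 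Since $X_0$ is separated, $\lip_\al(X_0)=\Lip(X_0^\al)=C(X_0)$ as sets, and likewise $\lip_{\al,b}(Y_0)=U_b(Y_0)=C_b(Y_0)$ since $Y_0$ is separated. So the ``little Lipschitz'' distinctions collapse on $X_0$ and $Y_0$ themselves; what survives is that a \emph{bounded} function on $Y_0$ corresponds under $T$ to a function whose pullback is Lipschitz with respect to $d'$ on $X_0'$.

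Next I would construct $S$. Since $T0=0$, Theorem \ref{thm29} (via Proposition \ref{prop28} for both $T$ and $T^{-1}$, all points of $X$ being relevant because $\lip_\al$ satisfies ($\heartsuit$) by Examples C(c)) gives the representation $Tf(y)=\Phi(y,f(\vp^{-1}(y)))$ with each $\Phi(y,\cdot)$ an increasing homeomorphism of $\R$. I would then use Proposition \ref{prop34} with $G=X_0$ (noting $\vp(X_0)=Y_0$): this produces an order isomorphism from $C(X_0)$ onto $C(Y_0)$ agreeing with $T$ on restrictions of functions in $\lip_\al(X)$, continuous for uniform convergence on compacta, and mapping $C_b(X_0)$ into $C_b(Y_0)$ since $\lip_{\al,b}(Y)$ consists of bounded functions. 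Composing with the linear order isomorphism $f\mapsto f/\xi$ from $C(X_0)$ to $C(X_0')$ (which is just a relabeling of the underlying set, $\xi$ being bounded below by $1$ and Lipschitz-positive) yields the desired $S:C(X_0')\to C(Y_0)$ with $S0=0$ and $S(C_b(X_0'))\subseteq C_b(Y_0)$; here one uses that $\xi$ is bounded on bounded subsets of $X_0'$ so that boundedness is preserved in both directions, together with Proposition \ref{prop5.1}(a) giving $\operatorname{diam}(X_0')<\infty$, whence $C_b(X_0')=C(X_0')$ anyway.

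The substantive clause is the last one: $f\in\Lip_b(X_0')$ whenever $Sf\in C_b(Y_0)$. Unravelling, $Sf\in C_b(Y_0)$ means the corresponding $g\in C_b(X_0)$ satisfies $g=\xi f$ and $(Tg)=S(g/\xi)=Sf$ is bounded, i.e. $Tg\in\lip_{\al,b}(Y)$. Here is where the hypotheses on $(p_n),(q_n)$ enter: I would argue that if $f$ were \emph{not} Lipschitz on $X_0'$, then by Proposition \ref{prop5.2} (backwards) $g=\xi f$ would not lie in $\lip_\al(X)$ — more precisely one extracts from the failure of the $d'$-Lipschitz condition a sequence exhibiting unbounded difference quotients of $g$ with respect to $d^\al$, using Proposition \ref{prop5.1}(b),(c) to pass between $\rho$, $d'$ and $d$ — and then $g\notin\lip_\al(X)$, so $Tg$ is not even defined; contradiction. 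The delicate point, and the one I expect to be the main obstacle, is handling the interaction between unbounded $\xi$ and the behaviour of $g$ near the ``infinite'' part of $X_0$: one must rule out the case where $f$ is bounded and $g=\xi f$ blows up but still happens to be Lipschitz-$\al$, and conversely, and it is exactly the separatedness of $X_0$ together with $d(p_{n+1},p_1)\ge 2d(p_n,p_1)$ that makes the estimates of Proposition \ref{prop5.1}(b),(c) tight enough to force the conclusion. Once that estimate is in place, symmetry of the construction (applying the same reasoning to $S^{-1}$) completes the proof.
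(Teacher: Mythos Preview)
Your construction of $S$ as the composite $R\circ(Q')^{-1}$ matches the paper, but both of the properties you need to verify contain real gaps.

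For $S(C_b(X_0'))\subseteq C_b(Y_0)$: your claim that $\xi$ is bounded on $d'$-bounded subsets of $X_0$ is false, since $\diam(X_0')<\infty$ while $\xi(p_n)=d(p_n,p_1)^\al\to\infty$; and the throwaway $C_b(X_0')=C(X_0')$ is equally false (finite diameter does not force continuous functions to be bounded on a non-compact space). The correct argument is order-theoretic: for $|f|\le t$ one has $|(Q')^{-1}f|\le t\xi_{|X_0}$, and $R(t\xi_{|X_0})=(T(t\xi))_{|Y_0}$ is bounded precisely because $t\xi\in\lip_\al(X)$ and $T$ lands in $\lip_{\al,b}(Y)$.

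For the substantive clause $Sf\in C_b(Y_0)\Rightarrow f\in\Lip_b(X_0')$, your contradiction argument does not work. You write ``$Tg$ is not even defined'' where $g=\xi f$, but $g$ lives only on $X_0$; the map acting on it is $R$, not $T$, and $R$ is defined on all of $C(X_0)$, so nothing contradicts anything. Proposition~\ref{prop5.2} only tells you $g\notin\Lip(X_0^\al)$, which by itself says nothing about $Rg$ being bounded. The missing idea is to go \emph{forward} through $Y$: since $Y_0$ is separated and $Sf$ is bounded, $Sf\in\Lip_b(Y_0)$ extends (McShane) to some $g\in\Lip_b(Y)\subseteq\lip_{\al,b}(Y)$; then $h=T^{-1}g\in\lip_\al(X)$, so $h_{|X_0}\in\Lip(X_0^\al)$, and $f=Q'(h_{|X_0})=Q(h_{|X_0})\in\Lip_b(X_0')$. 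The extension step and the passage back through the genuine order isomorphism $T^{-1}$ are what your sketch lacks.
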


\begin{proof}
By Proposition \ref{prop34}, there is an order isomorphism  $R:C(X_0)\to C(Y_0)$ such that $R(f_{|X_0}) = (Tf)_{|Y_0}$ for all $f\in \lip_\al(X)$.
By Theorem \ref{thm5.3}, the map  $Q: \Lip(X_0^\al) \to \Lip_b(X_0')$ given by $Qf = f/\xi$ is an  order isomorphism.
Apply  Proposition \ref{prop34} to extend $Q$ to an order isomorphism $Q': C(X_0^\al) \to C(X_0')$. Since $X_0$ is a separated metric space, so is $X_0^\al$.  Thus $C(X_0) = C(X_0^\al)$.
The map $S = R\circ(Q')^{-1}: C(X_0')\to C(Y_0)$ is an order isomorphism such that $S0 = 0$.
For any $t\in \R$, $t\xi \in \lip_\al(X)$. Hence 
\[R(t\xi_{|X_0}) = (Tt\xi)_{|Y_0} \in \lip_{\al,b}(Y_0)\subseteq C_b(Y_0).\]
If $f\in C_b(X_0')$, choose $0 < t\in \R$ such that $|f| \leq t$.
Then 
\[ -t\xi_{|X_0} = (Q')^{-1}(-t) \leq (Q')^{-1}f \leq (Q')^{-1}t = t\xi_{|X_0}.\]
It follows from the above that $Sf = R\circ (Q')^{-1}f$ is bounded.

Finally, suppose that $Sf \in C_b(Y_0)$.  Since $Y_0$ is separated, $Sf\in \Lip_b(Y_0)$.
By \cite[Theorem1.5.6]{W}, there exists $g\in \Lip_b(Y)$ such that $g_{|Y_0} = Sf$.
Now $g \in \lip_{\al,b}(Y)$ and hence $h = T^{-1}g \in \lip_\al(X)$.
By the choice of $R$, 
\[R(h_{|X_0})= (Th)_{|Y_0} = g_{|Y_0} = Sf.\]
Therefore, $f = S^{-1}R(h_{|X_0}) = Q'(h_{|X_0})$.  Since $h_{|X_0} \in \Lip(X_0^\al)$,  $f =Q'(h_{|X_0}) = Q(h_{|X_0}) \in \Lip_b(X_0')$.
\end{proof}

\begin{proof}[Proof of Theorem \ref{thm66}]
We first show that $X_0'$ is not proximally compact.
The sequences $(p_n)$ and $(q_n)$ are contained in $X_0'$.
By Proposition \ref{prop5.1}(b), for $m<  n$, 
\[ d'(p_n, p_m) \geq \frac{d(p_n,p_m)^\al}{\xi(p_n)} \geq \frac{(d(p_n,p_1) - d(p_m,p_1))^\al}{\xi(p_n)} \geq \frac{d(p_n,p_1)}{2^{\al}\xi(p_n)} = 2^{-\al}.\]
Thus $(p_n)$ has no $d'$-convergent subsequence.
On the other hand, by Proposition \ref{prop5.1}(b) again, 
\[ d'(p_n,q_n) \leq \frac{3d^\al(p_n,q_n)}{\xi(p_n)\vee \xi(q_n)} \leq 3\biggl(\frac{d(p_n,q_n)}{d(p_n,p_1)}\biggr)^\al\to 0. \]
This completes the proof that $X_0'$ is not proximally compact.

Apply Proposition \ref{prop41.1} to the order isomorphism $S: C(X_0') \to C(Y_0)$ obtained in Lemma \ref{lem79} to
find a function $f\in C(X_0')\bs U(X_0')$ such that $Sf \in \Lip^\loc_b(Y_0)$.
Since $Y_0$ is separated, $\Lip_b^\loc(Y_0) = C_b(Y_0)$.
But then it follows from Lemma \ref{lem79} that $f \in \Lip_b(X_0')$, contrary to the choice of $f$.
This concludes the proof that $X$ is almost expansive at $\infty$ if $\lip_\al(X) \sim\lip_{\al,b}(Y)$ for some $Y$.
The converse is a direct consequence of Proposition \ref{prop5.12}; see also the subsequent remark.
\end{proof}

\subsection{The case $U(X) \sim U_{b}(Y)$}

\begin{lem}\label{lem}\label{lem80}
If there exists $Y$ such that $U(X)\sim U_b(Y)$, then $U(X) = U_b(X)$.
\end{lem}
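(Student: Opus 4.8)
The plan is to show that if $T : U(X) \to U_b(Y)$ is an order isomorphism, then every function in $U(X)$ is bounded. First I would reduce to the normalized situation $T0 = 0$, which costs nothing since we may translate $T$ by the constant $T0 \in U_b(Y)$ without affecting the hypothesis or the conclusion. Next I would invoke the general machinery already in place: $U(X)$ and $U_b(Y)$ are near vector lattices satisfying $(\spadesuit)$ by Examples B(b), so Theorem \ref{thm27} supplies an associated homeomorphism $\vp : X \to Y$, and since $U(X)$ and $U_b(Y)$ both satisfy $(\heartsuit)$ by Examples C(c), Proposition \ref{prop34} (with $G = X$) extends $T$ to an order isomorphism $S : C(X) \to C(Y)$ of the form $Sf(x) = \Phi(\vp(x), f(x))$ with each $\Phi(y,\cdot)$ an increasing homeomorphism of $\R$, and moreover $S(C_b(X)) \subseteq C_b(Y)$ because $U_b(Y)$ consists of bounded functions.

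The heart of the argument is then to show $X$ is proximally compact, or at least that $U(X) = U_b(X)$ directly. I would argue by contradiction: suppose $f_0 \in U(X)$ is unbounded. Then there is a sequence $(x_n)$ in $X$ with $|f_0(x_n)| \to \infty$, and since $f_0$ is uniformly continuous and unbounded, $(x_n)$ can have no convergent subsequence; passing to a subsequence we may take $(x_n)$ to be a separated sequence and assume $f_0(x_n) \to +\infty$ (replacing $f_0$ by $-f_0$ if needed). Set $y_n = \vp(x_n)$; these are distinct with no convergent subsequence. The key point is that $Tf_0 = S f_0 \in U_b(Y)$ must be bounded, so $Tf_0(y_n)$ is bounded, while $f_0(x_n) \to \infty$. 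Using monotonicity, for each $m$ the constant function $m$ lies in $U(X)$ and eventually $m \le f_0(x_n)$, so $T m(y_n) = \Phi(y_n, m) \le Tf_0(y_n)$ is bounded in $n$; letting $m$ vary, I would construct, by a diagonal choice $n_1 < n_2 < \cdots$, a function $g \in U_b(Y)$ (indeed a Lipschitz function on the separated set $\{y_{n_m}\} \cup \{y'_{n_m}\}$ extended by Theorem 1.5.6 of \cite{W}, where $y'_{n_m}$ are suitable auxiliary distinct points with $d(y_{n_m}, y'_{n_m}) \not\to 0$) such that $g(y_{n_m}) = T(m1)(y_{n_m})$ and $g(y'_{n_m}) = 0$. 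Then $T^{-1}g \in U(X)$ with $T^{-1}g(x_{n_m}) = m$ and $T^{-1}g(x'_{n_m}) = 0$ where $x'_{n_m} = \vp^{-1}(y'_{n_m})$; since one can arrange the $x'_{n_m}$ so that the separated structure is preserved, $T^{-1}g$ is a bounded uniformly continuous function only if $(m)_m$ is bounded — contradiction.

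Alternatively, and perhaps more cleanly, I expect the shortest route is: by Proposition \ref{prop41.1} applied to $S : C(X) \to C(Y)$ (which satisfies $S(C_b(X)) \subseteq C_b(Y)$), if $X$ were not proximally compact there would exist $f \in C(X) \setminus U(X)$ with $Sf = Tf \in C_b(Y)$; but $U_b(Y)$ consists of uniformly continuous bounded functions, and $Y$ here need not be discrete, so one cannot immediately conclude — hence I would instead combine this with Proposition \ref{prop34}'s continuity statement and the direct unboundedness argument above. The main obstacle is precisely the bookkeeping in the diagonal construction of $g$: one must simultaneously arrange that the points $x_{n_m}$, $x'_{n_m}$ (and their images) remain separated so that the piecewise-defined $g$ lies in $U_b(Y)$ and $T^{-1}g$ lies in $U(X)$, exactly the kind of subsequence-extraction carried out in Lemma \ref{lem56} and Proposition \ref{prop58.2}; I would model the argument on those, using Lemma \ref{lem56} to produce the required $f$, $r$, and indices, and then deriving the contradiction from boundedness of $(T1(y_n))$.
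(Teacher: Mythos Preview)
Your proposal has a genuine gap in the final contradiction. You construct $g \in U_b(Y)$ with $g(y_{n_m}) = T(m1)(y_{n_m})$ and $g(y'_{n_m}) = 0$, and conclude $T^{-1}g(x_{n_m}) = m$, $T^{-1}g(x'_{n_m}) = 0$. But $T^{-1}$ maps $U_b(Y)$ into $U(X)$, not $U_b(X)$; so $T^{-1}g$ is only guaranteed to be uniformly continuous, not bounded. The sentence ``$T^{-1}g$ is a bounded uniformly continuous function only if $(m)_m$ is bounded'' assumes exactly what you are trying to prove. The only way to extract a contradiction from $T^{-1}g(x_{n_m}) = m$ and $T^{-1}g(x'_{n_m}) = 0$ via uniform continuity is to have $d(x_{n_m}, x'_{n_m}) \to 0$, yet you explicitly arrange the points to be separated. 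Your alternative paragraph acknowledges this difficulty but does not resolve it.

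The paper's proof avoids this trap by a preliminary reduction you omit: using Propositions \ref{prop57.1}(b) and \ref{prop58.2}, the associated homeomorphism $\vp$ is shown to be a \emph{uniform} homeomorphism. Composing with $f \mapsto f\circ\vp$ then yields an order isomorphism $T': U_b(X) \to U(X)$ whose associated homeomorphism is the identity on $X$. Now the argument is internal to $X$: if $f \in U(X)$ is unbounded along a separated sequence $(x_n)$, one splits into two cases. If $(x_n) \subseteq X_\ep$ for some $\ep$, the function $g$ with $g(x_n) = \Phi(x_n,n)$ and $g = 0$ elsewhere lies in $U(X)$, but $(T')^{-1}g(x_n) = n$ contradicts $(T')^{-1}g \in U_b(X)$. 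Otherwise one can pair the $x_n$ with nearby $x'_n$, and Lemma \ref{lem58.1} gives $\sup_n \Phi(x_n,t) < \infty$ for each $t$; since $(T')^{-1}f \le t$ for some $t$, one gets $f(x_n) \le \Phi(x_n,t)$, bounded, a contradiction. The crucial point is that here the target of $(T')^{-1}$ is $U_b(X)$, so unboundedness of $(T')^{-1}g$ \emph{is} a contradiction --- precisely what your direct approach lacks.
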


\begin{proof}
Suppose that there is an order isomorphism from $U(X)$ onto $U_b(Y)$ and let $\vp:X\to Y$ be the associated homeomorphism.
By Propositions \ref{prop57.1}(b) and \ref{prop58.2}, $\vp$ is a uniform homeomorphism.
It follows that there is an order isomorphism $T: U_b(X)\to U(X)$ whose associated homeomorphism is the identity map.
Let $\Phi:X\times \R \to \R$ be a function such that $\Phi(x,\cdot):\R\to \R$ is an increasing homeomorphism for all $x\in X$ and that $Tf(x) = \Phi(x,f(x))$.  We may assume that $T0=0$.
If $U(X) \neq U_b(X)$, there exists a function $0 \leq f\in U(X)$ and a sequence $(x_n)$ in $X$ such that $f(x_n) \to \infty$.
Clearly, $(x_n)$ has no convergent subsequence.  Since $X$ is complete by assumption, we may assume that the sequence $(x_n)$ is separated.

\medskip

\noindent\underline{Case 1}. There exists $\ep >0$ such that $(x_n) \subseteq X_\ep$.

\noindent In this case, the function $g:X\to \R$ defined by $g(x_n) = \Phi(x_n,n)$ and $g(x) = 0$ otherwise is uniformly continuous.  However, $T^{-1}g(x_n) =n$ for all $n$ and thus $T^{-1}g \notin U_b(X)$, contrary to the assumption.

\medskip

\noindent\underline{Case 2}. $(x_n)\not\subseteq X_\ep$ for any $\ep > 0$.

\noindent By using a subsequence if necessary, we may assume that there is a sequence $(x'_n)$ in $X$ such that $0 < d(x_n,x'_n) \to 0$. 
Extend $T$ to an order isomorphism $S:C(X)\to C(X)$ by Proposition \ref{prop34}.
By Lemma \ref{lem58.1}, for any $0< t\in \R$,
$\sup_n{\Phi(x_n,t)}< \infty$.
Since $T^{-1}f \in U_b(X)$, there exists $0< t\in \R$ such that $T^{-1}f\leq t$.
Then $f\leq Tt$.
In particular, $\sup_n f(x_n) \leq \sup_nTt(x_n) = \sup_n\Phi(x_n,t) < \infty$, contrary to the choice of $f$.
\end{proof}

Lemma \ref{lem80} reduces the problem of characterizing $X$ with $U(X)\sim U_b(Y)$ to characterizing $X$ with $U(X) = U_b(X)$. The latter question has been answered by O'Farrell \cite{O'F}.
Let us recall some terminology from \cite{O'F}.
Let $X$ be a metric space.  For any $\ep > 0$, define an equivalence relation on $X$ by 
$x\sim_\ep y$ if and only if there exist $x = x_0, x_1,\dots, x_n = y$ such that $d(x_{k-1},x_k)\leq \ep$, $1\leq k\leq n$.
The equivalence classes are called {\em $\ep$-step territories}.
If $x\sim_\ep y$, then the smallest $n$ in the definition above is denoted by $s_\ep(x,y)$.
An $\ep$-step territory $T$ is said to be {\em $\ep$-step-bounded} if $\sup_{x,y\in T}s_\ep(x,y)<\infty$.

\begin{thm}\label{thm81}\cite[Theorem 2.1]{O'F}
Let $X$ be a metric space.  Then $U(X) = U_b(X)$ if and only if for any $\ep >0$, $X$ has only finitely many $\ep$-step territories, each of which is $\ep$-step-bounded.
\end{thm}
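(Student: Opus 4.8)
The plan is to prove both implications by hand; this is O'Farrell's argument \cite{O'F}. For the ``if'' direction, I would take $f\in U(X)$, choose $\ep>0$ with $d(x,y)\le\ep\Rightarrow|f(x)-f(y)|\le 1$, and observe that if $x,y$ lie in one $\ep$-step territory $T$ then, along a minimizing chain $x=z_0,\dots,z_n=y$ of length $n=s_\ep(x,y)$, one has $|f(x)-f(y)|\le\sum_{k=1}^{n}|f(z_{k-1})-f(z_k)|\le n\le\sup_{p,q\in T}s_\ep(p,q)=:M_T<\infty$. Thus $f$ is bounded on each $\ep$-step territory, and since there are finitely many of them, $f\in U_b(X)$.

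For the converse I would use the contrapositive: if the combinatorial condition fails for some $\ep>0$, construct an unbounded function in $U(X)$. The basic remark is that any function constant on each $\ep$-step territory is automatically uniformly continuous, since $d(x,y)\le\ep$ already puts $x$ and $y$ in one $\ep$-step territory. Hence, if there are infinitely many $\ep$-step territories, picking a sequence of distinct ones $T_1,T_2,\dots$ and letting $f\equiv n$ on $T_n$ (and $f\equiv 0$ elsewhere) does the job. This leaves the case that some $\ep$-step territory $T$ is not $\ep$-step-bounded.

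For that case I would fix $x_0\in T$, introduce on $T$ the chain metric
\[
\rho(x,y)=\inf\bigl\{\,\textstyle\sum_{k=1}^{n}d(z_{k-1},z_k)\;:\;x=z_0,\dots,z_n=y,\ d(z_{k-1},z_k)\le\ep\,\bigr\},
\]
and set $f=\rho(x_0,\cdot)$ on $T$ and $f=0$ off $T$. Using the one-step chain, $\rho(x,y)\le d(x,y)$ when $d(x,y)\le\ep$; together with the preceding remark this gives $|f(x)-f(y)|\le d(x,y)$ whenever $d(x,y)\le\ep$, so $f\in U(X)$. It remains only to see $f$ is unbounded on $T$, equivalently that $\rho(x_0,\cdot)\le L$ on $T$ would force $T$ to be $\ep$-step-bounded.

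Here I would take, for $x\in T$, an $\ep$-chain $x_0=z_0,\dots,z_m=x$ of total length $<L+\ep$ and greedily consolidate it: set $w_0=z_0$ and, having chosen $w_i=z_{j_i}$, let $w_{i+1}=z_{j_{i+1}}$ with $j_{i+1}=\max\{\,j:d(z_{j_i},z_j)\le\ep\,\}$. The indices $j_i$ strictly increase, so one reaches $x$ after finitely many, say $N$, steps; the resulting chain $w_0,\dots,w_N$ is a valid $\ep$-chain of total length still $<L+\ep$ (each leg $d(w_i,w_{i+1})$ is bounded by the corresponding block of the original chain), and $d(w_{i-1},w_{i+1})>\ep$ for $1\le i\le N-1$ by the maximality in the choice of $w_i$. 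Summing $d(w_{2k},w_{2k+2})>\ep$ over $k$ and comparing with the total length $<L+\ep$ of the $w$-chain bounds $N$, hence $s_\ep(x_0,x)$, by a constant depending only on $L$ and $\ep$; since $s_\ep(p,q)\le s_\ep(p,x_0)+s_\ep(x_0,q)$, $T$ is $\ep$-step-bounded, a contradiction. The step I expect to require real care is this consolidation estimate; the rest is routine bookkeeping.
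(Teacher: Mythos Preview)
The paper does not actually prove this theorem; it merely quotes the statement from O'Farrell \cite{O'F} and uses it as a black box to deduce Theorem~\ref{thm72}. So there is nothing in the paper to compare against beyond the citation.

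Your argument is correct and is indeed O'Farrell's. The ``if'' direction is immediate from the chain estimate you wrote. In the converse, the case of infinitely many $\ep$-step territories is handled exactly as you say (any function constant on territories is uniformly continuous since $d(x,y)\le\ep$ already forces $x\sim_\ep y$). For the remaining case your chain-metric function $f=\rho(x_0,\cdot)$ on $T$ (and $0$ off $T$) is uniformly continuous for the same reason, together with $\rho(x,y)\le d(x,y)$ when $d(x,y)\le\ep$. The consolidation step is sound: given an $\ep$-chain of total length $<L+\ep$, your greedy choice $j_{i+1}=\max\{j:d(z_{j_i},z_j)\le\ep\}$ satisfies $j_{i+1}\ge j_i+1$ (since $d(z_{j_i},z_{j_i+1})\le\ep$), so the process terminates; each consolidated leg is bounded by the corresponding block of the original chain via the triangle inequality, so the total length is preserved; and maximality gives $d(w_{i-1},w_{i+1})>\ep$. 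Summing $d(w_{2k},w_{2k+2})\le d(w_{2k},w_{2k+1})+d(w_{2k+1},w_{2k+2})$ over $k$ and comparing with $L+\ep$ yields $N<2L/\ep+3$, bounding $s_\ep(x_0,x)$ uniformly in $x$, hence $T$ is $\ep$-step-bounded --- the desired contradiction.
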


Let us call a metric space satisfying the conditions of Theorem \ref{thm81} an {\em O'Farrell space}.
The following result is now immediate.

\begin{thm}\label{thm72}
There exists $Y$ such that $U(X)\sim U_b(Y)$ if and only if $X$ is an O'Farrell space.
\end{thm}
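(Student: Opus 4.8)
The plan is to combine Lemma \ref{lem80} with O'Farrell's characterization (Theorem \ref{thm81}) and the definition of an O'Farrell space. The two directions are of very different character, and the forward direction is essentially already done by the lemmas: if there exists $Y$ with $U(X) \sim U_b(Y)$, then by Lemma \ref{lem80} we have $U(X) = U_b(X)$, and then Theorem \ref{thm81} says precisely that for every $\ep > 0$, $X$ has only finitely many $\ep$-step territories, each $\ep$-step-bounded — that is, $X$ is an O'Farrell space. So the forward implication is a one-line deduction chaining the two cited results.

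For the converse, I would argue as follows. Suppose $X$ is an O'Farrell space, i.e., $U(X) = U_b(X)$ by Theorem \ref{thm81}. The natural candidate for $Y$ is $X$ itself. But we must produce an \emph{order isomorphism} between $U(X)$ and $U_b(Y)$ for \emph{some} $Y$; since $U(X) = U_b(X)$ already holds, the identity map on $U(X) = U_b(X)$ is an order isomorphism onto $U_b(X)$, so we simply take $Y = X$. This is the trivial direction, and I would state it in one sentence: if $X$ is an O'Farrell space, then $U(X) = U_b(X)$ by Theorem \ref{thm81}, so taking $Y = X$ gives $U(X) = U_b(Y)$, hence $U(X) \sim U_b(Y)$.

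Since both directions reduce immediately to previously established facts, there is no real obstacle here — the theorem is a packaging result that names the class of metric spaces $X$ for which $U(X) \sim U_b(Y)$ is solvable, having done all the genuine work in Lemma \ref{lem80} (which uses Propositions \ref{prop57.1}(b), \ref{prop58.2}, \ref{prop34} and Lemma \ref{lem58.1}) and in citing O'Farrell. The only thing to be careful about is that the statement quantifies existentially over $Y$, so in the forward direction one must not assume $Y = X$; one genuinely uses Lemma \ref{lem80} to collapse the hypothesis "$U(X) \sim U_b(Y)$ for some $Y$" down to the intrinsic property "$U(X) = U_b(X)$" of $X$ alone, after which Theorem \ref{thm81} applies. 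The proof is therefore: \emph{($\Rightarrow$)} By Lemma \ref{lem80}, $U(X) = U_b(X)$; by Theorem \ref{thm81}, $X$ is an O'Farrell space. \emph{($\Leftarrow$)} By Theorem \ref{thm81}, $U(X) = U_b(X)$, so $Y = X$ works.
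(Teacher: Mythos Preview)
Your proposal is correct and matches the paper's approach exactly: the paper declares Theorem \ref{thm72} to be ``now immediate'' after establishing Lemma \ref{lem80} and quoting O'Farrell's Theorem \ref{thm81}, and your two one-line deductions are precisely the intended argument.
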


\subsection{More on spaces of differentiable functions}
In this part, we take up the remaining cases of comparing  spaces of differentiable functions.
Specifically, we consider the comparisons $C^p(\ol{X})$ or $C^p_b(\ol{X}) \sim C^q(Y)$, and 
$C^p(\ol{X})$, $C^p_b(\ol{X})$ or $C^p_b(X) \sim C^q_*(\ol{Y})$.
Recall that here $X$ and $Y$ are open sets in (possibly different) Banach spaces, and $\ol{X}$, respectively, $\ol{Y}$ are their closures in the respective ambient Banach spaces.  We also assume that all spaces considered separate points from closed sets.

\begin{prop}\label{prop6.40}
Let $U$ and $V$ be open sets in Banach  spaces $E$ and $F$ respectively and 
let $A(F) = C^q(F)$ or $C^q_*(F)$.  Assume that $A(F)$ separates points from closed sets. 
Suppose that there are a homeomorphism $\vp: U\to V$ and a function $\Psi:U\times \R\to\R$ such that $\Psi(x,\cdot):\R\to \R$ is an increasing homeomorphism with $\Psi(x,0)= 0$ for all $x\in U$.
Furthermore, suppose that for any $g\in A(F)$, the formula $Sg(x) = \Psi(x,g(\vp(x))), x\in U$, defines a function $Sg \in C^p(U)$.
Let $(x_n)$ be a sequence of  distinct points in $U$ with no convergent subsequence in $U$.
For each $n$, there exist
\begin{enumerate}
\item  an open neighborhood $U_n$ of $x_n$, such that $\ol{U_n} \subseteq U$, $\diam U_n \to 0$, $\ol{U_n}\cap \ol{U_m} = \emptyset$ if $m\neq n$ (closures taken in $E$),
\item a function $f_n \in C^p_b(E)$, such that $f_n(x) = 0$ if $x\notin U_n$, $\|f_n\|_\infty \leq 1/n$,
\item a function $g_n\in A(F)$ and a point $y_n' \in \vp(U_n)$, such that $Sg_n = {f_n}_{|U}$, $d(y'_n,\vp(x_n)) \to 0$ and  $\|Dg_n(y'_n)\| \to \infty$ .
\end{enumerate}
\end{prop}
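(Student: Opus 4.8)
The plan is to \emph{reverse-engineer} $S$: rather than starting from $f_n$, I will build each $g_n$ directly as a rescaled bump function on $F$ concentrated at $y_n:=\vp(x_n)$, and then simply set $f_n=Sg_n$ on $U$ and $f_n=0$ off $U$. The hypothesis guarantees $Sg_n\in C^p(U)$, and all the estimates in (b), (c) will come out of choosing the scales of the bumps in the right order. The one observation that makes everything work is that $x\mapsto\Psi(x,t)=S(t)(x)$ is \emph{continuous} on $U$, since constants lie in $A(F)$ and $S$ maps into $C^p(U)\subseteq C(U)$; together with the fact that each $\Psi(x,\cdot)$ is an increasing homeomorphism fixing $0$, this lets me control heights $n$ by $n$.

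\textbf{Topology first.} Since $(x_n)$ has no convergent subsequence in $U$, for each $n$ the point $x_n$ is not a limit point of the sequence, so $c_n:=\inf_{m\neq n}d(x_m,x_n)>0$, and $\operatorname{dist}(x_n,E\setminus U)>0$. Choosing $\delta_n>0$ recursively below $\min\bigl(\tfrac1n,\ \tfrac12\operatorname{dist}(x_n,E\setminus U),\ \tfrac12c_n\bigr)$ and putting $U_n=B(x_n,\delta_n)$ gives (a): $\ol{U_n}\subseteq U$, $\diam U_n\to0$, and $\ol{U_m}\cap\ol{U_n}=\emptyset$ for $m\neq n$ because $\delta_m+\delta_n<\tfrac12(c_m+c_n)\le d(x_m,x_n)$. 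For the bump, since $A(F)$ separates points from closed sets there is $h_0\in A(F)$ with $h_0(0)\neq0$ vanishing off the unit ball of $F$; composing with a $C^\infty$ truncation $\gamma\ge0$ with $\gamma(0)=0$, $\gamma$ bounded, gives $h\in A(F)$ with $h\ge0$, $h(0)>0$, $h=0$ off $B(0,1)$, and $h$ bounded (automatic when $A(F)=C^q_*(F)$, where one may also take $h=h_0^2$). Being non-constant and $C^1$, $h$ has $Dh(z_0)\neq0$ for some $\|z_0\|<1$. For each $n$ I then set $g_n(y)=b_n\,h\bigl((y-y_n)/\epsilon_n\bigr)$ and $y_n'=y_n+\epsilon_n z_0$, so that $g_n\in A(F)$, $g_n\ge0$, $\|g_n\|_\infty=b_n\|h\|_\infty$, $g_n$ is supported in $\ol{B(y_n,\epsilon_n)}$, $d(y_n',y_n)=\epsilon_n\|z_0\|$, and $\|Dg_n(y_n')\|=(b_n/\epsilon_n)\|Dh(z_0)\|$.

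\textbf{Choosing the scales, in order.} (i) Since $\Psi(x_n,\cdot)$ is an increasing homeomorphism fixing $0$, pick $b_n>0$ with $\Psi(x_n,b_n\|h\|_\infty)<\tfrac1{2n}$. (ii) By continuity of $\Psi(\cdot,b_n\|h\|_\infty)$ at $x_n$, pick $\eta_n\in(0,\delta_n)$ with $\Psi(\cdot,b_n\|h\|_\infty)<\tfrac1n$ on $B(x_n,\eta_n)$. (iii) Finally pick $\epsilon_n>0$ so small that $\epsilon_n<\min(\tfrac1n,b_n/n)$, $\ol{B(y_n,\epsilon_n)}\subseteq V$, and (continuity of $\vp^{-1}$ at $y_n$) $\vp^{-1}\bigl(\ol{B(y_n,\epsilon_n)}\bigr)\subseteq B(x_n,\eta_n)$. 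Then $\epsilon_n\le b_n/n$ forces $\|Dg_n(y_n')\|=(b_n/\epsilon_n)\|Dh(z_0)\|\ge n\|Dh(z_0)\|\to\infty$, while $\epsilon_n\le\tfrac1n\to0$ gives $d(y_n',y_n)\to0$, and $\vp^{-1}(y_n')\in B(x_n,\eta_n)\subseteq U_n$, so $y_n'\in\vp(U_n)$: this is (c). Now define $f_n=Sg_n$ on $U$ and $f_n=0$ on $E\setminus U$. Because $g_n$ is supported in $\ol{B(y_n,\epsilon_n)}$ and $\vp^{-1}\bigl(\ol{B(y_n,\epsilon_n)}\bigr)\subseteq B(x_n,\eta_n)\subseteq U_n$, the function $Sg_n$ vanishes off $\ol{U_n}\subseteq U$; hence $\{U,\ E\setminus\ol{U_n}\}$ is an open cover of $E$ on which $f_n$ equals $Sg_n\in C^p(U)$ and $0$ respectively, consistently on the overlap, so $f_n\in C^p(E)$ with $f_n=0$ off $U_n$. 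Using $0\le g_n\le b_n\|h\|_\infty$, monotonicity of $\Psi(x,\cdot)$ and $\Psi(x,0)=0$, and that $Sg_n$ is supported in $B(x_n,\eta_n)$, we get $\|f_n\|_\infty=\sup_{x\in B(x_n,\eta_n)}\Psi\bigl(x,g_n(\vp(x))\bigr)\le\sup_{B(x_n,\eta_n)}\Psi(\cdot,b_n\|h\|_\infty)\le\tfrac1n$, so $f_n\in C^p_b(E)$. This is (b), and $Sg_n=f_n|_U$ by construction.

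\textbf{Main obstacle.} The only genuine difficulty is the coupling between height and derivative: $\|f_n\|_\infty$ is governed by the uncontrolled, $n$-dependent homeomorphism $\Psi(x_n,\cdot)$ applied to $\|g_n\|_\infty$, so one cannot merely make $g_n$ ``small.'' One must fix the height scale $b_n$ first (cheaply, from continuity of $S$ on constants), and only then make the width scale $\epsilon_n$ tiny \emph{relative to} $b_n$ to force $\|Dg_n(y_n')\|\to\infty$. The remaining points are routine but need care in the possibly infinite-dimensional setting: that $\ol{U_n}\subseteq U$ and $\ol{B(y_n,\epsilon_n)}\subseteq V$, and that $Sg_n$ extends by zero to a $C^p$ function on all of $E$ (handled via the cover $\{U,E\setminus\ol{U_n}\}$ rather than by compactness of supports).
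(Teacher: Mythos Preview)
Your proof is correct and follows essentially the same approach as the paper: build $g_n$ as a rescaled bump $g_n(y)=b_n\,h((y-y_n)/\epsilon_n)$, use continuity of $x\mapsto\Psi(x,c)=S(c)(x)$ (from constants lying in $A(F)$) to control $\|f_n\|_\infty$ near $x_n$, and choose the height $b_n$ first and then the width $\epsilon_n\ll b_n$ to force $\|Dg_n(y_n')\|\to\infty$. The paper's version uses $c_n$ with $\Psi(x_n,c_n)=1/n$, height $c_n/2$, and radius $r_n<(c_n\wedge 1)/n$, which is the same mechanism with slightly different bookkeeping; your extra care with the extension of $f_n$ to $E$ via the open cover $\{U,E\setminus\ol{U_n}\}$ is a nice touch the paper leaves implicit.
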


\begin{proof}
It is clear that there is a sequence of sets $(U_n)$ satisfying condition (a).
Let $y_n = \vp(x_n)$ and $c_n> 0$ be such that $\Psi(x_n,c_n) = 1/n$.  
For any $c\in \R$, the constant function $c\in A(F)$ and hence $\Psi(x,c)$ lies is $C^p(U)$ as a function of $x\in U$.
In particular, $\Psi(x,c)$ is continuous in the variable $x$ for $x\in U$.
Choose $0 < r_n < (c_n\wedge 1)/n$ such that $B(y_n,r_n) \subseteq \vp(U_n)$ and that $\Psi(\vp^{-1}(y),c_n/2) \leq 1/n$ for all $y\in B(y_n,r_n)$.
Since $A(F)$ separates points from closed sets, there exists $g\in A(F)$ such that $0 \leq g \leq 1$, $g(0) > 0$ and $g(y) = 0$ if $\|y\| \geq 1$. In particular, there exists $z\in F$, $\|z\| <1$, such that $Dg(z) \neq 0$.
Define $g_n: F\to \R$ by 
\[ g_n(y) = \frac{c_n}{2}g\bigl(\frac{y-y_n}{r_n}\bigr).\]
Then $g_n \in A(F)$ and hence $f_n = Sg_n \in C^p(U)$.
If $x\in U$ and  $\vp(x) \notin B(y_n,r_n)$,
then ${g_n}(\vp(x)) = 0$ and hence $f_n(x) = 0$.
In particular, $f_n(x) = 0$ if $x\in U\bs U_n$.
Since $\ol{U_n} \subseteq U$, we may extend $f_n$ to a function in $C^p(E)$ by defining $f_n(x) = 0$ for all $x\notin U$.  We will continue to denote the extension by $f_n$.
Note that for $x\in U$ with $\vp(x) \in B(y_n,r_n)$, 
 \[ 0 \leq f_n(x) = \Psi(x,g_n(\vp(x))) \leq \Psi(x,\frac{c_n}{2}) \leq \frac{1}{n}.\]
Hence $0 \leq f_n(x) \leq 1/n$.  Combined with the above, we see that $0 \leq f_n \leq 1/n$ on $E$.
Thus $f_n$ satisfies the conditions in (b).
Finally, for each $n$, let $y_n' = y_n+ r_nz$. Then $y_n' \in B(y_n,r_n) \subseteq \vp(U_n)$ and $d(y_n', y_n) \to 0$.
Furthermore,
\[ \|Dg_n(y'_n)\| = \frac{c_n}{2r_n}\|Dg(z)\| \to \infty\]
since $Dg(z) \neq 0$ and $0 < r_n < c_n/n$.
\end{proof}

\begin{thm}\label{thm6.41}
There exist $Y$ and $q$ such that $C^p(\ol{X}) \sim C^q(Y)$ if and only if $X =E$, the ambient Banach space containing $X$.
\end{thm}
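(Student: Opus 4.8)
The plan is to prove the two implications separately. The easy direction is the ``if'': if $X = E$ then $\ol{X} = E = X$, so $C^p(\ol{X}) = C^p(X)$, and one simply takes $Y = X$, $q = p$, giving the identity as the order isomorphism. So the substance of the theorem is the converse: assuming $C^p(\ol{X}) \sim C^q(Y)$, I must show that $\ol{X} = X$, i.e. $X$ has no boundary points in its ambient Banach space $E$; since $X$ is open, this forces $X = E$. By Corollary \ref{cor36.2}(b), we already know $C^p(X) \not\sim C^q_b(Y)$ and similar statements, but here both sides are ``full'' (non-bounded) spaces, so that corollary does not apply directly; the point is to exploit the presence of boundary points in $\ol{X}$ to build a function on which the order isomorphism misbehaves.

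First I would set up the standard machinery: let $T : C^p(\ol{X}) \to C^q(Y)$ be an order isomorphism, normalized so $T0 = 0$. By Examples B(d) and C(e), $C^p(\ol{X})$ is a near vector lattice satisfying condition ($\heartsuit$) at every point of $\ol{X}$ (for points of $\ol{X}\setminus X$ one uses the separating hypothesis; for points of $X$ this is Examples C(d)), and likewise $C^q(Y)$; and $C^p(\ol{X}) = C^p$-local or satisfies ($\spadesuit$) appropriately (Examples B(d)), similarly for $C^q(Y)$ (Examples B(c)). Hence Theorem \ref{thm29} applies: there is a homeomorphism $\vp : \ol{X} \to Y$ and, at each point, an increasing homeomorphism $\Phi(y,\cdot):\R\to\R$ with $Tf(y) = \Phi(y, f(\vp^{-1}(y)))$. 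Moreover, by Corollary \ref{cor35.1}, $T$ is continuous for uniform convergence on compact sets. Now suppose for contradiction that $\ol{X} \neq X$; pick $x_0 \in \ol{X}\setminus X$ and a sequence $(x_n)$ of distinct points of $X$ converging to $x_0$. Set $y_0 = \vp(x_0)$, $y_n = \vp(x_n)$; since $\vp$ is a homeomorphism, $y_n \to y_0$ in $Y$ and $(y_n)$ has a limit in $Y$ (namely $y_0 \in Y$). The idea is to apply Proposition \ref{prop6.40} with $U = Y$, $V = X$, and the map $S = T^{-1}$ in the role of the superposition operator (note $T^{-1}g(x) = \Psi(x, g(\vp(x)))$, $\Psi(x,\cdot)$ the inverse of $\Phi(\vp(x),\cdot)$, and $T^{-1}g \in C^p(\ol{X})$ restricts to a $C^p$ function on $X$, indeed on $\ol{X}$), applied to a sequence $(y_n')$ in $Y$ with no convergent subsequence — but here the obstruction is that $(y_n)$ \emph{does} converge, so the roles must be interpreted with care, using instead the bad behaviour near the boundary point $x_0$.

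Let me reorganize the contradiction mechanism. The right tool is the fact that $x_0 \in \ol{X} \setminus X$ admits, by Examples C(e) (the proof for points of $\ol{X}\setminus X$), a sequence-based construction: choose open neighborhoods $U_n \subseteq X$ of $x_n$ with $\diam U_n \to 0$ and $\ol{U_n} \cap \ol{U_m} = \emptyset$. For each $n$ let $b_n = T(n\cdot \mathbf{1})(y_n)$, where $\mathbf{1}$ is the constant function $1$ (which lies in $C^p(\ol{X})$); since $C^q(Y)$ need not consist of bounded functions, $(b_n)$ may be unbounded, but that is fine. Using that $C^q(Y)$ separates points from closed sets and the argument pattern of Proposition \ref{prop6.40}(b)–(c), build $g_n \in C^q(Y)$ supported near $y_n$ with $T^{-1}g_n$ supported in $U_n$ and $T^{-1}g_n(x_n) \geq n$ — here one inverts $\Phi(y_n,\cdot)$ to pick the height. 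Then the pointwise sum $g = \sum g_n$: because the supports $\vp^{-1}(U_n)$ accumulate only at $y_0 \in Y$ and $\diam \to 0$, $g$ is a well-defined $C^q$ function on $Y$ away from $y_0$, and near $y_0$ one needs $g_n \to 0$ in $C^q$-norm-on-compact-sets to get $g \in C^q(Y)$; this is arranged by scaling. Then $T^{-1}g \in C^p(\ol{X})$, yet $T^{-1}g \geq n\cdot\mathbf{1}$ on a neighborhood of $x_n$ by Theorem \ref{thm27}, so $T^{-1}g(x_n) \to \infty$, while $x_n \to x_0 \in \ol{X}$ — contradicting continuity of $T^{-1}g$ at $x_0$.

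I expect the main obstacle to be the following: for the sum $g = \sum g_n$ to land in $C^q(Y)$ one must control \emph{all} derivatives $D^k g_n$ (for $1 \le k \le q$) near the accumulation point $y_0$, not just the sup-norms; simply knowing $\|g_n\|_\infty \to 0$ is not enough for $C^q$-regularity at $y_0$. This is exactly the kind of delicate estimate that Proposition \ref{prop6.40} is designed to package (its clauses (b) and (c) produce $f_n$ with $\|f_n\|_\infty \le 1/n$ \emph{and} the required structure), so the real work is to route the construction through that proposition — taking the sequence $(x_n)$ there to be a sequence in $Y$ with no $Y$-convergent subsequence extracted from the situation, which we do \emph{not} have directly. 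The resolution: we should instead run Proposition \ref{prop6.40} in the \emph{other} direction, i.e. observe that if $\ol{X}\ne X$ then $\ol{X}$ fails to be open, hence (since $Y$ is homeomorphic to $\ol{X}$ via $\vp$) $Y$ is not open in its ambient space, but $Y$ \emph{is} assumed open — this is the clean contradiction, provided we know $Y$ is a genuine open subset of a Banach space, which is part of the standing hypotheses on spaces of type $C^q(Y)$. Thus the cleanest route is: (i) apply Theorem \ref{thm29} to get the homeomorphism $\vp : \ol{X} \to Y$; (ii) note $Y$ open in a Banach space $F$ forces $Y$ locally compact iff $F$ is finite-dimensional, and in all cases $Y$ is locally homeomorphic to open balls, while $\ol{X}$ fails this at boundary points — so $\ol{X}$ open in $E$; (iii) conclude $X = \ol{X} = E$. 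The function-theoretic argument above (via $T^{-1}g$ blowing up at $x_0$) is the fallback if one cannot invoke ``openness is a topological invariant of the pair'' cleanly; I would present the function-theoretic one in full, using Proposition \ref{prop6.40} to legitimize the derivative estimates.
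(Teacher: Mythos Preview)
Your topological ``cleanest route'' does not work. Openness in an ambient Banach space is not an intrinsic topological property of the subspace, and in infinite dimensions the obstruction you hope for genuinely vanishes: for example, the closed unit ball of an infinite-dimensional Hilbert space is homeomorphic to the whole space (a classical result in infinite-dimensional topology due to Klee), so a closure $\ol{X}$ with nonempty boundary can perfectly well be homeomorphic to an open set $Y$. Invariance of domain would rescue the argument only in finite dimensions. You must therefore carry out the function-theoretic route in full.

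For that route, you have the orientation of Proposition~\ref{prop6.40} reversed, and this is the genuine gap. You try to build $g = \sum g_n$ on the $Y$ side and detect value blow-up of $T^{-1}g$ at $x_0$; as you correctly diagnose, controlling all derivatives of the $g_n$ near the accumulation point $y_0 \in Y$ is then the obstacle, and it is a real one that your sketch does not overcome. The paper instead applies Proposition~\ref{prop6.40} with $U = X$ (the open set itself, \emph{not} $\ol{X}$), $V = \vp(X)$, $A(F) = C^q(F)$, and $S$ given by $Sg = (T^{-1}(g_{|Y}))_{|X}$. The point is that the sequence $(x_n) \subseteq X$ converging to $x_0 \in \ol{X}\setminus X$ has no convergent subsequence \emph{in $U = X$}, which is exactly the hypothesis of the proposition. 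The output is $f_n \in C^p_b(E)$ with $\|f_n\|_\infty \le 1/n$, disjointly supported in shrinking $U_n \subseteq X$, together with $g_n \in C^q(F)$ and points $y'_n \in \vp(U_n)$ with $d(y'_n,\vp(x_n))\to 0$ and $\|Dg_n(y'_n)\| \to \infty$. One now sums on the $\ol{X}$ side: $f = \sum f_n|_{\ol{X}} \in C^p(\ol{X})$, and this is painless precisely because membership in $C^p(\ol{X})$ demands only \emph{continuity} at the boundary point $x_0$, so the uniform convergence forced by $\|f_n\|_\infty \to 0$ suffices (locally on $X$ the sum is finite, hence $C^p$ there). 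Since $f = f_n$ on $U_n$, one has $Tf = g_n$ on $\vp(U_n)$, whence $\|D(Tf)(y'_n)\| \to \infty$; but $y'_n \to \vp(x_0) \in Y$ and $Tf \in C^q(Y)$ must have continuous first derivative on all of the open set $Y$, a contradiction. The asymmetry you missed is that $C^p(\ol{X})$ is forgiving at $x_0$ (only continuity required) while $C^q(Y)$ is unforgiving at $\vp(x_0)$ (full $C^q$ required), so the construction should live on the $X$ side and the contradiction should be a \emph{derivative} blow-up on the $Y$ side, not a value blow-up on the $\ol{X}$ side.
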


\begin{proof}
Suppose that there exist $Y$ and $q$ such that $C^p(\ol{X}) \sim C^q(Y)$.  To show that $X = E$, it suffices to show that $X= \ol{X}$.  Assume to the contrary that there exists $x_0 \in \ol{X}\bs X$.
Let $T: C^p(\ol{X})\to C^q(Y)$ be an order isomorphism such that $T0 = 0$.
By Theorem \ref{thm29}, we have a representation $Tf(y) = \Phi(y,f(\vp^{-1}(y)))$ for $f\in C^p(\ol{X})$ and $y\in Y$, where $\vp:\ol{X}\to Y$ is a homeomorphism and $\Phi:Y\times \R\to \R$ is a function such that $\Phi(y,\cdot):\R\to \R$ is an increasing homeomorphism with $\Phi(y,0) = 0$ for all $y\in Y$.
Set $U = X$ and $V = \vp(X)$.  Then $U$ and $V$ are open sets in the respective ambient Banach spaces $E$ and $F$ and $\vp$ is a homeomorphism from $U$ onto $V$.
For each $x\in U$, let $\Psi(x,\cdot)$ be the inverse of $\Phi(\vp(x),\cdot)$.
Then $\Psi(x,\cdot):\R\to \R$ is an increasing homeomorphism such that $\Psi(x,0) = 0$ for all $x\in U$.
By assumption, $C^q(Y)$ separates points from closed sets and hence so does $C^q(F)$.
If $g\in C^q(F)$, then $g_{|Y} \in C^q(Y)$ and hence $T^{-1}(g_{|Y}) \in C^p(\ol{X})$.
In particular, $Sg(x) = \Psi(x,g(\vp(x))) = T^{-1}(g_{|Y})(x)$ belongs to $C^p(U)$ as a function of $x\in U$.
Choose a sequence of  distinct points $(x_n)$ in $U$ that converges to $x_0$ in $E$.
Clearly, $(x_n)$ has no convergent subsequence in $U$.
Obtain sets $U_n$, functions $f_n$, $g_n$ and points $y'_n$ by applying Proposition \ref{prop6.40}. 
It is clear that the pointwise sum $f = \sum {f_n}_{|\ol{X}}$ defines a function in $C^p(\ol{X})$.
As $f = f_n$ on $U_n$, $Tf = g_n$ on the open set $\vp(U_n)$.
Since $y'_n \in \vp(U_n)$,  $\|D(Tf)(y_n')\|  = \|Dg_n(y_n')\| \to \infty$.
However, $(\vp(x_n))$ converges to $\vp(x_0)$ and $d(y_n',\vp(x_n))\to 0$. Therefore, $(y'_n)$ converges to $\vp(x_0)$.
As $Tf\in C^q(Y)$ and $\vp(x_0) \in Y$, we must have $D(Tf)(y'_n) \to D(Tf)(\vp(x_0))$.
This contradicts the fact that $\|D(Tf)(y'_n)\| \to \infty$ and  completes the proof that $X = E$.

Conversely, if $X = E$, then $C^p(\ol{X}) = C^p(E) = C^q(Y)$ if we take $q = p$ and $Y = E$. 
\end{proof}

\begin{cor}\label{cor6.43}
$C^p_b(\ol{X}) \not\sim C^q(Y)$ for any $p, q$ and any $X, Y$.
\end{cor}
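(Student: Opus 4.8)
The plan is to reduce the statement to Proposition \ref{prop36.1} (equivalently, to Corollary \ref{cor36.2}(b)). Since ``$\sim$'' is symmetric, it is enough to exclude an order isomorphism $T\colon C^q(Y)\to C^p_b(\ol X)$. The first step is to match the two spaces to the appropriate clauses of Proposition \ref{prop36.1}: $C^q(Y)$ is one of the ``type (a)'' spaces of that proposition (it is $C^p(X)$ from the list, with $p$ replaced by $q$ and $X$ replaced by the open set $Y$), while $C^p_b(\ol X)$ is one of the ``type (d)'' spaces (it is $C^p_b(\ol Y)$ from the list, with $Y$ replaced by $X$). Both are near vector lattices satisfying the standing assumptions — they separate points from closed sets by the convention of \S\ref{subsec6.2}, and the relevant near-vector-lattice properties are recorded in Examples B — so Proposition \ref{prop36.1} applies without modification.

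The second step is to read off the conclusion: Proposition \ref{prop36.1} asserts that an order isomorphism between a type-(a) space on a domain $Z$ and a type-(d) space forces $Z$ to be compact. Applied with $Z=Y$, this says $Y$ must be compact. The third and final step is to observe that this is absurd, because $Y$ is a nonempty open subset of a Banach space, and no nonempty open subset of a Banach space is compact. The resulting contradiction proves that $C^p_b(\ol X)\not\sim C^q(Y)$.

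I do not expect any genuine obstacle here; the argument is essentially bookkeeping, and the only point requiring a moment's care is keeping the roles of $X,Y,p,q$ straight under the relabeling and checking that the ``bounded, closed-domain'' space $C^p_b(\ol X)$ is handled by clause (d) of Proposition \ref{prop36.1} rather than clause (c) (which concerns $C^p_b$ on \emph{open} domains). In fact the statement is merely a relabeled instance of the inequality $C^p(X)\not\sim C^q_b(\ol Y)$ already recorded in Corollary \ref{cor36.2}(b), so one could simply cite it from there. Should one want a proof internal to this subsection, one could instead split into the case $\ol X\neq X$ — treated exactly as in the proof of Theorem \ref{thm6.41} via Proposition \ref{prop6.40}, producing an $f\in C^p_b(\ol X)$ whose image under the isomorphism has unbounded derivative near a point of $Y$, contradicting that $Tf\in C^q(Y)$ — and the case $X=E$, treated via Proposition \ref{prop31}; but routing through Proposition \ref{prop36.1} subsumes both.
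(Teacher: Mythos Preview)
Your proof is correct and more direct than the paper's. You observe that Corollary \ref{cor6.43} is literally a relabeled instance of one of the inequalities already recorded in Corollary \ref{cor36.2}(b), namely $C^p(X)\not\sim C^q_b(\ol Y)$: swapping the roles of $X,Y$ and of $p,q$ yields exactly $C^q(Y)\not\sim C^p_b(\ol X)$. Equivalently, Proposition \ref{prop36.1} applies directly with $C^q(Y)$ as a type-(a) space and $C^p_b(\ol X)$ as a type-(d) space, forcing $Y$ to be compact, which is impossible for a nonempty open set in a Banach space.

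The paper's own proof instead detours through Theorem \ref{thm6.41}: from $C^p_b(\ol X)\sim C^q(Y)$ it first passes via Corollary \ref{cor35} to $C^p(\ol X)\sim C^q(Y)$, then invokes Theorem \ref{thm6.41} to conclude $X=\ol X$ (so $X$ is the ambient Banach space), and only then appeals to Corollary \ref{cor36.2}(b) in the form $C^p_b(X)\not\sim C^q(Y)$. Your route avoids the machinery of Theorem \ref{thm6.41} and Proposition \ref{prop6.40} entirely; it shows that the corollary is already a consequence of the general compactness-forcing principle Proposition \ref{prop31}, with no need for the derivative-blowup construction. The paper's argument has the minor virtue of staying within the subsection's narrative and illustrating a use of Theorem \ref{thm6.41}, but your observation that the result was already available from \S\ref{subsec6.2} is both valid and more economical.
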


\begin{proof}
Let $A(X) = C^p_b(\ol{X})$ and $A(Y) = C^q(Y)$.  Then $A^\loc(\ol{X}) = C^p(\ol{X})$ and $A^\loc(Y) = C^q(Y)$.  By Corollary \ref{cor35}, if $C^p_b(\ol{X})\sim C^q(Y)$, then $C^p(\ol{X}) \sim C^q(Y)$. By Theorem \ref{thm6.41}, $\ol{X} = X$.  Thus $C^p_b(X) = C^p_b(\ol{X}) \sim C^q(Y)$.  By Corollary \ref{cor36.2}(b), this is impossible.
\end{proof}

\begin{thm}\label{thm6.43}
Let $A(\ol{X}) = C^p(\ol{X})$, $C^p_b(\ol{X})$.  Then $A(\ol{X})\not\sim C^q_*(\ol{Y})$ and $C^p_b(X) \not\sim C^q_*(\ol{Y})$ for any $q$ and any $Y$.
\end{thm}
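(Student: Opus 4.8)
The plan is to derive all three non-isomorphisms by a single application of Proposition \ref{prop6.40}, exploiting the fact that a function in $C^q_*(\ol Y)$ has \emph{bounded} first derivative on the open set $Y$. So I would argue by contradiction: suppose $T$ is an order isomorphism of one of $C^p(\ol X)$, $C^p_b(\ol X)$, $C^p_b(X)$ — call the common domain $A$ — onto $C^q_*(\ol Y)$, and, after a translation, that $T0=0$. By Examples B and C, $A$ and $C^q_*(\ol Y)$ are near vector lattices, $A$ satisfies ($\heartsuit$), $C^q_*(\ol Y)$ satisfies ($\heartsuit_y$) at every $y$ in the open set $Y$, $A$ equals $A^\loc$ or $A^\loc_b$, and $C^q_*(\ol Y)$ satisfies ($\spadesuit$). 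Theorem \ref{thm27} then supplies a homeomorphism $\vp$ from the underlying space of $A$ (which is $\ol X$ in the first two cases, $X$ in the third) onto $\ol Y$; Theorem \ref{thm29}, applied to $T$ and $T^{-1}$ at points $x$ with $\vp(x)\in Y$, gives representations $Tf(y)=\Phi(y,f(\vp^{-1}(y)))$ and $T^{-1}g(x)=\Psi(x,g(\vp(x)))$ with $\Psi(x,\cdot)=\Phi(\vp(x),\cdot)^{-1}$ an increasing homeomorphism fixing $0$, valid for all $x$ with $\vp(x)\in Y$.

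Next I would install the open sets needed for Proposition \ref{prop6.40}. Since $\vp(X)$ need be neither open in the ambient Banach space $F$ of $Y$ nor contained in $Y$, I would instead put $U=\vp^{-1}(Y)\cap X$ and $V=\vp(U)$. As $\vp^{-1}(Y)$ is dense in the domain of $\vp$ and $X$ is a nonempty (dense) open subset there, $U$ is a nonempty open subset of the ambient Banach space $E$ of $X$, $V$ is a nonempty open subset of $F$, and $\vp|_U:U\to V$ is a homeomorphism. For $g\in C^q_*(F)$ the restriction $g|_{\ol Y}$ lies in $C^q_*(\ol Y)$, so $h:=T^{-1}(g|_{\ol Y})\in A$; since every function of $A$ is $C^p$ on $X\supseteq U$ and $h(x)=\Psi(x,g(\vp(x)))$ on $U$, the assignment $g\mapsto Sg:=h|_U$ maps $C^q_*(F)$ into $C^p(U)$. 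Thus all hypotheses of Proposition \ref{prop6.40} (with $A(F)=C^q_*(F)$, which separates points from closed sets by the standing assumption) are in force.

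I would then feed it a sequence $(x_n)$ of distinct points of $U$ with \emph{no accumulation point in $X$}: such a sequence exists because $X$, being a nonempty open subset of a Banach space, is non-compact, so one may take a closed discrete sequence $(z_n)$ in $X$ and move each $z_n$ into the dense set $U$ by less than $2^{-n}$. Proposition \ref{prop6.40} then returns open sets $U_n$ with $\ol{U_n}\subseteq U$, $\diam U_n\to 0$ and pairwise disjoint closures, functions $f_n\in C^p_b(E)$ supported in $U_n$ with $\|f_n\|_\infty\le 1/n$, functions $g_n\in C^q_*(F)$, and points $y'_n\in\vp(U_n)\subseteq Y$ with $Sg_n=f_n|_U$, $d(y'_n,\vp(x_n))\to 0$ and $\|Dg_n(y'_n)\|\to\infty$. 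The pointwise sum $f=\sum_n f_n$ is bounded by $1$, and because the $\ol{U_n}$ are disjoint, shrinking, and non-accumulating in $X$, it is $C^p$ on $X$ and continuous on $\ol X$; hence $f\in A$ in each of the three cases. Since $f=f_n=Sg_n$ on $U_n$, i.e. $T^{-1}(g_n|_{\ol Y})=f$ on the open set $U_n$, Theorem \ref{thm27} gives $Tf=g_n$ on $\vp(U_n)$, whence $\|D(Tf)(y'_n)\|=\|Dg_n(y'_n)\|\to\infty$ with $y'_n\in Y$ — contradicting the fact that $Tf\in C^q_*(\ol Y)$ has bounded first derivative on $Y$.

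The part demanding the most care is the bookkeeping: one must pass to $U=\vp^{-1}(Y)\cap X$ rather than $\vp(X)$ so that $V=\vp(U)$ is genuinely open in $F$ and the representation of $T$ is available on $U$ (this is where the restriction of ($\heartsuit_y$) to the open part $Y$ of $\ol Y$, and hence implicitly Example D, enters the argument), and one must select $(x_n)$ with no accumulation point in $X$ so that the assembled function $f=\sum_n f_n$ really is $C^p$ on $X$ — automatic when $E$ is infinite dimensional but requiring the perturbation step when $E$ is finite dimensional. Beyond this, the proof is just an assembly of Theorems \ref{thm27}, \ref{thm29} and Proposition \ref{prop6.40}.
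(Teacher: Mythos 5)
Your proof is correct and follows essentially the same route as the paper's: set $U = X\cap\vp^{-1}(Y)$, check that $g\mapsto T^{-1}(g_{|\ol{Y}})_{|U}$ maps $C^q_*(F)$ into $C^p(U)$, feed Proposition \ref{prop6.40} a sequence of distinct points of $U$ with no convergent subsequence in $X$, and contradict the boundedness of $D(Tf)$ on $Y$. The additional care you take over why $U$ is a dense open set, why such a sequence can be found inside $U$, and why $f=\sum f_n$ lands in each of the three domain spaces only makes explicit what the paper dismisses as ``it is clear''.
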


\begin{proof}
Let $T$ be an order isomorphism from either $A(\ol{X})$ or $C^p_b(X)$ onto $C^q_*(\ol{Y})$.  We may assume that $T0 = 0$.  Denote by $\vp$ the associated homeomorphism.
Set $U = X \cap \vp^{-1}(Y)$ and $V = \vp(X) \cap Y$, which are open sets  in the ambient Banach spaces $E$ and $F$ respectively.
By Theorem \ref{thm29} (note in particular Example C part (f)), we have a representation $Tf(y) = \Phi(y,f(\vp^{-1}(y)))$ for $f\in A(\ol{X})$, respectively $C^p_b(X)$, and $y\in Y$, where  $\Phi:Y\times \R\to \R$ is a function such that $\Phi(y,\cdot):\R\to \R$ is an increasing homeomorphism with $\Phi(y,0) = 0$ for all $y\in Y$.
For each $x\in U$, let $\Psi(x,\cdot)$ be the inverse of $\Phi(\vp(x),\cdot)$.
Then $\Psi(x,\cdot):\R\to \R$ is an increasing homeomorphism such that $\Psi(x,0) = 0$ for all $x\in U$.
If $g\in C^q_*(F)$, then $g_{|\ol{Y}} \in C^q_*(\ol{Y})$.  It is easy to check that 
\[ Sg(x) = \Psi(x,g(\vp(x))) = T^{-1}(g_{|\ol{Y}})(x), \text{ $x\in U$}.\]
Hence $Sg \in C^p(U)$.
Since $U$ is a dense  open subset of an open set $X$ in a Banach space, it is clear that there is a sequence of  distinct points $(x_n)$ in $U$ with no convergent subsequence in $X$.
Obtain sets $U_n$, functions $f_n$, $g_n$ and points $y'_n$ by applying Proposition \ref{prop6.40}. 
Let $f:E\to \R$ be the pointwise sum $\sum f_n$.  It is easy to see that $f_{|\ol{X}} \in C^p_b(\ol{X})$ and that  $f_{|X} \in C^p_b(X)$.  Hence $T(f_{|\ol{X}})$, respectively, $T(f_{|X}) \in C^q_*(\ol{Y})$.
However, as in the proof of Theorem \ref{thm6.41}, $\|D(T(f_{|\ol{X}}))(y_n')\| \to \infty$, respectively $\|D(T(f_{|{X}}))(y_n')\| \to \infty$.  This contradiction concludes the proof of the theorem.
\end{proof}

\section{Spaces of little Lipschitz functions}

In this section, we return to the study of spaces of little Lipschitz functions.  The main aim is to establish the counterpart of Theorem \ref{thm5.6} for spaces of the type $\lip_\al(X)$. A special case of the result we intend to prove was established in \cite{CC1}.  The space $\lip(X)$ is said to {\em separate points boundedly} 
if and only if there is a constant $K <\infty$ such that for any $x,y\in X$, there exists $f\in \lip(X)$ such that $L(f) \leq K$ and $|f(x)-f(y)| = d(x,y)$.  Here $L(f)$ is the Lipschitz constant of $f$. Since $\Lip_b(X)\subseteq \lip_\al(X)$ if $0 < \al < 1$, 
it is clear that  $\lip_\al(X)$ separates points boundedly for any metric space $X$ and any $0 < \al < 1$.

\begin{thm}\label{thm7.1}\cite[Theorem 2]{CC1}
Let $X$ and $Y$ be compact metric spaces such that $\lip(X)$ and $\lip(Y)$ separate points boundedly.  If $T:\lip(X)\to\lip(Y)$ is an order isomorphism, then there are a Lipschitz homeomorphism $\vp:X\to Y$ and a function $\Phi:Y\times \R\to \R$ such that $\Phi(y,\cdot):\R\to \R$ is an increasing homeomorphism for each $y\in Y$ and that $Tf(y) = \Phi(y,f(\vp^{-1}(y)))$ for all $f\in \lip(X)$ and all $y\in Y$.
\end{thm}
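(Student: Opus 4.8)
The plan is to fit $\lip(X)$ and $\lip(Y)$ into the machinery of Sections 1 and 4 and then add a separate, quantitative argument for the Lipschitz character of the resulting homeomorphism. First one checks that for a compact metric space $X$ on which $\lip(X)$ separates points boundedly, $\lip(X)$ is a unital vector sublattice of $C(X)$ (closed under $\vee,\wedge$ because $|(f\vee g)(x)-(f\vee g)(y)|\le|f(x)-f(y)|\vee|g(x)-g(y)|$) that separates points from closed sets: given $x_0$ and a closed $A$ with $d(x_0,A)=\delta>0$, cover $A$ by finitely many balls of radius $\delta/3$ and combine, via lattice operations (which preserve $\lip(X)$), localized little--Lipschitz bumps furnished by the bounded--separation hypothesis. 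By Examples A(a), $\lip(X)$ is then a near vector lattice; compactness of $X$ and a Lebesgue--number argument give $\lip(X)=\lip^\loc(X)=\lip^\loc_b(X)$. Finally $\lip(X)$ satisfies $(\heartsuit_x)$ at every $x$: trivially at isolated points, and at an accumulation point $x_0$ with $0\le f\in\lip(X)$, $f(x_0)=0$, one produces $g\in\lip(X)$ with $x_0\in\ol{\{f<g\}}\cap\ol{\{g<0\}}$ exactly as in the construction of Examples C(c) — using a sequence $x_n\to x_0$ with rapidly decreasing distances and disjointly supported little--Lipschitz bumps (supplied here by bounded separation in place of the explicit H\"older bumps) scaled to overshoot $f(x_{2n})$ and to be negative near $x_{2n-1}$ — with uniform convergence of the sum automatic since $f(x_n)\to 0$. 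The same holds for $Y$.

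With this in hand, Theorem \ref{thm8} — the compact case of Theorem \ref{thm27}, since $\cA X=X$ and $\cA Y=Y$ — produces a homeomorphism $\vp\colon X\to Y$ for which $f=g$ on an open set $W$ if and only if $Tf=Tg$ on $\vp(W)$. Proposition \ref{prop28}, applied to $T$ and to $T^{-1}$ (legitimate because $\lip(X)$ and $\lip(Y)$ satisfy $(\heartsuit)$ at every point), then yields nondecreasing maps $\Phi(y,\cdot)\colon\R\to\R$ for $y\in Y$ and $\Psi(x,\cdot)\colon\R\to\R$ for $x\in X$ with $Tf(y)=\Phi(y,f(\vp^{-1}(y)))$ and $T^{-1}g(x)=\Psi(x,g(\vp(x)))$; comparing $TT^{-1}$ and $T^{-1}T$ on constant functions shows $\Phi(y,\cdot)$ and $\Psi(\vp^{-1}(y),\cdot)$ are mutual inverses, so each $\Phi(y,\cdot)$ is an increasing bijection, hence an increasing homeomorphism of $\R$ (this is the argument of Theorem \ref{thm29}). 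At this stage the entire statement is proved except for the assertion that $\vp$ is a \emph{Lipschitz} homeomorphism.

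The remaining step, that $\vp$ and $\vp^{-1}$ are Lipschitz, is the crux and is not delivered by the general framework. The approach is by contradiction: if $\vp$ is not Lipschitz, then compactness of $X$ lets one extract $x_n,x'_n\to x_0$ with $\delta_n:=d(x_n,x'_n)\to 0$ and $d(\vp x_n,\vp x'_n)/\delta_n\to\infty$; writing $y_n=\vp x_n$, $y'_n=\vp x'_n$, continuity of $\vp$ forces $\eta_n:=d(y_n,y'_n)\to 0$, so $\eta_n/\delta_n\to\infty$. One then seeks $f\in\lip(X)$ whose image $Tf(y)=\Phi(y,f(\vp^{-1}(y)))$ cannot satisfy the little--Lipschitz condition along the pairs $(y_n,y'_n)$, contradicting $Tf\in\lip(Y)$. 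The obstacle — and the reason this is the hard part — is that the weights $\Phi(y,\cdot)$ carry no a priori quantitative regularity, so the construction must exploit bounded separation on \emph{both} sides (which makes the intrinsic ``$\lip$--metric'' comparable to the given metric on $X$ and on $Y$), together with the monotonicity of the weights and the continuity of $T$ and $T^{-1}$ for uniform convergence on the compact spaces (Corollary \ref{cor35.1}). This is the compact prototype of the quantitative analyses carried out in Section 6 (compare Lemma \ref{lem56}, Lemma \ref{lem58.1}, and Propositions \ref{prop57.1} and \ref{prop58.2}), and it is where the bulk of the effort lies; running the same argument for $T^{-1}$ — equivalently, for the inverse weights $\Psi(x,\cdot)=\Phi(\vp(x),\cdot)^{-1}$ — shows $\vp^{-1}$ is Lipschitz, so $\vp$ is a Lipschitz homeomorphism and the proof is complete.
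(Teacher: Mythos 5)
This statement is not proved in the paper at all: it is quoted verbatim from \cite[Theorem 2]{CC1} and used as a black box (e.g.\ in Corollary \ref{cor7.6}), so there is no internal proof to match your argument against. Judged on its own terms, your proposal correctly assembles the ``soft'' half of the theorem from the paper's general machinery: verifying that $\lip(X)$ under bounded separation is a unital vector sublattice separating points from closed sets, hence a near vector lattice with $\lip(X)=\lip^\loc(X)$ on compact $X$, and that $(\heartsuit_x)$ holds everywhere, does yield the homeomorphism $\vp$ and the representation $Tf(y)=\Phi(y,f(\vp^{-1}(y)))$ with each $\Phi(y,\cdot)$ an increasing homeomorphism, exactly as in Theorems \ref{thm27} and \ref{thm29}. (One caveat even here: your $(\heartsuit_x)$ verification cannot simply copy Examples C(c), because the cone bumps $[1-\tfrac{2}{r_n}d(x,x_n)]^+$ used there lie in $\lip_\al$ only for $\al<1$ and are \emph{not} little Lipschitz of order $1$; you must manufacture genuinely little-Lipschitz bumps from the bounded-separation hypothesis via lattice operations and truncation, and then control the little-Lipschitz modulus of the disjointly supported sum uniformly in $n$. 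This is doable but not automatic.)

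The genuine gap is the last step, and you say so yourself: the assertion that $\vp$ and $\vp^{-1}$ are Lipschitz is the actual content of the theorem, and your proposal replaces its proof with a description of the kind of argument that ought to work. Setting up the contradictory sequences $x_n,x'_n\to x_0$ with $d(\vp x_n,\vp x'_n)/d(x_n,x'_n)\to\infty$ is the easy part; what is missing is the construction of a specific $f\in\lip(X)$ whose image fails the little-Lipschitz condition along $(y_n,y'_n)$. The difficulty is precisely the one you name --- the weights $\Phi(y,\cdot)$ have no a priori modulus of continuity in $t$ --- and overcoming it requires two nontrivial ingredients that your sketch does not supply: a pigeonhole argument on the increments of $\Phi(y_n,\cdot)$ producing levels $t_n$ at which the weight is quantitatively tame (the analogue of the Claim inside Proposition \ref{lip4}), and extension theorems allowing one to prescribe values of little-Lipschitz functions along the convergent configurations $(x_n)\cup(x'_n)$ and $(y_n)\cup(y'_n)$ (the analogues of Propositions \ref{lip1.1} and \ref{lip2.1}, which in the order-$1$ compact setting must be derived from bounded separation rather than from explicit H\"older bumps). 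Without these, the proof is a plan rather than an argument; this is exactly the content that \cite{CC1} contributes and that the present paper deliberately imports rather than reproves.
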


We set the stage by proving some extension results for little Lipschitz functions, which may be of independent interest.  For results of a similar nature, refer to \cite[Section 3.2]{W}.  Note however that the results there concern exclusively compact metric spaces.
In what follows, fix a complete metric space $(X,d)$ and $0 < \al < 1$.

\begin{prop}\label{lip1}
Let $X_0$ be a separated subset of $X$.  If $f: X_0\to \R$ is Lipschitz with respect to $d^\al$, then there exists $g\in \lip_\al(X)$ such that $g_{|X_0} = f$.
\end{prop}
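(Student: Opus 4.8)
The plan is to build the extension directly by a McShane-type infimum formula, but with the metric $d$ inside the formula replaced by a concave modulus $\nu$ tailored so as to destroy the H\"older cusps that a naive extension would create at the points of $X_0$. Let $K$ be a Lipschitz constant for $f$ with respect to $d^\al$, and let $\ep>0$ be such that $d(x_1,x_2)>\ep$ for distinct $x_1,x_2\in X_0$. Put $\theta=\ep$ and define $\nu:[0,\infty)\to[0,\infty)$ by $\nu(t)=K\theta^{\al-1}t$ for $0\le t\le\theta$ and $\nu(t)=Kt^\al$ for $t\ge\theta$. A direct inspection shows that $\nu$ is continuous, nondecreasing and concave (hence subadditive, since $\nu(0)=0$), that $\nu(t)\le Kt^\al$ for all $t\ge 0$ with equality for $t\ge\theta$, and that $\nu(t)/t^\al\to 0$ as $t\to 0^+$ (indeed $\nu(t)/t^\al=K\theta^{\al-1}t^{1-\al}$ near $0$). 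Set
\[
g(x)=\inf_{z\in X_0}\bigl(f(z)+\nu(d(x,z))\bigr),\qquad x\in X.
\]

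First I would check that $g$ is real-valued and that $g|_{X_0}=f$. Fix $x_0\in X_0$. For $z\ne x_0$ in $X_0$ the separation $d(z,x_0)>\ep=\theta$ gives $\nu(d(z,x_0))=Kd(z,x_0)^\al$; combining this with the $\al$-H\"older estimate $f(z)\ge f(x_0)-Kd(z,x_0)^\al$ and with $\nu(d(z,x_0))\le\nu(d(z,x))+\nu(d(x,x_0))$ (monotonicity and subadditivity) yields $f(z)+\nu(d(x,z))\ge f(x_0)-\nu(d(x,x_0))$ for every $z\in X_0$, while the term $z=x_0$ gives $g(x)\le f(x_0)+\nu(d(x,x_0))$. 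Hence $g$ is finite and $|g(x)-f(x_0)|\le\nu(d(x,x_0))$. Taking $x=x_0$ gives $g(x_0)=f(x_0)$, since the same H\"older estimate forces $f(z)+\nu(d(x_0,z))=f(z)+Kd(x_0,z)^\al\ge f(x_0)$ for all $z\in X_0$.

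Next I would show $g\in\lip_\al(X)$. Each map $x\mapsto f(z)+\nu(d(x,z))$ satisfies $|\nu(d(x,z))-\nu(d(x',z))|\le\nu(d(x,x'))$, again by monotonicity and subadditivity of $\nu$; the standard argument for an infimum of a uniformly modulus-controlled family then gives $|g(x)-g(x')|\le\nu(d(x,x'))$ for all $x,x'\in X$. From $\nu(t)\le Kt^\al$ this is a genuine bound $|g(x)-g(x')|\le Kd(x,x')^\al$, so $g\in\Lip(X,d^\al)$; and from $\nu(t)/t^\al\to 0$ we get $|g(x)-g(x')|/d(x,x')^\al\to 0$ as $d(x,x')\to 0$. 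Thus $g\in\lip_\al(X)$ with $g|_{X_0}=f$, which finishes the proof.

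The one point requiring care — and the reason separation of $X_0$ is indispensable — is the identity $g|_{X_0}=f$: the modulus $\nu$ is deliberately chosen to underestimate $Kt^\al$ at small scales (this is precisely what makes $g$ little Lipschitz), so if two points of $X_0$ could lie arbitrarily close, the value $f(z)+\nu(d(x_0,z))$ at a nearby $z\in X_0$ could drop below $f(x_0)$ and spoil $g(x_0)=f(x_0)$. Taking $\theta$ no larger than the separation constant removes exactly this danger. Everything else is a routine verification of the listed elementary properties of the concave modulus $\nu$ and of the standard stability of the McShane infimum construction, so no serious obstacle is expected beyond setting up $\nu$ correctly.
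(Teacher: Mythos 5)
Your proof is correct and takes essentially the same route as the paper: a McShane-type infimal-convolution extension whose modulus is altered below the separation scale so that the extension is little Lipschitz while still reproducing $f$ on $X_0$ (the paper uses the truncated modulus $2C(t^\al-\tfrac{r^\al}{2})^+$ and reduces to $f\ge 0$, whereas you linearize the modulus near $0$). Your concave choice of $\nu$ buys a slightly cleaner verification, since subadditivity yields the exact bound $|g(x)-g(x')|\le\nu(d(x,x'))$ in one line, but the underlying idea is the same.
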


\begin{proof}
Choose $r > 0$ such that $d(x,x') > r$ if $x$ and $x'$ are distinct points in $X_0$.
It suffices to prove the proposition for nonnegative $f$; otherwise, consider $f^+$ and $f^-$ separately.
Let $C$ be such that $|f(x) -f(x')| \leq Cd(x,x')^\al$ for all $x, x'\in X_0$.
Define $g: X\to \R$ by 
\[ g(x) = \inf_{z\in X_0}[f(z) + 2C(d(x,z)^\al -\frac{r^\al}{2})^+].\]
Since $f\geq 0$, $g$ is nonnegative.
Suppose that $x, z,$ are distinct points in $X_0$.  Then $t= d(x,z) > r$ and hence $t^\al < 2(t^\al -\frac{r^\al}{2})$.  Thus
\[ f(z)+ 2C(d(x,z)^\al - \frac{r^\al}{2})^+ > f(z) +Cd(x,z)^\al \geq f(x).\]
It follows that $g(x) = f(x)$ for all $x\in X_0$.

Fix $\ep > 0$.  Since the function $h:[0,\infty)\to \R$, $h(a) = (a^\al - \frac{r^\al}{2})^+$ belongs to $\lip_\al[0,\infty)$, there exist $K < \infty$ and $\delta > 0$ such that $|h(a) - h(b)| \leq K|a-b|^\al$ for all $a, b\in [0,\infty)$ and $|h(a)-h(b)| \leq \ep|a-b|^\al$ if $|a-b| <\delta$.
Let $x_1$ and $x_2$ be distinct points in $X$.  Without loss of generality, suppose that $g(x_1) \leq g(x_2)$.
Choose $z\in X_0$ such that 
\[
 f(z) + 2C(d(x_1,z)^\al -\frac{r^\al}{2})^+ \leq g(x_1) + \ep d(x_1,x_2)^\al.
\]
By definition, $g(x_2) \leq  f(z) + 2C(d(x_2,z)^\al -\frac{r^\al}{2})^+$.
Thus
\[
0 \leq g(x_2) -g(x_1) \leq   
 2C[(t^\al - \frac{r^\al}{2})^+ -(s^\al - \frac{r^\al}{2})^+] + \ep d(x_1,x_2)^\al,
\]
where $t = d(x_2,z)$ and $s = d(x_1,z)$.
Therefore,
\begin{align*}
0 \leq g(x_2) -g(x_1) & \leq 2C|h(t) -h(s)| +\ep d(x_1,x_2)^\al \\
&\leq 2CK|t-s|^\al + \ep d(x_1,x_2)^\al \\
&\leq (2CK + \ep) d(x_1,x_2)^\al.
\end{align*}
This shows that $g\in \Lip(X,d^\al)$.
If, in addition, $d(x_1,x_2)< \delta$, then $|t-s| < \delta$ and hence $|h(t)-h(s)| < \ep|t-s|^\al$.
By the calculation above,
\begin{align*}
0 \leq g(x_2) -g(x_1) & \leq 2C|h(t) -h(s)| +\ep d(x_1,x_2)^\al \\
&\leq 2C\ep|t-s|^\al + \ep d(x_1,x_2)^\al \\
&\leq (2C+1)\ep d(x_1,x_2)^\al.
\end{align*}
Hence $g\in \lip_\al(X)$.
\end{proof}

\begin{prop}\label{lip1.1}
Let $(x_n)$ be a sequence in $X$ and let $(r_n)$, $(a_n)$ be real sequences such that $r_n > 0$, $d(x_m,x_n) \geq r_m + r_n$ if $m\neq n$, and $a_n/r_n^\al\to 0$.  Then there exists $f\in\lip_\al(X)$ such that $f(x_n) =a_n$ for all $n$ and $f(x) = 0$ if $x\notin \cup B(x_n,r_n)$.
\end{prop}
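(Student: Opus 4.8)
The plan is to realize $f$ as a sum of disjointly supported ``tent'' functions, one at each $x_n$, and to verify the H\"older and little-H\"older estimates by hand. Set $M=\sup_n|a_n|/r_n^\al$, which is finite because $a_n/r_n^\al\to 0$, and for each $n$ put
\[ f_n(x)=a_n\bigl(1-\tfrac{2d(x,x_n)}{r_n}\bigr)^+. \]
Then $f_n$ vanishes off $B(x_n,\tfrac{r_n}{2})$, satisfies $f_n(x_n)=a_n$ and $\|f_n\|_\infty=|a_n|$, and is Lipschitz with respect to $d$ with constant $2|a_n|/r_n$. The hypothesis $d(x_m,x_n)\geq r_m+r_n$ forces the balls $B(x_n,\tfrac{r_n}{2})$ to be pairwise disjoint, so at most one $f_n$ is nonzero at any point of $X$; hence $f=\sum_nf_n$ is well defined pointwise, $f(x_n)=a_n$ for all $n$, and $f\equiv 0$ off $\bigcup_nB(x_n,\tfrac{r_n}{2})\subseteq\bigcup_nB(x_n,r_n)$. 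It remains to show $f\in\lip_\al(X)$.

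First I would record the per-tent bound
\[ |f_n(x)-f_n(y)|\leq 2\,\frac{|a_n|}{r_n^\al}\,d(x,y)^\al\qquad(x,y\in X), \]
which follows by splitting into $d(x,y)\leq r_n$ (combine the Lipschitz estimate with $d(x,y)\leq r_n^{1-\al}d(x,y)^\al$) and $d(x,y)>r_n$ (the support of $f_n$ has diameter $\leq r_n$, so one of $x,y$ lies outside it and the left side is at most $|a_n|\leq(|a_n|/r_n^\al)d(x,y)^\al$). Now fix $x,y\in X$. Exactly one of the following holds: (i) there is an index $n$ with $f_m(x)=f_m(y)=0$ for all $m\neq n$---this covers every configuration in which $x$ and $y$ lie in a common tent or at least one of them lies in no tent, and then $f(x)=f_n(x)$, $f(y)=f_n(y)$; or (ii) $x\in B(x_n,\tfrac{r_n}{2})$ and $y\in B(x_m,\tfrac{r_m}{2})$ with $m\neq n$. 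In case (i), $|f(x)-f(y)|=|f_n(x)-f_n(y)|\leq 2M\,d(x,y)^\al$. In case (ii), the separation hypothesis gives
\[ d(x,y)\geq d(x_n,x_m)-\tfrac{r_n}{2}-\tfrac{r_m}{2}\geq\tfrac{r_n+r_m}{2}, \]
so $r_n^\al,r_m^\al\leq(2d(x,y))^\al$ and therefore $|f(x)-f(y)|\leq|f_n(x)|+|f_m(y)|\leq|a_n|+|a_m|\leq 4M\,d(x,y)^\al$. Thus $f\in\Lip(X,d^\al)$; in particular $f$ is continuous.

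For the little-Lipschitz condition, fix $\eta>0$. Choose $N$ with $|a_n|/r_n^\al\leq\eta/8$ for all $n\geq N$, set $L_0=\max\{2|a_n|/r_n:n<N\}$ and $\rho=\min\{r_n:n<N\}>0$ (both are vacuous and the corresponding case below is empty when $N=1$), and pick $0<\delta\leq\rho/4$ with $L_0\delta^{1-\al}\leq\eta$. Suppose $d(x,y)<\delta$ and apply the dichotomy above. In case (i) with $n<N$, the genuine Lipschitz bound for $f_n$ gives $|f(x)-f(y)|\leq\frac{2|a_n|}{r_n}\,d(x,y)\leq L_0\delta^{1-\al}d(x,y)^\al\leq\eta\,d(x,y)^\al$; in case (i) with $n\geq N$, the per-tent bound gives $|f(x)-f(y)|\leq 2(\eta/8)\,d(x,y)^\al\leq\eta\,d(x,y)^\al$. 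In case (ii), $d(x,y)<\rho/4$ together with $d(x,y)\geq\tfrac{r_n+r_m}{2}$ forces $r_n,r_m<\rho/2$, hence $n,m\geq N$, and so $|f(x)-f(y)|\leq|a_n|+|a_m|\leq 2^\al\bigl(\tfrac{|a_n|}{r_n^\al}+\tfrac{|a_m|}{r_m^\al}\bigr)d(x,y)^\al\leq\eta\,d(x,y)^\al$. Hence $d(x,y)<\delta$ implies $|f(x)-f(y)|\leq\eta\,d(x,y)^\al$, and since $\eta>0$ was arbitrary, $\lim_{d(x,y)\to 0}|f(x)-f(y)|/d(x,y)^\al=0$, so $f\in\lip_\al(X)$.

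All the estimates are elementary. The step that needs care is the case split: one must see that the only genuinely dangerous configuration---two points lying in different tents but at small distance---is automatically impossible on small scales, and this is exactly where the hypothesis $d(x_m,x_n)\geq r_m+r_n$ enters, through the resulting bound $d(x,y)\geq\tfrac{r_n+r_m}{2}$ in case (ii). Beyond that, the whole argument runs on the two inequalities $d(x,y)\leq r_n^{1-\al}d(x,y)^\al$ (valid when $d(x,y)\leq r_n$) and $r_n^\al\leq(2d(x,y))^\al$ (valid when $d(x,y)\geq\tfrac{r_n}{2}$).
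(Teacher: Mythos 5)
Your proof is correct, and its overall architecture is the same as the paper's: disjointly supported bumps centered at the $x_n$ (the hypothesis $d(x_m,x_n)\geq r_m+r_n$ making the balls $B(x_n,\tfrac{r_n}{2})$ pairwise disjoint), followed by a two-case analysis in which the cross-tent case is controlled by the inequality $d(x,y)\geq\tfrac{r_m+r_n}{2}$. The one genuine difference is the choice of profile: the paper uses the little-H\"older bump $h(t)=(1-(2t^\al-1)^+)^+$, first establishing $h\in\lip_\al[0,\infty)$ and then importing its modulus $\eta$ (with $|h(t)-h(s)|\leq\eta(|t-s|)|t-s|^\al$) to drive all the estimates, whereas you use the plain Lipschitz tent $a_n(1-\tfrac{2}{r_n}d(x,x_n))^+$ and verify the H\"older and little-H\"older bounds by direct interpolation between the Lipschitz constant $2|a_n|/r_n$ and the sup-norm $|a_n|$. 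This changes how the little-Lipschitz condition is certified within a single tent: the paper gets smallness for the finitely many ``large'' indices from $\eta(2w/r_n)<\ep$, while you get it from the fact that a genuinely Lipschitz function is automatically little-$\al$-H\"older via $d(x,y)^{1-\al}\leq\delta^{1-\al}$; for the tail indices both arguments rest on $|a_n|/r_n^\al\to 0$. Your version is slightly more self-contained (no auxiliary membership claim for $h$ is needed), at the cost of no structural gain; both are equally valid.
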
 

\begin{proof}
The function $h(t) = (1 - (2t^\al-1)^+)^+$ belongs to $\lip_\al[0,\infty)$; $h(0) = 1$ and $h(t) =0$ if $t \geq 1$.
Define $f: X\to \R$ by taking the pointwise sum 
\[ f(x) = \sum a_nh\bigl(\frac{2d(x,x_n)}{r_n}\bigr).\]
If $d(x,x_n) \geq r_n/2$, then $h(2d(x,x_n)/r_n) = 0$. 
It follows that at each $x$, at most one of the terms in the sum is nonzero.
Clearly $f(x_n) = a_n$ and $f(x) = 0$ if $x\notin \cup B(x_n,r_n)$.
Since $h\in \lip_\al[0,\infty)$, there is a bounded function $\eta:[0,\infty)\to\R$, continuous at $0$ with $\eta(0) = 0$ such that 
\[|h(t) - h(s)| \leq \eta(|t-s|)|t-s|^\al \text{\ for all $t,s \geq 0$}.\]
Consider $x, x' \in X$.  We divide the proof into 2 cases.

\bigskip

\noindent\underline{Case 1}. 
There exists $n$ such that one of 
$x$ or $x'$ lies in $B(x_n,r_n/2)$ and the other is not in $B(x_m,r_m/2)$ for any $m\neq n$.

\noindent In this case,
$f(x) = a_nh(2d(x,x_n)/r_n)$ and $f(x') = a_nh(2d(x',x_n)/r_n)$. 
We have
\begin{align*}
|f(x) - f(x')| & \leq |a_n| |h\bigl(\frac{2d(x,x_n)}{r_n}\bigr) - h\bigl(\frac{2d(x',x_n)}{r_n}\bigr)|\\
&\leq |a_n|\eta\bigl(\frac{2}{r_n}|d(x,x_n) - d(x',x_n)|\bigr)\frac{2^\al}{r_n^\al}|d(x,x_n) - d(x',x_n)|^\al\\
&\leq |a_n|\eta\bigl(\frac{2}{r_n}|d(x,x_n) - d(x',x_n)|\bigr)\frac{2^\al}{r_n^\al}d(x,x')^\al\\
&\leq 2^\al\sup_m\frac{|a_m|}{r^\al_m}\|\eta\|_\infty d(x,x')^\al.
\end{align*}
In particular, there is a constant $C$, independent of $x$ and $x'$, such that $|f(x) - f(x')| \leq Cd(x,x')^\al$.
Let $\ep > 0$ be given.  There exists $N\in \N$ such that $|a_n/r_n^\al| < \ep$ if $n \geq N$.
Then choose $\delta > 0$ such that $\eta(2w/r_n) < \ep$ if $0 \leq w < \delta$, $1 \leq n < N$.
If $n \geq N$, we have by the preceding calculation that 
\[ |f(x) - f(x')|\leq 2^\al\ep\|\eta\|_\infty d(x,x')^\al.\]
On the other hand, if $1\leq n < N$ and $d(x,x')< \delta$, then $|d(x,x_n) - d(x',x_n)|< \delta$.
Hence $\eta({2}|d(x,x_n) - d(x',x_n)|/r_n) < \ep$.  Thus, from the preceding calculation, we obtain
\[ |f(x) - f(x')| \leq 2^\al\sup_m\frac{|a_m|}{r^\al_m}\ep d(x,x')^\al.\]

\bigskip

\noindent\underline{Case 2}. There exist $m\neq n$ such that $x\in B(x_m,r_m/2)$ and $x'\in B(x_n,r_n/2)$.

\noindent In this case, $d(x,x') \geq (r_n+r_m)/2$. We have
\begin{align*}
|f(x) - f(x')| & \leq |a_m| |h\bigl(\frac{2d(x,x_m)}{r_m}\bigr) - h(1)| + |a_n| |h\bigl(\frac{2d(x',x_n)}{r_n}\bigr) - h(1)|\\
&\leq |a_m|\eta\bigl(\bigl|\frac{2d(x,x_m)}{r_m} - 1\bigr|\bigr)\bigl|\frac{2}{r_m}d(x,x_m)-1\bigr|^\al+ \\
&\quad\ +|a_n|\eta\bigl(\bigl|\frac{2d(x',x_n)}{r_n} - 1\bigr|\bigr)\bigl|\frac{2}{r_n}d(x',x_n)-1\bigr|^\al\\
&\leq 2^\al\|\eta\|_\infty[\frac{|a_m|}{r_m^\al}|d(x,x_m)-\frac{r_m}{2}|^\al + \frac{|a_n|}{r_n^\al}|d(x',x_n)-\frac{r_n}{2}|^\al]\\
&\leq  2^\al\|\eta\|_\infty[\frac{|a_m|}{r_m^\al}\bigl(\frac{r_m}{2}\bigr)^\al + \frac{|a_n|}{r_n^\al}\bigl(\frac{r_n}{2}\bigr)^\al]\\
&\leq  2^\al\|\eta\|_\infty[\frac{|a_m|}{r_m^\al}+ \frac{|a_n|}{r_n^\al}]d(x,x')^\al.
\end{align*}
In particular,  there is a constant $C$, independent of $x$ and $x'$, such that $|f(x) - f(x')| \leq Cd(x,x')^\al$.
Given $\ep > 0$, choose $N\in \N$ such that $|a_n/r_n^\al| < \ep$ if $n \geq N$.
Set $\delta = \min_{1\leq k<N}r_k/2$.  Then $r_n/2+ r_m/2 < \delta$ implies that $n,m \geq N$.
Thus, if $d(x,x')< \delta$, then
\[ |f(x) - f(x')| \leq  2^\al\|\eta\|_\infty 2\ep d(x,x')^\al.\]
This completes the proof that $f\in \lip_\al(X)$.
\end{proof}

\begin{cor}\label{lip2}
Let $(x_n)$ be a sequence converging to $x_0$ in $X$. Assume that $d(x_{n+1},x_0) \leq d(x_n,x_0)/2$ for all $n$.
Set $X_0 = (x_n)$.  If $f:X_0\to \R$ belongs to $\lip_\al(X_0)$, then there exists $g\in \lip_\al(X)$ such that $g_{|X_0} = f$.
\end{cor}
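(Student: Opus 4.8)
The plan is to reduce Corollary \ref{lip2} to Proposition \ref{lip1.1} by writing the graph of $f$ on the convergent sequence $X_0 = (x_n)$ as a sum of "bump" contributions anchored at the $x_n$, much as in the proof of Corollary \ref{lip2}'s siblings (e.g. the constructions in part (c) of Examples C and in Lemma \ref{lem63} of Section 6). The first observation is that, by passing to a subsequence and relabelling, we may assume $d(x_{n+1},x_0) \leq d(x_n,x_0)/5$ rather than just $/2$; this is harmless because if we can extend $f$ off a cofinal subsequence then we can extend $f$ off all of $X_0$ (the omitted points are a separated set away from the tail, so Proposition \ref{lip1} handles them, and one patches the two extensions together using a $\lip_\al$ partition of unity subordinate to two disjoint open sets, one a neighbourhood of $x_0$ containing the tail, the other containing the finitely many omitted points). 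Set $r_n = d(x_n,x_0)$. With the ratio $1/5$ in force, the balls $B(x_n, r_n/3)$ are pairwise disjoint and in fact $d(x_m,x_n) \geq r_m/3 + r_n/3$ whenever $m \neq n$, which is the separation hypothesis required by Proposition \ref{lip1.1}.

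Next I would split $f$ into its value at the limit and a remainder. Since $(x_n) \to x_0$ and $f \in \lip_\al(X_0)$, we have $f(x_n) \to f(x_0)$; replacing $f$ by $f - f(x_0)$ (a constant, hence still in $\lip_\al$, and extendable trivially) we may assume $f(x_0) = 0$, so $a_n := f(x_n) \to 0$. The crucial quantitative point is that membership of $f$ in $\lip_\al(X_0)$ forces
\[
\frac{|a_n|}{r_n^\al} = \frac{|f(x_n) - f(x_0)|}{d(x_n,x_0)^\al} \to 0 \quad\text{as } n\to\infty,
\]
which is precisely the decay hypothesis $a_n/r_n^\al \to 0$ needed to apply Proposition \ref{lip1.1} with $r_n$ replaced by, say, $r_n/3$. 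That proposition then produces $g_1 \in \lip_\al(X)$ with $g_1(x_n) = a_n$ for all $n$ and $g_1 = 0$ off $\cup_n B(x_n, r_n/3)$; in particular $g_1(x_0) = 0 = f(x_0)$, so $g := g_1 + f(x_0)$ restricts to $f$ on $X_0$ and lies in $\lip_\al(X)$.

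The main obstacle is the bookkeeping around the passage to a subsequence: Proposition \ref{lip1.1} needs the \emph{genuine} separation $d(x_m,x_n) \geq r_m/3 + r_n/3$, and a general sequence with $d(x_{n+1},x_0)\le d(x_n,x_0)/2$ only gives $d(x_m,x_n) \ge r_m - r_n \ge r_m/2$ for $m<n$, which is enough to make the balls $B(x_n,r_n/3)$ disjoint but needs a small extra argument (triangle inequality plus the geometric-decay bound $r_n \le r_{m+1} \le r_m/2$) to get the clean additive form; tightening to ratio $1/5$ makes this immediate, so I would do that up front. The only other point requiring care is the patching step reconciling the chosen cofinal subsequence with the omitted terms of $X_0$, but since the omitted terms together with $x_0$ and the tail form two sets at positive mutual distance, one applies Proposition \ref{lip1} to the omitted (separated) part, multiplies each extension by a little-Lipschitz cutoff — available because $\lip_\al(X)$ is uniformly separating for $0<\al<1$ — and adds. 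Everything else is the routine verification already carried out inside Proposition \ref{lip1.1}, so no further computation is needed.
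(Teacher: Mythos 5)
Your proposal is correct and follows essentially the same route as the paper: subtract the limiting value, observe that the little-Lipschitz condition forces $a_n/r_n^\al\to 0$, and invoke Proposition \ref{lip1.1}. Two small remarks: the passage to a subsequence with ratio $1/5$ and the subsequent patching are unnecessary, since with the paper's choice $r_n=d(x_n,x_0)/3$ the hypothesis $d(x_{n+1},x_0)\le d(x_n,x_0)/2$ already yields $d(x_m,x_n)\ge d(x_m,x_0)-d(x_n,x_0)\ge d(x_m,x_0)/2\ge r_m+r_n$ for $m<n$; and since $x_0$ need not belong to $X_0=(x_n)$, you should subtract $a=\lim f(x_n)$ (which exists because $f$ is uniformly continuous on the Cauchy sequence) rather than $f(x_0)$, obtaining $|a_n|/r_n^\al\to 0$ by letting $m\to\infty$ in the little-Lipschitz estimate for the pair $(x_n,x_m)$.
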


\begin{proof}
Let $r_n  = d(x_n,x_0)/3$.  If $r_n = 0$ for some $n$, then $(x_n)$ is eventually constant and hence $X_0$ is separated.  The corollary follows from Proposition \ref{lip1}.  Assume that $r_n > 0$ for all $n$. By the assumption, $d(x_m,x_n)\geq r_m + r_n$ if $m \neq n$.
Suppose that $f\in \lip_\al(X_0)$.  
Then $a = \lim f(x_n)$ exists.  Furthermore, if 
$a_n = f(x_n)-a$, then $a_n/r_n^\al \to 0$.
By Proposition \ref{lip1.1}, there exists $h\in \lip_\al(X)$ such that $h(x_n) = a_n$ for all $n$.
Set $g = h + a$.  Then $g\in \lip_\al(X)$ and $g_{|X_0} = f$.
\end{proof}

For the sake of brevity, a sequence $(x_n)$ satisfying the hypothesis of Corollary \ref{lip2} will be said to {\em converge rapidly (to $x_0$)}.  Every convergent sequence has a subsequence that converges rapidly.

\begin{prop}\label{lip2.1}
Let $(x_n)$ and $(x'_n)$ be two sequences in $X$ such that $x_n \neq x'_n$ for all $n$.  
There is a subsequence $(n_k)$ of $\N$ such that, taking $X_0 = (x_{n_k}) \cup (x'_{n_k})$, every $f\in \lip_\al(X_0)$ extends to a function $g\in \lip_\al(X)$.
\end{prop}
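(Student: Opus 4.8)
The plan is to pass to a subsequence along which the geometry of $X_0$ is simple enough that the desired extension can be assembled from the three tools already available: Proposition \ref{lip1} (extension from separated sets), Corollary \ref{lip2} (extension from rapidly convergent sequences), and Proposition \ref{lip1.1} (bump functions). First I would normalize the two sequences. Since $X$ is complete, a sequence with no convergent subsequence has no Cauchy subsequence, so the standard total‑boundedness argument produces a \emph{separated} subsequence; and a convergent sequence has, by the remark preceding the proposition, a \emph{rapidly convergent} subsequence. Applying this in turn to $(x_n)$ and to $(x'_n)$, and then passing to a further subsequence so that $d(x_n,x'_n)\to L\in[0,\infty]$, I reduce to the case where each of $(x_n),(x'_n)$ is either separated or rapidly convergent (to $x_0$, resp.\ $x'_0$). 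When both are separated I would in addition extract a subsequence along which the clusters $Y_k=\{x_{n_k},x'_{n_k}\}$ are $\delta$‑separated for some fixed $\delta>0$: if no such $\delta$ worked, a maximality argument shows that $\{x_n\}\cup\{x'_n\}$ is totally bounded, hence has a Cauchy subsequence, contradicting separatedness.

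With these reductions several cases are immediate. If both sequences are separated and $L>0$, then (replacing the subsequence by a tail) $X_0$ is itself a separated set and Proposition \ref{lip1} applies directly. If the two sequences accumulate at different points $x_0\neq x'_0$, or if one is separated and the other converges to some $x'_0$, then each of $f_{|\{x_n\}}$ and $f_{|\{x'_n\}}$ already extends to a member of $\lip_\al(X)$, by Corollary \ref{lip2} or Proposition \ref{lip1} as appropriate; truncating such an extension (which never increases a H\"older difference, hence stays in $\lip_\al(X)$) makes it bounded, and multiplying by a Lipschitz cut‑off supported near the relevant limit keeps it in $\lip_\al(X)$, since a \emph{bounded} $\lip_\al$ function times a bounded Lipschitz function is again $\lip_\al$. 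Choosing the subsequence far enough out that the cut‑offs are $1$ on one sequence and $0$ on the other, the sum of the two localized extensions is the required $g$.

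The genuine obstacle is the case where both sequences pile up on a common point $x_0$ (and, in the same spirit, the case where both are separated but $L=0$, i.e.\ the pairs drift together without converging). Here I would extend $f$ restricted to one of the two sequences to some $\Psi\in\lip_\al(X)$ (via Corollary \ref{lip2} in the convergent case, Proposition \ref{lip1} in the separated case) and then correct on the other sequence by adding a bump from Proposition \ref{lip1.1}, centred at that sequence's points with radii $r_k\asymp d(x_{n_k},x'_{n_k})$. This works because the discrepancy $b_k:=f(x_{n_k})-\Psi(x_{n_k})$ at a point of the second sequence equals $[f(x_{n_k})-f(x'_{n_k})]-[\Psi(x_{n_k})-\Psi(x'_{n_k})]$, which, since $f,\Psi\in\lip_\al$ and $d(x_{n_k},x'_{n_k})\to 0$, is $o\bigl(d(x_{n_k},x'_{n_k})^\al\bigr)$ — exactly the hypothesis $b_k/r_k^\al\to 0$ needed for Proposition \ref{lip1.1}. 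The delicate point, and the reason the choice of subsequence matters, is to guarantee that the balls $B(x_{n_k},r_k)$ are pairwise at distance $\geq r_j+r_k$ and that each avoids both the other sequence's points and the common limit $x_0$. In the separated‑but‑$L=0$ subcase this is automatic, because the clusters are $\delta$‑separated and $r_k\to 0$; in the common‑limit subcase it forces a geometrically decaying choice of levels, $\max\bigl(d(x_{n_{k+1}},x_0),d(x'_{n_{k+1}},x_0)\bigr)\le\frac14\min\bigl(d(x_{n_k},x_0),d(x'_{n_k},x_0)\bigr)$, together with the decision, at each level, to extend from whichever of $x_{n_k},x'_{n_k}$ is closer to $x_0$, so that $r_k$ stays comparable to $d(x_{n_k},x'_{n_k})$ rather than to the possibly much smaller distance to $x_0$. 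Once the radii are chosen this way, $g:=\Psi+h$ agrees with $f$ on $X_0$ and lies in $\lip_\al(X)$, which finishes the proof.
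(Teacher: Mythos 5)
Your overall strategy is the paper's: pass to subsequences so that each of $(x_n)$, $(x'_n)$ is either separated or rapidly convergent, dispose of the genuinely separated case by Proposition \ref{lip1}, and in the cases where the two sequences drift together (common limit, or separated with $d(x_n,x'_n)\to 0$) extend $f$ from one sequence by Proposition \ref{lip1} or Corollary \ref{lip2} and correct on the other by a Proposition \ref{lip1.1} bump with radii comparable to $d(x_{n_k},x'_{n_k})$, the hypothesis $b_k/r_k^\al\to 0$ coming exactly as you say from the vanishing of $f-\Psi$ on the first sequence. That is the same case decomposition and the same mechanism as in the paper, including the geometric thinning of levels in the common-limit case.

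The one step that does not survive scrutiny as written is the ``truncate, then multiply by cut-offs and add'' device you use when one sequence is separated and the other converges (and, implicitly, when the limits are distinct but one piece of $X_0$ is unbounded). A function $f\in\lip_\al(X_0)$ need not be bounded on a separated sequence, so there is no truncation level $M$ for which the truncated extension still interpolates the values $f(x_{n_k})$; and if you skip the truncation, the product of an \emph{unbounded} $\lip_\al$ function with a bounded Lipschitz cut-off need not lie in $\lip_\al(X)$ (on $\R$, $(1+|x|)^\al\in\lip_\al(\R)$ but $(1+|x|)^\al\sin x\notin\Lip(\R,d^\al)$). The repair is easy — write $(1-\chi')\ti{f_1}=\ti{f_1}-\chi'\ti{f_1}$ and observe that $\chi'\ti{f_1}$ only sees $\ti{f_1}$ on the bounded support of $\chi'$, where it may be replaced by a truncation — but the paper avoids the issue altogether by extending $f$ in one stroke over $\{x'_0\}\cup(x_{n_k})$ (a set that is separated, or rapidly convergent, after discarding one term) and then bump-correcting on $(x'_{n_k})$, i.e., by the very mechanism you reserve for the ``hard'' cases. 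A smaller remark: your justification that the pairs $\{x_{n_k},x'_{n_k}\}$ can be made $\delta$-separated (``otherwise the union is totally bounded'') is not right as stated — the union of two separated sequences can fail to be separated along every subsequence's complement without being totally bounded — but the claim itself is true (each $x'_n$ is $\tfrac{\ep}{2}$-close to at most one $x_m$, and that assignment is injective, so a greedy extraction works), and the paper asserts the same reduction without proof.
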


\begin{proof}
Since $X$ is assumed to be complete, every sequence in $X$ has a subsequence that either converges or is separated.
Thus, by considering subsequences, and taking note of the symmetry between $(x_n)$ and $(x'_n)$, we may assume that we are in one of the following situations.
\begin{enumerate}
\item The sequence $(x_n)$ is separated and $d(x_n,x'_n)\not\to 0$.
\item The sequence $(x_n)$ is separated and $d(x_n,x'_n)\to 0$.
\item The sequences $(x_n)$ and $(x'_n)$ converge rapidly to $x_0$ and $x_0'$ respectively, $x_0 \neq x'_0$.
\item The sequences $(x_n)$ and $(x'_n)$ converge rapidly to the same limit $x_0$.
\end{enumerate}
In case (a), if $(x'_n)$ is also separated, then by taking a further subsequence, we may assume that $(x_n) \cup(x'_n)$ is separated.  The desired result  follows from Proposition \ref{lip1}.
Next, suppose that we are either in case (a) with $(x'_n)$ having no separated subsequence; or that we are in case (c).
In either of these situations, we may assume that $(x'_n)$ converges rapidly to some $x'_0$.
Set $r_n = d(x'_n,x_0')/3$. Then $d(x_m,x_n) \geq r_m + r_n$ if $m\neq n$.    
Furthermore, we may assume that $x_n \notin \cup_mB(x'_m,r_m)$ for all $n\in \N$ and that $d(x_0,x_0')/2 > d(x_1,x_0)$ if $x_0$ is defined (i.e., in case (c)).  Let $X_0 = (x_n) \cup (x'_n)$ and let $f\in \lip_\al(X_0)$.  We may extend $f$ by continuity to $x'_0$ and the resulting function will be in $\lip_\al(X_0\cup \{x_0'\})$.    
Since $\{x_0'\}\cup (x_n)$ is either separated or converges rapidly, by Proposition \ref{lip1} or Corollary \ref{lip2}, there exists $g_1 \in \lip_\al(X)$ such that $g_1(x_n) = f(x_n)$ for all $n\in \N$ and $g_1(x_0') = f(x_0')$.
Since 
\[ \frac{(f-g_1)(x_n')}{r_n^\al} = 3^\al\frac{(f-g_1)(x_n') - (f-g_1)(x_0')}{d(x_n',x_0')^\al}\to 0,\]
by Proposition \ref{lip1.1}, there exists $g_2\in \lip_\al(X)$ such that $g_2(x'_n) = (f-g_1)(x'_n)$ and $g_2(x) = 0$ if $x\notin \cup_mB(x'_m,r_m)$.  In particular, $g_2(x_n) = 0$ for all $n \in \N$.  Now $g = g_1 + g_2 \in \lip_\al(X)$ and $g_{|X_0} = f$.

The proof for case (b) is similar.  Let $r_n = d(x_n,x_n')$.  We may assume that $d(x'_m,x'_n) \geq r_m + r_n$ if $m\neq n$ and that $x_n \notin \cup_mB(x'_m,r_m)$ for all $n$.    Let $f\in \lip_\al(X_0)$, where $X_0 = (x_n)\cup(x'_n)$.
By Proposition \ref{lip1}, there exists $g_1\in \lip_\al(X)$ such that $g_1(x_n) = f(x_n)$ for all $n$. 
Since $f-g_1\in \lip_\al(X_0)$ and $(f-g_1)(x_n) = 0$ for all $n$, $(f-g_1)(x'_n)/r_n^\al \to 0$.   By Proposition \ref{lip1.1}, there exists $g_2\in \lip_\al(X)$ such that $g_2(x'_n) = (f-g_1)(x'_n)$ and $g_2(x) = 0$ for all $x\notin \cup B(x'_m,r_m)$.  Then $g = g_1 + g_2\in \lip_\al(X)$ and $g_{|X_0} = f$.

Finally, consider case (d).  If at least one of  $(x_n)$ and $(x'_n)$ is eventually constant,  then we may reduce the result to Corollary \ref{lip2}. Otherwise, we may assume that both $r_n= d(x_n,x_0)$ and $r_n' = d(x_n',x_0)$ are nonzero for all $n$.  By taking further subsequences,  we may suppose that $r_{n+1} \leq r'_{n+1}\leq r_n/2 \leq r'_n/2$ for all $n$.  Then $s_n = d(x_n,x'_n) \leq r_n + r'_n \leq 2r'_n$ for all $n$. It is easy to check that $d(x_m',x_n') \geq s_m/6 + s_n/6$ if $m\neq n$ and $x_n \notin \cup B(x'_m,s_m/6)$ for all $n$.
Let $f\in \lip_\al(X_0)$, where $X_0  = (x_n)\cup(x'_n)$.  Extend $f$ by continuity to $x_0$.  The extension belongs to $\lip_\al(X_0 \cup\{x_0\})$.
By Corollary \ref{lip2}, there exists $g_1\in \lip_\al(X)$ such that $g_1(x_n) = f(x_n)$ for all $n$.
Now
\[ \frac{(f-g_1)(x_n')}{s_n^\al} = \frac{(f-g_1)(x'_n) - (f-g_1)(x_n)}{s_n^\al} \to 0.\]
Hence, by Proposition \ref{lip1.1}, there exists $g_2\in \lip_\al(X)$ such that $g_2(x'_n) = (f-g_1)(x'_n)$ and $g_2(x) = 0$ for all $x\notin \cup B(x'_m,s_m/6)$.  In particular, $g_2(x_n) = 0$ for all $n\in \N$. Then $g = g_1 + g_2\in \lip_\al(X)$ and $g_{|X_0} = f$.
\end{proof}

\noindent{\bf Remark}.  From the proof of Proposition \ref{lip2.1}, one can see that it is possible to choose a subsequence $(n_k)$ so that the conclusion applies to any further subsequence of $(n_k)$.  Furthermore, if $f\in \lip_\al(\ol{X_0})$, then the proposition yields a $g\in \lip_\al(X)$ such that  $g_{|X_0} = f_{|X_0}$.  By continuity, $g_{|\ol{X_0}} = f$.

\medskip

\begin{prop}\label{lip3}
Let $A(X)$ and $A(Y)$ be sets of real-valued functions defined on the underlying sets $X$ and $Y$ respectively. Suppose
that there is an order isomorphism $T: A(X)\to A(Y)$ given by the formula $Tf(y) = \Phi(y,f(\vp^{-1}(y)))$, where  $\vp: X\to Y$ is a bijection  and $\Phi:Y\times \R\to \R$ is a function so that $\Phi(y,\cdot):\R\to \R$ is an increasing homeomorphism for each $y\in Y$.
Let $X_0$ be a subset of $X$ and let $Y_0 = \vp(X_0)$ and set
\[ A(X_0) = \{f_{|X_0}: f\in A(X)\}, \quad A(Y_0) = \{g_{|Y_0}: g\in A(Y)\}.\]
For any $f\in A(X_0)$, define $T_0f: Y_0\to \R$ by $T_0f(y) = \Phi(y,f(\vp^{-1}(y)))$ for all $y\in Y_0$.  Then $T_0f\in A(Y_0)$ and $T_0$ is an order isomorphism from $A(X_0)$ onto $A(Y_0)$.
\end{prop}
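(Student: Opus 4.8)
The plan is to verify by direct computation that $T_0$ is well defined on $A(X_0)$, that it is a bijection onto $A(Y_0)$, and that both $T_0$ and $T_0^{-1}$ preserve the pointwise order; the weighted composition form of $T$ is exactly what makes each of these steps mechanical. First I would record the corresponding form of $T^{-1}$: since $T$ is a bijection and each $\Phi(y,\cdot)$ is an increasing homeomorphism of $\R$, solving $\Phi(\vp(x),f(x)) = g(\vp(x))$ for $f(x)$ shows that $T^{-1}g(x) = \Psi(x,g(\vp(x)))$, where $\Psi(x,\cdot) := \Phi(\vp(x),\cdot)^{-1}$ is again an increasing homeomorphism of $\R$.

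Next I would handle well-definedness. Given $f\in A(X_0)$, choose $F\in A(X)$ with $F_{|X_0} = f$. Then $TF\in A(Y)$, and for $y\in Y_0$ we have $\vp^{-1}(y)\in X_0$, so $TF(y) = \Phi(y,F(\vp^{-1}(y))) = \Phi(y,f(\vp^{-1}(y))) = T_0f(y)$. Hence $T_0f = (TF)_{|Y_0}\in A(Y_0)$. By the symmetric argument applied to $T^{-1}$, the formula $S_0g(x) := \Psi(x,g(\vp(x)))$, $x\in X_0$, sends $g\in A(Y_0)$ to $(T^{-1}G)_{|X_0}\in A(X_0)$, where $G\in A(Y)$ is any extension of $g$.

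Finally I would check that $S_0$ is the inverse of $T_0$ and that both are monotone. For $f\in A(X_0)$ and $x\in X_0$, $S_0T_0f(x) = \Psi(x,T_0f(\vp(x))) = \Psi(x,\Phi(\vp(x),f(x))) = f(x)$, and similarly $T_0S_0g = g$ for $g\in A(Y_0)$; thus $T_0$ is a bijection with $T_0^{-1} = S_0$. If $f_1\le f_2$ on $X_0$, then for every $y\in Y_0$ the inequality $f_1(\vp^{-1}(y))\le f_2(\vp^{-1}(y))$ together with the monotonicity of $\Phi(y,\cdot)$ gives $T_0f_1(y)\le T_0f_2(y)$; applying the same reasoning to $S_0$ (using monotonicity of $\Psi(x,\cdot)$) yields the reverse implication, so $T_0$ is an order isomorphism of $A(X_0)$ onto $A(Y_0)$. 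There is no genuine obstacle in this argument; the only point that warrants care is keeping the bookkeeping with extensions straight so that the identity $T_0^{-1} = S_0$ is transparent, and noting that $\Phi(y,\cdot)^{-1}$ automatically inherits monotonicity and continuity.
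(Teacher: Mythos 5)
Your proof is correct and follows essentially the same route as the paper: extend $f$ to $A(X)$, apply $T$, restrict to $Y_0$, and use $\Psi(x,\cdot) = \Phi(\vp(x),\cdot)^{-1}$ to define the inverse map $S_0$, checking mutual inversion and monotonicity directly. The paper's argument is the same, only stated more tersely.
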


\begin{proof}
Let $f\in A(X_0)$.  Then $f = \ti{f}_{|X_0}$ for some $\ti{f} \in A(X)$.
Then $g = (T\ti{f})_{|Y_0} \in A(Y_0)$.
Obviously, $g(y) = \Phi(y,f(\vp^{-1}(y)))$ for all $y \in Y_0$.
Thus $T_0f\in A(Y_0)$.
For each $x\in X$, let $\Psi(x,\cdot):\R\to \R$ be the inverse of $\Phi(\vp(x),\cdot)$.
Then $T^{-1}g(x) = \Psi(x,g(\vp(x)))$ for all $g\in A(Y)$ and all $x\in X$.
In the same manner, we see that the formula $S_0g(x) = \Psi(x,g(\vp(x)))$ for all $g\in A(Y_0)$ and all $x\in X_0$
defines a map $S_0$ from $A(Y_0)$ into $A(X_0)$.  Obviously, $T_0$ and $S_0$ are mutual inverses, and both are order preserving. Hence $T_0$ is an order isomorphism.  
\end{proof}

For the remainder of the section, suppose that $X$ and $Y$ are complete metric spaces and that $T: \lip_\al(X) \to \lip_\al(Y)$ is an order isomorphism such that $T0 = 0$.
Express $T$ as $Tf(y) = \Phi(y,f(\vp^{-1}(y)))$ for some homeomorphism $\vp: X\to Y$ and a function $\Phi:Y\times \R \to \R$ such that $\Phi(y,\cdot): \R\to \R$ is an increasing homeomorphism for all $y\in Y$.
We seek to extract information on the homeomorphism $\vp$. The key idea is that the extension results for little Lipschitz functions proved above lead to a restriction of $T$ to functions defined on subspaces of $X$ and $Y$ respectively.

\begin{cor}\label{cor7.6}
Let $X_0$ be a compact subset of $X$ and let $Y_0 = \vp(X_0)$.
Suppose that any $f\in \lip_\al(X_0)$ and $g\in \lip_\al(Y_0)$ extend to  functions $\ti{f} \in \lip_\al(X)$ and $\ti{g} \in \lip_\al(Y)$ respectively.  Then $\vp$ is a Lipschitz homeomorphism from $X_0$ onto $Y_0$.
\end{cor}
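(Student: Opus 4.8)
The plan is to cut the problem down to the compact pieces $X_0$ and $Y_0$, where the known characterization of order isomorphisms between little Lipschitz spaces (Theorem~\ref{thm7.1}) applies, and then to recognize that the homeomorphism it produces is exactly $\vp|_{X_0}$.

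First I would apply Proposition~\ref{lip3} with $A(X) = \lip_\al(X)$, $A(Y) = \lip_\al(Y)$, and the given sets $X_0$, $Y_0 = \vp(X_0)$. By the extension hypothesis the restriction spaces $A(X_0) = \{f_{|X_0} : f \in \lip_\al(X)\}$ and $A(Y_0) = \{g_{|Y_0} : g \in \lip_\al(Y)\}$ are precisely $\lip_\al(X_0)$ and $\lip_\al(Y_0)$: the inclusions $A(X_0)\subseteq\lip_\al(X_0)$ and $A(Y_0)\subseteq\lip_\al(Y_0)$ are automatic, and the reverse inclusions are exactly what is assumed. Proposition~\ref{lip3} then gives an order isomorphism $T_0:\lip_\al(X_0)\to\lip_\al(Y_0)$ of the form $T_0 f(y) = \Phi(y,f(\vp^{-1}(y)))$ for $y\in Y_0$, and $T_00 = 0$ since $\Phi(y,0) = T0(y) = 0$.

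Next, $X_0$ is compact by hypothesis and $Y_0 = \vp(X_0)$ is compact as a continuous image of $X_0$; since $t\mapsto t^\al$ is a homeomorphism of $[0,\infty)$, the metrics $d$ and $d^\al$ induce the same topology, so $(X_0,d^\al)$ and $(Y_0,d^\al)$ are compact metric spaces, and $\lip_\al(X_0) = \lip(X_0,d^\al)$, $\lip_\al(Y_0) = \lip(Y_0,d^\al)$ separate points boundedly (the remark preceding Theorem~\ref{thm7.1}, using $\Lip_b\subseteq\lip_\al$). Thus Theorem~\ref{thm7.1}, applied to $T_0$ regarded as an order isomorphism between the little Lipschitz spaces of $(X_0,d^\al)$ and $(Y_0,d^\al)$, yields a Lipschitz homeomorphism $\psi:(X_0,d^\al)\to(Y_0,d^\al)$ together with a function $\Phi':Y_0\times\R\to\R$, with each $\Phi'(y,\cdot)$ an increasing homeomorphism, such that $T_0 f(y) = \Phi'(y,f(\psi^{-1}(y)))$ for all $f\in\lip_\al(X_0)$ and $y\in Y_0$.

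The one step needing an argument is the identification $\psi = \vp|_{X_0}$. Fix $y_0\in Y_0$ and put $a = \vp^{-1}(y_0)$, $b = \psi^{-1}(y_0)$. If $a\neq b$, then, $\lip_\al(X_0)$ separating points, I can choose $h\in\lip_\al(X_0)$ with $h(a) = 0$ and $h(b)\neq 0$ (take any $f\in\lip_\al(X_0)$ with $f(a)\neq f(b)$ and set $h = f - f(a)$). On one hand $T_0 h(y_0) = \Phi(y_0,h(a)) = \Phi(y_0,0) = T_00(y_0) = 0$; on the other hand $T_0 h(y_0) = \Phi'(y_0,h(b))$, which is nonzero since $\Phi'(y_0,\cdot)$ is injective with $\Phi'(y_0,0) = T_00(y_0) = 0$ and $h(b)\neq 0$ --- a contradiction. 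Hence $\vp^{-1} = \psi^{-1}$ on $Y_0$, i.e.\ $\psi = \vp|_{X_0}$. Finally, since $\psi$ and $\psi^{-1}$ are Lipschitz with respect to $d^\al$ on both sides, the inequality $d(\psi p,\psi q)^\al\le K\,d(p,q)^\al$ rearranges to $d(\psi p,\psi q)\le K^{1/\al}d(p,q)$, and similarly for $\psi^{-1}$, so $\vp$ restricts to a Lipschitz homeomorphism of $X_0$ onto $Y_0$ in the original metric. I do not expect a genuine obstacle here; the only delicate point is the bookkeeping that guarantees the restricted function spaces are exactly the full $\lip_\al$ spaces, so that both Proposition~\ref{lip3} and Theorem~\ref{thm7.1} apply without modification.
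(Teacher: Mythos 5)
Your proposal is correct and follows the paper's own proof exactly: restrict $T$ to $X_0$ via Proposition \ref{lip3} (the extension hypothesis guaranteeing that the restricted spaces are all of $\lip_\al(X_0)$ and $\lip_\al(Y_0)$), then invoke Theorem \ref{thm7.1} on the compact pieces. The only difference is that you carefully verify the point the paper leaves implicit — that the Lipschitz homeomorphism produced by Theorem \ref{thm7.1} must coincide with $\vp|_{X_0}$, which your separation argument handles correctly.
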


\begin{proof}
By Proposition \ref{lip3}, we have an order isomorphism $T_0:\lip_\al(X_0)\to \lip_\al(Y_0)$ whose associated homeomorphism is $\vp:X_0\to Y_0$.
The result now follows from Theorem \ref{thm7.1}.
\end{proof}

\begin{prop}\label{lip4}
Let $(x_n)$ and  $(x'_n)$ be sequences in $X$.  Set $y_n = \vp(x_n)$ and $y_n' = \vp(x_n')$ for all $n$.  If $(x_n)$ is separated and $(x_n) \cup (x'_n)$, $(y_n)\cup (y'_n)$ are bounded, then there exists $C<\infty$ such that ${d(y_n,y_n')}\leq Cd(x_n,x'_n)$.
\end{prop}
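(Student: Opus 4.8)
The plan is to argue by contradiction and to reduce the statement, via the extension results of this section, to a statement about an order isomorphism between two little Lipschitz spaces on \emph{bounded, discrete} metric spaces, where the earlier structural results can be applied. So suppose no finite $C$ works. Discarding the indices with $x_n=x'_n$ (for which automatically $y_n=y'_n$) and passing to a subsequence, we may assume $x_n\neq x'_n$ and $d(y_n,y'_n)\geq n\,d(x_n,x'_n)$ for all $n$. Since $(y_n)\cup(y'_n)$ is bounded, $\sup_n d(y_n,y'_n)<\infty$, hence $d(x_n,x'_n)\to 0$. As $(x_n)$ is separated it has no convergent subsequence; since $\vp$ is a homeomorphism neither does $(y_n)$, and since $d(x_n,x'_n)\to 0$ the same is true of $(x'_n)$ and $(y'_n)$. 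Using completeness of $Y$ and a further subsequence (the Ramsey-type extraction already used in the proof of Lemma~\ref{lem56}), arrange that $(y_n)$ is separated.

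Next I would localise. Applying Proposition~\ref{lip2.1} and the remark following it to $(x_n),(x'_n)$ and then to $(y_n),(y'_n)$, and passing to a common further subsequence, I obtain a subsequence along which, writing $X_0=\{x_n\}\cup\{x'_n\}$ and $Y_0=\vp(X_0)=\{y_n\}\cup\{y'_n\}$, every function in $\lip_\al(X_0)$ extends to one in $\lip_\al(X)$ and every function in $\lip_\al(Y_0)$ extends to one in $\lip_\al(Y)$. One checks that $X_0$ and $Y_0$ are closed discrete (hence complete) and bounded subspaces of $X$ and $Y$; in particular $\{f_{|X_0}:f\in\lip_\al(X)\}$ coincides with the intrinsic space $\lip_\al(X_0)=\lip_{\al,b}(X_0)$, and similarly for $Y_0$. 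Then Proposition~\ref{lip3} yields an order isomorphism $T_0:\lip_\al(X_0)\to\lip_\al(Y_0)$ with $T_00=0$ whose associated homeomorphism is $\vp|_{X_0}:X_0\to Y_0$, sending $x_n\mapsto y_n$ and $x'_n\mapsto y'_n$.

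Now the earlier machinery applies to $T_0$, both $X_0$ and $Y_0$ being complete. Since $\lip_\al(X_0)=\lip_{\al,b}(X_0)$ and $\lip_\al(Y_0)=\lip_{\al,b}(Y_0)$, Proposition~\ref{prop57.1}(b) applied to $T_0$ and to $T_0^{-1}$ shows that $\vp|_{X_0}$ is a uniform homeomorphism. This already disposes of the possibility $\inf_n d(y_n,y'_n)>0$ on a subsequence: it is incompatible with $d(x_n,x'_n)\to 0$ and uniform continuity of $\vp|_{X_0}$. (Alternatively, since $T_01\in\lip_\al(Y_0)$ is bounded, the "in particular" clause of Lemma~\ref{lem56} applied to $T_0$ forces $\liminf_n d(y_n,y'_n)=0$.) Passing to a further subsequence we may therefore assume $d(x_n,x'_n)\to 0$ and $d(y_n,y'_n)\to 0$, while still $d(y_n,y'_n)\geq n\,d(x_n,x'_n)$.

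It remains to derive a contradiction in this surviving case, and this is the main obstacle. The plan is to manufacture, using the extension results of this section, a test function $f\in\lip_\al(X_0)$ whose image $T_0f$ fails the little Lipschitz condition on $Y_0$ — which is absurd since $T_0$ maps into $\lip_\al(Y_0)$. Starting from the function $g\in\Lip_b(Y_0)\subseteq\lip_\al(Y_0)$ with $g(y_n)=r\wedge d(y_n,y'_n)$, $g(y'_n)=0$ furnished by the proof of Lemma~\ref{lem56}, one has $T_0^{-1}(mg)\in\lip_\al(X_0)$ with $T_0^{-1}(mg)(x'_n)=0$, so $T_0^{-1}(mg)(x_n)=o(d(x_n,x'_n)^\al)$ as $n\to\infty$ for each $m$; choosing indices $n_m$ along which $T_0^{-1}(mg)(x_{n_m})$ is negligible compared with $d(x_{n_m},x'_{n_m})^\al$ and assembling these values into a single bounded $f\in\lip_\al(X_0)$ (via Proposition~\ref{lip1.1} and the extension property) gives $T_0f(y_{n_m})=m\,d(y_{n_m},y'_{n_m})$ and $T_0f(y'_{n_m})=0$, whence $|T_0f(y_{n_m})-T_0f(y'_{n_m})|/d(y_{n_m},y'_{n_m})^\al=m\,d(y_{n_m},y'_{n_m})^{1-\al}$. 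The delicate point is to make the selection of $n_m$ so that this quotient does \emph{not} tend to $0$; this is precisely where the quantitative hypothesis $d(y_n,y'_n)\geq n\,d(x_n,x'_n)$ — equivalently $d(x_n,x'_n)^\al\leq n^{-\al}d(y_n,y'_n)^\al$, so that the $X_0$-side is far more constrained than the $Y_0$-side — must be exploited to couple the index $n_m$ to the vanishing rate of $d(y_n,y'_n)$ and thereby pin down the ratio. Once such an $f$ is produced, $T_0f\notin\lip_\al(Y_0)$, the desired contradiction, and the proof is complete.
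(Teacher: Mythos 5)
Your set-up (passing to subsequences, localizing via Proposition \ref{lip2.1} and Proposition \ref{lip3} to an order isomorphism $T_0:\lip_\al(X_0)\to\lip_\al(Y_0)$, and using Proposition \ref{prop57.1}(b) to reduce to the case $d(y_n,y'_n)\to 0$) is sound, but the final step — the only place where a contradiction can actually be extracted — is not carried out, and the mechanism you propose cannot be made to work. Your test function gives $T_0f(y_{n_m})=m\,d(y_{n_m},y'_{n_m})$ and $T_0f(y'_{n_m})=0$, so the relevant difference quotient is $m\,d(y_{n_m},y'_{n_m})^{1-\al}$. This contradicts $T_0f\in\Lip(Y_0)$ (which is why the same device works in Lemma \ref{lem56} and Proposition \ref{prop57.1}(a), where the target is $\Lip_b(Y)$), but it does \emph{not} contradict $T_0f\in\lip_\al(Y_0)$ unless you can keep $m\,d(y_{n_m},y'_{n_m})^{1-\al}$ away from $0$. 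You cannot: the index $n_m$ must be taken large enough that $T_0^{-1}(mg)(x_{n_m})/d(x_{n_m},x'_{n_m})^\al$ is small, and the rate of that convergence depends on $T$ and is completely out of your control, while $d(y_n,y'_n)\to 0$ at a rate you also do not control. The hypothesis $d(y_n,y'_n)\ge n\,d(x_n,x'_n)$ bounds $d(y_n,y'_n)$ from below only relative to $d(x_n,x'_n)$, not relative to $n$ (e.g.\ $d(x_n,x'_n)=e^{-n^2}$, $d(y_n,y'_n)=ne^{-n^2}$ is consistent with everything), so there is no way to ``couple'' $n_m$ to $m$ as you hope. In short, the asymmetry you are trying to exploit lives on the wrong side: a large multiplicative gap between $d(y_n,y'_n)$ and $d(x_n,x'_n)$ does not translate into a visible violation of the $\al$-H\"older modulus on $Y$.

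The paper's proof turns the contradiction around and violates the little Lipschitz condition on the $X$ side, where the quotient can be pinned to exactly $1$. Setting $r_n=d(x_n,x'_n)$, $R_n=d(y_n,y'_n)$ with $R_n^\al\ge nr_n^\al$, a pigeonhole argument on the increasing function $\Phi(y_n,\cdot)$ (using only that $\Phi(y_n,1)\le M$) produces $t_n\in[0,1]$ with $b_n:=\Phi(y_n,t_n+r_n^\al)-\Phi(y_n,t_n)\le 2Mr_n^\al$. One builds $p\in\lip_\al(X)$ with $p(x_n)=p(x'_n)=t_n$ (Propositions \ref{lip1} and \ref{lip1.1}), and then a perturbation $q\in\lip_\al(Y)$ with $q(y_n)=b_n$, $q(y'_n)=0$ — this is where the hypothesis enters, since $b_n/R_n^\al\le 2M/n\to 0$ makes $q$ little Lipschitz. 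Pulling back, $T^{-1}(Tp+q)$ takes the values $t_n+r_n^\al$ at $x_n$ and $t_n$ at $x'_n$, so its $d^\al$-difference quotient over the pair $(x_n,x'_n)$ equals $1$ while $d(x_n,x'_n)\to 0$, contradicting $T^{-1}(Tp+q)\in\lip_\al(X)$. If you want to salvage your write-up, replace your final paragraph by this argument (your localization to $T_0$ is compatible with it but is not needed).
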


\begin{proof}
If the proposition fails, we find bounded sequences $(x_n)$ and $(x'_n)$ in $X$ and bounded sequences $(y_n) =(\vp(x_n))$, $(y'_n) = (\vp(x'_n))$, such that  $(x_n)$ is separated, $x_n\neq x'_n$ for all $n$, and $d(y_n,y'_n)/d(x_n,x'_n)\to \infty$.
Since $\sup_nd(y_n,y'_n) < \infty$, we must have $d(x_n,x'_n) \to 0$.
As $(x_n)$ has no convergent subsequence, neither does $(x'_n)$.  Thus the same holds for  $(y_n)$ and $(y'_n)$.  By using subsequences if necessary, we may assume that  $(y_n)$  and $(y'_n)$ are separated.
Set $r_n = d(x_n,x'_n)$ and $R_n = d(y_n,y'_n)$.
We may assume that 
\[ \inf_{m\neq n}d(x_m,x_n), \inf_{m\neq n}d(x'_m,x'_n) > r_i+r_j\] 
for all $i,j$, $R^\al_n \geq nr^\al_n$ and $r_n^\al < 1/2$ for all $n$.
Since $(y_n)$ is bounded, $(T1(y_n))$ is bounded.  There exists $M < \infty$ such that $\Phi(y_n,1) \leq M$ for all $n$.

\bigskip

\noindent\underline{Claim}.  For all $n$, there exists $0 \leq t_n \leq 1$ such that
\[ \Phi(y_n,t_n+r^\al_n) - \Phi(y_n,t_n) \leq 2Mr^\al_n.\]

\medskip

\noindent Otherwise, there exists $n$ such that  for all $0 \leq t\leq 1$, 
\[ \Phi(y_n,t+r^\al_n) - \Phi(y_n,t) > 2Mr^\al_n.\]
Choose $k$ such that $1/2 < kr_n^\al \leq 1$.  Then
\[ \Phi(y_n,kr_n^\al) = \sum^k_{j=1}[\Phi(y_n,jr^\al_n) - \Phi(y_n,(j-1)r^\al_n)] > 2kMr^\al_n > M.\]
It follows that $\Phi(y_n,1) > M$, contrary to the choice of $M$.

\bigskip

Since $(x_n)$ is separated and $(t_n)$ is bounded, the function $f:(x_n) \to \R$ defined by 
$f(x_n) = t_n$ is Lipschitz with respect to $d^\al$.  
By Proposition \ref{lip1}, there exists $g\in \lip_\al(X)$ such that $g(x_n) = f(x_n) = t_n$.
In particular,
\[ \frac{t_n- g(x'_n)}{r^\al_n}  = \frac{g(x_n) - g(x'_n)}{r^\al_n} \to 0.\]
Thus Proposition \ref{lip1.1} applies to $(x'_n)$, with $r_n$ as chosen and $a_n = t_n - g(x'_n)$.
Furthermore, $x_n \notin B(x'_n,r_n)$ for all $n$; while for $m \neq n$,
\[ d(x_n,x_m') \geq d(x_n',x_m') -d(x_n,x_n')  > r_m.\]
Thus there exists $
h \in \lip_\al(X)$ such that $h(x_n) = 0$ and $h(x'_n) = t_n -g(x'_n)$ for all $n$.
Let $p= g+h$.  Then $p\in \lip_\al(X)$, and $p(x_n) = p(x'_n) = t_n$ for all $n$. 
From the Claim,
\[ 0 \leq \frac{\Phi(y_n,t_n+r^\al_n) - \Phi(y_n,t_n)}{R^\al_n} \leq \frac{2Mr^\al_n}{R_n^\al} \to 0.\]
If $R_n \to 0$, then we may assume that
$\inf_{m\neq n}d(y_m,y_n)> R_i+R_j$ {for all $i,j$.}
As above, Proposition \ref{lip1.1} applies to the sequence $(y_n)$, with the parameters $R_n$ and $b_n =\Phi(y_n,t_n+r^\al_n) - \Phi(y_n,t_n)$. Furthermore, $y_n' \notin \cup_mB(y_m,R_m)$ for all $n$.
Thus there exists $q \in \lip_\al(Y)$ such that 
$q(y_n) = b_n$ and $q(y'_n) = 0$
for all $n$.
On the other hand, if $R_n \not\to 0$, then we may assume that $(y_n)\cup (y_n')$ is separated.
Since the sequence $(b_n)$ converges to $0$, it is bounded.
Thus the function $\gamma:(y_n)\cup(y'_n)\to \R$ defined by $\gamma(y_n) = b_n$ and $\gamma(y'_n) = 0$ is Lipschitz with respect to $d^\al$.  By Proposition \ref{lip1}, we also obtain a function $q\in \lip_\al(Y)$ such that $q(y_n) = b_n$ and $q(y'_n) = 0$.
In either case, $Tp +q \in \lip_\al(Y)$ and thus $T^{-1}(Tp+q) \in \lip_\al(X)$.
We have
\begin{align*} 
(Tp +q)(y_n) &= \Phi(y_n,t_n) + q(y_n) = \Phi(y_n,t_n+r^\al_n),\\
(Tp+q)(y'_n) &= \Phi(y_n',t_n) + q(y_n') = \Phi(y_n',t_n).
\end{align*}
Therefore,
\[ T^{-1}(Tp +q)(x_n) = t_n + r^\al_n \text{ and } T^{-1}(Tp +q)(x'_n) = t_n.\]
This violates the fact that $T^{-1}(Tp+q) \in \lip_\al(X)$.
\end{proof}

\begin{prop}\label{lip5}
Let $(x_n)$ and  $(x'_n)$ be sequences in $X$.  Set $y_n = \vp(x_n)$ and $y_n' = \vp(x_n')$ for all $n$.  If  $(x_n) \cup (x'_n)$, $(y_n)\cup (y'_n)$ are bounded, 
then there exists $C<\infty$ such that ${d(y_n,y_n')}\leq Cd(x_n,x'_n)$.
\end{prop}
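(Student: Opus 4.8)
The plan is to reduce Proposition~\ref{lip5} to Proposition~\ref{lip4} and Corollary~\ref{cor7.6} by a subsequence argument. Suppose, for contradiction, that no finite $C$ works. Discarding the indices for which $x_n = x'_n$ (on which the desired inequality is trivial) and passing to a subsequence, we may assume $x_n \ne x'_n$ for all $n$ and $d(y_n,y'_n)/d(x_n,x'_n) \to \infty$. As in the proof of Proposition~\ref{lip2.1}, completeness of $X$ guarantees that every sequence in $X$ has a subsequence that is either separated or convergent; applying this to $(x_n)$ and then to $(x'_n)$, we pass to a further subsequence along which each of $(x_n)$, $(x'_n)$ is separated or convergent. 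Boundedness of $(x_n)\cup(x'_n)$ and $(y_n)\cup(y'_n)$ is inherited by all subsequences.

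If $(x_n)$ is separated, Proposition~\ref{lip4} produces a finite $C$ with $d(y_n,y'_n)\le Cd(x_n,x'_n)$, contradicting $d(y_n,y'_n)/d(x_n,x'_n)\to\infty$. If instead $(x'_n)$ is separated, the same conclusion follows by applying Proposition~\ref{lip4} with the roles of the two sequences interchanged. Hence we may assume both $(x_n)$ and $(x'_n)$ converge, say $x_n\to x_0$ and $x'_n\to x'_0$, and then $y_n\to\vp(x_0)$ and $y'_n\to\vp(x'_0)$ by continuity of $\vp$. If $x_0\ne x'_0$, then $d(x_n,x'_n)\to d(x_0,x'_0)>0$ while $d(y_n,y'_n)$ remains bounded, so the ratio cannot diverge; contradiction.

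This leaves the single essential case $x_0 = x'_0$, where both $(y_n)$ and $(y'_n)$ converge to $y_0 = \vp(x_0)$. Here I invoke the extension theory for little Lipschitz functions. By Proposition~\ref{lip2.1} and the remark following it, after replacing the sequences by a further subsequence, every function in $\lip_\al$ on $\ol{(x_n)\cup(x'_n)}$ extends to a function in $\lip_\al(X)$; a second application of Proposition~\ref{lip2.1} to $(y_n)$ and $(y'_n)$, carried out inside the subsequence already selected (the remark ensuring that the domain-side extension property survives), does the same for $\ol{(y_n)\cup(y'_n)}$. Set $X_0 = (x_n)\cup(x'_n)\cup\{x_0\}$, a compact subset of $X$, and $Y_0 = \vp(X_0) = (y_n)\cup(y'_n)\cup\{y_0\}$, which is compact because $\vp$ is continuous. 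Both $X_0$ and $Y_0$ then satisfy the hypothesis of Corollary~\ref{cor7.6}, so $\vp$ restricts to a Lipschitz homeomorphism of $X_0$ onto $Y_0$; in particular $d(y_n,y'_n)\le Cd(x_n,x'_n)$ for all $n$ in this final subsequence, again a contradiction.

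The crux of the argument, and the point where care is genuinely needed, is exactly this last case: one must extract the various subsequences so that the extension hypotheses of Corollary~\ref{cor7.6} hold simultaneously on the domain and on the range, which is why the strengthened form of Proposition~\ref{lip2.1} noted in the accompanying remark — that a single subsequence can be chosen to work for all of its further subsequences — is indispensable. The other cases are either immediate or a direct appeal to Proposition~\ref{lip4}.
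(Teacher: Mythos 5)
Your proof is correct and follows essentially the same route as the paper: reduce by contradiction to the case where both sequences converge to a common limit (ruling out the separated case via Proposition \ref{lip4}), then apply Proposition \ref{lip2.1} on both the domain and range sides and invoke Corollary \ref{cor7.6} on the resulting compact set. The only cosmetic difference is that the paper deduces directly from $d(x_n,x'_n)\to 0$ and Proposition \ref{lip4} that $(x_n)$ has no separated subsequence, whereas you run an explicit separated-versus-convergent case analysis; both are fine.
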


\begin{proof}
Suppose that the proposition fails. There are sequences $(x_n)$ and $(x'_n)$ such that $(x_n)\cup(x'_n)$ and $(y_n)\cup(y'_n) = (\vp(x_n))\cup(\vp(x'_n))$ are bounded, and that $d(y_n,y'_n)/d(x_n,x'_n) \to \infty$.
In particular, $d(x_n,x'_n) \to 0$.  By Proposition \ref{lip4}, $(x_n)$ cannot have a separated subsequence.
Thus, by considering a subsequence if necessary, we may assume that $(x_n)$ converges to some $x_0$.  It follows that $(x'_n)$ also converges to $x_0$.  Then $(y_n)$ and $(y'_n)$ both converge to $y_0 = \vp(x_0)$.
Applying Proposition \ref{lip2.1} to both $(x_n) \cup(x'_n)$ and $(y_n)\cup(y'_n)$, we may assume that there is a subsequence $(n_k)$ of $\N$ so that, setting $X_0 = (x_{n_k})\cup(x'_{n_k})$, $Y_0 = (y_{n_k})\cup(y'_{n_k})$, 
every function in $\lip_\al(X_0)$, respectively $\lip_\al(Y_0)$, extends to a function in $\lip_\al(X)$, respectively $\lip_\al(Y)$.
Clearly, every function in $\lip_\al(X_0\cup\{x_0\})$ also extends to a function in $\lip_\al(X)$, and every function in $\lip_\al(Y_0\cup\{y_0\})$ extends to a function in $\lip_\al(Y)$.
Since $X_0\cup\{x_0\}$ is compact,
by Corollary \ref{cor7.6}, $\vp$ is a Lipschitz homeomorphism from $X_0\cup\{x_0\}$ onto $Y_0\cup\{y_0\}$.
This yields a contradiction in view of the choices of $(x_n), (x'_n)$ and $(y_n), (y'_n)$.
\end{proof}

The next result follows immediate from Proposition \ref{lip5}.

\begin{thm}\label{lip6}
Suppose that $X$ and $Y$ are complete metric spaces with finite diameter.  If $T:\lip_\al(X)\to \lip_\al(Y)$ is an order isomorphism, then the associated homeomorphism $\vp: X\to Y$ is a Lipschitz homeomorphism.
Conversely, if $X$ and $Y$ are Lipschitz homeomorphic, then $\lip_\al(X)$ is order isomorphic to $\lip_\al(Y)$.
\end{thm}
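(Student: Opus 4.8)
The plan is to obtain the forward implication as a direct consequence of Proposition \ref{lip5} and the converse by exhibiting an explicit composition operator. For the forward direction, let $T:\lip_\al(X)\to\lip_\al(Y)$ be an arbitrary order isomorphism. Replacing $T$ by $\ti{T}f=Tf-T0$ — which is again an order isomorphism because $T0\in\lip_\al(Y)$ and translation is an order-preserving bijection of $\lip_\al(Y)$ — we may assume $\ti{T}0=0$, and this replacement does not change the associated homeomorphism $\vp:X\to Y$. Since $\lip_\al(X)=\lip(X,d^\al)$ and $\lip_\al(Y)$ are near vector lattices satisfying conditions $(\spadesuit)$ and $(\heartsuit)$ (Examples B and C; recall $0<\al<1$ makes $\lip_\al$ uniformly separating), Theorem \ref{thm29} furnishes the representation $\ti{T}f(y)=\Phi(y,f(\vp^{-1}(y)))$ that underlies this section, so all of the results leading to Proposition \ref{lip5} apply to $\ti{T}$.

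Next I would show $\vp$ is Lipschitz. Because $X$ has finite diameter and so does $Y=\vp(X)$, for any two sequences $(x_n),(x_n')$ in $X$ the sets $(x_n)\cup(x_n')$ and $(\vp(x_n))\cup(\vp(x_n'))$ are automatically bounded, so Proposition \ref{lip5} yields a finite $C$ with $d(\vp(x_n),\vp(x_n'))\le C\,d(x_n,x_n')$. If $\vp$ failed to be Lipschitz we could choose such sequences with $x_n\neq x_n'$ and $d(\vp(x_n),\vp(x_n'))/d(x_n,x_n')\to\infty$, a contradiction; hence $\vp$ is Lipschitz. Applying the same argument to $\ti{T}^{-1}$, whose associated homeomorphism is $\vp^{-1}$, shows $\vp^{-1}$ is Lipschitz, so $\vp$ is a Lipschitz homeomorphism.

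For the converse, suppose $\psi:X\to Y$ is a Lipschitz homeomorphism, with $d(\psi(x),\psi(x'))\le L\,d(x,x')$ and $d(\psi^{-1}(y),\psi^{-1}(y'))\le L\,d(y,y')$. For $g\in\lip_\al(Y)$ one has $|g(\psi(x))-g(\psi(x'))|\le L(g)L^\al d(x,x')^\al$, so $g\circ\psi\in\Lip(X,d^\al)$, and since $d(x,x')\to 0$ forces $d(\psi(x),\psi(x'))\to 0$, the quotient $|g(\psi(x))-g(\psi(x'))|/d(x,x')^\al\le L^\al|g(\psi(x))-g(\psi(x'))|/d(\psi(x),\psi(x'))^\al$ tends to $0$; thus $g\circ\psi\in\lip_\al(X)$. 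The same estimates with $\psi^{-1}$ in place of $\psi$ show that $C_{\psi^{-1}}:f\mapsto f\circ\psi^{-1}$ maps $\lip_\al(X)$ into $\lip_\al(Y)$ and is the inverse of $C_\psi:g\mapsto g\circ\psi$. As $\psi$ is a bijection, $C_\psi$ and $C_{\psi^{-1}}$ preserve the pointwise order in both directions, so $C_\psi$ is an order isomorphism between $\lip_\al(Y)$ and $\lip_\al(X)$.

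I do not expect a genuine obstacle here: the substantive content is already packaged in Proposition \ref{lip5} (and, behind it, the extension results \ref{lip1}--\ref{lip2.1} and Corollary \ref{cor7.6}). The only points demanding a little care are the reduction to $\ti{T}0=0$, so that the representation of $T$ and hence Proposition \ref{lip5} are available, and the observation that finite diameter makes the boundedness hypotheses of Proposition \ref{lip5} vacuous; the composition-operator argument for the converse is entirely routine.
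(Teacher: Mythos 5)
Your proposal is correct and follows essentially the same route as the paper, which simply remarks that the theorem ``follows immediately from Proposition \ref{lip5}'': finite diameter makes the boundedness hypotheses of that proposition automatic, the normalization $\ti{T}f = Tf - T0$ leaves the associated homeomorphism unchanged, and the converse is the routine composition-operator computation you give. No gaps.
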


Next, we consider complete metric spaces which may be unbounded.  For the sake of clarity, denote the metrics on $X$ and $Y$  by $d_X$ and $d_Y$ respectively.  Recall the following from \S \ref{sec5}.
Fix a point $e\in X$ and let $e' = \vp(e)\in Y$. Set $\xi(x) = 1 \vee d_X(x,e)^\al$, $\zeta(y) = 1 \vee d_Y(y,e')^\al$, 
\[ \rho_X(x,x') = \frac{d_X(x,x')^\al}{\xi(x) \vee \xi(x')} \text{\ \ and\ \ } \rho_Y(y,y')=\frac{d_Y(y,y')^\al}{\zeta(y) \vee \zeta(y')}\]
for all $x,x'\in X$ and $y,y'\in Y$.

\begin{prop}\label{lip7}
Let $T:\lip_\al(X)\to \lip_\al(Y)$ be an order isomorphism with associated homeomorphism $\vp:X\to Y$.
There exists $C<\infty$ such that $\rho_Y(\vp(x),\vp(x'))\leq C\rho_X(x,x')$ for all $x, x'\in X$.
\end{prop}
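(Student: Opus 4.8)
The plan is to argue by contradiction, reducing the ``unbounded'' situation to the bounded one already handled in Proposition~\ref{lip5} (which underlies Theorem~\ref{lip6}). Suppose no finite $C$ works. Then there are sequences $(x_n)$, $(x'_n)$ in $X$ such that, writing $y_n=\vp(x_n)$ and $y'_n=\vp(x'_n)$,
\[ \rho_Y(y_n,y'_n) > n\,\rho_X(x_n,x'_n)\qquad\text{for every }n . \]
Since $t\mapsto t^\al$ is subadditive for $0<\al<1$, we have $d_X(x,x')^\al \le d_X(x,e)^\al+d_X(x',e)^\al \le \xi(x)+\xi(x') \le 2\,(\xi(x)\vee\xi(x'))$, so $\rho_X$, and likewise $\rho_Y$, takes values in $[0,2]$. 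Hence $\rho_X(x_n,x'_n)\to 0$ and $x_n\neq x'_n$ for large $n$. Passing to a subsequence I would assume that each of $\xi(x_n)$, $\xi(x'_n)$, $\zeta(y_n)$, $\zeta(y'_n)$ converges in $[1,\infty]$.

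\emph{Bounded case.} Suppose all four limits are finite, so $\{x_n,x'_n\}$ is $d$-bounded in $X$ and $\{y_n,y'_n\}$ is $d$-bounded in $Y$. Proposition~\ref{lip5} then yields a finite $C$ with $d_Y(y_n,y'_n)\le C\,d_X(x_n,x'_n)$, and, using $\zeta\ge 1$ and $\xi\le M$ for a suitable $M$,
\[ \rho_Y(y_n,y'_n) \le d_Y(y_n,y'_n)^\al \le C^\al d_X(x_n,x'_n)^\al \le C^\al M\,\rho_X(x_n,x'_n), \]
contradicting the displayed inequality. This is the template for every remaining case: one must convert the blow-up of the ratio $\rho_Y/\rho_X$ into a genuine violation of membership in $\lip_\al$.

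\emph{Unbounded case.} Now suppose at least one of $\{x_n,x'_n\}$, $\{y_n,y'_n\}$ fails to be $d$-bounded. First I would record the elementary geometry forced by $\rho_X(x_n,x'_n)\to 0$: if $\xi(x_n)\to\infty$ or $\xi(x'_n)\to\infty$ then both do (otherwise $d_X(x_n,x'_n)$ would be of the order of $d_X(x_n,e)\vee d_X(x'_n,e)$, keeping $\rho_X(x_n,x'_n)$ away from $0$), and then $d_X(x_n,x'_n)=o\big(d_X(x_n,e)\big)$ with $d_X(x_n,e)/d_X(x'_n,e)\to 1$; the analogous dichotomy holds on the $Y$ side. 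The transport of $T$ to the bounded spaces $(X,d'_X)$, $(Y,d'_Y)$ used to prove Theorem~\ref{thm5.6} for Lipschitz functions is unavailable here, since multiplication by $\xi$ does not preserve $\lip_\al$. Instead I would pass to a further subsequence along which the surviving divergent distances to the base points grow at least geometrically, making the relevant points ``separated at their own scale''. Then the extension results of this section --- Proposition~\ref{lip1} for extending Lipschitz data off a separated set, Proposition~\ref{lip1.1} for disjointly supported spikes of heights $o(r_n^\al)$, and Proposition~\ref{lip2.1} for the combined convergent/separated dichotomy --- let me build an $f\in\lip_\al(X)$ that is locally constant near every $x_n$ and $x'_n$ with prescribed values there: essentially a spike near $x_n$ of width of order $d_X(x_n,x'_n)$ and height $o\big(d_X(x_n,e)^\al\big)$ vanishing at $x'_n$. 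Then $Tf\in\lip_\al(Y)$ with $Tf(y_n)=\Phi(y_n,f(x_n))$, $Tf(y'_n)=\Phi(y'_n,f(x'_n))$, and the hypothesis $\rho_Y(y_n,y'_n)/\rho_X(x_n,x'_n)\to\infty$ forces the rescaled difference quotient of $Tf$ across $(y_n,y'_n)$ to blow up while $d_Y(y_n,y'_n)$ stays controlled, contradicting $Tf\in\lip_\al(Y)$. Depending on which side is unbounded and on the relative scales, the same construction is run with $T^{-1}$ in place of $T$, or one appeals to Corollary~\ref{cor7.6} on a compact restriction of $X$ together with its $\vp$-image.

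The main obstacle is exactly this last construction. Because the scale $d_X(x_n,e)$ in the source and the scale $d_Y(y_n,e')$ in the target need not be comparable, the test functions have to be adapted to both simultaneously, and the extension lemmas must be applied on several interleaved subsequences before the spikes assemble into a single $\lip_\al$ function. Once that is achieved, the contradiction is produced just as in the bounded case, and Proposition~\ref{lip7} follows.
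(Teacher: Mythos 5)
Your reduction of the bounded case to Proposition \ref{lip5} is correct, and your observation that $\rho_X,\rho_Y\leq 2$ forces $\rho_X(x_n,x'_n)\to 0$ is a genuine (if inessential) simplification of the paper's case list. The gap is the unbounded case, which you leave as a sketch (``the main obstacle is exactly this last construction'') and which is the entire content of the proposition beyond Theorem \ref{lip6}. The spike construction you propose is problematic on its face: a bump supported in a ball of width comparable to $d_X(x_n,x'_n)$ with height comparable to $d_X(x_n,e)^\al$ has $d^\al$-difference quotient of order $1/\rho_X(x_n,x'_n)\to\infty$, so it cannot lie in $\lip_\al(X)$; and even with admissible heights, producing a contradiction requires showing that $\Phi(y_n,\cdot)$ moves by an amount large compared with $d_Y(y_n,y'_n)^\al$, which is exactly the step (the Claim in the proof of Proposition \ref{lip4}) that relies on boundedness of $(T1(y_n))$ and is unavailable when $\zeta(y_n)\to\infty$. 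Your fallback, Corollary \ref{cor7.6}, also does not apply, since the relevant sets $X_0$ are not compact in the unbounded case.

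The paper avoids any direct construction. After restricting to $\ol{X_0}$ and $\ol{Y_0}$ via Propositions \ref{lip2.1} and \ref{lip3}, it thins the sequences so that $\ol{X_0}$ is almost expansive at $\infty$, whence Proposition \ref{prop5.12} replaces $\lip_\al(\ol{X_0})$ by $\lip(\ol{X_0},d')$ for a \emph{bounded} metric $d'$ comparable to $\rho_X$. The key idea missing from your proposal is the transfer of this structure to the target side: Theorem \ref{thm66} (the converse direction, that $\lip_\al\sim\lip_{\al,b}$ forces almost expansiveness at $\infty$) shows that $\ol{Y_0}$ is also almost expansive at $\infty$, so the same renorming is available there, and Theorem \ref{lip6} applied to the bounded metrics then yields the Lipschitz comparison of $\rho_Y$ with $\rho_X$, giving the contradiction. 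Without some substitute for this transfer step the unbounded case does not close; the remaining subcase (both sequences unbounded with $d_X(x_n,x'_n)\not\to 0$) reduces to separated sets, where $\lip_\al=\Lip(\cdot,d^\al)$ and Theorem \ref{thm5.6} finishes the argument.
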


\begin{proof}
Suppose that the proposition fails.  There are sequences $(x_n)$ and $(x'_n)$ in $X$ such that $x_n\neq x'_n$ and that $\rho_Y(y_n,y'_n)/\rho_X(x_n,x'_n)\to \infty$, where $y_n = \vp(x_n)$ and $y'_n = \vp(x'_n)$. 
By taking further subsequences and taking note of the symmetry between $(x_n)$ and $(x'_n)$, we may assume that we have one of the following situations.
\begin{enumerate}
\item $(x_n) \cup(x'_n)$ is bounded.
\item $(x_n)$ is unbounded, $d_X(x_{n+1},e) \geq 2 d_X(x_n,e)$ for all $n$, $(x'_n)$ is bounded.
\item $(x_n)$ is unbounded, $d_X(x_{n+1},e) \geq 2 d_X(x_n,e)$ for all $n$, $d_X(x_n,x'_n) \to 0$.
\item $(x_n)$ and $(x'_n)$ are both  unbounded, $d_X(x_n,x'_n) \not\to 0$.
\end{enumerate}
Set $X_0 = (x_n)\cup(x'_n)$ and $Y_0 = (y_n)\cup(y'_n)$.
Applying Proposition \ref{lip2.1} and the remark thereafter, we may assume that every function in $\lip_\al(\ol{X_0})$, respectively $\lip_\al(\ol{Y_0})$, extends to a function in $\lip_\al(X)$, respectively, $\lip_\al(Y)$.
By Proposition \ref{lip3}, there is an order isomorphism $T_0: \lip_\al(\ol{X_0})\to \lip_\al(\ol{Y_0})$ whose associated homeomorphism is $\vp: \ol{X_0}\to \ol{Y_0}$.
In cases (a), (b) and (c), one can readily verify that $\ol{X_0}$ is almost expansive at $\infty$. (Refer to the definition following Theorem \ref{thm5.6}.)
By Proposition \ref{prop5.12}, and Proposition \ref{prop5.1}(a), (b) and (d), one may endow $\ol{X_0}$ with a complete bounded  metric $d'$ such that $\rho_X(x,x') \leq d'(x,x') \leq 3\rho_X(x,x')$ for all $x,x'\in \ol{X_0}$, and that $\lip_\al(\ol{X_0})$ is order isomorphic to $\lip(\ol{X_0},d')$, with the formal identity map as the associated homeomorphism.
As $d'$ is a bounded metric, $\lip_b(\ol{X_0},d') = \lip(\ol{X_0},d')\sim \lip_\al(\ol{X_0}) \sim\lip_\al(\ol{Y_0})$.
Since $\rho_X^{1/\al}$ is within a constant multiple of a metric, we may also assume that $d'$ is a H\"{o}lder metric of order $\al$.
It follows from Theorem \ref{thm66} that $\ol{Y_0}$ is almost expansive at $\infty$.
By Proposition  \ref{prop5.12}, and Proposition \ref{prop5.1}(b) again, one may endow $\ol{Y_0}$ with a complete  metric $d''$ such that $\rho_Y(y,y') \leq d''(y,y') \leq 3\rho_Y(y,y')$ for all $y,y'\in \ol{Y_0}$, and that $\lip_\al(\ol{Y_0})$ is order isomorphic to $\lip(\ol{Y_0},d'')$, with the formal identity map as the associated homeomorphism.  Again, we may assume that $d''$ is a H\"{o}lder metric of order $\al$.
Working through the chain of order isomorphisms
\[\lip(\ol{X_0},d')  \sim\lip_\al(\ol{X_0}) \sim \lip_\al(\ol{Y_0}) \sim \lip(\ol{Y_0},d''),\]
we see that $\lip(\ol{X_0},d')  \sim\lip(\ol{Y_0},d'')$ with associated homeomorphism $\vp:\ol{X_0}\to \ol{Y_0}$.
Since both $d'$ and $d''$ are bounded metrics, $\vp: (\ol{X_0},d')\to (\ol{Y_0},d'')$ is a Lipschitz homeomorphism by Theorem \ref{lip6}.
But this shows that there is a constant $C$ such that $\rho_Y(y_n,y'_n) \leq C\rho_X(x_n,x'_n)$, contrary to the choices of $(x_n)$ and $(x'_n)$.

In case (d), we may assume that $(x_n)\cup (x'_n)$ is separated.  Then neither $(y_n)$ nor $(y_n')$ has a convergent subsequence.  Hence we may assume that each one is separated.
If $d_Y(y_n,y'_n)\to 0$, then by taking further subsequences, and possibly interchanging $(y_n)$ and $(y_n')$,
we end up in one of the situations (a) or (c) for the sequences $(y_n)$ and $(y_n')$. 
By the proof above, the map $\vp^{-1}: (\ol{Y_0},d'')\to (\ol{X_0},d')$ is a Lipschitz homeomorphism.  Again this implies that there is a constant $C$ such that $\rho_Y(y_n,y'_n) \leq C\rho_X(x_n,x'_n)$, contrary to the choices of $(x_n)$ and $(x'_n)$.

Finally, suppose that $(x_n)\cup (x'_n)$ is separated and that $d_Y(y_n,y'_n)\not\to 0$.  Then we may also assume that $(y_n)\cup(y_n')$ is  separated.
In this case $\ol{X_0} = X_0$ and $\ol{Y_0} = Y_0$ are separated.
Hence $\lip_\al(\ol{X_0}) = \Lip(X_0,d_X^\al)$ and $\lip_\al(\ol{Y_0}) = \Lip(Y_0,d_Y^\al)$.
It follows that $\Lip(X_0,d_X^\al)\sim \Lip(Y_0,d_Y^\al)$, with associated homeomorphism $\vp:X_0\to Y_0$.  By Theorem \ref{thm5.6}, there is a constant $1 \leq C< \infty$ such that
$\rho_Y(\vp(x),\vp(x')) \leq C\rho_X(x,x')$ for all $x,x'\in X_0$. Again, this contradicts the choices of $(x_n)$ and $(x_n')$.
\end{proof}

\begin{prop}\label{lip8}
Let 
$T:\lip_\al(X)\to \lip_\al(Y)$ be an order isomorphism. Then the associated homeomorphism $\vp:X\to Y$ is uniformly continuous.
\end{prop}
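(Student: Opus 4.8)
The plan is to argue by contradiction, reducing the failure of uniform continuity to a configuration that is ruled out by the comparison theorems already established. So suppose $\vp$ is not uniformly continuous. Then there are $\ep>0$ and sequences $(x_n),(x'_n)$ in $X$ with $x_n\neq x'_n$ and $d_X(x_n,x'_n)\to 0$, but $d_Y(y_n,y'_n)\geq\ep$ for all $n$, where $y_n=\vp(x_n)$ and $y'_n=\vp(x'_n)$. Since $X$ is complete, after passing to a subsequence $(x_n)$ either converges or is separated. If $(x_n)$ converges to some $x_0$, then $d_X(x_n,x'_n)\to 0$ forces $(x'_n)\to x_0$ as well, and continuity of $\vp$ gives $y_n,y'_n\to\vp(x_0)$, whence $d_Y(y_n,y'_n)\to 0$, contradicting $d_Y(y_n,y'_n)\geq\ep$. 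Hence we may assume $(x_n)$ is separated; consequently neither $(x_n)$ nor $(x'_n)$ — and therefore neither $(y_n)$ nor $(y'_n)$ — has a convergent subsequence.

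The next step is to pass to subspaces on which $T$ restricts. Apply Proposition \ref{lip2.1} to the pair $(x_n),(x'_n)$ in $X$: since $(x_n)$ is separated and $d_X(x_n,x'_n)\to 0$, we are in case (b) of its proof, and we obtain (using the remark following that proposition, so that the conclusion survives further thinning) a subsequence such that $X_0:=\{x_n\}\cup\{x'_n\}$ has the property that every function in $\lip_\al(X_0)$ extends to a function in $\lip_\al(X)$. Apply Proposition \ref{lip2.1} once more to the images $(y_n),(y'_n)$ in $Y$: here $d_Y(y_n,y'_n)\geq\ep$ and neither sequence has a convergent subsequence, so we are in case (a) with the subcase in which $(y'_n)$ too is separated; thinning again we may assume that $Y_0:=\vp(X_0)=\{y_n\}\cup\{y'_n\}$ is a \emph{separated} set and (by Proposition \ref{lip1}) that every function in $\lip_\al(Y_0)$ extends to $\lip_\al(Y)$. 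After discarding finitely many indices the points $x_n,x'_n$ are all distinct. By Proposition \ref{lip3}, $\vp$ then restricts to the associated homeomorphism of an order isomorphism $T_0\colon\lip_\al(X_0)\to\lip_\al(Y_0)$. Note that $X_0$ is discrete but \emph{not} separated, since $d_X(x_n,x'_n)\to 0$ with $x_n\neq x'_n$.

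Now the contradiction. Since $Y_0$ is separated, $\lip_\al(Y_0)=\Lip(Y_0,d_Y^\al)$. The metric space $(Y_0,d_Y^\al)$ is complete, so Theorem \ref{thm5.3} provides a complete metric space $W$ of finite diameter with $\Lip(Y_0,d_Y^\al)$ (linearly) order isomorphic to $\Lip_b(W)$. Composing, $\lip_\al(X_0)$ is order isomorphic to $\Lip_b(W)$. By Theorem \ref{thm48} this forces $X_0$ to be separated, contradicting the previous paragraph. Hence $\vp$ must be uniformly continuous.

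I expect the main difficulty to lie in the bookkeeping of the middle paragraph: arranging, by a finite string of subsequence extractions, that $X_0$ and $Y_0$ simultaneously enjoy the extension property, that $Y_0$ comes out separated while $X_0$ is a genuine ``cluster‑pair'' set (discrete but not separated), and that Proposition \ref{lip3} indeed identifies the restricted map's associated homeomorphism with $\vp$ on $X_0$. Everything else — the two elementary dichotomies coming from completeness, the identification $\lip_\al(Y_0)=\Lip(Y_0,d_Y^\al)$ for separated $Y_0$, and the invocations of Theorems \ref{thm5.3} and \ref{thm48} — is routine. (Proposition \ref{lip7} is available but is not actually needed for this route; it enters only in combination with Proposition \ref{lip8} in the final characterization.)
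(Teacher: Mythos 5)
Your argument is correct and follows essentially the same route as the paper: reduce by contradiction to the subspaces $X_0=(x_n)\cup(x_n')$ and the separated set $Y_0=\vp(X_0)$ via Proposition \ref{lip2.1} and Proposition \ref{lip3}, identify $\lip_\al(Y_0)=\Lip(Y_0,d_Y^\al)$ and pass to $\Lip_b$ of a bounded complete space by Theorem \ref{thm5.3}, then derive a contradiction from the structure of $X_0$. The only difference is cosmetic: the paper closes by citing Proposition \ref{prop57.1}(a) to get proximal compactness of $X_0$, whereas you cite Theorem \ref{thm48} to get that $X_0$ is separated — but Theorem \ref{thm48} is itself proved via Proposition \ref{prop57.1}(a), so the two endings are essentially equivalent.
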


\begin{proof}
If the proposition fails, there are sequences $(x_n)$ and $(x'_n)$ in $X$ such that $0 < d_X(x_n,x'_n) \to 0$ and $d_Y(y_n,y'_n)\not\to 0$, where $y_n = \vp(x_n)$ and $y_n' = \vp(x'_n)$.
In particular, neither $(x_n)$ nor $(x'_n)$ can have  a convergent subsequence.  Thus the same holds for $(y_n)$ and $(y_n')$.  By taking subsequences, we may assume that $(x_n)$ and $(y_n)\cup (y'_n)$ are separated.
Set $X_0 = (x_n)\cup(x'_n)$ and $Y_0 = (y_n)\cup(y'_n)$. 
Note that $X_0$ and $Y_0$ are complete. Applying Proposition \ref{lip2.1}, we may further assume that every function in $\lip_\al(X_0)$, respectively $\lip_\al(Y_0)$, extends to a function in $\lip_\al(X)$, respectively, $\lip_\al(Y)$.
It follows from Proposition \ref{lip3} that we have an order isomorphism $T_0:\lip_\al(X_0)\to \lip_\al(Y_0)$.
Since $Y_0$ is separated, $\lip_\al(Y_0) = \Lip(Y_0,d^\al_Y)$. 
By Theorem \ref{thm5.3} and Proposition \ref{prop5.1}(d), $\Lip(Y_0,d^\al)$ is (linearly) order isomorphic to a space $\Lip(Y_0,d')$, where $d'$ is a complete bounded metric on $Y_0$.
Therefore, 
\[\lip_\al(X_0) \sim \lip_\al(Y_0)  = \Lip(Y_,d^\al_Y) \sim \Lip(Y_0,d') = \Lip_b(Y_0,d').\]
By Proposition \ref{prop57.1}, $X_0$ is proximally compact.
(Refer to the definition following Proposition \ref{prop37}.)
However, $(x_n)$ is a sequence in $X_0$ with no convergent subsequence, and $(x_n') \subseteq X_0$ with $0 < d_X(x_n,x'_n)\to 0$.  This contradicts the proximal compactness of $X_0$.   
\end{proof}

\begin{lem}\label{lip8.1}
Let $X$ and $Y$ be complete metric spaces.  Suppose that $\vp:X\to Y$ is a  uniform homeomorphism such that there exists $1 \leq C < \infty$ satisfying 
\begin{equation}\label{lipeq1} 
\frac{1}{C}\rho_X(x,x') \leq \rho_Y(\vp(x),\vp(x')) \leq C\rho_X(x,x')
\end{equation}
for all $x,x'\in X$.  Then for all $f\in \lip_\al(X)$, $g:Y \to\R$ defined by
\[ g(y) = \frac{f(\vp^{-1}(y))}{\xi(\vp^{-1}(y))}\,\zeta(y)\]
belongs to $\lip_\al(Y)$.
\end{lem}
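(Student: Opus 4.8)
The plan is to estimate $g(y)-g(y')$ directly in terms of $f$, $\xi$ and $\zeta$, rather than routing the argument through the auxiliary metrics of Section \ref{sec5}. It is tempting to write $\delta_X$, $\delta_Y$ for the metrics obtained from equation (\ref{eq5.1}) applied to $(X,d_X^\al)$ and $(Y,d_Y^\al)$, observe via Proposition \ref{prop5.2} that $f/\xi\in\Lip(X,\delta_X)$, and note that the hypothesis (\ref{lipeq1}) together with Proposition \ref{prop5.1}(b) makes $\vp$ a bi-Lipschitz map from $(X,\delta_X)$ onto $(Y,\delta_Y)$; this already shows $g=[(f/\xi)\circ\vp^{-1}]\,\zeta\in\Lip_\al(Y)$. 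However $f/\xi$ is in general \emph{not} little Lipschitz with respect to $\delta_X$ (take $f(x)=x$ on $\R$), so the little Lipschitz conclusion cannot be read off this way, and the vanishing of the difference quotient for $g$ must be established by hand.

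First I would record the elementary ingredients. Since $f\in\lip_\al(X)$ there is a constant $C_f=|f(e)|+L(f)$, with $L(f)$ the $d_X^\al$-Lipschitz constant of $f$, such that $|f(x)|\le C_f\,\xi(x)$ for all $x$; hence $f/\xi$ is bounded by $C_f$. Write $\theta(t)=\sup\{|f(x)-f(x')|/d_X(x,x')^\al : 0<d_X(x,x')\le t\}$, which is $\le L(f)$ and tends to $0$ as $t\to0$. The weights $\xi$ and $\zeta$ themselves lie in $\lip_\al(X)$ and $\lip_\al(Y)$ respectively: $1\vee d(\cdot,e)^\al$ is $1$-Lipschitz for $d^\al$, is constant near $e$, and is little Lipschitz away from $e$ by the mean value theorem applied to $t\mapsto t^\al$. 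Let $\eta_\xi,\eta_\zeta\le 1$ be the associated moduli, vanishing at $0$, so that $|\xi(x)-\xi(x')|\le\eta_\xi(d_X(x,x'))\,d_X(x,x')^\al$ and similarly for $\zeta$. Finally I would prove, by a short triangle-inequality case analysis on $d_X(x',e)$ (mirroring the computation in Proposition \ref{prop5.1}), the dichotomy: if $\xi(x)\le\xi(x')$ then either $\xi(x')\le 2^\al\,\xi(x)$ or $\rho_X(x,x')\ge 2^{-\al}$.

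Now let $x=\vp^{-1}(y)$, $x'=\vp^{-1}(y')$, and, interchanging the two points if needed, assume $\xi(x)\le\xi(x')$. The key is the decomposition
\[ g(y)-g(y')=\frac{\zeta(y)\,\big(f(x)-f(x')\big)}{\xi(x)}+\frac{f(x')\,\big(\zeta(y)-\zeta(y')\big)}{\xi(x)}+\frac{f(x')\,\zeta(y')\,\big(\xi(x')-\xi(x)\big)}{\xi(x)\,\xi(x')}. \]
Dividing by $d_Y(y,y')^\al$ and using the identities $d_X(x,x')^\al=\rho_X(x,x')\,(\xi(x)\vee\xi(x'))$, $d_Y(y,y')^\al=\rho_Y(y,y')\,(\zeta(y)\vee\zeta(y'))$, the bound $\rho_X(x,x')\le C\,\rho_Y(y,y')$ from (\ref{lipeq1}), the estimate $|f(x')|\le C_f\,\xi(x')$, and $\zeta(y)\le\zeta(y)\vee\zeta(y')$, each of the three terms is at most $2^\al$ times a fixed multiple of $\theta(d_X(x,x'))$, $\eta_\zeta(d_Y(y,y'))$, $\eta_\xi(d_X(x,x'))$ respectively, \emph{provided} $\xi(x')\le 2^\al\xi(x)$. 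If instead $\rho_X(x,x')\ge 2^{-\al}$, the crude bound $|g(y)-g(y')|\le 2C_f(\zeta(y)\vee\zeta(y'))$ together with $d_Y(y,y')^\al\ge C^{-1}2^{-\al}(\zeta(y)\vee\zeta(y'))$ yields $|g(y)-g(y')|/d_Y(y,y')^\al\le 2^{\al+1}CC_f$. These two cases exhaust all pairs and give a uniform bound, so $g\in\Lip_\al(Y)$. For the little Lipschitz condition one only needs pairs with $d_Y(y,y')\to0$; for these $\rho_Y(y,y')\le d_Y(y,y')^\al\to0$, hence $\rho_X(x,x')\to0$ and the first case applies eventually, and — crucially — $d_X(x,x')\to0$ because $\vp^{-1}$ is uniformly continuous. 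Therefore $\theta(d_X(x,x'))$, $\eta_\xi(d_X(x,x'))$ and $\eta_\zeta(d_Y(y,y'))$ all tend to $0$, so $|g(y)-g(y')|/d_Y(y,y')^\al\to0$, which finishes the proof.

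The only genuine obstacle is the failure of the naive approach: $f/\xi$ is merely Lipschitz, not little Lipschitz, for the natural auxiliary metric, so the vanishing of the difference quotient of $g$ really depends on the hypothesis that $\vp$ is a \emph{uniform} homeomorphism — it is precisely uniform continuity of $\vp^{-1}$ that forces $d_X(x,x')\to0$ whenever $d_Y(y,y')\to0$, and hence resurrects the vanishing moduli of $f$, $\xi$ and $\zeta$ on the scales that matter.
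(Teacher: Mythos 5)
Your proof is correct and follows essentially the same route as the paper's: the three-term decomposition of $g(y)-g(y')$ you write is algebraically identical to the paper's inequality, and both arguments run on the same ingredients --- the bound $|f|\le C_f\,\xi$, the little-Lipschitz moduli of $f$ and of $t\mapsto t^\al\vee 1$, condition (\ref{lipeq1}), and uniform continuity of $\vp^{-1}$ to convert $d_Y(y,y')\to 0$ into $d_X(x,x')\to 0$. The only difference is your normalization $\xi(x)\le\xi(x')$, which puts the \emph{smaller} weight in the denominators and forces your (correct) auxiliary dichotomy; the paper avoids this by taking $\xi(x')\le\xi(x)$ so that the larger weight appears below.
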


\begin{proof}
There exists a bounded  function $\eta:[0,\infty)\to \R$, with $\eta(0) = 0$ and continuous at $0$, such that
\[|f(x) - f(x')| \leq \eta(d_X(x,x'))d_X(x,x')^\al \text{\ for all $x,x' \in X$}.\]
In particular, there exists $M < \infty$ such that $|f(x)| \leq M\xi(x)$ for all $x\in X$.
For any $y, y' \in Y$, let $x = \vp^{-1}(y)$ and $x' = \vp^{-1}(y')$. We may assume that $\xi(x') \leq \xi(x)$.  Then
\begin{align}\label{lipeq1.1}
|g(y)- g(y')| &\leq |f(x) - f(x')|\frac{\zeta(y)}{\xi(x)} + |f(x')|\zeta(y)\bigl|\frac{1}{\xi(x)} - \frac{1}{\xi(x')}\bigr|+\\ \notag
&\quad + \frac{|f(x')|}{\xi(x')}|\zeta(y) - \zeta(y')|.
\end{align}
Denote the three terms on the right of inequality (\ref{lipeq1.1}) by I, II and III respectively.
For term I, we find
\begin{align}\label{lipeq2}
\text{I} &\leq \eta(d_X(x,x'))d_X(x,x')^\al\frac{\zeta(y)}{\xi(x)}
= \eta(d_X(x,x'))\rho_X(x,x')\zeta(y)\\ \notag
&\leq \eta(d_X(x,x'))C\rho_Y(y,y')\zeta(y) \leq \eta(d_X(x,x'))Cd_Y(y,y')^\al.
\end{align}
For terms II and III, use the fact that the function $h:[0,\infty)\to\R$ given by $h(t) = t^\al \vee 1$ belongs to $\lip_\al[0,\infty)$.
Then
\begin{align}\label{lipeq3}
\text{II} &= \frac{|f(x')|}{\xi(x')}\frac{\zeta(y)}{\xi(x)}|h(d_X(x,e))-h(d_X(x',e))|\\ \notag
&\leq MC\frac{|h(d_X(x,e))-h(d_X(x',e))|}{d_X(x,x')^\al}d_Y(y,y')^\al\\ \notag
&\leq MC\frac{|h(d_X(x,e))-h(d_X(x',e))|}{|d_X(x,e)-d_X(x',e)|^\al}d_Y(y,y')^\al\\
\intertext{and} \label{lipeq4}
\text{III} &= \frac{|f(x')|}{\xi(x')}|h(d_Y(y,e'))-h(d_Y(y',e'))|\\ \notag
&\leq M\frac{|h(d_Y(y,e'))-h(d_Y(y',e'))|}{|d_Y(y,e')-d_Y(y',e')|^\al}\,d_Y(y,y')^\al.
\end{align}
Since $\vp^{-1}$ is uniformly continuous, $d_X(x,x')\to 0$ as $d_Y(y,y')\to 0$.  Since $h\in \lip_\al[0,\infty)$,
it follows readily from (\ref{lipeq1.1}), (\ref{lipeq2}), (\ref{lipeq3}) and (\ref{lipeq4}) that $g$ belongs to $\lip_\al(Y)$.
\end{proof}

\begin{thm}\label{lip9}
Let $X$ and $Y$ be complete metric spaces. If $T:\lip_\al(X)\to \lip_\al(Y)$ is an order isomorphism, then the associated homeomorphism $\vp:X\to Y$ is a uniform homeomorphism and there exists $1 \leq C < \infty$ satisfying (\ref{lipeq1}).
Conversely, if there is a uniform homeomorphism $\vp:X\to Y$ and a constant $1 \leq C <\infty$ satisfying (\ref{lipeq1}), then $\lip_\al(X)$ is order isomorphic to $\lip_\al(Y)$.
\end{thm}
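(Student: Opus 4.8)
The plan is to reduce at once to the normalized case: replacing $T$ by the map $f\mapsto Tf-T0$ — which is again an order isomorphism of $\lip_\al(X)$ onto $\lip_\al(Y)$, since $\lip_\al(Y)$ is a vector space containing $T0$, and which has the same associated homeomorphism $\vp$, both being determined by the sets $\ol{\{Tf<Tg\}}$ and these are unchanged by subtracting $T0$ — we may assume $T0=0$. Then $T$ lies in the standing setting of the latter part of this section and has the representation $Tf(y)=\Phi(y,f(\vp^{-1}(y)))$. By Proposition \ref{lip8} applied to $T$, the map $\vp$ is uniformly continuous; applying the same proposition to $T^{-1}$, whose associated homeomorphism is $\vp^{-1}$, shows $\vp^{-1}$ is uniformly continuous, so $\vp$ is a uniform homeomorphism. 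Next, Proposition \ref{lip7} applied to $T$ yields $C_1<\infty$ with $\rho_Y(\vp(x),\vp(x'))\le C_1\rho_X(x,x')$ for all $x,x'\in X$, and applied to $T^{-1}$ it yields $C_2<\infty$ with $\rho_X(\vp^{-1}(y),\vp^{-1}(y'))\le C_2\rho_Y(y,y')$ for all $y,y'\in Y$, i.e.\ $\rho_X(x,x')\le C_2\,\rho_Y(\vp(x),\vp(x'))$. Taking $C=\max\{C_1,C_2\}$ gives (\ref{lipeq1}).

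\textbf{Converse direction.} Given $\vp$ and $C$ as hypothesized, fix $e\in X$, put $e'=\vp(e)$, and define $\xi$, $\zeta$, $\rho_X$, $\rho_Y$ as above. Define $T:\lip_\al(X)\to\lip_\al(Y)$ by $Tf(y)=\dfrac{f(\vp^{-1}(y))}{\xi(\vp^{-1}(y))}\,\zeta(y)$; that $Tf\in\lip_\al(Y)$ for every $f\in\lip_\al(X)$ is precisely the content of Lemma \ref{lip8.1}. Inequality (\ref{lipeq1}) is symmetric: substituting $x=\vp^{-1}(y)$, $x'=\vp^{-1}(y')$ and rearranging gives $\frac1C\rho_Y(y,y')\le\rho_X(\vp^{-1}(y),\vp^{-1}(y'))\le C\rho_Y(y,y')$, and $\vp^{-1}$ is again a uniform homeomorphism. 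Hence Lemma \ref{lip8.1} applied with the roles of $X$ and $Y$ (and therefore of $\xi$ and $\zeta$) interchanged shows that $Sg(x)=\dfrac{g(\vp(x))}{\zeta(\vp(x))}\,\xi(x)$ defines a map $S:\lip_\al(Y)\to\lip_\al(X)$. A direct computation gives $ST=\mathrm{id}$ and $TS=\mathrm{id}$, so $T$ is a bijection, and since $\xi>0$ on $X$, $\zeta>0$ on $Y$, and $\vp$ is a bijection, $f_1\le f_2$ if and only if $Tf_1\le Tf_2$. Thus $T$ is a (in fact linear) order isomorphism, so $\lip_\al(X)\sim\lip_\al(Y)$.

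\textbf{Where the difficulty lies.} The substantive work is already packaged in the preceding Propositions \ref{lip7} and \ref{lip8} and in Lemma \ref{lip8.1}, so at this stage the argument is organizational rather than computational. The points needing care are the reduction to $T0=0$ together with the observation that the associated homeomorphism is unaffected; the passage from the one-sided distortion estimates that Proposition \ref{lip7} supplies for $T$ and for $T^{-1}$ separately to the single two-sided constant in (\ref{lipeq1}); and verifying that (\ref{lipeq1}) is genuinely symmetric under $\vp\leftrightarrow\vp^{-1}$, which is what allows Lemma \ref{lip8.1} to be invoked in both directions to produce mutually inverse maps.
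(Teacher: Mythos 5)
Your proof is correct and follows essentially the same route as the paper: the forward direction is exactly the paper's appeal to Propositions \ref{lip7} and \ref{lip8} (applied to both $T$ and $T^{-1}$, after the harmless normalization $T0=0$), and the converse is the paper's construction of the mutually inverse weighted composition operators via Lemma \ref{lip8.1} and the symmetry of (\ref{lipeq1}). The only difference is that you spell out the bookkeeping (the reduction to $T0=0$, the two one-sided estimates combining into one constant, and the symmetry check) that the paper leaves implicit.
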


\begin{proof}
The first statement of the theorem is a consequence of Propositions \ref{lip7} and \ref{lip8}.
Conversely, suppose that $\vp:X\to Y$ is a uniform homeomorphism such that (\ref{lipeq1}) holds.
By Lemma \ref{lip8.1}, the map $T$ defined by 
$Tf(y) = f(\vp^{-1}(y))\zeta(y)/\xi(\vp^{-1}(y))$ maps $\lip_\al(X)$ into $\lip_\al(Y)$.
By symmetry, the map $S$ defined by $Sg(x) = g(\vp(x))\xi(x)/\zeta(\vp(x))$ maps $\lip_\al(Y)$ into $\lip_\al(X)$.  Clearly, $T$ and $S$ are mutual inverses.  Since both maps obviously preserve pointwise order, $\lip_\al(X)$ is order isomorphic to $\lip_\al(Y)$.
\end{proof}

\section{Comparing spaces of the same type}

In this final section, we identify necessary and sufficient conditions for pairs of spaces of the same type to be order isomorphic.  We adopt the same convention as in subsection \ref{subsec6.2}.  Namely, $X$ and $Y$ will always denote metric spaces, with additional properties as given in Examples B or C, depending on the space being considered.  The metrics on both spaces will be denoted by $d$, even though they may differ.
We begin by making a list of the cases that are already known and/or follow readily from results obtained earlier in the paper.

\begin{thm}\label{thm8.1}
Let $X$ and $Y$ be metric spaces and assume that the relevant spaces satisfy the conditions given in Examples B and C.
\begin{enumerate}
\item \cite{C} $C_b(X) \sim C_b(Y) \iff C(X) \sim C(Y) \iff$ $X$ and $Y$ are homeomorphic.
\item (Theorem \ref{thm5.6}) $\Lip(X) \sim \Lip(Y)$ if and only if there is a homeomorphism $\vp: X\to Y$ satisfying condition (\ref{eq5.2}).
\item $\Lip_b(X) \sim \Lip_b(Y)$ if and only if there is a Lipschitz homeomorphism $\vp: (X,d\wedge 1) \to (Y, d\wedge 1)$.
\item (Theorem \ref{lip9}) $\lip_\al(X)\sim\lip_\al(Y)$ if and only if there is a uniform homeomorphism $\vp:X\to Y$ satisfying condition (\ref{lipeq1}).
\item $\lip_{\al,b}(X)\sim\lip_{\al,b}(Y)$ if and only if there is a Lipschitz homeomorphism $\vp: (X,d\wedge 1) \to (Y, d\wedge 1)$.
\item \cite{CC3} (see also Proposition \ref{prop58.2}) $U(X) \sim U(Y)$ if and only if $X$ and $Y$ are uniformly homeomorphic.
\item (Proposition \ref{prop57.1}) $U_b(X) \sim U_b(Y)$ if and only if $X$ and $Y$ are uniformly homeomorphic.
\end{enumerate}
\end{thm}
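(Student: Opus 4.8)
The plan is to read part (g) off the general machinery already developed, with Proposition \ref{prop57.1}(b) supplying the one nontrivial input. The ``if'' direction is routine: given a uniform homeomorphism $\vp\colon X\to Y$, the map $f\mapsto f\circ\vp^{-1}$ is a linear order isomorphism of $U_b(X)$ onto $U_b(Y)$, since composing a bounded uniformly continuous function with a uniformly continuous map yields a bounded uniformly continuous function, and composition with $\vp$ provides the inverse.

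For the ``only if'' direction, let $T\colon U_b(X)\to U_b(Y)$ be an order isomorphism. By Examples B(b) both $U_b(X)$ and $U_b(Y)$ are near vector lattices satisfying condition ($\spadesuit$) --- recall that the standing convention of this section makes $X$ and $Y$ complete, and that $U_b$ requires no uniform-separation hypothesis --- so Theorem \ref{thm27} applies and produces the associated homeomorphism $\vp\colon X\to Y$. I would then apply Proposition \ref{prop57.1}(b) with $A(X)=U_b(X)$ and target space $U_b(Y)$ to conclude that $\vp$ is uniformly continuous, and apply the same proposition to the order isomorphism $T^{-1}\colon U_b(Y)\to U_b(X)$ to conclude that $\vp^{-1}$ is uniformly continuous as well. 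Hence $\vp$ is a uniform homeomorphism, which is exactly what is claimed.

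The only point needing care --- and it is bookkeeping rather than a genuine obstacle, all the hard analysis being already contained in Lemma \ref{lem56} and the proof of Proposition \ref{prop57.1} --- is to verify that the homeomorphism associated with $T^{-1}$ is precisely $\vp^{-1}$, so that Proposition \ref{prop57.1}(b) really does deliver information about the same map. This follows because the point maps produced in Theorem \ref{thm8} for an order isomorphism and for its inverse are mutual inverses (Proposition \ref{prop6.1}), and this relation is preserved through the compactification and restriction steps used in Theorems \ref{thm18.1} and \ref{thm27}. Once this is in hand the proof is complete.
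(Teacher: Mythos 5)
Your proof is correct and follows essentially the same route the paper intends for part (g): the statement itself cites Proposition \ref{prop57.1}, and the argument is precisely to apply part (b) of that proposition to both $T$ and $T^{-1}$ (noting that their associated homeomorphisms are mutual inverses) to get a uniform homeomorphism, with the converse being the obvious composition operator. Your added remarks on the completeness convention and on why the homeomorphism associated to $T^{-1}$ is $\vp^{-1}$ are accurate and fill in exactly the bookkeeping the paper leaves implicit.
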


\begin{proof}
(a) If $C_b(X)\sim C_b(Y)$, then $C(X) \sim C(Y)$ by Proposition \ref{prop34}.  
If $C(X) \sim C(Y)$, then $X$ and $Y$ are homeomorphic by Theorem \ref{thm29}.
Obviously $C_b(X)\sim C_b(Y)$ if $X$ and $Y$ are homeomorphic.

\noindent(c) Since $\Lip_b(X) = \Lip(X, d\wedge 1)$ and $\Lip_b(Y) = \Lip(Y,d\wedge 1)$, the result follows from \cite[Theorem 1]{CC1}.

\noindent(e) Since $\lip_{\al,b}(X) = \lip_\al(X,d\wedge 1)$ and $\lip_{\al,b}(Y) = \lip_\al(Y,d\wedge 1)$, 
$\lip_{\al,b}(X)\sim\lip_{\al,b}(Y)$ if and only if there is a uniform homeomorphism $\vp:(X,d\wedge 1)\to (Y,d\wedge 1)$ satisfying  condition (\ref{lipeq1}) with respect to the metrics $d\wedge 1$ on $X$ and $Y$ respectively.  But for bounded metrics, condition (\ref{lipeq1}) is equivalent to the fact the $\vp:(X,d\wedge 1)\to (Y,d\wedge 1)$ is a Lipschitz homeomorphism.  In this case, $\vp$ is automatically a uniform homeomorphism.
\end{proof}

A map $\vp:X\to Y$ is {\em locally Lipschitz} if for any $x\in X$, there is an open neighborhood $U$ of $x$ such that $\vp$ is Lipschitz on the set $U$.
If $\vp:X\to Y$ is a bijection so that both  $\vp$ and $\vp^{-1}$ are locally Lipschitz, then $\vp$ is a {\em local Lipschitz homeomorphism}.

\begin{thm}\label{thm8.2}
Let $X$ and $Y$ be metric spaces.  Then the following are equivalent.
\begin{enumerate}
\item $\Lip^\loc(X)\sim \Lip^\loc(Y)$.
\item $\lip^\loc_\al(X)\sim \lip^\loc_\al(Y)$.
\item $X$ and $Y$ are locally Lipschitz homeomorphic.
\end{enumerate}
\end{thm}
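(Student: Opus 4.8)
The plan is to prove $(c)\Rightarrow(a)$ and $(c)\Rightarrow(b)$ directly, and then $(a)\Rightarrow(c)$, $(b)\Rightarrow(c)$ by a common argument that reduces to the compact-space results of Cabello Sánchez and Cabello Sánchez. For $(c)\Rightarrow(a)$ and $(c)\Rightarrow(b)$: if $\vp\colon X\to Y$ is a local Lipschitz homeomorphism, then $f\mapsto f\circ\vp$ is a linear order isomorphism of $\Lip^\loc(Y)$ onto $\Lip^\loc(X)$, and of $\lip^\loc_\al(Y)$ onto $\lip^\loc_\al(X)$. The only point needing verification is that $f\circ\vp$ again has the relevant local property: given $x_0$, shrink a neighbourhood of $x_0$ so that $\vp$ is Lipschitz there (with some constant $K$) and maps it into a neighbourhood of $\vp(x_0)$ on which $f$ is Lipschitz (respectively little $\al$-Lipschitz), and then the estimate using $d(\vp(x),\vp(x'))\le K\,d(x,x')$ transfers the property to $f\circ\vp$. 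Since $\vp^{-1}$ is also a local Lipschitz homeomorphism, $g\mapsto g\circ\vp^{-1}$ is the two-sided inverse, and order is manifestly preserved in both directions; as $\sim$ is symmetric this yields $(a)$ and $(b)$.

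For $(a)\Rightarrow(c)$ and $(b)\Rightarrow(c)$ I would argue in parallel. Let $T$ be an order isomorphism between the two spaces, say $T\colon\Lip^\loc(X)\to\Lip^\loc(Y)$; we may assume $T0=0$. By Examples B (a) and C (c) both spaces are near vector lattices, equal to their own localizations and satisfying $(\heartsuit)$, so Theorems \ref{thm27} and \ref{thm29} produce a homeomorphism $\vp\colon X\to Y$ and a function $\Phi\colon Y\times\R\to\R$ with each $\Phi(y,\cdot)$ an increasing homeomorphism of $\R$, such that $Tf(y)=\Phi(y,f(\vp^{-1}(y)))$ for all $f$ and all $y$. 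It suffices to show $\vp$ is locally Lipschitz, since applying the same to $T^{-1}$ then handles $\vp^{-1}$. Suppose $\vp$ is not Lipschitz on any neighbourhood of some $x_0\in X$; picking, for each $n$, points of $B(x_0,1/n)$ witnessing Lipschitz constant $>n$, I obtain sequences $(x_n),(x'_n)$ converging to $x_0$, with $x_n\neq x'_n$ and $d(\vp(x_n),\vp(x'_n))/d(x_n,x'_n)\to\infty$. Put $X_0=\{x_0\}\cup\{x_n:n\}\cup\{x'_n:n\}$ and $Y_0=\vp(X_0)$; both are compact metric spaces. In the $\lip^\loc_\al$ case I would first pass to a subsequence so that, by Proposition \ref{lip2.1} and the remark following it (only the case of sequences converging in $X$, resp.\ $Y$, is invoked, so the completeness hypothesis there plays no role), every function in $\lip_\al(\ol{X_0})$ extends to $\lip^\loc_\al(X)$ and every function in $\lip_\al(\ol{Y_0})$ extends to $\lip^\loc_\al(Y)$.

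The key step is a reduction to the compact case. Since $X_0$ is compact, a locally Lipschitz function on $X$ restricts to a Lipschitz function on $X_0$ (cover $X_0$ by finitely many sets on which the function is Lipschitz and use a Lebesgue number), while any Lipschitz function on $X_0$ extends, by the McShane theorem \cite[Theorem 1.5.6]{W}, to a function in $\Lip(X)\subseteq\Lip^\loc(X)$; hence $\{\,f|_{X_0}:f\in\Lip^\loc(X)\,\}=\Lip(X_0)$, which equals $\Lip^\loc(X_0)$ on the compact space $X_0$. The same identities hold in the $\lip^\loc_\al$ case (the inclusion $\supseteq$ coming from the extension arranged above, and $\lip_\al(X_0)=\lip^\loc_\al(X_0)$ from Lemma \ref{lem45}), and likewise for $Y_0$. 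Proposition \ref{lip3} then gives an order isomorphism $T_0\colon\Lip(X_0)\to\Lip(Y_0)$ (resp.\ $\lip_\al(X_0)\to\lip_\al(Y_0)$) whose associated homeomorphism is $\vp|_{X_0}$. As $X_0$ and $Y_0$ are compact metric spaces, Theorem \ref{thm5.5} (resp.\ Theorem \ref{thm7.1}, which applies because $\lip_\al$ always separates points boundedly) shows that $\vp|_{X_0}$ is a Lipschitz homeomorphism of $X_0$ onto $Y_0$ — the Lipschitz constant taken with respect to $d$, resp.\ to $d^\al$, which again gives a Lipschitz bound for $d$. In particular $d(\vp(x_n),\vp(x'_n))\le K\,d(x_n,x'_n)$ for a fixed $K$ and all $n$, contradicting the choice of $(x_n),(x'_n)$; this completes $(a)\Rightarrow(c)$ and $(b)\Rightarrow(c)$.

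The main obstacle is the identification in the third paragraph: that restricting $\Lip^\loc(X)$ (resp.\ $\lip^\loc_\al(X)$) to the compact set $X_0$ yields exactly $\Lip(X_0)$ (resp.\ $\lip_\al(X_0)$). The inclusion $\subseteq$ is a routine compactness/Lebesgue-number argument, but the inclusion $\supseteq$, trivial for $\Lip$, forces one in the $\lip_\al$ case to run the little-Lipschitz extension machinery (Proposition \ref{lip2.1} and its predecessors) and to check carefully that the relevant case of that machinery goes through here, where the sequences converge in $X$ and $Y$. Once this bookkeeping is in place, the problem collapses onto the known compact-space theorems quoted above.
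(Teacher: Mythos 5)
Your proposal is correct and follows essentially the same route as the paper: assume the associated homeomorphism fails to be locally Lipschitz, restrict to the compact set formed by the witnessing sequences and their limit, identify the restricted spaces with $\Lip(X_0)$ resp.\ $\lip_\al(X_0)$ via McShane/Proposition \ref{lip2.1} and a compactness argument, and invoke Proposition \ref{lip3} together with the compact-space theorems of \cite{CC1} to reach a contradiction. Your added details (explicit appeal to Theorems \ref{thm27} and \ref{thm29}, the Lebesgue-number step, and the observation that completeness is not needed in the convergent-sequence case of Proposition \ref{lip2.1}) are correct elaborations of steps the paper leaves implicit.
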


\begin{proof}
Suppose that $\vp: X\to Y$ is a homeomorphism that is not locally Lipschitz.
There exist $x_0 \in X$ and sequences $(x_n)$, $(x_n')$ converging to $x_0$ such that $x_n \neq x'_n$ for all $n$ and $d(\vp(x_n),\vp(x_n'))/d(x_n,x'_n) \to \infty$.
Set $y_n = \vp(x_n)$, $y_n' = \vp(x'_n)$, $y_0 = \vp(x_0)$, $X_0 = (x_n)\cup(x'_n)\cup\{x_0\}$ and $Y_0 = (y_n)\cup(y'_n) \cup\{y_0\}$. By Proposition \ref{lip2.1} and the subsequent remark, we may assume that every $f\in \lip_\al(X_0)$ extends to a function in $\lip_\al(X)$.

Suppose that $f\in \lip_\al(X_0)$, respectively, $\Lip(X_0)$.  Either by choice of $X_0$ or by \cite[Theorem 1.5.6]{W}, $f$ extends to a function $\ti{f} \in \lip_\al(X)$, respectively, $\Lip(X)$. In particular, $\ti{f} \in \lip^\loc_\al(X)$, respectively, $\Lip^\loc(X)$.  
Conversely, if $h \in \lip^\loc_\al(X)$, respectively,  $\Lip^\loc(X)$, then $h_{|X_0}\in \lip^\loc_\al(X_0)$, respectively, $\Lip^\loc(X_0)$.  Since $X_0$ is compact, $h_{|X_0} \in \lip_\al(X_0)$, respectively, $\Lip(X_0)$.
This proves that
\[ \lip_\al(X_0) =\{f_{|X_0}: f\in \lip^\loc_\al(X)\} \text{ and } \Lip(X_0) = \{f_{|X_0}: f\in \Lip^\loc(X)\}.\]
Similar equalities hold with $X_0$ and $X$ replaced by $Y_0$ and $Y$ respectively.
It follows from Proposition \ref{lip3} that there is an order isomorphism $\lip_\al(X_0)\sim \lip_\al(Y_0)$, respectively, $\Lip(X_0) \sim \Lip(Y_0)$, whose associated homeomorphism is $\vp:X_0\to Y_0$. 
By \cite[Theorem 2]{CC1}, respectively, \cite[Theorem 1 ]{CC1}, $\vp:X_0\to Y_0$ is a Lipschitz homeomorphism, contrary to the choices of $(x_n)$ and $(x'_n)$.
This completes the proof of (a) $\implies$ (c) and (b)$\implies$ (c).
Clearly, (c) $\implies$ (a) and (b).
\end{proof}

\begin{cor}\label{cor8.3}
Let $X$ and $Y$ be metric spaces.  The following are equivalent.
\begin{enumerate}
\item $\Lip^\loc_b(X)\sim \Lip^\loc_b(Y)$.
\item $\lip^\loc_{\al,b}(X)\sim \lip^\loc_{\al,b}(Y)$.
\item $X$ and $Y$ are locally Lipschitz homeomorphic.
\end{enumerate}
\end{cor}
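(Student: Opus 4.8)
The plan is to reduce Corollary \ref{cor8.3} to Theorem \ref{thm8.2} via Corollary \ref{cor35}, rather than reworking the localization argument from scratch. Suppose first that $T:\Lip^\loc_b(X)\to\Lip^\loc_b(Y)$ is an order isomorphism; the case $\lip^\loc_{\al,b}(X)\sim\lip^\loc_{\al,b}(Y)$ is treated the same way, with $\lip_\al$ in place of $\Lip$ throughout. By Examples B(a), $\Lip^\loc_b(X)$ and $\Lip^\loc_b(Y)$ are near vector lattices of the form $A^\loc_b$, and by Examples C(c) they satisfy condition ($\heartsuit$). Hence Theorem \ref{thm27} yields an associated homeomorphism $\vp:X\to Y$, and Proposition \ref{prop34} applies with $G = X$, so that $A(Y)$ satisfies ($\heartsuit_y$) on $\vp(G)=Y$.

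With $G = X$, Corollary \ref{cor35} then produces an order isomorphism $S$ from $A^\loc(X)$ onto $A^\loc(\vp(X)) = A^\loc(Y)$, where $A = \Lip^\loc_b$ and $A^\loc(X)$ is taken in the sense of the paragraph preceding Corollary \ref{cor35}. The one point requiring a short check is the identification $A^\loc(X) = \Lip^\loc(X)$: a function that agrees locally with members of $\Lip^\loc_b(X)$ is locally Lipschitz, and conversely, given $f\in\Lip^\loc(X)$ and $x_0\in X$ with $f = g$ near $x_0$ for some $g\in\Lip(X)$, the truncation $g' = (g\wedge M)\vee(-M)$ with $M>|g(x_0)|$ lies in $\Lip_b(X)\subseteq\Lip^\loc_b(X)$ and still agrees with $f$ on a smaller neighbourhood of $x_0$, so $f\in A^\loc(X)$. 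The same truncation works in the little-Lipschitz case, since composing a little-Lipschitz function with a Lipschitz function gives a little-Lipschitz function. Thus $\Lip^\loc(X)\sim\Lip^\loc(Y)$, and Theorem \ref{thm8.2} gives that $X$ and $Y$ are locally Lipschitz homeomorphic; likewise $\lip^\loc_{\al,b}(X)\sim\lip^\loc_{\al,b}(Y)$ forces $\lip^\loc_\al(X)\sim\lip^\loc_\al(Y)$ and hence the same conclusion by Theorem \ref{thm8.2}. This proves (a) $\Rightarrow$ (c) and (b) $\Rightarrow$ (c).

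For the converse, if $\vp:X\to Y$ is a local Lipschitz homeomorphism, then $f\mapsto f\circ\vp^{-1}$ is a linear order isomorphism of $\Lip^\loc_b(X)$ onto $\Lip^\loc_b(Y)$: boundedness is preserved, and near any $y\in Y$ the map $\vp^{-1}$ is Lipschitz while $f$ is Lipschitz near $\vp^{-1}(y)$, so $f\circ\vp^{-1}$ is Lipschitz near $y$. The same map serves for $\lip^\loc_{\al,b}$, because on a neighbourhood where $\vp^{-1}$ is $L$-Lipschitz one has $|f(\vp^{-1}(y))-f(\vp^{-1}(y'))|/d(y,y')^\al \le L^\al\,|f(\vp^{-1}(y))-f(\vp^{-1}(y'))|/d(\vp^{-1}(y),\vp^{-1}(y'))^\al \to 0$ as $d(y,y')\to 0$. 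I expect the only real work in the whole argument to be the bookkeeping of the second paragraph — verifying that Proposition \ref{prop34} and Corollary \ref{cor35} are applicable and identifying $A^\loc(X)$ with $\Lip^\loc(X)$ (resp. $\lip^\loc_\al(X)$); once that is in place the statement follows at once from Theorem \ref{thm8.2}.
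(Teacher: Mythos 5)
Your proposal is correct and follows essentially the same route as the paper: pass from $\Lip^\loc_b(X)\sim\Lip^\loc_b(Y)$ (resp.\ $\lip^\loc_{\al,b}$) to $\Lip^\loc(X)\sim\Lip^\loc(Y)$ (resp.\ $\lip^\loc_\al$) via Corollary \ref{cor35}, then invoke Theorem \ref{thm8.2}, with the converse handled by the composition operator. The only difference is that you spell out the identification $A^\loc(X)=\Lip^\loc(X)$ via truncation, a detail the paper leaves implicit.
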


\begin{proof}
If $\Lip^\loc_b(X)\sim \Lip^\loc_b(Y)$, then $\Lip^\loc(X)\sim \Lip^\loc(Y)$ by Corollary \ref{cor35}.
Thus $X$ and $Y$ are locally Lipschitz homeomorphic by Theorem \ref{thm8.2}.
This proves that (a) $\implies$ (c).  Similarly, (b) $\implies$ (c).  Clearly, (c) $\implies$ (a) and (b).
\end{proof}

A map $\vp:X\to Y$ is {\em locally uniformly continuous} if for all $x\in X$, there exists an open neighborhood $U$ of $x$ such that $\vp$ is uniformly continuous on $U$. If $\vp:X\to Y$ is a bijection so that both $\vp$ and $\vp^{-1}$ are locally uniformly continuous, then $\vp$ is a {\em local uniform homeomorphism}.

\begin{thm}\label{thm8.4}
Let $X$ and $Y$ be  metric spaces.
Then $U^\loc(X)\sim U^\loc(Y)$ if and only if $X$ and $Y$ are locally uniformly homeomorphic.
\end{thm}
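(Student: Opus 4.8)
Suppose $\vp:X\to Y$ is a local uniform homeomorphism. I would check that $f\mapsto f\circ\vp^{-1}$ is an order isomorphism of $U^\loc(X)$ onto $U^\loc(Y)$. Given $f\in U^\loc(X)$ and $y_0\in Y$, put $x_0=\vp^{-1}(y_0)$, pick an open $V\ni x_0$ on which $f$ is uniformly continuous and an open $W\ni y_0$ on which $\vp^{-1}$ is uniformly continuous. On the open neighbourhood $W\cap\vp(V)$ of $y_0$ the map $\vp^{-1}$ takes values in $V$ and is uniformly continuous, so $f\circ\vp^{-1}$ is uniformly continuous there; since a uniformly continuous real-valued function on a subset of a metric space extends to a uniformly continuous function on the whole space, $f\circ\vp^{-1}$ agrees on that neighbourhood with a member of $U(Y)$, whence $f\circ\vp^{-1}\in U^\loc(Y)$. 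Symmetrically $g\mapsto g\circ\vp$ carries $U^\loc(Y)$ into $U^\loc(X)$, and the two maps are mutually inverse order-preserving bijections.

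\textbf{The ``only if'' direction: reduction.} By Examples B(a) and C(b),(c), $U^\loc(X)$ is a near vector lattice with $U^\loc(X)=(U^\loc(X))^\loc$ satisfying condition $(\heartsuit)$, and likewise for $Y$. Given an order isomorphism $T$, Theorem \ref{thm27} yields a homeomorphism $\vp:X\to Y$, and Proposition \ref{prop34}/Corollary \ref{cor35} extend $T$ to an order isomorphism $S:C(X)\to C(Y)$ which, for every subset $G$ of $X$, restricts to an order isomorphism of $U^\loc(G)$ onto $U^\loc(\vp(G))$ with associated point map $\vp|_G$; here $U^\loc(G)$ is the space of locally uniformly continuous functions on $(G,d|_G)$, which coincides with $(U^\loc(X))^\loc(G)$ (a uniformly continuous function on a relatively open subset of $G$ extends through $X$ preserving uniform continuity). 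It therefore suffices to prove that $\vp$ is a local uniform homeomorphism, and by symmetry it is enough to show that $\vp$ is locally uniformly continuous. Two preliminary facts I would record: (i) every continuous map from a proximally compact metric space into a metric space is uniformly continuous (immediate from the definition, exactly as in Lemma \ref{lem45}); (ii) if a metric space $Z$ is locally proximally compact then $U^\loc(Z)=C(Z)$, since each point has a proximally compact closed ball neighbourhood on which $C=U$ by (i).

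\textbf{The locally proximally compact case.} Note first that $X$ is locally proximally compact if and only if $Y$ is: if $Y$ were locally proximally compact then $U^\loc(Y)=C(Y)$ by (ii), and then $U^\loc(X)\sim C(Y)$ forces $X$ locally proximally compact by Theorem \ref{thm43}(b); symmetrically for the converse. Assume both are. Fix $x_0\in X$ and a proximally compact closed ball $B=\ol{B(x_0,r)}$. Then $U^\loc(B)=C(B)$ by (ii), so the restriction of $S$ gives $C(B)\sim U^\loc(\vp(B))$, whence $\vp(B)$ is locally proximally compact by Theorem \ref{thm43}(b). Since $\vp|_B:B\to Y$ is continuous and $B$ is proximally compact, fact (i) makes $\vp|_B$ uniformly continuous, so $\vp$ is uniformly continuous on $B(x_0,r)$. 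Thus $\vp$, and by symmetry $\vp^{-1}$, is locally uniformly continuous, finishing this case.

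\textbf{The remaining case and the main obstacle.} The hard case is when neither $X$ nor $Y$ is locally proximally compact, so that $U^\loc(X)\subsetneq C(X)$ and $U^\loc(Y)\subsetneq C(Y)$. Suppose, for contradiction, that $\vp$ is not uniformly continuous on any ball $B(x_0,r)$. Using continuity of $\vp$ at $x_0$ one extracts, at each scale $r=1/k$, sequences $(u^k_m)_m,(v^k_m)_m$ inside $B(x_0,1/k)$ with $d(u^k_m,v^k_m)\to 0$, no convergent subsequence, and $d(\vp u^k_m,\vp v^k_m)\ge\ep_k>0$; the pairs at scale $1/k$ are bounded away from $x_0$ by a positive amount $\delta_k$ (necessarily $\delta_k\to0$ and $\ep_k\to0$), and one may arrange that their $\vp$-images lie in a fixed bounded ball about $\vp(x_0)$. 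The plan is to imitate the function-gluing constructions in the proofs of Theorems \ref{thm41}, \ref{thm43} and \ref{thm57}: build a single $g\in U^\loc(Y)$, supported on shrinking disjoint bumps around the points $\vp u^k_m$ and vanishing at all the $\vp v^k_m$, with the bump heights $c_k$ chosen positive but decaying fast enough (relative to $\ep_k$) that $g$ is still locally uniformly continuous near $\vp(x_0)$ — this is possible precisely because the points $\vp u^k_m$ do not accumulate and stay $\ep_k$-separated from the $\vp v^k_m$ at each scale — while $T^{-1}g$ takes a value bounded below by some $c'_k>0$ on $u^k_m$ and the value $0$ on $v^k_m$ for every $m$. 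Since these pairs lie in balls of arbitrarily small radius about $x_0$ with $d(u^k_m,v^k_m)\to0$, $T^{-1}g$ is not uniformly continuous on any neighbourhood of $x_0$, so $T^{-1}g\notin U^\loc(X)$, contradicting $T^{-1}g\in U^\loc(X)$. The delicate points, and the real work of the proof, are: (a) choosing the scales and bump heights so that $g$ genuinely lands in $U^\loc(Y)$ (one must defeat the continuity of $T^{-1}g$ at $x_0$ only through failure of \emph{uniform} continuity, never of continuity, and control $\Phi(\cdot,c_k)=T(c_k)$ on the relevant region); and (b) the bookkeeping when the sequences admit no uniformly separated subsequences, i.e.\ when the ambient pieces are not complete, which I would handle by passing to completions of the bounded pieces or by an extra diagonalization. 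This step is where the peculiar feature of $U^\loc$ — its insensitivity to behaviour along non-accumulating sequences — must be exploited with care, and it is the chief difficulty.
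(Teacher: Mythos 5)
Your skeleton for the hard direction is the right one, but the proof is not complete: you explicitly defer what you yourself call ``the real work'' --- producing a single $g\in U^\loc(Y)$ with bumps over the points $\vp(u^k_m)$, vanishing at the $\vp(v^k_m)$, whose heights are ``chosen\dots so that $g$ genuinely lands in $U^\loc(Y)$'' --- and you do not say how to choose those heights. That is precisely the step the paper supplies, and it needs no delicate balancing against the separation constants $\ep_k$ at all. Writing $Tf(y)=\Phi(y,f(\vp^{-1}(y)))$ with $T0=0$, prescribe the value of $g$ at $\vp(u^n_k)$ to be $\Phi(\vp(u^n_k),1/n)$, i.e.\ the value of $T(1/n)$ there, so that $T^{-1}g(u^n_k)=1/n$ and $T^{-1}g(v^n_k)=0$ automatically. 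The height control then comes for free from continuity of $\Phi(\cdot,1/n)=T(1/n)$ at $y_0$: choose $R_n<r_{n-1}$ with $\Phi(y,1/n)<2\Phi(y_0,1/n)$ on $B(y_0,R_n)$, draw the $n$-th pair of sequences from $\vp^{-1}(B(y_0,R_n/2))$, discard finitely many terms so that their images avoid $B(y_0,2r_n)$ for some $r_n>0$ (possible since neither image sequence has a convergent subsequence), and support $g_n$ in $\Ann(y_0,r_n,R_n)$ with $\|g_n\|_\infty\le 2\Phi(y_0,1/n)$. The $g_n$ are disjointly supported uniformly continuous functions with $\|g_n\|_\infty\to 0$ (as $\Phi(y_0,0)=0$), so $g=\sum g_n$ is uniformly continuous on all of $Y$ --- in particular continuous at $y_0$, which disposes of your worry (a) --- while $T^{-1}g$ oscillates between $1/n$ and $0$ on pairs with $d(u^n_k,v^n_k)\to 0$ inside every neighbourhood of $x_0$. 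Until a choice of heights with this property is exhibited, the contradiction is not established. (Your worry (b) is a fair one: building $g_n$ with the prescribed values uses a separated subsequence of $(\vp(u^n_k))_k\cup(\vp(v^n_k))_k$, which requires these sequences to have no Cauchy subsequence, not merely no convergent one; the paper glosses over this point too.)

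Two smaller remarks. The case division on local proximal compactness is unnecessary: the construction above works uniformly, and your first case is subsumed by it (though the observation that continuous maps on proximally compact spaces are automatically uniformly continuous is correct and is essentially the argument of Lemma \ref{lem45}). In the ``if'' direction, the blanket claim that a uniformly continuous real-valued function on a subset extends to a uniformly continuous function on the whole space is false for unbounded functions ($n\mapsto n^2$ on $\Z\subseteq\R$ admits no uniformly continuous extension to $\R$); first shrink the neighbourhood of $y_0$ so that $f\circ\vp^{-1}$ is bounded there, and then apply the extension theorem for bounded uniformly continuous functions.
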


\begin{proof}
Let $T: U^\loc(X)\to U^\loc(Y)$ be an order isomorphism such that $T0=0$.
Represent $T$ as $Tf(y) = \Phi(y,f(\vp^{-1}(y)))$ for all $f\in U^\loc(X)$ and all $y\in Y$, where $\vp:X\to Y$ is a homeomorphism and $\Phi(y,\cdot):\R\to \R$ is an increasing homeomorphism for all $y\in Y$.
Suppose that $\vp$ is not locally uniformly continuous.
There exists $x_0 \in X$ such that $\vp$ is not uniformly continuous on any open neighborhood of $x_0$.
Set $y_0 = \vp(x_0)$ and $r_0 = 1$.  Assume that $r_{n-1}> 0$ has been chosen for some $n\in \N$.
Since $\Phi(y,1/n)$ is a continuous function of $y$ and $\Phi(y_0,1/n) > 0$, there exists $0 < R_n < r_{n-1}$ such that $\Phi(y,1/n) < 2 \Phi(y_0,1/n)$ for all $y\in B(y_0,R_n)$.
As $\vp$ is not uniformly continuous on $\vp^{-1}(B(y_0,R_n/2))$, there are sequences $(u^n_k)_k$, $(v^n_k)_k$ in $\vp^{-1}(B(y_0,R_n/2))$ such that $d(u^n_k,v^n_k)\to 0$ and $\inf_kd(\vp(u^n_k),\vp(v^n_k)) > 0$.
Obviously, neither $(\vp(u^n_k))_k$ nor $(\vp(v^n_k))_k$ can have a convergent subsequence.  By using further subsequences, we may assume that $(\vp(u^n_k))_k\cup (\vp(v^n_k))_k$ is separated and that 
 there exists $r_n > 0$ such that $d(\vp(u^n_k),y_0), d(\vp(v^n_k),y_0) > 2r_n$ for all $k$.
Since $(\Phi(\vp(u^n_k),1/n))_k$ is bounded above by $2\Phi(y_0, 1/n)$, 
there exists a uniformly continuous function $g_n:Y\to \R$ such that $g_n(y) = 0$ if $y \notin \Ann(y_0,r_n,R_n)$, $g_n(\vp(u^n_k)) = \Phi(\vp(u^n_k),1/n)$, $g_n(\vp(v^n_k)) = 0$ for all $k$ and $\|g_n\|_\infty \leq 2\Phi(y_0,1/n)$.
Note that $\Phi(y_0,1/n) \to 0$ and the functions $(g_n)$ are pairwise disjoint.
Hence the pointwise sum $g = \sum g_n$ is well-defined and uniformly continuous on $Y$.
In particular, $g\in U^\loc(Y)$.
Therefore, $f = T^{-1}g \in U^\loc(X)$.
However, for all $n$ and $k$, $f(u^n_k) = 1/n$ and $f(v^n_k) = 0$.
It follows that $f$ is not locally uniformly continuous on any neighborhood of $x_0$.
This completes the proof of the ``only if" part of the theorem.
The converse is clear.
\end{proof}

Arguing as in Corollary \ref{cor8.3}, we have the following corollary.

\begin{cor}\label{cor8.5}
Let $X$ and $Y$ be  metric spaces.
Then $U^\loc_b(X)\sim U^\loc_b(Y)$ if and only if $X$ and $Y$ are locally uniformly homeomorphic.
\end{cor}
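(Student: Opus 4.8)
The plan is to transcribe the proof of Corollary \ref{cor8.3} almost verbatim, replacing ``Lipschitz'' by ``uniformly continuous'' throughout: reduce the bounded local case to the unbounded local case settled in Theorem \ref{thm8.4}, using the extension machinery of Section 6. First I would record that the spaces in play satisfy the needed hypotheses. By Examples B(a), $U^\loc_b(X)$ and $U^\loc_b(Y)$ are near vector lattices with $A(X)=A^\loc_b(X)$, and by Examples C(c) they satisfy condition $(\heartsuit)$. Hence, given an order isomorphism $T\colon U^\loc_b(X)\to U^\loc_b(Y)$, Theorem \ref{thm27} produces an associated homeomorphism $\vp\colon X\to Y$, and Proposition \ref{prop34} together with Corollary \ref{cor35}, applied with $G=X$, extend $T$ to an order isomorphism $S$ from $U^\loc(X)=(U^\loc_b(X))^\loc$ onto $U^\loc(Y)$; the last clause of Corollary \ref{cor35} (both spaces consist of bounded functions) is what guarantees that $S$ carries $U^\loc_b(X)$ back onto $U^\loc_b(Y)$, so that $S$ genuinely extends $T$. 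Thus $U^\loc(X)\sim U^\loc(Y)$, and Theorem \ref{thm8.4} yields that $X$ and $Y$ are locally uniformly homeomorphic.

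For the converse, suppose $\psi\colon X\to Y$ is a local uniform homeomorphism. I would verify that the composition operator $f\mapsto f\circ\psi^{-1}$ is an order isomorphism from $U^\loc_b(X)$ onto $U^\loc_b(Y)$: it is an order-preserving bijection with order-preserving inverse $g\mapsto g\circ\psi$, it clearly preserves boundedness, and it preserves local uniform continuity because $\psi^{-1}$ (resp.\ $\psi$) is locally uniformly continuous and the composition of a bounded locally uniformly continuous function with a locally uniformly continuous map is again locally uniformly continuous. This gives $U^\loc_b(X)\sim U^\loc_b(Y)$, completing the equivalence.

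I do not anticipate a genuine obstacle here; the only points needing (routine) care are the identifications $(U^\loc_b(X))^\loc=U^\loc(X)$ and $(U^\loc_b(X))^\loc_b=U^\loc_b(X)$ required to invoke Corollary \ref{cor35} with $A(X)=U^\loc_b(X)$, and the check that the extension $S$ it provides restricts to the original $T$. As the authors indicate, the corollary follows by exactly the argument used for Corollary \ref{cor8.3}, with Theorem \ref{thm8.4} in place of Theorem \ref{thm8.2}.
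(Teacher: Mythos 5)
Your proposal is correct and follows exactly the route the paper takes: the paper proves Corollary \ref{cor8.5} by "arguing as in Corollary \ref{cor8.3}," i.e., using Corollary \ref{cor35} to pass from $U^\loc_b(X)\sim U^\loc_b(Y)$ to $U^\loc(X)\sim U^\loc(Y)$ and then invoking Theorem \ref{thm8.4}, with the converse handled by the obvious composition operator. The routine checks you flag (that $(U^\loc_b(X))^\loc=U^\loc(X)$ and that the extension restricts to $T$) are exactly the ones the paper leaves implicit.
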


The tables below summarize the comparison results obtained in \S 6 - 8.  The numbers refer to the theorems where the relevant results are found.  Note that the blank boxes are left unfilled by nature of symmetry.  The open cases are indicated with ``?"; see the introduction.

\vspace{1in}

\begin{tabular}{||p{1.5cm}||p{1.5cm}|p{1.5cm}|p{1.5cm}|p{1.5cm}|p{1.5cm}||}
 \hline\hline &\ \ \ \ \   $C^p(Y)$ &\ \ \ \ \   $C^p_b(Y)$ & \ \ \ \ \  $C^p(\ol{Y})$ &\ \ \ \ \   $C^p_b(\ol{Y})$ &\ \ \ \ \   $C^p_*(\ol{Y})$\\ \hline\hline
$C^p(X)$ &?&6.7(b)&6.41&6.7(b)&6.7(b)\\ \hline
$C^p_b(X)$ &&?&6.7(b)&6.42&6.43 \\ \hline
\vspace*{0in}$C^p(\ol{X})$&&&? & 6.7(c) & 6.43 \\ \hline
\vspace*{0in}$C^p_b(\ol{X})$ &&&& ? & 6.43\\ \hline
\vspace*{0in}$C^p_*(\ol{X})$ &&&&&? \\ \hline\hline
\end{tabular}


\begin{sideways}

\begin{tabular}{||p{1.5cm}||c|c|c|c|c|c|p{1.5cm}|c|c|c|c|c|c||}
\hline\hline & $C(Y)$ & $C_b(Y)$ & $U(Y)$ & $U_b(Y)$ & $U^\loc(Y)$ & $U^\loc_b(Y)$  & $\Lip(Y)$ $\Lip_b(Y)$ &  $\Lip^\loc(Y)$ & $\Lip^\loc_b(Y)$   &  $\lip_\al(Y)$ & $\lip_{\al,b}(Y)$ & $\lip^\loc_\al(Y)$ & $\lip^\loc_{\al,b}(Y)$               \\ \hline \hline
$C(X)$ & 8.1(a) & 6.7(a)& 6.15(a)& 6.7(a)& 6.15(b)& 6.7(a) & 6.13(b)& 6.13(a)& 6.13(a) & 6.13(b)& 6.13(b)& 6.13(a)& 6.13(b) \\ \hline
$C_b(X)$ &&8.1(a)& 6.15(c)& 6.15(a)& 6.7(a)& 6.15(b)& 6.13(c)& 6.13(b)& 6.13(a)& 6.24(a)&6.13(c)&6.13(b)&6.13(a) \\ \hline
$U(X)$ &&&8.1(f)& 6.39& 6.15(a)& 6.15(c)& 6.21& 6.15(e)&6.15(d)&6.30 & 6.30 & 6.15(e)&6.15(d) \\ \hline
$U_b(X)$ &&&&8.1(g)& 6.7(a)& 6.15(a)& 6.21& 6.13(b)& 6.15(e)& 6.24(a)& 6.24(a)& 6.13(b)& 6.15(e) \\ \hline
$U^\loc(X)$ &&&&&8.4&6.7(a)&6.13(b)& 6.13(a)& 6.13(b)&6.13(b)&6.13(b)&6.13(a)&6.13(b) \\ \hline
$U^\loc_b(X)$ &&&&&&8.5 & 6.13(c)& 6.13(b)&6.13(a)&6.24(a)&6.13(c)&6.13(b)&6.13(a) \\ \hline
$\Lip(X)$ $\Lip_b(X)$ &&&&&&&5.5  8.1(b)(c)&6.7(a) & 6.16& 6.21&6.21&6.17(b)&6.17(c) \\ \hline
$\Lip^\loc(X)$ &&&&&&&&8.2&6.7(a)&6.17(b)&6.17(a)&6.17(a)&6.17(b) \\ \hline
$\Lip^\loc_b(X)$ &&&&&&&&&8.3&6.24(a)&6.17(a)&6.17(b)&6.17(a) \\ \hline
$\lip^\al(X)$ &&&&&&&&&&7.14&6.34&6.15(c)&6.33 \\ \hline
$\lip_{\al,b}(X)$ &&&&&&&&&&&8.1(e)&6.7(a)&6.16 \\ \hline
$\lip^\loc_{\al}(X)$ &&&&&&&&&&&&8.2&6.7(a) \\ \hline
$\lip^\loc_{\al,b}(X)$ &&&&&&&&&&&&&8.3 \\ \hline
$C^p(X)$ &6.8&6.8&6.8&6.8&6.8&6.8&6.8&6.8&6.8&6.8&6.8&6.8&6.8 \\ \hline
$C^p_b(X)$  &6.8&6.8&6.8&6.8&6.8&6.8&6.8&6.8&6.8&6.8&6.8&6.8&6.8 \\ \hline
\vspace*{0in}$C^p(\ol{X})$  &6.8&6.8&6.8&6.8&6.8&6.8&6.8&6.8&6.8&6.8&6.8&6.8&6.8 \\ \hline
\vspace*{0in}$C^p_b(\ol{X})$  &6.8&6.8&6.8&6.8&6.8&6.8&6.8&6.8&6.8&6.8&6.8&6.8&6.8 \\ \hline
\vspace*{0in}$C^p_*(\ol{X})$  &6.8&6.8&6.8&6.8&6.8&6.8&6.8&6.8&6.8&6.8&6.8&6.8&6.8 \\ \hline\hline
\end{tabular}

\end{sideways}


\end{document}